\newcommand{\M}{\mathcal{M}}
\newcommand{\R}{\mathbb{R}}
\newcommand{\N}{\mathbb{N}}
\newcommand{\Z}{\mathbb{Z}}
\def\calL{{\mathcal L}}
\numberwithin{equation}{section}
\newtheorem{thm}{Theorem}[section]
\newtheorem{cor}[thm]{Corollary}
\newtheorem{lem}[thm]{Lemma}
\newtheorem{prop}[thm]{Proposition}
\theoremstyle{remark}
\newtheorem{rem}{Remark}[section]
\newtheorem{defn}{Definition}[section]
\newcommand{\Del}[1]{}
\newcommand{\x}{\xi}
\def\calR{\mathcal{R}}
\def\eps{\varepsilon}
\def\sign{\mathrm{sign}}
\def\til{\tilde}
\begin{document}

\title{On stability of type II blow up for the critical NLW on $\R^{3+1}$}
\author{Joachim Krieger}

\subjclass{35L05, 35B40}

\keywords{critical wave equation, blowup}

\thanks{Support of the Swiss National Fund is gratefully acknowledged. The author would also like to thank Prof. W. Schlag of the University of Chicago as well as IHES for their hospitality in August 2015, resp. June/July 2016}
\begin{abstract}
We show that the finite time type II blow up solutions for the energy critical nonlinear wave equation 
\[
\Box u = -u^5
\]
on $\R^{3+1}$ constructed in \cite{KST}, \cite{KS1} are stable along a co-dimension three manifold of radial data perturbations in a suitable topology, provided the scaling parameter $\lambda(t) = t^{-1-\nu}$ is sufficiently close to the self-similar rate, i. e. $\nu>0$ is sufficiently small. Our method is based on Fourier techniques adapted to time dependent wave operators of the form 
\[
-\partial_t^2 + \partial_r^2 + \frac2r\partial_r +V(\lambda(t)r)
\]
for suitable monotone scaling parameters $\lambda(t)$ and potentials $V(r)$ with a resonance at zero. 
\end{abstract}

\maketitle

\section{Introduction}

The focussing energy critical nonlinear wave equation 
\begin{equation}\label{eq:Main}
\Box u = (-\partial_t^2 + \triangle)u = -u^5,\,u = u(t, x),\,(t,x)\in \R^{3+1},
\end{equation}
has recently attracted a lot of attention, as it has become clear that some of its dynamic features are characteristic of more geometric and physical models such as critical Wave Maps or Yang-Mills, yet the simplicity of the model 
allows to avoid many technical issues. In particular, it is believed that techniques developed for characterising its so-called type II dynamics, such as in the seminal series of works \cite{DKM1} - \cite{DKM4}, ought to be adaptable to attack the characterisation of solutions for more physical models. 
\\

The model \eqref{eq:Main} comes with a conserved energy 
\begin{equation}\label{eq:energy}
E(u) = \int_{\R^3}\big[\frac{1}{2}\big|\nabla_{t,x}u\big|^2 - \frac{1}{6}u^6\big]\,dx,
\end{equation}
of ambiguous sign. In particular, energy class Shatah-Struwe solutions (see \cite{SStruwe}) are not necessarily a priori bounded in $H^1(\R^3)$. This leads one to distinguish between type I and type II solutions, the latter being characterised by the property that 
\begin{equation}\label{eq:typeIIbound}
\sup_{t\in I}\big\|\nabla_{t,x}u(t, \cdot)\big\|_{L_x^2}<+\infty,
\end{equation}
where the open interval $I$ is the maximal interval of existence for a Shatah-Struwe solution $u$. On the other hand, type I solutions satisfy 
\begin{equation}\label{eq:typeI}
\sup_{t\in I}\big\|\nabla_{t,x}u(t, \cdot)\big\|_{L_x^2}=+\infty,
\end{equation}

Restricting to radial solution $u(t, x) = \tilde{u}(t, r)$, $r = |x|$, the works \cite{DKM1} - \cite{DKM4} provide an abstract classification of all singular type II solutions in terms of the so-called {\it{ground state}} $W(x)$, given by the explicit static solution
\begin{equation}\label{eq:groundstate}
u(t, x) = W(x): = \big(1+\frac{|x|^2}{3}\big)^{-\frac{1}{2}},
\end{equation} 
also known as {\it{Aubin-Talenti solution}} from its geometric origins. Observe that the {\it{scaling transformation}} associated with \eqref{eq:Main},
given by 
\[
u(t, x)\longrightarrow \lambda^{\frac12}u(\lambda t, \lambda x),\,\lambda>0,
\]
and carrying solutions into solutions, means that we obtain the one parameter family of scaled ground state static solutions
\[
W_{\lambda}(x) = \lambda^{\frac12} \big(1+\frac{\lambda^2|x|^2}{3}\big)^{-\frac{1}{2}}
\]
Then the Duyckaerts-Kenig-Merle classification of radial singular type II solutions for \eqref{eq:Main} is given by 
\begin{thm}(Duyckaerts-Kenig-Merle '09) Let $u(t, x)$, $t\in I$, be a type II radial Shatah-Struwe solution of \eqref{eq:Main} with maximal interval of existence $I$, and suppose $I = (T_0, T)$ with $T_0<T\in \R$. Then there exists $N\in \mathbf{N}$ as well as continuous functions $\lambda_j: I\rightarrow \R_+$, $j = 1,2,\ldots, N$, with 
\[
\lim_{t\rightarrow T}(T-t)\lambda_j(t) = +\infty,\,\lim_{t\rightarrow T}\big|\log\big(\frac{\lambda_j(t)}{\lambda_{j'}(t)}\big)\big| = +\infty,\,j\neq j', 
\]
and such that we can write 
\begin{equation}\label{eq:profiledecomp1}
u(t, x) = \sum_{j=1}^N\kappa_j W_{\lambda_j(t)}(x) + v(t, x),\,\kappa_j\in \{\pm 1\},
\end{equation}
and where $v(t, x)\in C^0(I\cup \{T\}; H^1),\,v_t\in C^0(I\cup\{T\}; L^2)$. 
\end{thm}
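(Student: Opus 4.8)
The plan is to run the concentration--compactness plus ``channels of energy'' program. First I would fix the variational setting: by the sharp Aubin--Talenti (Sobolev) inequality, the rescaled ground states $W_\lambda$ are, up to scaling and sign, the only nonzero finite-energy static solutions of \eqref{eq:Main}, and one has the companion rigidity statement that a finite-energy wave with energy slightly above $E(W)$ and small ``nonlinear defect'' must be $\dot H^1\times L^2$-close to some $\pm W_\lambda$. Normalizing the blow-up time to $T$ finite, the type II hypothesis \eqref{eq:typeIIbound} means $\|\nabla_{t,x}u(t)\|_{L^2_x}$ stays bounded as $t\to T^-$, so the data stay in a bounded ball in the energy space and a profile decomposition is available.

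The technical heart is the exterior energy lower bound for the $3+1$ radial \emph{free} wave equation: for a radial solution $w$ of $\Box w=0$,
\[
\lim_{t\to\pm\infty}\int_{|x|>|t|}|\nabla_{t,x}w(t,x)|^2\,dx \;\gtrsim\; \big\|(w,\partial_t w)(0)\big\|_{\dot H^1\times L^2}^2
\]
after projecting onto the relevant (outgoing, resp. incoming) half of the data. This is the rigidity engine developed in \cite{DKM1}--\cite{DKM4}: it forbids energy from being stored in a purely self-similar, non-bubbling profile, since such a profile would radiate a fixed fraction of its energy past the light cone, contradicting the type II bound once the genuine bubbles have been subtracted. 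Dimension three is exactly the favorable case where this estimate is clean.

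Third, I would extract a bubble decomposition along a sequence $t_n\to T$. Applying a Bahouri--Gérard-type profile decomposition to $(u,\partial_t u)(t_n)$, rescaled so the ``bulk'' sits at unit scale, each nonzero profile evolves to a nonzero solution of \eqref{eq:Main}; combining the variational rigidity with the exterior-energy dichotomy forces every such profile to be a rescaled signed $\pm W$, while the remainder vanishes in $\dot H^1\times L^2$. The Pythagorean expansion of the (bounded) energy then forces finitely many bubbles $N$; scale orthogonality gives $|\log(\lambda_j(t_n)/\lambda_{j'}(t_n))|\to\infty$ for $j\neq j'$, and comparing the bubble scales with the cone through the tip yields $(T-t_n)\lambda_j(t_n)\to\infty$, i.e. the bubbles concentrate strictly faster than self-similar (otherwise they would be visible outside the light cone and leak energy). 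Finally I would upgrade from the sequence to all $t$ near $T$: the $\lambda_j$ can be taken continuous, $N$ and the signs $\kappa_j$ are locally constant (a new bubble cannot nucleate, as it again costs a quantum $E(W)$ of exterior energy with no available source), and the radiation term $v$ solves a wave equation with forcing that is integrable near $T$ by finite speed of propagation and the $L^2$-smallness of the bubble interactions, so $v(t)$ converges in $\dot H^1\times L^2$; persistence of regularity promotes this to $v\in C^0(I\cup\{T\};H^1)$, $v_t\in C^0(I\cup\{T\};L^2)$.

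The main obstacle is twofold. The channels-of-energy estimate is delicate: it genuinely fails for some data — notably the $W$-profile sitting at the resonance at zero — so one must carefully bookkeep which half of the energy (outgoing versus incoming) is captured and then feed this into the nonlinear perturbative analysis around a multi-bubble configuration. The second difficulty is promoting the sequential decomposition to a uniform-in-time one: ruling out oscillation in $N$, controlling the bubble--bubble and bubble--radiation interactions over the \emph{entire} interval $(T_0,T)$, and squeezing out the limit of $v$ at the singular time all require the full strength of the no-return analysis, which is where the bulk of the work in \cite{DKM1}--\cite{DKM4} is spent.
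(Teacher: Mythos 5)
The paper does not prove this theorem: it is imported as a black box from the cited works \cite{DKM1}--\cite{DKM4} of Duyckaerts, Kenig and Merle, so there is no internal argument to compare against. Your sketch is a legitimate high-level outline of the DKM strategy (variational characterization of $W$, Bahouri--G\'erard profile decomposition under the type II bound, the radial $3+1$ channels-of-energy estimate, and the upgrade from a sequence $t_n\to T$ to a decomposition valid for all $t$ near $T$).

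Two points are stated imprecisely and are exactly where the delicacy lives, so they are worth fixing. First, the $3$D full-space channels estimate used in DKM reads: for every radial free wave $w$ in $\R^{3+1}$, for all $t\geq 0$ \emph{or} for all $t\leq 0$,
\[
\int_{|x|\geq |t|}|\nabla_{t,x}w(t,x)|^2\,dx\;\geq\;\tfrac12\,\big\|(w,\partial_t w)(0)\big\|_{\dot H^1\times L^2}^2;
\]
there is no projection in the full-space version --- the ``or'' already encodes the outgoing/incoming dichotomy. Projections off a finite-dimensional null space are needed only in the truncated (exterior of a ball) variant and in even dimensions, not in the form you quote. Second, the classification of radial finite-energy static solutions as $\pm W_\lambda$ does not follow from the Aubin--Talenti inequality alone: Aubin--Talenti identifies the positive optimizers, but ruling out sign-changing radial $\dot H^1$ solutions requires an additional Pohozaev-type or ODE shooting argument. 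Finally, the phrase ``energy slightly above $E(W)$'' misdescribes the setting --- type II solutions with $N\geq 2$ bubbles carry energy $\geq N E(W)$; the near-$W$ rigidity is applied to each profile after rescaling, not to $u$ itself. None of these invalidate the plan, but in a theorem whose entire content lives in such estimates, the statements must be exact.
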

In particular, the limit $\lim_{t\rightarrow T}\nabla_{t,x}v(t, x)$ exists in the $L^2(\R^3)$-topology and allows one to prolong the solution $u(t,x)$ beyond the endpoint $t = T$ in the distributional sense, see \cite{KW}. 
\\

The result by Duyckaerts-Kenig-Merle does not make any assertion about the {\it{existence}} of solutions of the form \eqref{eq:profiledecomp1}, nor does it assert anything about their stability. In fact, at this point in time, it appears that only finite or infinite time blow up solutions with one bulk term $W_{\lambda_j(t)}$ are known. Furthermore, assuming the error term $v(t, x)$ to be of small critical norm, such solutions are known to be unstable in a quite precise way \cite{CNLW4}, as follows: 
\begin{thm}\label{thm:codim1stable}(\cite{CNLW4}) Let 
\begin{equation}\label{eq:oneprofile}
u(t, x) = W_{\lambda(t)}(x) + v(t, x)
\end{equation}
be a type II blow up solution for \eqref{eq:Main}, with $\lim_{t\rightarrow T}\lambda(t) = +\infty$, and such that 
\[
\sup_{t\in I}\big\|\nabla_{t,x}v(t, \cdot)\big\|_{L_x^2}\leq \delta\ll 1
\]
for some sufficiently small $\delta>0$, where as usual $I$ denotes the maximal life span of the Shatah-Struwe solution $u$. Also, assume that $t_0\in I$. Then there exists a co-dimension one Lipschitz manifold $\Sigma$ in a small neighbourhood of the data $\big(u(t_0, \cdot), u_t(t_0, \cdot)\big)\in \Sigma$ in the energy topology $\dot{H}^1(\R^3)\times L^2(\R^3)$ and such that initial data $\big(u_0, u_1\big)\in \Sigma$ result in a type II solution, while initial data 
\[
\big(u_0, u_1\big)\in B_{\delta}\backslash \Sigma,
\]
where $B_{\delta}\subset \dot{H}^1(\R^3)\times L^2(\R^3)$ is a sufficiently small ball centred at $\big(u(t_0, \cdot), u_t(t_0, \cdot)\big)$, either lead to blow up in finite time, or solutions scattering to zero, depending on the 'side of $\Sigma$' these data are chosen from. 
\end{thm}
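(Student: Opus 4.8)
Proof proposal for Theorem \ref{thm:codim1stable} (the co-dimension one instability result cited from \cite{CNLW4}):

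\medskip

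\noindent\emph{Proof sketch.} The plan is to follow the by now standard modulation-plus-spectral-analysis scheme, adapted to the limited (critical) regularity available for the background error $v$. First I would fix coordinates adapted to the soliton family: near the background solution $u = W_{\lambda(t)} + v$, write a perturbed solution as
\[
u^{\mathrm{pert}}(t,x) = W_{\mu(t)}(x) + \varepsilon(t,x),
\]
where the modulation parameter $\mu(t)>0$ is determined, as long as the solution remains in a small tube around the soliton manifold $\{W_\mu\}$, by imposing an orthogonality condition $\langle \varepsilon(t), \Lambda W_{\mu(t)}\rangle = 0$, with $\Lambda W = (\tfrac12 + x\cdot\nabla)W$ the infinitesimal generator of scaling. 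Conjugating the equation by the rescaling $\mu(t)$, the error $\varepsilon$ satisfies a wave equation $\partial_t^2\varepsilon + \mathcal L_{\mu(t)}\varepsilon = \mathcal N(\varepsilon) + (\text{terms in }\dot\mu,\ddot\mu)$, where $\mathcal L = -\Delta - 5W^4$ is the (suitably rescaled) linearized operator. The radial spectral theory of $\mathcal L$ is explicit: one simple negative eigenvalue $-k^2<0$ with a smooth, exponentially decaying ground state $\phi$, a zero-energy resonance $\Lambda W$, and absolutely continuous spectrum $[0,\infty)$ with no embedded eigenvalue or further threshold resonance.

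Second, decompose $\varepsilon = a(t)\phi + \varepsilon_d(t)$, with $a(t)=\langle\varepsilon(t),\phi\rangle/\|\phi\|^2$ and $\varepsilon_d$ in the continuous-spectral subspace (the zero mode having been removed by the choice of $\mu$). Projecting onto $\phi$ gives a scalar second-order ODE $\ddot a - k^2 a = F_a[\varepsilon_d,a,\dot\mu]$ with $F_a$ quadratic-or-better — a hyperbolic equation with one expanding mode $e^{+kt}$ and one contracting mode $e^{-kt}$. The dispersive component $\varepsilon_d$ solves a wave equation with a slowly varying nonnegative potential obeying good energy and Strichartz estimates, and the modulation parameter satisfies $\dot\mu = O(\|\varepsilon\|)$ plus controllable errors; these three coupled pieces are what must be closed in a bootstrap.

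Third, I would construct $\Sigma$ as the graph of a Lipschitz map over the co-dimension one subspace complementary to the single expanding direction, via a Lyapunov--Perron fixed point: on the space of trajectories that stay in the small soliton tube and decay in the appropriate critical norm for all $t\ge t_0$, one sets up the Duhamel iteration in which the expanding mode of $a$ is solved backward in time — so its value is slaved to the remaining data — and everything else forward; the contraction closes using the dispersive decay of $\varepsilon_d$ and the smallness $\delta$. The admissible initial data $(u_0,u_1)$ form precisely the Lipschitz graph $\Sigma$ through $(u(t_0,\cdot),u_t(t_0,\cdot))$, and data on $\Sigma$ produce solutions asymptotic to the moving soliton, hence type II.

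Finally — and this is where the real work lies — I would establish the dichotomy for data in $B_\delta\setminus\Sigma$. For such data the expanding coefficient of $a$ is nonzero, and a one-pass / ejection argument shows $|a(t)|$ grows monotonically, with a fixed sign, until the solution exits an $O(1)$-relative-to-$\delta$ neighbourhood of the soliton manifold. One then feeds the ejected solution into a variational/virial analysis in the spirit of Kenig--Merle and Nakanishi--Schlag: on one side of $\Sigma$ the solution enters the region $\|\nabla u\|_{L^2_x}^2 < \int u^6$ with sub-$W$ energy, forcing finite-time blow up; on the other side it enters the coercive region $\|\nabla u\|_{L^2_x}^2 > \int u^6$ with sub-$W$ energy, forcing global existence and scattering to $0$. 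The main obstacles are (i) running the modulation and ejection machinery with only the critical-norm smallness of $v$, so that no extra regularity of the background is available and every estimate must close in $\dot H^1\times L^2$ together with the relevant Strichartz norms; and (ii) excluding borderline trajectories that neither re-enter the soliton tube nor fully eject, which is exactly what the sign-definiteness in the ejection lemma is designed to preclude. \qed
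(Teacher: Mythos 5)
This theorem is quoted in the paper directly from the reference \cite{CNLW4} (Krieger--Nakanishi--Schlag) and the paper supplies no proof of its own, so there is nothing in the paper to compare your argument against. Your sketch is a reasonable high-level account of the approach actually taken in that reference: modulation around the soliton family with a scaling orthogonality condition, a decomposition of the error into the unstable eigenmode of $\mathcal{L}=-\triangle - 5W^4$ plus a dispersive piece, a Lyapunov--Perron fixed point to construct the center-stable manifold $\Sigma$ as a Lipschitz graph over the codimension-one complement of the unstable direction, and the one-pass/ejection argument combined with the Kenig--Merle variational dichotomy to classify data off $\Sigma$ as leading to finite-time blow up on one side and scattering to zero on the other. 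The spectral facts you invoke (one negative eigenvalue with exponentially decaying ground state $\phi_d$, zero-energy resonance $\Lambda W$, no embedded eigenvalues) are correct, and you correctly flag the genuine difficulty, namely that all estimates must close at the critical regularity $\dot H^1\times L^2$ with only Strichartz control, and that the ejection lemma must have the sign-definiteness needed to rule out borderline trajectories. Since this is a cited theorem rather than a proved one, I cannot certify the proposal at the level of detail needed, but it does not contain any visible wrong step and it is structurally consistent with the cited work.
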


In fact, the work \cite{CNLW4} gives much more, but the preceding result is all that is relevant to the discussion of the present paper. 
\\

The preceding theorem reveals that understanding the optimal stability of type II solutions of the form \eqref{eq:oneprofile} reduces\footnote{Note that it appears natural to conjecture that working in a suitable topology, the blow up solutions 'above' and sufficiently close to $\Sigma$ are all of type I.} to understanding the stability of such solutions under perturbations {\it{along the hyper surface $\Sigma$}}, and this irrespective of the precise scaling law $\lambda(t)$. This is then precisely the kind of question we aim to address in this paper, for a specific class of rather explicit blow up solutions constructed in \cite{KST}, \cite{KS1}. Before explaining this, we remark that the co-dimension one condition for $\Sigma$ simply reflects the fact that the linear operator $\mathcal{L}: = -\triangle - 5W^4$ has a unique negative eigenvalue $\xi_d<0$ which generically causes exponential growth for solutions of the corresponding linear wave equation 
\[
\big(\partial_t^2-\triangle - 5W^4\big)v(t, x) = 0. 
\]
For the nonlinear problem, this implies that perturbing initial data $u[0]\in \Sigma$ by a positive multiple of the corresponding positive eigenfunction $(\phi_d,0)$ leads to solutions which escape from a suitable neighbourhood of the family $\{\pm W_{\lambda}\}_{\lambda>0}$, and blow up in finite time(in fact, in a certain sense this blow up is of type I, see \cite{KW}), while adding a negative multiple of $(\phi_d, 0)$ leads to a solution existing globally in forward time and scattering toward zero. 
\\

{\it{From now on, we shall work exclusively in the context of radial solutions without further mention}}.
\\

Now pick $\nu>0$, set $\lambda(t) = t^{-1-\nu}$, and consider one of the blow up solutions $u_{\nu}(t, x) =: W_{\lambda(t)}(x) + v_{\nu}(t, x)$ constructed in \cite{KST}, \cite{KS1} and existing on an interval $I = (0, t_0]$ for $t_0>0$ sufficiently small. 
Note that $\sup_{t\in I}\|\nabla_{t,x}v(t, \cdot)\|_{L_x^2}\leq \delta\ll1 $ may be assumed for these solutions, and so the result from \cite{CNLW4} implies the existence of a co-dimension one Lipschitz manifold $\Sigma$ passing through the data $u_{\nu}[t_0]$ such that 
data $u[t_0]\in \Sigma$ result (forward in time) in type II solutions. 
\\

The question we want to address here is whether the blow up for $u_{\nu}$ is preserved generically under such perturbations along $\Sigma$. In fact, for small enough $\nu>0$, we can establish such a result under a suitable co-dimension two condition {\it{for perturbations along $\Sigma$}}, which in turn corresponds to a co-dimension three condition on these perturbations amongst all possible perturbations: 
\begin{thm}\label{thm:Main} There is $\nu_0>0$ sufficiently small, such that the following holds: Let $u_{\nu}$, $0<\nu<\nu_0$ be one of the solutions constructed in \cite{KST}, \cite{KS1}, on a time slice $(0, t_0]\times \R^3$, with $0<t_0\ll 1$ sufficiently small. Then there exists a co-dimension two Lipschitz hyper surface $\Sigma_0$ in a Hilbert space $\tilde{S}\times \R$ where $\tilde{S}$ is essentially $\big(H^{\frac{3}{2}+}_{rad}(\R^3)\cap\{\phi_d\}^{\perp}\big)\times \big(H^{\frac{1}{2}+}_{rad}(\R^3)\cap\{\phi_d\}^{\perp}\big)$, and a positive $\delta_1\ll 1$, such that for any $(u_0, u_1,\gamma)\in \Sigma_0\cap  \big(B_{\delta_1, \tilde{S}}(0)\times (-\delta_1, \delta_1)\big)$ and suitable Lipschitz functions 
\[
\gamma_{1,2}:  \Sigma_0\cap  \big(B_{\delta_1, \tilde{S}}(0)\times (-\delta_1, \delta_1)\big)\longrightarrow \R, 
\]
the solution of \eqref{eq:Main} with data 
\begin{align*}
u[t_0]: &=u_{\nu}[t_0] + \big(u_0, u_1\big) + \big(\gamma\phi_d +\gamma_1(u_0, u_1,\gamma)\phi_d, \gamma_2(u_0, u_1,\gamma)\phi_d\big)\\&\in \big(H^{1+}_{rad}(\R^3)\times H^{0+}_{rad}(\R^3)\big)\cap \Sigma
\end{align*}
exists on $I = (0, t_0]$ and can be written in the form 
\[
u(t, x) = W_{\lambda(t)} + v_1(t, x),\,\lambda(t) = t^{-1-\nu}
\]
with $(v_1, v_{1,t})\in H^{1+\frac{\nu}{2}-}\times H^{\frac{\nu}{2}-}$ on each time slice $t = t_1\in I$, and furthermore
\[
\big(E_{loc}(v)\big)(t): = \int_{|x|\leq t}\frac{1}{2}\big|\nabla_{t,x}v_1\big|^2\,dx\longrightarrow 0
\]
as $t\rightarrow 0$. Thus for small enough $\nu>0$, the solutions constructed in \cite{KST}, \cite{KS1} are stable under perturbations along a co-dimension three manifold in a suitable topology. 
\end{thm}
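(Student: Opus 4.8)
The plan is to linearize around the reference solution and run a Fourier-analytic parametrix argument of the type developed in \cite{KST}, \cite{KS1}. Write the unknown solution as $u = W_{\lambda(t)} + v_1$ with $v_1 = v_\nu + \epsilon$, where $v_\nu$ is the remainder of the reference blow up $u_\nu$ and $\epsilon$ is the new perturbation, carrying at $t=t_0$ precisely the data $(u_0,u_1)$ together with the $\phi_d$-contributions $((\gamma+\gamma_1)\phi_d,\gamma_2\phi_d)$. Since $u_\nu$ solves \eqref{eq:Main} exactly, $\epsilon$ satisfies
\[
\big(-\partial_t^2 + \partial_r^2 + \tfrac2r\partial_r + 5W_{\lambda(t)}^4\big)\epsilon = \mathcal{N}(\epsilon) + \mathcal{R}(\epsilon;v_\nu,W_{\lambda(t)}),
\]
where $\mathcal{N}$ collects the genuinely nonlinear (quadratic through quintic) terms in $\epsilon$, and $\mathcal{R}$ the terms linear in $\epsilon$ with coefficients built from $v_\nu$, hence small for $\nu$ small. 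For $\nu$ small one has $\|v_\nu\|_{\dot H^1}\ll1$, so the principal part of the linearized operator is exactly $-\partial_t^2 + \partial_r^2 + \frac2r\partial_r + 5W_{\lambda(t)}^4$ and the rest is perturbative; the data is split into its $\phi_d$-component (parametrized by $\gamma$ and the Lipschitz corrections $\gamma_{1,2}$) and the $\phi_d^\perp$-piece $(u_0,u_1)\in\tilde{S}$.

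Next I would pass to self-similar–type coordinates: set $R = \lambda(t)r$ and a new time $\tau$ with $d\tau$ proportional to $\lambda(t)\,dt$ (so $\tau\to+\infty$ as $t\to0^+$), and absorb Jacobian factors via $\epsilon = \lambda(t)^{1/2}\varepsilon(\tau,R)$. The potential then freezes to the $t$-independent $5W(R)^4$, but the d'Alembertian acquires first-order-in-$R$ terms with coefficient $\frac{\dot\lambda}{\lambda}\sim -c(\nu)/\tau$. Diagonalizing $\mathcal{L} := -\partial_R^2 - \frac2R\partial_R - 5W(R)^4$ by its distorted Fourier transform $\mathcal{F}$: the spectrum is $\{\xi_d\}\cup[0,\infty)$, $\xi_d<0$ with eigenfunction $\phi_d$, a zero-energy \emph{resonance} given up to normalization by $\Lambda W$, and spectral measure $\rho(\xi)\sim\xi^{-1/2}$ as $\xi\to0^+$. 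Writing $\varepsilon = x_d(\tau)\phi_d + \int_0^\infty x(\tau,\xi)\phi(R,\xi)\rho(\xi)\,d\xi$ reduces the equation to a scalar ODE $\ddot x_d - |\xi_d|x_d = (\text{forcing})$ for the discrete coefficient, together with a $1+1$-dimensional wave equation
\[
\Big(\partial_\tau + \tfrac{\dot\lambda}{\lambda}\,\mathcal{D}\Big)^2 x + \xi\,x = \mathcal{F}\big[\text{RHS}\big],\qquad \mathcal{D} = (\text{scaling/transport vector field in }\xi) + \mathcal{K},
\]
for the continuum part, where $\mathcal{K}$ is the \emph{transference operator} encoding the failure of $\mathcal{F}$ to commute with $R\partial_R$ — a non-local operator with a principal-value kernel that degenerates near $\xi=0$.

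The core of the argument is then to solve this Fourier-side system in Banach spaces modeled on those of \cite{KST}: weighted $L^2$-in-$\xi$ norms with weights $\langle\tau^2\xi\rangle^{\alpha}\langle\xi\rangle^{\beta}$, with a companion scalar space for $x_d$, chosen so that boundedness corresponds to $(v_1,v_{1,t})\in H^{1+\frac{\nu}{2}-}\times H^{\frac{\nu}{2}-}$ on each slice plus decay as $\tau\to\infty$. One represents the solution by Duhamel in $\tau$, using the oscillation $e^{\pm i\tau\sqrt\xi}$ for the continuum part and the explicit hyperbolic Green's function for $x_d$; to obtain a solution that does \emph{not} grow as $\tau\to\infty$ one must suppress the unstable branch $e^{+\sqrt{|\xi_d|}\,\tau}$ of $x_d$, a slowly growing low-frequency continuum mode created by the resonance interacting with the $1/\tau$-type coefficient, and the scaling direction $\Lambda W$. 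These are finitely many linear constraints: by the implicit function theorem a subset determines the corrections $\gamma_{1,2}$ (placing the data on the manifold $\Sigma$ of \cite{CNLW4}) and the remaining two cut out the codimension-two Lipschitz surface $\Sigma_0\subset\tilde{S}\times\R$, which accounts for the codimension-three count among all perturbations. The terms $\mathcal{N},\mathcal{R}$ are then absorbed by a contraction-mapping argument on these spaces, using smallness of $\delta_1$, of $t_0$, and of $\nu$, together with $3D$ Strichartz bounds and Sobolev embedding ($H^{1+\nu/2}\hookrightarrow L^{6+}$) to handle the quintic and the $W^3\epsilon^2$-type terms.

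I expect the main obstacle to be the linear theory for the $1+1$ wave equation above: proving the requisite mapping bounds for the Duhamel operator \emph{uniformly in} $\tau$ in the presence of the zero-energy resonance — which makes $\rho(\xi)$ singular and the kernel of $\mathcal{K}$ worse at low frequency — and of the transport-plus-transference operator $\mathcal{D}$, whose interaction with $\dot\lambda/\lambda$ near $\xi=0$ is delicate. This is exactly where smallness of $\nu$ enters twice: it keeps the problem perturbative off $\mathcal{L}$ (where $\|v_\nu\|_{\dot H^1}\ll1$), and it secures the regularity buffer $\frac12-\frac\nu2$ between the data space $H^{3/2+}$ and the solution space $H^{1+\frac\nu2-}$ needed to absorb the commutator losses from $\mathcal{K}$ and from the coordinate change. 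Once the fixed point is in hand, one reads off $(v_1,v_{1,t})\in H^{1+\frac{\nu}{2}-}\times H^{\frac{\nu}{2}-}$ on each slice, matches $v_1$ to the essentially free evolution outside the light cone $|x|\le t$ by standard energy estimates, and, from the decay of $x_d(\tau)$ and of the low-frequency/large-$R$ part of $x(\tau,\cdot)$ as $\tau\to\infty$ together with the fact that the soliton scale $\lambda(t)^{-1}=t^{1+\nu}$ is $\ll t$, concludes that $\big(E_{loc}(v_1)\big)(t)\to0$ as $t\to0$, which completes the proof.
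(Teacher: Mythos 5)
Your setup (linearize around $u_\nu$, pass to the renormalized coordinates $(\tau,R)$ with $R=\lambda r$, diagonalize via the distorted Fourier transform associated to $\mathcal{L}$, and solve the resulting transport equation on the Fourier side) is the correct framework and matches the paper's section 2--4. But the closing step of your argument --- ``the terms $\mathcal{N},\mathcal{R}$ are then absorbed by a contraction-mapping argument on these spaces, using smallness of $\delta_1$, $t_0$, $\nu$'' --- is exactly where the proof becomes nontrivial, and it fails as stated.

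The difficulty is not the linear-in-$\epsilon$ terms with coefficients built from $v_\nu$ (those are indeed small, as you say), but the \emph{transference term} $\mathcal{R}(\tau,\underline{x})$ in \eqref{eq:transport}, which arises intrinsically from the failure of the distorted Fourier transform to commute with $R\partial_R$ and has coefficients $\beta_\nu(\tau)=\dot\lambda/\lambda\sim\tau^{-1}$ and $\beta_\nu^2(\tau)\sim\tau^{-2}$. These carry no smallness factor whatsoever. In particular, a single application of the (modified) Duhamel parametrix to $\mathcal{R}(\tau,\underline{x})$ only ``breaks even'' --- the output is controlled by a constant $A$ that is comparable to the input constant, not smaller (this is made precise in Proposition~\ref{prop:lineariterstep}). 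Consequently the naive iteration does not visibly contract, and one cannot invoke the contraction mapping principle or an implicit function theorem in the direct way you propose. The paper's resolution is genuinely different: it runs an infinite iterative scheme whose a~priori bounds may grow at each stage, isolates the ``bad'' contribution (the diagonal part $\mathcal{K}^d$ of $\mathcal{K}$, restricted to a fixed frequency window), and proves that re-iterating this part many times does eventually gain smallness via a combinatorial estimate over a high-dimensional simplex of time variables (Lemma~\ref{lem:simplex} and Proposition~\ref{prop:mainestimate}, giving an $\epsilon^{-j}/j!$ factor). This is the heart of the convergence proof (sections 10--12) and is not visible in your outline.

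A second, related gap: you propose to pick the solution decaying as $\tau\to+\infty$ by suppressing the unstable branches via ``finitely many linear constraints,'' which implicitly uses a backward-in-$\tau$ (vanishing at infinity) parametrix for the continuum part, as in \cite{KS1}. The paper explicitly rejects this, since imposing vanishing at $\tau=\infty$ would feed back errors at $\tau=\tau_0$ of the \emph{same} size as the original perturbation, destroying the perturbative structure. Instead the paper uses the forward Duhamel parametrix and, at each iterative step $j$, adds a small data correction $(\triangle\tilde{\tilde{x}}_0^{(j)},\triangle\tilde{\tilde{x}}_1^{(j)})$ of size $\lesssim\tau_0^{-(1-)}\triangle A_{j-1}$ (Lemma~\ref{lem:choiceofcorrection}, Proposition~\ref{prop:lineariterstep}) chosen so that the ``free'' part of each increment again satisfies the two vanishing conditions \eqref{eq:vanishing}. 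Those two conditions, which are the precise mechanism producing the codimension-two surface $\Sigma_0$ (on top of the codimension-one $\phi_d$-condition), are concrete integral constraints on the Fourier data that reduce the free evolution's growth to linear in $\tau$ --- they are not abstract consequences of an implicit function theorem around the scaling and resonance modes. The remark after Theorem~\ref{thm:Main} makes clear why they cannot be modulated away by varying $\nu$ or the scaling constant. You should replace the contraction-mapping step with the paper's scheme of Propositions \ref{prop:firstiteratelincont}--\ref{prop:convergence} and the simplex estimate.
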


\begin{rem}We note that two of the co-dimension conditions producing $\Sigma_0$ may be thought of as being forced by the requirement of fixing the exponent $\nu$ in $\lambda(t) = Ct^{-1-\nu}$, as well as the scaling factor $C = 1$. We cannot 'modulate' in $\nu$ or $C$ to force these orthogonality conditions, since the solutions $u_{\nu}$ from \cite{KST}, \cite{KS1} corresponding to different $\nu$, $C$ are infinitely far apart with respect to $\big\|\cdot\big\|_{\tilde{S}}$ for small $\nu>0$. This then suggests that in order to gain two co-dimensions (and hence produce an optimal stability result in light of Theorem~\ref{thm:codim1stable}), one needs to work with a more flexible blow up scaling law $\lambda(t)$, depending on two additional parameters. 
\end{rem}
\begin{rem} The space $\tilde{S}$ will be specified after a recall of the distorted Fourier transform associated with $\mathcal{L}$ below. 

\end{rem}

The proof of the preceding theorem will proceed via Fourier techniques in suitable coordinate systems adapted to the blow up solutions $u_{\nu}(t, x)$. In particular, no use of virial type identities is being made. It is to be expected that such techniques extend to significantly more general contexts, for example the stability of blow up solutions for the critical Wave Maps equation. 

\subsection{Relation to other works} There is by now a very sizeable literature dealing with the construction as well as stability analysis of type II and other blow up solutions. In particular, in the context of type II dynamics, a remarkably robust method has been developed in the 
pioneering works by Merle, Martel, Raphael and co-authors in the last decade. In relation to the equation at hand, we mention for example the important works \cite{HR}, \cite{RaRod}, \cite{MaMeRa}. At the most basic level, the strategy in these works is to construct solutions in a two step process, first building an approximate solution, and then completing it to an exact solution, by controlling the remaining error via Morawetz and virial type identities and exploiting subtle monotonicity properties. 
It is in regard to the second step, the control of the remaining error, where the present work is striving to achieve a different approach, based on a constructive parametrix approach to the linear operator arising upon linearisation around the bulk part of the blow up solution. Our approach may be seen as somewhat in the spirit of the recent remarkable work by Donninger and Donninger and Schorkhuber on the stability of self-similar blow up solutions, for example in \cite{Don}, \cite{DoSch}, which also completely avoids Lyapounov/Morawetz type estimates. It may be hoped that constructive methods like those employed in this paper may shed further light on the precise features of the solutions obtained. \\
We also point out an exciting novel point of view espoused in works by J. Jendrej, \cite{Jen1}, \cite{Jen2}, where an analysis of type II blow up solutions of the natural five dimensional analogue of \eqref{eq:Main} in terms of the leftover radiation at blow up time is performed. This author has also succeeded in constructing the first examples of {\it{two bubble solutions}}, \cite{Jen3} on semi-infinite time intervals. The issue of constructing finite time radial two bubble blow ups appears still open at this time.

\subsection{Overview of the paper}

In section 2 we quickly recall the properties of the approximate solutions $u^{(k, \nu)}_{approx}$ which are used in \cite{KST}, \cite{KS1} to construct the blow up solutions $u_{\nu}(t, x)$. We also quickly gather the basic facts about the distorted Fourier transform associated to the operator 
\[
-\triangle - 5W^4
\]
in this section, as well as the translation of the problem in 'physical space' to a problem in 'Fourier space', given by \eqref{eq:transport}. In section 3, we analyse the growth of the forward flow associated with the transport operator on the left hand side of 
\eqref{eq:transport}, and in particular arrive at the two crucial vanishing conditions which guarantee that the growth is limited to linear in a certain sense. All of this is of course restricted to the linear flow associated with the transport operator. In section 4, we provide a precise formulation of the stability theorem we are striving to prove in terms of the Fourier variables, see Theorem~\ref{thm:MainTechnical}. In section 5 we show that the better growth estimates deduced under the key vanishing properties in section 3 suffice to control the nonlinear terms on the right hand side of \eqref{eq:transport}. In section 6, we show the strategy for constructing the blow up solutions by means of a suitable iterative scheme, leaving the precise choice of the iterates $\triangle x^{(j)}$ to the following sections. 
\\

Section 7 in a sense forms the heart of the paper. It is here where we analyse the {\it{first iterate}}, and more precisely the contribution to the first iterate of the delicate {\it{non-local linear source terms}} on the right hand side of \eqref{eq:transport}. The contributions of these terms to the first correction do not a priori gain smallness. We briefly explain here the main idea which allows us to make the correct choice for their contribution to the first iterate, by modifying the data slightly: thus consider an equation of the form 
\[
(\mathcal{D}_{\tau}^2 + \beta_{\nu}(\tau)\mathcal{D}_{\tau} + \xi)x(\tau, \xi) = f(\tau, \xi),
\]
where the right hand side denotes (say) one of the non-local linear source terms on the right hand side of \eqref{eq:transport}. Here we cannot proceed in analogy to \cite{KST}, \cite{KS1} and solve this by imposing vanishing of $x(\tau, \xi)$ toward $\tau = +\infty$, since we would then generate errors at time $\tau = \tau_0$ (which corresponds to the initial time $t = t_0$) which would be of the same order of magnitude as the original perturbation $(x_0, x_1)$. On the other hand, using a Duhamel parametrix at time $\tau = \tau_0$ forward in time will lead to functions $x(\tau, \xi)$ which grow much too rapidly toward $\tau = +\infty$ in the small frequency regime $\xi<1$, which is of course related to the issue forcing the vanishing conditions in section 3 for the free transport equation. The way out of this impasse is to {\it{add a small correction to the data}}, given by the pair\footnote{In fact, the precise choice of the corrections $(\triangle\tilde{\tilde{x}}_0^{(1)}, \triangle\tilde{\tilde{x}}_1^{(1)})$ shows that this correction is the analogue of 'modulating' on the inherent scaling parameter as was for example done in \cite{KS}.}
\[
(\triangle\tilde{\tilde{x}}_0^{(1)}, \triangle\tilde{\tilde{x}}_1^{(1)}).
\]
Then the free transport of the evolution of this pair will in some sense cancel the growth of the inhomogeneous Duhamel term for small frequencies, while for large frequencies its contribution will be small. Moreover, crucially, the correction is smaller by a factor $\tau_0^{-1+}$ than the data, and 'essentially preserves the initial perturbation'. We then carefully check in section 7 that the contribution to the first iterate by the linear non-local source terms thus defined itself admits in the low frequency regime a decomposition into a free transport term (given in terms of data $(\triangle\tilde{x}_0^{(1)}, \triangle\tilde{x}_1^{(1)})$ satisfying the crucial vanishing conditions from section 3) plus a term satisfying a better estimate (the term $\triangle_{>\tau}x^{(1)}$), and we spell out the precise bounds for the functions $\triangle_{>\tau}x^{(1)}$, $\mathcal{D}_{\tau}\triangle_{>\tau}x^{(1)}$, with a view toward the later stages of the iterative scheme. In particular, this involves a somewhat delicate refinement of the bound obtained for $\mathcal{D}_{\tau}\triangle_{>\tau}x^{(1)}$ in Proposition~\ref{prop:firstiteratelincont} to a square-sum over dyadic time scales type bound in Proposition~\ref{prop:firstiteratelincontimprov}. 
\\

We then repeat the same kind of construction for the contribution to the first iterate by the nonlinear source terms in \eqref{eq:transport} in section 8. 
\\
With these stages in place, it is not so hard anymore to formulate the iterative step in section 9, where we show that the kind of decomposition obtained for the first iterate can be perpetuated across an iterative scheme, resulting in a priori bounds, which may however grow at each stage. In particular, at this point, it is by no means clear that the iterative scheme thus arrived at will converge. 
\\

The ground work for convergence will be laid in sections 10, 11, where we show that certain expressions arrived at in the iterative scheme do enjoy a gain of smallness under certain frequency restrictions, or else under a {\it{re-iteration}} of the iterative step. This will then allow us in section 12 to reduce the convergence of the scheme to Proposition~\ref{prop:mainestimate}, which derives smallness of a {\it{many times re-iterated wave type propagator}} by a combinatorial type argument, reducing to integration over high dimensional simplices. This is in fact very closely related to arguments used in \cite{KST1}, \cite{DoMHKS}, and indeed it appears that it is in the present context that this technique comes to bear in the most natural and powerful way. 
\\

The proof of Theorem~\ref{thm:MainTechnical} will then finally be accomplished in section 13. 

\subsection{Further remarks} It appears that the techniques developed in this paper ought to have much wider applicability. In fact, the method expounded in sections 3 - 12 appears to apply to much more general wave operators of the form 
\[
-\partial_t^2 + \partial_r^2 + \frac2r\partial_r +V(\lambda(t)r),
\]
providing an approach to study their evolution based on a completely explicit iterative parametrix construction, and avoiding any kind of Lyapounov/Morawetz type estimates and implicit reasonings. 
\section*{Acknowledgement}

The author would like to thank Stefano Burzio for numerous corrections.

\section{Recall of the properties of the solutions $u_{\nu}(t, x)$}

The solutions $u_{\nu}$ in \cite{KST}, \cite{KS1}, are constructed via a two stage process, first generating an approximate solution via an ad hoc iterative procedure, and completion thereof to an exact solution. While the first stage consists in the solution of suitable elliptic problems, the second stage (i. e. completion to an exact solution) relies on a parametrix construction for a wave equation with time dependent potential. It is the techniques for the second step which play a prominent role in this paper, while the first step is mostly used as a black box here. 
\\

To be more precise, the approximate solution in \cite{KST}, \cite{KS1} is obtained in the form 
\[
u_{approx}^{(k, \nu)}(t, x) = W_{\lambda(t)}(x) + u_{2k-1}^{(k, \nu)}(t, x),
\]
where in turn $u_{2k-1}^{(k, \nu)}(t, x) = \sum_{l=1}^{2k-1}v_l(t, x)$ is a sum of corrections. Using the variables $R = \lambda(t)\cdot r$, $\tau = \nu^{-1}t^{-\nu}$, we have 
\begin{equation}\label{eq:u2k-1bound}
\big|u_{2k-1}^{(k, \nu)}(t, R)\big|\lesssim\frac{\lambda^{\frac12}}{(\lambda t)^2}R
\end{equation}
as well as 
\[
\big\|\nabla_R^{1+\frac{\nu}{2}-}u_{2k-1}\big\|_{L^2_{dR}(R\lesssim \tau)}\lesssim \frac{\lambda^{\frac12}}{\tau^{\frac{3}{2}+\frac{\nu}{2}-}}.
\]
Importantly, the error generated by the approximation $u_{approx}^{(k, \nu)}(t, x)$ can be made arbitrarily small in a suitable sense by picking $k$ suitably large. More precisely, we have 
\[
\big|t^2 e_{2k-1}(t, r)\big|\lesssim \frac{\lambda^{\frac12}}{(\lambda t)^{2k-1}}\big(1-a\big)^{-\frac{1}{2}+\frac{\nu}{2}-},\,a = \frac{r}{t}
\]
where 
\[
e_{2k-1} = \Box u_{approx}^{(k, \nu)} + \big(u_{approx}^{(k, \nu)}\big)^5. 
\]
To pass from the approximate solution $u_{approx}^{(k, \nu)}$ to the exact solution $u_{\nu}(t, x)$, we set $u_{\nu} = u_{approx}^{(k, \nu)} + \epsilon$, where $\epsilon$ is most easily controlled by passing to the new variable 
\[
\til\eps(\tau, R): = R\epsilon(t(\tau), r(\tau, R)). 
\]
In fact, see (3.1) in \cite{KS1}, we obtain the equation 
\begin{equation}\label{eq:epseqn}\begin{split}
&(\partial_{\tau} + \dot{\lambda}\lambda^{-1}R\partial_R)^2 \tilde{\eps} - \beta_{\nu}(\tau)(\partial_{\tau} + \dot{\lambda}\lambda^{-1}R\partial_R)\tilde{\eps} + \calL\tilde{\eps}\\
&=\lambda^{-2}(\tau)R[N_{2k-1}(\eps) + e_{2k-1}]+\partial_\tau(\dot{\lambda}\lambda^{-1})\tilde{\eps};\,\beta_{\nu}(\tau) = \dot{\lambda}(\tau)\lambda^{-1}(\tau), 
\end{split}\end{equation}
where the operator $\calL$ is given by 
\[
\calL = -\partial_R^2 - 5W^4(R)
\]
and we have 
\[
RN_{2k-1}(\eps) = 5(u_{2k-1}^4 - u_0^4)\tilde{\eps} + RN(u_{2k-1}, \tilde{\eps}),
\]
\[
RN(u_{2k-1}, \tilde{\eps}) = R(u_{2k-1}+\frac{\tilde{\eps}}{R})^5 - R u_{2k-1}^5 - 5u_{2k-1}^4\tilde{\eps}
\]
Introducing the operator, with $\beta_{\nu}(\tau) = \frac{\dot{\lambda}(\tau)}{\lambda(\tau)}$,
\[
\mathcal{D} = \partial_{\tau} + \beta_{\nu}(\tau)(R\partial_R - 1),
\]
we can also write the above equation as 
\begin{equation}\label{eq:Rtauwave1}
\mathcal{D}^2\tilde{\eps} + \beta_{\nu}(\tau)\mathcal{D}\tilde{\eps} + \calL\tilde{\eps} = \lambda^{-2}(\tau)\big[5(u_{2k-1}^4 - u_0^4)\tilde{\eps} + RN(u_{2k-1}, \tilde{\eps}) + R e_{2k-1}\big]
\end{equation}
To deal with this equation, we translate things in terms of the Fourier transform associated to the operator $\mathcal{L}$, just as in \cite{KST}, \cite{KS1}. In fact, we quote the following paragraph directly from \cite{KS1}: 
\\

Recall from \cite{KST} that there exists a Fourier basis $\phi(R, \xi)$ and associated spectral measure $\rho(\xi)$ satisfying the asymptotic expansions and growth conditions explained in \cite[Section 4]{KST} such that 
\[
\tilde{\eps}(\tau, R) = x_d(\tau)\phi_d(R) + \int_0^\infty x(\tau, \xi)\phi(R, \xi)\rho(\xi)\,d\xi
\]
For the asymptotic behavior of $\phi(R,\xi)$ in various regimes we shall rely on results from \cite{KST}. 
Here the functions $x(\tau, \xi)$ are the (distorted) Fourier coefficients associated with $\tilde{\eps}$, and $\phi_d(R)$ is the unique ground state with associated negative eigenvalue for the operator $\calL$. We also note the important asymptotic estimates 
\begin{equation}\label{eq:rhoasympto}
\rho(\xi)\simeq \xi^{-\frac{1}{2}},\,\xi\ll 1,\,\rho(\xi)\simeq \xi^{\frac{1}{2}},\,\xi\gg1.
\end{equation}
as well as the fact that near $\xi = 0$ as well as $\xi = \infty$ the spectral measure behaves like a symbol upon differentiation. 
We shall henceforth write 
\[
\underline{x}(\tau, \xi): = \left(\begin{array}{c}x_d(\tau)\\ x(\tau, \xi)\end{array}\right),\qquad \underline{\xi} = \binom{\xi_d}{\xi}
\]
Then proceeding as in \cite{DoKr}, in particular section 3.5 in loc. cit.     
which uses a variation on the procedure in \cite{KST}, we derive the following transport equation for $x(\tau, \xi)$: 
\begin{equation}\label{eq:transport}
\big(\mathcal{D}_{\tau}^2 + \beta_{\nu}(\tau)\mathcal{D}_{\tau} + \underline{\xi}\big)\underline{x}(\tau, \xi) = \calR(\tau, \underline{x}) + \underline{f}(\tau, \underline{\xi}),
\end{equation}
where we have
\begin{equation}\label{eq:Rterms}
\calR(\tau, \underline{x})(\xi) = \Big(-4\beta_{\nu}(\tau)\mathcal{K}\mathcal{D}_{\tau}\underline{x} - \beta_{\nu}^2(\tau)(\mathcal{K}^2 + [\mathcal{A}, \mathcal{K}] + \mathcal{K} +  \beta_{\nu}' \beta_{\nu}^{-2}\mathcal{K})\underline{x}\Big)(\xi)
\end{equation}
 with $\beta_{\nu}(\tau) = \frac{\dot{\lambda}(\tau)}{\lambda(\tau)}$, and we set $\underline{f} = \left(\begin{array}{c}f_d\\ f\end{array}\right)$ where
 \begin{equation}\label{eq:fterms}\begin{split}
 &f(\tau, \xi) = \mathcal{F}\big( \lambda^{-2}(\tau)\big[5(u_{2k-1}^4 - u_0^4)\tilde{\eps} + RN(u_{2k-1}, \tilde{\eps}) + R e_{2k-1}\big]\big)\big(\xi\big)\\
 &f_d(\tau) = \langle \lambda^{-2}(\tau)\big[5(u_{2k-1}^4 - u_0^4)\tilde{\eps} + RN(u_{2k-1}, \tilde{\eps}) + R e_{2k-1}\big], \phi_d(R)\rangle.
 \end{split}\end{equation}
Also the key operator 
 \[
 \mathcal{D}_{\tau} = \partial_{\tau} + \beta_{\nu}(\tau)\mathcal{A},\quad \mathcal{A} = \left(\begin{array}{cc}0&0\\0&\mathcal{A}_c\end{array}\right)
 \]
 and we have 
 \[
 \mathcal{A}_c = -2\xi\partial_{\xi} - \Big (\frac{5}{2}  + \frac{\rho'(\xi)\xi}{\rho(\xi)} \Big)
 \]
 Finally, we observe that the ``transference operator'' $\mathcal{K}$ is given by the following type of expression 
 \begin{equation}\label{eq:Kstructure}
 \mathcal{K} = \left(\begin{array}{cc}\mathcal{K}_{dd}&\mathcal{K}_{dc}\\
 \mathcal{K}_{cd}&\mathcal{K}_{cc}\end{array}\right)
 \end{equation}
 with the following description (\cite{KST}): 
 \begin{itemize}
 \item The operators $\mathcal{K}_{dd}$, $ \mathcal{K}_{cd}$ are given by $\mathcal{K}_{dd} = -\frac12$, $\big(\mathcal{K}_{cd}(1)\big)(\xi) = K_d(\xi)$, where $K_d$ is smooth and rapidly decaying toward $\xi = +\infty$. 
 \item We have $\mathcal{K}_{dc}(f) = -\int_0^\infty f(\xi)K_d(\xi)\rho(\xi)\,d\xi$, with $K_d$ as above. 
 \item We have $\big(\mathcal{K}_{cc}(f)\big)(\xi) = \int_0^\infty K_0(\xi, \eta)f(\eta)\,d\eta$, where $K_0(\xi, \eta) = \frac{\rho(\eta)}{\xi - \eta}F(\xi, \eta)$, and $F$ is a $C^2$-function with the further smoothness and decay properties listed in Theorem 5.1 in \cite{KST}, see also the remarks in \cite{KS1}, proof of Lemma 5.1. 
 \end{itemize}
 
The construction in \cite{KS1} then relies crucially on the observation that the equation
\[
\big(\mathcal{D}_{\tau}^2 + \beta_{\nu}(\tau)\mathcal{D}_{\tau} + \underline{\xi}\big)\underline{x}(\tau, \xi) = \underline{f}(\tau, \xi) = \left(\begin{array}{c}f_d(\tau)\\ f(\tau, \xi)\end{array}\right)
\]
can be solved completely explicitly; in particular, imposing vanishing boundary data at $\tau = \infty$, we obtain the following expression for the continuous part $x(\tau, \xi)$: 
\begin{equation}\label{eq:para1}
x(\tau, \xi) = \xi^{-\frac{1}{2}}\int_{\tau}^\infty \frac{\lambda^{\frac{3}{2}}(\tau)}{\lambda^{\frac{3}{2}}(\sigma)}\frac{\rho^{\frac{1}{2}}(\frac{\lambda^{2}(\tau)}{\lambda^{2}(\sigma)}\xi)}{\rho^{\frac{1}{2}}(\xi)}\sin\Big[\lambda(\tau)\xi^{\frac{1}{2}}\int_{\tau}^{\sigma}\lambda^{-1}(u)\,du\Big]f\big(\sigma, \frac{\lambda^{2}(\tau)}{\lambda^{2}(\sigma)}\xi\big)\,d\sigma
\end{equation}
On the other hand, one immediately obtains the elementary implicit relation 
\begin{equation}\begin{split}\label{eq:x_d}
&x_d(\tau) = \int_{\tau_0}^\infty H_d(\tau, \sigma)\tilde{f}_d(\sigma)\,d\sigma,\; \; H_d(\tau, \sigma) = -\frac{1}{2}|\xi_d|^{-\frac{1}{2}}e^{-|\xi_d|^{\frac{1}{2}}|\tau-\sigma|}\\
&\tilde{f}_d(\sigma) = f_d(\sigma) -  \beta_{\nu}(\sigma)\partial_{\tau}x_d(\sigma)
\end{split}\end{equation}
See \cite{KS1} for justification of these facts. 
\\

In the present work, we shall want to perturb the data at time $t = t_0$, which means that we shall no longer enforce the condition of vanishing at infinity (in terms of $\tau$). For this, it shall be important to understand the properties of solutions of the {\it{homogeneous linear problem}} 
\begin{equation}\label{eq:linhom1}
\big(\mathcal{D}_{\tau}^2 + \beta_{\nu}(\tau)\mathcal{D}_{\tau} + \underline{\xi}\big)\underline{x}(\tau, \xi) = 0,\,\big(\underline{x}(\tau_0,\cdot), \mathcal{D}_{\tau}\underline{x}(\tau_0,\cdot)\big) = \big(\underline{x}_0, \underline{x}_1\big).
\end{equation}
Here we use the convention that $\mathcal{D}_{\tau}$ acts on the first component $x_d(\tau)$ in the standard way $x_d(\tau)\longrightarrow \partial_{\tau}x_d(\tau)$. Then the following result follows from the arguments in \cite{KS1}: 
\begin{lem}\label{lem:linhom} The equation \eqref{eq:linhom1} is solved for the continuous spectral part $x(\tau, \xi)$ via the following parametrix: 
\begin{equation}\label{eq:linhomparam1}\begin{split}
x(\tau, \xi) = &\frac{\lambda^{\frac{5}{2}}(\tau)}{\lambda^{\frac{5}{2}}(\tau_0)}\frac{\rho^{\frac{1}{2}}(\frac{\lambda^{2}(\tau)}{\lambda^{2}(\tau_0)}\xi)}{\rho^{\frac{1}{2}}(\xi)}\cos\Big[\lambda(\tau)\xi^{\frac{1}{2}}\int_{\tau_0}^{\tau}\lambda^{-1}(u)\,du\Big]x_0\big(\frac{\lambda^{2}(\tau)}{\lambda^{2}(\tau_0)}\xi\big)\\
& + \frac{\lambda^{\frac{3}{2}}(\tau)}{\lambda^{\frac{3}{2}}(\tau_0)}\frac{\rho^{\frac{1}{2}}(\frac{\lambda^{2}(\tau)}{\lambda^{2}(\tau_0)}\xi)}{\rho^{\frac{1}{2}}(\xi)}\frac{\sin\Big[\lambda(\tau)\xi^{\frac{1}{2}}\int_{\tau_0}^{\tau}\lambda^{-1}(u)\,du\Big]}{\xi^{\frac12}}x_1\big(\frac{\lambda^{2}(\tau)}{\lambda^{2}(\tau_0)}\xi\big)\\
\end{split}\end{equation}
Moreover, writing $\underline{x}_0 = \left(\begin{array}{c}x_{0d}\\ x_0(\xi)\end{array}\right)$, $\underline{x}_1 = \left(\begin{array}{c}x_{1d}\\ x_1(\xi)\end{array}\right)$ and picking $\tau_0\gg 1$ sufficiently large, there is $c_d = 1 + O(\tau_0^{-1})$ as well as $\gamma_d = -|\xi_d|^{\frac12} + O(\tau_0^{-1})$ such that if we impose the co-dimension one condition 
\begin{equation}\label{eq:cond1}
x_{1d} = \gamma_d x_{0d},
\end{equation}
then the discrete spectral part of $\underline{x}(\tau,\xi)$ admits for any $\kappa>0$ the representation
\[
x_d(\tau) = \big(1+O_{\kappa}(\tau^{-1}e^{\kappa(\tau-\tau_0)})\big)e^{-|\xi_d|^{\frac12}(\tau - \tau_0)}c_d x_{0d}
\]
One also has for $i\geq 1$
\[
(-\partial_{\tau})^ix_d(\tau) = \big(1+O_{\kappa}(\tau^{-1}e^{\kappa(\tau-\tau_0)})\big)|\xi_d|^{\frac{i}{2}}e^{-|\xi_d|^{\frac12}(\tau - \tau_0)}c_d x_{0d}
\]
\end{lem}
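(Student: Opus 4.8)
The plan is to treat the continuous and discrete parts separately, since they decouple completely in \eqref{eq:linhom1} once one recalls that the transport operator $\mathcal{D}_\tau^2 + \beta_\nu\mathcal{D}_\tau + \underline\xi$ acts diagonally with respect to the splitting $\underline x = (x_d, x)$, the operator $\mathcal A$ annihilating the discrete component. For the continuous part, I would verify directly that the claimed formula \eqref{eq:linhomparam1} solves the scalar equation $(\mathcal{D}_\tau^2 + \beta_\nu\mathcal{D}_\tau + \xi)x = 0$ with the prescribed Cauchy data at $\tau = \tau_0$. The natural route is to recall the explicit conjugation from \cite{KST}, \cite{KS1}: under the change of variables $\xi \mapsto \frac{\lambda^2(\tau)}{\lambda^2(\tau_0)}\xi$ together with the weight $\frac{\lambda^{5/2}(\tau)}{\lambda^{5/2}(\tau_0)}\frac{\rho^{1/2}(\cdots)}{\rho^{1/2}(\xi)}$, the operator $\mathcal{D}_\tau$ is intertwined with plain $\partial_\tau$ acting on a rescaled profile, and the zeroth-order term $\beta_\nu\mathcal D_\tau$ is absorbed by the choice of exponent $5/2$ versus $3/2$ in the two weights (precisely the same computation that produces \eqref{eq:para1}). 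Once the equation has been reduced to $\partial_\tau^2 y + \xi y = 0$ in the rescaled frame — with the caveat that $\xi$ itself is being rescaled, which is why $\lambda(\tau)\xi^{1/2}\int_{\tau_0}^\tau \lambda^{-1}$ appears as the correct phase rather than $\xi^{1/2}(\tau-\tau_0)$ — the $\cos$ and $\sin/\xi^{1/2}$ fundamental system is forced, and matching at $\tau = \tau_0$ (where the rescaling is the identity and all weights are $1$) fixes the coefficients to be exactly $x_0(\xi)$ and $x_1(\xi)$. The identity $\mathcal D_\tau x|_{\tau = \tau_0} = x_1$ then needs one short check that the $\tau$-derivative hitting the prefactors contributes nothing at $\tau = \tau_0$ after using the relation defining $\mathcal A_c$ and \eqref{eq:rhoasympto}; this is where one quotes ``the arguments in \cite{KS1}.''

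For the discrete part, the equation becomes the genuinely one-dimensional ODE $\partial_\tau^2 x_d + \beta_\nu(\tau)\partial_\tau x_d + \xi_d x_d = 0$ with $\xi_d < 0$. I would write $\xi_d = -|\xi_d|$ and view this as a perturbation of the constant-coefficient equation $\partial_\tau^2 x_d - |\xi_d| x_d = 0$, whose fundamental solutions are $e^{\pm|\xi_d|^{1/2}(\tau-\tau_0)}$; the perturbation $\beta_\nu(\tau)\partial_\tau x_d$ is small because $\beta_\nu(\tau) = \dot\lambda/\lambda = O(\tau^{-1})$ for $\lambda(t) = t^{-1-\nu}$ (one has $\beta_\nu(\tau) \sim -(1+\nu)\nu^{-1}\tau^{-1}$, integrable against $e^{-|\xi_d|^{1/2}(\tau-\tau_0)}$ going forward). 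The strategy is a Duhamel/fixed-point argument: recast the solution as $x_d(\tau) = c_- e^{-|\xi_d|^{1/2}(\tau-\tau_0)} + c_+ e^{+|\xi_d|^{1/2}(\tau-\tau_0)} + (\text{Duhamel term in }\beta_\nu\partial_\tau x_d)$, then impose the co-dimension one condition \eqref{eq:cond1}, $x_{1d} = \gamma_d x_{0d}$, to kill the growing mode up to the small error — this is exactly what determines $\gamma_d = -|\xi_d|^{1/2} + O(\tau_0^{-1})$ and $c_d = 1 + O(\tau_0^{-1})$, the $O(\tau_0^{-1})$ corrections coming from the size of $\beta_\nu$ at the base point $\tau_0$. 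A contraction in a weighted space with weight $e^{|\xi_d|^{1/2}(\tau-\tau_0)}$ (so as to see the decaying mode) then gives $x_d(\tau) = c_d x_{0d} e^{-|\xi_d|^{1/2}(\tau-\tau_0)}(1 + \text{error})$; the error is controlled by $\int_\tau^\infty \beta_\nu$-type integrals, and a slightly lossy but uniform bound of the form $O_\kappa(\tau^{-1}e^{\kappa(\tau-\tau_0)})$ for every $\kappa > 0$ is exactly what one gets by absorbing the non-integrable tail of $\beta_\nu$ into an arbitrarily slow exponential. The estimates on $(-\partial_\tau)^i x_d$ follow by differentiating the fixed-point identity, since each $\partial_\tau$ on the leading term pulls down $-(-|\xi_d|^{1/2})= |\xi_d|^{1/2}$ and the error terms differentiate with the same structure.

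The main obstacle, in my view, is not the discrete part — that is a routine perturbed ODE — but the bookkeeping in the continuous part: one must be careful that the frequency argument is the \emph{rescaled} $\frac{\lambda^2(\tau)}{\lambda^2(\tau_0)}\xi$ everywhere (in $x_0$, in $\rho^{1/2}$, and implicitly in the phase), and that the two different weight exponents $5/2$ and $3/2$ are precisely what make the $\cos$-branch and the $\sin$-branch simultaneously solve the \emph{same} equation with the \emph{same} first-order term $\beta_\nu\mathcal D_\tau$; a naive guess with a single exponent fails. The cleanest way to handle this, which I would adopt, is to not re-derive \eqref{eq:linhomparam1} from scratch but to observe that it is the formal solution operator obtained by differentiating \eqref{eq:para1} in $\tau_0$ (Duhamel's principle relating the inhomogeneous propagator to the homogeneous one), thereby reducing everything to facts already established in \cite{KST}, \cite{KS1}; the statement ``the following result follows from the arguments in \cite{KS1}'' in the lemma is to be read in exactly this spirit.
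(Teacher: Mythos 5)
Your proposal is correct and follows essentially the same route as the paper: the continuous-spectral formula \eqref{eq:linhomparam1} is deferred to the explicit parametrix computations of \cite{KS1} (your remark that the $\cos$-branch is, up to the prefactor bookkeeping, minus the $\tau_0$-derivative of the $\sin$-propagator in \eqref{eq:para1} is indeed the cleanest way to see why the weight jumps from $\lambda^{3/2}$ to $\lambda^{5/2}$), and the discrete part is treated as a perturbation of the constant-coefficient ODE $\partial_\tau^2-|\xi_d|$ via a fixed-point argument. The one difference is purely in the packaging of the integral equation for $x_d$: the paper feeds $-\beta_\nu\partial_\sigma x_d$ back through the two-sided decaying kernel $H_d(\tau,\sigma)=-\tfrac12|\xi_d|^{-1/2}e^{-|\xi_d|^{1/2}|\tau-\sigma|}$ and adds the free mode $\tilde x_d\,e^{-|\xi_d|^{1/2}(\tau-\tau_0)}$, which builds the decay at $\tau=+\infty$ into the Green's function, whereas you propose forward Duhamel from $\tau_0$ plus explicit cancellation of the growing mode; the two setups are equivalent and lead to the same co-dimension one relation \eqref{eq:cond1} with $\gamma_d=-|\xi_d|^{1/2}+O(\tau_0^{-1})$. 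One small slip: since $\lambda(\tau)\sim\tau^{1+\nu^{-1}}$ is increasing in $\tau$, one has $\beta_\nu(\tau)=\partial_\tau\lambda/\lambda\sim(1+\nu^{-1})\tau^{-1}>0$, not the negative value you wrote; this sign plays no role in the contraction estimate, and your accounting of the $O_\kappa(\tau^{-1}e^{\kappa(\tau-\tau_0)})$ error as coming from the non-integrable $\tau^{-1}$ tail of $\beta_\nu$ is exactly the point.
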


The proof of this lemma is essentially contained in \cite{KS1}, except for the last part. To see this, we use \eqref{eq:x_d} with $f_d = 0$ but adding an exponentially decaying free term to match the part of the data at time $\tau = \tau_0$, which gives the implicit relation
\begin{align*}
x_d(\tau) = &\int_{\tau_0}^\infty\beta_{\nu}(\sigma)[\partial_{\sigma}x_d(\sigma) - \tilde{x}_d|\xi_d|^{\frac12}e^{-|\xi_d|^{\frac12}(\sigma - \tau_0)}]\cdot\frac{1}{2}|\xi_d|^{-\frac12}e^{-|\xi_d|^{\frac12}|\tau - \sigma|}\,d\sigma\\& + e^{-|\xi_d|^{\frac12}(\tau - \tau_0)}\cdot \tilde{x}_d.
\end{align*}
Carrying out an integration by parts, this gives 
\begin{align*}
x_d(\tau) &=  (\tilde{x}_d - \beta_{\nu}(\tau_0)x_d(\tau_0))e^{-|\xi_d|^{\frac12}(\tau - \tau_0)}\\&   -\int_{\tau_0}^\infty \partial_{\sigma}\beta_{\nu}(\sigma)x_d(\sigma)\frac{1}{2}|\xi_d|^{-\frac12}e^{-|\xi_d|^{\frac12}|\tau - \sigma|}\,d\sigma\\
& - \int_{\tau_0}^\infty \beta_{\nu}(\sigma)x_d(\sigma)\frac{1}{2}\sign(\tau-\sigma)e^{-|\xi_d|^{\frac12}|\tau - \sigma|}\,d\sigma\\
& - \int_{\tau_0}^\infty\tilde{x}_d\beta_{\nu}(\sigma)e^{-|\xi_d|^{\frac12}(\sigma - \tau_0)}\cdot\frac{1}{2}e^{-|\xi_d|^{\frac12}|\tau - \sigma|}\,d\sigma.
\end{align*}
The conclusion of the lemma then follows from a simple fixed point argument. 

\section{Growth properties of the forward linear parametrix \eqref{eq:linhomparam1}}

Recall that $t = (\nu\tau)^{-\nu^{-1}}$, which implies\footnote{We abuse notation and write $\lambda(\tau)$ instead of $\lambda(t(\tau))$} $\lambda(\tau) \sim \tau^{1+\nu^{-1}}$ and hence rapidly polynomially growing for $\nu\ll 1$. However, what decides whether an iteration scheme for the full nonlinear problem starting at the linear approximation \eqref{eq:linhomparam1} will converge are not the growth properties of the Fourier coefficients $x(\tau, \xi)$, but rather those of the function $\til\eps(\tau, R)$ they represent, for it is the latter which gets inserted in the nonlinear expressions, and it is here where fast growth toward $\tau = +\infty$ or alternatively $t\rightarrow 0$ would doom any iteration. 
\\

It turns out that there is a natural co-dimension two condition which while not altogether eliminating growth at infinity (for $\til\eps(\tau, R)$!) reduces it to only linear growth in a suitable sense, and it turns out that this suffices to still run an iteration scheme, thanks to damping factors attached to the nonlinear terms. The following proposition is the key for the sequel: 
\begin{prop}\label{prop:lingrowthcond}Assume the data $(x_0, x_1)\in \langle\xi\rangle^{-1-}\xi^{0+}L^2_{d\xi}\times  \langle\xi\rangle^{-\frac{1}{2}-}\xi^{0+}L^2_{d\xi}$. Furthermore, assume that we have the vanishing relations 
 \begin{equation}\label{eq:vanishing}
 \int_0^\infty\frac{\rho^{\frac{1}{2}}(\xi)x_0(\xi)}{\xi^{\frac{1}{4}}}\cos[\nu\tau_0\xi^{\frac{1}{2}}]\,d\xi = 0,\,\int_0^\infty\frac{\rho^{\frac{1}{2}}(\xi)x_1(\xi)}{\xi^{\frac{3}{4}}}\sin[\nu\tau_0\xi^{\frac{1}{2}}]\,d\xi = 0. 
 \end{equation}
 at time $\tau = \tau_0$. Assume that $x(\tau, \xi)$ is given by \eqref{eq:linhomparam1}. Then the function $P_c\tilde{\epsilon}(\tau, R)$ represented by the Fourier coefficients $x(\tau, \xi)$ via 
 \[
 P_c\tilde{\epsilon}(\tau, R) = \int_0^\infty \phi(R, \xi)x(\tau, \xi)\rho(\xi)\,d\xi
 \]
 satisfies 
 \[
 P_c\tilde{\epsilon}(\tau, R) = \tilde{\epsilon}_1(\tau, R) + \tilde{\epsilon}_2(\tau, R),
 \]
 where we have 
 \[
 \big\|\frac{\tilde{\epsilon}_1(\tau, R)}{R}\big\|_{L^\infty_{dR}}\lesssim \big\|(\langle\xi\rangle^{\frac12++}\xi^{\frac12-}x_0, \langle\xi\rangle^{\frac{1}{2}++}\xi^{0-}x_1)\big\|_{L^2_{d\xi}}
 \]
 \[
 \big\| \tilde{\epsilon}_2(\tau, R)\big\|_{L^\infty_{dR}}\lesssim \tau \big\|(\langle\xi\rangle^{\frac12++}\xi^{\frac12-}x_0, \langle\xi\rangle^{\frac{1}{2}++}\xi^{0-}x_1)\big\|_{L^2_{d\xi}}
 \]
 Here we use for a small constant $\delta_0>0$ the notation $\frac12++ : = \frac{1}{2}+2\delta_0,\,\frac12+ = \frac12 + \delta_0,\,\frac12-: = \frac12-\delta_0,\,0-: = -\delta_0$. For later reference, we shall also use the notation $1+ = 1+\delta_0, 0+ = \delta_0$.
  \end{prop}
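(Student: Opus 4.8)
The plan is to feed the explicit parametrix \eqref{eq:linhomparam1} into the inversion formula $P_c\tilde\epsilon(\tau,R)=\int_0^\infty\phi(R,\xi)x(\tau,\xi)\rho(\xi)\,d\xi$, change variables to absorb the Fröhlich-type dilation $\xi\mapsto\frac{\lambda^2(\tau)}{\lambda^2(\tau_0)}\xi$, and then exploit the oscillatory structure. After the substitution $\eta=\frac{\lambda^2(\tau)}{\lambda^2(\tau_0)}\xi$ the $\rho$-ratios and $\lambda$-powers collapse, so that $P_c\tilde\epsilon(\tau,R)$ becomes (up to the analogous discrete piece, which is harmless by Lemma~\ref{lem:linhom} and the co-dimension one condition \eqref{eq:cond1} being subsumed) an integral of the schematic form
\[
\int_0^\infty \phi\big(R,\tfrac{\lambda^2(\tau_0)}{\lambda^2(\tau)}\eta\big)\,\Big[\cos\big(\lambda(\tau_0)\eta^{\frac12}\!\!\int_{\tau_0}^{\tau}\lambda^{-1}\big)x_0(\eta) + \tfrac{\sin(\cdots)}{\eta^{\frac12}}x_1(\eta)\Big]\rho(\eta)\,d\eta .
\]
Here I would use the phase identity $\lambda(\tau_0)\int_{\tau_0}^\tau\lambda^{-1}(u)\,du = \nu(\tau-\tau_0)+O(\tau^{-1}(\tau-\tau_0))$, valid because $\lambda(\tau)\sim\tau^{1+\nu^{-1}}$, to replace the phase by the clean linear one $\nu(\tau-\tau_0)\eta^{1/2}$, putting the error into $\tilde\epsilon_2$.

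The decomposition $P_c\tilde\epsilon=\tilde\epsilon_1+\tilde\epsilon_2$ then comes from splitting off the main oscillatory term. For $\tilde\epsilon_1$ I would use the small-$R$ asymptotics of $\phi(R,\xi)$ from \cite[Section 4]{KST}: for $R\xi^{1/2}\lesssim 1$ one has $\phi(R,\xi)=R\cdot(1+O(R^2\xi))$-type behaviour, while for $R\xi^{1/2}\gtrsim 1$ it is oscillatory with the appropriate normalization. Writing $\cos(\nu(\tau-\tau_0)\eta^{1/2})$ in terms of $e^{\pm i\nu(\tau-\tau_0)\eta^{1/2}}$ and combining with the oscillation of $\phi$, a stationary phase / non-stationary phase analysis in $\eta$ produces a bounded (in $\tau$) contribution — this is $\tilde\epsilon_1/R$ being $L^\infty$-bounded by the stated weighted $L^2$ norm via Cauchy–Schwarz against $\rho(\eta)\langle\eta\rangle^{-1-}\eta^{0+}$ — provided the boundary terms in the oscillatory integration, which occur at $\eta\to 0$, vanish. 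Those boundary terms at $\eta=0$ are precisely $\int_0^\infty \rho^{1/2}(\xi)x_0(\xi)\xi^{-1/4}\cos(\nu\tau_0\xi^{1/2})\,d\xi$ and $\int_0^\infty\rho^{1/2}(\xi)x_1(\xi)\xi^{-3/4}\sin(\nu\tau_0\xi^{1/2})\,d\xi$ (the factors $\xi^{-1/4}$, $\xi^{-3/4}$ and $\rho^{1/2}\sim\xi^{-1/4}$ combine to give exactly the $\rho(\xi)$ weight after accounting for the $R\to 0$ profile $\phi\sim R$ and the Jacobian), which is why the hypotheses \eqref{eq:vanishing} are imposed: they kill the would-be $\tau$-growing boundary contribution, leaving only the $O(\tau)$ remainder $\tilde\epsilon_2$ coming from the Taylor remainder of the phase and from the region $R\gtrsim 1$ where $\phi(R,\xi)/R$ is no longer bounded but the $L^\infty_{dR}$ norm (without the $1/R$) still only grows linearly.

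The main obstacle, I expect, is making the oscillatory-integral estimate for $\tilde\epsilon_1$ uniform in both $R$ and $\tau$ while simultaneously tracking how the vanishing conditions \eqref{eq:vanishing} at the fixed time $\tau_0$ propagate to control the integral at all later $\tau$: the dilation $\xi\mapsto\frac{\lambda^2(\tau)}{\lambda^2(\tau_0)}\xi$ moves mass toward $\eta=0$ as $\tau\to\infty$, so the low-frequency regime, where $\rho(\xi)\simeq\xi^{-1/2}$ is singular and where $\phi(R,\xi)\simeq R$ uniformly, is exactly where the danger of linear-in-$\tau$ (or worse) growth lives, and one must see that subtracting the value guaranteed to vanish by \eqref{eq:vanishing} gains one power of $\eta^{1/2}$ (hence, after the change of variables, one factor of $\lambda(\tau_0)/\lambda(\tau)\cdot\tau$, i.e.\ boundedness) in the first term and reduces the second term's growth to linear. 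Handling the symbol behaviour of $\rho$ and the $C^2$ (not smoother) regularity of the $\phi$-asymptotics near $\xi=0$ when integrating by parts is the delicate bookkeeping; everything else is Cauchy–Schwarz against the stated weights.
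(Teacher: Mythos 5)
There is a genuine gap: your phase approximation is wrong, and this error propagates into a misconception about how the vanishing conditions \eqref{eq:vanishing} are used, so the proposal as written does not prove the proposition.

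Concretely, you write $\lambda(\tau_0)\int_{\tau_0}^\tau\lambda^{-1}(u)\,du = \nu(\tau-\tau_0)+O(\tau^{-1}(\tau-\tau_0))$, i.e.\ a phase growing linearly in $\tau$. Since $\lambda(u)\sim u^{1+\nu^{-1}}$, an exact computation (this is \eqref{eq:simplebutimportant} in the paper) gives
\[
\lambda(\tau_0)\int_{\tau_0}^\tau\lambda^{-1}(u)\,du = \nu\tau_0 - \nu\,\tau_0^{1+\nu^{-1}}\tau^{-\nu^{-1}},
\]
which is \emph{bounded} and in fact converges to the constant $\nu\tau_0$ as $\tau\to\infty$. (Even for $\tau$ near $\tau_0$ your formula is off by a factor of $\nu$, since the left side is $\approx \tau-\tau_0$, not $\nu(\tau-\tau_0)$.) This is not a minor bookkeeping issue: the whole point is that after the change of variables $\tilde\xi = \frac{\lambda^2(\tau)}{\lambda^2(\tau_0)}\xi$, the cosine/sine splits via the angle-addition formula into a $\tau$-independent piece with phase $\nu\tau_0\tilde\xi^{1/2}$ (on which \eqref{eq:vanishing} is imposed) plus a piece with the decaying phase $\nu\tau_0^{1+\nu^{-1}}\tau^{-\nu^{-1}}\tilde\xi^{1/2}$. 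The vanishing condition is then used to replace $\cos[\nu\tau_0^{1+\nu^{-1}}\tau^{-\nu^{-1}}\tilde\xi^{1/2}]$ by $\cos[\cdots]-1$, which is $O(\tau_0^{1+\nu^{-1}}\tau^{-\nu^{-1}}\tilde\xi^{1/2})$, and this gain of $\tilde\xi^{1/2}\lambda(\tau_0)/\lambda(\tau)$ exactly offsets the dangerous factor $\lambda(\tau)/\lambda(\tau_0)$ sitting in front, downgrading the would-be $\lambda(\tau)/\lambda(\tau_0)$-growth to the claimed linear-in-$\tau$ bound for $\tilde\epsilon_2$.

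Your mechanism — treating \eqref{eq:vanishing} as boundary terms at $\eta\to 0$ in an oscillatory integration by parts — is different and would not close: the integrands do not have enough smoothness at $\xi=0$ (recall $\rho(\xi)\sim\xi^{-1/2}$ there, and $F$ is only $C^2$) to run a stationary-phase argument that produces these as clean boundary terms, and more importantly the phase you would integrate by parts against is bounded, not growing, so there is nothing nonstationary to exploit for large $\tau$. The paper's actual route also handles the singular resonance contribution by isolating the multiple of $\phi(R,0)$ via the decomposition \eqref{eq:Rxisplit} (splitting at $\xi = \min\{R^{-2},1\}$), a step your proposal elides. You should redo the low-frequency analysis with the correct, bounded phase and the explicit trigonometric split; everything before that (changing variables, collapsing $\rho$-ratios and $\lambda$-powers, Cauchy–Schwarz for high frequencies) is in the right spirit.
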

  We note that the integrals in \eqref{eq:vanishing} converge  under the hypothesis on $x_{0,1}$ in the proposition. 
  We also observe that the norm displayed on the right corresponds roughly to $H^{\frac{3}{2}+}_{dR}\times H^{\frac{1}{2}+}_{dR}$ on the physical side. 
\\
The preceding proposition motivates introduction of the following norms: we put 
\begin{equation}\label{eq:tildeSnorm}\begin{split}
\big\|(x_0, x_1)\big\|_{\tilde{S}}&: = \big\|x_0\big\|_{\tilde{S}_1} + \big\|x_1\big\|_{S_2}\\&: = \big\|\langle\xi\rangle^{\frac12++}\min\{\tau_0\xi^{\frac12},1\}^{-1}\xi^{\frac12-}x_0\big\|_{L^2_{d\xi}} + \big\| \langle\xi\rangle^{\frac{1}{2}++}\xi^{0-}x_1\big\|_{L^2_{d\xi}}.  
\end{split}\end{equation}
Note that for very small frequencies $\xi\ll \tau_0^{-2}$, we have built in control over $\big\|\xi^{0-}x_0\big\|_{L^2_{d\xi}}$, whence control over $\big\|\cdot\big\|_{\tilde{S}}$ suffices to ensure the quantities in \eqref{eq:vanishing} are well-defined. 
For later reference, we also introduce the following stronger norm: 
\begin{equation}\label{eq:Snorm}\begin{split}
\big\|(x_0, x_1)\big\|_{S}&: = \big\|x_0\big\|_{S_1} + \big\|x_1\big\|_{S_2}\\&: = \big\|\langle\xi\rangle^{1++}\xi^{0-}x_0\big\|_{L^2_{d\xi}} + \big\| \langle\xi\rangle^{\frac{1}{2}++}\xi^{0-}x_1\big\|_{L^2_{d\xi}},\,1++ = 1+2\delta_0.
\end{split}\end{equation}

In the following proof, we shall heavily rely on the spectral theory developed in \cite{KST}. 
\begin{proof}
We shall constantly refer to the formula \eqref{eq:linhomparam1}. In particular, we shall call the first term on the right the contribution of $x_0$, while the second term is the contribution of $x_1$. 
Here we start with the contribution of $x_0$, which we treat by distinguishing between different regimes for the frequency $\xi$. Throughout we take advantage of the fact (see \cite{KST}) that $\frac{\phi(R, \xi)}{R}$ is uniformly bounded in $R, \xi>0$. 
\\

{\it(1). Large frequencies: $\xi>1$. } In this regime we have 
\[
\frac{\rho^{\frac{1}{2}}(\frac{\lambda^{2}(\tau)}{\lambda^{2}(\tau_0)}\xi)}{\rho^{\frac{1}{2}}(\xi)}\sim \big(\frac{\lambda(\tau)}{\lambda(\tau_0)}\big)^{\frac{1}{2}},\,\rho(\xi)\sim \xi^{\frac{1}{2}},
\]
Hence  the corresponding contribution to $P_c\tilde{\epsilon}(\tau, R)$ is bounded by 
\begin{align*}
&\lesssim \int_1^\infty\frac{\lambda^3(\tau)}{\lambda^3(\tau_0)}|\frac{\phi(R, \xi)}{R}||\cos[\lambda(\tau)\xi^{\frac{1}{2}}\int_{\tau_0}^{\tau}\lambda^{-1}(u)\,du]||x_0|(\xi\frac{\lambda^2(\tau)}{\lambda^{2}(\tau_0)})\xi^{\frac{1}{2}}\,d\xi\\
&\lesssim \big\|\langle\xi\rangle^{1+}x_0\big\|_{L^2_{d\xi}},
\end{align*}
where we have used the Cauchy-Schwarz inequality as well as a simple change of variables. Note that in fact we also get $\big\|P_c\tilde{\epsilon}(\tau, R)\big\|_{L^\infty}\lesssim \big\|\langle\xi\rangle^{1+}x_0\big\|_{L^2_{d\xi}}$ for this contributions (division by $R$ for large $R$ is unnecessary). 
\\

{\it(2). Small frequencies $\xi<1$.} Here we use a decomposition 
\begin{equation}\label{eq:Rxisplit}\begin{split}
&\int_0^1\phi(R, \xi)y(\xi)\,d\xi\\&= \int_0^{\min\{R^{-2},1\}}\phi(R, \xi)y(\xi)\,d\xi + \int_{\min\{R^{-2},1\}}^1\phi(R, \xi)y(\xi)\,d\xi\\
& = \int_0^{\min\{R^{-2},1\}}[\phi(R, 0) + O(R^2\xi)]y(\xi)\,d\xi + \int_{\min\{R^{-2},1\}}^1\phi(R, \xi)y(\xi)\,d\xi\\
& = \phi(R, 0)\int_0^{\min\{R^{-2},1\}} y(\xi)\,d\xi + \int_0^{\min\{R^{-2},1\}}O(R^2\xi)y(\xi)\,d\xi\\& + \int_{\min\{R^{-2},1\}}^1\phi(R, \xi)y(\xi)\,d\xi.\\
\end{split}\end{equation}
Then we substitute
\[
y(\xi) =  \rho(\xi)\frac{\lambda^{\frac{5}{2}}(\tau)}{\lambda^{\frac{5}{2}}(\tau_0)}\frac{\rho^{\frac{1}{2}}(\frac{\lambda^{2}(\tau)}{\lambda^{2}(\tau_0)}\xi)}{\rho^{\frac{1}{2}}(\xi)}\cos[\lambda(\tau)\xi^{\frac{1}{2}}\int_{\tau_0}^{\tau}\lambda^{-1}(u)\,du]x_0(\frac{\lambda^2(\tau)}{\lambda^{2}(\tau_0)}\xi)
\]
We get 
\begin{align*}
&\big|R^{-1}\int_0^{\min\{R^{-2},1\}}O(R^2\xi)y(\xi)\,d\xi\big|\lesssim \int_0^{\min\{R^{-2},1\}}\xi^{\frac{1}{2}}|y(\xi)|\,d\xi,
\end{align*}
and we can further bound this by 
\begin{align*}
&\int_0^{\min\{R^{-2},1\}}\xi^{\frac{1}{2}}|y(\xi)|\,d\xi\\&\lesssim \int_0^{\min\{R^{-2},1\}}\frac{\lambda^{\frac{5}{2}}(\tau)}{\lambda^{\frac{5}{2}}(\tau_0)}\frac{\rho^{\frac{1}{2}}(\frac{\lambda^{2}(\tau)}{\lambda^{2}(\tau_0)}\xi)}{\rho^{\frac{1}{2}}(\xi)}\xi^{\frac12}\big|\cos[\lambda(\tau)\xi^{\frac{1}{2}}\int_{\tau_0}^{\tau}\lambda^{-1}(u)\,du]\big|\big|x_0(\frac{\lambda^2(\tau)}{\lambda^{2}(\tau_0)}\xi)\big|\rho(\xi)\,d\xi,
\end{align*}
Further restricting to the range $\xi\frac{\lambda^2(\tau)}{\lambda^2(\tau_0)}<1$, this is bounded by 
\begin{align*}
&\int_0^{\min\{R^{-2},1\}}\chi_{\xi\frac{\lambda^2(\tau)}{\lambda^2(\tau_0)}<1}\frac{\lambda^{\frac{5}{2}}(\tau)}{\lambda^{\frac{5}{2}}(\tau_0)}\frac{\rho^{\frac{1}{2}}(\frac{\lambda^{2}(\tau)}{\lambda^{2}(\tau_0)}\xi)}{\rho^{\frac{1}{2}}(\xi)}\xi^{\frac12}\big|\\&\hspace{4cm}\cdot\cos[\lambda(\tau)\xi^{\frac{1}{2}}\int_{\tau_0}^{\tau}\lambda^{-1}(u)\,du]\big|\big|x_0(\frac{\lambda^2(\tau)}{\lambda^{2}(\tau_0)}\xi)\big|\rho(\xi)\,d\xi\\
&\lesssim \int_0^\infty\chi_{\xi\frac{\lambda^2(\tau)}{\lambda^2(\tau_0)}<1}\frac{\lambda^{2}(\tau)}{\lambda^{2}(\tau_0)}\big|x_0(\frac{\lambda^2(\tau)}{\lambda^{2}(\tau_0)}\xi)\big|\,d\xi\lesssim \big\|\xi^{\frac12-}x_0\big\|_{L^2_{d\xi}(\xi<1)},
\end{align*}
where we have again used the Cauchy-Schwarz inequality in the last step. 
\\
Similarly, restricting to $\xi\frac{\lambda^2(\tau)}{\lambda^2(\tau_0)}>1$, we have 
\begin{align*}
&\int_0^{\min\{R^{-2},1\}}\chi_{\xi\frac{\lambda^2(\tau)}{\lambda^2(\tau_0)}>1}\frac{\lambda^{\frac{5}{2}}(\tau)}{\lambda^{\frac{5}{2}}(\tau_0)}\frac{\rho^{\frac{1}{2}}(\frac{\lambda^{2}(\tau)}{\lambda^{2}(\tau_0)}\xi)}{\rho^{\frac{1}{2}}(\xi)}\xi^{\frac12}\big|\\&\hspace{4cm}\cdot\cos[\lambda(\tau)\xi^{\frac{1}{2}}\int_{\tau_0}^{\tau}\lambda^{-1}(u)\,du]\big|\big|x_0(\frac{\lambda^2(\tau)}{\lambda^{2}(\tau_0)}\xi)\big|\rho(\xi)\,d\xi\\
&\lesssim \int_0^1\chi_{\xi\frac{\lambda^2(\tau)}{\lambda^2(\tau_0)}>1}\frac{\lambda^{3}(\tau)}{\lambda^{3}(\tau_0)}\xi^{\frac12}\big|x_0(\frac{\lambda^2(\tau)}{\lambda^{2}(\tau_0)}\xi)\big|\,d\xi\lesssim \big\|\langle \xi\rangle^{1+}x_0\big\|_{L^2_{d\xi}(\xi>1)}
\end{align*}
after change of variables and application of Cauchy-Schwarz. 
\\

Similarly, in the range $1>\xi>R^{-2}$, we get 
\begin{align*}
\big|R^{-1}\int_{\min\{R^{-2},1\}}^1\phi(R, \xi) y(\xi)\,d\xi\big|\lesssim \int_{\min\{R^{-2},1\}}^1\xi^{\frac{1}{2}}|\phi(R, \xi)||y(\xi)|\,d\xi,
\end{align*}
and this expression is bounded just as before by splitting into the cases $\xi\frac{\lambda^2(\tau)}{\lambda^2(\tau_0)}\lessgtr 1$. 
\\

It then remains to deal with the most delicate case, which is the multiple of the resonance 
\begin{align*}
&\phi(R, 0)\int_0^{\min\{R^{-2},1\}} y(\xi)\,d\xi\\&  = \phi(R, 0)\int_0^{\min\{R^{-2},1\}} \frac{\lambda^{\frac{5}{2}}(\tau)}{\lambda^{\frac{5}{2}}(\tau_0)}\frac{\rho^{\frac{1}{2}}(\frac{\lambda^{2}(\tau)}{\lambda^{2}(\tau_0)}\xi)}{\rho^{\frac{1}{2}}(\xi)}\\&\hspace{4cm}\cdot\big|\cos[\lambda(\tau)\xi^{\frac{1}{2}}\int_{\tau_0}^{\tau}\lambda^{-1}(u)\,du]\big|\big|x_0(\frac{\lambda^2(\tau)}{\lambda^{2}(\tau_0)}\xi)\big|\rho(\xi)\,d\xi\ \,d\xi 
\end{align*}
For small frequencies $\xi<1$, we have 
\begin{equation}\label{eq:rhoasymptotics}
\rho(\xi) = \frac{1}{3\pi}\xi^{-\frac{1}{2}}(1+O(\xi^{\frac{1}{2}})). 
\end{equation}
Here, the contribution of the error term $ \frac{1}{3\pi}\xi^{-\frac{1}{2}}\cdot O(\xi^{\frac{1}{2}})$ is bounded by 
\begin{align*}
&\lesssim\int_0^1 \frac{\lambda^{\frac{5}{2}}(\tau)}{\lambda^{\frac{5}{2}}(\tau_0)}\frac{\rho^{\frac{1}{2}}(\frac{\lambda^{2}(\tau)}{\lambda^{2}(\tau_0)}\xi)}{\rho^{\frac{1}{2}}(\xi)}\big|\cos[\lambda(\tau)\xi^{\frac{1}{2}}\int_{\tau_0}^{\tau}\lambda^{-1}(u)\,du]\big|\big|x_0(\frac{\lambda^2(\tau)}{\lambda^{2}(\tau_0)}\xi)\big|\,d\xi\\
&\lesssim \big\|\xi^{\frac{1}{2}-}\langle\xi\rangle^{\frac12++}x_0\big\|_{L^2_{d\xi}}, 
\end{align*}
as one sees by splitting into the cases $\xi\frac{\lambda^2(\tau)}{\lambda^2(\tau_0)}\lessgtr 1$ and using a change of variables as well as Cauchy-Schwarz. 
 We can thus replace the factors $\rho^{-\frac{1}{2}}(\xi), \rho(\xi)$ by $\xi^{\frac{1}{4}}, \xi^{-\frac{1}{2}}$, respectively, which reduces things to the expression
\[
\phi(R, 0)\int_0^{\min\{R^{-2},1\}} \frac{\lambda^{\frac{5}{2}}(\tau)}{\lambda^{\frac{5}{2}}(\tau_0)}\rho^{\frac{1}{2}}(\frac{\lambda^{2}(\tau)}{\lambda^{2}(\tau_0)}\xi)\frac{\cos[\lambda(\tau)\xi^{\frac{1}{2}}\int_{\tau_0}^{\tau}\lambda^{-1}(u)\,du]}{\xi^{\frac{1}{4}}}x_0(\frac{\lambda^2(\tau)}{\lambda^{2}(\tau_0)}\xi)\,d\xi
\]
Note that on account of $\frac{\phi(R, 0)}{R}\xi^{-\frac12}\lesssim \min\{1, \xi^{-\frac12}\}$ for $\xi\geq \min\{R^{-2},1\}$, we get  
\begin{align*}
&\big|\frac{\phi(R, 0)}{R}\int_{\min\{R^{-2},1\}}^\infty \frac{\lambda^{\frac{5}{2}}(\tau)}{\lambda^{\frac{5}{2}}(\tau_0)}\rho^{\frac{1}{2}}(\frac{\lambda^{2}(\tau)}{\lambda^{2}(\tau_0)}\xi)\frac{\cos[\lambda(\tau)\xi^{\frac{1}{2}}\int_{\tau_0}^{\tau}\lambda^{-1}(u)\,du]}{\xi^{\frac{1}{4}}}x_0(\frac{\lambda^2(\tau)}{\lambda^{2}(\tau_0)}\xi)\,d\xi\big|\\
&\lesssim \big|\int_{\min\{R^{-2},1\}}^\infty \frac{\lambda^{\frac{5}{2}}(\tau)}{\lambda^{\frac{5}{2}}(\tau_0)}\rho^{\frac{1}{2}}(\frac{\lambda^{2}(\tau)}{\lambda^{2}(\tau_0)}\xi)\xi^{\frac14}\cos[\lambda(\tau)\xi^{\frac{1}{2}}\int_{\tau_0}^{\tau}\lambda^{-1}(u)\,du]x_0(\frac{\lambda^2(\tau)}{\lambda^{2}(\tau_0)}\xi)\,d\xi\big|,\\
\end{align*}
and the preceding expression is again easily seen to be bounded by 
\[
\lesssim \big\|\xi^{\frac12-}\langle\xi\rangle^{\frac12++}x_0\big\|_{L^2_{d\xi}}.
\]
It follows that it suffices to bound 
\begin{align*}
\phi(R, 0)\int_0^{\infty} \frac{\lambda^{\frac{5}{2}}(\tau)}{\lambda^{\frac{5}{2}}(\tau_0)}\rho^{\frac{1}{2}}(\frac{\lambda^{2}(\tau)}{\lambda^{2}(\tau_0)}\xi)\frac{\cos[\lambda(\tau)\xi^{\frac{1}{2}}\int_{\tau_0}^{\tau}\lambda^{-1}(u)\,du]}{\xi^{\frac{1}{4}}}x_0(\frac{\lambda^2(\tau)}{\lambda^{2}(\tau_0)}\xi)\,d\xi
\end{align*}
Now introduce the new variable 
\[
\tilde{\xi}: = \frac{\lambda^2(\tau)}{\lambda^{2}(\tau_0)}\xi,
\]
in terms of which the integral becomes 
\[
\int_0^\infty \frac{\lambda(\tau)}{\lambda(\tau_0)}\rho^{\frac{1}{2}}(\tilde{\xi})\frac{\cos[\lambda(\tau_0)\tilde{\xi}^{\frac{1}{2}}\int_{\tau_0}^{\tau}\lambda^{-1}(u)\,du]}{\tilde{\xi}^{\frac{1}{4}}}x_0(\tilde{\xi})\,d\xi
\]
Then write
\begin{equation}\label{eq:simplebutimportant}
\lambda(\tau_0)\int_{\tau_0}^{\tau}\lambda^{-1}(u)\,du = \nu\tau_0 - \nu\tau_0^{1+\nu^{-1}}\tau^{-\nu^{-1}},
\end{equation}
which gives
\begin{align*}
&\int_0^\infty \frac{\lambda(\tau)}{\lambda(\tau_0)}\rho^{\frac{1}{2}}(\tilde{\xi})\frac{\cos[\lambda(\tau_0)\tilde{\xi}^{\frac{1}{2}}\int_{\tau_0}^{\tau}\lambda^{-1}(u)\,du]}{\tilde{\xi}^{\frac{1}{4}}}x_0(\tilde{\xi})\,d\xi\\
& = \frac{\lambda(\tau)}{\lambda(\tau_0)}\int_0^\infty\frac{\rho^{\frac{1}{2}}(\tilde{\xi})x_0(\tilde{\xi})}{\tilde{\xi}^{\frac{1}{4}}}\cos[\nu\tau_0\tilde{\xi}^{\frac{1}{2}}]\cos[\nu\tau_0^{1+\nu^{-1}}\tau^{-\nu^{-1}}\tilde{\xi}^{\frac{1}{2}}]\,d\tilde{\xi}\\
& +  \frac{\lambda(\tau)}{\lambda(\tau_0)}\int_0^\infty\frac{\rho^{\frac{1}{2}}(\tilde{\xi})x_0(\tilde{\xi})}{\tilde{\xi}^{\frac{1}{4}}}\sin[\nu\tau_0\tilde{\xi}^{\frac{1}{2}}]\sin[\nu\tau_0^{1+\nu^{-1}}\tau^{-\nu^{-1}}\tilde{\xi}^{\frac{1}{2}}]\,d\tilde{\xi}\\
\end{align*}
Under the first of the assumed vanishing conditions \eqref{eq:vanishing}, we can bound the sum of these terms in absolute value by 
\begin{align*}
&\frac{\lambda(\tau)}{\lambda(\tau_0)}\big|\int_0^\infty\frac{\rho^{\frac{1}{2}}(\tilde{\xi})x_0(\tilde{\xi})}{\tilde{\xi}^{\frac{1}{4}}}\cos[\nu\tau_0\tilde{\xi}^{\frac{1}{2}}]\big(\cos[\nu\tau_0^{1+\nu^{-1}}\tau^{-\nu^{-1}}\tilde{\xi}^{\frac{1}{2}}] - 1\big)\,d\tilde{\xi}\big|\\
& +  \frac{\lambda(\tau)}{\lambda(\tau_0)}\big|\int_0^\infty\frac{\rho^{\frac{1}{2}}(\tilde{\xi})x_0(\tilde{\xi})}{\tilde{\xi}^{\frac{1}{4}}}\sin[\nu\tau_0\tilde{\xi}^{\frac{1}{2}}]\sin[\nu\tau_0^{1+\nu^{-1}}\tau^{-\nu^{-1}}\tilde{\xi}^{\frac{1}{2}}]\,d\tilde{\xi}\big|\\
&\lesssim 
\tau\cdot\big\|\xi^{\frac12-}\langle\xi\rangle^{\frac{1}{2}++}x_0\big\|_{L^2_{d\xi}}
\end{align*}
This establishes the desired bound for the contribution of the first term in \eqref{eq:linhomparam1}.
\\

Next, we turn to the contribution from $x_1$, i. e. the second term in \eqref{eq:linhomparam1}, which follows exactly the same scheme: 
\\

{\it(3). Contribution of $x_1$. Large frequencies: $\xi>1$. } In fact, arguing as in {\it{(1)}}, we get 
\begin{align*}
&\lesssim \int_1^\infty\frac{\lambda^2(\tau)}{\lambda^2(\tau_0)}|\frac{\phi(R, \xi)}{R}||\frac{\sin[\lambda(\tau)\xi^{\frac{1}{2}}\int_{\tau_0}^{\tau}\lambda^{-1}(u)\,du]}{\xi^{\frac12}}||x_1|(\xi\frac{\lambda^2(\tau)}{\lambda^{2}(\tau_0)})\xi^{\frac{1}{2}}\,d\xi\\
&\lesssim \big\|\langle\xi\rangle^{\frac12+}x_1(\cdot)\big\|_{L^2_{d\xi}}.
\end{align*}
We can then reduce to the contribution of $x_1$ in the low- or intermediate frequency regime: 
\\

{\it(4). Contribution of $x_1$. Frequencies: $\xi<1$. } Use \eqref{eq:Rxisplit}, where we substitute 
\[
y(\xi) =  \rho(\xi)\frac{\lambda^{\frac{3}{2}}(\tau)}{\lambda^{\frac{3}{2}}(\tau_0)}\frac{\rho^{\frac{1}{2}}(\frac{\lambda^{2}(\tau)}{\lambda^{2}(\tau_0)}\xi)}{\rho^{\frac{1}{2}}(\xi)}\frac{\sin[\lambda(\tau)\xi^{\frac{1}{2}}\int_{\tau_0}^{\tau}\lambda^{-1}(u)\,du]}{\xi^{\frac{1}{2}}}x_1(\frac{\lambda^2(\tau)}{\lambda^{2}(\tau_0)}\xi)
\]
We get 
\begin{align*}
&\big|R^{-1}\int_0^{\min\{R^{-2},1\}}O(R^2\xi)y(\xi)\,d\xi\big|\lesssim \int_0^{\min\{R^{-2},1\}}\xi^{\frac{1}{2}}|y(\xi)|\,d\xi,
\end{align*}
and we can further bound this by 
\begin{align*}
&\int_0^{\min\{R^{-2},1\}}\xi^{\frac{1}{2}}|y(\xi)|\,d\xi\\&\lesssim \int_0^{\min\{R^{-2},1\}}\frac{\lambda^{\frac{3}{2}}(\tau)}{\lambda^{\frac{3}{2}}(\tau_0)}\frac{\rho^{\frac{1}{2}}(\frac{\lambda^{2}(\tau)}{\lambda^{2}(\tau_0)}\xi)}{\rho^{\frac{1}{2}}(\xi)}\big|\sin[\lambda(\tau)\xi^{\frac{1}{2}}\int_{\tau_0}^{\tau}\lambda^{-1}(u)\,du]\big|\big|x_1(\frac{\lambda^2(\tau)}{\lambda^{2}(\tau_0)}\xi)\big|\rho(\xi)\,d\xi,
\end{align*}
and further splitting into the cases $\xi\frac{\lambda^2(\tau)}{\lambda^2(\tau_0)}\lessgtr 1$ as in {\it{(2)}}, we can bound this term by 
\[
\lesssim \big\|\langle\xi\rangle^{\frac12++}\xi^{0-}x_1\big\|_{L^2_{d\xi}}.
\]

Similarly, in the range $1>\xi>R^{-2}$, we get 
\begin{align*}
\big|R^{-1}\int_{\min\{R^{-2},1\}}^1\phi(R, \xi) y(\xi)\,d\xi\big|\lesssim \int_{\min\{R^{-2},1\}}^1\xi^{\frac{1}{2}}|\phi(R, \xi)||y(\xi)|\,d\xi,
\end{align*}
and we recover the same bound as for the immediately preceding term.
\\
It then remains to deal with the most delicate term, which is the multiple of the zero mode, given above by 
\begin{align*}
&\phi(R, 0)\int_0^{\min\{R^{-2},1\}} y(\xi)\,d\xi\\&  = \phi(R, 0)\int_0^{\min\{R^{-2},1\}} \frac{\lambda^{\frac{3}{2}}(\tau)}{\lambda^{\frac{3}{2}}(\tau_0)}\frac{\rho^{\frac{1}{2}}(\frac{\lambda^{2}(\tau)}{\lambda^{2}(\tau_0)}\xi)}{\rho^{\frac{1}{2}}(\xi)}\big|\frac{\sin[\lambda(\tau)\xi^{\frac{1}{2}}\int_{\tau_0}^{\tau}\lambda^{-1}(u)\,du]}{\xi^{\frac{1}{2}}}\big|\\&\hspace{7.5cm}\cdot\big|x_1(\frac{\lambda^2(\tau)}{\lambda^{2}(\tau_0)}\xi)\big|\rho(\xi)\,d\xi.
\end{align*}
Recalling \eqref{eq:rhoasymptotics}, the contribution of the error term $ \frac{1}{3\pi}\xi^{-\frac{1}{2}}\cdot O(\xi^{\frac{1}{2}})$ is bounded by 
\begin{align*}
&\int_0^1 \frac{\lambda^{\frac{3}{2}}(\tau)}{\lambda^{\frac{3}{2}}(\tau_0)}\frac{\rho^{\frac{1}{2}}(\frac{\lambda^{2}(\tau)}{\lambda^{2}(\tau_0)}\xi)}{\rho^{\frac{1}{2}}(\xi)}\big|\sin[\lambda(\tau)\xi^{\frac{1}{2}}\int_{\tau_0}^{\tau}\lambda^{-1}(u)\,du]\big|\big|x_1(\frac{\lambda^2(\tau)}{\lambda^{2}(\tau_0)}\xi)\big|\big|\rho(\xi)\big|\,d\xi\\
&\lesssim \big\|\langle\xi\rangle^{\frac{1}{2}+}x_1\big\|_{L^2_{d\xi}}, 
\end{align*}
similarly to case {\it{(2)}} for the contribution by $x_0$. We can thus replace the factors $\rho^{-\frac{1}{2}}(\xi), \rho(\xi)$ by $\xi^{\frac{1}{4}}, \xi^{-\frac{1}{2}}$, respectively, which reduces things to the expression
\[
\phi(R, 0)\int_0^{\min\{R^{-2},1\}} \frac{\lambda^{\frac{3}{2}}(\tau)}{\lambda^{\frac{3}{2}}(\tau_0)}\rho^{\frac{1}{2}}(\frac{\lambda^{2}(\tau)}{\lambda^{2}(\tau_0)}\xi)\frac{\sin[\lambda(\tau)\xi^{\frac{1}{2}}\int_{\tau_0}^{\tau}\lambda^{-1}(u)\,du]}{\xi^{\frac{3}{4}}}x_1(\frac{\lambda^2(\tau)}{\lambda^{2}(\tau_0)}\xi)\,d\xi
\]
Then again using $\frac{\phi(R, 0)}{R}\xi^{-\frac12}\lesssim \min\{1, \xi^{-\frac12}\}$ for $\xi\geq \min\{R^{-2},1\}$, we get  
\begin{align*}
&\big|\frac{\phi(R, 0)}{R}\int_{\min\{R^{-2},1\}}^\infty \frac{\lambda^{\frac{3}{2}}(\tau)}{\lambda^{\frac{3}{2}}(\tau_0)}\rho^{\frac{1}{2}}(\frac{\lambda^{2}(\tau)}{\lambda^{2}(\tau_0)}\xi)\frac{\sin[\lambda(\tau)\xi^{\frac{1}{2}}\int_{\tau_0}^{\tau}\lambda^{-1}(u)\,du]}{\xi^{\frac{3}{4}}}x_1(\frac{\lambda^2(\tau)}{\lambda^{2}(\tau_0)}\xi)\,d\xi\big|\\
&\lesssim \big|\int_{\min\{R^{-2},1\}}^\infty \frac{\lambda^{\frac{3}{2}}(\tau)}{\lambda^{\frac{3}{2}}(\tau_0)}\rho^{\frac{1}{2}}(\frac{\lambda^{2}(\tau)}{\lambda^{2}(\tau_0)}\xi)\frac{\sin[\lambda(\tau)\xi^{\frac{1}{2}}\int_{\tau_0}^{\tau}\lambda^{-1}(u)\,du]}{\xi^{\frac{1}{4}}}x_1(\frac{\lambda^2(\tau)}{\lambda^{2}(\tau_0)}\xi)\,d\xi\big|,\\
\end{align*}
which in turn is bounded by $\lesssim \big\|\langle\xi\rangle^{\frac12+}\xi^{0-}x_1\big\|_{L^2_{d\xi}}$. It follows that it suffices to bound 
\begin{align*}
\phi(R, 0)\int_0^{\infty} \frac{\lambda^{\frac{3}{2}}(\tau)}{\lambda^{\frac{3}{2}}(\tau_0)}\rho^{\frac{1}{2}}(\frac{\lambda^{2}(\tau)}{\lambda^{2}(\tau_0)}\xi)\frac{\sin[\lambda(\tau)\xi^{\frac{1}{2}}\int_{\tau_0}^{\tau}\lambda^{-1}(u)\,du]}{\xi^{\frac{3}{4}}}x_1(\frac{\lambda^2(\tau)}{\lambda^{2}(\tau_0)}\xi)\,d\xi
\end{align*}
Now as before using the variable 
\[
\tilde{\xi}: = \frac{\lambda^2(\tau)}{\lambda^{2}(\tau_0)}\xi,
\]
the integral becomes 
\[
\int_0^\infty \frac{\lambda(\tau)}{\lambda(\tau_0)}\rho^{\frac{1}{2}}(\tilde{\xi})\frac{\sin[\lambda(\tau_0)\tilde{\xi}^{\frac{1}{2}}\int_{\tau_0}^{\tau}\lambda^{-1}(u)\,du]}{\tilde{\xi}^{\frac{3}{4}}}x_1(\tilde{\xi})\,d\xi,
\]
and using \eqref{eq:simplebutimportant} we obtain 
\begin{align*}
&\int_0^\infty \frac{\lambda(\tau)}{\lambda(\tau_0)}\rho^{\frac{1}{2}}(\tilde{\xi})\frac{\sin[\lambda(\tau_0)\tilde{\xi}^{\frac{1}{2}}\int_{\tau_0}^{\tau}\lambda^{-1}(u)\,du]}{\tilde{\xi}^{\frac{3}{4}}}x_1(\tilde{\xi})\,d\xi\\
& = \frac{\lambda(\tau)}{\lambda(\tau_0)}\int_0^\infty\frac{\rho^{\frac{1}{2}}(\tilde{\xi})x_1(\tilde{\xi})}{\tilde{\xi}^{\frac{3}{4}}}\sin[\nu\tau_0\tilde{\xi}^{\frac{1}{2}}]\cos[\nu\tau_0^{1+\nu^{-1}}\tau^{-\nu^{-1}}\tilde{\xi}^{\frac{1}{2}}]\,d\tilde{\xi}\\
& -  \frac{\lambda(\tau)}{\lambda(\tau_0)}\int_0^\infty\frac{\rho^{\frac{1}{2}}(\tilde{\xi})x_1(\tilde{\xi})}{\tilde{\xi}^{\frac{3}{4}}}\cos[\nu\tau_0\tilde{\xi}^{\frac{1}{2}}]\sin[\nu\tau_0^{1+\nu^{-1}}\tau^{-\nu^{-1}}\tilde{\xi}^{\frac{1}{2}}]\,d\tilde{\xi}\\
\end{align*}
Under the second of the assumed vanishing conditions \eqref{eq:vanishing}, we can bound these terms in absolute value by 
\begin{align*}
&\frac{\lambda(\tau)}{\lambda(\tau_0)}\big|\int_0^\infty\frac{\rho^{\frac{1}{2}}(\tilde{\xi})x_1(\tilde{\xi})}{\tilde{\xi}^{\frac{3}{4}}}\sin[\nu\tau_0\tilde{\xi}^{\frac{1}{2}}]\big(\cos[\nu\tau_0^{1+\nu^{-1}}\tau^{-\nu^{-1}}\tilde{\xi}^{\frac{1}{2}}] - 1\big)\,d\tilde{\xi}\big|\\
& +  \frac{\lambda(\tau)}{\lambda(\tau_0)}\big|\int_0^\infty\frac{\rho^{\frac{1}{2}}(\tilde{\xi})x_1(\tilde{\xi})}{\tilde{\xi}^{\frac{3}{4}}}\cos[\nu\tau_0\tilde{\xi}^{\frac{1}{2}}]\sin[\nu\tau_0^{1+\nu^{-1}}\tau^{-\nu^{-1}}\tilde{\xi}^{\frac{1}{2}}]\,d\tilde{\xi}\big|\\
&\lesssim 
\tau\cdot\big\|\langle\xi\rangle^{\frac{1}{2}+}x_1\big\|_{L^2_{d\xi}}
\end{align*}
This completes the proof of the proposition. 

\end{proof}

The preceding $L^\infty$-type bound needs to be complemented with an energy type bound for the parametrix \eqref{eq:linhomparam1} in order to be able to recover the $S$-norm bounds later on. We shall rely on the following simple 
\begin{prop}\label{prop:energybound1} For $x(\tau, \xi)$ defined as before, we have 
\[
\big\|\xi^{\frac{3}{4}+}x(\tau, \xi)\big\|_{L^2_{d\xi}}\lesssim \big\|(\xi^{\frac12-}\langle\xi\rangle^{\frac12++}x_0, \langle\xi\rangle^{\frac{1}{2}+}x_1)\big\|_{L^2_{d\xi}} 
\]
\end{prop}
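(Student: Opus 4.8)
The goal is an energy bound for the parametrix \eqref{eq:linhomparam1}, and the natural strategy is to work directly from that explicit formula and estimate the $L^2_{d\xi}$-norm of $\xi^{\frac34+}x(\tau,\xi)$ by splitting into the contribution of $x_0$ (the $\cos$ term) and the contribution of $x_1$ (the $\sin$ term), and in each case changing variables to $\tilde\xi = \frac{\lambda^2(\tau)}{\lambda^2(\tau_0)}\xi$. The point is that, unlike in Proposition~\ref{prop:lingrowthcond}, here we do not need the vanishing conditions: the $L^2_{d\xi}$-norm is unitary-like under the rescaling built into the parametrix once one accounts for the spectral-measure weights, so no delicate cancellation is needed and one only has to track powers of $\xi$ and of $\frac{\lambda(\tau)}{\lambda(\tau_0)}$.

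\emph{Contribution of $x_0$.} From the first line of \eqref{eq:linhomparam1}, $\xi^{\frac34+}x(\tau,\xi)$ equals, up to the bounded factor $|\cos[\cdots]|\le 1$,
\[
\xi^{\frac34+}\,\frac{\lambda^{\frac52}(\tau)}{\lambda^{\frac52}(\tau_0)}\,\frac{\rho^{\frac12}(\frac{\lambda^2(\tau)}{\lambda^2(\tau_0)}\xi)}{\rho^{\frac12}(\xi)}\,x_0\Big(\tfrac{\lambda^2(\tau)}{\lambda^2(\tau_0)}\xi\Big).
\]
Substituting $\tilde\xi = \frac{\lambda^2(\tau)}{\lambda^2(\tau_0)}\xi$, so that $d\xi = \frac{\lambda^2(\tau_0)}{\lambda^2(\tau)}d\tilde\xi$ and $\xi = \frac{\lambda^2(\tau_0)}{\lambda^2(\tau)}\tilde\xi$, the $L^2_{d\xi}$-norm squared becomes an $L^2_{d\tilde\xi}$-integral of $\big(\tfrac{\lambda^2(\tau_0)}{\lambda^2(\tau)}\tilde\xi\big)^{\frac32+2\delta_0}\frac{\lambda^5(\tau)}{\lambda^5(\tau_0)}\frac{\rho(\tilde\xi)}{\rho(\frac{\lambda^2(\tau_0)}{\lambda^2(\tau)}\tilde\xi)}\frac{\lambda^2(\tau_0)}{\lambda^2(\tau)}|x_0(\tilde\xi)|^2$; counting powers of $\tfrac{\lambda(\tau_0)}{\lambda(\tau)}$ gives $5 - 2 - (3+4\delta_0)= -4\delta_0 \le 0$ since $\lambda(\tau)\ge\lambda(\tau_0)$ for $\tau\ge\tau_0$, and the ratio $\rho(\tilde\xi)/\rho(\frac{\lambda^2(\tau_0)}{\lambda^2(\tau)}\tilde\xi)$ is controlled using \eqref{eq:rhoasympto}: since the argument only decreases, $\rho$ is comparable on both scales up to at most a power $(\frac{\lambda^2(\tau_0)}{\lambda^2(\tau)})^{-1/2}\le 1$ in the regime where one crosses $\tilde\xi\sim 1$, which is again favorable. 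What survives is $\|\tilde\xi^{\frac34+}\langle\tilde\xi\rangle^{?}\rho^{\frac12}(\tilde\xi)\xi^{-\frac34-}\cdots\|$; carefully tracking the small- and large-$\tilde\xi$ behavior of $\rho$ reduces this to $\|\xi^{\frac12-}\langle\xi\rangle^{\frac12++}x_0\|_{L^2_{d\xi}}$ — one needs the $\xi^{\frac12-}$ near zero to absorb the $\xi^{-1/2}$ from $\rho$, and the $\langle\xi\rangle^{\frac12++}$ at infinity to absorb the growth from $\rho(\xi)\sim\xi^{1/2}$ together with the $\xi^{\frac34+}$ weight.

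\emph{Contribution of $x_1$.} Identical, with the $\sin[\cdots]/\xi^{\frac12}$ factor: the extra $\xi^{-\frac12}$ combines with $\xi^{\frac34+}$ to leave effectively a $\xi^{\frac14+}$ weight, which after the same change of variables and use of \eqref{eq:rhoasympto} is bounded by $\|\langle\xi\rangle^{\frac12+}x_1\|_{L^2_{d\xi}}$; near $\xi=0$ the weight $\xi^{\frac14+}$ already outweighs $\rho^{1/2}\sim\xi^{-1/4}$ so no negative power of $\xi$ is needed on $x_1$, consistent with the claimed right-hand side. Summing the two contributions and using $|\sin|,|\cos|\le 1$ throughout gives the stated estimate.

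\emph{Main obstacle.} The only genuinely delicate point is the bookkeeping of the spectral-measure ratio $\rho^{\frac12}(\frac{\lambda^2(\tau)}{\lambda^2(\tau_0)}\xi)/\rho^{\frac12}(\xi)$ across the transition $\xi\sim 1$ combined with the rescaling — one must split the $\tilde\xi$-integral into $\tilde\xi<1$ and $\tilde\xi>1$ (and further according to whether the pre-image $\xi=\frac{\lambda^2(\tau_0)}{\lambda^2(\tau)}\tilde\xi$ lies below or above $1$) and check in each of the (at most four) regimes that the accumulated power of $\frac{\lambda(\tau_0)}{\lambda(\tau)}$ is non-positive, exactly as in cases (1)--(2) of the proof of Proposition~\ref{prop:lingrowthcond}. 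This is routine but must be done; no cancellation or vanishing condition enters, which is why this bound, unlike the $L^\infty$ bound, holds for all data in the indicated weighted $L^2$ spaces.
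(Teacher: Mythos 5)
Your strategy is essentially the one in the paper: estimate term-by-term from the explicit formula \eqref{eq:linhomparam1}, bound $|\cos|,|\sin|\le 1$, change variables $\tilde\xi = \frac{\lambda^2(\tau)}{\lambda^2(\tau_0)}\xi$, and split the resulting integral into regimes according to whether the two arguments of $\rho$ lie above or below $1$ so as to invoke \eqref{eq:rhoasympto}. (The paper does this directly in the original variable $\xi$, but that difference is cosmetic.) There is, however, a genuine sign error in your accounting of the spectral-measure ratio which would have led the computation astray: $(\lambda^2(\tau_0)/\lambda^2(\tau))^{-1/2} = \lambda(\tau)/\lambda(\tau_0)\ge 1$ for $\tau\ge\tau_0$, not $\le 1$. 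In the regime $\xi>1$ (both arguments of $\rho$ large), the ratio $\rho^{\frac12}\big(\tfrac{\lambda^2(\tau)}{\lambda^2(\tau_0)}\xi\big)/\rho^{\frac12}(\xi)\sim(\lambda(\tau)/\lambda(\tau_0))^{\frac12}$ is a loss, and after changing variables the prefactor of the $\tilde\xi$-integral is $(\lambda(\tau)/\lambda(\tau_0))^{1-4\delta_0}$, a positive power of a quantity $\ge 1$. Hence your proposed checklist --- ``verify that the accumulated power of $\lambda(\tau_0)/\lambda(\tau)$ is non-positive in each regime'' --- does not hold in this regime. The estimate still closes because the change of variables simultaneously restricts the support to $\tilde\xi\ge(\lambda(\tau)/\lambda(\tau_0))^2$, so that $\tilde\xi^{\frac12}\ge\lambda(\tau)/\lambda(\tau_0)$ absorbs the loss, leaving a weight $\lesssim\tilde\xi^{2+}$ which the factor $\xi^{\frac12-}\langle\xi\rangle^{\frac12++}$ in front of $x_0$ controls. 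This absorption is precisely what the paper's one-line bound $\lesssim\|\xi x_0\|_{L^2_{d\xi}(\xi>1)}$ in case (1) of its proof encodes. Once you correct this piece of bookkeeping, the remaining casework (and the analogous, easier $x_1$ computation) goes through as you describe, with no need for the vanishing conditions \eqref{eq:vanishing} --- as you correctly emphasize.
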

\begin{proof} 
For the contribution of $x_0$ in \eqref{eq:linhomparam1}, using the terminology of the preceding proof, we have in case {\it{(2)}} under the additional restriction $\frac{\lambda^2(\tau)}{\lambda^2(\tau_0)}\xi<1$ the bound
\begin{align*}
&\big\|\xi^{\frac{3}{4}+}\frac{\lambda^{\frac{5}{2}}(\tau)}{\lambda^{\frac{5}{2}}(\tau_0)}\frac{\rho^{\frac{1}{2}}(\frac{\lambda^{2}(\tau)}{\lambda^{2}(\tau_0)}\xi)}{\rho^{\frac{1}{2}}(\xi)}\cos[\lambda(\tau)\xi^{\frac{1}{2}}\int_{\tau_0}^{\tau}\lambda^{-1}(u)\,du]x_0(\frac{\lambda^2(\tau)}{\lambda^{2}(\tau_0)}\xi)\big\|_{L^2_{d\xi}(\xi<\frac{\lambda^2(\tau_0)}{\lambda^2(\tau)})}\\
&\lesssim \big\|\xi^{\frac{3}{4}+}\frac{\lambda^2(\tau)}{\lambda^2(\tau_0)}\cos[\lambda(\tau)\xi^{\frac{1}{2}}\int_{\tau_0}^{\tau}\lambda^{-1}(u)\,du]x_0(\frac{\lambda^2(\tau)}{\lambda^{2}(\tau_0)}\xi)\big\|_{L^2_{d\xi}(\xi<\frac{\lambda^2(\tau_0)}{\lambda^2(\tau)})}\\
&\lesssim \big\|\xi^{\frac12}x_0\big\|_{L^2_{d\xi}}. 
\end{align*}
Still in case {\it{(2)}} and assuming $\frac{\lambda^2(\tau)}{\lambda^2(\tau_0)}\xi>1$, we find 
\begin{align*}
&\big\|\xi^{\frac{3}{4}+}\frac{\lambda^{\frac{5}{2}}(\tau)}{\lambda^{\frac{5}{2}}(\tau_0)}\frac{\rho^{\frac{1}{2}}(\frac{\lambda^{2}(\tau)}{\lambda^{2}(\tau_0)}\xi)}{\rho^{\frac{1}{2}}(\xi)}\cos[\lambda(\tau)\xi^{\frac{1}{2}}\int_{\tau_0}^{\tau}\lambda^{-1}(u)\,du]x_0(\frac{\lambda^2(\tau)}{\lambda^{2}(\tau_0)}\xi)\big\|_{L^2_{d\xi}(1>\xi>\frac{\lambda^2(\tau_0)}{\lambda^2(\tau)})}\\
&\lesssim \big\|\xi^{\frac{5}{4}+}\frac{\lambda^3(\tau)}{\lambda^3(\tau_0)}\cos[\lambda(\tau)\xi^{\frac{1}{2}}\int_{\tau_0}^{\tau}\lambda^{-1}(u)\,du]x_0(\frac{\lambda^2(\tau)}{\lambda^{2}(\tau_0)}\xi)\big\|_{L^2_{d\xi}(1>\xi>\frac{\lambda^2(\tau_0)}{\lambda^2(\tau)})}\\
&\lesssim \big\|\langle\xi\rangle x_0\big\|_{L^2_{d\xi}}
\end{align*}
Finally, in case {\it{(1)}}, we have 
\begin{align*}
&\big\|\xi^{\frac{3}{4}+}\frac{\lambda^{\frac{5}{2}}(\tau)}{\lambda^{\frac{5}{2}}(\tau_0)}\frac{\rho^{\frac{1}{2}}(\frac{\lambda^{2}(\tau)}{\lambda^{2}(\tau_0)}\xi)}{\rho^{\frac{1}{2}}(\xi)}\cos[\lambda(\tau)\xi^{\frac{1}{2}}\int_{\tau_0}^{\tau}\lambda^{-1}(u)\,du]x_0(\frac{\lambda^2(\tau)}{\lambda^{2}(\tau_0)}\xi)\big\|_{L^2_{d\xi}(\xi>1)}\\
&\lesssim \big\|\xi^{\frac{3}{4}+}\frac{\lambda^{3}(\tau)}{\lambda^{3}(\tau_0)}\cos[\lambda(\tau)\xi^{\frac{1}{2}}\int_{\tau_0}^{\tau}\lambda^{-1}(u)\,du]x_0(\frac{\lambda^2(\tau)}{\lambda^{2}(\tau_0)}\xi)\big\|_{L^2_{d\xi}(\xi>1)}\\
&\lesssim \big\|\xi x_0\big\|_{L^2_{d\xi}(\xi>1)}
\end{align*}
The contribution of $x_1$ is handled analogously. 
\end{proof}

\section{Setting up the perturbative problem}

Fix  $\nu>0$ which shall be held fixed throughout and will be assumed sufficiently small later on.
We shall now seek to construct solutions of \eqref{eq:Main} of the form 
\[
u(t, x) = u_{\nu}(t, x) + \epsilon(t, x),
\]
where $\epsilon$ is supposed to match initial data at time $t = t_0>0$ on a co-dimension three hyper surface. Our point of departure shall be the following equation for $\til\eps = R\eps$ in terms of the renormalised coordinates $(\tau, R)$ introduced in the preceding section: 
\begin{equation}\label{eq:epseqn1}\begin{split}
&(\partial_{\tau} + \dot{\lambda}\lambda^{-1}R\partial_R)^2 \tilde{\eps} - \beta_{\nu}(\tau)(\partial_{\tau} + \dot{\lambda}\lambda^{-1}R\partial_R)\tilde{\eps} + \calL\tilde{\eps}\\
&=\lambda^{-2}(\tau)RN_{\nu}(\til\eps)+\partial_\tau(\dot{\lambda}\lambda^{-1})\tilde{\eps};
\end{split}\end{equation}
where we have 
\[
RN_{\nu}(\til\eps) = 5(u_{\nu}^4 - u_0^4)\tilde{\eps} + RN(u_{\nu}, \tilde{\eps}),
\]
and as before
\[
RN(u_{\nu}, \tilde{\eps}) = R(u_{\nu}+\frac{\tilde{\eps}}{R})^5 - R u_{\nu}^5 - 5u_{\nu}^4\tilde{\eps}
\]
We shall solve \eqref{eq:epseqn1} on a sufficiently small interval $(0, t_0]$, which in terms of $\tau$ becomes $[\tau_0, \infty)$, say, with $\tau_0\gg 1$. 
As hinted at previously we shall employ Fourier methods to control the solution, and so we shall start with explaining how to pass from the initial data $(\epsilon, \partial_t\epsilon)|_{t = t_0}$ to the Fourier data $(\underline{x}_0, \underline{x}_1)|_{\tau = \tau_0}$ 
which will be used to build the zeroth iterate given in terms of the linear evolution \eqref{eq:linhomparam1}. 
\\

To understand this mapping between data, assume that $\til\eps(\tau, R)$ is of the form 
\begin{equation}\label{eq:Fourierrepr}
 \til\eps(\tau, R) = x_d(\tau)\phi_d(R) + \int_0^\infty x(\tau, \xi)\phi(R, \xi)\rho(\xi)\,d\xi.
 \end{equation}
 with $x(\tau, \xi), x_d(\tau)$ given as in the statement of Lemma~\ref{lem:linhom}. Then arguing as in \cite{KST}, \cite{KS1}, we find 
 \[
 -\frac{R}{\lambda}\epsilon_t = \big(\partial_{\tau} + \beta_{\nu}(\tau)(R\partial_R - 1)\big)\til\eps,\,\beta_{\nu}(\tau) = \frac{\lambda_{\tau}}{\lambda}, 
 \]
 which in terms of the Fourier transform can be expressed as 
\[
-\left(\begin{array}{c}\mathcal{F}\big(\frac{R}{\lambda}\epsilon_t\big)\\ \langle\phi_d, \frac{R}{\lambda}\epsilon_t\rangle\end{array}\right) = \mathcal{D}_{\tau}\underline{x}(\tau,\cdot) + \beta_{\nu}(\tau)\mathcal{K}\underline{x}(\tau, \cdot)
\]
where $\mathcal{F}$ denotes the Fourier transform corresponding to the continuous spectrum, and the operators $\mathcal{D}, \mathcal{K}$ are specified as in \cite{KS1}, see the paragraph after \eqref{eq:Kstructure}. Evaluating the preceding relation at initial time $\tau = \tau_0$ (or equivalently $t = t_0$) and using \eqref{eq:linhomparam1}, we find the first half of transference principle for the initial data 
\begin{equation}\label{eq:datatransference1}
-\mathcal{F}\big(\frac{R}{\lambda}\epsilon_t\big)\big|_{t = t_0} = x_1 + \beta_{\nu}(\tau_0)\mathcal{K}_{cc}x_0 + \beta_{\nu}(\tau_0)\mathcal{K}_{cd}x_{0d}
\end{equation}
\begin{equation}\label{eq:datatransference2}
-\langle\phi_d, \frac{R}{\lambda}\epsilon_t\rangle\big|_{t = t_0} = x_{1d} + \beta_{\nu}(\tau_0)\mathcal{K}_{dd}x_{0d} + \beta_{\nu}(\tau_0)\mathcal{K}_{dc}x_{0}
\end{equation}
These relations get complemented with the immediate relations
\begin{equation}\label{eq:datatransference3}
\mathcal{F}\big(\til\eps\big)\big|_{t = t_0} = x_0,\,\langle\phi_d, \til\eps\rangle\big|_{t = t_0} = x_{0d}. 
\end{equation}
Conversely, if the relations \eqref{eq:datatransference1} - \eqref{eq:datatransference3} are satisfied, where it is assumed that the co-dimension one condition \eqref{eq:cond1}
is satisfied so that we can construct the evolution of $\underline{x}(\tau, \xi)$ as in Lemma~\ref{lem:linhom}, and if we define $\til\eps$ via \eqref{eq:Fourierrepr},then $\epsilon = \frac{\til\eps}{R}$ will have initial data $(\epsilon, \epsilon_t)\big|_{t = t_0}$ in accordance with the left hand side of \eqref{eq:datatransference1} - \eqref{eq:datatransference3}. As these latter relations can be easily solved for the functions $\big(x_{0,1}, x_{0d,1d}\big)$, we shall from now on identify a data set $\big(\epsilon(t_0, \cdot), \epsilon_t(t_0, \cdot)\big)$ with the set of Fourier coefficients $\big(x_{0,1}, x_{0d,1d}\big)$. The co-dimension one condition \eqref{eq:cond1} then implicitly gives a co-dimension one condition for $\big(\epsilon(t_0, \cdot), \epsilon_t(t_0, \cdot)\big)$. The space $\tilde{S}$ is then specified in terms of the variables $\big(x_{0,1}, x_{0d,1d}\big)$ via the norm \eqref{eq:tildeSnorm}. 
\\

Having specified the transition between the physical data and the data on the Fourier side at the linear level, we shall now set up the equation system we attempt to solve in terms of the Fourier variables, in exact analogy to \cite{KS1}. Specifically, we get upon letting 
\begin{equation}\label{eq:Fourierrepr2}
 \til\eps(\tau, R) = x_d(\tau)\phi_d(R) + \int_0^\infty x(\tau, \xi)\phi(R, \xi)\rho(\xi)\,d\xi
 \end{equation}
 and now $x(\tau, \xi)$ is of course no longer defined in terms of the linear evolution \eqref{eq:linhomparam1}
\begin{equation}\label{eq:Fouriersystem}
 (\mathcal{D}_{\tau}^2 + \beta_{\nu}(\tau)\mathcal{D}_{\tau} + \underline{\xi})\underline{x}(\tau, \xi) = \mathcal{R}(\tau, \underline{x}) + \underline{f}(\tau,\xi),
 \end{equation}
 where we have 
 \[
 \underline{f}(\tau, \xi) = \left(\begin{array}{c}\langle \phi_d, \lambda^{-2}(\tau)RN_{\nu}(\epsilon)\rangle\\ \mathcal{F}\big(\lambda^{-2}(\tau)RN_{\nu}(\epsilon) \big)(\xi)\end{array}\right)
 \]
 and the additional linear error term is defined by 
 \begin{equation}\label{eq:Rterms}
\mathcal{R}(\tau, \underline{x})(\xi) = \Big(-4\beta_{\nu}(\tau)\mathcal{K}\mathcal{D}_{\tau}\underline{x} - \beta_{\nu}^2(\tau)(\mathcal{K}^2 + [\mathcal{A}, \mathcal{K}] + \mathcal{K} +  \beta_{\nu}' \beta_{\nu}^{-2}\mathcal{K})\underline{x}\Big)(\xi)
\end{equation}
 with $\beta_{\nu}(\tau) = \frac{\dot{\lambda}(\tau)}{\lambda(\tau)}$,
 \[
 \mathcal{D}_{\tau} = \partial_{\tau} + \beta_{\nu}(\tau)\mathcal{A},\quad \mathcal{A} = \left(\begin{array}{cc}0&0\\0&\mathcal{A}_c\end{array}\right)
 \]
 and we have 
 \[
 \mathcal{A}_c = -2\xi\partial_{\xi} - \Big (\frac{5}{2}  + \frac{\rho'(\xi)\xi}{\rho(\xi)} \Big)
 \]
 Finally, we observe that the ``transference operator'' $\mathcal{K}$ is given by the following type of expression 
 \begin{equation}\label{eq:Kstructure}
 \mathcal{K} = \left(\begin{array}{cc}\mathcal{K}_{dd}&\mathcal{K}_{dc}\\
 \mathcal{K}_{cd}&\mathcal{K}_{cc}\end{array}\right)
 \end{equation}
We shall from now on work in terms of the Fourier coefficients $\underline{x}(\tau, \xi)$, and use the relation \eqref{eq:Fourierrepr2} to define the nonlinear expressions involving $\epsilon$. 
\\

The proof of Theorem~\ref{thm:Main} shall then follow from the following more technical and precise version 
\begin{thm}\label{thm:MainTechnical} Let $\big(\underline{x}_0, \underline{x}_1\big) = \big(\left(\begin{array}{c}x_{0d}\\x_0\end{array}\right), \left(\begin{array}{c}x_{1d}\\x_1\end{array}\right)\big)$ with $\big(x_0, x_1\big)\in \tilde{S}$, see \eqref{eq:tildeSnorm}, and moreover 
\[
\big\|\big(x_0, x_1\big)\big\|_{\tilde{S}} + |x_{0d}|<\delta_2
\]
sufficiently small. Also, assume the compatibility relation \eqref{eq:cond1} and the basic vanishing conditions \eqref{eq:vanishing} hold. Then picking $\tau_0$ sufficiently large (or equivalently $t_0$ sufficiently small), there exist corrections $\big(\underline{\triangle x}_0, \underline{\triangle x}_1\big) = \big(\left(\begin{array}{c}\triangle x_{0d}\\ \triangle x_{0}\end{array}\right), \left(\begin{array}{c}\triangle x_{1d}\\ \triangle x_{1}\end{array}\right)\big)$ with 
\[
\big\|\big(\triangle x_0, \triangle x_1\big)\big\|_{\tilde{S}} + |\triangle x_{0d}| +  |\triangle x_{1d}|\ll \big\|\big(x_0, x_1\big)\big\|_{\tilde{S}} + |x_{0d}|
\]
and such that $\big(\underline{\triangle x}_0, \underline{\triangle x}_1\big)$ depend in Lipschitz continuous fashion on $\big(\underline{x}_0, \underline{x}_1\big) $ with respect to the $\tilde{S}$-norm, with Lipschitz constant $\ll 1$, such that determining the initial data $\big(\epsilon, \epsilon_t\big)\big|_{t = t_0}$ via \eqref{eq:datatransference1} - \eqref{eq:datatransference3} with $\big(\underline{x}_0, \underline{x}_1\big)$ replaced by $\big(\underline{x}_0 + \underline{\triangle x}_0, \underline{x}_1 + \underline{\triangle x}_1\big)$, the solution $u(t, x)$ with initial data 
\[
u_{\nu}[t_0] + \big(\epsilon, \epsilon_t\big)\big|_{t = t_0}
\]
exists on $(0, t_0]$ and is a type II blow up solution satisfying the conclusion of Theorem~\ref{thm:Main}.  
\end{thm}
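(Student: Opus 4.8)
The plan is to solve the Fourier-side system \eqref{eq:Fouriersystem} on $[\tau_0,\infty)$ by an iterative scheme whose zeroth iterate $\underline{x}^{(0)}$ is the linear homogeneous evolution \eqref{eq:linhomparam1} with data $(\underline{x}_0,\underline{x}_1)$. Because \eqref{eq:cond1} and the vanishing relations \eqref{eq:vanishing} are imposed, Lemma~\ref{lem:linhom} together with Proposition~\ref{prop:lingrowthcond} and Proposition~\ref{prop:energybound1} guarantee that the physical-space function $\tilde{\eps}^{(0)}$ represented by $\underline{x}^{(0)}$ grows at most linearly in $\tau$ in a suitable $L^\infty$ sense while having controlled energy; this is the structural property one must preserve through the iteration, since the nonlinear and transference terms carry damping factors that tame linear but not faster growth. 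At stage $j\to j+1$ one solves
\[
(\mathcal{D}_\tau^2 + \beta_\nu(\tau)\mathcal{D}_\tau + \underline{\xi})\,\underline{x}^{(j+1)} = \mathcal{R}(\tau,\underline{x}^{(j)}) + \underline{f}(\tau,\xi;\underline{x}^{(j)}),
\]
and the real content is the choice of parametrix and of a small correction to the data at $\tau=\tau_0$ for the increment $\triangle x^{(j+1)}$.

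First I would confront the central difficulty flagged in the introduction. For a source term $g(\tau,\xi)$ on the right, the Duhamel parametrix vanishing at $\tau=\infty$ (formula \eqref{eq:para1}) re-creates at $\tau=\tau_0$ an error comparable to the original perturbation, whereas the forward Duhamel parametrix from $\tau=\tau_0$ grows far too fast toward $\tau=\infty$ in the regime $\xi<1$, precisely because the relevant free data violates \eqref{eq:vanishing}. The resolution is to add to the data a small pair $(\triangle\tilde{\tilde{x}}_0,\triangle\tilde{\tilde{x}}_1)$ whose free transport cancels the low-frequency growth of the forward Duhamel term while contributing only a small error at large frequencies; this correction should be smaller than the underlying data by a factor $\tau_0^{-1+}$ and, morally, be the Fourier-side analogue of modulating the scaling parameter. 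I would then verify --- this is the heart of section 7 for the non-local linear source terms and section 8 for the nonlinear ones --- that the resulting contribution to the increment itself decomposes, for $\xi<1$, into a genuine free transport piece whose data again satisfies \eqref{eq:vanishing} (so Proposition~\ref{prop:lingrowthcond} applies) plus a remainder $\triangle_{>\tau}x$ obeying strictly better bounds, including, after the refinement in Proposition~\ref{prop:firstiteratelincontimprov}, a square-summable-over-dyadic-time-scales bound on $\mathcal{D}_\tau\triangle_{>\tau}x$.

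Next I would show that this ``(free transport with vanishing data) $+$ (better remainder)'' structure propagates through the whole scheme, yielding a priori bounds at every stage which, however, one cannot yet prevent from growing stage by stage. The crux is then convergence: I would extract extra smallness from the iterates, either under frequency localization or under a re-iteration of the iterative step, and reduce convergence to the statement of Proposition~\ref{prop:mainestimate}, that a many-times re-iterated wave-type propagator is small --- proved by a combinatorial argument reducing to integration over high-dimensional simplices, in the spirit of \cite{KST1}, \cite{DoMHKS}. With convergence established, the limit $\underline{x}(\tau,\xi)$ solves \eqref{eq:Fouriersystem}; the total data correction $(\underline{\triangle x}_0,\underline{\triangle x}_1)$ is the sum of the stagewise corrections, is $\ll \|(x_0,x_1)\|_{\tilde{S}}+|x_{0d}|$, and depends Lipschitz-continuously on $(\underline{x}_0,\underline{x}_1)$ with small constant by the contraction estimates; the compatibility condition \eqref{eq:cond1} is maintained (or re-solved for $\triangle x_{1d}$) so that the data remain on $\Sigma$. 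Translating back via \eqref{eq:datatransference1}--\eqref{eq:datatransference3} and \eqref{eq:Fourierrepr2} then gives $u=W_{\lambda(t)}+v_1$ with $(v_1,v_{1,t})\in H^{1+\frac{\nu}{2}-}\times H^{\frac{\nu}{2}-}$ on each slice and $E_{loc}(v)(t)\to 0$ as $t\to 0$, read off from the quantitative bounds on $x(\tau,\xi)$.

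The main obstacle I expect is the convergence step. The a priori bounds close only with growth at each iteration --- Proposition~\ref{prop:lingrowthcond} permits genuine linear-in-$\tau$ growth and the non-local terms $\mathcal{R}(\tau,\underline{x})$ produce no a priori smallness --- so a naive contraction argument in one norm fails, and one must track the fine structure of the iterates across re-iterations to harvest the smallness hidden in the repeated action of the wave propagator, which is exactly where the simplex estimate of Proposition~\ref{prop:mainestimate} is indispensable. A second, subtler difficulty is arranging the data correction of section 7 to kill the low-frequency growth, stay smaller by $\tau_0^{-1+}$, \emph{and} essentially preserve the perturbation --- i.e.\ not destroy the vanishing conditions \eqref{eq:vanishing} on which the entire linear-growth mechanism rests.
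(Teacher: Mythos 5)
Your outline follows the paper's own proof almost step for step: the iterative scheme seeded by the linear evolution with data satisfying \eqref{eq:cond1} and \eqref{eq:vanishing}, the $\tau_0^{-1+}$-small data correction $(\triangle\tilde{\tilde{x}}_0,\triangle\tilde{\tilde{x}}_1)$ cancelling the low-frequency growth of the forward Duhamel term, the decomposition into a free piece with data again satisfying \eqref{eq:vanishing} plus a remainder $\triangle_{>\tau}x$ with square-sum-over-dyadic-time bounds, the re-iteration mechanism to harvest smallness, the reduction to the simplex estimate of Proposition~\ref{prop:mainestimate}, and the final transfer back to physical space. This is the same approach as the paper, and your identification of the two main obstacles (lack of stage-wise contraction in a single norm, and the constraint that the data correction be simultaneously growth-killing, $\tau_0^{-1+}$-small, and vanishing-condition-preserving) matches what the paper addresses in sections 7, 9--12.
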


In the rest of the paper, we shall prove Theorem~\ref{thm:MainTechnical}. 

\section{Nonlinear estimates}

The proof of Theorem~\ref{thm:MainTechnical} proceeds via a somewhat involved iterative procedure, constructing the corrections $\big(\underline{\triangle x}_0, \underline{\triangle x}_1\big)$ via an infinite sequence of iterative steps. The main complication shall be caused by the linear source terms $\mathcal{R}(\tau, \underline{x}) $ given by \eqref{eq:Rterms}, as it is not a priori clear how one should gain smallness for these. Moreover, at each iterative step we will have to adjust the initial data a bit, basically in order to keep satisfying analogues of the basic vanishing conditions \eqref{eq:vanishing}. Controlling the nonlinear source terms $\underline{f}(\tau, \xi)$ shall be relatively straightforward, by contrast, and this is what we do in the current section. 
\\

In the sequel, it will become clear that the corrections we add in the iterative steps will roughly have the structure of the sum of two terms, whose continuous part of the Fourier transform are either of the form \eqref{eq:linhomparam1} or else given by 
$x(\tau, \xi)$ satisfying bounds of the form (here recall the definition \eqref{eq:Snorm}; thus the norm $\big\|\cdot\big\|_{S}$ is stronger than the norm $\big\|\cdot\big\|_{\tilde{S}}$)
\begin{equation}\label{eq:crudeSbound1}
\big\|x(\tau, \cdot)\big\|_{S_1}\lesssim \big(\frac{\tau}{\tau_0}\big)^{\kappa}\big[ \big\|\big(x_0, x_1\big)\big\|_{\tilde{S}} + |x_{0d}|\big]
\end{equation}
where $0<\kappa\lesssim \delta_0$, whence can be made arbitrarily small by picking $\delta_0$ small enough. Moreover, the discrete part $x_d(\tau)$ shall be bounded by $ \big\|\big(x_0, x_1\big)\big\|_{\tilde{S}} + |x_{0d}|$. 
Accordingly we formulate 
\begin{prop}\label{prop:nonlinbounds} Assume that function $\til\eps(\tau, R)$ is given by the Fourier representation 
\[
\til\eps(\tau, R) = x_d(\tau)\phi_d(R) + \int_0^\infty x(\tau, \xi)\phi(R, \xi)\rho(\xi)\,d\xi,
\]
where $|x_d(\tau)|\lesssim \alpha\ll 1$ and either (i) $x(\tau, \xi)$ is given by \eqref{eq:linhomparam1} with data $\big(x_0, x_1\big)$ satisfying the vanishing condition \eqref{eq:vanishing}, and with 
\[
\big\|\big(x_0, x_1\big)\big\|_{\tilde{S}}\leq \alpha,
\]
or else (ii) we have the bound 
\[
\big\|x(\tau, \cdot)\big\|_{S_1}\lesssim \big(\frac{\tau}{\tau_0}\big)^{\kappa}\alpha
\]
for $0<\kappa\ll1 $ sufficiently small. Then, if $0<\nu\ll 1$ is sufficiently small, in situation (ii) we have 
\[
\big\|\lambda^{-2}(\tau)RN_{\nu}(\til\eps)(\tau, \cdot)\big\|_{(H^{1+}_{dR}\cap L^1_{dR})(R\lesssim \tau)}\lesssim \alpha\cdot \tau^{-(2-)}
\]
Here we use the convention introduced after \eqref{eq:Snorm} that $1+ = 1+\delta_0$ with $\delta_0>0$ sufficiently small fixed throughout, while $2- = 2-O(\kappa)$. On the other hand, in situation (i), there is a splitting 
\[
\lambda^{-2}(\tau)RN_{\nu}(\til\eps)(\tau, \cdot) = E_1(\tau, \cdot) + E_2(\tau, \cdot), 
\]
such that
\[
\big\| E_1(\tau, \cdot)\big\|_{(H^{1+}_{dR}\cap L^1_{dR})(R\lesssim \tau)}\lesssim \alpha\cdot \tau^{-(2-)},
\]
while the term $E_2$ has worse decay but a more precise structure: 
\[
E_2(\tau, R) = \tau^{-2}\tilde{g}(\tau)\cdot \langle R\rangle^{-3}g(R,a)
\]
where the function $\tilde{g}(R, a)$ is in the space $IS^2(R, \mathcal{Q})$ defined precisely in \cite{KST}, and the function $g(\tau)$ (which implicitly also depends on $\tau_0$, $x_{0,1}$ and $\nu$) satisfies 
\[
\tau^{-1}|\tilde{g}(\tau)| + |\partial_{\tau}\tilde{g}(\tau)|\lesssim \alpha. 
\]

\end{prop}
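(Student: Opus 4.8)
The plan is to expand $RN_{\nu}(\tilde\eps)$ into monomials in $\tilde\eps$ and in $w:=u_{\nu}-u_0$, to absorb all of them into the fast-decaying piece $E_1$ by exploiting the prefactor $\lambda^{-2}(\tau)$ together with the extra gains carried by higher powers of $w$ and of $\tilde\eps$, and to recognise the one exceptional monomial — present only in situation (i) — as the explicitly structured remainder $E_2$. First I would collect the bookkeeping $\lambda(\tau)\sim\tau^{1+\nu^{-1}}$, $\lambda t=\nu\tau$, so $\lambda^{-2}(\tau)t^{-2}\sim\tau^{-2}$ and $\lambda^{-2}(\tau)u_0^{4-j}w^{j}\sim(\nu\tau)^{-2j}W^{4-j}(R)\tilde w(R,a)^{j}$, where $u_0=\lambda^{1/2}W(R)\lesssim\lambda^{1/2}\langle R\rangle^{-1}$ and $w=u_{2k-1}+w_{\sharp}$ with $u_{2k-1}=\frac{\lambda^{1/2}}{(\lambda t)^2}\tilde w(R,a)$ modulo a remainder carrying the further gain $(\lambda t)^{-2}$, $\tilde w$ in an $IS$-class of \cite{KST}, $a=R/(\nu\tau)$, $|u_{2k-1}|\lesssim\frac{\lambda^{1/2}}{(\lambda t)^2}R$ and the $H^{1+\nu/2-}$-bound recalled in Section~2, while the completion error $w_{\sharp}$ (the completion of $u^{(k,\nu)}_{\mathrm{approx}}$ to the exact solution $u_{\nu}$, see \cite{KS1}) is smaller than $u_{2k-1}$ by a large power $(\lambda t)^{-(2k-2)}$. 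Next I would record the bounds on $\tilde\eps$: in situation (ii) they follow directly from $\|x(\tau,\cdot)\|_{S_1}\lesssim(\tau/\tau_0)^{\kappa}\alpha$, $|x_d(\tau)|\lesssim\alpha$ and the boundedness of $\phi(R,\xi)/R$, $\phi_d/R$ (with $\langle\xi\rangle^{1++}$ controlling two-plus distorted derivatives), giving $\|\tilde\eps/R\|_{L^{\infty}_{dR}}+\|\langle R\rangle^{-N}\tilde\eps\|_{H^{1+}_{dR}(R\lesssim\tau)}\lesssim(\tau/\tau_0)^{\kappa}\alpha$; in situation (i) I would split $\tilde\eps=\tilde\eps_{\mathrm{good}}+\tilde\eps_2$, $\tilde\eps_{\mathrm{good}}:=x_d\phi_d+\tilde\eps_1$, for which Propositions~\ref{prop:lingrowthcond} and \ref{prop:energybound1} (and $|x_d|\lesssim\alpha$) give $\|\tilde\eps_{\mathrm{good}}/R\|_{L^{\infty}_{dR}}+\|\langle R\rangle^{-N}\tilde\eps_{\mathrm{good}}\|_{H^{1+}_{dR}(R\lesssim\tau)}\lesssim\tau^{O(\delta_0)}\alpha$ with no genuine ($\sim\tau$) growth, whereas — reopening the oscillatory-integral analysis of the proof of Proposition~\ref{prop:lingrowthcond} — the remaining piece is \emph{exactly} $\tilde\eps_2(\tau,R)=\phi(R,0)\,h(\tau)$, with $|h(\tau)|\lesssim\tau\alpha$ as there and $|\partial_{\tau}h(\tau)|\lesssim\alpha$ obtained by differentiating those integrals under the integral sign, $\partial_{\tau}$ landing either on the prefactor $\lambda(\tau)/\lambda(\tau_0)$ (producing $\beta_{\nu}(\tau)=O(\tau^{-1})$) or on the phase $\nu\tau_0^{1+\nu^{-1}}\tau^{-\nu^{-1}}\tilde\xi^{1/2}$ (whose $\tau$-derivative is $O(\tau^{-1})$ times a bounded oscillatory factor), the vanishing hypotheses \eqref{eq:vanishing} being used in both cases exactly as for $h$ itself.

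With these inputs I would carry out the term-by-term estimate. Writing
\[
RN_{\nu}(\tilde\eps)=5\sum_{j=1}^{4}\binom{4}{j}u_0^{4-j}w^{j}\tilde\eps\;+\;\sum_{j=2}^{5}\binom{5}{j}u_{\nu}^{5-j}R^{1-j}\tilde\eps^{j},
\]
one bounds each monomial of $\lambda^{-2}(\tau)$ times the right-hand side in $(H^{1+}_{dR}\cap L^1_{dR})(R\lesssim\tau)$, using the one-dimensional algebra property of $H^{1+}$ (with the $\langle R\rangle$-weights) for the Sobolev part and pointwise size times $|\{R\lesssim\tau\}|$ for the $L^1$ part. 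A monomial with $j\ge2$ factors of $w$ carries the extra gain $(\lambda t)^{-2(j-1)}\sim\tau^{-2(j-1)}$ beyond the $j=1$ scaling; a monomial with $j\ge2$ factors of $\tilde\eps$ carries $\lambda^{-2}u_{\nu}^{5-j}\sim\lambda^{-(j-1)/2}\sim\tau^{-(j-1)(1+\nu^{-1})/2}$, which for $\nu\ll1$ beats every fixed power of $\tau$ coming from $\tilde\eps^{j}$ and from the $R$-integration; and the unique linear-in-$w$, linear-in-$\tilde\eps$ monomial $20\,\lambda^{-2}(\tau)u_0^3w\,\tilde\eps$, once $w$ is taken as its $w_{\sharp}$-part, or once $\tilde\eps$ is taken as $\tilde\eps_{\mathrm{good}}$ (situation (i)) or as the whole of $\tilde\eps$ (situation (ii)), satisfies $|20\lambda^{-2}u_0^3w\,\tilde\eps|\lesssim(\nu\tau)^{-2}\langle R\rangle^{-3}R^{2}\,\alpha\,\tau^{O(\delta_0)}$ ($\tau^{O(\kappa)}$ in situation (ii)), whose $(H^{1+}\cap L^1)(R\lesssim\tau)$-norm is $\lesssim\alpha\,\tau^{-(2-)}$. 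All of these — together with $\big((u_{\nu}^4-u_0^4)-4u_0^3w\big)\tilde\eps=(6u_0^2w^2+4u_0w^3+w^4)\tilde\eps$, which has $\ge2$ factors of $w$, and the subleading remainder of $u_{2k-1}$ — constitute $E_1$.

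This leaves, in situation (i) only, the monomial $E_2:=20\,\lambda^{-2}(\tau)\,u_0^3\,u_{2k-1}\,\tilde\eps_2$ (modulo the above $E_1$-pieces). Substituting $u_0^3=\lambda^{3/2}W^3(R)$, $u_{2k-1}=\frac{\lambda^{1/2}}{(\lambda t)^2}\tilde w(R,a)$ and $\tilde\eps_2=\phi(R,0)h(\tau)$ gives
\[
E_2=\frac{20}{(\lambda t)^2}\,W^3(R)\,\tilde w(R,a)\,\phi(R,0)\,h(\tau)=\tau^{-2}\,g(\tau)\,\langle R\rangle^{-3}\,\tilde g(R,a),
\]
with $g(\tau):=\tfrac{20}{\nu^{2}}h(\tau)$, so $\tau^{-1}|g(\tau)|+|\partial_{\tau}g(\tau)|\lesssim\alpha$ by the two bounds on $h$, and $\tilde g(R,a):=\langle R\rangle^{3}W^3(R)\tilde w(R,a)\phi(R,0)$, which lies in $IS^2(R,\mathcal{Q})$ because $\langle R\rangle^3W^3$, $\phi(R,0)$ (by the expansions of \cite[Section~4]{KST}) and $\tilde w$ all belong to $IS$-classes and the multiplicative calculus of \cite{KST} adds their orders.

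The hard part will be the structural step in situation (i): upgrading the $L^{\infty}$-bound on $\tilde\eps_2$ supplied by Proposition~\ref{prop:lingrowthcond} to the exact factorised form $\phi(R,0)h(\tau)$ \emph{together with} $|\partial_{\tau}h|\lesssim\alpha$, which forces one to revisit the oscillatory integrals of that proof and to track how $\partial_{\tau}$ meets the phases and the conditions \eqref{eq:vanishing}, and then verifying that the exceptional monomial lands precisely in $IS^2(R,\mathcal{Q})$, which needs the symbol calculus of \cite{KST} for $W$, for the resonance $\phi(R,0)$, and for the rescaled correction $\tilde w$. By contrast the remaining term-by-term bookkeeping of $\tau$-powers is lengthy but routine once the $\lambda^{-2}$-, $(\lambda t)^{-2(j-1)}$- and $\lambda^{-(j-1)/2}$-gains are in hand, with only the usual mild care near $R=0$ (where the singularities $R^{1-j}$ are cancelled by $\tilde\eps^{j}\sim R^{j}$) and near the light cone $a\to1$ (where the relevant $u_{2k-1}$-bounds remain integrable).
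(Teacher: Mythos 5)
Your proposal follows essentially the same route as the paper's proof: you expand $RN_{\nu}(\tilde\eps)$ into monomials, split $\tilde\eps$ into a multiple of the resonance $\phi(R,0)$ times a linearly growing temporal factor (controlled via the oscillatory integrals from the proof of Proposition~\ref{prop:lingrowthcond} and the vanishing conditions \eqref{eq:vanishing}) plus a uniformly bounded remainder, absorb everything except the single $u_0^3\cdot u_{2k-1}\cdot(\text{resonant part of }\tilde\eps)$ product into $E_1$, and identify that exceptional monomial with the structured $E_2$. The only differences are notational (your good/bad indices are swapped relative to the paper's $\tilde\eps_1,\tilde\eps_2$, and your $g,\tilde g$ are swapped relative to the paper's) and some added detail in verifying the $IS^2$-membership and the $\partial_\tau$-bound on the temporal factor, which the paper delegates to \cite{KST} and to inspection of the earlier oscillatory estimates respectively.
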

\begin{proof} We shall treat the situations (i) and (ii) separately. Also, we refer to \eqref{eq:epseqn1} and sequel for the precise structure of $RN_{\nu}(\til\eps)$. 
\\

{\it{Proof in case (i)}}. We need to bound the following terms 
\begin{equation}\label{eq:list1}
 \lambda^{-2}(\tau)5(u_{\nu}^4 - u_0^4)\tilde{\eps},\, \lambda^{-2}(\tau)Ru_{\nu}^{5-j}\big(\frac{\til\eps}{R}\big)^j,\,2\leq j\leq 5. 
 \end{equation}
 Recall from \eqref{eq:u2k-1bound} as well as the bounds established on the specific solution of \eqref{eq:epseqn} vanishing at $\tau = +\infty$ in a suitable sense in \cite{KST}, \cite{KS1} we get 
 \begin{equation}\label{eq:unu-u0}
 u_{\nu} - u_0 = \frac{\lambda^{\frac12}}{(\lambda t)^2}g(R, a) +  O(\frac{\lambda^{\frac12}}{(\lambda t)^4}R\log R),\,g(R, a)\in IS^2(R, \mathcal{Q}). 
\end{equation}
Further, referring to the proof of Proposition~\ref{prop:lingrowthcond} and in particular cases {\it{(2)}} and {\it{(4)}} there, we can write 
\[
\til\eps(\tau, R) = \til\eps_1(\tau, R) + \til\eps_2(\tau, R), 
\]
where we have 
\begin{align*}
\til\eps_1(\tau, R) &= \phi(R, 0)\int_0^\infty \frac{\lambda(\tau)}{\lambda(\tau_0)}\rho^{\frac{1}{2}}(\tilde{\xi})\frac{\sin[\lambda(\tau_0)\tilde{\xi}^{\frac{1}{2}}\int_{\tau_0}^{\tau}\lambda^{-1}(u)\,du]}{\tilde{\xi}^{\frac{3}{4}}}x_1(\tilde{\xi})\,d\xi\\
& + \phi(R, 0)\int_0^\infty \frac{\lambda(\tau)}{\lambda(\tau_0)}\rho^{\frac{1}{2}}(\tilde{\xi})\frac{\cos[\lambda(\tau_0)\tilde{\xi}^{\frac{1}{2}}\int_{\tau_0}^{\tau}\lambda^{-1}(u)\,du]}{\tilde{\xi}^{\frac{1}{4}}}x_0(\tilde{\xi})\,d\xi\\
& =:\phi(R,0)\tilde{g}(\tau).
\end{align*}
Then using the observations in {\it{(2)}}, {\it{(4)}} of the proof of Proposition~\ref{prop:lingrowthcond}, we see that 
\[
\tau^{-1}|\tilde{g}(\tau)| + |\partial_{\tau}\tilde{g}(\tau)|\lesssim \big\|x_0\big\|_{\tilde{S}_1} + \big\|x_1\big\|_{S_2}\leq \alpha. 
\]
Defining $\til\eps_2(\tau, R) = \til\eps(\tau, R) -\til\eps_1(\tau, R)$, we see that the proof of Proposition~\ref{prop:lingrowthcond} gives 
\[
\big\|R^{-1}\til\eps_2\big\|_{L^\infty}\lesssim \alpha.
\]
Then write 
\begin{align*}
 \lambda^{-2}(\tau)5(u_{\nu}^4 - u_0^4)\til\eps = 20 \lambda^{-2}(\tau)(u_{\nu} - u_0)u_0^3\til\eps_1 + F_1(\tau, R), 
\end{align*}
where the second term on the right is defined via this relation. Then using \eqref{eq:unu-u0} as well as the preceding observations we get the schematic relation
\[
20 \lambda^{-2}(\tau)(u_{\nu} - u_0)u_0^3\til\eps_1 = \langle R\rangle^{-3} g(R, a)\tau^{-2}\tilde{g}(\tau) + F_2(\tau, R)
\]
where the first term on the right has the properties of the term $E_2$ of the proposition, while the second term satisfies 
\[
\big\|F_2(\tau, R)\big\|_{(H^{1+}_{dR}\cap L^1)(R\lesssim \tau)}\lesssim \alpha\tau^{-3}. 
\]
As for the term $F_1(\tau, R)$ further above, it is schematically of the form 
\[
 F_1(\tau, R) = 20 \lambda^{-2}(\tau)(u_{\nu} - u_0)u_0^3\til\eps_2 + \sum_{2\leq j\leq 4}\lambda^{-2}(\tau)(u_{\nu} - u_0)^ju_0^{4-j}\til\eps
 \]
 Then using the fractional Leibniz rule as well as Proposition~\ref{prop:energybound1}, we get 
 \begin{align*}
&\big\|20 \lambda^{-2}(\tau)(u_{\nu} - u_0)u_0^3\til\eps_2\big\|_{(H^{1+}_{dR}\cap L^1)(R\lesssim \tau)}\\&\lesssim\big\|\lambda^{-\frac32}(\tau)(u_{\nu} - u_0)u_0^2\big\|_{(H^{1+}_{dR}\cap L^1)(R\lesssim \tau)}\big\|R^{-1}\til\eps_2(\tau, \cdot)\big\|_{L^\infty}\\
 & + \big\|\lambda^{-\frac32}(\tau)(u_{\nu} - u_0)u_0^2\big\|_{L^\infty(R\lesssim \tau)}\big\|R^{-1}\til\eps_2(\tau, \cdot)\big\|_{(\dot{H}^{1+}_{dR})(R\lesssim \tau)}\\
 &\lesssim \alpha\log\tau\tau^{-2} \lesssim  \alpha\tau^{-(2-)}. 
 \end{align*}
 Further, again taking advantage of \eqref{eq:unu-u0} as well as the fractional Leibniz rule, we get 
 \begin{align*}
&\big\|\sum_{2\leq j\leq 4}\lambda^{-2}(\tau)(u_{\nu} - u_0)^ju_0^{4-j}\til\eps\big\|_{(H^{1+}_{dR}\cap L^1)(R\lesssim \tau)}\\
&\lesssim \sum_{2\leq j\leq 4}\big\|\lambda^{-2}(\tau)(u_{\nu} - u_0)^ju_0^{4-j}\big\|_{(H^{1+}_{dR}\cap L^1)(R\lesssim \tau)}\big\|\til\eps(\tau, \cdot)\big\|_{L^\infty}\\
& + \sum_{2\leq j\leq 4}\big\|\lambda^{-2}(\tau)(u_{\nu} - u_0)^ju_0^{4-j}\big\|_{L^\infty}\big\|\til\eps(\tau, \cdot)\big\|_{(H^{1+}_{dR}\cap L^1)(R\lesssim \tau)}\\
&\lesssim \alpha(\log\tau)^2\tau^{-2}\lesssim \alpha\tau^{-(2-)}. 
 \end{align*}
 This completes bounding the term $ \lambda^{-2}(\tau)5(u_{\nu}^4 - u_0^4)\til\eps$ in case (i). 
 \\
 
 Next, consider the remaining terms $\lambda^{-2}(\tau)Ru_{\nu}^{5-j}\big(\frac{\til\eps}{R}\big)^j,\,2\leq j\leq 5$. We shall treat the case $j = 5$ here, the other cases being more of the same. Thus we get by the fractional Leibniz rule
 \begin{align*}
 \big\|\lambda^{-2}(\tau)\big(\frac{\til\eps}{R}\big)^4\til\eps\big\|_{(H^{1+}_{dR}\cap L^1_{dR})(R\lesssim \tau)}\lesssim &\lambda^{-2}(\tau)\big\|R^{0-}\big(\frac{\til\eps}{R}\big)^5\big\|_{L^2_{dR}(R\lesssim \tau)}\\
 & + \lambda^{-2}(\tau)\big\|\big(\frac{\til\eps}{R}\big)^4\big\|_{L^\infty\cap L^2}\big\|\langle\nabla\rangle^{1+}\til\eps\big\|_{L^2_{dR}(R\lesssim \tau)}\\
 &\lesssim \lambda^{-2}(\tau)\tau^6\cdot\alpha\lesssim \tau^{-2}\alpha
 \end{align*}
 provided $2(1+\nu^{-1})-6\geq 2$, or equivalently $\nu\leq \frac{1}{3}$.

 {\it{Proof in case (ii)}}.\\ 
 The estimate in the case (ii) is quite similar, in that one simply uses 
 \begin{align*}
 \big\|x_d(\tau)\phi_d(R) + \int_0^\infty x(\tau, \xi)\phi(R, \xi)\rho(\xi)\,d\xi\big\|_{L^\infty_{dR}}
 \lesssim |x_d(\tau)| + \big\|x(\tau, \cdot)\big\|_{S_1}, 
 \end{align*}
as well as the energy type bound 
\begin{align*} 
 \big\|x_d(\tau)\phi_d(R) + \int_0^\infty x(\tau, \xi)\phi(R, \xi)\rho(\xi)\,d\xi\big\|_{H^{1+}_{dR}}\lesssim |x_d(\tau)| + \big\|x(\tau, \cdot)\big\|_{S_1}.
\end{align*}
We omit the simple details. 
\end{proof}

\section{Outline of the iterative scheme}

Our point of departure shall be a data quadruple $\big(\underline{x}_0, \underline{x}_1\big)$ as specified in the statement of Theorem~\ref{thm:MainTechnical}. In particular, we introduce the {\it{zeroth iterate}} 
\[
\underline{x}^{(0)}(\tau, \xi) = \left(\begin{array}{c} x_d^{(0)}(\tau)\\x^{(0)}(\tau, \xi)\end{array}\right),
\]
where the right hand components are defined as the linear evolution detailed in Lemma~\ref{lem:linhom}. On the other hand, we define the first and higher iterates $\underline{x}^{(j)}(\tau, \xi),\,j\geq 1$, by using the iterative step
\begin{equation}\label{eq:iterationstep}
 (\mathcal{D}_{\tau}^2 + \beta_{\nu}(\tau)\mathcal{D}_{\tau} + \underline{\xi})\underline{x}^{(j)}(\tau, \xi) = \mathcal{R}(\tau, \underline{x}^{(j-1)}) + \underline{f}^{(j-1)}(\tau,\xi),\,j\geq 1, 
 \end{equation}
 where we set 
  \[
 \underline{f}^{(j-1)}(\tau, \xi) = \left(\begin{array}{c}\langle \phi_d, \lambda^{-2}(\tau)RN_{\nu}(\epsilon^{(j-1)})\rangle\\ \mathcal{F}\big(\lambda^{-2}(\tau)RN_{\nu}(\epsilon^{(j-1)}) \big)(\xi)\end{array}\right)
 \]
 and the additional linear error term is defined by 
 \begin{equation}\label{eq:Rterms}
\mathcal{R}(\tau, \underline{x}^{(j-1)})(\xi) = \Big(-4\beta_{\nu}(\tau)\mathcal{K}\mathcal{D}_{\tau}\underline{x}^{(j-1)} - \beta_{\nu}^2(\tau)(\mathcal{K}^2 + [\mathcal{A}, \mathcal{K}] + \mathcal{K} +  \beta_{\nu}' \beta_{\nu}^{-2}\mathcal{K})\underline{x}^{(j-1)}\Big)(\xi)
\end{equation}
Moreover, we write 
\[
\epsilon^{(j-1)}(\tau, R) = R^{-1}\til\eps^{(j-1)}(\tau, R) = R^{-1}[\phi_d(R)x^{(j-1)}_d(\tau) + \int_0^\infty \phi(R, \xi)x^{(j-1)}(\tau, \xi)\rho(\xi)\,d\xi]. 
\]
Thinking of the iterates $\underline{x}^{(j)}(\tau, \xi)$ as obtained by adding a sequence of 'increments' we can alternatively set $\underline{x}^{(j)} = \underline{x}^{(0)} + \sum_{k=1}^j\underline{\triangle x}^{(k)}(\tau, \xi)$, where we put 
\begin{equation}\label{eq:increment1}
 (\mathcal{D}_{\tau}^2 + \beta_{\nu}(\tau)\mathcal{D}_{\tau} + \underline{\xi})\underline{\triangle x}^{(1)}(\tau, \xi) = \mathcal{R}(\tau, \underline{x}^{(0)}) + \underline{f}^{(0)}(\tau,\xi), 
 \end{equation}
as well as 
\begin{equation}\label{eq:incrementj}
 (\mathcal{D}_{\tau}^2 + \beta_{\nu}(\tau)\mathcal{D}_{\tau} + \underline{\xi})\underline{\triangle x}^{(j)}(\tau, \xi) = \mathcal{R}(\tau, \underline{\triangle x}^{(j-1)}) + \underline{\triangle f}^{(j-1)}(\tau,\xi),\,j\geq 2,  
 \end{equation}
where we evidently define 
\begin{align*}
\underline{\triangle f}^{(j-1)}(\tau,\xi): = \left(\begin{array}{c}\langle \phi_d, \lambda^{-2}(\tau)RN_{\nu}(\epsilon^{(j-1)})\rangle - \lambda^{-2}(\tau)RN_{\nu}(\epsilon^{(j-2)})\rangle\\ \mathcal{F}\big(\lambda^{-2}(\tau)RN_{\nu}(\epsilon^{(j-1)}) \big)(\xi) - \lambda^{-2}(\tau)RN_{\nu}(\epsilon^{(j-2)}) \big)(\xi)\end{array}\right)
\end{align*}
Ideally, we would pick the $\underline{\triangle x}^{(j)}$ with vanishing data at initial time $\tau = \tau_0$, which, however, does not work for both the continuous as well as the discrete part. Instead, we shall have to add small additional incremental corrections to the data of the form
\[
\big(\left(\begin{array}{c}\triangle x_{0d}^{(j)}\\ \triangle\tilde{\tilde{x}}^{(j)}_0\end{array}\right), \left(\begin{array}{c}\triangle x_{1d}^{(j)}\\ \triangle\tilde{\tilde{x}}^{(j)}_1\end{array}\right)
\]
at the $j$-th iterate, which is the reason for the nonlinear nature of the set $\Sigma_0$ described in Theorem~\ref{thm:Main}. 
\\

In the next sections, we shall carefully develop this iteration scheme, starting with a description of the first iterate $\underline{x}^{(1)}(\tau, \xi)$, and then followed by the higher iterates. It is when treating the latter that we introduce a stable functional framework which shall be preserved under all the increments. 
Finally, we shall have to prove convergence of the series $\sum_{j=1}^\infty  \underline{\triangle x}^{(j)}$, which will be the culmination of our efforts, and establish the Theorem~\ref{thm:MainTechnical}.

\section{Control of the first iterate; contribution of the linear terms $ \mathcal{R}(\tau, \underline{x}^{(0)})$}

Note that the solution $x(\tau, \xi)$ of the inhomogeneous equation 
\[
 (\mathcal{D}_{\tau}^2 + \beta_{\nu}(\tau)\mathcal{D}_{\tau} + \underline{\xi})\underline{x}(\tau, \xi) = \underline{f}(\tau, \xi)
 \]
 upon imposition of trivial data $\big(x(\tau_0, \cdot), \mathcal{D}_{\tau}x(\tau_0, \cdot)\big) = \big(0, 0\big)$ is solved in terms of its continuous part $x(\tau, \xi)$ by the formula 
 \[
 x(\tau, \xi) = \int_{\tau_0}^\tau\frac{\lambda^{\frac{3}{2}}(\tau)}{\lambda^{\frac{3}{2}}(\sigma)}\frac{\rho^{\frac{1}{2}}(\frac{\lambda^2(\tau)}{\lambda^2(\sigma)}\xi)}{\rho^{\frac12}(\xi)}\frac{\sin[\lambda(\tau)\xi^{\frac12}\int_{\tau}^{\sigma}\lambda^{-1}(u)\,du]}{\xi^{\frac12}}f(\sigma, \frac{\lambda^2(\tau)}{\lambda^2(\sigma)}\xi)\,d\sigma
 \]
 This expression of course grows in some sense rapidly as $\tau\rightarrow +\infty$, and we are interested in a suitable vanishing condition which ensures that it in fact does not 'grow too much'. The following proposition explains how this can be achieved by adding on a suitable 'free wave', provided we set\footnote{Here, the expression $\mathcal{R}(\tau, \underline{x}^{(0)})(\xi)$ refers to the continuous spectral part of the vectorial quantity $\mathcal{R}(\tau, \underline{x}^{(0)})$. We shall deal with the much simpler discrete spectral part later.}  $f(\tau, \xi) = \mathcal{R}(\tau, \underline{x}^{(0)})(\xi)$: denote by $S(\tau)(x_0, x_1)$ the evolution of the continuous part by the free flow 
 \[
  (\mathcal{D}_{\tau}^2 + \beta_{\nu}(\tau)\mathcal{D}_{\tau} + \underline{\xi})\underline{x}(\tau, \xi) = 0
  \]
  with data $(x_0, x_1)$ at time $\tau = \tau_0$. Thus this evolution is given by the formula \eqref{eq:linhomparam1} in Lemma~\ref{lem:linhom}. {\it{Throughout we assume the vanishing conditions \eqref{eq:vanishing}} for the data $(x_0, x_1)$}. 
  
  \begin{prop}\label{prop:firstiteratelincont} Defining (see \eqref{eq:tildeSnorm})
  \[
 \big\|\cdot,\cdot\big\|_{\tilde{S}}: = \big\|\cdot, \cdot\big\|_{\langle\xi\rangle^{-(\frac12++)}\xi^{-(\frac12-)}\min\{\tau_0^2\xi,1\}^{\frac12}L^2_{d\xi}\times  \langle\xi\rangle^{-(\frac{1}{2}++)}\xi^{0+}L^2_{d\xi}}, 
 \]
there is a choice of $(\triangle\tilde{\tilde{x}}_0^{(1)}, \triangle\tilde{\tilde{x}}_1^{(1)})\in \tilde{S}$ with 
\begin{equation}\label{eq:keysmall}
\big\|(\triangle\tilde{\tilde{x}}_0^{(1)}, \triangle\tilde{\tilde{x}}_1^{(1)})\big\|_{\tilde{S}}\lesssim \tau_0^{-(1-)}[\big\|(x_0, x_1)\big\|_{\tilde{S}} + |x_{0d}|]
\end{equation}
such that setting 
 \begin{align*}
 &\triangle x^{(1)}(\tau, \xi): \\
 &=\int_{\tau_0}^\tau\frac{\lambda^{\frac{3}{2}}(\tau)}{\lambda^{\frac{3}{2}}(\sigma)}\frac{\rho^{\frac{1}{2}}(\frac{\lambda^2(\tau)}{\lambda^2(\sigma)}\xi)}{\rho^{\frac{1}{2}}(\xi)}\frac{\sin[\lambda(\tau)\xi^{\frac{1}{2}}\int_{\tau}^\sigma\lambda^{-1}(u)\,du]}{\xi^{\frac{1}{2}}}\mathcal{R}(\sigma, \underline{x}^{(0)})(\frac{\lambda^2(\tau)}{\lambda^2(\sigma)}\xi)\,d\sigma\\
 & + S(\tau)(\triangle\tilde{\tilde{x}}_0^{(1)}, \triangle\tilde{\tilde{x}}_1^{(1)}) 
 \end{align*}
 we have the high frequency bound (recall \eqref{eq:Snorm} for the definition of $\big\|(\cdot, \cdot)\big\|_{S}$, $\big\|\cdot\big\|_{S_{1,2}}$)
 \begin{equation}\label{eq:firstiterhigh}
  \big\|\chi_{\xi>1}\big(\triangle x^{(1)}(\tau, \xi), \mathcal{D}_{\tau}\triangle x^{(1)}(\tau, \xi)\big)\big\|_{S}\lesssim \big\|(x_0, x_1)\big\|_{\tilde{S}} + |x_{0d}|
  \end{equation}
  uniformly in $\tau\geq \tau_0$. Moreover, there is a splitting 
  \begin{equation}\label{eq:splitting1}
  \triangle x^{(1)}(\tau, \xi) =  \triangle x^{(1)}_{>\tau}(\tau, \xi) + S(\tau)\big(\triangle \tilde{x}^{(1)}_0(\xi), \triangle \tilde{x}^{(1)}_1(\xi)\big)
  \end{equation}
  such that we have 
  \begin{equation}\label{eq:firstiterlow}\begin{split}
 &\sup_{\tau\geq \tau_0}(\frac{\tau}{\tau_0})^{-\kappa}\big\|\chi_{\xi<1}\triangle x^{(1)}_{>\tau}(\tau, \xi)\big\|_{S_1} + \sup_{\tau\geq \tau_0}\big\|\chi_{\xi<1}\mathcal{D}_{\tau}\triangle x^{(1)}_{>\tau}(\tau, \xi)\big\|_{S_2}\\&
 + \big\|\big(\triangle \tilde{x}^{(1)}_0(\xi), \triangle \tilde{x}^{(1)}_1(\xi)\big)\big\|_{S}\lesssim  \big\|(x_0, x_1)\big\|_{\tilde{S}} + |x_{0d}|, 
  \end{split}\end{equation}
  and such that $\triangle \tilde{x}^{(1)}_{0,1}(\xi)$ in turn satisfy the vanishing conditions \eqref{eq:vanishing}. 
  Here we shall let $\kappa = 2(1+\nu^{-1})\delta_0$ throughout, with $\delta_0>0$ as in the definition of $\tilde{S}, S$, see \eqref{eq:tildeSnorm}, \eqref{eq:Snorm}. 
 \end{prop}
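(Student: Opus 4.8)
The plan is to work from the forward Duhamel representation stated just before the proposition, which we abbreviate $x_{\mathrm{Duh}}(\tau,\xi)=\int_{\tau_0}^\tau\frac{\lambda^{3/2}(\tau)}{\lambda^{3/2}(\sigma)}\frac{\rho^{1/2}(\frac{\lambda^2(\tau)}{\lambda^2(\sigma)}\xi)}{\rho^{1/2}(\xi)}\frac{\sin[\lambda(\tau)\xi^{1/2}\int_\tau^\sigma\lambda^{-1}]}{\xi^{1/2}}\calR(\sigma,\underline{x}^{(0)})(\tfrac{\lambda^2(\tau)}{\lambda^2(\sigma)}\xi)\,d\sigma$, the solution with trivial data at $\tau_0$. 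The first task is to bound the source $\calR(\tau,\underline{x}^{(0)})$. Since $\lambda(\tau)\sim\tau^{1+\nu^{-1}}$ gives $\beta_{\nu}(\tau)\sim\tau^{-1}$, $\beta_{\nu}^2(\tau)\sim\tau^{-2}$ and $\beta_{\nu}'(\tau)\beta_{\nu}^{-2}(\tau)=O(1)$, one combines the growth bound $\|x^{(0)}(\tau,\cdot)\|_{S_1}\lesssim(\tau/\tau_0)^{\kappa}[\|(x_0,x_1)\|_{\tilde{S}}+|x_{0d}|]$, the energy estimate of Proposition~\ref{prop:energybound1} for $x^{(0)}$ and for $\mathcal{D}_{\tau}x^{(0)}$, the mapping properties of $\mathcal{K}_{cc}$ (a singular integral with kernel $\frac{\rho(\eta)}{\xi-\eta}F(\xi,\eta)$) and the rapid decay of $K_d$, to obtain a schematic bound $\|\langle\xi\rangle^{N}\calR(\sigma,\underline{x}^{(0)})(\cdot)\|_{L^2_{d\xi}}\lesssim\sigma^{-1+O(\kappa)}[\|(x_0,x_1)\|_{\tilde{S}}+|x_{0d}|]$ in the weights relevant to the Duhamel estimate, and similarly after applying $\mathcal{D}_{\tau}$. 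On the regime $\xi>1$ the propagator in $x_{\mathrm{Duh}}$ is non-resonant, so a Cauchy--Schwarz in $\sigma$ and the change of variables $\xi\mapsto\frac{\lambda^2(\tau)}{\lambda^2(\sigma)}\xi$, exactly as in the proofs of Propositions~\ref{prop:lingrowthcond} and \ref{prop:energybound1}, give \eqref{eq:firstiterhigh}; the $\sigma$-integral of $\sigma^{-1+O(\kappa)}$ against the surviving $\lambda$-quotients converges for $\nu,\kappa$ small, and no data correction is needed here.

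The low frequencies $\xi<1$ are the core difficulty, since $x_{\mathrm{Duh}}$ there grows too fast. The device is to let $\triangle x^{(1)}_{>\tau}(\tau,\xi)$ be the solution of $(\mathcal{D}_{\tau}^2+\beta_{\nu}(\tau)\mathcal{D}_{\tau}+\underline{\xi})\,\cdot=\calR(\tau,\underline{x}^{(0)})$ obtained by imposing vanishing as $\tau\to+\infty$ — i.e. the analogue of \eqref{eq:para1}. Both $\triangle x^{(1)}_{>\tau}$ and $x_{\mathrm{Duh}}$ solve the same inhomogeneous ODE in $\tau$, so their difference solves \eqref{eq:linhom1}; matching data at $\tau_0$ yields $x_{\mathrm{Duh}}(\tau,\xi)=\triangle x^{(1)}_{>\tau}(\tau,\xi)-S(\tau)\big(\triangle x^{(1)}_{>\tau}(\tau_0,\cdot),\,\mathcal{D}_{\tau}\triangle x^{(1)}_{>\tau}(\tau_0,\cdot)\big)$. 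The term $\triangle x^{(1)}_{>\tau}$ inherits the improved growth of \eqref{eq:para1}: rerunning the frequency-regime analysis of Section~3 for its kernel, where the \emph{forward} elapsed phase $\lambda(\tau)\xi^{1/2}\int_\tau^\sigma\lambda^{-1}$ vanishes as $\sigma\to\tau$ and thereby supplies the gain that, for the free flow, was provided by the vanishing conditions, together with the $\sigma^{-1+O(\kappa)}$ decay of the source, gives the first line of \eqref{eq:firstiterlow} with the stated $(\tau/\tau_0)^{\kappa}$ loss and uniform $S_2$-control of $\mathcal{D}_{\tau}\triangle x^{(1)}_{>\tau}$. It remains to re-express the free term: $\big(\triangle x^{(1)}_{>\tau}(\tau_0,\cdot),\mathcal{D}_{\tau}\triangle x^{(1)}_{>\tau}(\tau_0,\cdot)\big)$ need not satisfy \eqref{eq:vanishing}, so I would define $\triangle\tilde{\tilde{x}}^{(1)}_{0},\triangle\tilde{\tilde{x}}^{(1)}_{1}$ to be (suitable multiples of) the components of this pair along the two bounded functionals $g\mapsto\int_0^\infty\frac{\rho^{1/2}(\xi)g(\xi)}{\xi^{1/4}}\cos[\nu\tau_0\xi^{1/2}]\,d\xi$ and $g\mapsto\int_0^\infty\frac{\rho^{1/2}(\xi)g(\xi)}{\xi^{3/4}}\sin[\nu\tau_0\xi^{1/2}]\,d\xi$, and set $\triangle\tilde{x}^{(1)}_{0}:=\triangle\tilde{\tilde{x}}^{(1)}_{0}-\triangle x^{(1)}_{>\tau}(\tau_0,\cdot)$ and likewise $\triangle\tilde{x}^{(1)}_{1}$; by construction $\triangle\tilde{x}^{(1)}_{0,1}$ are annihilated by those functionals, hence satisfy \eqref{eq:vanishing}, and are controlled in the $S$-norm by $\|\triangle x^{(1)}_{>\tau}(\tau_0,\cdot)\|_S+\|\triangle\tilde{\tilde{x}}^{(1)}_{0,1}\|_{\tilde S}$, which closes the last line of \eqref{eq:firstiterlow}. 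Setting $\triangle x^{(1)}:=x_{\mathrm{Duh}}+S(\tau)(\triangle\tilde{\tilde{x}}^{(1)}_0,\triangle\tilde{\tilde{x}}^{(1)}_1)$ then gives exactly the identity \eqref{eq:splitting1}.

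The main obstacle is \eqref{eq:keysmall}, the gain $\tau_0^{-(1-)}$ for the correction $\triangle\tilde{\tilde{x}}^{(1)}_{0,1}$. To extract it I would insert the backward-Duhamel formula for $\triangle x^{(1)}_{>\tau}(\tau_0,\xi)$ into the two functionals above, use the identity $\lambda(\tau_0)\int_{\tau_0}^\sigma\lambda^{-1}(u)\,du=\nu\tau_0-\nu\tau_0^{1+\nu^{-1}}\sigma^{-\nu^{-1}}$ of \eqref{eq:simplebutimportant} to expand the inner oscillation $\sin[\lambda(\tau_0)\xi^{1/2}\int_{\tau_0}^\sigma\lambda^{-1}]$ into products of $\cos/\sin[\nu\tau_0\xi^{1/2}]$ with $\cos/\sin[\nu\tau_0^{1+\nu^{-1}}\sigma^{-\nu^{-1}}\xi^{1/2}]$, and estimate the resulting $\xi$-integrals of $\rho^{1/2}(\xi)\xi^{-a}$ against a product of two trigonometric factors by the Cauchy--Schwarz and change-of-variable arguments of the proof of Proposition~\ref{prop:lingrowthcond}. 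The point is that the remaining $\sigma$-weight is integrable over $[\tau_0,\infty)$ only after one exploits that the $\sigma$-profile $\nu\tau_0^{1+\nu^{-1}}\sigma^{-\nu^{-1}}$ equals $\nu\tau_0$ at $\sigma=\tau_0$ and decays, which effectively localises the $\sigma$-integration to $\sigma\sim\tau_0$; combined with the $\sigma^{-1+O(\kappa)}$ decay of $\calR(\sigma,\underline{x}^{(0)})$ this produces the factor $\tau_0^{-(1-)}$. Morally, $\triangle\tilde{\tilde{x}}^{(1)}_{0,1}$ is the component of the data along the two forbidden modulation directions (the scaling constant $C$ and the rate $\nu$ in $\lambda(t)=Ct^{-1-\nu}$), and imposing that orthogonality by hand costs precisely a factor $\beta_{\nu}(\tau_0)\sim\tau_0^{-1}$ because genuine modulation is unavailable.

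Finally, the Lipschitz dependence of $(\triangle\tilde{\tilde{x}}^{(1)}_0,\triangle\tilde{\tilde{x}}^{(1)}_1)$ and of $\triangle x^{(1)}$ on $(x_0,x_1,x_{0d})$, with constant $\lesssim1$, is immediate since the entire construction — the source $\calR(\tau,\underline{x}^{(0)})$, the Duhamel integrals, the free evolution $S(\tau)$ and the two linear functionals — is linear in the data, the only nonlinearity of the eventual set $\Sigma_0$ entering at later iterates. The sharper square-function bound for $\mathcal{D}_{\tau}\triangle x^{(1)}_{>\tau}$ over dyadic $\tau$-scales is left to Proposition~\ref{prop:firstiteratelincontimprov}.
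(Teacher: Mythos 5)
Your overall architecture is the same as the paper's: you split the forward Duhamel parametrix into a backward--Duhamel piece $\triangle x^{(1)}_{>\tau}$ (solving the ODE with vanishing at $\tau\to+\infty$) plus a free wave, identify $\triangle\tilde{x}^{(1)}_{0,1}$ from the free-wave data, and observe that those data need not satisfy \eqref{eq:vanishing}, so a small correction $\triangle\tilde{\tilde{x}}^{(1)}_{0,1}$ must be added to restore the vanishing. That is precisely what Step~3 of the paper's proof does via its $A+B+C$ decomposition. Likewise, your treatment of the high-frequency regime and the low-frequency bound for $\triangle x^{(1)}_{>\tau}$ follows the paper in spirit, even if the details (the case analysis for $\mathcal{K}_{cc}$ in various $\xi,\eta$ regimes, the $\xi\frac{\lambda^2(\tau)}{\lambda^2(\sigma)}\lessgtr\eta$ trichotomies, the integration by parts in $\sigma$ for the $\mathcal{K}_{cc}\mathcal{D}_\sigma x^{(0)}$ term) are left implicit.

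The genuine gap is in your argument for \eqref{eq:keysmall}. You describe the two functionals as ``bounded'' on $\tilde{S}$ and then ``project out'' $\triangle x^{(1)}_{>\tau}(\tau_0,\cdot)$ along them. But the second functional $g\mapsto\int_0^\infty\rho^{1/2}(\xi)g(\xi)\xi^{-3/4}\sin[\nu\tau_0\xi^{1/2}]\,d\xi$ has operator norm $\sim\tau_0^{1-}$ on $S_2$ (test $g\sim\chi_{\xi\sim\tau_0^{-2}}$), not $O(1)$, so a naive projection produces a correction of $\tilde{S}$-norm $\sim\|(x_0,x_1)\|_{\tilde{S}}$ with no $\tau_0$-gain at all. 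The actual gain $\tau_0^{-(1-)}$ in the paper rests on two separate ingredients that your proposal doesn't supply. First, one must show the functionals applied to $\widetilde{\triangle}\tilde{x}^{(1)}_{0,1}=\mp\triangle x^{(1)}_{>\tau}(\tau_0,\cdot)$ are only of size $\tau_0^{0+}\|(x_0,x_1)\|_{\tilde{S}}$, a full power below the operator-norm bound; this is Lemma~\ref{lem:choiceofcorrection}(1) and it relies essentially on the vanishing conditions \eqref{eq:vanishing} propagating through $\mathcal{R}(\sigma,\underline{x}^{(0)})$ via the singular kernel structure of $\mathcal{K}_{cc}$ (in particular case (1i.c), which reruns the end of the proof of Proposition~\ref{prop:lingrowthcond}). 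Your suggested mechanism---that the phase $\nu\tau_0^{1+\nu^{-1}}\sigma^{-\nu^{-1}}\xi^{1/2}$ ``localises the $\sigma$-integration to $\sigma\sim\tau_0$''---does not hold: $\mathcal{R}$ decays only like $\sigma^{-1}$ (resp.\ $\sigma^{-2}$) and the $\sigma$-integral is genuinely spread over all of $[\tau_0,\infty)$; the $\tau_0^{0+}$ bound comes from cancellation, not concentration. Second, one needs a concrete correction with large functional value relative to its $\tilde{S}$-norm; the paper takes $\triangle\tilde{\tilde{x}}^{(1)}_1=\alpha\,\mathcal{F}(\chi_{R\leq C\tau_0}\phi(R,0))$ (a truncated resonance), whose functional value is $\sim\alpha\tau_0$ against $\tilde{S}$-norm $\sim\alpha\tau_0^{0+}$, so that matching the $\tau_0^{0+}$ defect forces $\alpha\sim\tau_0^{-1+}$ and $\|\triangle\tilde{\tilde{x}}^{(1)}_1\|_{\tilde{S}}\lesssim\tau_0^{-(1-)}$. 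Without both (i) the $\tau_0^{0+}$ estimate and (ii) this explicit choice, \eqref{eq:keysmall} is not established. (Also, the Lipschitz dependence you mention at the end is not part of this proposition; it only enters later in Theorem~\ref{thm:MainTechnical}.)
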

 \begin{rem} We note that for the most part, the factor $\min\{\tau_0^2\xi, 1\}^{-\frac12}$ in the definition of $\big\|\cdot\big\|_{\tilde{S}}$ could have been omitted, except that it ensures both the fact that the first vanishing condition in \eqref{eq:vanishing} is defined, as well as the fact that the correction $\triangle\tilde{\tilde{x}}_0^{(1)}$ will be much smaller than the data. 
\end{rem}
\begin{proof}
 {\bf{Step 0}}: To begin with, we observe the following simple 
\begin{lem}\label{lem:freeSbound} Let $(x_0, x_1)\in \tilde{S}$. Then we have 
\[
\big\|\big(\chi_{\xi>1}S(\tau)(x_0, x_1), \chi_{\xi>1}\mathcal{D}_{\tau}S(\tau)(x_0, x_1)\big\|_{S}\lesssim \big\|(x_0, x_1)\big\|_{\tilde{S}}
\]
uniformly in $\tau\geq \tau_0$. 

\end{lem}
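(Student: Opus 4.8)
The plan is to work entirely in the large-frequency regime and to exploit that the output cutoff $\chi_{\xi>1}$ forces the data $x_0,x_1$ to be sampled only at frequencies $\geq\Lambda(\tau)\geq1$, where throughout I write $\Lambda(\tau):=\lambda^2(\tau)/\lambda^2(\tau_0)$, $\Phi(\tau,\xi):=\lambda(\tau)\xi^{1/2}\int_{\tau_0}^\tau\lambda^{-1}(u)\,du$, and $x(\tau,\xi):=S(\tau)(x_0,x_1)(\xi)$ for the continuous part \eqref{eq:linhomparam1}; here I use that $\lambda$ is increasing in $\tau$, so $\Lambda(\tau)\geq1$ for $\tau\geq\tau_0$. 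On $\xi>1$ the weights defining the $\tilde S$- and $S$-norms coincide --- both are comparable to $\xi^{1+\delta_0}$ on the first slot and to $\xi^{1/2+\delta_0}$ on the second, the factor $\min\{\tau_0\xi^{1/2},1\}^{-1}$ in \eqref{eq:tildeSnorm} being $\equiv1$ there --- so the assertion reduces to the two weighted $L^2_{d\xi}(\xi>1)$ bounds $\|\chi_{\xi>1}x(\tau,\cdot)\|_{S_1}\lesssim\|(x_0,x_1)\|_{\tilde S}$ and $\|\chi_{\xi>1}\mathcal{D}_\tau x(\tau,\cdot)\|_{S_2}\lesssim\|(x_0,x_1)\|_{\tilde S}$, with constants independent of $\tau\geq\tau_0$.

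I would first record the asymptotics on $\xi>1$, hence also on $\Lambda\xi>1$: by \eqref{eq:rhoasympto} one has $\rho(\xi)\simeq\xi^{1/2}$ there, so $\rho^{1/2}(\Lambda\xi)/\rho^{1/2}(\xi)\simeq\Lambda^{1/4}=(\lambda(\tau)/\lambda(\tau_0))^{1/2}$, while $|\cos\Phi|\leq1$ and $|\sin\Phi|\,\xi^{-1/2}\lesssim1$. Inserting these into \eqref{eq:linhomparam1}, taking absolute values and computing the $S_1$-weighted $L^2_{d\xi}(\xi>1)$ norm, the contribution of $x_0$ is bounded by $\tfrac{\lambda^3(\tau)}{\lambda^3(\tau_0)}\|\xi^{1+\delta_0}\chi_{\xi>1}|x_0|(\Lambda\xi)\|_{L^2_{d\xi}}$; the substitution $\tilde\xi=\Lambda\xi$ converts weight and Jacobian into a factor $\Lambda^{-(1+\delta_0)-1/2}$, and since $\tfrac{\lambda^3(\tau)}{\lambda^3(\tau_0)}=\Lambda^{3/2}$ the $\lambda$-powers collapse to $\Lambda^{-\delta_0}\leq1$, whence this contribution is $\lesssim\|\xi^{1+\delta_0}x_0\|_{L^2_{d\xi}(\xi>1)}\lesssim\|x_0\|_{\tilde S_1}$, uniformly in $\tau\geq\tau_0$. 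The contribution of $x_1$ is identical, the extra $\xi^{-1/2}$ from $\sin\Phi/\xi^{1/2}$ turning the $S_1$-weight $\xi^{1+\delta_0}$ into the $S_2$-weight $\xi^{1/2+\delta_0}$ on $x_1$, so it is $\lesssim\|x_1\|_{\tilde S_2}$. This settles the first slot of the $S$-norm.

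For the $\mathcal{D}_\tau$-slot I would first derive the companion formula for $\mathcal{D}_\tau$ applied to \eqref{eq:linhomparam1}. Writing $T(\tau):=\int_{\tau_0}^\tau\lambda^{-1}(u)\,du$, so that $\Phi=\lambda(\tau_0)(\Lambda\xi)^{1/2}T(\tau)$, one checks from the definitions of $\mathcal{D}_\tau$ and $\mathcal{A}_c$ that $\beta_\nu\mathcal{A}_c$ cancels exactly the $\partial_\tau$-derivative falling on $\Lambda$ inside $\rho^{1/2}(\Lambda\xi)\rho^{-1/2}(\xi)\,g(\Lambda\xi,T)$, leaving
\[
\mathcal{D}_\tau\big[\rho^{1/2}(\Lambda\xi)\rho^{-1/2}(\xi)\,g(\Lambda\xi,T)\big]=-\tfrac52\beta_\nu\,\rho^{1/2}(\Lambda\xi)\rho^{-1/2}(\xi)\,g(\Lambda\xi,T)+\dot T\,\rho^{1/2}(\Lambda\xi)\rho^{-1/2}(\xi)\,\partial_Tg(\Lambda\xi,T).
\]
Applying this with $g=\cos\Phi\,x_0(\Lambda\xi)$ and with $g=\sin\Phi\,(\Lambda\xi)^{-1/2}x_1(\Lambda\xi)$, using $\partial_\tau\tfrac{\lambda^{5/2}(\tau)}{\lambda^{5/2}(\tau_0)}=\tfrac52\beta_\nu\tfrac{\lambda^{5/2}(\tau)}{\lambda^{5/2}(\tau_0)}$ and $\dot T\lambda(\tau_0)(\Lambda\xi)^{1/2}=\xi^{1/2}$ (a consequence of \eqref{eq:simplebutimportant}), the $\tfrac52\beta_\nu$-terms cancel and one obtains
\[
\mathcal{D}_\tau x(\tau,\xi)=-\xi^{1/2}\tfrac{\lambda^{5/2}(\tau)}{\lambda^{5/2}(\tau_0)}\tfrac{\rho^{1/2}(\Lambda\xi)}{\rho^{1/2}(\xi)}\sin\Phi\;x_0(\Lambda\xi)+\tfrac{\lambda^{3/2}(\tau)}{\lambda^{3/2}(\tau_0)}\tfrac{\rho^{1/2}(\Lambda\xi)}{\rho^{1/2}(\xi)}\cos\Phi\;x_1(\Lambda\xi),
\]
consistent with $x(\tau_0,\cdot)=x_0$, $\mathcal{D}_\tau x(\tau_0,\cdot)=x_1$ (this identity is also implicit in the parametrix construction of \cite{KS1}). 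The $S_2$-bound for $\chi_{\xi>1}\mathcal{D}_\tau x$ then follows by the same change of variables as in the previous paragraph: in the first term the extra $\xi^{1/2}$ combines with the $S_2$-weight $\xi^{1/2+\delta_0}$ to reproduce the $\tilde S_1$-weight $\xi^{1+\delta_0}$ acting on $x_0$, and the $\lambda$-powers again collapse to $\Lambda^{-\delta_0}\leq1$; the second term lands directly on the $\tilde S_2$-weight on $x_1$, once more with the $\lambda$-powers collapsing to $\Lambda^{-\delta_0}\leq1$. Summing the two slots yields the claim, uniformly in $\tau\geq\tau_0$.

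The one genuinely delicate point is the exponent bookkeeping: in every case one has to check that the amplitude $\lambda^a(\tau)/\lambda^a(\tau_0)$ appearing in \eqref{eq:linhomparam1} or in the companion formula, the Jacobian $\Lambda^{-1}$ of $\tilde\xi=\Lambda\xi$, and the rescaling of the weight conspire to leave the single factor $\Lambda^{-\delta_0}=(\lambda(\tau_0)/\lambda(\tau))^{2\delta_0}\leq1$, so that no factor growing in $\tau$ survives. This is precisely the feature distinguishing the present high-frequency regime from the low-frequency one treated in Proposition~\ref{prop:lingrowthcond}, and it is why only the weaker $\tilde S$-norm of the data, rather than the $S$-norm, is needed on the right. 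The derivation of the companion $\mathcal{D}_\tau$-formula is the only other step requiring care, but it is a short computation using nothing beyond the definitions of $\mathcal{D}_\tau$, $\mathcal{A}_c$ and the identity \eqref{eq:simplebutimportant}.
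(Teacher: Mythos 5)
Your proof is correct and is exactly what the paper's one-line proof ("direct inspection of \eqref{eq:linhomparam1} and the asymptotics of $\rho(\xi)$") is alluding to: the output cutoff $\chi_{\xi>1}$ together with $\Lambda(\tau)\geq1$ keeps all frequencies in the regime where $\rho\simeq\xi^{1/2}$ and where the $\tilde S$- and $S$-weights agree, and the $\lambda$-powers, the Jacobian, and the rescaled weight collapse to $\Lambda^{-\delta_0}\leq1$. The derivation of the companion $\mathcal{D}_\tau$-formula (checked: the $\Lambda$-derivative and $\rho$-ratio terms cancel against $\beta_\nu\mathcal{A}_c$, leaving only the $\dot T$-derivative and the $-\tfrac52\beta_\nu$ term that matches the amplitude power) is a clean way to make the second slot rigorous; the paper leaves this implicit.
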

\begin{proof} This follows from direct inspection of \eqref{eq:linhomparam1} and the asymptotics of $\rho(\xi)$. 
\end{proof}
This lemma takes care of the high frequency contribution of $S(\tau)(\triangle\tilde{\tilde{x}}_0^{(1)}, \triangle\tilde{\tilde{x}}_1^{(1)})$, provided we have picked  suitable $\triangle\tilde{\tilde{x}}_{0,1}^{(1)}$ satisfying the bound alleged in the proposition to be proved. This we shall do later on in the proof. Next, we shall deal with the contribution of of $\mathcal{R}(\sigma, \underline{x}^{(0)})$, which in turn is defined in terms of the operator $\mathcal{K}$. Recall from \eqref{eq:Kstructure}, that we have 
\[
\mathcal{K}(\underline{x}^{(0)}) = \mathcal{K}_{cc}(x^{(0)}) + \mathcal{K}_{cd}(x^{(0)}_{d})
\]
We shall then write 
\[
\mathcal{R}(\tau, \underline{x}^{(0)})(\xi) = \mathcal{R}_1(\tau, x^{(0)})(\xi) + \mathcal{R}_2(\tau, x^{(0)}_d)(\xi)
\]
and consider these contributions separately. Again the exponential decay of $x^{(0)}_d$ (see Lemma~\ref{lem:linhom}) ensures that the second term on the right will be much simpler to treat than the first, which will be our main concern for the proof. Thus relegating the contribution of $\mathcal{R}_2(\tau, x^{(0)}_d)(\xi)$ to the end, we now focus on the main term $\mathcal{R}_1(\tau, x^{(0)})(\xi)$, which in light of the considerations in \cite{KST}, \cite{KS1} can be schematically written as the sum of three terms 
\begin{equation}\label{eq:R_1structure}
\mathcal{R}_1(\tau, x^{(0)})(\xi) = \beta_{\nu}^2(\tau)\mathcal{K}_{cc}x^{(0)} + \beta_{\nu}(\tau)\mathcal{K}_{cc}\mathcal{D}_{\tau}x^{(0)} + \beta_{\nu}^2(\tau)\mathcal{K}_{cc}^2x^{(0)},
\end{equation}
 where we keep in mind that $x^{(0)}$ represents the continuous part of the zeroth iterate $\underline{x}^{(0)}$, see \eqref{eq:Rterms}. In fact, the commutator term $[\mathcal{A}, \mathcal{K}]$ is seen to have a structure similar to the one of $\mathcal{K}$. Moreover, the operator $\mathcal{K}_{cc}$ is given by integration against a kernel of the form 
 \[
 K_{cc}(\xi, \eta) = \frac{F(\xi, \eta)\rho(\eta)}{\xi - \eta},
 \]
with $F$ having the decay and vanishing properties specified in \cite{KST}. In the sequel, we treat the contributions of the first two terms on the right in \eqref{eq:R_1structure}, dealing with the last term in the appendix. 
\\

{\bf{Step 1}}: {\it{Proof of the high frequency estimate \eqref{eq:firstiterhigh} for the contribution from $\mathcal{R}_1(\tau, x^{(0)})$.}} We establish the bound for $\triangle x^{(1)}$, the bound for $\mathcal{D}_{\tau}\triangle x^{(1)}$ being similar. We shall have to take into account the precise form of $\mathcal{K}_{cc}x^{(0)}$, which in light of the definition of the zeroth iterate $x^{(0)}$ is in fact given by the expression 
 \begin{equation}\label{eq:Kx0formula}\begin{split}
 &\mathcal{K}_{cc}x^{(0)}(\sigma, \xi)\\
 &  = \int_0^\infty \frac{F(\xi, \eta)\rho(\eta)}{\xi- \eta}\frac{\lambda^{\frac{5}{2}}(\sigma)}{\lambda^{\frac{5}{2}}(\tau_0)}\frac{\rho^{\frac{1}{2}}(\frac{\lambda^2(\sigma)}{\lambda^2(\tau_0)}\eta)}{\rho^{\frac{1}{2}}(\eta)}\cos[\lambda(\sigma)\eta^{\frac{1}{2}}\int_{\tau_0}^{\sigma}\lambda^{-1}(u)\,du]x_0(\frac{\lambda^2(\sigma)}{\lambda^2(\tau_0)}\eta)\,d\eta\\
 & + \int_0^\infty \frac{F(\xi, \eta)\rho(\eta)}{\xi- \eta}\frac{\lambda^{\frac{3}{2}}(\sigma)}{\lambda^{\frac{3}{2}}(\tau_0)}\frac{\rho^{\frac{1}{2}}(\frac{\lambda^2(\sigma)}{\lambda^2(\tau_0)}\eta)}{\rho^{\frac{1}{2}}(\eta)}\frac{\sin[\lambda(\sigma)\eta^{\frac{1}{2}}\int_{\tau_0}^{\sigma}\lambda^{-1}(u)\,du]}{\eta^{\frac{1}{2}}}x_1(\frac{\lambda^2(\sigma)}{\lambda^2(\tau_0)}\eta)\,d\eta,\\
 \end{split}\end{equation}
Both terms on the right are handled along the same lines, and so we shall restrict below to the contribution of the second term, treating the contribution of the first term in the appendix. Recalling the above schematic identity \eqref{eq:R_1structure}, we further divide this step into two tasks: 
\\

{\bf{Step 1 (i)}}: {\it{Contribution of $ \beta_{\nu}^2(\tau)\mathcal{K}_{cc}x^{(0)}$.}}  Substituting the second term on the right in \eqref{eq:Kx0formula}
 for $\mathcal{K}x^{(0)}(\sigma, \xi)$ into the formula for $\triangle x^{(1)}(\tau, \xi)$, we arrive at the following expression: 
 \begin{align*}
 \int_{\tau_0}^\tau\frac{\lambda^{\frac{3}{2}}(\tau)}{\lambda^{\frac{3}{2}}(\sigma)}\frac{\rho^{\frac{1}{2}}(\frac{\lambda^2(\tau)}{\lambda^2(\sigma)}\xi)}{\rho^{\frac{1}{2}}(\xi)}\frac{\sin[\lambda(\tau)\xi^{\frac{1}{2}}\int_{\tau}^\sigma\lambda^{-1}(u)\,du]}{\xi^{\frac{1}{2}}}\beta_{\nu}^2(\sigma)A(\xi, \tau, \tau_0, \sigma)\,d\sigma,
 \end{align*}
 where we set 
 \begin{align*}
 &A(\xi, \tau, \tau_0, \sigma):\\
 & =  \int_0^\infty\frac{\lambda^{\frac32}(\sigma)}{\lambda^{\frac32}(\tau_0)}\frac{F(\frac{\lambda^2(\tau)}{\lambda^2(\sigma)}\xi, \frac{\lambda^2(\tau)}{\lambda^2(\sigma)}\tilde{\eta})\rho^{\frac{1}{2}}(\frac{\lambda^2(\tau)}{\lambda^2(\sigma)}\tilde{\eta})}{\xi- \tilde{\eta}}\rho^{\frac{1}{2}}(\frac{\lambda^2(\tau)}{\lambda^2(\tau_0)}\tilde{\eta})\frac{\sin[\lambda(\tau)\tilde{\eta}^{\frac{1}{2}}\int_{\tau_0}^{\sigma}\lambda^{-1}(u)\,du]}{(\frac{\lambda^2(\tau)}{\lambda^2(\sigma)}\tilde{\eta})^{\frac{1}{2}}}\\
 &\hspace{9.5cm}\cdot x_1(\frac{\lambda^2(\tau)}{\lambda^2(\tau_0)}\tilde{\eta})\,d\tilde{\eta}. 
 \end{align*}
To dispose of the contribution when $\tilde{\eta}\geq 1$, we simply observe that when in addition $\xi>1$ we have that 
\begin{align*}
B(\tau, \sigma, \xi, \tilde{\eta}): = \frac{\lambda^{\frac{3}{2}}(\tau)}{\lambda^{\frac{3}{2}}(\sigma)}\frac{\xi^{\frac14+}\rho^{\frac{1}{2}}(\frac{\lambda^2(\tau)}{\lambda^2(\sigma)}\xi)F(\frac{\lambda^2(\tau)}{\lambda^2(\sigma)}\xi, \frac{\lambda^2(\tau)}{\lambda^2(\sigma)}\tilde{\eta})\rho^{\frac{1}{2}}(\frac{\lambda^2(\tau)}{\lambda^2(\sigma)}\tilde{\eta})}{(\frac{\lambda^2(\tau)}{\lambda^2(\sigma)}\tilde{\eta})^{\frac{1}{2}}}
\end{align*}
satisfies $\big|B(\tau, \sigma, \xi, \tilde{\eta})\big|\lesssim \big(\frac{\lambda(\tau)}{\lambda(\sigma)}\big)^{-\frac12}$; indeed, this follows from the bounds established in \cite{KST}. Furthermore, we have (for $\tilde{\eta}\geq 1$)
\[
\big|\rho^{\frac{1}{2}}(\frac{\lambda^2(\tau)}{\lambda^2(\tau_0)}\tilde{\eta}) x_1(\frac{\lambda^2(\tau)}{\lambda^2(\tau_0)}\tilde{\eta})\big|\lesssim \big(\frac{\lambda(\tau)}{\lambda(\tau_0)}\big)^{-\frac12}\big|(\cdot)^{\frac{1}{2}}x_1\big|(\frac{\lambda^2(\tau)}{\lambda^2(\tau_0)}\tilde{\eta})
\]
Furthermore, the operator given by integration with respect to $\tilde{\eta}$ and kernel $\frac{B(\tau, \sigma, \xi, \tilde{\eta})}{\xi - \tilde{\eta}}$ is seen to be bounded as an operator on $L^2_{d\xi}$ by arguments as in \cite{KST} with norm bound $\big(\frac{\lambda(\tau)}{\lambda(\sigma)}\big)^{-\frac12 +}$. We then easily infer the desired bound 
\begin{align*}
&\big\| \chi_{\xi>1}\langle\xi\rangle^{1+}\int_{\tau_0}^\tau\frac{\lambda^{\frac{3}{2}}(\tau)}{\lambda^{\frac{3}{2}}(\sigma)}\frac{\rho^{\frac{1}{2}}(\frac{\lambda^2(\tau)}{\lambda^2(\sigma)}\xi)}{\rho^{\frac{1}{2}}(\xi)}\frac{\sin[\lambda(\tau)\xi^{\frac{1}{2}}\int_{\tau}^\sigma\lambda^{-1}(u)\,du]}{\xi^{\frac{1}{2}}}\beta_{\nu}^2(\sigma)A_{\tilde{\eta}\geq 1}(\xi, \tau, \tau_0, \sigma)\,d\sigma\big\|_{L^2_{d\xi}}\\
&\lesssim \big\|x_1\big\|_{S_2}\int_{\tau_0}^\tau\frac{\lambda^{2-}(\sigma)}{\lambda^{(2-)}(\tau)}\beta_{\nu}^2(\sigma)\,d\sigma\lesssim \tau_0^{-1}\big\|(x_0, x_1)\big\|_{S}. 
\end{align*}
where the subscript in $A_{\tilde{\eta}\geq 1}$ indicates the restriction of the integrated variable we additionally impose in the definition of $A(\xi, \tau, \tau_0, \sigma)$. 
\\
Continuing with the contribution of the second term in \eqref{eq:Kx0formula}, the case $\tilde{\eta}<1$ is much more delicate, and we have to exploit the fine structure of $A(\xi, \tau, \tau_0, \sigma)$, as well as the vanishing condition of $x_1$, to treat it. Write (\cite{KST}) 
 \[
 F(\frac{\lambda^2(\tau)}{\lambda^2(\sigma)}\xi, \frac{\lambda^2(\tau)}{\lambda^2(\sigma)}\tilde{\eta}) = F(\frac{\lambda^2(\tau)}{\lambda^2(\sigma)}\xi, 0) + O([\frac{\lambda^2(\tau)}{\lambda^2(\sigma)}\xi]^{-N} \frac{\lambda^2(\tau)}{\lambda^2(\sigma)}\tilde{\eta})
 \]
 Since $O([\frac{\lambda^2(\tau)}{\lambda^2(\sigma)}\xi]^{-N} \frac{\lambda^2(\tau)}{\lambda^2(\sigma)}\tilde{\eta}) = O([\frac{\lambda^2(\tau)}{\lambda^2(\sigma)}\xi]^{-(N-1)}\tilde{\eta})$ in light of $\xi>1$, we get 
 \begin{align*}
&\big|O([\frac{\lambda^2(\tau)}{\lambda^2(\sigma)}\xi]^{-N} \frac{\lambda^2(\tau)}{\lambda^2(\sigma)}\tilde{\eta})\rho^{\frac{1}{2}}(\frac{\lambda^2(\tau)}{\lambda^2(\sigma)}\tilde{\eta})\rho^{\frac{1}{2}}(\frac{\lambda^2(\tau)}{\lambda^2(\tau_0)}\tilde{\eta})\frac{\sin[\lambda(\tau)\tilde{\eta}^{\frac{1}{2}}\int_{\tau_0}^{\sigma}\lambda^{-1}(u)\,du]}{(\frac{\lambda^2(\tau)}{\lambda^2(\sigma)}\tilde{\eta})^{\frac{1}{2}}}\big|\\
&\lesssim \big[\chi_{\frac{\lambda^2(\tau)}{\lambda^2(\tau_0)}\tilde{\eta}\geq 1}\tilde{\eta}^{\frac{1}{4}}[\frac{\lambda^2(\tau)}{\lambda^2(\tau_0)}\tilde{\eta}]^{\frac{1}{4}} + \chi_{\frac{\lambda^2(\tau)}{\lambda^2(\tau_0)}\tilde{\eta}<1}\big][\frac{\lambda^2(\tau)}{\lambda^2(\sigma)}\xi]^{-(N-1)} 
\end{align*}
where we keep the assumption $\sigma\leq \tau, \tilde{\eta}<1$, $\xi>1$ in mind. It follows that substituting $O([\frac{\lambda^2(\tau)}{\lambda^2(\sigma)}\xi]^{-N} \frac{\lambda^2(\tau)}{\lambda^2(\sigma)}\tilde{\eta})
$ instead of $F(\frac{\lambda^2(\tau)}{\lambda^2(\sigma)}\xi, \frac{\lambda^2(\tau)}{\lambda^2(\sigma)}\tilde{\eta})$ in $A_{\tilde{\eta}<1}(\xi, \tau, \tau_0, \sigma)$ and calling the corresponding term $\tilde{A}_{\tilde{\eta}<1}(\xi, \tau, \tau_0, \sigma)$, we get 
\begin{align*}
 &\big\|\int_{\tau_0}^\tau\frac{\lambda^{\frac{3}{2}}(\tau)}{\lambda^{\frac{3}{2}}(\sigma)}\frac{\rho^{\frac{1}{2}}(\frac{\lambda^2(\tau)}{\lambda^2(\sigma)}\xi)}{\rho^{\frac{1}{2}}(\xi)}\frac{\sin[\lambda(\tau)\xi^{\frac{1}{2}}\int_{\tau}^\sigma\lambda^{-1}(u)\,du]}{\xi^{\frac{1}{2}}}\beta_{\nu}^2(\sigma)\tilde{A}_{\tilde{\eta}<1}(\xi, \tau, \tau_0, \sigma)\,d\sigma\big\|_{\langle\xi\rangle^{-1-}L^2_{d\xi}(\xi>1)}\\
 &\lesssim \tau_0^{-1}\big\|x_1\big\|_{\langle\xi\rangle^{-\frac{1}{2}-}L^2_{d\xi}},\\
\end{align*}
Similarly, using 
\[
\rho^{\frac{1}{2}}(\frac{\lambda^2(\tau)}{\lambda^2(\sigma)}\tilde{\eta}) = \chi_{\frac{\lambda^2(\tau)}{\lambda^2(\sigma)}\tilde{\eta}>1}\rho^{\frac{1}{2}}(\frac{\lambda^2(\tau)}{\lambda^2(\sigma)}\tilde{\eta})  + c\chi_{\frac{\lambda^2(\tau)}{\lambda^2(\sigma)}\tilde{\eta}<1}[\frac{\lambda^2(\tau)}{\lambda^2(\sigma)}\tilde{\eta}]^{-\frac{1}{4}}(1+O([\frac{\lambda^2(\tau)}{\lambda^2(\sigma)}\tilde{\eta})]^{\frac{1}{2}})
\]
for a suitable constant $c>0$, and using that 
\begin{align*}
&\big\|\tilde{\eta}^{-\frac{1}{4}}\rho^{\frac{1}{2}}(\frac{\lambda^2(\tau)}{\lambda^2(\tau_0)}\tilde{\eta})x_1(\frac{\lambda^2(\tau)}{\lambda^2(\tau_0)}\tilde{\eta})\big\|_{L^1_{d\tilde{\eta}}}\lesssim \big(\frac{\lambda(\tau_0)}{\lambda(\tau)}\big)^{\frac{3}{2}}\big\|x_1\big\|_{\langle\xi\rangle^{-\frac{1}{2}-}\xi^{0+}L^2_{d\xi}},
\end{align*}
we see that we may replace the factor $\rho^{\frac{1}{2}}(\frac{\lambda^2(\tau)}{\lambda^2(\sigma)}\tilde{\eta}) $ by $\chi_{\frac{\lambda^2(\tau)}{\lambda^2(\sigma)}\tilde{\eta}<1}[\frac{\lambda^2(\tau)}{\lambda^2(\sigma)}\tilde{\eta}]^{-\frac{1}{4}}$. Further, one may replace $\frac{1}{\xi-\tilde{\eta}}$ by $\frac{1}{\xi}$, contribution bounded by $\lesssim \tau_0^{-1}\big\|x_1\big\|_{\langle\xi\rangle^{-\frac{1}{2}-}L^2_{d\xi}}$. 
\\
It now follows that we may replace $A_{\tilde{\eta}<1}(\xi, \tau, \tau_0, \sigma)$ by a term of the form 
\[
\frac{\lambda^{\frac32}(\sigma)}{\lambda^{\frac32}(\tau_0)}\frac{F(\frac{\lambda^2(\tau)}{\lambda^2(\sigma)}\xi, 0)}{\xi}\int_0^\infty\chi_{\frac{\lambda^2(\tau)}{\lambda^2(\sigma)}\tilde{\eta}<1} \frac{\sin[\lambda(\tau)\tilde{\eta}^{\frac{1}{2}}\int_{\tau_0}^{\sigma}\lambda^{-1}(u)\,du]}{(\frac{\lambda^2(\tau)}{\lambda^2(\sigma)}\tilde{\eta})^{\frac{3}{4}}}\cdot \rho^{\frac{1}{2}}(\frac{\lambda^2(\tau)}{\lambda^2(\tau_0)}\tilde{\eta})x_1(\frac{\lambda^2(\tau)}{\lambda^2(\tau_0)}\tilde{\eta})\,d\tilde{\eta}
\]
Here one may easily get rid of the cutoff $\chi_{\frac{\lambda^2(\tau)}{\lambda^2(\sigma)}\tilde{\eta}<1} $, as the contribution of $\chi_{\frac{\lambda^2(\tau)}{\lambda^2(\sigma)}\tilde{\eta}\geq1}$ is treated like before on account of 
\begin{align*}
&\big|\rho^{\frac{1}{2}}(\frac{\lambda^2(\tau)}{\lambda^2(\tau_0)}\tilde{\eta})x_1(\frac{\lambda^2(\tau)}{\lambda^2(\tau_0)}\tilde{\eta})\big|\\
& \lesssim \big|\chi_{\frac{\lambda^2(\tau)}{\lambda^2(\tau_0)}\tilde{\eta}>1}\big(\frac{\lambda(\tau_0)}{\lambda(\tau)}\big)^{\frac{1}{2}}\tilde{\eta}^{-\frac{1}{4}}\rho(\frac{\lambda^2(\tau)}{\lambda^2(\tau_0)}\tilde{\eta})x_1(\frac{\lambda^2(\tau)}{\lambda^2(\tau_0)}\tilde{\eta})\big| + \big|\tilde{\eta}^{-\frac{1}{4}}\frac{\lambda^{\frac{1}{2}}(\tau_0)}{\lambda^{\frac{1}{2}}(\tau)}x_1(\frac{\lambda^2(\tau)}{\lambda^2(\tau_0)}\tilde{\eta})\big| 
\end{align*}
For the remaining expression, if one  enacts a change of variables, it is transformed into 
\[
\frac{F(\frac{\lambda^2(\tau)}{\lambda^2(\sigma)}\xi, 0)}{\frac{\lambda^{\frac{3}{2}}(\tau)}{\lambda^{\frac{3}{2}}(\sigma)}\xi}\frac{\lambda(\tau)}{\lambda(\tau_0)}\int_0^\infty\frac{\sin[\lambda(\tau_0)\tilde{\eta}^{\frac{1}{2}}\int_{\tau_0}^{\sigma}\lambda^{-1}(u)\,du]}{\tilde{\eta}^{\frac{3}{4}}}\cdot \rho^{\frac{1}{2}}(\tilde{\eta})x_1(\tilde{\eta})\,d\tilde{\eta}
\]
But this is precisely the expression which came up at the end of the proof of Proposition~\ref{prop:lingrowthcond} and so, in light of the vanishing condition on $x_1$, we can bound it by 
\begin{align*}
&\big|\frac{F(\frac{\lambda^2(\tau)}{\lambda^2(\sigma)}\xi, 0)}{\frac{\lambda^{\frac{3}{2}}(\tau)}{\lambda^{\frac{3}{2}}(\sigma)}\xi}\frac{\lambda(\tau)}{\lambda(\tau_0)}\int_0^\infty\frac{\sin[\lambda(\tau_0)\tilde{\eta}^{\frac{1}{2}}\int_{\tau_0}^{\sigma}\lambda^{-1}(u)\,du]}{\tilde{\eta}^{\frac{3}{4}}}\cdot \rho^{\frac{1}{2}}(\tilde{\eta})x_1(\tilde{\eta})\,d\tilde{\eta}\big|\\
&\lesssim \frac{F(\frac{\lambda^2(\tau)}{\lambda^2(\sigma)}\xi, 0)}{\frac{\lambda^{\frac{1}{2}}(\tau)}{\lambda^{\frac{1}{2}}(\sigma)}\xi}\sigma\big\|\langle\xi\rangle^{\frac{1}{2}+}\xi^{0-}x_1\big\|_{L^2_{d\xi}}. 
\end{align*}
Finally, substituting this for $A_{\tilde{\eta}<1}(\xi, \tau, \tau_0, \sigma)$ we arrive at the bound 
\begin{align*}
 &\big\|\int_{\tau_0}^\tau\frac{\lambda^{\frac{3}{2}}(\tau)}{\lambda^{\frac{3}{2}}(\sigma)}\frac{\rho^{\frac{1}{2}}(\frac{\lambda^2(\tau)}{\lambda^2(\sigma)}\xi)}{\rho^{\frac{1}{2}}(\xi)}\frac{\sin[\lambda(\tau)\xi^{\frac{1}{2}}\int_{\tau}^\sigma\lambda^{-1}(u)\,du]}{\xi^{\frac{1}{2}}}\beta_{\nu}^2(\sigma)A_{\tilde{\eta}<1}(\xi, \tau, \tau_0, \sigma)\,d\sigma\big\|_{\langle\xi\rangle^{-1-}L^2_{d\xi}(\xi>1)}\\
 &\lesssim \big\|x_1\big\|_{\langle\xi\rangle^{-\frac{1}{2}-}\xi^{0+}L^2_{d\xi}},\\
\end{align*}
which is also acceptable. 
\begin{rem}\label{rem:prop71highfreqimprov} We note that in the last step we could have performed an integration by parts with respect to $\sigma$, and gained an extra power $\tau_0^{-(1-)}$. 

\end{rem}

{\bf{Step 1 (ii)}}: {\it{Contribution of $ \beta_{\nu}(\tau)\mathcal{K}_{cc}\mathcal{D}_{\tau}x^{(0)}$.}} This follows the exact same pattern as before, except that now we have only one power of decay $\sigma^{-1}$ but in exchange we replace $A(\xi, \tau, \tau_0, \sigma)$ (corresponding to the contribution by the second term in \eqref{eq:Kx0formula}) by 
 \begin{align*}
 &A(\xi, \tau, \tau_0, \sigma):\\
 & =  \int_0^\infty\frac{\lambda^{\frac32}(\sigma)}{\lambda^{\frac32}(\tau_0)}\frac{F(\frac{\lambda^2(\tau)}{\lambda^2(\sigma)}\xi, \frac{\lambda^2(\tau)}{\lambda^2(\sigma)}\tilde{\eta})\rho^{\frac{1}{2}}(\frac{\lambda^2(\tau)}{\lambda^2(\sigma)}\tilde{\eta})}{\xi- \tilde{\eta}}\rho^{\frac{1}{2}}(\frac{\lambda^2(\tau)}{\lambda^2(\tau_0)}\tilde{\eta})\cos[\lambda(\tau)\tilde{\eta}^{\frac{1}{2}}\int_{\tau_0}^{\sigma}\lambda^{-1}(u)\,du]\\
 &\hspace{10cm}\cdot x_1(\frac{\lambda^2(\tau)}{\lambda^2(\tau_0)}\tilde{\eta})\,d\tilde{\eta},
\end{align*}
and here we no longer lose a factor $\sigma$ as above (which necessitated the decay factor $\beta_{\nu}^2(\sigma)$ to close things). Note that one doesn't lose logarithmically here for the time integral (in $\sigma$), as one gains a factor of the form $(\frac{\lambda(\tau)}{\lambda(\sigma)})^{-\alpha}$ for $\alpha>0$.
\\

{\bf{Step 2}}: {\it{Proof of the high frequency estimate \eqref{eq:firstiterhigh} for the contribution from $\mathcal{R}_2(\tau, x^{(0)}_d)$.}} Here we exploit the fact that the operator $\mathcal{K}_{cd}$ is given by a kernel $K_{cd}(\xi)$ which is smooth and rapidly decaying, see \cite{KST}. In particular, we infer 
\begin{align*}
\big\|\chi_{\xi>1}\langle\xi\rangle^{1+}\frac{\lambda^{\frac{3}{2}}(\tau)}{\lambda^{\frac{3}{2}}(\sigma)}\frac{\rho^{\frac{1}{2}}(\frac{\lambda^2(\tau)}{\lambda^2(\sigma)}\xi)}{\rho^{\frac{1}{2}}(\xi)}\frac{\sin[\lambda(\tau)\xi^{\frac{1}{2}}\int_{\tau}^\sigma\lambda^{-1}(u)\,du]}{\xi^{\frac{1}{2}}}K_{cd}(\frac{\lambda^2(\tau)}{\lambda^2(\sigma)}\xi)\big\|_{L^2_{d\xi}}\lesssim_N \big(\frac{\lambda^2(\sigma)}{\lambda^2(\tau)}\big)^N,
\end{align*}
which entails the bound 
\begin{align*}
\big\|\chi_{\xi>1}\langle\xi\rangle^{1+}\int_{\tau_0}^{\tau}\frac{\lambda^{\frac{3}{2}}(\tau)}{\lambda^{\frac{3}{2}}(\sigma)}\frac{\rho^{\frac{1}{2}}(\frac{\lambda^2(\tau)}{\lambda^2(\sigma)}\xi)}{\rho^{\frac{1}{2}}(\xi)}\frac{\sin[\lambda(\tau)\xi^{\frac{1}{2}}\int_{\tau}^\sigma\lambda^{-1}(u)\,du]}{\xi^{\frac{1}{2}}}\mathcal{R}_2(\sigma, x^{(0)}_d)\,d\sigma\big\|_{L^2_{d\xi}}\lesssim \big|x_{0d}\big|,
\end{align*}
in light of the estimate satisfied by $x_d^{(0)}$ via Lemma~\ref{lem:linhom}. 
\\

{\bf{Step 3}}: {\it{Towards the low frequency bound; choosing the corrections $(\triangle \tilde{\tilde{x}}_0^{(1)}, \triangle \tilde{\tilde{x}}_1^{(1)})$.}} We next restrict to $\xi<1$. Here the factor 
\[
\frac{\sin[\lambda(\tau)\xi^{\frac{1}{2}}\int_{\tau}^\sigma\lambda^{-1}(u)\,du]}{\xi^{\frac{1}{2}}}
\]
becomes potentially dangerous, as we cannot absorb the $\xi^{-\frac{1}{2}}$ into the term 
\[
\sin[\lambda(\tau)\xi^{\frac{1}{2}}\int_{\tau}^\sigma\lambda^{-1}(u)\,du]
\]
without losing a very large factor $\frac{\lambda(\tau)}{\lambda(\sigma)}\sigma$. On the other hand, such a factor is essentially harmless when $\sigma\geq \tau$, and so reformulate the expression accordingly. Specifically, we now write 
\begin{align*}
 &\triangle x^{(1)}(\tau, \xi): \\
 &=\int_{\tau_0}^\tau\frac{\lambda^{\frac{3}{2}}(\tau)}{\lambda^{\frac{3}{2}}(\sigma)}\frac{\rho^{\frac{1}{2}}(\frac{\lambda^2(\tau)}{\lambda^2(\sigma)}\xi)}{\rho^{\frac{1}{2}}(\xi)}\frac{\sin[\lambda(\tau)\xi^{\frac{1}{2}}\int_{\tau}^\sigma\lambda^{-1}(u)\,du]}{\xi^{\frac{1}{2}}}\mathcal{R}(\sigma, \underline{x}^{(0)})(\frac{\lambda^2(\tau)}{\lambda^2(\sigma)}\xi)\,d\sigma\\
 & + S(\tau)(\triangle \tilde{\tilde{x}}_0^{(1)}, \triangle\tilde{\tilde{x}}_1^{(1)})\\
 & = -\int_{\tau}^\infty\frac{\lambda^{\frac{3}{2}}(\tau)}{\lambda^{\frac{3}{2}}(\sigma)}\frac{\rho^{\frac{1}{2}}(\frac{\lambda^2(\tau)}{\lambda^2(\sigma)}\xi)}{\rho^{\frac{1}{2}}(\xi)}\frac{\sin[\lambda(\tau)\xi^{\frac{1}{2}}\int_{\tau}^\sigma\lambda^{-1}(u)\,du]}{\xi^{\frac{1}{2}}}\mathcal{R}(\sigma, \underline{x}^{(0)})(\frac{\lambda^2(\tau)}{\lambda^2(\sigma)}\xi)\,d\sigma\\
 &+ \int_{\tau_0}^\infty\frac{\lambda^{\frac{3}{2}}(\tau)}{\lambda^{\frac{3}{2}}(\sigma)}\frac{\rho^{\frac{1}{2}}(\frac{\lambda^2(\tau)}{\lambda^2(\sigma)}\xi)}{\rho^{\frac{1}{2}}(\xi)}\frac{\sin[\lambda(\tau)\xi^{\frac{1}{2}}\int_{\tau}^\sigma\lambda^{-1}(u)\,du]}{\xi^{\frac{1}{2}}}\mathcal{R}(\sigma, \underline{x}^{(0)})(\frac{\lambda^2(\tau)}{\lambda^2(\sigma)}\xi)\,d\sigma\\
 & + S(\tau)(\triangle \tilde{\tilde{x}}_0^{(1)}, \triangle\tilde{\tilde{x}}_1^{(1)})(\xi) =: A + B + C. 
 \end{align*}
Observe that now both $B$, $C$ solve the homogeneous equation (recall that here we are analysing the continuous part of the correction $\underline{\triangle x}^{(1)}$)
\[
(\mathcal{D}_{\tau}^2x + \beta_{\nu}(\tau)\mathcal{D}_{\tau}x + \xi x) = 0. 
\]
Also, we shall re-denote 
\[
A = : \triangle x^{(1)}_{>\tau}(\tau, \xi)
\]
Moreover, as both $B, C$ are solutions of the free problem, we can write 
\[
(B+C)(\tau, \xi) = S(\tau)\big(\triangle \tilde{x}^{(1)}_0(\xi), \triangle \tilde{x}^{(1)}_1(\xi)\big)
\]
where we have 
\begin{align*}
\triangle \tilde{x}^{(1)}_0(\xi) = &\int_{\tau_0}^\infty\frac{\lambda^{\frac{3}{2}}(\tau_0)}{\lambda^{\frac{3}{2}}(\sigma)}\frac{\rho^{\frac{1}{2}}(\frac{\lambda^2(\tau_0)}{\lambda^2(\sigma)}\xi)}{\rho^{\frac{1}{2}}(\xi)}\frac{\sin[\lambda(\tau_0)\xi^{\frac{1}{2}}\int_{\tau_0}^\sigma\lambda^{-1}(u)\,du]}{\xi^{\frac{1}{2}}}\mathcal{R}(\sigma, \underline{x}^{(0)})(\frac{\lambda^2(\tau_0)}{\lambda^2(\sigma)}\xi)\,d\sigma\\
& + \triangle \tilde{\tilde{x}}_0^{(1)}\\
&=:\widetilde{\triangle} \tilde{x}^{(1)}_0(\xi) + \triangle\tilde{\tilde{x}}_0^{(1)}(\xi).
\end{align*}
as well as 
\begin{align*}
\triangle \tilde{x}^{(1)}_1(\xi) =&\int_{\tau_0}^\infty\frac{\lambda^{\frac{3}{2}}(\tau_0)}{\lambda^{\frac{3}{2}}(\sigma)}\frac{\rho^{\frac{1}{2}}(\frac{\lambda^2(\tau_0)}{\lambda^2(\sigma)}\xi)}{\rho^{\frac{1}{2}}(\xi)}\cos[\lambda(\tau_0)\xi^{\frac{1}{2}}\int_{\tau_0}^\sigma\lambda^{-1}(u)\,du]\mathcal{R}(\sigma, \underline{x}^{(0)})(\frac{\lambda^2(\tau_0)}{\lambda^2(\sigma)}\xi)\,d\sigma\\& + \triangle\tilde{\tilde{x}}_1^{(1)}(\xi)\\
&=:\widetilde{\triangle} \tilde{x}^{(1)}_1(\xi) + \triangle\tilde{\tilde{x}}_1^{(1)}(\xi).
\end{align*}

This is now the point where we can choose $\triangle\tilde{\tilde{x}}_{0,1}^{(1)}(\xi)$.  In fact, we claim

\begin{lem}\label{lem:choiceofcorrection}There exist $(\triangle\tilde{\tilde{x}}_0^{(1)}(\xi), \triangle\tilde{\tilde{x}}_1^{(1)}(\xi))$ satisfying \eqref{eq:keysmall} and such that we have 
\begin{equation}\label{eq:vanishing2}
\int_0^\infty \frac{(\rho^{\frac{1}{2}}\triangle\tilde{x}^{(1)}_0)(\xi)}{\xi^{\frac{1}{4}}}\cos[\nu\tau_0\xi^{\frac{1}{2}}]\,d\xi = 0, \int_0^\infty \frac{(\rho^{\frac{1}{2}}\triangle\tilde{x}^{(1)}_1)(\xi)}{\xi^{\frac{3}{4}}}\sin[\nu\tau_0\xi^{\frac{1}{2}}]\,d\xi = 0.
\end{equation}
\end{lem}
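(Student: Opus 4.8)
The goal is to choose the pair $(\triangle\tilde{\tilde{x}}_0^{(1)}, \triangle\tilde{\tilde{x}}_1^{(1)})$ so that the two linear functionals in \eqref{eq:vanishing2} annihilate $\triangle\tilde{x}^{(1)}_{0,1} = \widetilde{\triangle}\tilde{x}^{(1)}_{0,1} + \triangle\tilde{\tilde{x}}^{(1)}_{0,1}$. Since these functionals are linear and the terms $\widetilde{\triangle}\tilde{x}^{(1)}_{0,1}(\xi)$ are already fixed (they are the Duhamel contributions defined just above the lemma), the natural strategy is to define $\triangle\tilde{\tilde{x}}^{(1)}_{0,1}$ as a suitable \emph{one-parameter correction} — essentially a rescaling/modulation profile — chosen so that the resulting values of the functionals are exactly cancelled. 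Concretely, I would fix a once-and-for-all reference profile pair $(\Phi_0(\xi), \Phi_1(\xi))$, smooth and compactly supported away from $\xi = 0$ and $\xi = \infty$ (so that both lie in $\tilde{S}$ with $O(1)$ norm and the integrals against $\xi^{-1/4}\cos[\nu\tau_0\xi^{1/2}]$, $\xi^{-3/4}\sin[\nu\tau_0\xi^{1/2}]$ converge and are nonvanishing — one picks $\Phi_0$ so the first integral is a fixed nonzero constant and $\Phi_1$ so the second is), and set
\[
\triangle\tilde{\tilde{x}}_0^{(1)}(\xi) = c_0\,\Phi_0(\xi),\qquad \triangle\tilde{\tilde{x}}_1^{(1)}(\xi) = c_1\,\Phi_1(\xi),
\]
with scalars $c_0, c_1$ to be solved for. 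The first vanishing condition then reads
\[
\int_0^\infty \frac{\rho^{\frac12}(\xi)\widetilde{\triangle}\tilde{x}^{(1)}_0(\xi)}{\xi^{\frac14}}\cos[\nu\tau_0\xi^{\frac12}]\,d\xi + c_0\int_0^\infty\frac{\rho^{\frac12}(\xi)\Phi_0(\xi)}{\xi^{\frac14}}\cos[\nu\tau_0\xi^{\frac12}]\,d\xi = 0,
\]
which is a \emph{scalar linear equation} in $c_0$ with invertible (nonzero, $O(1)$) coefficient; likewise for $c_1$ and the second condition. So one simply solves for $c_0, c_1$ explicitly.

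**Verifying the size bound \eqref{eq:keysmall}.** The real content is to show $|c_0|, |c_1| \lesssim \tau_0^{-(1-)}[\|(x_0,x_1)\|_{\tilde S} + |x_{0d}|]$, which by construction reduces to estimating the two scalar integrals
\[
\int_0^\infty \frac{\rho^{\frac12}(\xi)\widetilde{\triangle}\tilde{x}^{(1)}_{0}(\xi)}{\xi^{\frac14}}\cos[\nu\tau_0\xi^{\frac12}]\,d\xi,\qquad \int_0^\infty \frac{\rho^{\frac12}(\xi)\widetilde{\triangle}\tilde{x}^{(1)}_{1}(\xi)}{\xi^{\frac34}}\sin[\nu\tau_0\xi^{\frac12}]\,d\xi,
\]
and showing each is $O(\tau_0^{-(1-)})$ times the data norm. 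For this I would insert the definition of $\widetilde{\triangle}\tilde{x}^{(1)}_{0,1}$ as a $\sigma$-integral over $[\tau_0,\infty)$ of the Duhamel kernel applied to $\mathcal{R}(\sigma, \underline x^{(0)})$, swap the $\xi$- and $\sigma$-integrations (Fubini, justified by the decay in $\sigma$ coming from the $\beta_\nu(\sigma)^2 \sim \sigma^{-2}$ and $\beta_\nu(\sigma)\sim\sigma^{-1}$ prefactors in $\mathcal{R}$, together with the $\lambda^{3/2}(\tau_0)/\lambda^{3/2}(\sigma)$ gain), and for each fixed $\sigma$ recognize the inner $\xi$-integral as precisely the type of oscillatory integral already analyzed in Step 1 of the present proof and at the end of the proof of Proposition~\ref{prop:lingrowthcond}. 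In particular, after the change of variable $\tilde\xi = \frac{\lambda^2(\tau_0)}{\lambda^2(\sigma)}\xi$ and the identity \eqref{eq:simplebutimportant}, the $\sin/\cos$ product splits and the `$\cos - 1$' type gain plus the Cauchy–Schwarz bounds already established give, for each $\sigma$, a contribution $\lesssim \sigma \cdot \big(\frac{\lambda(\tau_0)}{\lambda(\sigma)}\big)^{\alpha}\|(x_0,x_1)\|_{\tilde S}$ for some $\alpha>0$; integrating in $\sigma$ against $\beta_\nu^2(\sigma)\,d\sigma \sim \sigma^{-2}\,d\sigma$ on $[\tau_0,\infty)$ (with the one-power-of-$\sigma$ loss absorbed) yields $\lesssim \tau_0^{-1+}[\|(x_0,x_1)\|_{\tilde S} + |x_{0d}|]$, as claimed. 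The $\mathcal{R}_2$ (discrete-source) piece is handled exactly as in Step 2, using the rapid decay of $K_{cd}$ and the exponential decay of $x_d^{(0)}$, and is of size $\lesssim \tau_0^{-N}|x_{0d}|$, which is better.

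**Lipschitz dependence and the main obstacle.** Since $c_0, c_1$ depend linearly on $(\widetilde{\triangle}\tilde{x}^{(1)}_0, \widetilde{\triangle}\tilde{x}^{(1)}_1)$, which in turn depend linearly on $\underline x^{(0)}$ and hence on the data $(x_0,x_1,x_{0d})$, the Lipschitz (indeed linear, with small constant) dependence required later for the construction of $\Sigma_0$ follows immediately from the size bound above. The main obstacle is not the algebra — solving a scalar equation is trivial — but rather the quantitative bound on the scalar integrals: one must check that swapping the $\sigma$- and $\xi$-integrals is legitimate and that no logarithmic loss in $\tau_0$ creeps in that would spoil the gain $\tau_0^{-(1-)}$ (as opposed to merely $\tau_0^{0-}$), since \eqref{eq:keysmall} insists the correction be genuinely \emph{smaller} than the data by a power of $\tau_0$; this is exactly where the extra power of decay noted in Remark~\ref{rem:prop71highfreqimprov} (integration by parts in $\sigma$) is used, together with the fact that the low-frequency factor $\min\{\tau_0^2\xi,1\}^{-1/2}$ built into $\|\cdot\|_{\tilde S}$ makes all the relevant integrals absolutely convergent near $\xi = 0$. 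One also needs that the reference integrals $\int\rho^{1/2}\Phi_0\,\xi^{-1/4}\cos[\nu\tau_0\xi^{1/2}]\,d\xi$ and $\int\rho^{1/2}\Phi_1\,\xi^{-3/4}\sin[\nu\tau_0\xi^{1/2}]\,d\xi$ stay bounded away from zero uniformly in $\tau_0 \gg 1$; this is arranged by choosing $\Phi_{0,1}$ supported in a fixed dyadic band $\xi \sim \tau_0^{-2}$ (equivalently, rescaling a fixed profile), so that the oscillatory factors are $\sim 1$ there and the integrals equal a fixed nonzero constant independent of $\tau_0$, at the cost of a harmless $\tau_0$-power in $\|\Phi_{0,1}\|_{\tilde S}$ that is still consistent with \eqref{eq:keysmall}. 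This completes the proof of Lemma~\ref{lem:choiceofcorrection}.
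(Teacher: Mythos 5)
Your reduction to a pair of decoupled scalar linear equations, together with picking a one--parameter correction profile and solving for the coefficient, is exactly the strategy the paper takes: the paper's choice $\triangle\tilde{\tilde{x}}_1^{(1)} = \alpha\,\mathcal{F}(\chi_{R\leq C\tau_0}\phi(R,0))$ is precisely a frequency profile concentrated near $\xi\lesssim\tau_0^{-2}$, which is what you propose. So the plan is sound. However, the quantitative bookkeeping that makes \eqref{eq:keysmall} come out right contains a genuine error, and you have mis-located the source of the $\tau_0^{-(1-)}$ gain.

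You claim the Duhamel contribution to the two linear functionals is $\lesssim\tau_0^{-(1-)}$. That is not correct, and your own sketched computation contradicts it: with the pointwise bound $\lesssim\sigma\cdot(\lambda(\tau_0)/\lambda(\sigma))^{\alpha}$ at each $\sigma$ and the weight $\beta_\nu^2(\sigma)\sim\sigma^{-2}$, the $\sigma$-integral over $[\tau_0,\infty)$ is $\int_{\tau_0}^\infty\sigma^{-1}(\tau_0/\sigma)^{\alpha(1+\nu^{-1})}\,d\sigma = O(1)$, not $O(\tau_0^{-1})$. The paper's Step (1) of this proof proves exactly
\[
\big|\textstyle\int_0^\infty \tfrac{(\rho^{1/2}\widetilde{\triangle}\tilde{x}^{(1)}_1)(\xi)}{\xi^{3/4}}\sin[\nu\tau_0\xi^{1/2}]\,d\xi\big|\lesssim\tau_0^{0+}\|(x_0,x_1)\|_{\tilde S}+\tau_0^{-(1-)}|x_{0d}|,
\]
and $\tau_0^{0+}$ (a small \emph{positive} power) is the best one gets; the cite to Remark~\ref{rem:prop71highfreqimprov} does not help here, as that remark concerns the high-frequency $S_1$-bound where the outer oscillating factor has $\sigma$-dependence — there is no analogous $\sigma$-oscillation to integrate by parts against once one has performed the change of variable in the functional.

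The actual source of the $\tau_0^{-(1-)}$ in \eqref{eq:keysmall} is the \emph{leverage ratio} of the correction profile. For the paper's choice, the functional value of $\triangle\tilde{\tilde{x}}_1^{(1)}=\alpha\mathcal{F}(\chi_{R\leq C\tau_0}\phi(R,0))$ is $\sim\alpha\tau_0$ while $\|(0,\triangle\tilde{\tilde{x}}_1^{(1)})\|_{\tilde S}\lesssim\alpha\tau_0^{0+}$; hence killing a functional of size $\tau_0^{0+}$ requires $\alpha\sim\tau_0^{-1+}$ and yields the $\tilde S$-norm $\lesssim\tau_0^{-1+}$ of the correction. In your normalization (functional $\sim1$), the same computation gives $\|\Phi_{0,1}\|_{\tilde S}\sim\tau_0^{-1+}$ — a \emph{negative} power, and the crucial one. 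Your phrase ``at the cost of a harmless $\tau_0$-power in $\|\Phi_{0,1}\|_{\tilde S}$'' leaves this sign undetermined and frames the key quantitative input as a cosmetic side effect; without that negative power the argument does not close, because the Duhamel functional bound alone only gives $\tau_0^{0+}$. You should explicitly compute $\|\Phi_{0,1}\|_{\tilde S}$ in the chosen dyadic band and identify this as the mechanism producing the smallness in \eqref{eq:keysmall}, and correct the Duhamel bound to $\tau_0^{0+}$ (which is all that is needed and all that is true).

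Finally, a minor but worthwhile addition you make that the paper does not: you observe that $c_0,c_1$ depend \emph{linearly} on the data, so the Lipschitz dependence of the correction follows for free; the paper defers this observation to the assembly of Theorem~\ref{thm:MainTechnical}.
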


A key point here is to satisfy the bound \eqref{eq:keysmall}, since {\it{this is the first correction to the data}}, which has to be of smaller size than the original chosen data to obtain an initial data set $\Sigma_0$ which is identifiable as a co-dimension one Lipschitz surface in $S$. The fact that we can achieve such a gain comes from the fact that the terms in $\mathcal{R}$ enjoy a special vanishing property on account of their symbols. 
\begin{proof}(lemma) The two bounds being quite similar, we prove the second one. We proceed in two steps:
\\

{\bf{(1)}}: {\it{Establishing the bound 
\[
\big|\int_0^\infty \frac{(\rho^{\frac{1}{2}}\widetilde{\triangle}\tilde{x}^{(1)}_1)(\xi)}{\xi^{\frac{3}{4}}}\sin[\nu\tau_0\xi^{\frac{1}{2}}]\,d\xi \big|\lesssim \tau_0^{0+}\big\|(x_0, x_1)\big\|_{\tilde{S}} + \tau_0^{-(1-)}|x_{0d}|.
\]
}}
We immediately reduce this to the contribution of the continuous part $x^{(0)}(\tau, \xi)$ to $\mathcal{R}(\sigma, \underline{x}^{(0)})$, as it is straightforward to check that substituting $\mathcal{R}_2(\sigma, x_d^{(0)})$ for $\mathcal{R}(\sigma, \underline{x}^{(0)})$ in the formula for $\widetilde{\triangle}\tilde{x}^{(1)}_1$ results in the bound 
\[
\big|\int_0^\infty \frac{(\rho^{\frac{1}{2}}\widetilde{\triangle}\tilde{x}^{(1)}_1)(\xi)}{\xi^{\frac{3}{4}}}\sin[\nu\tau_0\xi^{\frac{1}{2}}]\,d\xi \big|\lesssim \tau_0^{-(1-)}|x_{0d}|.
\]
We shall henceforth replace $\mathcal{R}(\sigma, \underline{x}^{(0)})$ by $\mathcal{R}_1(\sigma, x^{(0)})$ and use the schematic relation \eqref{eq:R_1structure} to further divide into two key contributions: 
\\

{\bf{(1i)}}: {\it{Contribution of  $\beta_{\nu}^2(\sigma)\mathcal{K}_{cc}x^{(0)}(\sigma, \xi)$, low frequency $\xi<1$}}.
Throughout we only consider the contribution of the second term in \eqref{eq:Kx0formula} to $x^{(0)}$, leaving the contribution of the first term to the appendix. Substituting this formula into the expression for $\widetilde{\triangle}\tilde{x}^{(1)}_1$ and then into the second expression in \eqref{eq:vanishing2}, we obtain the following kind of integral expression 
\begin{align*}
&\int_{\tau_0}^\infty \frac{\lambda^{\frac{3}{2}}(\tau_0)}{\lambda^{\frac{3}{2}}(\sigma)}\beta_{\nu}^2(\sigma)\int_0^\infty\chi_{\xi<1}\frac{\rho^{\frac{1}{2}}(\frac{\lambda^2(\tau_0)}{\lambda^2(\sigma)}\xi)}{\rho^{\frac{1}{2}}(\xi)}\frac{\rho^{\frac{1}{2}}(\xi)\sin[\nu\tau_0\xi^{\frac{1}{2}}]\cos[\lambda(\tau_0)\xi^{\frac{1}{2}}\int_{\tau_0}^\sigma\lambda^{-1}(u)\,du]}{\xi^{\frac{3}{4}}}\\
&\hspace{9cm}\cdot B(\tau_0, \sigma, \xi)\,d\xi d\sigma,
\end{align*}
where we have set 
\begin{align*}
&B(\tau_0, \sigma, \xi):\\& = \int_0^\infty \frac{F(\frac{\lambda^2(\tau_0)}{\lambda^2(\sigma)}\xi, \eta)\rho(\eta)}{\frac{\lambda^2(\tau_0)}{\lambda^2(\sigma)}\xi - \eta}\frac{\lambda^{\frac{3}{2}}(\sigma)}{\lambda^{\frac{3}{2}}(\tau_0)}\frac{\rho^{\frac{1}{2}}(\frac{\lambda^2(\sigma)}{\lambda^2(\tau_0)}\eta)}{\rho^{\frac{1}{2}}(\eta)}\frac{\sin[\lambda(\sigma)\eta^{\frac{1}{2}}\int_{\tau_0}^\sigma\lambda^{-1}(u)\,du]}{\eta^{\frac{1}{2}}}x_1(\frac{\lambda^2(\sigma)}{\lambda^2(\tau_0)}\eta)\,d\eta
\end{align*}
Observe that under our frequency restriction $\xi<1$ we get 
\[
\big|\frac{\rho^{\frac{1}{2}}(\frac{\lambda^2(\tau_0)}{\lambda^2(\sigma)}\xi)}{\rho^{\frac{1}{2}}(\xi)}\big|\lesssim (\frac{\lambda^2(\tau_0)}{\lambda^2(\sigma)})^{-\frac{1}{4}}. 
\]
Note that we can afford to absorb one singular factor $\xi^{-\frac{1}{2}}$ into $\sin[\nu\tau_0\xi^{\frac{1}{2}}]$ at the expense of a factor $\nu\tau_0$ which we can afford on account of the weight $\beta_{\nu}^2(\sigma)$ and $\sigma\geq \tau_0$. 
Then we further divide into the following cases: 
\\

{\bf{(1i.a)}}: {\it{Imbalanced frequencies $\frac{\lambda^2(\tau_0)}{\lambda^2(\sigma)}\xi\ll \eta$.}} Here we absorb one factor $\xi^{-\frac{1}{2}}$ at the expense of $\nu\tau_0$ and by switching the orders of integration we perform the $\xi$-integral first, which leads to a factor
\[
\nu\tau_0\frac{\lambda(\tau_0)}{\lambda(\sigma)}\beta_{\nu}^2(\sigma)\xi^{\frac{1}{2}}|_{\frac{\lambda^2(\tau_0)}{\lambda^2(\sigma)}\xi = c\eta}\sim \tau_0\beta_{\nu}^2(\sigma)\eta^{\frac{1}{2}}
\]
Including the extra factor $\eta^{\frac{1}{2}}$ here into the $B$-integral, we then reduce at fixed time $\sigma\geq \tau_0$ to bounding 
\begin{align*}
&\tau_0\beta_{\nu}^2(\sigma)\int_0^\infty \sup_{\frac{\lambda^2(\tau_0)}{\lambda^2(\sigma)}\xi\ll \eta}\big|\eta^{\frac12}\frac{F(\frac{\lambda^2(\tau_0)}{\lambda^2(\sigma)}\xi, \eta)\rho(\eta)}{\frac{\lambda^2(\tau_0)}{\lambda^2(\sigma)}\xi - \eta}\big|\frac{\lambda^{\frac{3}{2}}(\sigma)}{\lambda^{\frac{3}{2}}(\tau_0)}\big|\frac{\rho^{\frac{1}{2}}(\frac{\lambda^2(\sigma)}{\lambda^2(\tau_0)}\eta)}{\rho^{\frac{1}{2}}(\eta)}\\&\hspace{5cm}\cdot\eta^{-\frac12}\sin[\lambda(\sigma)\eta^{\frac{1}{2}}\int_{\tau_0}^\sigma\lambda^{-1}(u)\,du]x_1(\frac{\lambda^2(\sigma)}{\lambda^2(\tau_0)}\eta)\big|\,d\eta\\
&\lesssim \tau_0\beta_{\nu}^2(\sigma)\int_0^\infty\frac{\lambda(\sigma)}{\lambda(\tau_0)} \big(\eta^{-\frac{1}{2}}\chi_{\frac{\lambda^2(\sigma)}{\lambda^2(\tau_0)}\eta<1}\big|x_1(\frac{\lambda^2(\sigma)}{\lambda^2(\tau_0)}\eta)\big|\\&\hspace{4cm} + \eta^{-\frac{1}{2}}\chi_{\frac{\lambda^2(\sigma)}{\lambda^2(\tau_0)}\eta\geq 1}[\frac{\lambda^2(\sigma)}{\lambda^2(\tau_0)}\eta]^{\frac{1}{2}}\big|x_1(\frac{\lambda^2(\sigma)}{\lambda^2(\tau_0)}\eta)\big|\big)\,d\eta
\end{align*}
Using the Cauchy-Schwarz inequality, this last expression is then bounded by 
\begin{align*}
&\tau_0\beta_{\nu}^2(\sigma)\int_0^\infty\frac{\lambda(\sigma)}{\lambda(\tau_0)} \big(\eta^{-\frac{1}{2}}\chi_{\frac{\lambda^2(\sigma)}{\lambda^2(\tau_0)}\eta<1}\big|x_1(\frac{\lambda^2(\sigma)}{\lambda^2(\tau_0)}\eta)\big|\\&\hspace{4cm} + \eta^{-\frac{1}{2}}\chi_{\frac{\lambda^2(\sigma)}{\lambda^2(\tau_0)}\eta\geq 1}[\frac{\lambda^2(\sigma)}{\lambda^2(\tau_0)}\eta]^{\frac{1}{2}}\big|x_1(\frac{\lambda^2(\sigma)}{\lambda^2(\tau_0)}\eta)\big|\big)\,d\eta\\
&\lesssim \tau_0\beta_{\nu}^2(\sigma)\big(\frac{\lambda^2(\sigma)}{\lambda^2(\tau_0)}\big)^{0+}\big\|x_1\big\|_{S_2}
\end{align*} 
Inserting this bound into the preceding, we finally arrive at the $\sigma$-integral 
\[
\big\|x_1\big\|_{S_2}\int_{\tau_0}^\infty\tau_0\beta_{\nu}^2(\sigma)\big(\frac{\lambda^2(\sigma)}{\lambda^2(\tau_0)}\big)^{0+}\,d\sigma\lesssim \big\|x_1\big\|_{S_2}. 
\]

{\bf{(1i.b)}}: {\it{Balanced frequencies $\frac{\lambda^2(\tau_0)}{\lambda^2(\sigma)}\xi\sim\eta$.}} Using the assumption $\xi<1$ underlying case {\bf{(1i)}} we get here 
\[
\frac{\rho^{\frac{1}{2}}(\frac{\lambda^2(\sigma)}{\lambda^2(\tau_0)}\eta)}{\rho^{\frac{1}{2}}(\eta)}\lesssim \frac{\lambda^{\frac12}(\tau_0)}{\lambda^{\frac12}(\sigma)}
\]
and further due to \cite{KST}
\[
\big|F(\frac{\lambda^2(\tau_0)}{\lambda^2(\sigma)}\xi, \eta)\rho(\eta)\eta^{-\frac12}\big|\lesssim 1
\]
and the integral operator with kernel 
\[
\frac{F(\frac{\lambda^2(\tau_0)}{\lambda^2(\sigma)}\xi, \eta)\rho(\eta)\eta^{-\frac12}}{\frac{\lambda^2(\tau_0)}{\lambda^2(\sigma)}\xi - \eta}
\]
is $L^2$-bounded up to a factor $\frac{\lambda(\sigma)}{\lambda(\tau_0)}$. Furthermore, we bound 
\begin{equation}\label{eq:anotherbound1}
\big|\chi_{\xi<1}\frac{\rho^{\frac{1}{2}}(\frac{\lambda^2(\tau_0)}{\lambda^2(\sigma)}\xi)}{\rho^{\frac{1}{2}}(\xi)}\frac{\rho^{\frac{1}{2}}(\xi)\sin[\nu\tau_0\xi^{\frac{1}{2}}]\cos[\lambda(\tau_0)\xi^{\frac{1}{2}}\int_{\tau_0}^\sigma\lambda^{-1}(u)\,du]}{\xi^{\frac{3}{4}}}\big|\lesssim \nu\tau_0\frac{\lambda^{\frac12}(\sigma)}{\lambda^{\frac12}(\tau_0)}\xi^{-\frac{1}{2}}
\end{equation}
This is almost $L^2_{d\xi}$-integrable over $\xi<1$, except for including an extra factor $\xi^{0+}\sim \big(\frac{\lambda^2(\sigma)}{\lambda^2(\tau_0)}\eta\big)^{0+}$ which we can absorb into $x_1(\frac{\lambda^2(\sigma)}{\lambda^2(\tau_0)}\eta)$ due to the definition of $S_2$.  Using the Cauchy-Schwarz inequality to handle the integral over $\xi$, we obtain the same bound in situation {\bf{(1i.b)}} as in situation {\bf{(1i.a)}}. 
\\

{\bf{(1i.c)}}: {\it{Imbalanced frequencies $\frac{\lambda^2(\tau_0)}{\lambda^2(\sigma)}\xi\gg\eta$.}}  Here we can no longer absorb the singular factor $\eta^{-\frac{1}{2}}$ into $F(\frac{\lambda^2(\tau_0)}{\lambda^2(\sigma)}\xi, \eta)$, and so we have again to take advantage of the cancellation condition satisfied by $x_1$ in order to not lose a factor $\frac{\lambda(\sigma)}{\lambda(\tau_0)}$. Specifically, we write 
\begin{equation}\label{eq:ugly0}\begin{split}
\frac{F(\frac{\lambda^2(\tau_0)}{\lambda^2(\sigma)}\xi, \eta)\rho^{\frac{1}{2}}(\eta)}{\frac{\lambda^2(\tau_0)}{\lambda^2(\sigma)}\xi - \eta} &=\frac{F(\frac{\lambda^2(\tau_0)}{\lambda^2(\sigma)}\xi, 0)\rho^{\frac{1}{2}}(\eta)}{\frac{\lambda^2(\tau_0)}{\lambda^2(\sigma)}\xi} + O\big(\frac{\eta}{\frac{\lambda^2(\tau_0)}{\lambda^2(\sigma)}\xi}\big)\rho^{\frac{1}{2}}(\eta)\\
& =  c\frac{F(\frac{\lambda^2(\tau_0)}{\lambda^2(\sigma)}\xi, 0)\eta^{-\frac{1}{4}}}{\frac{\lambda^2(\tau_0)}{\lambda^2(\sigma)}\xi} + O\big(\frac{\eta}{\frac{\lambda^2(\tau_0)}{\lambda^2(\sigma)}\xi}\big)\rho^{\frac{1}{2}}(\eta)+O(\eta^{\frac{1}{4}}).
\end{split}\end{equation}
for a suitable constant $c$. 
\\
Substituting the first expression on the right into $B(\tau_0, \sigma, \xi)$ leads to the expression 
\begin{align*}
c\frac{F(\frac{\lambda^2(\tau_0)}{\lambda^2(\sigma)}\xi, 0)}{\frac{\lambda^2(\tau_0)}{\lambda^2(\sigma)}\xi}\int_0^{c\frac{\lambda^2(\tau_0)}{\lambda^2(\sigma)}\xi}\frac{\lambda^{\frac{3}{2}}(\sigma)}{\lambda^{\frac{3}{2}}(\tau_0)}\rho^{\frac{1}{2}}(\frac{\lambda^2(\sigma)}{\lambda^2(\tau_0)}\eta)\frac{\sin[\lambda(\sigma)\eta^{\frac{1}{2}}\int_{\tau_0}^\sigma\lambda^{-1}(u)\,du]}{\eta^{\frac{3}{4}}}x_1(\frac{\lambda^2(\sigma)}{\lambda^2(\tau_0)}\eta)\,d\eta
\end{align*}
and we split the integral into 
\begin{align}
&\int_0^{c\frac{\lambda^2(\tau_0)}{\lambda^2(\sigma)}\xi}\frac{\lambda^{\frac{3}{2}}(\sigma)}{\lambda^{\frac{3}{2}}(\tau_0)}\rho^{\frac{1}{2}}(\frac{\lambda^2(\sigma)}{\lambda^2(\tau_0)}\eta)\frac{\sin[\lambda(\sigma)\eta^{\frac{1}{2}}\int_{\tau_0}^\sigma\lambda^{-1}(u)\,du]}{\eta^{\frac{3}{4}}}x_1(\frac{\lambda^2(\sigma)}{\lambda^2(\tau_0)}\eta)\,d\eta\nonumber\\
& = \int_0^{\infty}\frac{\lambda^{\frac{3}{2}}(\sigma)}{\lambda^{\frac{3}{2}}(\tau_0)}\rho^{\frac{1}{2}}(\frac{\lambda^2(\sigma)}{\lambda^2(\tau_0)}\eta)\frac{\sin[\lambda(\sigma)\eta^{\frac{1}{2}}\int_{\tau_0}^\sigma\lambda^{-1}(u)\,du]}{\eta^{\frac{3}{4}}}x_1(\frac{\lambda^2(\sigma)}{\lambda^2(\tau_0)}\eta)\,d\eta\label{eq:ugly1}\\
&-\int_{c\frac{\lambda^2(\tau_0)}{\lambda^2(\sigma)}\xi}^\infty\frac{\lambda^{\frac{3}{2}}(\sigma)}{\lambda^{\frac{3}{2}}(\tau_0)}\rho^{\frac{1}{2}}(\frac{\lambda^2(\sigma)}{\lambda^2(\tau_0)}\eta)\frac{\sin[\lambda(\sigma)\eta^{\frac{1}{2}}\int_{\tau_0}^\sigma\lambda^{-1}(u)\,du]}{\eta^{\frac{3}{4}}}x_1(\frac{\lambda^2(\sigma)}{\lambda^2(\tau_0)}\eta)\,d\eta\label{eq:ugly2}
\end{align}
Then proceeding as at the end of the proof of Proposition~\ref{prop:lingrowthcond}, we have 
\begin{align*}
&\big| \int_0^{\infty}\frac{\lambda^{\frac{3}{2}}(\sigma)}{\lambda^{\frac{3}{2}}(\tau_0)}\rho^{\frac{1}{2}}(\frac{\lambda^2(\sigma)}{\lambda^2(\tau_0)}\eta)\frac{\sin[\lambda(\sigma)\eta^{\frac{1}{2}}\int_{\tau_0}^\sigma\lambda^{-1}(u)\,du]}{\eta^{\frac{3}{4}}}x_1(\frac{\lambda^2(\sigma)}{\lambda^2(\tau_0)}\eta)\,d\eta\big|\\
&\lesssim \sigma \big\|x_1\big\|_{S_2}. 
\end{align*}
For the second integral expression above, we obtain using the Cauchy-Schwarz inequality
\begin{align*}
&\big|\int_{c\frac{\lambda^2(\tau_0)}{\lambda^2(\sigma)}\xi}^\infty\frac{\lambda^{\frac{3}{2}}(\sigma)}{\lambda^{\frac{3}{2}}(\tau_0)}\rho^{\frac{1}{2}}(\frac{\lambda^2(\sigma)}{\lambda^2(\tau_0)}\eta)\frac{\sin[\lambda(\sigma)\eta^{\frac{1}{2}}\int_{\tau_0}^\sigma\lambda^{-1}(u)\,du]}{\eta^{\frac{3}{4}}}x_1(\frac{\lambda^2(\sigma)}{\lambda^2(\tau_0)}\eta)\,d\eta\big|\\
&\lesssim (\frac{\lambda(\sigma)}{\lambda(\tau_0)})^{1+}\xi^{-(\frac{1}{2}-)}\cdot  \big\|x_1\big\|_{S_2}.
\end{align*}
It follows that substituting either one of 
\[
c\frac{F(\frac{\lambda^2(\tau_0)}{\lambda^2(\sigma)}\xi, 0)}{\frac{\lambda^2(\tau_0)}{\lambda^2(\sigma)}\xi}\cdot\eqref{eq:ugly1},\,c\frac{F(\frac{\lambda^2(\tau_0)}{\lambda^2(\sigma)}\xi, 0)}{\frac{\lambda^2(\tau_0)}{\lambda^2(\sigma)}\xi}\cdot\eqref{eq:ugly2}
\]
for $B(\tau_0, \sigma, \xi)$ and using \eqref{eq:anotherbound1}, we deduce the bound 
\begin{align*}
&\int_{\tau_0}^\infty \frac{\lambda^{\frac{3}{2}}(\tau_0)}{\lambda^{\frac{3}{2}}(\sigma)}\beta_{\nu}^2(\sigma)\int_0^\infty\chi_{\xi<1}\frac{\rho^{\frac{1}{2}}(\frac{\lambda^2(\tau_0)}{\lambda^2(\sigma)}\xi)}{\rho^{\frac{1}{2}}(\xi)}\frac{\rho^{\frac{1}{2}}(\xi)\sin[\nu\tau_0\xi^{\frac{1}{2}}]\cos[\lambda(\tau_0)\xi^{\frac{1}{2}}\int_{\tau_0}^\sigma\lambda^{-1}(u)\,du]}{\xi^{\frac{3}{4}}}\\
&\hspace{9cm}\cdot B(\tau_0, \sigma, \xi)\,d\xi d\sigma\\
&\lesssim \big\|x_1\big\|_{S_2}\int_{\tau_0}^\infty[\log\tau_0\frac{\lambda(\tau_0)}{\lambda(\sigma)}\sigma^{-1} + \nu\tau_0\sigma^{-2}(\frac{\lambda(\sigma)}{\lambda(\tau_0)})^{0+}]\,d\sigma\lesssim \tau_0^{0+}\big\|x_1\big\|_{S_2}. 
\end{align*}
Further, substituting the error terms 
\[
O\big(\frac{\eta}{\frac{\lambda^2(\tau_0)}{\lambda^2(\sigma)}\xi}\big)\rho^{\frac{1}{2}}(\eta)+O(\eta^{\frac{1}{4}})
\]
in \eqref{eq:ugly0} for the factor $\frac{F(\frac{\lambda^2(\tau_0)}{\lambda^2(\sigma)}\xi, \eta)\rho^{\frac{1}{2}}(\eta)}{\frac{\lambda^2(\tau_0)}{\lambda^2(\sigma)}\xi - \eta} $ inside $B(\tau_0, \sigma, \xi)$ leads to terms which are handled just like case {\bf{(1i.a)}}, and we omit the details. This concludes case {\bf{(1i)}}.
\\

{\bf{(1ii)}}: {\it{Contribution of  $\beta_{\nu}^2(\sigma)\mathcal{K}_{cc}x^{(0)}(\sigma, \xi)$, high frequency $\xi\geq 1$}}. Proceeding in analogy to case {\it{(1i)}} and only considering the contribution of $x_1$, i. e. the second term in \eqref{eq:Kx0formula}, we need to estimate the expression 
 \begin{align*}
&\int_{\tau_0}^\infty \frac{\lambda^{\frac{3}{2}}(\tau_0)}{\lambda^{\frac{3}{2}}(\sigma)}\beta_{\nu}^2(\sigma)\int_0^\infty\chi_{\xi\geq 1}\frac{\rho^{\frac{1}{2}}(\frac{\lambda^2(\tau_0)}{\lambda^2(\sigma)}\xi)}{\rho^{\frac{1}{2}}(\xi)}\frac{\rho^{\frac{1}{2}}(\xi)\sin[\nu\tau_0\xi^{\frac{1}{2}}]\cos[\lambda(\tau_0)\xi^{\frac{1}{2}}\int_{\tau_0}^\sigma\lambda^{-1}(u)\,du]}{\xi^{\frac{3}{4}}}\\
&\hspace{9cm}\cdot B(\tau_0, \sigma, \xi)\,d\xi d\sigma,
\end{align*}
where again
\begin{align*}
&B(\tau_0, \sigma, \xi):\\& = \int_0^\infty \frac{F(\frac{\lambda^2(\tau_0)}{\lambda^2(\sigma)}\xi, \eta)\rho(\eta)}{\frac{\lambda^2(\tau_0)}{\lambda^2(\sigma)}\xi - \eta}\frac{\lambda^{\frac{3}{2}}(\sigma)}{\lambda^{\frac{3}{2}}(\tau_0)}\frac{\rho^{\frac{1}{2}}(\frac{\lambda^2(\sigma)}{\lambda^2(\tau_0)}\eta)}{\rho^{\frac{1}{2}}(\eta)}\frac{\sin[\lambda(\sigma)\eta^{\frac{1}{2}}\int_{\tau_0}^\sigma\lambda^{-1}(u)\,du]}{\eta^{\frac{1}{2}}}x_1(\frac{\lambda^2(\sigma)}{\lambda^2(\tau_0)}\eta)\,d\eta
\end{align*}

Here we split into the following cases: 
\\

{\bf{(1ii.a)}}:  {\it{intermediate output frequencies $\frac{\lambda^2(\tau_0)}{\lambda^2(\sigma)}\xi<1$.}} In this situation, we have 
\[
\frac{\rho^{\frac{1}{2}}(\frac{\lambda^2(\tau_0)}{\lambda^2(\sigma)}\xi)}{\rho^{\frac{1}{2}}(\xi)}\sim \frac{1}{(\frac{\lambda^2(\tau_0)}{\lambda^2(\sigma)}\xi))^{\frac{1}{4}}\xi^{\frac{1}{4}}},
\]
and so we reduce to estimating 
\begin{align*}
&\int_{\tau_0}^\infty \frac{\lambda(\tau_0)}{\lambda(\sigma)}\beta_{\nu}^2(\sigma)\int_0^\infty\chi_{\frac{\lambda^2(\sigma)}{\lambda^2(\tau_0)}>\xi\geq 1}\frac{\sin[\nu\tau_0\xi^{\frac{1}{2}}]\cos[\lambda(\tau_0)\xi^{\frac{1}{2}}\int_{\tau_0}^\sigma\lambda^{-1}(u)\,du]}{\xi}\\
&\hspace{9cm}\cdot B(\tau_0, \sigma, \xi)\,d\xi d\sigma
\end{align*}
To control the inner integral, we divide into cases ($\alpha$) - ($\gamma$) concerning the integration variable $\eta$ in the formula for $ B(\tau_0, \sigma, \xi)$: 
\\

{\bf{(1ii.a.$\alpha$)}}:  $\frac{\lambda^2(\tau_0)}{\lambda^2(\sigma)}\xi\ll \eta$. We pass to the new variable $\eta = \frac{\lambda^2(\tau_0)}{\lambda^2(\sigma)}\tilde{\eta}$, so here we have $\xi\ll\tilde{\eta}$. Then we get 
\begin{align*}
&B_{\frac{\lambda^2(\tau_0)}{\lambda^2(\sigma)}\xi\ll \eta}(\tau_0, \sigma, \xi):\\& = \int_{c\xi}^\infty \frac{F(\frac{\lambda^2(\tau_0)}{\lambda^2(\sigma)}\xi, \frac{\lambda^2(\tau_0)}{\lambda^2(\sigma)}\tilde{\eta})\rho^{\frac{1}{2}}(\frac{\lambda^2(\tau_0)}{\lambda^2(\sigma)}\tilde{\eta})}{[\xi - \tilde{\eta}][\frac{\lambda^2(\tau_0)}{\lambda^2(\sigma)}\tilde{\eta}]^{\frac{1}{2}}}\frac{\lambda^{\frac{3}{2}}(\sigma)}{\lambda^{\frac{3}{2}}(\tau_0)}\sin[\lambda(\tau_0)\tilde{\eta}^{\frac{1}{2}}\int_{\tau_0}^\sigma\lambda^{-1}(u)\,du]\rho^{\frac{1}{2}}(\tilde{\eta})x_1(\tilde{\eta})\,d\tilde{\eta}
\end{align*}
Then we get the bound 
\begin{align*}
\big|B_{\frac{\lambda^2(\tau_0)}{\lambda^2(\sigma)}\xi\ll \eta}(\tau_0, \sigma, \xi)\big|\lesssim &\int_{c\xi}^\infty\chi_{ \frac{\lambda^2(\tau_0)}{\lambda^2(\sigma)}\tilde{\eta}<1}\frac{\lambda(\sigma)}{\lambda(\tau_0)}\tilde{\eta}^{-\frac{3}{4}}|\rho^{\frac{1}{2}}(\tilde{\eta})x_1(\tilde{\eta})|\,d\tilde{\eta}\\
& + \int_{c\xi}^\infty\chi_{ \frac{\lambda^2(\tau_0)}{\lambda^2(\sigma)}\tilde{\eta}\geq 1}\frac{\lambda^{\frac{3}{2}}(\sigma)}{\lambda^{\frac{3}{2}}(\tau_0)}\tilde{\eta}^{-1}|\rho^{\frac{1}{2}}(\tilde{\eta})x_1(\tilde{\eta})|\,d\tilde{\eta}\\
\end{align*}
Note that for the first expression on the right we used that for $1\leq \xi\ll\tilde{\eta}$, $\sigma\geq \tau_0$,
\[
\chi_{ \frac{\lambda^2(\tau_0)}{\lambda^2(\sigma)}\tilde{\eta}<1}\big|\frac{F(\frac{\lambda^2(\tau_0)}{\lambda^2(\sigma)}\xi, \frac{\lambda^2(\tau_0)}{\lambda^2(\sigma)}\tilde{\eta})\rho^{\frac{1}{2}}(\frac{\lambda^2(\tau_0)}{\lambda^2(\sigma)}\tilde{\eta})}{[\xi - \tilde{\eta}][\frac{\lambda^2(\tau_0)}{\lambda^2(\sigma)}\tilde{\eta}]^{\frac{1}{2}}}\big|\lesssim \frac{\lambda^{\frac{1}{2}}(\tau_0)}{\lambda^{\frac{1}{2}}(\sigma)}\tilde{\eta}^{-\frac{3}{4}}. 
\]
It then follows that we have the bound 
\[
\big|B_{\frac{\lambda^2(\tau_0)}{\lambda^2(\sigma)}\xi\ll \eta}(\tau_0, \sigma, \xi)\big|\lesssim \frac{\lambda(\sigma)}{\lambda(\tau_0)}\xi^{-\frac{1}{2}}\big\|x_1\big\|_{S_2}. 
\]
Inserting this back into the full expression furnishes the bound 
\begin{align*}
&\big|\int_{\tau_0}^\infty \frac{\lambda(\tau_0)}{\lambda(\sigma)}\beta_{\nu}^2(\sigma)\int_0^\infty\chi_{\frac{\lambda^2(\sigma)}{\lambda^2(\tau_0)}>\xi\geq 1}\frac{\sin[\nu\tau_0\xi^{\frac{1}{2}}]\cos[\lambda(\tau_0)\xi^{\frac{1}{2}}\int_{\tau_0}^\sigma\lambda^{-1}(u)\,du]}{\xi}\\
&\hspace{8cm}\cdot B_{\frac{\lambda^2(\tau_0)}{\lambda^2(\sigma)}\xi\ll \eta}(\tau_0, \sigma, \xi)\,d\xi d\sigma\big|\\
&\lesssim \tau_0^{-1}\big\|x_1\big\|_{S_2}. 
\end{align*}

{\bf{(1ii.a.$\beta$)}}: $\frac{\lambda^2(\tau_0)}{\lambda^2(\sigma)}\xi\sim\eta$. Here we bound (note that here we have $\eta\lesssim 1$)
 \begin{align*}
 \big|F(\frac{\lambda^2(\tau_0)}{\lambda^2(\sigma)}\xi, \eta)\rho^{\frac{1}{2}}(\eta)\eta^{-\frac{1}{2}}\big|\lesssim \eta^{\frac{1}{4}}, 
 \end{align*}
 as well as 
 \[
 \big|\eta^{\frac{1}{4}}\rho^{\frac12}(\frac{\lambda^2(\sigma)}{\lambda^2(\tau_0)}\eta)x_1(\frac{\lambda^2(\sigma)}{\lambda^2(\tau_0)}\eta)\big|\lesssim \frac{\lambda^{\frac{1}{2}}(\tau_0)}{\lambda^{\frac{1}{2}}(\sigma)}[\frac{\lambda^2(\sigma)}{\lambda^2(\tau_0)}\eta]^{\frac{1}{2}}\big|x_1(\frac{\lambda^2(\sigma)}{\lambda^2(\tau_0)}\eta)\big|.
 \]
 It follows that 
 \begin{align*}
\frac{\lambda(\tau_0)}{\lambda(\sigma)}\big\|B_{\frac{\lambda^2(\tau_0)}{\lambda^2(\sigma)}\xi\sim\eta}(\tau_0, \sigma, \xi)\big\|_{L_{\xi}^2}\lesssim \big\|x_1\big\|_{S_2},
 \end{align*}
 and then using Cauchy-Schwarz we infer that 
 \begin{align*}
&\big|\int_{\tau_0}^\infty \frac{\lambda(\tau_0)}{\lambda(\sigma)}\beta_{\nu}^2(\sigma)\int_0^\infty\chi_{\frac{\lambda^2(\sigma)}{\lambda^2(\tau_0)}>\xi\geq 1}\frac{\sin[\nu\tau_0\xi^{\frac{1}{2}}]\cos[\lambda(\tau_0)\xi^{\frac{1}{2}}\int_{\tau_0}^\sigma\lambda^{-1}(u)\,du]}{\xi}\\
&\hspace{8cm}\cdot B_{\frac{\lambda^2(\tau_0)}{\lambda^2(\sigma)}\xi\sim\eta}(\tau_0, \sigma, \xi)\,d\xi d\sigma\big|\\
&\lesssim \tau_0^{-1}\big\|x_1\big\|_{S_2}. 
\end{align*}

{\bf{(1ii.a.$\gamma$)}}: $\frac{\lambda^2(\tau_0)}{\lambda^2(\sigma)}\xi\gg\eta$. Here $\eta<1$ is again in the singular range, and one concludes this case just as in {\it{(1i.c)}} earlier. 
\\

{\bf{(1ii.b)}}:  {\it{large  output frequencies $\frac{\lambda^2(\tau_0)}{\lambda^2(\sigma)}\xi\geq 1$.}} In this situation, we have 
\[
\frac{\rho^{\frac{1}{2}}(\frac{\lambda^2(\tau_0)}{\lambda^2(\sigma)}\xi)}{\rho^{\frac{1}{2}}(\xi)}\sim \frac{\lambda^{\frac12}(\tau_0)}{\lambda^{\frac12}(\sigma)}
\]
and so we reduce to estimating 
\begin{align*}
&\int_{\tau_0}^\infty \frac{\lambda^2(\tau_0)}{\lambda^2(\sigma)}\beta_{\nu}^2(\sigma)\int_0^\infty\chi_{\frac{\lambda^2(\sigma)}{\lambda^2(\tau_0)}\leq \xi}\frac{\sin[\nu\tau_0\xi^{\frac{1}{2}}]\cos[\lambda(\tau_0)\xi^{\frac{1}{2}}\int_{\tau_0}^\sigma\lambda^{-1}(u)\,du]}{\xi^{\frac12}}\\
&\hspace{9cm}\cdot B(\tau_0, \sigma, \xi)\,d\xi d\sigma
\end{align*}
Again one needs to split into the sub-cases {\it{(1ii.b.$\alpha$)}} - {\it{(1ii.b.$\gamma$)}} in analogy to the preceding case {\it{(1ii.a)}}, and since these are completely analogous, we only treat the case {\it{(1ii.b.$\alpha$)}} here: 
\\

{\bf{(1ii.b.$\alpha$)}}:  $1\leq \frac{\lambda^2(\tau_0)}{\lambda^2(\sigma)}\xi\ll \eta$. Note that in this situation we automatically have $\frac{\lambda^2(\tau_0)}{\lambda^2(\sigma)}\tilde{\eta}\geq 1$ (enacting the same change of variables as in the preceding case), and then 
\[
\big|B_{ \frac{\lambda^2(\tau_0)}{\lambda^2(\sigma)}\xi\ll \eta}(\tau_0, \sigma, \xi)\big|\lesssim \frac{\lambda^{\frac32}(\sigma)}{\lambda^{\frac32}(\tau_0)}\xi^{-\frac34} \big\|x_1\big\|_{S_2}, 
\]
which implies that 
\begin{align*}
&\int_{\tau_0}^\infty \frac{\lambda^2(\tau_0)}{\lambda^2(\sigma)}\beta_{\nu}^2(\sigma)\int_0^\infty\chi_{\frac{\lambda^2(\sigma)}{\lambda^2(\tau_0)}\leq \xi}\frac{\sin[\nu\tau_0\xi^{\frac{1}{2}}]\cos[\lambda(\tau_0)\xi^{\frac{1}{2}}\int_{\tau_0}^\sigma\lambda^{-1}(u)\,du]}{\xi^{\frac12}}\\
&\hspace{9cm}\cdot B(\tau_0, \sigma, \xi)\,d\xi d\sigma\\
&\lesssim \tau_0^{-1} \big\|x_1\big\|_{S_2}. 
\end{align*}

We have at this point finished controlling the cases {\it{(1i), (1ii)}}, as far as the contribution from $x_1$ was concerned, i. e. the contribution to $\beta_{\nu}^2(\sigma)\mathcal{K}_{cc}x^{(0)}(\sigma, \xi)$ by the second term in \eqref{eq:Kx0formula}.
\\

{\bf{(1iii)}}:  {\it{Contribution of  $\beta_{\nu}(\sigma)\mathcal{K}_{cc}\mathcal{D}_{\sigma}x^{(0)}(\sigma, \xi)$ to 
\[
\big|\int_0^\infty \frac{(\rho^{\frac{1}{2}}\widetilde{\triangle}\tilde{x}^{(1)}_1)(\xi)}{\xi^{\frac{3}{4}}}\sin[\nu\tau_0\xi^{\frac{1}{2}}]\,d\xi \big|.
\]
}}
Here we trade in better regularity of the expression defining $\mathcal{K}_{cc}\mathcal{D}_{\sigma}x^{(0)}$ in terms of $\eta$ for the weaker weight function $\beta_{\nu}(\sigma)$ which only decays like $\sigma^{-1}$, thus barely non-integrable as $\sigma\rightarrow\infty$. However, it is straightforward to check that in the analogues of situation {\it{(1ii)}}, i. e. the high frequency case $\xi\geq 1$, one always obtains bounds involving $[\frac{\lambda(\tau_0)}{\lambda(\sigma)}]^{\kappa_0}\sigma^{-1}$ for some $\kappa_0>0$, and these are integrable over $\sigma$. \\
In fact, this is also the case in the analogue of case {\it{(1i.c)}}, i. e. the case $\xi<1$, $\frac{\lambda^2(\tau_0)}{\lambda^2(\sigma)}\xi\gg \eta$, where this time we obtain (up to better error terms) the expression 
\begin{align*}
c\frac{F(\frac{\lambda^2(\tau_0)}{\lambda^2(\sigma)}\xi, 0)}{\frac{\lambda^2(\tau_0)}{\lambda^2(\sigma)}\xi}\int_0^{c\frac{\lambda^2(\tau_0)}{\lambda^2(\sigma)}\xi}\frac{\lambda^{\frac{3}{2}}(\sigma)}{\lambda^{\frac{3}{2}}(\tau_0)}\rho^{\frac{1}{2}}(\frac{\lambda^2(\sigma)}{\lambda^2(\tau_0)}\eta)\frac{\cos[\lambda(\sigma)\eta^{\frac{1}{2}}\int_{\tau_0}^\sigma\lambda^{-1}(u)\,du]}{\eta^{\frac{1}{4}}}x_1(\frac{\lambda^2(\sigma)}{\lambda^2(\tau_0)}\eta)\,d\eta
\end{align*}
Note that here we have 
\[
\big|\rho^{\frac{1}{2}}(\frac{\lambda^2(\sigma)}{\lambda^2(\tau_0)}\eta)\eta^{-\frac14}\big|\lesssim \frac{\lambda^{\frac12}(\tau_0)}{\lambda^{\frac12}(\sigma)}\eta^{-\frac12}, 
\]
and therefore (recalling $\xi<1$ in case {\it{(1i)}})
\begin{align*}
&\big|\int_0^{c\frac{\lambda^2(\tau_0)}{\lambda^2(\sigma)}\xi}\frac{\lambda^{\frac{3}{2}}(\sigma)}{\lambda^{\frac{3}{2}}(\tau_0)}\rho^{\frac{1}{2}}(\frac{\lambda^2(\sigma)}{\lambda^2(\tau_0)}\eta)\frac{\cos[\lambda(\sigma)\eta^{\frac{1}{2}}\int_{\tau_0}^\sigma\lambda^{-1}(u)\,du]}{\eta^{\frac{1}{4}}}x_1(\frac{\lambda^2(\sigma)}{\lambda^2(\tau_0)}\eta)\,d\eta\big|\\
&\lesssim \xi^{0+}\big\|x_1\big\|_{S_2}, 
\end{align*}
and the corresponding contribution to $\big|\int_0^\infty \frac{(\rho^{\frac{1}{2}}\widetilde{\triangle}\tilde{x}^{(1)}_1)(\xi)}{\xi^{\frac{3}{4}}}\sin[\nu\tau_0\xi^{\frac{1}{2}}]\,d\xi \big|$ is seen to be bounded by 
\[
\lesssim \big\|x_1\big\|_{S_2}\cdot\int_{\tau_0}^\infty \frac{\lambda(\tau_0)}{\lambda(\sigma)}\beta_{\nu}(\sigma)\,d\sigma\lesssim \big\|x_1\big\|_{S_2}. 
\]
It then remains to treat the analogues of situations  {\it{(1i.a)}},  {\it{(1i.b)}} from before, where we do have to argue slightly differently, and in fact use an integration by parts trick to reduce to expressions essentially as before. Thus re-defining for our purposes 
 \begin{align*}
 B(\tau_0, \sigma, \xi) &= \int_0^\infty \frac{F(\frac{\lambda^2(\tau_0)}{\lambda^2(\sigma)}\xi, \frac{\lambda^2(\tau_0)}{\lambda^2(\sigma)}\tilde{\eta})\rho(\frac{\lambda^2(\tau_0)}{\lambda^2(\sigma)}\tilde{\eta})}{\xi - \tilde{\eta}}(\mathcal{D}_{\sigma}x^{(0)})(\sigma, \frac{\lambda^2(\tau_0)}{\lambda^2(\sigma)}\tilde{\eta})\,d\tilde{\eta}\\
 & =  \int_0^\infty \frac{F(\frac{\lambda^2(\tau_0)}{\lambda^2(\sigma)}\xi, \frac{\lambda^2(\tau_0)}{\lambda^2(\sigma)}\tilde{\eta})\rho(\frac{\lambda^2(\tau_0)}{\lambda^2(\sigma)}\tilde{\eta})}{\xi - \tilde{\eta}}\partial_{\sigma}\big(x^{(0)}(\sigma, \frac{\lambda^2(\tau_0)}{\lambda^2(\sigma)}\tilde{\eta})\big)\,d\tilde{\eta},\\
 & + \text{error}
 \end{align*}
which corresponds of course to $\big(\mathcal{K}_{cc}\mathcal{D}_{\sigma}x^{(0)}\big)(\frac{\lambda^2(\tau_0)}{\lambda^2(\sigma)}\xi)$, and where the error term accounts for the discrepancy between $\mathcal{D}_{\sigma}$ and $\partial_{\sigma} - 2\beta_{\nu}(\sigma)\xi\partial_{\xi}$ and can be handled like the previous terms, we can then bound the expression 
 \begin{align*}
&\int_{\tau_0}^\infty \frac{\lambda^{\frac{3}{2}}(\tau_0)}{\lambda^{\frac{3}{2}}(\sigma)}\beta_{\nu}(\sigma)\int_0^\infty\chi_{\xi<1}\frac{\rho^{\frac{1}{2}}(\frac{\lambda^2(\tau_0)}{\lambda^2(\sigma)}\xi)}{\rho^{\frac{1}{2}}(\xi)}\frac{\rho^{\frac{1}{2}}(\xi)\sin[\nu\tau_0\xi^{\frac{1}{2}}]\cos[\lambda(\tau_0)\xi^{\frac{1}{2}}\int_{\tau_0}^\sigma\lambda^{-1}(u)\,du]}{\xi^{\frac{3}{4}}}\\
&\hspace{9cm}\cdot B_{\frac{\lambda^2(\tau_0)}{\lambda^2(\sigma)}\xi\lesssim \eta}(\tau_0, \sigma, \xi)\,d\xi d\sigma,
\end{align*}
via integration by parts in $\sigma$. This in fact either produces an additional factor $\sigma^{-1}$ or else (when $\partial_{\sigma}$ hits $\cos[\lambda(\tau_0)\xi^{\frac{1}{2}}\int_{\tau_0}^\sigma\lambda^{-1}(u)\,du]$) an additional factor 
\[
\frac{\lambda(\tau_0)}{\lambda(\sigma)}\xi^{\frac12}.
\]
In this latter case, we thus gain an additional term $\xi^{\frac12}$, whence we needn't absorb any factor into $\sin[\nu\tau_0\xi^{\frac12}]$ as before, and the additional factor $\frac{\lambda(\tau_0)}{\lambda(\sigma)}$ when combined with the weight $\beta_{\nu}(\sigma)$ is now integrable. 
This completes {\it{(1)}} of the proof of Lemma~\ref{lem:choiceofcorrection}. 
\\

{\bf{(2)}} {\it{Choice of $\triangle\tilde{\tilde{x}}_1^{(1)}$.}} Here we shall pick $\triangle\tilde{\tilde{x}}_1^{(1)} = \alpha \mathcal{F}(\chi_{R\leq C\tau_0}\phi(R, 0))$ for suitable $\alpha\in \R$. We easily get 
\[
\int_0^\infty\frac{(\rho^{\frac{1}{2}}\triangle\tilde{\tilde{x}}_1^{(1)})(\xi)}{\xi^{\frac{3}{4}}}\sin[\nu\tau_0\xi^{\frac{1}{2}}]\,d\xi\sim \alpha\tau_0,
\]
while we also have 
\begin{align*}
\big\|(0, \triangle\tilde{\tilde{x}}_1^{(1)})\big\|_{\tilde{S}}\lesssim \alpha\tau_0^{0+}. 
\end{align*}
In light of {\bf{(1)}}, there is a choice of $\alpha$ with $|\alpha|\lesssim \tau_0^{-1+}$, and such that the conclusion of the lemma is satisfied.
\end{proof}

{\bf{Step 4}}: {\it{Establishing the bound \eqref{eq:firstiterlow} for $\triangle x^{(1)}_{>\tau}$.}} Here we prove the bound 
\begin{equation}\label{eq:Step4bound}
\sup_{\tau\geq \tau_0}(\frac{\tau}{\tau_0})^{-\kappa}\big\|\chi_{\xi<1}\triangle x^{(1)}_{>\tau}(\tau, \xi)\big\|_{S_1} + \sup_{\tau\geq \tau_0}\big\|\chi_{\xi<1}\mathcal{D}_{\tau}\triangle x^{(1)}_{>\tau}(\tau, \xi)\big\|_{S_2}\lesssim  \big\|(x_0, x_1)\big\|_{\tilde{S}} + \big|x_{0d}\big|.
\end{equation}
with $\triangle x^{(1)}_{>\tau}$ defined as in the preceding step. Thus 
\begin{align*}
\triangle x^{(1)}_{>\tau} =  -\int_{\tau}^\infty\frac{\lambda^{\frac{3}{2}}(\tau)}{\lambda^{\frac{3}{2}}(\sigma)}\frac{\rho^{\frac{1}{2}}(\frac{\lambda^2(\tau)}{\lambda^2(\sigma)}\xi)}{\rho^{\frac{1}{2}}(\xi)}\frac{\sin[\lambda(\tau)\xi^{\frac{1}{2}}\int_{\tau}^\sigma\lambda^{-1}(u)\,du]}{\xi^{\frac{1}{2}}}\mathcal{R}(\sigma, \underline{x}^{(0)})(\frac{\lambda^2(\tau)}{\lambda^2(\sigma)}\xi)\,d\sigma
\end{align*}

Writing as before $\mathcal{R}(\sigma, \underline{x}^{(0)}) =  \mathcal{R}_1(\sigma, x^{(0)})(\xi) + \mathcal{R}_2(\sigma, x^{(0)}_d)(\xi)$, it is then straightforward to check, exploiting the exponential decay of $x^{(0)}_d$ (Lemma~\ref{lem:linhom}) as well as the fact that $\mathcal{R}_2(\sigma, x^{(0)}_d)(\xi)$ is smooth and rapidly decaying with respect to $\xi$ (\cite{KST}), that the contribution of $\mathcal{R}_2(\sigma, x^{(0)}_d)$ to \eqref{eq:Step4bound} is in effect bounded by $\lesssim \tau^{-(1-)}x_{0d}$, which is better than what we need. It thus suffices to consider the contribution of $\mathcal{R}_1(\sigma, x^{(0)})(\xi)$, which we again split into two main contributions: 
\\

{\bf{Step 4(i)}}: {\it{Contribution of  $\beta_{\nu}^2(\sigma)\mathcal{K}_{cc}x^{(0)}(\sigma, \xi)$ to $ \triangle x^{(1)}_{>\tau}(\tau, \xi)$}}. In light of the definition of the norm $\|\cdot\|_{S}$ (see \eqref{eq:Snorm}), we have to estimate the following expression 
\begin{align*}
\big\|\xi^{-0+}\int_{\tau}^\infty \frac{\lambda^{\frac{3}{2}}(\tau)}{\lambda^{\frac{3}{2}}(\sigma)}\frac{\rho^{\frac{1}{2}}(\frac{\lambda^2(\tau)}{\lambda^2(\sigma)}\xi)}{\rho^{\frac{1}{2}}(\xi)}\frac{\sin[\lambda(\tau)\xi^{\frac{1}{2}}\int_{\tau}^\sigma\lambda^{-1}(u)\,du]}{\xi^{\frac{1}{2}}}\beta_{\nu}^2(\sigma)\mathcal{K}_{cc}x^{(0)}(\sigma, \frac{\lambda^2(\tau)}{\lambda^2(\sigma)}\xi)\,d\sigma\big\|_{L^2_{d\xi}(\xi<1)}
\end{align*}

We follow roughly the steps in the proof of the preceding lemma, dividing into the cases {\it{(4(i).a)}} - {\it{(4(i).c)}}. We use 
\[
 \frac{\lambda^{\frac{3}{2}}(\tau)}{\lambda^{\frac{3}{2}}(\sigma)}\frac{\rho^{\frac{1}{2}}(\frac{\lambda^2(\tau)}{\lambda^2(\sigma)}\xi)}{\rho^{\frac{1}{2}}(\xi)}\sim  \frac{\lambda(\tau)}{\lambda(\sigma)}
\]
for $\xi<1$, $\sigma\geq \tau$. Then expanding the term $\mathcal{K}_{cc}x^{(0)}(\sigma, \xi)$ as before and only retaining the contribution of $x_1$ to $x^{(0)}$, i. e. the second term on the right of \eqref{eq:linhomparam1}, we quickly consider the different cases, referring to the algebra in the proof of the preceding lemma: 
\\

 {\bf{(4(i).a)}}: $\xi\cdot\frac{\lambda^2(\tau)}{\lambda^2(\sigma)}\ll \eta$. Localise $\xi\cdot\frac{\lambda^2(\tau)}{\lambda^2(\sigma)}\sim 2^J, \eta\sim 2^K$, so that $2^J\ll 2^K$. Then we bound 
 \begin{align*}
 &A_{J,K}: =\int_{\tau}^\infty \frac{\lambda^{\frac{3}{2}}(\tau)}{\lambda^{\frac{3}{2}}(\sigma)}\beta_{\nu}^2(\sigma)\chi_{\frac{\lambda^2(\tau)}{\lambda^2(\sigma)}\xi\sim 2^J}\frac{\rho^{\frac{1}{2}}(\frac{\lambda^2(\tau)}{\lambda^2(\sigma)}\xi)}{\rho^{\frac{1}{2}}(\xi)}\frac{\sin[\lambda(\tau)\xi^{\frac{1}{2}}\int_{\tau}^\sigma\lambda^{-1}(u)\,du]}{\xi^{\frac{1}{2}}}\\
&\hspace{9cm}\cdot B_{\eta\sim 2^K}(\tau, \tau_0, \sigma, \xi)\,d\sigma,
 \end{align*}
 where we have 
 \begin{align*}
& B_{\eta\sim 2^K}(\tau, \tau_0, \sigma, \xi):  \\
&=\int_0^\infty\chi_{\eta\sim 2^K} \frac{F(\frac{\lambda^2(\tau)}{\lambda^2(\sigma)}\xi, \eta)\rho(\eta)}{\frac{\lambda^2(\tau)}{\lambda^2(\sigma)}\xi - \eta}\frac{\lambda^{\frac{3}{2}}(\sigma)}{\lambda^{\frac{3}{2}}(\tau_0)}\frac{\rho^{\frac{1}{2}}(\frac{\lambda^2(\sigma)}{\lambda^2(\tau_0)}\eta)}{\rho^{\frac{1}{2}}(\eta)}\frac{\sin[\lambda(\sigma)\eta^{\frac{1}{2}}\int_{\tau_0}^\sigma\lambda^{-1}(u)\,du]}{\eta^{\frac{1}{2}}}x_1(\frac{\lambda^2(\sigma)}{\lambda^2(\tau_0)}\eta)\,d\eta
  \end{align*}
 Estimate (for $\sigma\geq \tau), \xi<1$)
 \[
 \big|\frac{\lambda^{\frac{3}{2}}(\tau)}{\lambda^{\frac{3}{2}}(\sigma)}\frac{\rho^{\frac{1}{2}}(\frac{\lambda^2(\tau)}{\lambda^2(\sigma)}\xi)}{\rho^{\frac{1}{2}}(\xi)}\frac{\sin[\lambda(\tau)\xi^{\frac{1}{2}}\int_{\tau}^\sigma\lambda^{-1}(u)\,du]}{\xi^{\frac{1}{2}}}\big|\lesssim \tau\frac{\lambda(\tau)}{\lambda(\sigma)}.
 \]
 Then fixing $\tau, \sigma$ for now, we get 
 \begin{align*}
 &\big\|\xi^{-0+}\frac{\lambda^{\frac{3}{2}}(\tau)}{\lambda^{\frac{3}{2}}(\sigma)}\beta_{\nu}^2(\sigma)\chi_{\frac{\lambda^2(\tau)}{\lambda^2(\sigma)}\xi\sim 2^J}\frac{\rho^{\frac{1}{2}}(\frac{\lambda^2(\tau)}{\lambda^2(\sigma)}\xi)}{\rho^{\frac{1}{2}}(\xi)}\frac{\sin[\lambda(\tau)\xi^{\frac{1}{2}}\int_{\tau}^\sigma\lambda^{-1}(u)\,du]}{\xi^{\frac{1}{2}}}\\
&\hspace{9cm}\cdot B_{\eta\sim 2^K}(\tau, \tau_0, \sigma, \xi)\big\|_{L^2_{d\xi}(\xi<1)}\\
&\lesssim \tau\beta_{\nu}^2(\sigma)2^{(\frac12-)(J-K)}\big|\int_0^\infty\chi_{\eta\sim 2^K} \eta^{-\frac{1}{2}}\frac{\lambda^{\frac{3}{2}}(\sigma)}{\lambda^{\frac{3}{2}}(\tau_0)}\frac{\rho^{\frac{1}{2}}(\frac{\lambda^2(\sigma)}{\lambda^2(\tau_0)}\eta)}{\rho^{\frac{1}{2}}(\eta)}\frac{\sin[\lambda(\sigma)\eta^{\frac{1}{2}}\int_{\tau_0}^\sigma\lambda^{-1}(u)\,du]}{\eta^{0+}}x_1(\frac{\lambda^2(\sigma)}{\lambda^2(\tau_0)}\eta)\,d\eta\big|\\
&\lesssim \tau\beta_{\nu}^2(\sigma)2^{(\frac12-)(J-K)}(\frac{\lambda(\sigma)}{\lambda(\tau_0)})^{2\delta_0}\big\|\chi_{\eta\sim \frac{\lambda^2(\sigma)}{\lambda^2(\tau_0)}2^K}x_1\big\|_{S_2}
 \end{align*}
It follows using a simple orthogonality argument that 
\begin{align*}
&\big\|\sum_{J\ll K}\xi^{-0+}\frac{\lambda^{\frac{3}{2}}(\tau)}{\lambda^{\frac{3}{2}}(\sigma)}\beta_{\nu}^2(\sigma)\chi_{\frac{\lambda^2(\tau)}{\lambda^2(\sigma)}\xi\sim 2^J}\frac{\rho^{\frac{1}{2}}(\frac{\lambda^2(\tau)}{\lambda^2(\sigma)}\xi)}{\rho^{\frac{1}{2}}(\xi)}\frac{\sin[\lambda(\tau)\xi^{\frac{1}{2}}\int_{\tau}^\sigma\lambda^{-1}(u)\,du]}{\xi^{\frac{1}{2}}}\\
&\hspace{9cm}\cdot B_{\eta\sim 2^K}(\tau, \tau_0, \sigma, \xi)\big\|_{L^2_{d\xi}(\xi<1)}\\
&\lesssim \tau\beta_{\nu}^2(\sigma)(\frac{\lambda(\sigma)}{\lambda(\tau_0)})^{2\delta_0}\big\|x_1\big\|_{S_2}, 
\end{align*}
 We conclude that 
 \begin{align*}
 \big\|\xi^{-0+}\sum_{J\ll K}A_{J,K}\big\|_{L^2_{d\xi}(\xi<1)}&\lesssim \big(\int_{\tau}^{\infty}\tau\beta_{\nu}^2(\sigma)(\frac{\lambda(\sigma)}{\lambda(\tau_0)})^{2\delta_0}\,d\sigma\big)\big\|x_1\big\|_{S_2}\\
 &\lesssim (\frac{\lambda(\tau)}{\lambda(\tau_0)})^{2\delta_0}\big\|x_1\big\|_{S_2}. 
 \end{align*}
 
  {\bf{(4(i).b)}}: {\it{diagonal case $\xi\cdot\frac{\lambda^2(\tau)}{\lambda^2(\sigma)}\sim \eta$}}. Here we bound 
  \begin{align*}
 \big| F(\frac{\lambda^2(\tau)}{\lambda^2(\sigma)}\xi, \eta)\rho(\eta)\eta^{-\frac{1}{2}}\big|\lesssim 1, 
  \end{align*}
  and furthermore we have 
  \begin{align*}
 &\big|\frac{\rho^{\frac{1}{2}}(\frac{\lambda^2(\sigma)}{\lambda^2(\tau_0)}\eta)}{\rho^{\frac{1}{2}}(\eta)}x_1(\frac{\lambda^2(\sigma)}{\lambda^2(\tau_0)}\eta)|\\&\lesssim \chi_{\frac{\lambda^2(\sigma)}{\lambda^2(\tau_0)}\eta<1}\frac{\lambda^{\frac{1}{2}}(\tau_0)}{\lambda^{\frac{1}{2}}(\sigma)}\big|x_1(\frac{\lambda^2(\sigma)}{\lambda^2(\tau_0)}\eta)\big| + \chi_{\frac{\lambda^2(\sigma)}{\lambda^2(\tau_0)}\eta\geq 1}\frac{\lambda^{\frac{1}{2}}(\sigma)}{\lambda^{\frac{1}{2}}(\tau_0)}\eta^{\frac{1}{2}}\big|x_1(\frac{\lambda^2(\sigma)}{\lambda^2(\tau_0)}\eta)\big|
  \end{align*}
  and then we get 
  \begin{align*}
  &\big\|\xi^{-0+}\int_0^\infty\frac{F(\frac{\lambda^2(\tau)}{\lambda^2(\sigma)}\xi, \eta)\rho(\eta)}{\frac{\lambda^2(\tau)}{\lambda^2(\sigma)}\xi - \eta}\frac{\lambda^{\frac{3}{2}}(\sigma)}{\lambda^{\frac{3}{2}}(\tau_0)}\frac{\rho^{\frac{1}{2}}(\frac{\lambda^2(\sigma)}{\lambda^2(\tau_0)}\eta)}{\rho^{\frac{1}{2}}(\eta)}\frac{\sin[\lambda(\sigma)\eta^{\frac{1}{2}}\int_{\tau_0}^\sigma\lambda^{-1}(u)\,du]}{\eta^{\frac{1}{2}}}x_1(\frac{\lambda^2(\sigma)}{\lambda^2(\tau_0)}\eta)\,d\eta\big\|_{L^2_{d\xi}}\\
  &\lesssim [\frac{\lambda(\tau)}{\lambda(\sigma)}]^{-1}(\frac{\lambda(\sigma)}{\lambda(\tau_0)})^{2\delta_0}\big\|x_1\big\|_{S_2}. 
  \end{align*}
This in turn furnishes the bound 
\begin{align*}
&\big\|\xi^{-0+}\int_{\tau}^\infty \frac{\lambda^{\frac{3}{2}}(\tau)}{\lambda^{\frac{3}{2}}(\sigma)}\beta_{\nu}^2(\sigma)\frac{\rho^{\frac{1}{2}}(\frac{\lambda^2(\tau)}{\lambda^2(\sigma)}\xi)}{\rho^{\frac{1}{2}}(\xi)}\frac{\sin[\lambda(\tau)\xi^{\frac{1}{2}}\int_{\tau}^\sigma\lambda^{-1}(u)\,du]}{\xi^{\frac{1}{2}}}\\
&\hspace{9cm}\cdot B_{\eta\sim \frac{\lambda^2(\tau)}{\lambda^2(\sigma)}\xi}(\tau, \tau_0, \sigma, \xi)\,d\sigma\big\|_{L^2_{d\xi}(\xi<1)}\\
&\lesssim (\frac{\lambda(\tau)}{\lambda(\tau_0)})^{2\delta_0}\big\|x_1\big\|_{S_2}. 
\end{align*}
 
  {\bf{(4(i).c)}}: {\it{off-diagonal case $\xi\cdot\frac{\lambda^2(\tau)}{\lambda^2(\sigma)}\gg\eta$}}. Here it is again the cancellation property of $x_1$ which plays in prominently. As this is the direct analogue of case {\it{(1i.c)}} in the proof of the preceding lemma, we shall be correspondingly brief: first, we may replace the kernel function 
 \[
 \frac{F(\frac{\lambda^2(\tau)}{\lambda^2(\sigma)}\xi, \eta)\rho^{\frac12}(\eta)}{\frac{\lambda^2(\tau)}{\lambda^2(\sigma)}\xi - \eta}
 \]
 by $\frac{F(\frac{\lambda^2(\tau)}{\lambda^2(\sigma)}\xi, 0)}{\frac{\lambda^2(\tau)}{\lambda^2(\sigma)}\xi}\eta^{-\frac{1}{4}}$ up to better errors which can be handled as in the preceding two cases.  Then we reduce to bounding the expression 
 \begin{align*}
 &\big\|\xi^{-0+}\int_{\tau}^\infty  \frac{\lambda(\tau)}{\lambda(\sigma)}\beta_{\nu}^2(\sigma)\frac{\sin[\lambda(\tau)\xi^{\frac{1}{2}}\int_{\tau}^\sigma\lambda^{-1}(u)\,du]}{\xi^{\frac{1}{2}}}\frac{F(\frac{\lambda^2(\tau)}{\lambda^2(\sigma)}\xi, 0)}{\frac{\lambda^2(\tau)}{\lambda^2(\sigma)}\xi}\\&\hspace{3cm}\cdot\int_0^\infty \frac{\lambda(\sigma)}{\lambda(\tau_0)}\frac{\sin[\lambda(\tau_0)\tilde{\eta}\int_{\tau_0}^\sigma\lambda^{-1}(u)\,du]}{\tilde{\eta}^{\frac{3}{4}}}(\rho^{\frac{1}{2}}x_1)(\tilde{\eta})\,d\tilde{\eta}\big\|_{L^2_{d\xi}(\xi<1)}
 \end{align*}
 Then note that since 
 \[
 \tilde{\eta} = \frac{\lambda^2(\sigma)}{\lambda^2(\tau_0)}\eta\ll \frac{\lambda^2(\sigma)}{\lambda^2(\tau_0)}\frac{\lambda^2(\tau)}{\lambda^2(\sigma)}\xi = \frac{\lambda^2(\tau)}{\lambda^2(\tau_0)}\xi, 
\]
we can absorb the extra weight $\xi^{-0+}$ into $x_1$ at the cost of a factor $(\frac{\lambda(\tau)}{\lambda(\tau_0)})^{2\delta_0}$, and then the argument used to prove Proposition~\ref{prop:lingrowthcond} in conjunction with a simple orthogonality argument yields 
\begin{align*}
&\big\|\xi^{-0+}\int_{\tau}^\infty  \frac{\lambda(\tau)}{\lambda(\sigma)}\beta_{\nu}^2(\sigma)\frac{\sin[\lambda(\tau)\xi^{\frac{1}{2}}\int_{\tau}^\sigma\lambda^{-1}(u)\,du]}{\xi^{\frac{1}{2}}}\frac{F(\frac{\lambda^2(\tau)}{\lambda^2(\sigma)}\xi, 0)}{\frac{\lambda^2(\tau)}{\lambda^2(\sigma)}\xi}\\&\hspace{3cm}\cdot\int_0^\infty \frac{\lambda(\sigma)}{\lambda(\tau_0)}\frac{\sin[\lambda(\tau_0)\tilde{\eta}\int_{\tau_0}^\sigma\lambda^{-1}(u)\,du]}{\tilde{\eta}^{\frac{3}{4}}}(\rho^{\frac{1}{2}}x_1)(\tilde{\eta})\,d\tilde{\eta}\big\|_{L^2_{d\xi}(\xi<1)}\\
&\lesssim (\frac{\lambda(\tau)}{\lambda(\tau_0)})^{2\delta_0}\big(\int_{\tau}^\infty \frac{\lambda(\tau)}{\lambda(\sigma)}\beta_{\nu}^2\cdot\sigma\,d\sigma\big)\big\|x_1\big\|_{S_2}\lesssim (\frac{\lambda(\tau)}{\lambda(\tau_0)})^{2\delta_0}\big\|x_1\big\|_{S_2}. 
\end{align*}
This concludes {\it{Step 4(i)}}. 
\\

{\bf{Step 4(ii)}}: {\it{Contribution of  $\beta_{\nu}(\sigma)\mathcal{K}_{cc}\mathcal{D}_{\sigma}x^{(0)}(\sigma, \xi)$ to $ \triangle x^{(1)}_{>\tau}(\tau, \xi)$}}. We again limit the estimates here to the contribution to $x^{(0)}$ by $x_1$. In light of the definition of the norm $\|\cdot\|_{S}$, we have to estimate the following expression 
\begin{align*}
\big\|\xi^{-0+}\int_{\tau}^\infty \frac{\lambda^{\frac{3}{2}}(\tau)}{\lambda^{\frac{3}{2}}(\sigma)}\frac{\rho^{\frac{1}{2}}(\frac{\lambda^2(\tau)}{\lambda^2(\sigma)}\xi)}{\rho^{\frac{1}{2}}(\xi)}\frac{\sin[\lambda(\tau)\xi^{\frac{1}{2}}\int_{\tau}^\sigma\lambda^{-1}(u)\,du]}{\xi^{\frac{1}{2}}}\beta_{\nu}(\sigma)\mathcal{K}_{cc}\mathcal{D}_{\sigma}x^{(0)}(\sigma, \frac{\lambda^2(\tau)}{\lambda^2(\sigma)}\xi)\,d\sigma\big\|_{L^2_{d\xi}(\xi<1)}
\end{align*}
We divide into cases (a) - (c) as in the preceding. 
\\

 {\bf{(4(ii).a)}}: $\xi\cdot\frac{\lambda^2(\tau)}{\lambda^2(\sigma)}\ll \eta$. We have to perform an integration by parts in $\sigma$. Use 
\begin{align*}
\mathcal{K}_{cc}\mathcal{D}_{\sigma}x^{(0)}(\sigma, \frac{\lambda^2(\tau)}{\lambda^2(\sigma)}\xi) &= \int_0^\infty \frac{F(\frac{\lambda^2(\tau)}{\lambda^2(\sigma)}\xi, \frac{\lambda^2(\tau)}{\lambda^2(\sigma)}\tilde{\eta})\rho(\frac{\lambda^2(\tau)}{\lambda^2(\sigma)}\tilde{\eta})}{\xi - \tilde{\eta}}(\mathcal{D}_\sigma x^{(0)})(\sigma, \frac{\lambda^2(\tau)}{\lambda^2(\sigma)}\tilde{\eta})\,d\tilde{\eta}\\
& = \int_0^\infty \frac{F(\frac{\lambda^2(\tau)}{\lambda^2(\sigma)}\xi, \frac{\lambda^2(\tau)}{\lambda^2(\sigma)}\tilde{\eta})\rho(\frac{\lambda^2(\tau)}{\lambda^2(\sigma)}\tilde{\eta})}{\xi - \tilde{\eta}}\partial_{\sigma}\big(x^{(0)}(\sigma, \frac{\lambda^2(\tau)}{\lambda^2(\sigma)}\tilde{\eta})\big)\,d\tilde{\eta}\\
& + \text{error},
\end{align*}
with the term 'error' again coming from the difference $\mathcal{D}_{\sigma} - (\partial_{\sigma} - 2\beta_{\nu}(\sigma)\xi\partial_{\xi})$ and hence being treatable like the contribution by $\beta_{\nu}^2(\sigma)\mathcal{K}_cc x^{(0)}$. 
 Then integration by parts either produces an extra factor $\sigma^{-1}$ which transforms the term into one of the type considered in the preceding (in the situation {\it{(4(i).a)}}), or else one replaces 
 \[
 \frac{\sin[\lambda(\tau)\xi^{\frac{1}{2}}\int_{\tau}^\sigma\lambda^{-1}(u)\,du]}{\xi^{\frac{1}{2}}}
 \]
 by 
 \[
\frac{\lambda(\tau)}{\lambda(\sigma)}\cos[\lambda(\tau)\xi^{\frac{1}{2}}\int_{\tau}^\sigma\lambda^{-1}(u)\,du]
 \]
 But then if one proceeds as in case {\it{(4(i).a)}} above one obtains the bound 
 \begin{align*}
&\big\|\xi^{-0+}\int_{\tau}^\infty \frac{\lambda^{\frac{5}{2}}(\tau)}{\lambda^{\frac{5}{2}}(\sigma)}\frac{\rho^{\frac{1}{2}}(\frac{\lambda^2(\tau)}{\lambda^2(\sigma)}\xi)}{\rho^{\frac{1}{2}}(\xi)}\cos[\lambda(\tau)\xi^{\frac{1}{2}}\int_{\tau}^\sigma\lambda^{-1}(u)\,du]\beta_{\nu}(\sigma)\mathcal{K}_{cc}x^{(0)}(\sigma, \frac{\lambda^2(\tau)}{\lambda^2(\sigma)}\xi)\,d\sigma\big\|_{L^2_{d\xi}(\xi<1)}\\
&\lesssim \big\|x_1\big\|_{S_2}\int_\tau^\infty \frac{\lambda(\tau)}{\lambda(\sigma)}\beta_{\nu}(\sigma)(\frac{\lambda(\sigma)}{\lambda(\tau_0)})^{2\delta_0}\,d\sigma\lesssim (\frac{\lambda(\tau)}{\lambda(\tau_0)})^{2\delta_0}\big\|x_1\big\|_{S_2}. 
 \end{align*}

 {\bf{(4(ii).b)}}: $\xi\cdot\frac{\lambda^2(\tau)}{\lambda^2(\sigma)}\sim\eta$. Here we simply trade a factor $\eta^{\frac12}$ for a factor $\frac{\lambda(\tau)}{\lambda(\sigma)}\xi^{\frac12}$ and then proceed as in  {\it{(4(i).b)}}. 
\\

 {\bf{(4(ii).c)}}: $\xi\cdot\frac{\lambda^2(\tau)}{\lambda^2(\sigma)}\gg\eta$. Here we still integrate by parts with respect to $\sigma$ but we need to combine the oscillatory factors correctly. Specifically, write
 \[
 \mathcal{K}\mathcal{D}_{\sigma}x^{(0)}(\sigma, \frac{\lambda^2(\tau)}{\lambda^2(\sigma)}\xi) 
 \]
 in the form 
 \begin{align*}
 \int_0^\infty\frac{F(\frac{\lambda^2(\tau)}{\lambda^2(\sigma)}\xi, \eta)\rho(\eta)}{\frac{\lambda^2(\tau)}{\lambda^2(\sigma)}\xi - \eta}\frac{\lambda^{\frac{3}{2}}(\sigma)}{\lambda^{\frac{3}{2}}(\tau_0)}\frac{\rho^{\frac{1}{2}}(\frac{\lambda^2(\sigma)}{\lambda^2(\tau_0)}\eta)}{\rho^{\frac{1}{2}}(\eta)}\cos[\lambda(\sigma)\eta^{\frac{1}{2}}\int_{\tau_0}^\sigma\lambda^{-1}(u)\,du]x_1(\frac{\lambda^2(\sigma)}{\lambda^2(\tau_0)}\eta)\,d\eta,
 \end{align*}
 which then gets localised to (with a smooth cutoff $\chi_{\xi\cdot\frac{\lambda^2(\tau)}{\lambda^2(\sigma)}\gg\eta}$, say)
  \begin{align*}
 &\int_0^\infty\chi_{\xi\cdot\frac{\lambda^2(\tau)}{\lambda^2(\sigma)}\gg\eta}\frac{F(\frac{\lambda^2(\tau)}{\lambda^2(\sigma)}\xi, \eta)\rho(\eta)}{\frac{\lambda^2(\tau)}{\lambda^2(\sigma)}\xi - \eta}\frac{\lambda^{\frac{3}{2}}(\sigma)}{\lambda^{\frac{3}{2}}(\tau_0)}\frac{\rho^{\frac{1}{2}}(\frac{\lambda^2(\sigma)}{\lambda^2(\tau_0)}\eta)}{\rho^{\frac{1}{2}}(\eta)}\\&\hspace{4cm}\cdot\cos[\lambda(\sigma)\eta^{\frac{1}{2}}\int_{\tau_0}^\sigma\lambda^{-1}(u)\,du]x_1(\frac{\lambda^2(\sigma)}{\lambda^2(\tau_0)}\eta)\,d\eta,
 \end{align*}
and re-write the integral in terms of the variable $\tilde{\eta}: = \frac{\lambda^2(\sigma)}{\lambda^2(\tau_0)}\eta$. Then combine the oscillatory factor 
\[
\cos[\lambda(\tau_0)\tilde{\eta}^{\frac{1}{2}}\int_{\tau_0}^\sigma\lambda^{-1}(u)\,du]
\]
with the outer oscillatory factor $\sin[\lambda(\tau)\xi^{\frac{1}{2}}\int_{\tau}^\sigma\lambda^{-1}(u)\,du]$, resulting in an expression which is a linear combination of exponentials with phase 
\[
\lambda(\tau)\xi^{\frac{1}{2}}\int_{\tau}^\sigma\lambda^{-1}(u)\,du\pm \lambda(\tau_0)\tilde{\eta}^{\frac{1}{2}}\int_{\tau_0}^\sigma\lambda^{-1}(u)\,du, 
\]
and perform an integration by parts with respect to $\sigma$. We gain an additional $\sigma^{-1}$ at the expense of an extra $[\xi\cdot\frac{\lambda^2(\tau)}{\lambda^2(\sigma)}]^{-1}$, which however gets compensated for by a gain from the $\tilde{\eta}$-integral. The details here are then just as in case {\it{(4(i).a)}}. This concludes case {\it{4(ii)}} as far as the contribution to $x^{(0)}$ of the second term on the right in \eqref{eq:linhomparam1} is concerned. 
\\

The preceding concludes the estimate for $\sup_{\tau\geq \tau_0}(\frac{\tau}{\tau_0})^{-0+}\big\|\chi_{\xi<1}\triangle \tilde{x}^{(1)}_{>\tau}(\tau, \cdot)\big\|_{S_1}$. One argues similarly to get the desired bound for $\sup_{\tau\geq \tau_0}\big\|\chi_{\xi<1}\mathcal{D}_{\tau}\triangle \tilde{x}^{(1)}_{>\tau}(\tau, \cdot)\big\|_{S_2}$. Note that the loss of $(\frac{\tau}{\tau_0})^{2\delta_0(1+\nu^{-1})}$ for the former expression comes from the singular weight $\xi^{-0+}$ in conjunction with the singular factor 
\[
 \frac{\sin[\lambda(\tau)\xi^{\frac{1}{2}}\int_{\tau}^\sigma\lambda^{-1}(u)\,du]}{\xi^{\frac{1}{2}}}
 \]
which just barely fails to be in $L^2_{d\xi}$. Applying $\mathcal{D}_{\tau}$ de-singularizes the preceding term, whence no loss $(\frac{\tau}{\tau_0})^{2\delta_0(1+\nu^{-1})}$ results for $\big\|\chi_{\xi<1}\mathcal{D}_{\tau}\triangle \tilde{x}^{(1)}_{>\tau}(\tau, \cdot)\big\|_{S_2}$. 
\\

{\bf{Step 5}}: {\it{Control over the initial data $\big(\triangle \tilde{x}^{(1)}_0(\xi), \triangle \tilde{x}^{(1)}_1(\xi)\big)$.}} Here we prove the estimate 
\[
\big\|\big(\triangle \tilde{x}^{(1)}_0(\xi), \triangle \tilde{x}^{(1)}_1(\xi)\big)\big\|_{S}\lesssim  \big\|(x_0, x_1)\big\|_{\tilde{S}} + \big|x_{0d}\big|. 
\]
Observe that in light of the definition of $\big(\triangle \tilde{x}^{(1)}_0(\xi), \triangle \tilde{x}^{(1)}_1(\xi)\big)$ preceding the statement of Lemma~\ref{lem:choiceofcorrection} and the preceding {\it{Step 4}}, it suffices to prove the high frequency bound
\[
\big\|\chi_{\xi>1}\big(\triangle \tilde{x}^{(1)}_0(\xi), \triangle \tilde{x}^{(1)}_1(\xi)\big)\big\|_{S}\lesssim  \big\|(x_0, x_1)\big\|_{\tilde{S}} + \big|x_{0d}\big|. 
\]
Also, we may immediately reduce to the contribution of the continuous spectral part of the data $(x_0, x_1)$, the discrete part again being straightforward on account if its exponential decay. 
We consider here the term $\triangle \tilde{x}^{(1)}_0$, and more specifically the contribution of the source term $x_1$ via \eqref{eq:linhomparam1} to it, the other cases being treated similarly. As before, we split the term $\mathcal{R}_1(\sigma, \underline{x}^{(0)})(\frac{\lambda^2(\tau_0)}{\lambda^2(\sigma)}\xi)$ involved in the definition of $\triangle\tilde{x}^{(1)}_0(\xi)$ into two parts, by invoking the schematic splitting \eqref{eq:R_1structure}:
\\

{\bf{Step 5(i)}}: {\it{Contribution of $ \beta_{\nu}^2(\tau)\mathcal{K}_{cc}x^{(0)}$}}. Thus, to begin with, we need to bound the expression 
\begin{align*}
\big\|\langle\xi\rangle^{1+}\int_{\tau_0}^\infty\frac{\lambda^{\frac{3}{2}}(\tau_0)}{\lambda^{\frac{3}{2}}(\sigma)}\frac{\rho^{\frac{1}{2}}(\frac{\lambda^2(\tau_0)}{\lambda^2(\sigma)}\xi)}{\rho^{\frac{1}{2}}(\xi)}\frac{\sin[\lambda(\tau_0)\xi^{\frac{1}{2}}\int_{\tau_0}^\sigma\lambda^{-1}(u)\,du]}{\xi^{\frac{1}{2}}}\beta_{\nu}^2(\sigma)B(\sigma, \tau_0, \xi)\,d\sigma\big\|_{L^2_{d\xi}(\xi>1)}, 
\end{align*}
where we put 
\begin{align*}
&B(\sigma, \tau_0, \xi)\\&
 = \int_0^\infty \frac{F(\frac{\lambda^2(\tau_0)}{\lambda^2(\sigma)}\xi, \eta)\rho(\eta)}{\frac{\lambda^2(\tau_0)}{\lambda^2(\sigma)}\xi - \eta}\frac{\lambda^{\frac{3}{2}}(\sigma)}{\lambda^{\frac{3}{2}}(\tau_0)}\frac{\rho^{\frac{1}{2}}(\frac{\lambda^2(\sigma)}{\lambda^2(\tau_0)}\eta)}{\rho^{\frac{1}{2}}(\eta)}\frac{\sin[\lambda(\sigma)\eta^{\frac{1}{2}}\int_{\tau_0}^\sigma\lambda^{-1}(u)\,du]}{\eta^{\frac{1}{2}}}x_1(\frac{\lambda^2(\sigma)}{\lambda^2(\tau_0)}\eta). 
\end{align*}
{\bf{(5(i).a)}}: $\frac{\lambda^2(\tau_0)}{\lambda^2(\sigma)}\xi\ll \eta<1$. Here we get (in light of $\xi>1$)
\[
\big|\langle\xi\rangle^{1+}\frac{\rho^{\frac{1}{2}}(\frac{\lambda^2(\tau_0)}{\lambda^2(\sigma)}\xi)}{\rho^{\frac{1}{2}}(\xi)}\frac{\sin[\lambda(\tau_0)\xi^{\frac{1}{2}}\int_{\tau_0}^\sigma\lambda^{-1}(u)\,du]}{\xi^{\frac{1}{2}}}\big|
\lesssim [\frac{\lambda^2(\tau_0)}{\lambda^2(\sigma)}]^{-\frac{1}{4}}\xi^{0+}
\]
Furthermore, we have in our current situation
\begin{align*}
\big|\frac{F(\frac{\lambda^2(\tau_0)}{\lambda^2(\sigma)}\xi, \eta)\rho(\eta)}{\frac{\lambda^2(\tau_0)}{\lambda^2(\sigma)}\xi - \eta}\frac{\rho^{\frac{1}{2}}(\frac{\lambda^2(\sigma)}{\lambda^2(\tau_0)}\eta)}{\rho^{\frac{1}{2}}(\eta)}\frac{\sin[\lambda(\sigma)\eta^{\frac{1}{2}}\int_{\tau_0}^\sigma\lambda^{-1}(u)\,du]}{\eta^{\frac{1}{2}}}\big|\lesssim [\frac{\lambda^2(\sigma)}{\lambda^2(\tau_0)}]^{\frac{1}{4}}\eta^{-\frac{1}{2}}. 
\end{align*}
Then taking the factor $\frac{\lambda^{\frac{3}{2}}(\tau_0)}{\lambda^{\frac{3}{2}}(\sigma)}$ into account, the $L^2_{d\xi}$ integral leads to a gain of 
\[
\frac{\lambda(\tau_0)}{\lambda(\sigma)}\xi^{\frac{1}{2}+}|_{\frac{\lambda^2(\tau_0)}{\lambda^2(\sigma)}\xi = c\eta}
\]
whence to a gain of $(\frac{\lambda(\sigma)}{\lambda(\tau_0)})^{2\delta_0}\eta^{\frac{1}{2}+}$, and so we reduce to the $\eta$-integral 
\begin{align*}
\int_{[\frac{\lambda^{2}(\sigma)}{\lambda^{2}(\tau_0)}]^{-1}}^1\eta^{0+}\frac{\lambda^{2(1+\delta_0)}(\sigma)}{\lambda^{2(1+\delta_0)}(\tau_0)}x_1(\frac{\lambda^2(\sigma)}{\lambda^2(\tau_0)}\eta)\,d\eta = \int_{[\frac{\lambda^2(\sigma)}{\lambda^2(\tau_0)}]^{-1}}^1\eta^{-\frac{1}{2+}}\frac{\lambda^{2(1+\delta_0)}(\sigma)}{\lambda^{2(1+\delta_0)}(\tau_0)}\eta^{\frac{1}{2}}x_1(\frac{\lambda^2(\sigma)}{\lambda^2(\tau_0)}\eta)\,d\eta,
\end{align*}
which via Cauchy-Schwarz can be bounded by 
\[
\big|\int_{[\frac{\lambda^2(\sigma)}{\lambda^2(\tau_0)}]^{-1}}^1\eta^{-\frac{1}{2+}}\frac{\lambda^{2(1+\delta_0)}(\sigma)}{\lambda^{2(1+\delta_0)}(\tau_0)}\eta^{\frac{1}{2}}x_1(\frac{\lambda^2(\sigma)}{\lambda^2(\tau_0)}\eta)\,d\eta\big|\lesssim (\frac{\lambda(\sigma)}{\lambda(\tau_0)})^{2\delta_0}\big\|x_1\big\|_{S_2}. 
\]
It follows that the full expression is bounded by 
\[
\big(\int_{\tau_0}^\infty(\frac{\lambda(\sigma)}{\lambda(\tau_0)})^{2\delta_0}\beta_{\nu}^2(\sigma)\,d\sigma\big)\big\|x_1\big\|_{S_2}\lesssim \tau_0^{-1}\big\|x_1\big\|_{S_2}. 
\]

{\bf{(5(i).b)}}: $\frac{\lambda^2(\tau_0)}{\lambda^2(\sigma)}\xi\ll \eta,\,\eta>1$.  Here, we use 
\begin{align*}
\big|\langle\xi\rangle^{1+}\frac{\rho^{\frac{1}{2}}(\frac{\lambda^2(\tau_0)}{\lambda^2(\sigma)}\xi)}{\rho^{\frac{1}{2}}(\xi)}\frac{\sin[\lambda(\tau_0)\xi^{\frac{1}{2}}\int_{\tau_0}^\sigma\lambda^{-1}(u)\,du]}{\xi^{\frac{1}{2}}}\big|&\lesssim [\frac{\lambda^2(\tau_0)}{\lambda^2(\sigma)}]^{-\frac{1}{4}}\xi^{0+}\chi_{\frac{\lambda^2(\tau_0)}{\lambda^2(\sigma)}\xi<1}\\
& + [\frac{\lambda^2(\tau_0)}{\lambda^2(\sigma)}]^{\frac{1}{4}}\xi^{\frac{1}{2}+}\chi_{\frac{\lambda^2(\tau_0)}{\lambda^2(\sigma)}\xi>1}\\
\end{align*}
and also (under our current assumptions)
\begin{align*}
\big|\frac{F(\frac{\lambda^2(\tau_0)}{\lambda^2(\sigma)}\xi, \eta)\rho(\eta)}{\frac{\lambda^2(\tau_0)}{\lambda^2(\sigma)}\xi - \eta}\frac{\rho^{\frac{1}{2}}(\frac{\lambda^2(\sigma)}{\lambda^2(\tau_0)}\eta)}{\rho^{\frac{1}{2}}(\eta)}\frac{\sin[\lambda(\sigma)\eta^{\frac{1}{2}}\int_{\tau_0}^\sigma\lambda^{-1}(u)\,du]}{\eta^{\frac{1}{2}}}\big|\lesssim\langle \frac{\lambda^2(\tau_0)}{\lambda^2(\sigma)}\xi\rangle^{-2}\eta^{-2}[\frac{\lambda^2(\sigma)}{\lambda^2(\tau_0)}]^{\frac{1}{4}}. 
\end{align*}
Then we observe that (recalling $1+ = 1+\delta_0, \frac12+ = \frac12 + \delta_0$)
\begin{align*}
&\frac{\lambda^{\frac32}(\tau_0)}{\lambda^{\frac32}(\sigma)}\big\|[\frac{\lambda^2(\tau_0)}{\lambda^2(\sigma)}]^{-\frac{1}{4}}\xi^{0+}\chi_{\frac{\lambda^2(\tau_0)}{\lambda^2(\sigma)}\xi<1}\big\|_{L^2_{d\xi}} + \frac{\lambda^{\frac32}(\tau_0)}{\lambda^{\frac32}(\sigma)}\big\|\langle \frac{\lambda^2(\tau_0)}{\lambda^2(\sigma)}\xi\rangle^{-2}[\frac{\lambda^2(\tau_0)}{\lambda^2(\sigma)}]^{\frac{1}{4}}\xi^{\frac{1}{2}+}\chi_{\frac{\lambda^2(\tau_0)}{\lambda^2(\sigma)}\xi>1}\big\|_{L^2_{d\xi}}\\
&\lesssim [\frac{\lambda(\sigma)}{\lambda(\tau_0)}]^{2\delta_0},
\end{align*}
while also 
\[
\big|\int_1^\infty [\frac{\lambda^2(\sigma)}{\lambda^2(\tau_0)}]^{\frac{1}{4}}\eta^{-2} \frac{\lambda^{\frac32}(\sigma)}{\lambda^{\frac32}(\tau_0)}x_1(\frac{\lambda^2(\sigma)}{\lambda^2(\tau_0)}\eta)\,d\eta\big|\lesssim (\frac{\lambda(\tau_0)}{\lambda(\sigma)})^{2\delta_0}\big\|\langle\cdot\rangle^{\frac12+}x_1\big\|_{L^2}
\]
In total we bound the preceding case by the expression 
\[
\lesssim \int_{\tau_0}^\infty \beta_{\nu}^2(\sigma)[\frac{\lambda(\sigma)}{\lambda(\tau_0)}]^{2\delta_0}\cdot (\frac{\lambda(\tau_0)}{\lambda(\sigma)})^{2\delta_0}\big\|\langle\cdot\rangle^{\frac12+}x_1\big\|_{L^2}\,d\sigma\lesssim \tau_0^{-1}\big\|x_1\big\|_{S_2}. 
\]

{\bf{(5(i).c)}}: $\frac{\lambda^2(\tau_0)}{\lambda^2(\sigma)}\xi\sim\eta$. Here we reduce to bounding the $L^2_{d\xi}$-norm of the function 
\begin{align*}
&\chi_{\frac{\lambda^2(\tau_0)}{\lambda^2(\sigma)}\xi<1}\frac{\lambda(\tau_0)}{\lambda(\sigma)}\xi^{0+}\int_{\eta\sim \frac{\lambda^2(\tau_0)}{\lambda^2(\sigma)}\xi}\frac{F(\frac{\lambda^2(\tau_0)}{\lambda^2(\sigma)}\xi, \eta)\eta^{-\frac{1}{2}}}{\frac{\lambda^2(\tau_0)}{\lambda^2(\sigma)}\xi - \eta}\frac{\lambda^2(\sigma)}{\lambda^2(\tau_0)}x_1(\frac{\lambda^2(\sigma)}{\lambda^2(\tau_0)}\eta)\,d\eta\\
& + \chi_{\frac{\lambda^2(\tau_0)}{\lambda^2(\sigma)}\xi>1}\frac{\lambda^2(\tau_0)}{\lambda^2(\sigma)}\xi^{\frac{1}{2}+}\int_{\eta\sim \frac{\lambda^2(\tau_0)}{\lambda^2(\sigma)}\xi}\frac{F(\frac{\lambda^2(\tau_0)}{\lambda^2(\sigma)}\xi, \eta)}{\frac{\lambda^2(\tau_0)}{\lambda^2(\sigma)}\xi - \eta}\frac{\lambda^2(\sigma)}{\lambda^2(\tau_0)}x_1(\frac{\lambda^2(\sigma)}{\lambda^2(\tau_0)}\eta)\,d\eta,\\
\end{align*}
where we have omitted for now the $\sigma$ integral, as well as the weight $\beta_{\nu}^2(\sigma)$. Then for the first expression observe that under the localisations there have have 
\[
\big|F(\frac{\lambda^2(\tau_0)}{\lambda^2(\sigma)}\xi, \eta)\eta^{-\frac{1}{2}}\big|\lesssim \eta^{\frac{1}{2}}, 
\]
and so we find 
\begin{align*}
&\big\|\chi_{\frac{\lambda^2(\tau_0)}{\lambda^2(\sigma)}\xi<1}\frac{\lambda(\tau_0)}{\lambda(\sigma)}\xi^{0+}\int_{\eta\sim \frac{\lambda^2(\tau_0)}{\lambda^2(\sigma)}\xi}\frac{F(\frac{\lambda^2(\tau_0)}{\lambda^2(\sigma)}\xi, \eta)\eta^{-\frac{1}{2}}}{\frac{\lambda^2(\tau_0)}{\lambda^2(\sigma)}\xi - \eta}\frac{\lambda^2(\sigma)}{\lambda^2(\tau_0)}x_1(\frac{\lambda^2(\sigma)}{\lambda^2(\tau_0)}\eta)\,d\eta\big\|_{L^2_{d\xi}}\\
&\lesssim (\frac{\lambda(\sigma)}{\lambda(\tau_0)})^{2\delta_0}\big\|\langle\cdot\rangle^{\frac{1}{2}}x_1\big\|_{L^2_{d\eta}}
\end{align*}
For the second integral expression above, one gets under the localisations indicated there 
\[
\big|\frac{\lambda^2(\tau_0)}{\lambda^2(\sigma)}\xi^{\frac{1}{2}+}F(\frac{\lambda^2(\tau_0)}{\lambda^2(\sigma)}\xi, \eta)\big|\lesssim (\frac{\lambda(\sigma)}{\lambda(\tau_0)})^{2\delta_0}\frac{\lambda(\tau_0)}{\lambda(\sigma)}
\]
and since also $\eta\gtrsim 1$ one gets the same bound as for the preceding term. Then performing the $\sigma$-integral results in the upper bound 
\[
\big\|x_1\big\|_{S_2}\int_{\tau_0}^\infty  (\frac{\lambda(\sigma)}{\lambda(\tau_0)})^{2\delta_0}\beta_{\nu}^2(\sigma)\,d\sigma\lesssim \tau_0^{-1}\big\|x_1\big\|_{S_2}. 
\]

{\bf{(5(i).d)}}: $\frac{\lambda^2(\tau_0)}{\lambda^2(\sigma)}\xi\gg\eta$. Here we again take advantage of the vanishing property of $x_1$. Use the same decomposition as in preceding case splitting into the cases 
\[
\frac{\lambda^2(\tau_0)}{\lambda^2(\sigma)}\xi<1, \frac{\lambda^2(\tau_0)}{\lambda^2(\sigma)}\xi>1. 
\]
Both are handled similarly, so we consider the contribution of the former case. Write (under the current assumption on frequencies)
\begin{equation}\label{eq:1001}
\frac{F(\frac{\lambda^2(\tau_0)}{\lambda^2(\sigma)}\xi, \eta)\rho^{\frac{1}{2}}(\eta)}{\frac{\lambda^2(\tau_0)}{\lambda^2(\sigma)}\xi - \eta} = c\frac{F(\frac{\lambda^2(\tau_0)}{\lambda^2(\sigma)}\xi, 0)\eta^{-\frac{1}{4}}}{\frac{\lambda^2(\tau_0)}{\lambda^2(\sigma)}\xi} + O(\frac{\eta^{\frac{3}{4}}}{\frac{\lambda^2(\tau_0)}{\lambda^2(\sigma)}\xi}) + O(\eta^{\frac14}).
\end{equation}
Inserting the error term at the end into the first long expression in {\it{(5(i).c)}} and evaluating the $L^2_{d\xi}$ integral gives a factor $(\frac{\lambda(\sigma)}{\lambda(\tau_0)})^{2\delta_0}$ and reduces to bounding the integral 
\begin{align*}
&\big|\int_0^1\frac{\lambda^{\frac{3}{2}}(\sigma)}{\lambda^{\frac{3}{2}}(\tau_0)}\eta^{-\frac{1}{4}}\rho^{\frac{1}{2}}(\frac{\lambda^2(\sigma)}{\lambda^2(\tau_0)}\eta)x_1(\frac{\lambda^2(\sigma)}{\lambda^2(\tau_0)}\eta)\,d\eta\big|\\
&\lesssim (\frac{\lambda(\sigma)}{\lambda(\tau_0)})^{2\delta_0}\big\|x_1\big\|_{S_2}
\end{align*}
The integral over $\sigma$ is then again bounded as in the preceding case. 
\\
This reduces things to the contribution of the principal part $c\frac{F(\frac{\lambda^2(\tau_0)}{\lambda^2(\sigma)}\xi, 0)\eta^{-\frac{1}{4}}}{\frac{\lambda^2(\tau_0)}{\lambda^2(\sigma)}\xi}$ (in lieu of the expression on the left in \eqref{eq:1001}
) , which is the expression  
\begin{align*}
&\chi_{\frac{\lambda^2(\tau_0)}{\lambda^2(\sigma)}\xi<1}\frac{\lambda(\tau_0)}{\lambda(\sigma)}\xi^{0+}\int_{\eta\ll\frac{\lambda^2(\tau_0)}{\lambda^2(\sigma)}\xi}\frac{F(\frac{\lambda^2(\tau_0)}{\lambda^2(\sigma)}\xi, 0)\eta^{-\frac{1}{4}}}{\frac{\lambda^{2}(\tau_0)}{\lambda^{2}(\sigma)}\xi}\frac{\lambda^{\frac{3}{2}}(\sigma)}{\lambda^{\frac{3}{2}}(\tau_0)}\rho^{\frac{1}{2}}(\frac{\lambda^2(\sigma)}{\lambda^2(\tau_0)}\eta)\\
&\hspace{5cm}\cdot\frac{\sin[\lambda(\sigma)\eta^{\frac{1}{2}}\int_{\tau_0}^\sigma\lambda^{-1}(u)\,du]}{\eta^{\frac{1}{2}}}x_1(\frac{\lambda^2(\sigma)}{\lambda^2(\tau_0)}\eta)\,d\eta
\end{align*}
But then extending the integral to the full range $[0,\infty)$ up to errors treatable like in {\it{(5(i).a)}} - {\it{(5(i).c)}}, and then performing the change of variable $\tilde{\eta} = \frac{\lambda^2(\sigma)}{\lambda^2(\tau_0)}\eta$, we are led to the expression 
\begin{align*}
\chi_{\frac{\lambda^2(\tau_0)}{\lambda^2(\sigma)}\xi<1}\frac{\lambda(\tau_0)}{\lambda(\sigma)}\xi^{0+}\int_0^\infty \frac{\lambda(\sigma)}{\lambda(\tau_0)}\frac{\sin[\lambda(\tau_0)\tilde{\eta}^{\frac{1}{2}}\int_{\tau_0}^\sigma\lambda^{-1}(u)\,du]}{\tilde{\eta}^{\frac{3}{4}}}(\rho^{\frac{1}{2}}x_1)(\tilde{\eta})d\tilde{\eta}. 
\end{align*}
However, the bound 
\[
\big|\int_0^\infty \frac{\lambda(\sigma)}{\lambda(\tau_0)}\frac{\sin[\lambda(\tau_0)\tilde{\eta}^{\frac{1}{2}}\int_{\tau_0}^\sigma\lambda^{-1}(u)\,du]}{\tilde{\eta}^{\frac{3}{4}}}(\rho^{\frac{1}{2}}x_1)(\tilde{\eta})d\tilde{\eta}\big|\lesssim \sigma\big\|x_1\big\|_{S_2}
\]
is not quite good enough to handle its contribution leading to a divergent integral in $\sigma$. Here note that if we restrict further to 
\[
\frac{\lambda^2(\tau_0)}{\lambda^2(\sigma)}\xi<\sigma^{-\delta}
\]
for some $\delta>0$ (but large in comparison to $\delta_0$), then we still get convergence as we gain $\sigma^{-\frac{\delta}{2}}$ from the $L^2_{d\xi}$-norm of the output. It follows that one may reduce to the contribution of 
\begin{align*}
&\chi_{\sigma^{-\delta}<\frac{\lambda^2(\tau_0)}{\lambda^2(\sigma)}\xi<1}\frac{\lambda(\tau_0)}{\lambda(\sigma)}\xi^{0+}\int_{\eta\ll\frac{\lambda^2(\tau_0)}{\lambda^2(\sigma)}\xi}\frac{F(\frac{\lambda^2(\tau_0)}{\lambda^2(\sigma)}\xi, 0)\eta^{-\frac{1}{4}}}{\frac{\lambda^{2}(\tau_0)}{\lambda^{2}(\sigma)}\xi}\frac{\lambda^{\frac{3}{2}}(\sigma)}{\lambda^{\frac{3}{2}}(\tau_0)}\rho^{\frac{1}{2}}(\frac{\lambda^2(\sigma)}{\lambda^2(\tau_0)}\eta)\\
&\hspace{5cm}\cdot\frac{\sin[\lambda(\sigma)\eta^{\frac{1}{2}}\int_{\tau_0}^\sigma\lambda^{-1}(u)\,du]}{\eta^{\frac{1}{2}}}x_1(\frac{\lambda^2(\sigma)}{\lambda^2(\tau_0)}\eta)\,d\eta
\end{align*}
To avoid a divergence in the eventual $\sigma$-integral here, we have to keep track of all oscillating factors, and in particular the outer one which we suppressed until now: 
\[
\sin[\lambda(\tau_0)\xi^{\frac{1}{2}}\int_{\tau_0}^\sigma\lambda^{-1}(u)\,du]. 
\]
Combining the two oscillatory phases, passing to the variable $\tilde{\eta} = \frac{\lambda^2(\sigma)}{\lambda^2(\tau_0)}\eta$ and performing an integration by parts with respect to $\sigma$ then results in a gain of 
\[
\sigma^{-1}[\frac{\lambda^2(\tau_0)}{\lambda^2(\sigma)}\xi]^{-1}\lesssim \sigma^{\delta-1}, 
\]
and so we again win provided $\delta<1$ and $1-\delta\gg \delta_0$. 
\\

This then concludes controlling the contribution of the source term $\beta_{\nu}^2(\sigma)\mathcal{K}x^{(0)}$ to the estimate 
\[
\big\|\chi_{\xi>1}(\triangle \tilde{x}^{(1)}_0, \triangle \tilde{x}^{(1)}_{1})\big\|_{S}\lesssim \big\|(x_0, x_1)\big\|_{\tilde{S}}.
\]

{\bf{Step 5(ii)}}: {\it{Contribution of $ \beta_{\nu}(\tau)\mathcal{K}_{cc}\mathcal{D}_{\tau}x^{(0)}$}}. This case is potentially delicate since we only gain one power of $\sigma$ here, but expect to lose small powers of the form $(\frac{\lambda(\sigma)}{\lambda(\tau_0)})^{2\delta_0}$. However, we then do not lose a power of $\sigma$ as in case {\it{(5(i).c)}} before due to the less singular nature of $\mathcal{D}_{\sigma}x^{(0)}$ (one gains a factor $\eta^{\frac{1}{2}}$). In fact, using an integration by parts argument with respect to $\sigma$ as in case {\it{(5(i).c)}} before, one easily disposes of the cases 
\[
\frac{\lambda^2(\tau_0)}{\lambda^2(\sigma)}\xi\ll \eta,\,\frac{\lambda^2(\tau_0)}{\lambda^2(\sigma)}\xi\gg \eta.
\]
This then reduces the contribution of the term $\beta_{\nu}(\sigma)\mathcal{K}\mathcal{D}_{\sigma}x^{(0)}$ to the diagonal case $\frac{\lambda^2(\tau_0)}{\lambda^2(\sigma)}\xi\sim\eta$. Then, due to the vanishing and decay properties of the kernel $F(\cdot, \cdot)$, we may essentially (up to terms which are treated similarly) reduce to the case $\frac{\lambda^2(\tau_0)}{\lambda^2(\sigma)}\xi\sim 1\sim \eta$. Thus consider now the expression 
\begin{align*}
&\big\|\langle\xi\rangle^{\frac{1}{2}+}\int_{\tau_0}^\infty\chi_{\frac{\lambda^2(\tau_0)}{\lambda^2(\sigma)}\xi\sim 1}\frac{\lambda^2(\tau_0)}{\lambda^2(\sigma)}\beta_{\nu}(\sigma)\\&\hspace{3cm}\cdot\int_{\eta\sim 1}\frac{\lambda^{\frac32}(\sigma)}{\lambda^{\frac32}(\tau_0)}\frac{F(\frac{\lambda^2(\tau_0)}{\lambda^2(\sigma)}\xi, \eta)}{\frac{\lambda^{2}(\tau_0)}{\lambda^{2}(\sigma)}\xi - \eta}\rho^{\frac{1}{2}}(\frac{\lambda^2(\sigma)}{\lambda^2(\tau_0)}\eta)x_1(\frac{\lambda^2(\sigma)}{\lambda^2(\tau_0)}\eta)\,d\eta d\sigma\big\|_{L^2_{d\xi}}
\end{align*}
In fact, bounding this will then imply the needed bound for $\triangle \tilde{x}^{(1)}_0$. By orthogonality, we can equate this expression's square with 
\begin{align*}
&\sum_{j\geq 0}\big\|\langle\xi\rangle^{\frac{1}{2}+}\int_{\tau_0}^\infty\chi_{\frac{\lambda^2(\tau_0)}{\lambda^2(\sigma)}\xi\sim 1}\frac{\lambda^2(\tau_0)}{\lambda^2(\sigma)}\beta_{\nu}(\sigma)\\&\hspace{3cm}\cdot\int_{\eta\sim 1}\frac{F(\frac{\lambda^2(\tau_0)}{\lambda^2(\sigma)}\xi, \eta)}{\frac{\lambda^{2}(\tau_0)}{\lambda^{2}(\sigma)}\xi - \eta}\frac{\lambda^2(\sigma)}{\lambda^2(\tau_0)}x_1(\frac{\lambda^2(\sigma)}{\lambda^2(\tau_0)}\eta)\,d\eta d\sigma\big\|_{L^2_{d\xi}(\xi\sim 2^j)}^2\\
& = \sum_{j\geq 0}\big\|\langle\xi\rangle^{\frac{1}{2}+}\int_{\sigma\sim 2^{\frac{\nu j}{2(1+\nu)}}\tau_0}\chi_{\frac{\lambda^2(\tau_0)}{\lambda^2(\sigma)}\xi\sim 1}\frac{\lambda^2(\tau_0)}{\lambda^2(\sigma)}\beta_{\nu}(\sigma)\\&\hspace{3cm}\cdot\int_{\eta\sim 1}\frac{F(\frac{\lambda^2(\tau_0)}{\lambda^2(\sigma)}\xi, \eta)}{\frac{\lambda^{2}(\tau_0)}{\lambda^{2}(\sigma)}\xi - \eta}\frac{\lambda^2(\sigma)}{\lambda^2(\tau_0)}x_1(\frac{\lambda^2(\sigma)}{\lambda^2(\tau_0)}\eta)\,d\eta d\sigma\big\|_{L^2_{d\xi}(\xi\sim 2^j)}^2\\
\end{align*}
Then observe that in the above expression we have $\langle\xi\rangle^{\frac{1}{2}+}\frac{\lambda(\tau_0)}{\lambda(\sigma)}\lesssim (\frac{\lambda(\sigma)}{\lambda(\tau_0)})^{2\delta_0}$, and furthermore, for fixed $j$, we have 
\[
\frac{\lambda^2(\sigma)}{\lambda^2(\tau_0)}\eta\sim 2^j,\,\frac{\lambda(\sigma)}{\lambda(\tau_0)}\big\|(\frac{\lambda(\sigma)}{\lambda(\tau_0)})^{2\delta_0}\big(\eta\frac{\lambda^2(\sigma)}{\lambda^2(\tau_0)}\big)^{\frac12}x_1(\frac{\lambda^2(\sigma)}{\lambda^2(\tau_0)}\eta)\big\|_{L^2_{d\eta}(\eta\sim 1)}\lesssim \big\|x_1\big\|_{S_2}. 
\]
Thus, for fixed $j$, we get 
 \begin{align*}
&\big\|\langle\xi\rangle^{\frac{1}{2}+}\int_{\sigma\sim 2^{\frac{\nu j}{1+\nu}}\tau_0}\chi_{\frac{\lambda^2(\tau_0)}{\lambda^2(\sigma)}\xi\sim 1}\frac{\lambda^2(\tau_0)}{\lambda^2(\sigma)}\beta_{\nu}(\sigma)\\&\hspace{3cm}\cdot\int_{\eta\sim 1}\frac{F(\frac{\lambda^2(\tau_0)}{\lambda^2(\sigma)}\xi, \eta)}{\frac{\lambda^{2}(\tau_0)}{\lambda^{2}(\sigma)}\xi - \eta}\frac{\lambda^2(\sigma)}{\lambda^2(\tau_0)}x_1(\frac{\lambda^2(\sigma)}{\lambda^2(\tau_0)}\eta)\,d\eta d\sigma\big\|_{L^2_{d\xi}(\xi\sim 2^j)}\\
&\lesssim \big\|\chi_{\eta\sim 2^j}x_1\big\|_{S_2},
 \end{align*}
 and the desired final bound follows after square summation over $j$. This finally concludes our arguments for the bound 
 \[
 \big\|\chi_{\xi>1}(\triangle x^{(1)}_0, \triangle x^{(1)}_{1})\big\|_{S}\lesssim \big\|(x_0, x_1)\big\|_{\tilde{S}},
\]
and thereby the proof of Proposition~\ref{prop:firstiteratelincont}. 

\end{proof}

In the following proposition, we shall strengthen the bounds of the preceding proposition a bit. In fact, this will be necessary to finally arrive at a framework of estimates which we can perpetuate across the iteration. The norms involving square-sums over dyadic time intervals shall be the key technical device allowing us to formulate the correct inductive steps for the iteration.  
 \begin{prop}\label{prop:firstiteratelincontimprov} Using the same terminology as in the statement of the preceding proposition, and also recalling \eqref{eq:Snorm}, we have the low frequency bound 
 \begin{equation}\label{eq:firstiteratelincontimprov1}\begin{split}
&\big(\sum_{\substack{N\gtrsim \tau_0\\N\,\text{dyadic}}}\sup_{\tau\sim N}(\frac{\lambda(\tau)}{\lambda(\tau_0)})^{4\delta_0}\big\|\xi^{-0+}\mathcal{D}_{\tau}\triangle x_{>\tau}^{(1)}(\tau, \cdot)\big\|_{L^2_{d\xi}(\xi<1)}^2\big)^{\frac12}\\
&\lesssim \big\|(x_0, x_1)\big\|_{\tilde{S}} + \big|x_{0d}\big|. 
 \end{split}\end{equation}
 Moreover, recalling 
 \begin{align*}
 &\triangle x^{(1)}(\tau, \xi) - S(\tau)( \triangle\tilde{\tilde{x}}_0^{(1)}, \triangle\tilde{\tilde{x}}_1^{(1)}) \\
 &=\int_{\tau_0}^\tau\frac{\lambda^{\frac{3}{2}}(\tau)}{\lambda^{\frac{3}{2}}(\sigma)}\frac{\rho^{\frac{1}{2}}(\frac{\lambda^2(\tau)}{\lambda^2(\sigma)}\xi)}{\rho^{\frac{1}{2}}(\xi)}\frac{\sin[\lambda(\tau)\xi^{\frac{1}{2}}\int_{\tau}^\sigma\lambda^{-1}(u)\,du]}{\xi^{\frac{1}{2}}}\mathcal{R}(\sigma, \underline{x}^{(0)})(\frac{\lambda^2(\tau)}{\lambda^2(\sigma)}\xi)\,d\sigma, 
 \end{align*}
we also have the high frequency bound 
  \begin{equation}\label{eq:firstiteratelincontimprov2}\begin{split}
&\big(\sum_{\substack{N\gtrsim \tau_0\\N\,\text{dyadic}}}\sup_{\tau\sim N}(\frac{\lambda(\tau)}{\lambda(\tau_0)})^{4\delta_0}\big\|\xi^{\frac12+}\mathcal{D}_{\tau}\big[\triangle x^{(1)}(\tau, \xi) - S(\tau)(\triangle\tilde{\tilde{x}}_0^{(1)}, \triangle\tilde{\tilde{x}}_1^{(1)})\big](\tau, \cdot)\big\|_{L^2_{d\xi}(\xi>1)}^2\big)^{\frac12}\\
&\lesssim \big\|(x_0, x_1)\big\|_{\tilde{S}} + \big|x_{0d}\big|. 
 \end{split}\end{equation}
 \end{prop}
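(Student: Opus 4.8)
The strategy is to revisit the estimates from the proof of Proposition~\ref{prop:firstiteratelincont}, but this time organize all bounds as square-sums over dyadic time intervals $\tau\sim N$, keeping careful track of the gains of $(\frac{\lambda(\sigma)}{\lambda(\tau_0)})^{\pm 2\delta_0}$ that already appeared there. The point is that in each of the frequency regimes treated in Step~4 (for \eqref{eq:firstiteratelincontimprov1}) and Step~1/Step~5 (for \eqref{eq:firstiteratelincontimprov2}), the $\sigma$-integral $\int_\tau^\infty(\cdots)\,d\sigma$ or $\int_{\tau_0}^\tau(\cdots)\,d\sigma$ was in fact estimated with an integrable kernel in $\sigma$ up to a small loss $(\frac{\lambda(\sigma)}{\lambda(\tau_0)})^{2\delta_0}$ or $(\frac{\lambda(\tau)}{\lambda(\tau_0)})^{2\delta_0}$; since $\beta_\nu^2(\sigma)$ and (after integration by parts) the terms arising from $\beta_\nu(\sigma)\mathcal{K}_{cc}\mathcal{D}_\sigma x^{(0)}$ decay at least like $\sigma^{-2+}$ in the relevant places, one actually gains a negative power $\sigma^{-\theta}$, $\theta>0$, and hence can afford the extra dyadic weight $(\frac{\lambda(\tau)}{\lambda(\tau_0)})^{4\delta_0}=(\frac{\tau}{\tau_0})^{4\delta_0(1+\nu^{-1})}$ and still square-sum over $N$.

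\textbf{Key steps.} First I would fix a dyadic block $\tau\sim N$ and, applying $\mathcal{D}_\tau$ to the Duhamel expression for $\triangle x^{(1)}_{>\tau}$, observe that $\mathcal{D}_\tau$ hitting the oscillatory factor $\frac{\sin[\lambda(\tau)\xi^{1/2}\int_\tau^\sigma\lambda^{-1}]}{\xi^{1/2}}$ desingularizes it (it produces $\cos[\cdots]$ or, on the diagonal term, a bounded factor), so that the dangerous $\xi^{-1/2}$ which forced the loss $(\frac{\tau}{\tau_0})^{2\delta_0(1+\nu^{-1})}$ in Step~4 is now absent — this is exactly the remark at the end of Step~4. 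This means for $\mathcal{D}_\tau\triangle x^{(1)}_{>\tau}$ in the regime $\xi<1$ one has, block by block, a bound of the form $\sup_{\tau\sim N}\|\xi^{-0+}\mathcal{D}_\tau\triangle x^{(1)}_{>\tau}\|_{L^2_{d\xi}(\xi<1)}\lesssim (\frac{\lambda(N)}{\lambda(\tau_0)})^{-\theta+2\delta_0}[\|(x_0,x_1)\|_{\tilde S}+|x_{0d}|]$ for some fixed $\theta>0$ (coming from the $\sigma$-integral), and similarly for the high-frequency piece $\xi>1$ one invokes Step~1 together with Remark~\ref{rem:prop71highfreqimprov} (the integration by parts in $\sigma$ gaining the extra $\tau_0^{-(1-)}$, hence a decaying $\sigma$-kernel). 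Then I would square the per-block bound, multiply by $(\frac{\lambda(N)}{\lambda(\tau_0)})^{4\delta_0}$, and sum over dyadic $N\gtrsim\tau_0$; since $-2\theta+8\delta_0<0$ for $\delta_0$ small (recall $\delta_0$ can be chosen as small as we like), the geometric series in $(\frac{\lambda(N)}{\lambda(\tau_0)})$ converges and yields the claimed $\ell^2_N$ bound. The contributions of $\mathcal{R}_2(\sigma,x^{(0)}_d)$ are, as in the previous proof, negligible because of the exponential decay of $x^{(0)}_d$ from Lemma~\ref{lem:linhom} and the rapid decay of $K_{cd}$; the first term in \eqref{eq:Kx0formula} (the $x_0$-contribution) is handled exactly as the $x_1$-contribution and relegated, as before, to the appendix.

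\textbf{Main obstacle.} The delicate point is the off-diagonal balanced/imbalanced split $\frac{\lambda^2(\tau)}{\lambda^2(\sigma)}\xi\lessgtr\eta$ in the cases {\it(4(i).c)}, {\it(1i.c)} and {\it(5(i).d)}, where one is forced to use the vanishing condition on $x_1$ (respectively on $\triangle\tilde x^{(1)}_{0,1}$) and where the naive bound on the inner $\tilde\eta$-integral is only $\lesssim\sigma\|x_1\|_{S_2}$, barely too weak. In the $\ell^2_N$ setting this is worse because one cannot absorb a full power of $\sigma$ against a dyadic weight. The resolution is to combine the two oscillatory phases $\lambda(\tau)\xi^{1/2}\int_\tau^\sigma\lambda^{-1}$ and $\lambda(\tau_0)\tilde\eta^{1/2}\int_{\tau_0}^\sigma\lambda^{-1}$ and integrate by parts once more in $\sigma$ — precisely as in {\it(5(i).d)} with the refinement $\frac{\lambda^2(\tau_0)}{\lambda^2(\sigma)}\xi<\sigma^{-\delta}$ for a suitable $\delta$ with $\delta_0\ll\delta<1$ — gaining $\sigma^{-1}[\frac{\lambda^2(\tau)}{\lambda^2(\sigma)}\xi]^{-1}$, which after the frequency restriction becomes a genuinely negative power of $\sigma$ and then square-sums. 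I expect that verifying that this single extra integration by parts suffices uniformly across all sub-cases (balanced, both imbalanced directions, and both ranges of the output frequency relative to $1$), while keeping the $\delta_0$-losses under control, is where most of the bookkeeping goes; but no genuinely new mechanism beyond those already used in Steps~1,4,5 is required.
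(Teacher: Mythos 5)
There is a genuine gap: your proposal hinges on the claim that each dyadic block $\tau\sim N$ produces a per-block gain $\big(\frac{\lambda(N)}{\lambda(\tau_0)}\big)^{-\theta+O(\delta_0)}$ with a \emph{fixed} $\theta>0$ coming from the $\sigma$-integral, so that after multiplying by $\big(\frac{\lambda(\tau)}{\lambda(\tau_0)}\big)^{4\delta_0}$ and squaring, the sum over $N$ is a convergent geometric series. That is true in some sub-cases (for instance when $\frac{\lambda^2(\tau)}{\lambda^2(\sigma)}\xi\ll\eta$ and $\frac{\lambda^2(\sigma)}{\lambda^2(\tau_0)}\eta<1$, where one gets a full power gain $[\frac{\lambda(\tau_0)}{\lambda(\tau)}]^{1-}$, and after the integration by parts when $\frac{\lambda^2(\tau)}{\lambda^2(\sigma)}\xi\gg\eta$). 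But in the borderline regimes — concretely when $\frac{\lambda^2(\tau)}{\lambda^2(\sigma)}\xi\ll\eta$, $\frac{\lambda^2(\sigma)}{\lambda^2(\tau_0)}\eta\ge 1$, $\eta<1$, and when $\frac{\lambda^2(\tau)}{\lambda^2(\sigma)}\xi\sim\eta$, $\frac{\lambda^2(\sigma)}{\lambda^2(\tau_0)}\eta\ge 1$, as well as their analogues on the high-frequency side — the best per-block gain one can extract from the $\sigma$-integral against $\beta_\nu(\sigma)\sim\sigma^{-1}$ is exactly $\big(\frac{\lambda(\tau_0)}{\lambda(\tau)}\big)^{2\delta_0}$, not $\sigma^{-\theta}$ with $\theta>0$ independent of $\delta_0$. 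Squaring this and multiplying by $\big(\frac{\lambda(\tau)}{\lambda(\tau_0)}\big)^{4\delta_0}$ gives a \emph{constant} per block, and the geometric series diverges. Your "$-2\theta+8\delta_0<0$" step presupposes an input that is not available in precisely these sub-cases, so the argument does not close there.

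The missing mechanism is an orthogonality argument rather than a geometric-series argument. What saves the borderline cases is that after localising the inner variable to dyadic scales $\eta\sim 2^j$ (with $2^j<1$, say), the rescaled annuli $\frac{\lambda^2(\sigma)}{\lambda^2(\tau_0)}\eta\sim 2^j\frac{\lambda^2(\sigma)}{\lambda^2(\tau_0)}$ sweep over essentially disjoint dyadic frequency blocks of $x_1$ as $\sigma$ ranges over dyadic intervals $\sigma\sim M$. One therefore bounds the inner $\eta$-integral by $\big(\frac{\lambda(\tau_0)}{\lambda(\sigma)}\big)^{2\delta_0}\big(\sum_j 2^{\theta j}\|\chi_{\cdot\sim 2^j\frac{\lambda^2(\sigma)}{\lambda^2(\tau_0)}}x_1\|_{S_2}^2\big)^{1/2}$, applies Cauchy--Schwarz in the $\sigma$-integral to land in a double sum over $N$ and $M\gtrsim N$ with a weight $\big(\frac{\lambda(N)}{\lambda(M)}\big)^{2\delta_0}$, and then concludes by summing the $N$-variable first (geometric in the inner parameter) and using $\sum_M \|\chi_{\cdot\sim 2^j\frac{\lambda^2(M)}{\lambda^2(\tau_0)}}x_1\|_{S_2}^2\lesssim\|x_1\|_{S_2}^2$ by orthogonality. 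This is what actually makes the square-sum finite; a block-by-block gain cannot be manufactured here. You do correctly anticipate the combined-phase integration by parts in the genuinely off-diagonal regime $\frac{\lambda^2(\tau)}{\lambda^2(\sigma)}\xi\gg\eta$ and the desingularising effect of $\mathcal{D}_\tau$ (which removes the extraneous $(\frac{\tau}{\tau_0})^{\kappa}$ loss present in the un-differentiated quantity), and the reductions involving $\mathcal{R}_2$, the $x_0$-contribution, etc.\ are fine; but the orthogonality device is an essential additional ingredient that your plan omits.
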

 \begin{proof} We treat the low frequency estimate and the high frequency estimate separately: 
 \\
 
 {\bf{Step 1}}: {\it{Proof of \eqref{eq:firstiteratelincontimprov1}}}. Recall the representation formula 
 \begin{align*}
 &\triangle x_{>\tau}^{(1)}(\tau, \xi)\\
 &= -\int_{\tau}^\infty\frac{\lambda^{\frac{3}{2}}(\tau)}{\lambda^{\frac{3}{2}}(\sigma)}\frac{\rho^{\frac{1}{2}}(\frac{\lambda^2(\tau)}{\lambda^2(\sigma)}\xi)}{\rho^{\frac{1}{2}}(\xi)}\frac{\sin[\lambda(\tau)\xi^{\frac{1}{2}}\int_{\tau}^\sigma\lambda^{-1}(u)\,du]}{\xi^{\frac{1}{2}}}\mathcal{R}(\sigma, \underline{x}^{(0)})(\frac{\lambda^2(\tau)}{\lambda^2(\sigma)}\xi)\,d\sigma\\
 \end{align*}
 and furthermore the splitting
 \[
 \mathcal{R}(\tau, \underline{x}^{(0)})(\xi) = \mathcal{R}_1(\tau, x^{(0)})(\xi) + \mathcal{R}_2(\tau, x^{(0)}_d)(\xi)
 \]
 as well as the schematic decomposition
 \[
 \mathcal{R}_1(\tau, x^{(0)})(\xi) = \beta_{\nu}^2(\tau)\mathcal{K}_{cc}x^{(0)} + \beta_{\nu}(\tau)\mathcal{K}_{cc}\mathcal{D}_{\tau}x^{(0)} +  \beta_{\nu}^2(\tau)\mathcal{K}_{cc}^2x^{(0)}. 
 \]
 The contribution of the exponentially decaying mode $\mathcal{R}_2(\tau, x^{(0)}_d)(\xi)$ is again straightforward to handle, and so we focus on the contribution of $\mathcal{R}_1(\tau, x^{(0)})(\xi)$. Here we shall consider the contribution of $\beta_{\nu}(\tau)\mathcal{K}_{cc}\mathcal{D}_{\tau}x^{(0)}$, that of $ \beta_{\nu}^2(\tau)\mathcal{K}_{cc}x^{(0)}, \beta_{\nu}^2(\tau)\mathcal{K}_{cc}^2x^{(0)}$ being similar. Also, we shall only consider the contribution to $x^{(0)}$ by the second term on the right in \eqref{eq:linhomparam1}, as the first term is handled just the same.
 \\
 Then write 
 \begin{equation}\label{eq:Step1etaintegral1}\begin{split}
 &\beta_{\nu}(\sigma)\big(\mathcal{K}_{cc}\mathcal{D}_{\sigma}x^{(0)}\big)(\sigma, \frac{\lambda^2(\tau)}{\lambda^2(\sigma)}\xi)\\& =  \beta_{\nu}(\sigma)\int_0^\infty \frac{F(\frac{\lambda^2(\tau)}{\lambda^2(\sigma)}\xi, \eta)\rho(\eta)}{\frac{\lambda^2(\tau)}{\lambda^2(\sigma)}\xi - \eta}\frac{\lambda^{\frac{3}{2}}(\sigma)}{\lambda^{\frac{3}{2}}(\tau_0)}\frac{\rho^{\frac{1}{2}}(\frac{\lambda^2(\sigma)}{\lambda^2(\tau_0)}\eta)}{\rho^{\frac{1}{2}}(\eta)}\\&\hspace{4cm}\cdot\cos[\lambda(\sigma)\eta^{\frac{1}{2}}\int_{\tau_0}^\sigma\lambda^{-1}(u)\,du]x_1(\frac{\lambda^2(\sigma)}{\lambda^2(\tau_0)}\eta)\,d\eta
 \end{split}\end{equation}
 Also, use that 
 \begin{align*}
 &\mathcal{D}_{\tau}\triangle x_{>\tau}^{(1)}(\tau, \xi)\\
 &= -\int_{\tau}^\infty\frac{\lambda^{\frac{3}{2}}(\tau)}{\lambda^{\frac{3}{2}}(\sigma)}\frac{\rho^{\frac{1}{2}}(\frac{\lambda^2(\tau)}{\lambda^2(\sigma)}\xi)}{\rho^{\frac{1}{2}}(\xi)}\cos[\lambda(\tau)\xi^{\frac{1}{2}}\int_{\tau}^\sigma\lambda^{-1}(u)\,du]\mathcal{R}(\sigma, \underline{x}^{(0)})(\frac{\lambda^2(\tau)}{\lambda^2(\sigma)}\xi)\,d\sigma\\
 \end{align*}
and we replace $\mathcal{R}(\sigma, \underline{x}^{(0)})(\frac{\lambda^2(\tau)}{\lambda^2(\sigma)}\xi)$ by $\beta_{\nu}(\sigma)\big(\mathcal{K}_{cc}\mathcal{D}_{\sigma}x^{(0)}\big)(\sigma, \frac{\lambda^2(\tau)}{\lambda^2(\sigma)}\xi)$. 
 
 {\bf{(1i)}}: $\frac{\lambda^2(\tau)}{\lambda^2(\sigma)}\xi\ll \eta$, $\frac{\lambda^2(\sigma)}{\lambda^2(\tau_0)}\eta<1$. Here we get 
\[
\big|\frac{\lambda^{\frac{3}{2}}(\sigma)}{\lambda^{\frac{3}{2}}(\tau_0)}\frac{\rho^{\frac{1}{2}}(\frac{\lambda^2(\sigma)}{\lambda^2(\tau_0)}\eta)}{\rho^{\frac{1}{2}}(\eta)}\big|\sim \frac{\lambda(\sigma)}{\lambda(\tau_0)},\,\frac{\lambda^{\frac{3}{2}}(\tau)}{\lambda^{\frac{3}{2}}(\sigma)}\frac{\rho^{\frac{1}{2}}(\frac{\lambda^2(\tau)}{\lambda^2(\sigma)}\xi)}{\rho^{\frac{1}{2}}(\xi)}\sim \frac{\lambda(\tau)}{\lambda(\sigma)}.
\]
The $L^2_{d\xi}$-integral furnishes a gain of $(\frac{\lambda^2(\tau)}{\lambda^2(\sigma)}\xi)^{\frac{1}{2+}}$ which offsets the loss from $\rho(\eta)$ up to $\eta^{-0+}$, and the $\eta$-integral, handled via Cauchy-Schwarz, furnishes a gain of $\eta^{\frac{1}{2+}}<(\frac{\lambda^2(\sigma)}{\lambda^2(\tau_0)})^{-\frac{1}{2+}}$. In total, we gain  
\[
[\frac{\lambda(\tau_0)}{\lambda(\tau)}]^{1-}\cdot (\frac{\lambda(\tau)}{\lambda(\sigma)})^{1-}\frac{\eta^{\frac{1}{2+}}}{(\frac{\lambda^2(\sigma)}{\lambda^2(\tau_0)})^{-\frac{1}{2+}}}
\]
Here the weight $[\frac{\lambda(\tau_0)}{\lambda(\tau)}]^{1-}$ is much better than the required $[\frac{\lambda(\tau_0)}{\lambda(\tau)}]^{2\delta_0}$, which then also ensures square-summability over dyadic time intervals. 
\\

 {\bf{(1ii)}}: $\frac{\lambda^2(\tau)}{\lambda^2(\sigma)}\xi\ll \eta$, $\frac{\lambda^2(\sigma)}{\lambda^2(\tau_0)}\eta\geq 1, \eta<1$. Here we have
 \[
 \big|\frac{\lambda^{\frac{3}{2}}(\sigma)}{\lambda^{\frac{3}{2}}(\tau_0)}\frac{\rho^{\frac{1}{2}}(\frac{\lambda^2(\sigma)}{\lambda^2(\tau_0)}\eta)}{\rho^{\frac{1}{2}}(\eta)}\big|\sim \frac{\lambda^2(\sigma)}{\lambda^2(\tau_0)}\eta^{\frac12},\,\frac{\lambda^{\frac{3}{2}}(\tau)}{\lambda^{\frac{3}{2}}(\sigma)}\frac{\rho^{\frac{1}{2}}(\frac{\lambda^2(\tau)}{\lambda^2(\sigma)}\xi)}{\rho^{\frac{1}{2}}(\xi)}\sim \frac{\lambda(\tau)}{\lambda(\sigma)}.
\]
The $L^2_{d\xi}$-norm of the output again gains $[\frac{\lambda^2(\tau)}{\lambda^2(\sigma)}\xi]^{\frac{1}{2+}}$, and so the expression \eqref{eq:firstiteratelincontimprov1} in the present situation gets bounded by 
\begin{equation}\label{eq:delicate111}
\big(\sum_{\substack{N\gtrsim \tau_0\\N\,\text{dyadic}}}\sup_{\tau\sim N}(\frac{\lambda(\tau)}{\lambda(\tau_0)})^{4\delta_0}\big[\int_{\tau}^\infty\beta_{\nu}(\sigma)\int_0^1\eta^{-0+}\frac{\lambda^2(\sigma)}{\lambda^2(\tau_0)}\eta^{\frac12}x_1(\frac{\lambda^2(\sigma)}{\lambda^2(\tau_0)}\eta)\,d\eta d\sigma\big]^2\big)^{\frac12}
\end{equation}
Using the Cauchy-Schwarz inequality and the definition of the norm $\|\cdot\|_{S_2}$, we have 
\begin{align*}
\int_0^1\eta^{-0+}\frac{\lambda^2(\sigma)}{\lambda^2(\tau_0)}\eta^{\frac12}x_1(\frac{\lambda^2(\sigma)}{\lambda^2(\tau_0)}\eta)\,d\eta&\lesssim \big(\frac{\lambda(\tau_0)}{\lambda(\sigma)}\big)^{2\delta_0}\sum_{j<0}2^{\frac{j}{2+}}\big\|\chi_{\cdot\sim 2^j\frac{\lambda^2(\sigma)}{\lambda^2(\tau_0)}}x_1\big\|_{S_2}\\
&\lesssim \big(\frac{\lambda(\tau_0)}{\lambda(\sigma)}\big)^{2\delta_0}\big(\sum_{j<0}2^{\frac{j}{2+}}\big\|\chi_{\cdot\sim 2^j\frac{\lambda^2(\sigma)}{\lambda^2(\tau_0)}}x_1\big\|_{S_2}^2\big)^{\frac12}
\end{align*}
It follows that (reiterating Cauchy-Schwarz)
\begin{align*}
&\big[\int_{\tau}^\infty\beta_{\nu}(\sigma)\int_0^1\eta^{-0+}\frac{\lambda^2(\sigma)}{\lambda^2(\tau_0)}\eta^{\frac12}x_1(\frac{\lambda^2(\sigma)}{\lambda^2(\tau_0)}\eta)\,d\eta d\sigma\big]^2\\
&\lesssim \big(\frac{\lambda(\tau_0)}{\lambda(\tau)}\big)^{4\delta_0}\sum_{\substack{M\gtrsim \tau\\ M\,\text{dyadic}}}\sup_{\sigma \sim M}(\frac{\lambda(\tau)}{\lambda(\sigma)})^{2\delta_0}\big(\sum_{j<0}2^{\frac{j}{2+}}\big\|\chi_{\cdot\sim 2^j\frac{\lambda^2(\sigma)}{\lambda^2(\tau_0)}}x_1\big\|_{S_2}^2\big),
\end{align*}
and so we can bound the square of \eqref{eq:delicate111} by 
\[
\sum_{\substack{N\gtrsim \tau_0\\N\,\text{dyadic}}}\sum_{\substack{M\gtrsim N\\ M\,\text{dyadic}}}\sup_{\tau\sim N}\sup_{\sigma \sim M}(\frac{\lambda(\tau)}{\lambda(\sigma)})^{2\delta_0}\big(\sum_{j<0}2^{\frac{j}{2+}}\big\|\chi_{\cdot\sim 2^j\frac{\lambda^2(\sigma)}{\lambda^2(\tau_0)}}x_1\big\|_{S_2}^2\big)
\]
This last expression is bounded by $\lesssim \big\|x_1\big\|_{S_2}^2$ by a simple orthogonality argument. 
\\

 {\bf{(1iii)}}: $\frac{\lambda^2(\tau)}{\lambda^2(\sigma)}\xi\ll \eta$, $\eta\geq 1$. Here \eqref{eq:firstiteratelincontimprov1} is easily seen to be bounded by 
 \[
 \big(\sum_{\substack{N\gtrsim \tau_0\\N\,\text{dyadic}}}\sup_{\tau\sim N}(\frac{\lambda(\tau)}{\lambda(\tau_0)})^{4\delta_0}\big[\int_{\tau}^\infty\beta_{\nu}(\sigma)\int_1^\infty\eta^{-2}\frac{\lambda^2(\sigma)}{\lambda^2(\tau_0)}x_1(\frac{\lambda^2(\sigma)}{\lambda^2(\tau_0)}\eta)\,d\eta d\sigma\big]^2\big)^{\frac12}
 \]
 This is handled by analogy to the preceding case, by localising $\eta$ to dyadic size $2^j$, $j>0$. 
\\

{\bf{(1iv)}}: $\frac{\lambda^2(\tau)}{\lambda^2(\sigma)}\xi\sim\eta$, $\frac{\lambda^2(\sigma)}{\lambda^2(\tau_0)}\eta<1$. Use that 
\[
\big|\xi^{-0+}F(\frac{\lambda^2(\tau)}{\lambda^2(\sigma)}\xi, \eta)\rho(\eta)\big|\lesssim \eta^{\frac12-}\lesssim [\frac{\lambda(\tau_0)}{\lambda(\sigma)}]^{\frac12-} [\frac{\lambda(\tau_0)}{\lambda(\tau)}]^{2\delta_0}
\]
in this case(provided we choose $\delta_0$ small enough). In turn this implies that 
\begin{align*}
&\big\|\xi^{-0+}\beta_{\nu}(\sigma)\int_0^\infty \frac{F(\frac{\lambda^2(\tau)}{\lambda^2(\sigma)}\xi, \eta)\rho(\eta)}{\frac{\lambda^2(\tau)}{\lambda^2(\sigma)}\xi - \eta}\frac{\lambda^{\frac{3}{2}}(\sigma)}{\lambda^{\frac{3}{2}}(\tau_0)}\frac{\rho^{\frac{1}{2}}(\frac{\lambda^2(\sigma)}{\lambda^2(\tau_0)}\eta)}{\rho^{\frac{1}{2}}(\eta)}\\&\hspace{3.5cm}\cdot\cos[\lambda(\sigma)\eta^{\frac{1}{2}}\int_{\tau_0}^\sigma\lambda^{-1}(u)\,du]\chi_{\frac{\lambda^2(\sigma)}{\lambda^2(\tau_0)}\eta<1}x_1(\frac{\lambda^2(\sigma)}{\lambda^2(\tau_0)}\eta)\,d\eta\big\|_{L^2_{d\xi}}\\
&\lesssim \beta_{\nu}(\sigma)[\frac{\lambda(\tau)}{\lambda(\sigma)}]^{-1} [\frac{\lambda(\tau_0)}{\lambda(\sigma)}]^{\frac12-} [\frac{\lambda(\tau_0)}{\lambda(\tau)}]^{2\delta_0}\big\|x_1\big\|_{S_2}, 
\end{align*}
and hence the corresponding contribution to \eqref{eq:firstiteratelincontimprov1} is bounded by 
\[
\lesssim \big\|x_1\big\|_{S_2}\big(\sum_{\substack{N\gtrsim \tau_0\\N\,\text{dyadic}}}\sup_{\tau\sim N}(\frac{\lambda(\tau_0)}{\lambda(\tau)})^{1-}\big)^{\frac12}\lesssim  \big\|x_1\big\|_{S_2}. 
\]

{\bf{(1v)}}: $\frac{\lambda^2(\tau)}{\lambda^2(\sigma)}\xi\sim\eta$, $\frac{\lambda^2(\sigma)}{\lambda^2(\tau_0)}\eta\geq 1$. This is similar to case {\it{(1ii)}}. One can absorb an extra $(\frac{\lambda^2(\sigma)}{\lambda^2(\tau_0)}\eta)^{0+}$ into $x_1$, which results in a gain of $[\frac{\lambda(\tau_0)}{\lambda(\sigma)}]^{2\delta_0}$ when combining with the bound 
\[
\big|F(\frac{\lambda^2(\tau)}{\lambda^2(\sigma)}\xi, \eta)\rho(\eta)\big|\lesssim \eta^{\frac12}. 
\]
{\bf{(1vi)}}: $\frac{\lambda^2(\tau)}{\lambda^2(\sigma)}\xi\gg\eta$. Switching variables in \eqref{eq:Step1etaintegral1} we have 
\begin{align*}
&\int_0^\infty \frac{F(\frac{\lambda^2(\tau)}{\lambda^2(\sigma)}\xi, \eta)\rho(\eta)}{\frac{\lambda^2(\tau)}{\lambda^2(\sigma)}\xi - \eta}\frac{\lambda^{\frac{3}{2}}(\sigma)}{\lambda^{\frac{3}{2}}(\tau_0)}\frac{\rho^{\frac{1}{2}}(\frac{\lambda^2(\sigma)}{\lambda^2(\tau_0)}\eta)}{\rho^{\frac{1}{2}}(\eta)}\cos[\lambda(\sigma)\eta^{\frac{1}{2}}\int_{\tau_0}^\sigma\lambda^{-1}(u)\,du]x_1(\frac{\lambda^2(\sigma)}{\lambda^2(\tau_0)}\eta)\,d\eta\\
& = \int_0^\infty \frac{F(\frac{\lambda^2(\tau)}{\lambda^2(\sigma)}\xi, \frac{\lambda^2(\tau)}{\lambda^2(\sigma)}\tilde{\eta})\rho(\frac{\lambda^2(\tau)}{\lambda^2(\sigma)}\tilde{\eta})}{\xi - \tilde{\eta}}\frac{\lambda^{\frac{3}{2}}(\sigma)}{\lambda^{\frac{3}{2}}(\tau_0)}\frac{\rho^{\frac{1}{2}}(\frac{\lambda^2(\tau)}{\lambda^2(\tau_0)}\tilde{\eta})}{\rho^{\frac{1}{2}}(\frac{\lambda^2(\tau)}{\lambda^2(\sigma)}\tilde{\eta})}\cos[\lambda(\tau)\tilde{\eta}^{\frac{1}{2}}\int_{\tau_0}^\sigma\lambda^{-1}(u)\,du]x_1(\frac{\lambda^2(\tau)}{\lambda^2(\tau_0)}\tilde{\eta})\,d\tilde{\eta}\\
\end{align*}
Then, returning to the full expression also involving the $\sigma$-integral, we intend to perform an integration by parts with respect to $\sigma$, provided $\frac{\lambda^2(\tau)}{\lambda^2(\sigma)}\xi>\sigma^{-2\delta}$. In fact, under the latter assumption, we write 
\begin{align*}
&\cos[\lambda(\tau)\xi^{\frac{1}{2}}\int_{\tau}^\sigma\lambda^{-1}(u)\,du]\cos[\lambda(\tau)\tilde{\eta}^{\frac{1}{2}}\int_{\tau_0}^\sigma\lambda^{-1}(u)\,du]\\
& = \sum_{\pm}\frac{1}{\frac{\lambda(\tau)}{\lambda(\sigma)}\xi^{\frac{1}{2}}\pm \frac{\lambda(\tau)}{\lambda(\sigma)}\tilde{\eta}^{\frac{1}{2}}}\partial_{\sigma}\big[\cos[\lambda(\tau)\xi^{\frac{1}{2}}\int_{\tau}^\sigma\lambda^{-1}(u)\,du \pm \lambda(\tau)\tilde{\eta}^{\frac{1}{2}}\int_{\tau_0}^\sigma\lambda^{-1}(u)\,du]\big]
\end{align*}
and so integration by parts with respect to $\sigma$ will result in a gain of $\sigma^{\delta - 1}$. To conclude this case then, one again distinguishes between the situations $\frac{\lambda^2(\sigma)}{\lambda^2(\tau_0)}\eta><1$. To bound the $\eta$-integral by Cauchy-Schwarz, one needs to pay a small power of $\frac{\lambda^2(\sigma)}{\lambda^2(\tau_0)}$, which is counteracted by the gain in $\sigma$. 
\\

This reduces things to the case $\frac{\lambda^2(\tau)}{\lambda^2(\sigma)}\xi<\sigma^{-2\delta}$. Here an additional gain in $\sigma$ comes from the $\xi$-integral, which indeed produces a gain 
\[
[\xi^{\frac{1}{2}}\frac{\lambda(\tau)}{\lambda(\sigma)}]^{\frac{1}{2+}} <\sigma^{-\delta},
\]
This suffices to absorb the factor $(\frac{\lambda(\tau)}{\lambda(\tau_0)})^{4\delta_0}$ in \eqref{eq:firstiteratelincontimprov1} and also to be able to square-sum over dyadic time intervals. This completes the proof of \eqref{eq:firstiteratelincontimprov1}.
\\

 {\bf{Step 2}}: {\it{Proof of \eqref{eq:firstiteratelincontimprov2}}}. Again we omit the simple contribution by the exponentially decaying term due to $x_{0d}$. Here we have $\frac{\lambda^2(\tau)}{\lambda^2(\sigma)}\xi>1$ on account of $\tau\geq \sigma$ and $\xi>1$ (recall the definition of $\triangle x^{(1)}$ from the previous proposition). We shall again treat the contribution of $x_1$ and the source term $\beta_{\nu}(\sigma)\mathcal{K}_{cc}\mathcal{D}_{\sigma}x^{(0)}$ in detail, the other cases being similar. Throughout in this case, we have 
\[
\frac{\lambda^{\frac{3}{2}}(\tau)}{\lambda^{\frac{3}{2}}(\sigma)}\frac{\rho^{\frac{1}{2}}(\frac{\lambda^2(\tau)}{\lambda^2(\sigma)}\xi)}{\rho^{\frac{1}{2}}(\xi)}\sim \frac{\lambda^2(\tau)}{\lambda^2(\sigma)}.
\]
{\bf{(2i)}}: $\frac{\lambda^2(\tau)}{\lambda^2(\sigma)}\xi\ll \eta$.  Square integration over $\xi$ gives an extra factor $\xi^{\frac{1}{2}}$, and we have (for $\xi>1$, $\frac{\lambda^2(\tau)}{\lambda^2(\sigma)}\xi\ll \eta$, $\tau>\sigma$)
 \[
\big|\xi^{\frac{1}{2}}\langle\xi\rangle^{\frac{1}{2}+}\frac{\lambda^2(\tau)}{\lambda^2(\sigma)} F(\frac{\lambda^2(\tau)}{\lambda^2(\sigma)}\xi, \eta)\rho(\eta)\big|\lesssim \eta^{-1}\big(\frac{\lambda^2(\sigma)}{\lambda^2(\tau)}\big)^2
\]
due to the strong decay properties for $F(\cdot, \cdot)$ for large variables, see \cite{KST}. Furthermore, write (for $\eta>1$)
\begin{align*}
\big|\frac{\lambda^{\frac{3}{2}}(\sigma)}{\lambda^{\frac{3}{2}}(\tau_0)}\frac{\rho^{\frac{1}{2}}(\frac{\lambda^2(\sigma)}{\lambda^2(\tau_0)}\eta)}{\rho^{\frac{1}{2}}(\eta)}x_1(\frac{\lambda^2(\sigma)}{\lambda^2(\tau_0)}\eta)\big|
\lesssim \big(\frac{\lambda(\tau_0)}{\lambda(\sigma)}\big)^{2\delta_0}\frac{\lambda(\sigma)}{\lambda(\tau_0)}\big|\big(\frac{\lambda^2(\sigma)}{\lambda^2(\tau_0)}\eta\big)^{\frac{1}{2}+}x_1(\frac{\lambda^2(\sigma)}{\lambda^2(\tau_0)}\eta)\big|
\end{align*}
Then it follows that 
\begin{align*}
&\big\|\langle\xi\rangle^{\frac{1}{2}+}\int_{\tau_0}^\tau\frac{\lambda^{\frac{3}{2}}(\tau)}{\lambda^{\frac{3}{2}}(\sigma)}\frac{\rho^{\frac{1}{2}}(\frac{\lambda^2(\tau)}{\lambda^2(\sigma)}\xi)}{\rho^{\frac{1}{2}}(\xi)}\cos[\lambda(\tau)\xi^{\frac{1}{2}}\int_{\tau}^\sigma\lambda^{-1}(u)\,du]\beta_{\nu}(\sigma)\\&\hspace{4cm}\cdot\big[\mathcal{K}_{\frac{\lambda^2(\tau)}{\lambda^2(\sigma)}\xi\ll \eta}\mathcal{D}_{\sigma}\big(S(\sigma)(0, x_1)\big)\big](\frac{\lambda^2(\tau)}{\lambda^2(\sigma)}\xi)\,d\sigma\big\|_{L^2_{d\xi}(\xi>1)}\\
&\lesssim \big(\frac{\lambda(\tau_0)}{\lambda(\tau)}\big)^{2\delta_0}\sum_{j>1}2^{-j}\big(\sum_{\substack {\sigma\sim N\lesssim \tau\\ N\,\text{dyadic}}}\frac{\lambda^2(N)}{\lambda^2(\tau)}\sup_{\sigma\sim N}\big\|x_1\big\|_{S_2(\cdot\sim 2^j\frac{\lambda^2(\sigma)}{\lambda^2(\tau_0)})}^2\big)^{\frac12}
\end{align*}
 where $S(\tau)$ is the propagator given explicitly by \eqref{eq:linhomparam1}. In the last step we have used Cauchy-Schwarz for the time integral. Then the desired bound \eqref{eq:firstiteratelincontimprov2} for this contribution follows by square-summing over dyadic $\tau$-intervals (after taking suprema over such) and a simple orthogonality argument. 
\\

{\bf{(2ii)}}: $\frac{\lambda^2(\tau)}{\lambda^2(\sigma)}\xi\sim\eta$, $\xi>1, \tau\geq \sigma$. Here the estimate is a bit more delicate, since we only get 
\[
\big|\langle\xi\rangle^{\frac12+}F(\frac{\lambda^2(\tau)}{\lambda^2(\sigma)}\xi, \eta)\rho(\eta)\big|\lesssim \frac{\lambda(\sigma)}{\lambda(\tau)}\xi^{0+}
\]
on account of the bounds established in \cite{KST}. Then using another orthogonality argument as in the preceding case, we obtain the similar bound 
\begin{align*}
&\big\|\langle\xi\rangle^{\frac{1}{2}+}\int_{\tau_0}^\tau\frac{\lambda^{\frac{3}{2}}(\tau)}{\lambda^{\frac{3}{2}}(\sigma)}\frac{\rho^{\frac{1}{2}}(\frac{\lambda^2(\tau)}{\lambda^2(\sigma)}\xi)}{\rho^{\frac{1}{2}}(\xi)}\cos[\lambda(\tau)\xi^{\frac{1}{2}}\int_{\tau}^\sigma\lambda^{-1}(u)\,du]\beta_{\nu}(\sigma)\\&\hspace{4cm}\cdot\big[\mathcal{K}_{\frac{\lambda^2(\tau)}{\lambda^2(\sigma)}\xi\sim\eta}\mathcal{D}_{\sigma}\big(S(\sigma)(0, x_1)\big)\big](\frac{\lambda^2(\tau)}{\lambda^2(\sigma)}\xi)\,d\sigma\big\|_{L^2_{d\xi}(\xi>1)}\\
&\lesssim \big(\frac{\lambda(\tau_0)}{\lambda(\tau)}\big)^{2\delta_0}\sum_{j>0}2^{-\frac{j}{2}}\big\|x_1\big\|_{S_2(\cdot\sim 2^j\frac{\lambda^2(\tau)}{\lambda^2(\tau_0)})}
\end{align*}
 Then the desired square-sum bound \eqref{eq:firstiteratelincontimprov2} again follows from an orthogonality argument. 
 \\
 
 {\bf{(2iii)}}: $\frac{\lambda^2(\tau)}{\lambda^2(\sigma)}\xi\gg\eta$, $\xi>1, \tau\geq \sigma$. Here as we are in the non-singular regime for the Hilbert transform type operator giving $\mathcal{K}_{cc}$, we can take advantage of the better decay properties of the kernel $F$, see \cite{KST}, namely 
 \[
 \big|\langle\xi\rangle^{\frac12}\frac{\lambda^2(\tau)}{\lambda^2(\sigma)}F(\frac{\lambda^2(\tau)}{\lambda^2(\sigma)}\xi, \eta)\big|\lesssim \big(\frac{\lambda^2(\tau)}{\lambda^2(\sigma)}\big)^{-N}\xi^{-N}. 
 \]
On the other hand, the $\eta$-integral furnishes a gain of $\eta^{\delta_0}[\frac{\lambda(\tau_0)}{\lambda(\sigma)}]^{2\delta_0}$ for $\eta<1$. One then easily infers the bound 
\begin{align*}
&\big\|\langle\xi\rangle^{\frac{1}{2}+}\int_{\tau_0}^\tau\frac{\lambda^{\frac{3}{2}}(\tau)}{\lambda^{\frac{3}{2}}(\sigma)}\frac{\rho^{\frac{1}{2}}(\frac{\lambda^2(\tau)}{\lambda^2(\sigma)}\xi)}{\rho^{\frac{1}{2}}(\xi)}\cos[\lambda(\tau)\xi^{\frac{1}{2}}\int_{\tau}^\sigma\lambda^{-1}(u)\,du]\beta_{\nu}(\sigma)\\&\hspace{4cm}\cdot\big[\mathcal{K}_{\frac{\lambda^2(\tau)}{\lambda^2(\sigma)}\xi\gg\eta}\mathcal{D}_{\sigma}\big(S(\sigma)(0, x_1)\big)\big](\frac{\lambda^2(\tau)}{\lambda^2(\sigma)}\xi)\,d\sigma\big\|_{L^2_{d\xi}(\xi>1)}\\
&\lesssim \big(\frac{\lambda(\tau_0)}{\lambda(\tau)}\big)^{2\delta_0}\sum_{j\in \Z}\min\{2^{\delta_0 j}, 2^{-Mj}\}\big(\sum_{\substack{\sigma\sim N\lesssim \tau\\ N\,\text{dyadic}}}[\frac{\lambda(N)}{\lambda(\tau)}]^M\sup_{\sigma\sim N}\big\|x_1\big\|_{S_2(\cdot\sim 2^j\frac{\lambda^2(\sigma)}{\lambda^2(\tau_0)})}^2\big)^{\frac12}
\end{align*}
where $M$ in effect may be chosen arbitrarily, but $M= 1$ suffices. Then the required bound \eqref{eq:firstiteratelincontimprov2} follows easily from an orthogonality argument. 
 
 \end{proof}

 We have now almost concluded the treatment of the contribution to the first iterate from the linear term $\mathcal{R}(\tau, \underline{x}^{(0)})$, except for the contribution to the discrete spectral part of $x^{(1)}$. Calling this contribution 
 $\triangle x^{(1)}_{d}(\tau)$, we recall from \eqref{eq:x_d} and a straightforward bootstrap argument that it suffices to bound 
 \[
 \big|\int_{\tau_0}^\infty H_d(\tau, \sigma)f_d(\sigma)\,d\sigma\big|,\,H_d(\tau, \sigma) = -\frac{1}{2}|\xi_d|^{-\frac{1}{2}}e^{-|\xi_d|^{\frac{1}{2}}|\tau-\sigma|},
 \]
 where 
\[
f_d(\sigma):= \beta_{\nu}(\sigma)\mathcal{K}_{dc}\mathcal{D}_{\sigma}x^{(0)} + \beta_{\nu}^2(\sigma)\mathcal{K}_{dc}x^{(0)} +  \beta_{\nu}^2(\sigma)\mathcal{K}_{dc}\mathcal{K}_cx^{(0)} + \mathcal{K}_{dd}x^{(0)}_{d}, 
\]
and $x^{(0)}$ here refers to the continuous spectral part of $\underline{x}^{(0)}$, while $x^{(0)}_{d}$ refers to its discrete part. Assuming the operator $\mathcal{K}_{dc}$ is given by kernel function $K_{d}(\eta)$ and again restricting to the contribution by $x_1$ to $x^{(0)}$ and the expression $ \beta_{\nu}^2(\sigma)\mathcal{K}_{dc}x^{(0)}$, say, we need to bound the absolute value of the expression
\begin{align*}
&\int_{\tau_0}^\infty H_d(\tau, \sigma)\beta_{\nu}^2(\sigma)\int_0^\infty K_{d}(\eta)\frac{\lambda^{\frac{3}{2}}(\sigma)}{\lambda^{\frac{3}{2}}(\tau_0)}\frac{\rho^{\frac{1}{2}}(\frac{\lambda^2(\sigma)}{\lambda^2(\tau_0)}\eta)}{\rho^{\frac{1}{2}}(\eta)}\\&\hspace{4cm}\cdot\frac{\sin[\lambda(\sigma)\eta^{\frac{1}{2}}\int_{\tau_0}^\sigma\lambda^{-1}(u)\,du]}{\eta^{\frac{1}{2}}}x_1(\frac{\lambda^2(\sigma)}{\lambda^2(\tau_0)}\eta)\,d\eta
\end{align*}
 Using
 \begin{align*}
 \big|\frac{\lambda^{\frac{3}{2}}(\sigma)}{\lambda^{\frac{3}{2}}(\tau_0)}\frac{\rho^{\frac{1}{2}}(\frac{\lambda^2(\sigma)}{\lambda^2(\tau_0)}\eta)}{\rho^{\frac{1}{2}}(\eta)}\big|&\lesssim \chi_{\frac{\lambda^2(\sigma)}{\lambda^2(\tau_0)}\eta<1}\frac{\lambda(\sigma)}{\lambda(\tau_0)} + \chi_{\frac{\lambda^2(\tau_0)}{\lambda^2(\sigma)}<\eta<1}\frac{\lambda^2(\sigma)}{\lambda^2(\tau_0)}\eta^{\frac{1}{2}}\\
 & + \chi_{\eta>1}\frac{\lambda^2(\sigma)}{\lambda^2(\tau_0)}
 \end{align*}
 and the boundedness and decay properties of $K_{d}(\eta)$ established in \cite{KST}, and arguing as in the proof of Proposition~\ref{prop:firstiteratelincont}, we get 
 \begin{align*}
&\big|\int_0^\infty K_{d}(\eta)\frac{\lambda^{\frac{3}{2}}(\sigma)}{\lambda^{\frac{3}{2}}(\tau_0)}\frac{\rho^{\frac{1}{2}}(\frac{\lambda^2(\sigma)}{\lambda^2(\tau_0)}\eta)}{\rho^{\frac{1}{2}}(\eta)}\cdot\beta_{\nu}^2(\sigma)\frac{\sin[\lambda(\sigma)\eta^{\frac{1}{2}}\int_{\tau_0}^\sigma\lambda^{-1}(u)\,du]}{\eta^{\frac{1}{2}}}x_1(\frac{\lambda^2(\sigma)}{\lambda^2(\tau_0)}\eta)\,d\eta\big|\\
&\lesssim (\frac{\lambda(\sigma)}{\lambda(\tau_0)})^{0+}\sigma\cdot\beta_{\nu}^2(\sigma)\big\|x_1\big\|_{S_2}. 
 \end{align*}
 It then follows that 
 \begin{align*}
 &\big|\int_{\tau_0}^\infty H_d(\tau, \sigma)\beta_{\nu}^2(\sigma)\int_0^\infty K_{d}(\eta)\frac{\lambda^{\frac{3}{2}}(\sigma)}{\lambda^{\frac{3}{2}}(\tau_0)}\frac{\rho^{\frac{1}{2}}(\frac{\lambda^2(\sigma)}{\lambda^2(\tau_0)}\eta)}{\rho^{\frac{1}{2}}(\eta)}\\&\hspace{4cm}\cdot\frac{\sin[\lambda(\sigma)\eta^{\frac{1}{2}}\int_{\tau_0}^\sigma\lambda^{-1}(u)\,du]}{\eta^{\frac{1}{2}}}x_1(\frac{\lambda^2(\sigma)}{\lambda^2(\tau_0)}\eta)\,d\eta\big|\\
 &\lesssim \big\|x_1\big\|_{S_2}\int_{\tau_0}^\infty H_d(\tau, \sigma)\sigma\beta_{\nu}^2(\sigma)\cdot (\frac{\lambda(\sigma)}{\lambda(\tau_0)})^{0+}\,d\sigma\\
 &\lesssim \tau^{-1+}\big\|x_1\big\|_{S_2}. 
 \end{align*}
Similarly, one obtains the bound $\tau^{-1+}\big\|x_1\big\|_{S_2}$ for the corresponding contribution of $\beta_{\nu}(\sigma)\mathcal{K}_{dc}\mathcal{D}_{\sigma}x^{(0)}$. Finally, the contribution of the discrete part $x^{(0)}_d$ is again elementary due to its exponential decay. 
\\

Let us summarise the preceding observations in the following 
\begin{prop}\label{prop:firstiteratelindisc} Define the function $\triangle x^{(1)}_d(\tau)$ implicitly by the equation 
\[
\triangle x^{(1)}_d(\tau) = \int_{\tau_0}^\infty H_d(\tau, \sigma)\cdot [\mathcal{R}_d(\sigma, \underline{x}^{(0)}) - \beta_{\nu}(\sigma)\partial_{\sigma}\triangle x^{(1)}_d(\sigma)]\,d\sigma
\]
where we set $\mathcal{R}_d(\sigma, \underline{x}^{(0)}) = \langle \mathcal{R}(\sigma, \underline{x}^{(0)}), \phi_d\rangle$, and  impose the vanishing condition $\lim_{\tau\rightarrow\infty}\triangle x^{(1)}_d(\tau)  = 0$. Then we have 
\[
\tau^{1-}\big[\big|\triangle x^{(1)}_d(\tau)\big| + \big|\partial_{\tau}\triangle x^{(1)}_d(\tau)\big|\big]\lesssim \big\|(x_0, x_1)\big\|_{\tilde{S}} + \big|x_{0d}\big|. 
\]

\end{prop}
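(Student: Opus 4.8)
The plan is to verify the claimed bound for $\triangle x^{(1)}_d(\tau)$ by treating the implicit equation
\[
\triangle x^{(1)}_d(\tau) = \int_{\tau_0}^\infty H_d(\tau, \sigma)\cdot \big[\mathcal{R}_d(\sigma, \underline{x}^{(0)}) - \beta_{\nu}(\sigma)\partial_{\sigma}\triangle x^{(1)}_d(\sigma)\big]\,d\sigma
\]
as a fixed-point problem in the weighted norm $\|\tau^{1-}(\,|\triangle x^{(1)}_d| + |\partial_\tau \triangle x^{(1)}_d|\,)\|_{L^\infty_\tau}$. First I would record the two elementary facts about the kernel $H_d(\tau,\sigma) = -\tfrac12|\xi_d|^{-\frac12}e^{-|\xi_d|^{\frac12}|\tau-\sigma|}$: it satisfies $\int_{\tau_0}^\infty |H_d(\tau,\sigma)|\,d\sigma \lesssim 1$, it maps a source decaying like $\sigma^{-1+}$ into something decaying like $\tau^{-1+}$ (since the exponential kernel is essentially an averaging operator over unit scales), and moreover $\partial_\tau H_d(\tau,\sigma) = -\tfrac12\,\mathrm{sign}(\tau-\sigma)e^{-|\xi_d|^{\frac12}|\tau-\sigma|}$ enjoys the same convolution bounds, so that the estimate for $\partial_\tau \triangle x^{(1)}_d$ follows from the same source bound. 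Thus the whole proposition reduces to the source estimate
\[
\big|\mathcal{R}_d(\sigma, \underline{x}^{(0)})\big| \lesssim \sigma^{-1+}\big[\|(x_0,x_1)\|_{\tilde S} + |x_{0d}|\big] + (\text{an even faster-decaying error}),
\]
together with absorbing the feedback term $\beta_\nu(\sigma)\partial_\sigma\triangle x^{(1)}_d(\sigma)$: since $\beta_\nu(\sigma)\sim \sigma^{-1}$, this term is controlled by $\sigma^{-1}\cdot \sigma^{-1+}\|\cdot\| = \sigma^{-2+}\|\cdot\|$ under the bootstrap hypothesis, which is better than what we need, so a contraction-mapping / standard bootstrap argument closes as soon as the genuine source bound is in hand.

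The core of the argument is therefore the pointwise bound on $\mathcal{R}_d(\sigma, \underline{x}^{(0)}) = \langle \mathcal{R}(\sigma, \underline{x}^{(0)}), \phi_d\rangle$. Using the structure of $\mathcal{R}$ from \eqref{eq:Rterms} and the block form \eqref{eq:Kstructure} of $\mathcal{K}$, the discrete output is a sum of terms of the schematic shapes $\beta_\nu^2(\sigma)\mathcal{K}_{dc}x^{(0)}$, $\beta_\nu(\sigma)\mathcal{K}_{dc}\mathcal{D}_\sigma x^{(0)}$, $\beta_\nu^2(\sigma)\mathcal{K}_{dc}\mathcal{K}_{cc}x^{(0)}$, and $\mathcal{K}_{dd}x^{(0)}_d$ (the last two with coefficients that are $O(\beta_\nu^2)$ or lower order, and the commutator $[\mathcal{A},\mathcal{K}]$ having the same structure). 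The term involving $x^{(0)}_d$ is controlled trivially by the exponential decay of $x^{(0)}_d$ from Lemma~\ref{lem:linhom}. For the continuous-part contributions I would substitute the explicit formula \eqref{eq:linhomparam1} for $x^{(0)}$ and use that $\mathcal{K}_{dc}(f) = -\int_0^\infty f(\xi)K_d(\xi)\rho(\xi)\,d\xi$ with $K_d$ smooth and rapidly decaying. As already carried out in the excerpt for the $\beta_\nu^2(\sigma)\mathcal{K}_{dc}x^{(0)}$ term via $x_1$, one splits the $\eta$-integral into the regimes $\frac{\lambda^2(\sigma)}{\lambda^2(\tau_0)}\eta \lessgtr 1$ and $\eta\lessgtr 1$, uses the asymptotics $\rho(\xi)\simeq \xi^{-1/2}$ for $\xi\ll1$, $\rho(\xi)\simeq\xi^{1/2}$ for $\xi\gg1$, and applies Cauchy--Schwarz together with the definition of $\|x_1\|_{S_2}$ (respectively $\|x_0\|_{\tilde S_1}$ for the $x_0$ half of \eqref{eq:linhomparam1}). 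The rapid decay of $K_d$ kills all large-frequency contributions cheaply, so one loses at most $(\lambda(\sigma)/\lambda(\tau_0))^{0+}$ and a single factor $\sigma$; combined with the weight $\beta_\nu^2(\sigma)\sim\sigma^{-2}$ this gives exactly $\sigma^{-1+}[\|(x_0,x_1)\|_{\tilde S} + |x_{0d}|]$. The $\beta_\nu(\sigma)\mathcal{K}_{dc}\mathcal{D}_\sigma x^{(0)}$ term carries only $\beta_\nu(\sigma)\sim\sigma^{-1}$, but $\mathcal{D}_\sigma x^{(0)}$ is one derivative less singular in $\eta$ (it gains a power $\eta^{1/2}$ relative to $x^{(0)}$, since differentiating the $\sin$-factor produces $\lambda(\sigma)\eta^{1/2}\cdot\lambda^{-1}(\sigma)$ and there is no longer a bare $\eta^{-1/2}$), so one does not lose the factor $\sigma$ and again lands at $\sigma^{-1+}[\cdots]$; the $\mathcal{K}_{dc}\mathcal{K}_{cc}$ term is handled by first applying the $\mathcal{K}_{cc}$ estimates from the proof of Proposition~\ref{prop:firstiteratelincont} and then pairing against the rapidly decaying $K_d$.

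With these source bounds established, the fixed-point argument is routine: on the ball $\{\,\tau^{1-}(|\triangle x^{(1)}_d|+|\partial_\tau\triangle x^{(1)}_d|)\le C[\|(x_0,x_1)\|_{\tilde S}+|x_{0d}|]\,\}$ the map defined by the right-hand side of the implicit equation is a contraction for $\tau_0$ large, because the only $\triangle x^{(1)}_d$-dependent piece is $\int H_d(\tau,\sigma)\beta_\nu(\sigma)\partial_\sigma\triangle x^{(1)}_d(\sigma)\,d\sigma$, whose contribution is bounded by $\int_{\tau_0}^\infty |H_d(\tau,\sigma)|\,\sigma^{-1}\sigma^{-1+}\,d\sigma\cdot C[\cdots]\lesssim \tau_0^{-1+}\cdot C[\cdots]$, hence a genuine contraction with constant $\ll1$. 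The vanishing condition $\lim_{\tau\to\infty}\triangle x^{(1)}_d(\tau)=0$ is automatic since the constructed solution decays like $\tau^{-1+}$. The main obstacle is not any single estimate but the bookkeeping in the source bound: one must check that every schematic term in $\mathcal{R}_d$ — including the commutator $[\mathcal{A},\mathcal{K}]$ and the $\mathcal{K}^2$ term — produces at worst a $\sigma^{-1+}$ decay after pairing against $\phi_d$, and in particular that the potential loss of one power of $\sigma$ (from the singular $\xi^{-1/2}$ weights interacting with the oscillatory phases, exactly as at the end of the proof of Proposition~\ref{prop:lingrowthcond}) is always compensated either by the $\beta_\nu^2$ weight or by the extra regularity of $\mathcal{D}_\sigma x^{(0)}$; here the rapid decay of the kernel $K_d$ is what makes this uniform and painless, in contrast to the much more delicate continuous-spectrum estimates in the preceding propositions.
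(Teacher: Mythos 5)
Your proposal is correct and follows essentially the same route as the paper: reduce to the pointwise bound $|\mathcal{R}_d(\sigma,\underline{x}^{(0)})|\lesssim\sigma^{-1+}[\|(x_0,x_1)\|_{\tilde S}+|x_{0d}|]$ via the explicit representation of $x^{(0)}$ paired against the rapidly decaying kernel $K_d$, then convolve with $H_d$ and absorb the feedback term $\beta_\nu\partial_\sigma\triangle x_d^{(1)}$ by a bootstrap/contraction argument. You spell out the contraction step a bit more explicitly than the paper's terse ``straightforward bootstrap argument'', and you correctly identify that the potential loss of one power of $\sigma$ for the $\beta_\nu^2\mathcal{K}_{dc}x^{(0)}$ term is compensated by the $\beta_\nu^2\sim\sigma^{-2}$ weight, while the $\beta_\nu\mathcal{K}_{dc}\mathcal{D}_\sigma x^{(0)}$ term gains an $\eta^{1/2}$ so does not lose that power; this matches the paper's computation preceding the proposition statement.
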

 
\section{Control of the first iterate; contribution of the nonlinear terms} 

Here we show the following result which controls the contribution to the first iterate of the source term $N_{\nu}(\epsilon^{(0)})$, where we put 
\[
\tilde{\epsilon}^{(0)} = R\epsilon^{(0)} = x^{(0)}_d(\tau)\phi_d(R) + \int_0^\infty \phi(R, \xi)x^{(0)}(\tau, \xi)\rho(\xi)\,d\xi, 
\]
and $x^{(0)}$ as before stands for the zeroth iterate given by Lemma~\ref{lem:linhom}:
\begin{prop}\label{prop:firstiteratenonlin} There is a choice of $(\triangle\tilde{\tilde{x}}_0^{(1)}, \triangle\tilde{\tilde{x}}_1^{(1)})\in \tilde{S}$ with 
\begin{equation}\label{eq:keysmallnl}
\big\|(\triangle\tilde{\tilde{x}}_0^{(1)},\triangle\tilde{\tilde{x}}_1^{(1)})\big\|_{\tilde{S}}\lesssim \tau_0^{-(1-)}[\big\|(x_0, x_1)\big\|_{\tilde{S}} + |x_{0d}|]
\end{equation}
such that setting $f(\tau, \xi): = \mathcal{F}\big(\lambda^{-2}(\tau)RN_{\nu}(\tilde{\epsilon}^{(0)}) \big)(\xi)$ as well as 
 \begin{align*}
 &\triangle x^{(1)}(\tau, \xi): \\
 &=\int_{\tau_0}^\tau\frac{\lambda^{\frac{3}{2}}(\tau)}{\lambda^{\frac{3}{2}}(\sigma)}\frac{\rho^{\frac{1}{2}}(\frac{\lambda^2(\tau)}{\lambda^2(\sigma)}\xi)}{\rho^{\frac{1}{2}}(\xi)}\frac{\sin[\lambda(\tau)\xi^{\frac{1}{2}}\int_{\tau}^\sigma\lambda^{-1}(u)\,du]}{\xi^{\frac{1}{2}}}f(\sigma, (\frac{\lambda^2(\tau)}{\lambda^2(\sigma)}\xi)\,d\sigma\\
 & + S(\tau)(\triangle\tilde{\tilde{x}}_0^{(1)}, \triangle\tilde{\tilde{x}}_1^{(1)}) 
 \end{align*}
 we have the high frequency bound 
 \begin{equation}\label{eq:firstiterhighnl}
  \big\|\chi_{\xi>1}\big(\triangle x^{(1)}(\tau, \xi), \mathcal{D}_{\tau}\triangle x^{(1)}(\tau, \xi)\big)\big\|_{S}\lesssim \tau_0^{-(1-)}\big[\big\|x_0, x_1\big\|_{\tilde{S}} + |x_{0d}|\big]
  \end{equation}
  uniformly in $\tau\geq \tau_0$, as well as the improved bound 
  \begin{equation}\label{eq:firstiterhighimprovednl}\begin{split}
 &\big(\sum_{\substack{N\gtrsim \tau_0\\N\,\text{dyadic}}}\sup_{\tau\sim N}(\frac{\lambda(\tau)}{\lambda(\tau_0)})^{4\delta_0}\big\|\xi^{\frac12+}\mathcal{D}_{\tau}\big[\triangle x^{(1)}(\tau, \xi) - S(\tau)(\triangle\tilde{\tilde{x}}_0^{(1)}, \triangle\tilde{\tilde{x}}_1^{(1)})\big](\tau, \cdot)\big\|_{L^2_{d\xi}(\xi>1)}^2\big)^{\frac12}\\
&\lesssim \tau_0^{-(1-)}\big[\big\|(x_0, x_1)\big\|_{\tilde{S}} + \big|x_{0d}\big|\big].  
\end{split}\end{equation}

  Moreover, there is a splitting 
  \begin{equation}\label{eq:splitting1nl}
  \triangle x^{(1)}(\tau, \xi) =  \triangle x^{(1)}_{>\tau}(\tau, \xi) + S(\tau)\big(\triangle \tilde{x}^{(1)}_0(\xi), \triangle \tilde{x}^{(1)}_1(\xi)\big)
  \end{equation}
  such that we have 
  \begin{equation}\label{eq:firstiterlownl}\begin{split}
 &\sup_{\tau\geq \tau_0}(\frac{\tau}{\tau_0})^{-\kappa}\big\|\chi_{\xi<1}\triangle x^{(1)}_{>\tau}(\tau, \xi)\big\|_{S_1} + \sup_{\tau\geq \tau_0}\big(\frac{\tau}{\tau_0}\big)^{\kappa}\big\|\chi_{\xi<1}\mathcal{D}_{\tau}\triangle x^{(1)}_{>\tau}(\tau, \xi)\big\|_{S_2}\\&
 + \big\|\big(\triangle \tilde{x}^{(1)}_0(\xi), \triangle \tilde{x}^{(1)}_1(\xi)\big)\big\|_{S}\lesssim  \big\|(x_0, x_1)\big\|_{\tilde{S}} + |x_{0d}|.
  \end{split}\end{equation}
 The functions $\triangle \tilde{x}^{(1)}_{0,1}$ satisfy the vanishing relations \eqref{eq:vanishing}. 
Here we set as in Proposition~\ref{prop:firstiteratelincont} $\kappa = 2(1+\nu^{-1})\delta_0$. Furthermore, there is the following improvement:
 \begin{equation}\label{eq:firstiteratelincontimprov1nl}\begin{split}
&\big(\sum_{\substack{N\gtrsim \tau_0\\N\,\text{dyadic}}}\sup_{\tau\sim N}(\frac{\lambda(\tau)}{\lambda(\tau_0)})^{4\delta_0}\big\|\xi^{-0+}\mathcal{D}_{\tau}\triangle x_{>\tau}^{(1)}(\tau, \cdot)\big\|_{L^2_{d\xi}(\xi<1)}^2\big)^{\frac12}\\
&\lesssim \tau_0^{-\gamma}\big[\big\|(x_0, x_1)\big\|_{\tilde{S}} + \big|x_{0d}\big|\big]. 
 \end{split}\end{equation}
 for some $\gamma>0$. 
 \end{prop}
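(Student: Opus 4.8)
\textbf{Proof proposal for Proposition~\ref{prop:firstiteratenonlin}.}

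The plan is to run the same three-step scheme as in the proof of Proposition~\ref{prop:firstiteratelincont}: (i) a high-frequency bound for the Duhamel term truncated at $\xi>1$, (ii) a careful choice of the data correction $(\triangle\tilde{\tilde{x}}_0^{(1)}, \triangle\tilde{\tilde{x}}_1^{(1)})$ forcing the vanishing conditions \eqref{eq:vanishing} for the reassembled data $(\triangle\tilde x_0^{(1)}, \triangle\tilde x_1^{(1)})$, and (iii) the low-frequency bound for $\triangle x^{(1)}_{>\tau}$, together with the square-sum refinements. The essential simplification compared with the linear case is that here the source $f(\tau,\xi) = \mathcal{F}(\lambda^{-2}(\tau)RN_\nu(\tilde\epsilon^{(0)}))(\xi)$ is controlled by Proposition~\ref{prop:nonlinbounds}, case (i): it splits as $E_1 + E_2$ with $\|E_1(\tau,\cdot)\|_{(H^{1+}_{dR}\cap L^1_{dR})(R\lesssim\tau)}\lesssim \alpha\tau^{-(2-)}$ and $E_2(\tau,R) = \tau^{-2}\tilde g(\tau)\langle R\rangle^{-3}\tilde g(R,a)$ with $\tau^{-1}|\tilde g(\tau)| + |\partial_\tau\tilde g(\tau)|\lesssim\alpha$, where $\alpha = \|(x_0,x_1)\|_{\tilde S} + |x_{0d}|$. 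So the quantitative input is now an honest $\tau^{-(2-)}$ (resp. $\tau^{-2}$) decay of the Fourier-side source in a good norm, rather than the $\beta_\nu$-weighted transference terms which only decayed like $\sigma^{-1}$ or $\sigma^{-2}$ but came with kernel singularities. Concretely, I would first transfer the physical-space bounds on $E_1, E_2$ into frequency-side bounds: using that $\mathcal{F}$ maps $H^{1+}_{dR}\cap L^1_{dR}$ (on $R\lesssim\tau$) into $\langle\xi\rangle^{1+}\xi^{0-}L^2_{d\xi}$ with at most a $\log\tau$ (i.e. $\tau^{0+}$) loss — this is the same mapping used implicitly in Proposition~\ref{prop:nonlinbounds} and relies on the $\phi(R,\xi)/R$ bounds and $\rho$ asymptotics from \cite{KST} — one obtains $\|f(\sigma,\cdot)\|_{S_1}\lesssim \alpha\sigma^{-(2-)}$, with the analogous weaker statement for the $E_2$ piece: $\|\mathcal{F}(E_2(\sigma,\cdot))(\cdot)\|_{\langle\xi\rangle^{1+}\xi^{0-}L^2}\lesssim \alpha\sigma^{-1}$ together with the structural fact that $\mathcal F(\langle R\rangle^{-3}\tilde g(R,a))$ is smooth, decaying in $\xi$ and essentially supported at $\xi\lesssim 1$ (since $\langle R\rangle^{-3}\tilde g$ lives at $R\lesssim 1$).

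For the high-frequency bound \eqref{eq:firstiterhighnl} and its refinement \eqref{eq:firstiterhighimprovednl}, I would insert this into the Duhamel formula and, on $\xi>1$, $\sigma\le\tau$, use the standard ratios $\frac{\lambda^{3/2}(\tau)}{\lambda^{3/2}(\sigma)}\frac{\rho^{1/2}(\frac{\lambda^2(\tau)}{\lambda^2(\sigma)}\xi)}{\rho^{1/2}(\xi)}\sim\frac{\lambda^2(\tau)}{\lambda^2(\sigma)}$, a change of variables $\tilde\xi = \frac{\lambda^2(\tau)}{\lambda^2(\sigma)}\xi$, and Cauchy–Schwarz in $\xi$, exactly as in Step~1 of Proposition~\ref{prop:firstiteratelincont}. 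The $\sigma$-integral then reads $\int_{\tau_0}^\tau \frac{\lambda^{2-}(\sigma)}{\lambda^{2-}(\tau)}\sigma^{-(2-)}\,d\sigma$ for the $E_1$ piece, which converges and is $\lesssim\tau_0^{-(1-)}$ because $\sigma^{-(2-)}$ beats the polynomial growth $\lambda^{2-}(\sigma)\sim\sigma^{(2-)(1+\nu^{-1})}$ once $\nu$ is small — here is where the $\tau_0^{-(1-)}$ gain in \eqref{eq:keysmallnl}, \eqref{eq:firstiterhighnl} comes from, and this is genuinely stronger than in the linear case. The $E_2$ piece contributes only at $\xi\lesssim 1$ so it is harmless for the $\xi>1$ estimate. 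For \eqref{eq:firstiterhighimprovednl} I would, after applying $\mathcal{D}_\tau$ (which turns $\sin$ into $\cos$ and removes the $\xi^{-1/2}$), localize $\tau\sim N$ dyadic, bound the $\sigma$-integral by Cauchy–Schwarz in $\sigma$ exploiting the extra $\sigma^{-(2-)}$, and square-sum over $N$; the polynomial weight $(\lambda(\tau)/\lambda(\tau_0))^{4\delta_0}$ is absorbed easily since we have a full power of decay to spare.

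For Step~2, I would again set $\triangle\tilde{\tilde x}_1^{(1)} = \alpha_*\mathcal{F}(\chi_{R\le C\tau_0}\phi(R,0))$ (and similarly for the $x_0$-slot), compute $\int_0^\infty \xi^{-3/4}(\rho^{1/2}\triangle\tilde{\tilde x}_1^{(1)})(\xi)\sin[\nu\tau_0\xi^{1/2}]\,d\xi\sim\alpha_*\tau_0$, and choose $\alpha_*$ to cancel the value of the same integral applied to $\widetilde{\triangle}\tilde x_1^{(1)}$; the point is that the latter is bounded by $\tau_0^{0+}\alpha$ (because the $E_1$ contribution, after reducing to the resonant mode as at the end of the proof of Proposition~\ref{prop:lingrowthcond}, loses at most a factor $\sigma$ against a $\sigma^{-(2-)}$ decaying $\sigma$-integrand, and the $E_2$ contribution is handled via the $\tau^{-1}|\tilde g|$ bound), so that $|\alpha_*|\lesssim\tau_0^{-1+}$ and \eqref{eq:keysmallnl} holds. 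For Step~3, the low-frequency bound \eqref{eq:firstiterlownl} for $\triangle x^{(1)}_{>\tau} = -\int_\tau^\infty(\cdots)f(\sigma,\cdots)\,d\sigma$ follows the same pattern as Step~4 of Proposition~\ref{prop:firstiteratelincont}, but is now easier because on $\sigma\ge\tau$ we have $\frac{\lambda^{3/2}(\tau)}{\lambda^{3/2}(\sigma)}\frac{\rho^{1/2}(\cdots)}{\rho^{1/2}(\xi)}\sim\frac{\lambda(\tau)}{\lambda(\sigma)}$ and the $\sigma$-integral $\int_\tau^\infty\frac{\lambda(\tau)}{\lambda(\sigma)}\sigma\cdot\sigma^{-(2-)}\,d\sigma$ converges with room; the $E_2$ piece, being a fixed profile $\langle R\rangle^{-3}\tilde g(R,a)$ times $\tau^{-2}\tilde g(\tau)$ with $|\tilde g(\tau)|\lesssim\alpha\tau$, contributes $\int_\tau^\infty\frac{\lambda(\tau)}{\lambda(\sigma)}\sigma^{-1}\,d\sigma\cdot\alpha\lesssim\alpha$ in $S_1$ and, since applying $\mathcal{D}_\tau$ either gains $\sigma^{-1}$ or brings down $\frac{\lambda(\tau)}{\lambda(\sigma)}\xi^{1/2}$ which de-singularizes, gives the $(\tau/\tau_0)^\kappa$-improved $\mathcal{D}_\tau$-bound and, after dyadic localization and square-summation, the claimed $\tau_0^{-\gamma}$ gain in \eqref{eq:firstiteratelincontimprov1nl}. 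The high-frequency data bound $\|\chi_{\xi>1}(\triangle\tilde x_0^{(1)},\triangle\tilde x_1^{(1)})\|_S\lesssim\alpha$ is read off at $\tau=\tau_0$ from the above, and the vanishing conditions hold by construction of the corrections.

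The main obstacle I anticipate is not in any single estimate — each is softer than its linear-case analogue — but in being careful about \emph{where} the $E_2$ term is allowed to destroy smallness: unlike $E_1$, the $E_2$ contribution does \emph{not} gain the factor $\tau_0^{-(1-)}$ (it only decays like $\tau^{-2}$ but carries the growing coefficient $|\tilde g(\tau)|\lesssim\alpha\tau$), so it can only be placed in the parts of \eqref{eq:firstiterlownl} and \eqref{eq:firstiteratelincontimprov1nl} that ask merely for $\lesssim\alpha$ (resp. $\tau_0^{-\gamma}\alpha$ with a possibly small $\gamma$ coming solely from the low-frequency localization), and must be kept out of \eqref{eq:keysmallnl}, \eqref{eq:firstiterhighnl}, \eqref{eq:firstiterhighimprovednl}. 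Verifying that the $E_2$ profile is indeed low-frequency-concentrated (so that its Fourier transform is negligible on $\xi>1$) and that the resonant-mode reduction from the proof of Proposition~\ref{prop:lingrowthcond} applies verbatim to it is the one genuinely new bookkeeping point; everything else is a transcription of the arguments already carried out in Sections~3 and~7.
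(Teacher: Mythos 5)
Your overall scheme --- transfer the physical-space bounds from Proposition~\ref{prop:nonlinbounds} to the Fourier side, run the three-step analysis from Proposition~\ref{prop:firstiteratelincont}, split the source into $E_1 + E_2$, and choose $\triangle\tilde{\tilde x}_1^{(1)}$ proportional to $\mathcal{F}(\chi_{R\le C\tau_0}\phi(R,0))$ --- matches the paper, and the $E_1$ bookkeeping is fine. But your treatment of $E_2$ at high frequency is a genuine gap. You assert that ``the $E_2$ piece contributes only at $\xi\lesssim 1$ so it is harmless for the $\xi>1$ estimate,'' on the grounds that $\langle R\rangle^{-3}\tilde g(R,a)$ ``lives at $R\lesssim 1$.'' That is not correct: the profile has only polynomial decay in $R$ (it is in $IS^2(R,\mathcal{Q})$ weighted by $\langle R\rangle^{-3}$, not compactly supported), its spatial shape depends on $a=r/t$ and hence on $\sigma$, and the distorted transform $\xi\mapsto\langle E_2(\sigma,\cdot),\phi(\cdot,\xi)\rangle$ need not be negligible on $\xi>1$ at the scale required. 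Since $E_2$ only gives $\|\mathcal{F}E_2(\sigma,\cdot)\|\lesssim\alpha\sigma^{-1}$ (the $\sigma^{-2}$ coefficient versus the $|\tilde g(\sigma)|\lesssim\alpha\sigma$ growth), plugging this pointwise into the Duhamel formula for $\xi>1$ and integrating the $\sigma$-integral in absolute value would at best break even, not give the claimed $\tau_0^{-(1-)}$ gain in \eqref{eq:firstiterhighnl}, \eqref{eq:firstiterhighimprovednl}.

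The paper's actual mechanism for $E_2$ at high frequency is integration by parts in $\sigma$, exploiting the extra $\sigma$-regularity encoded in $\tau^{-1}|\tilde g(\tau)|+|\partial_\tau\tilde g(\tau)|\lesssim\alpha$: after rewriting $\cos[\cdots]$ as $\xi^{-1/2}\frac{\lambda(\sigma)}{\lambda(\tau)}\partial_\sigma(\sin[\cdots])$ and moving $\partial_\sigma$ onto $\langle E_2(\sigma,\cdot),\phi(\cdot,\cdot)\rangle(\frac{\lambda^2(\tau)}{\lambda^2(\sigma)}\xi)$, one gets a source that is $O(\alpha\sigma^{-2})$ in the relevant norm, and this recovers the $\tau_0^{-(1-)}$ gain. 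Your concluding paragraph says the opposite --- that $E_2$ ``does not gain the factor $\tau_0^{-(1-)}$'' and ``must be kept out of'' \eqref{eq:keysmallnl}, \eqref{eq:firstiterhighnl}, \eqref{eq:firstiterhighimprovednl} --- which misidentifies where the difficulty lies and what resolves it. You do invoke exactly this $\sigma$-integration-by-parts idea for the low-frequency square-sum bound \eqref{eq:firstiteratelincontimprov1nl}, so the fix is to apply the same device uniformly, in particular in Step~1(b) of the high-frequency argument, rather than trying to argue $E_2$ away by a Fourier-support heuristic. One further small omission: for the undifferentiated low-frequency bound in \eqref{eq:firstiterlownl} you need the sharpened pointwise bound $|f(\sigma,\underline x^{(0)})(\frac{\lambda^2(\tau)}{\lambda^2(\sigma)}\xi)|\lesssim(\frac{\sigma}{\tau_0})^\kappa\sigma^{-1-\delta_0}\alpha$ (obtained by revisiting cases (2), (4) of the proof of Proposition~\ref{prop:lingrowthcond}) to offset the $\tau^{2\delta_0}$ loss from the singular weight $\xi^{-0+}$; ``converges with room'' glosses over this.
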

 \begin{rem}\label{rem:firstiteratenonlin} Observe that in \eqref{eq:firstiterlownl}  we do not gain a smallness factor here, in spite of the multilinear nature of the source term. This is due to the contribution of the term $E_2$ in the statement of Proposition~\ref{prop:nonlinbounds}, which in turn arises due to the interaction of the corrections $u_{\nu} - u_0$ with the part of $\tilde{\epsilon}$ which grows linearly in time $\tau$, i. e. a suitable multiple of the resonance $\phi(R,0)$. More precisely, the lack of a smallness gain arises for this source term in the low frequency regime, and only for the un-differentiated output (i. e. without the $\mathcal{D}_{\tau}$). This will later ensure that {\it{re-iteration}} of our modified Duhamel type parametrix to this bad term will produce better terms with a gain. 
\end{rem}
\begin{proof}(Proposition~\ref{prop:firstiteratenonlin}) This roughly follows the steps of the proof of Proposition~\ref{prop:firstiteratelincont}, with Proposition~\ref{prop:nonlinbounds} playing an important role. The choice of the correction term $\triangle\tilde{\tilde{x}}_1^{(1)}$ will be made when controlling the low frequency part of $\triangle x^{(1)}$, so we shall assume for now that such a choice has been made satisfying \eqref{eq:keysmallnl}. 
\\

{\bf{Step 1}}: {\it{Proof of the high frequency bounds \eqref{eq:firstiterhighnl}, \eqref{eq:firstiterhighimprovednl}}}. In light of Lemma~\ref{lem:freeSbound}, the proof of \eqref{eq:firstiterhighnl} will follows once we establish it for $\triangle x^{(1)}(\tau, \cdot) - S(\tau)( \triangle\tilde{\tilde{x}}_0^{(1)}, \triangle\tilde{\tilde{x}}_1^{(1)})$. Here we shall in fact establish the somewhat stronger bound \eqref{eq:firstiterhighimprovednl} for the time derivative of $\triangle x^{(1)}$, the remaining bounds being in effect similar but simpler. Then we may invoke Proposition~\ref{prop:nonlinbounds} where we find ourselves in situation (i) of that proposition. Then it remains to distinguish between two situations: 
\\

{\bf{(1a)}}: {\it{Situation (i) in the statement of Proposition~\ref{prop:nonlinbounds} with a source term of type $E_1$}}. Here we need to bound (with respect to the square-sum norm as in \eqref{eq:firstiterhighimprovednl}) the expression 
\begin{align*}
&X_1(\tau, \xi):\\& = \int_{\tau_0}^\tau\frac{\lambda^{\frac{3}{2}}(\tau)}{\lambda^{\frac{3}{2}}(\sigma)}\frac{\rho^{\frac{1}{2}}(\frac{\lambda^2(\tau)}{\lambda^2(\sigma)}\xi)}{\rho^{\frac{1}{2}}(\xi)}\cos[\lambda(\tau)\xi^{\frac{1}{2}}\int_{\tau}^\sigma\lambda^{-1}(u)\,du]\big(\langle E_1(\sigma,\cdot), \phi(\cdot, \cdot)\rangle\big)(\frac{\lambda^2(\tau)}{\lambda^2(\sigma)}\xi)\,d\sigma
\end{align*}
Observe that (using $\tau\geq \sigma$)
\begin{align*}
\big\|\langle\xi\rangle^{\frac12+}X_1(\tau, \xi)\big\|_{L^2_{d\xi}(\xi>1)}&\lesssim \int_{\tau_0}^\tau\big\|\frac{\lambda^{2}(\tau)}{\lambda^{2}(\sigma)}\langle\xi\rangle^{\frac12+}\big(\langle E_1(\sigma,\cdot), \phi(\cdot, \cdot)\rangle\big)(\frac{\lambda^2(\tau)}{\lambda^2(\sigma)}\xi)\big\|_{L^2_{d\xi}(\xi>1)}\,d\sigma\\
&\lesssim \int_{\tau_0}^\tau\big(\frac{\lambda(\sigma)}{\lambda(\tau)}\big)^{2\delta_0}\big(\frac{\lambda(\sigma)}{\lambda(\tau)}\big)^{\frac12}\big\|\xi^{\frac12}\langle E_1(\sigma,\cdot), \phi(\cdot, \xi)\rangle\big\|_{L^2_{d\rho}(\xi>1)}\,d\sigma\\
&\lesssim \int_{\tau_0}^\tau\big(\frac{\lambda(\sigma)}{\lambda(\tau)}\big)^{\frac12+2\delta_0}\big\|E_1(\sigma, \cdot)\big\|_{H^1}\,d\sigma.
\end{align*}
We conclude that 
\begin{align*}
&\big(\sum_{\substack{\tau\sim N\gtrsim \tau_0\\ N\,\text{dyadic}}}\big(\frac{\lambda(\tau)}{\lambda(\tau_0)}\big)^{4\delta_0}\sup_{\tau\sim N}\big\|\langle\xi\rangle^{\frac12+}X_1(\tau, \xi)\big\|_{L^2_{d\xi}(\xi>1)}^2\big)^{\frac12}\\
&\lesssim \big(\sum_{\substack{\tau\sim N\gtrsim \tau_0\\ N\,\text{dyadic}}}\big(\frac{\lambda(\tau)}{\lambda(\tau_0)}\big)^{4\delta_0}\sup_{\tau\sim N}\big[\int_{\tau_0}^\tau\big(\frac{\lambda(\sigma)}{\lambda(\tau)}\big)^{\frac12+2\delta_0}\big\|E_1(\sigma, \cdot)\big\|_{H^1}\,d\sigma\big]^{2}\big)^{\frac12}\\
&\lesssim \big[\big\|(x_0, x_1)\big\|_{\tilde{S}} + \big|x_{0d}\big|\big]\big(\sum_{\substack{\tau\sim N\gtrsim \tau_0\\ N\,\text{dyadic}}}\big(\frac{\lambda(\tau)}{\lambda(\tau_0)}\big)^{4\delta_0}\sup_{\tau\sim N}\big[\int_{\tau_0}^\tau\big(\frac{\lambda(\sigma)}{\lambda(\tau)}\big)^{\frac12+2\delta_0}\sigma^{-2+}\,d\sigma\big]^{2}\big)^{\frac12}\\
&\lesssim \tau_0^{-(1-)}\big[\big\|(x_0, x_1)\big\|_{\tilde{S}} + \big|x_{0d}\big|\big].
\end{align*}
provided $\delta_0\ll 1$ is sufficiently small. 
\\

{\bf{(1b)}}: {\it{Situation (i) in the statement of Proposition~\ref{prop:nonlinbounds} with a source term of type $E_2$}}. Here we get a much worse point wise decay for the $H^1$-norm of the source term, and in particular we cannot integrate the time integral in the absolute sense. Instead, we shall have to exploit some oscillation in it. To carry this out, we observe that (we use the notation used in the statement of Proposition~\ref{prop:nonlinbounds})
\begin{align*}
&\big\|\partial_{\sigma}\big[\big(\langle E_2(\sigma,\cdot), \phi(\cdot, \cdot)\rangle\big)(\frac{\lambda^2(\tau)}{\lambda^2(\sigma)}\xi)\big]\big\|_{L^2_{d\rho}(\xi>1)}\\&\lesssim \frac{\lambda(\sigma)}{\lambda(\tau)}[\sigma^{-3}|g(\sigma)| + \sigma^{-2}\partial_{\sigma}g]\big\|\langle R\rangle^{-3}g(R, a)\big\|_{H^1_{dR}},
\end{align*}
and further, 
\begin{align*}
&X_2(\tau, \xi):\\& = \int_{\tau_0}^\tau\frac{\lambda^{\frac{3}{2}}(\tau)}{\lambda^{\frac{3}{2}}(\sigma)}\frac{\rho^{\frac{1}{2}}(\frac{\lambda^2(\tau)}{\lambda^2(\sigma)}\xi)}{\rho^{\frac{1}{2}}(\xi)}\cos[\lambda(\tau)\xi^{\frac{1}{2}}\int_{\tau}^\sigma\lambda^{-1}(u)\,du]\big(\langle E_2(\sigma,\cdot), \phi(\cdot, \cdot)\rangle\big)(\frac{\lambda^2(\tau)}{\lambda^2(\sigma)}\xi)\,d\sigma\\
& =  \int_{\tau_0}^\tau \sin[\lambda(\tau)\xi^{\frac{1}{2}}\int_{\tau}^\sigma\lambda^{-1}(u)\,du]\\&\hspace{2cm}\cdot\partial_{\sigma}\big[\xi^{-\frac12}\frac{\lambda(\sigma)}{\lambda(\tau)}\frac{\lambda^{\frac{3}{2}}(\tau)}{\lambda^{\frac{3}{2}}(\sigma)}\frac{\rho^{\frac{1}{2}}(\frac{\lambda^2(\tau)}{\lambda^2(\sigma)}\xi)}{\rho^{\frac{1}{2}}(\xi)}\big(\langle E_2(\sigma,\cdot), \phi(\cdot, \cdot)\rangle\big)(\frac{\lambda^2(\tau)}{\lambda^2(\sigma)}\xi)\big]\,d\sigma\\
& + \text{boundary terms}.
\end{align*}
But then, using 
\[
\big|\sigma^{-3}|g(\sigma)| + \sigma^{-2}\partial_{\sigma}g\big|\lesssim \sigma^{-2}\big[\big\|(x_0, x_1)\big\|_{\tilde{S}} + \big|x_{0d}\big|\big],
\]
we easily infer the bound 
\begin{align*}
&\big(\sum_{\substack{\tau\sim N\gtrsim \tau_0\\ N\,\text{dyadic}}}\big(\frac{\lambda(\tau)}{\lambda(\tau_0)}\big)^{4\delta_0}\sup_{\tau\sim N}\big\|\langle\xi\rangle^{\frac12+}X_2(\tau, \xi)\big\|_{L^2_{d\xi}(\xi>1)}^2\big)^{\frac12}\\
&\lesssim \tau_0^{-1}\big[\big\|(x_0, x_1)\big\|_{\tilde{S}} + \big|x_{0d}\big|\big].
\end{align*}
This then concludes {\it{Step 1}}, i. e. the proof of the high frequency bounds. 
\\

{\bf{Step 2}}: {\it{Proof of the low frequency bounds \eqref{eq:firstiterlownl}, \eqref{eq:firstiteratelincontimprov1nl}.}} This involves in particular defining the correction terms $\triangle\tilde{\tilde{x}}_0^{(1)}, \triangle\tilde{\tilde{x}}_1^{(1)}$ and thereby the data $(\triangle \tilde{x}^{(1)}_0(\xi), \triangle \tilde{x}^{(1)}_1)$. We do this in close analogy to the proof of Proposition~\ref{prop:firstiteratelincont}, and hence shall be correspondingly more brief here. To begin with, we set 
\begin{align*}
\triangle \tilde{x}^{(1)}_0(\xi) =& \int_{\tau_0}^\infty\frac{\lambda^{\frac{3}{2}}(\tau_0)}{\lambda^{\frac{3}{2}}(\sigma)}\frac{\rho^{\frac{1}{2}}(\frac{\lambda^2(\tau_0)}{\lambda^2(\sigma)}\xi)}{\rho^{\frac{1}{2}}(\xi)}\frac{\sin[\lambda(\tau_0)\xi^{\frac{1}{2}}\int_{\tau_0}^\sigma\lambda^{-1}(u)\,du]}{\xi^{\frac{1}{2}}}f(\sigma, \frac{\lambda^2(\tau_0)}{\lambda^2(\sigma)}\xi)\,d\sigma\\
& + \triangle\tilde{\tilde{x}}_0^{(1)}\\
&=:\widetilde{\triangle} \tilde{x}^{(1)}_0(\xi) + \triangle\tilde{\tilde{x}}_0^{(1)}(\xi), 
\end{align*}
as well as 
\begin{align*}
\triangle \tilde{x}^{(1)}_1(\xi) =&\int_{\tau_0}^\infty\frac{\lambda^{\frac{3}{2}}(\tau_0)}{\lambda^{\frac{3}{2}}(\sigma)}\frac{\rho^{\frac{1}{2}}(\frac{\lambda^2(\tau_0)}{\lambda^2(\sigma)}\xi)}{\rho^{\frac{1}{2}}(\xi)}\cos[\lambda(\tau_0)\xi^{\frac{1}{2}}\int_{\tau_0}^\sigma\lambda^{-1}(u)\,du]f(\sigma, \underline{x}^{(0)})(\frac{\lambda^2(\tau_0)}{\lambda^2(\sigma)}\xi)\,d\sigma\\& + \triangle\tilde{\tilde{x}}_1^{(1)}(\xi)\\
&=:\widetilde{\triangle} \tilde{x}^{(1)}_1(\xi) + \triangle\tilde{\tilde{x}}_1^{(1)}(\xi).
\end{align*}
with $f(\sigma, \xi): = \mathcal{F}\big(\lambda^{-2}(\sigma)RN_{\nu}(\epsilon^{(0)}) \big)(\xi)$.  Then we have 

\begin{lem}\label{lem:choiceofcorrectionnl}There exists $\triangle\tilde{\tilde{x}}_0^{(1)}(\xi),\triangle\tilde{\tilde{x}}_1^{(1)}(\xi)$ satisfying \eqref{eq:keysmallnl} and such that we have 
\begin{equation}\label{eq:vanishing2nl}
\int_0^\infty \frac{(\rho^{\frac{1}{2}}\triangle\tilde{x}^{(1)}_0)(\xi)}{\xi^{\frac{1}{4}}}\cos[\nu\tau_0\xi^{\frac{1}{2}}]\,d\xi = 0,\,\int_0^\infty \frac{(\rho^{\frac{1}{2}}\triangle\tilde{x}^{(1)}_1)(\xi)}{\xi^{\frac{3}{4}}}\sin[\nu\tau_0\xi^{\frac{1}{2}}]\,d\xi = 0.
\end{equation}
\end{lem}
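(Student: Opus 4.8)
\textbf{Proof proposal for Lemma~\ref{lem:choiceofcorrectionnl}.}

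The plan is to mirror closely the proof of Lemma~\ref{lem:choiceofcorrection}, with the linear source terms $\mathcal R(\sigma,\underline x^{(0)})$ replaced by the nonlinear source $f(\sigma,\xi)=\mathcal F(\lambda^{-2}(\sigma)RN_\nu(\epsilon^{(0)}))(\xi)$, and with the structural information on $f$ supplied by Proposition~\ref{prop:nonlinbounds}. As in the linear case, the argument splits into two steps. First, I would establish that the two integrals
\[
I_0:=\int_0^\infty\frac{(\rho^{\frac12}\widetilde\triangle\tilde x^{(1)}_0)(\xi)}{\xi^{\frac14}}\cos[\nu\tau_0\xi^{\frac12}]\,d\xi,\qquad I_1:=\int_0^\infty\frac{(\rho^{\frac12}\widetilde\triangle\tilde x^{(1)}_1)(\xi)}{\xi^{\frac34}}\sin[\nu\tau_0\xi^{\frac12}]\,d\xi
\]
formed out of the \emph{Duhamel part} $\widetilde\triangle\tilde x^{(1)}_{0,1}$ of the correction data satisfy
\[
|I_0|+|I_1|\lesssim \tau_0^{-(1-)}\big[\|(x_0,x_1)\|_{\tilde S}+|x_{0d}|\big].
\]
Second, I would cancel these (small) quantities by an explicit choice of $(\triangle\tilde{\tilde x}^{(1)}_0,\triangle\tilde{\tilde x}^{(1)}_1)$, exactly as in part {\bf(2)} of the proof of Lemma~\ref{lem:choiceofcorrection}: set $\triangle\tilde{\tilde x}^{(1)}_1=\alpha\,\mathcal F(\chi_{R\le C\tau_0}\phi(R,0))$ and $\triangle\tilde{\tilde x}^{(1)}_0=\tilde\alpha\,\mathcal F(\chi_{R\le C\tau_0}\phi(R,0))$, using that these produce $\int_0^\infty\frac{(\rho^{\frac12}\triangle\tilde{\tilde x}^{(1)}_1)(\xi)}{\xi^{3/4}}\sin[\nu\tau_0\xi^{1/2}]\,d\xi\sim\alpha\tau_0$ (resp.\ $\sim\tilde\alpha\tau_0$ for the cosine integral), with $\|(0,\triangle\tilde{\tilde x}^{(1)}_1)\|_{\tilde S}\lesssim\alpha\tau_0^{0+}$ and likewise for the other component. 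Solving $\alpha\tau_0\sim -I_1$, $\tilde\alpha\tau_0\sim -I_0$ forces $|\alpha|,|\tilde\alpha|\lesssim\tau_0^{-2+}\big[\|(x_0,x_1)\|_{\tilde S}+|x_{0d}|\big]$ — in fact better than needed — hence the $\tilde S$-bound \eqref{eq:keysmallnl} is comfortably met, while \eqref{eq:vanishing2nl} holds by construction.

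For the estimate on $I_0,I_1$ I would invoke the splitting from Proposition~\ref{prop:nonlinbounds}, $\lambda^{-2}(\sigma)RN_\nu(\tilde\epsilon^{(0)})=E_1(\sigma,\cdot)+E_2(\sigma,\cdot)$, and treat the two pieces separately. For the $E_1$-contribution, $\|E_1(\sigma,\cdot)\|_{(H^{1+}_{dR}\cap L^1_{dR})(R\lesssim\sigma)}\lesssim\alpha\sigma^{-(2-)}$ with $\alpha=\|(x_0,x_1)\|_{\tilde S}+|x_{0d}|$; substituting $E_1$ into the formula for $\widetilde\triangle\tilde x^{(1)}_{0,1}$, changing variables $\tilde\xi=\frac{\lambda^2(\tau_0)}{\lambda^2(\sigma)}\xi$ exactly as at the end of the proof of Proposition~\ref{prop:lingrowthcond}, and using the $L^1_{dR}$-control to bound $|(\langle E_1(\sigma,\cdot),\phi(\cdot,\tilde\xi)\rangle)|$ (together with $|\phi(R,\xi)/R|\lesssim1$ and the $\rho$-asymptotics) reduces the $\xi$-integral to an absolutely convergent bound of size $\lesssim\sigma^{-(2-)}\alpha$, whose $\sigma$-integral over $[\tau_0,\infty)$ gives $\lesssim\tau_0^{-(1-)}\alpha$. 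For the $E_2$-contribution, $E_2(\sigma,R)=\sigma^{-2}\tilde g(\sigma)\langle R\rangle^{-3}g(R,a)$ with $\sigma^{-1}|\tilde g(\sigma)|+|\partial_\sigma\tilde g(\sigma)|\lesssim\alpha$; here the pointwise decay $\sigma^{-2}$ is not integrable, so I would integrate by parts in $\sigma$ in the oscillatory factor $\sin[\lambda(\tau_0)\xi^{1/2}\int_{\tau_0}^\sigma\lambda^{-1}(u)\,du]$ (resp.\ $\cos[\cdots]$), picking up a factor $\frac{\lambda(\sigma)}{\lambda(\tau_0)}$ which, combined with $\partial_\sigma$ hitting $\sigma^{-2}\tilde g(\sigma)$ (giving $\lesssim\sigma^{-2}\alpha$ by the $\tilde g$-bound) and with the change of variables above, yields an integrand $\lesssim\sigma^{-2}\alpha$ plus harmless boundary terms at $\sigma=\tau_0$ of size $\lesssim\tau_0^{-1}\alpha$; integrating in $\sigma$ again gives $\lesssim\tau_0^{-(1-)}\alpha$. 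The discrete-spectrum contribution $x^{(0)}_d$ entering $\epsilon^{(0)}$ is handled as everywhere else using its exponential decay from Lemma~\ref{lem:linhom}.

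The main obstacle is the same phenomenon flagged in Remark~\ref{rem:firstiteratenonlin}: the term $E_2$ — arising from the interaction of $u_\nu-u_0$ with the linearly-in-$\tau$ growing resonance part $\phi(R,0)\tilde g(\tau)$ of $\tilde\epsilon^{(0)}$ — decays only like $\sigma^{-2}$ in $H^1$, which is exactly borderline non-integrable against the time-weights appearing here. The resolution is that, despite the slow decay, $E_2$ has a \emph{precise product structure} $\tilde g(\sigma)\cdot$(fixed profile), so that the integration-by-parts in $\sigma$ is legitimate and gains the missing power; the key inputs making this work are the derivative bound $|\partial_\sigma\tilde g(\sigma)|\lesssim\alpha$ from Proposition~\ref{prop:nonlinbounds} and the fact that differentiating the oscillatory phase produces the benign factor $\frac{\lambda(\sigma)}{\lambda(\tau_0)}\xi^{1/2}$ on the low-frequency support $\xi<1$. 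Once these two steps are in place, the remaining bookkeeping (the $\langle\xi\rangle^{-(1/2++)}$, $\xi^{0+}$ weights in $\tilde S$, the split into $\frac{\lambda^2(\tau_0)}{\lambda^2(\sigma)}\tilde\eta\lessgtr1$ regimes) is routine and identical in spirit to Proposition~\ref{prop:firstiteratelincont}, so I would only indicate it briefly.
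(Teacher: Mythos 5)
Your plan is the right one and matches the paper at the architectural level: estimate the two quantities $I_0, I_1$ built from the Duhamel part $\widetilde\triangle\tilde x^{(1)}_{0,1}$, then cancel them by a multiple of $\mathcal F(\chi_{R\le C\tau_0}\phi(R,0))$, exactly as in part {\bf(2)} of Lemma~\ref{lem:choiceofcorrection}. However, your claimed intermediate estimate $|I_0|+|I_1|\lesssim\tau_0^{-(1-)}\alpha$ is too strong and the integration-by-parts argument you offer for the $E_2$-contribution does not deliver it. When you integrate by parts in $\sigma$ against $\cos[\lambda(\tau_0)\xi^{1/2}\int_{\tau_0}^\sigma\lambda^{-1}]$, the factor you gain is $\frac{\lambda(\sigma)}{\lambda(\tau_0)}\xi^{-1/2}$, not just $\frac{\lambda(\sigma)}{\lambda(\tau_0)}$; the $\frac{\lambda(\sigma)}{\lambda(\tau_0)}$ piece exactly cancels the decaying kernel ratio $\frac{\lambda^{3/2}(\tau_0)}{\lambda^{3/2}(\sigma)}\cdot\frac{\rho^{1/2}(\cdot)}{\rho^{1/2}(\cdot)}\sim\frac{\lambda(\tau_0)}{\lambda(\sigma)}$, and the leftover $\xi^{-1/2}$ then overwhelms the $\xi$-integral near the origin. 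Concretely, the $E_2$-contribution to $\widetilde\triangle\tilde x^{(1)}_1(\xi)$ satisfies only $\min\{O(\alpha),\,O(\xi^{-1/2}\tau_0^{-1}\alpha)\}$ pointwise (the direct bound for $\xi<\tau_0^{-2}$, the integrated-by-parts bound for $\xi>\tau_0^{-2}$), and neither regime improves with $\tau_0$: the $\sigma$-integral $\int_{\tau_0}^\infty\frac{\lambda(\tau_0)}{\lambda(\sigma)}\sigma^{-1}\,d\sigma$ is an $O(1)$ quantity saturated at $\sigma\sim\tau_0$, since $|\tilde g(\sigma)|\lesssim\alpha\sigma$ is generically sharp. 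Plugging this into $\int_0^1\frac{\rho^{1/2}(\xi)}{\xi^{3/4}}\sin[\nu\tau_0\xi^{1/2}]\cdot(\cdots)\,d\xi\sim\int_0^1\xi^{-1}\min\{\tau_0\xi^{1/2},1\}(\cdots)\,d\xi$ yields $O(\alpha)$ at best, or the paper's $\tau_0^{0+}\alpha$ if one does not bother with the IBP. Your bound $\tau_0^{-(1-)}\alpha$ is therefore not attainable.

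This gap is not fatal. The paper aims only for $|I_0|+|I_1|\lesssim\tau_0^{0+}\alpha$, obtained without even splitting off $E_2$: one simply uses the crude pointwise bound $|f(\sigma,\cdot)|\lesssim\sigma^{-1}\alpha$ for $\xi<1$ coming from Proposition~\ref{prop:nonlinbounds}, integrates the $\sigma$-integral (giving $O(1)$ via the $\frac{\lambda(\tau_0)}{\lambda(\sigma)}$ decay), and accepts the $\log\tau_0\sim\tau_0^{0+}$ loss from the $\xi$-integral. Since the cancellation construction produces $\int_0^\infty\frac{(\rho^{1/2}\triangle\tilde{\tilde x}^{(1)}_1)(\xi)}{\xi^{3/4}}\sin[\nu\tau_0\xi^{1/2}]\,d\xi\sim c\,\tau_0$ at $\tilde S$-cost $c\,\tau_0^{0+}$, the amplitude $c$ solving the vanishing condition is of size $|I_1|/\tau_0\lesssim\tau_0^{-1+}\alpha$, and \eqref{eq:keysmallnl} follows with room to spare. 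So replace your claimed bound with $\tau_0^{0+}$, drop the IBP for the $E_2$-piece (or acknowledge that it only buys $O(\alpha)$ after splitting the $\xi$-range at $\tau_0^{-2}$), and the argument coincides with the paper's.
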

\begin{proof}(lemma) We show this for $\triangle\tilde{\tilde{x}}_1^{(1)}(\xi)$, the argument for $\triangle\tilde{\tilde{x}}_0^{(1)}(\xi)$ being similar. It suffices to show (see the proof of Lemma~\ref{lem:choiceofcorrection}) that 
\begin{align*}
\big|\int_0^\infty \frac{(\rho^{\frac{1}{2}}\tilde{\triangle}\tilde{x}^{(1)}_1)(\xi)}{\xi^{\frac{3}{4}}}\sin[\nu\tau_0\xi^{\frac{1}{2}}]\,d\xi\big|\lesssim \tau_0^{0+}\big[\big\|(x_0, x_1)\big\|_{\tilde{S}} + \big|x_{0d}\big|\big]
\end{align*}
To see this, use for $\xi<1$ that from Proposition~\ref{prop:nonlinbounds}
\[
\big|f(\sigma, \underline{x}^{(0)})(\frac{\lambda^2(\tau_0)}{\lambda^2(\sigma)}\xi)\big|\lesssim \sigma^{-1}\big[\big\|(x_0, x_1)\big\|_{\tilde{S}} + \big|x_{0d}\big|\big],
\]
and so (for $\xi<1$)
\[
\big|\widetilde{\triangle} \tilde{x}^{(1)}_1(\xi)\big|\lesssim \int_{\tau_0}^\infty\sigma^{-1}\frac{\lambda(\tau_0)}{\lambda(\sigma)}\,d\sigma\cdot\big[\big\|(x_0, x_1)\big\|_{\tilde{S}} + \big|x_{0d}\big|\big]
\]
Then we get 
\begin{align*}
\big|\int_0^1 \frac{(\rho^{\frac{1}{2}}\tilde{\triangle}\tilde{x}^{(1)}_1)(\xi)}{\xi^{\frac{3}{4}}}\sin[\nu\tau_0\xi^{\frac{1}{2}}]\,d\xi\big|&\lesssim \tau_0^{0+}\int_{\tau_0}^\infty\sigma^{-1}\frac{\lambda(\tau_0)}{\lambda(\sigma)}\,d\sigma\big[\big\|(x_0, x_1)\big\|_{\tilde{S}} + \big|x_{0d}\big|\big]\\
&\lesssim \tau_0^{0+}\cdot \big[\big\|(x_0, x_1)\big\|_{\tilde{S}} + \big|x_{0d}\big|\big]. 
\end{align*}
On the other hand, when $\xi>1$, use that 
\[
\big|\widetilde{\triangle} \tilde{x}^{(1)}_1(\xi)\big|\lesssim \big[\chi_{\frac{\lambda^2(\tau_0)}{\lambda^2(\sigma)}\xi<1}\frac{\lambda(\tau_0)}{\lambda(\sigma)}\xi^{-\frac12} + \chi_{\frac{\lambda^2(\tau_0)}{\lambda^2(\sigma)}\xi>1}\frac{\lambda^2(\tau_0)}{\lambda^2(\sigma)}\big]\cdot \big|f(\sigma, x^{(0)})(\frac{\lambda^2(\tau_0)}{\lambda^2(\sigma)}\xi)\big|
\]
Then in the regime $\frac{\lambda^2(\tau_0)}{\lambda^2(\sigma)}\xi<1$ we use the bound 
\[
\big|f(\sigma, x^{(0)})(\frac{\lambda^2(\tau_0)}{\lambda^2(\sigma)}\xi)\big|\lesssim \sigma^{-1}\big[\big\|(x_0, x_1)\big\|_{\tilde{S}} + \big|x_{0d}\big|\big], 
\]
while in the regime $\frac{\lambda^2(\tau_0)}{\lambda^2(\sigma)}\xi>1$, we use the $H^{0+}_{d\xi}$-bound for $f(\sigma, \xi)$ following from Proposition~\ref{prop:nonlinbounds}. This gives the bound 
\begin{align*}
\big|\int_1^\infty \frac{(\rho^{\frac{1}{2}}\tilde{\triangle}\tilde{x}^{(1)}_1)(\xi)}{\xi^{\frac{3}{4}}}\sin[\nu\tau_0\xi^{\frac{1}{2}}]\,d\xi\big|&\lesssim \tau_0^{0+}\int_{\tau_0}^\infty\sigma^{-1}[\frac{\lambda(\tau_0)}{\lambda(\sigma)}]^{1-}\,d\sigma\big[\big\|(x_0, x_1)\big\|_{\tilde{S}} + \big|x_{0d}\big|\big]\\
&\lesssim \tau_0^{0+}\cdot \big[\big\|(x_0, x_1)\big\|_{\tilde{S}} + \big|x_{0d}\big|\big]. 
\end{align*}
This completes the proof of the lemma. 
\end{proof}

Following the same sequence of steps as in the proof of Proposition~\ref{prop:firstiteratelincont}, we next establish the bound \eqref{eq:firstiterlownl} for the first two terms on the left:
\\

{\it{The estimate $\sup_{\tau\geq \tau_0}(\frac{\tau}{\tau_0})^{-\kappa}\big\|\chi_{\xi<1}\triangle x^{(1)}_{>\tau}(\tau, \xi)\big\|_{S_1} \lesssim \big\|(x_0, x_1)\big\|_{\tilde{S}} + \big|x_{0d}\big|$.}}
We need to bound 
\begin{align*}
\big\|\xi^{-0+}\int_{\tau}^\infty\frac{\lambda^{\frac{3}{2}}(\tau)}{\lambda^{\frac{3}{2}}(\sigma)}\frac{\rho^{\frac{1}{2}}(\frac{\lambda^2(\tau)}{\lambda^2(\sigma)}\xi)}{\rho^{\frac{1}{2}}(\xi)}\frac{\sin[\lambda(\tau)\xi^{\frac{1}{2}}\int_{\tau}^\sigma\lambda^{-1}(u)\,du]}{\xi^{\frac12}}f(\sigma, \underline{x}^{(0)})(\frac{\lambda^2(\tau)}{\lambda^2(\sigma)}\xi)\,d\sigma\big\|_{L^2_{d\xi}(\xi<1)}
\end{align*}
This we can do by exploiting the point wise bound on $f(\sigma, x^{(0)})(\cdot)$ used before, with a small twist. Observe the simple bound (for $\sigma\geq \tau$)
\begin{align*}
&\big\|\xi^{-0+}\frac{\lambda^{\frac{3}{2}}(\tau)}{\lambda^{\frac{3}{2}}(\sigma)}\frac{\rho^{\frac{1}{2}}(\frac{\lambda^2(\tau)}{\lambda^2(\sigma)}\xi)}{\rho^{\frac{1}{2}}(\xi)}\frac{\sin[\lambda(\tau)\xi^{\frac{1}{2}}\int_{\tau}^\sigma\lambda^{-1}(u)\,du]}{\xi^{\frac12}}\big\|_{L^2_{d\xi}(\xi<1)}\\
&\lesssim \tau^{2\delta_0}\cdot \frac{\lambda(\tau)}{\lambda(\sigma)},
\end{align*}
Note that we lose a factor $\tau^{2\delta_0}$ here, and we need to compensate for it. In fact, we claim that we get the point wise bound 
\[
\big|f(\sigma, \underline{x}^{(0)})(\frac{\lambda^2(\tau)}{\lambda^2(\sigma)}\xi)\big|\lesssim (\frac{\sigma}{\tau_0})^{\kappa}\sigma^{-1-\delta_0}\cdot [\big\|(x_0, x_1)\big\|_{\tilde{S}} + \big|x_{0d}\big|].
\]
To get this, we revisit the bad cases {\it{(2)}}, {\it{(4)}} in the proof of Proposition~\ref{prop:lingrowthcond}, which are responsible for the linear growth of $\tilde{\epsilon}$. Considering for example case {\it{(4)}}, and dividing into the cases 
$\tilde{\xi}^{\frac12}\tau_0^{1+\nu^{-1}}\tau^{-\nu^{-1}}\gtrsim 1$ or $\tilde{\xi}^{\frac12}\tau_0^{1+\nu^{-1}}\tau^{-\nu^{-1}}\lesssim 1$, we easily infer the bound (recalling $\kappa = 2\delta_0(1+\nu^{-1})$)
\begin{align*}
&\frac{\lambda(\tau)}{\lambda(\tau_0)}\big|\int_0^\infty\frac{\rho^{\frac{1}{2}}(\tilde{\xi})x_1(\tilde{\xi})}{\tilde{\xi}^{\frac{3}{4}}}\sin[\nu\tau_0\tilde{\xi}^{\frac{1}{2}}]\big(\cos[\nu\tau_0^{1+\nu^{-1}}\tau^{-\nu^{-1}}\tilde{\xi}^{\frac{1}{2}}] - 1\big)\,d\tilde{\xi}\big|\\
& +  \frac{\lambda(\tau)}{\lambda(\tau_0)}\big|\int_0^\infty\frac{\rho^{\frac{1}{2}}(\tilde{\xi})x_1(\tilde{\xi})}{\tilde{\xi}^{\frac{3}{4}}}\cos[\nu\tau_0\tilde{\xi}^{\frac{1}{2}}]\sin[\nu\tau_0^{1+\nu^{-1}}\tau^{-\nu^{-1}}\tilde{\xi}^{\frac{1}{2}}]\,d\tilde{\xi}\big|\\
&\lesssim (\frac{\tau}{\tau_0})^{\kappa}\cdot\tau^{1-2\delta_0}\big\|x_1\big\|_{S_2},
\end{align*}
which then easily implies the claimed bound on $f(\sigma, \underline{x}^{(0)})(\frac{\lambda^2(\tau)}{\lambda^2(\sigma)}\xi)$, see the proof of Proposition~\ref{prop:nonlinbounds}. One argues similarly for the contribution of $x_0$, i. e. in case {\it{(2)}}.  Armed with the preceding estimates,
we get 
\begin{align*}
&\big\|\xi^{-0+}\int_{\tau}^\infty\frac{\lambda^{\frac{3}{2}}(\tau)}{\lambda^{\frac{3}{2}}(\sigma)}\frac{\rho^{\frac{1}{2}}(\frac{\lambda^2(\tau)}{\lambda^2(\sigma)}\xi)}{\rho^{\frac{1}{2}}(\xi)}\frac{\sin[\lambda(\tau)\xi^{\frac{1}{2}}\int_{\tau}^\sigma\lambda^{-1}(u)\,du]}{\xi^{\frac12}}f(\sigma, \underline{x}^{(0)})(\frac{\lambda^2(\tau)}{\lambda^2(\sigma)}\xi)\,d\sigma\big\|_{L^2_{d\xi}(\xi<1)}\\
&\lesssim \tau^{2\delta_0}\int_{\tau}^\infty\frac{\lambda(\tau)}{\lambda(\sigma)}(\frac{\sigma}{\tau_0})^{\kappa}\sigma^{-(1+2\delta_0)}\,d\sigma\cdot \big[\big\|(x_0, x_1)\big\|_{\tilde{S}} + \big|x_{0d}\big|\big]\\
&\lesssim\big(\frac{\tau}{\tau_0}\big)^{\kappa}\cdot \big[\big\|(x_0, x_1)\big\|_{\tilde{S}} + \big|x_{0d}\big|\big], 
\end{align*}
as desired 
\\

{\it{The estimate $\sup_{\tau\geq \tau_0}(\frac{\tau}{\tau_0})^{\kappa}\big\|\chi_{\xi<1}\mathcal{D}_{\tau}\triangle x^{(1)}_{>\tau}(\tau, \xi)\big\|_{S_1} \lesssim  \big\|(x_0, x_1)\big\|_{\tilde{S}} + \big|x_{0d}\big|$.}} In fact, we immediately pass to the stronger bound \eqref{eq:firstiteratelincontimprov1nl}, which implies the former bound. 
\\
Here, we need to bound the following norm 
\begin{align*}
\big\|\xi^{-0+}\int_{\tau}^\infty\frac{\lambda^{\frac{3}{2}}(\tau)}{\lambda^{\frac{3}{2}}(\sigma)}\frac{\rho^{\frac{1}{2}}(\frac{\lambda^2(\tau)}{\lambda^2(\sigma)}\xi)}{\rho^{\frac{1}{2}}(\xi)}\cos[\lambda(\tau)\xi^{\frac{1}{2}}\int_{\tau}^\sigma\lambda^{-1}(u)\,du]f(\sigma, \underline{x}^{(0)})(\frac{\lambda^2(\tau)}{\lambda^2(\sigma)}\xi)\,d\sigma\big\|_{L^2_{d\xi}(\xi<1)}
\end{align*}
and more precisely we need to then evaluate the square-sum over dyadic time intervals as in \eqref{eq:firstiteratelincontimprov1nl}. For this estimate we need to distinguish between the two parts $E_1, E_2$ contributing to $f(\sigma, \cdot)$ according to Proposition~\ref{prop:nonlinbounds}. Assume that 
\[
f(\sigma, \cdot) = \langle \phi(R, \cdot), E_1(\sigma, R)\rangle
\]
Then according to Proposition~\ref{prop:nonlinbounds} we get 
\begin{align*}
&\big\|\xi^{-0+}\int_{\tau}^\infty\frac{\lambda^{\frac{3}{2}}(\tau)}{\lambda^{\frac{3}{2}}(\sigma)}\frac{\rho^{\frac{1}{2}}(\frac{\lambda^2(\tau)}{\lambda^2(\sigma)}\xi)}{\rho^{\frac{1}{2}}(\xi)}\cos[\lambda(\tau)\xi^{\frac{1}{2}}\int_{\tau}^\sigma\lambda^{-1}(u)\,du]f(\sigma, \underline{x}^{(0)})(\frac{\lambda^2(\tau)}{\lambda^2(\sigma)}\xi)\,d\sigma\big\|_{L^2_{d\xi}(\xi<1)}\\
&\lesssim \int_{\tau}^\infty\frac{\lambda(\tau)}{\lambda(\sigma)}\big\|f(\sigma, \cdot)\big\|_{L^\infty_{d\xi}}\,d\sigma\\
&\lesssim \big[\big\|(x_0, x_1)\big\|_{\tilde{S}} + \big|x_{0d}\big|\big]\cdot\int_{\tau}^\infty\frac{\lambda(\tau)}{\lambda(\sigma)}\sigma^{-(2-)}\,d\sigma
\end{align*}
But we have 
\begin{align*}
\big(\sum_{\substack{\tau\sim N\gtrsim \tau_0\\ N\,\text{dyadic}}}\big(\frac{\lambda(\tau)}{\lambda(\tau_0)}\big)^{4\delta_0}\sup_{\tau\sim N}\big[\int_{\tau}^\infty\frac{\lambda(\tau)}{\lambda(\sigma)}\sigma^{-(2-)}\,d\sigma\big]^2\big)^{\frac12}\lesssim \tau_0^{-(1-)}, 
\end{align*}
and so the desired bound \eqref{eq:firstiteratelincontimprov1nl} follows for this contribution. 
\\

Next, consider the case when 
\[
f(\sigma, \cdot) = \langle \phi(R, \cdot), E_2(\sigma, R)\rangle. 
\]
Here we obtain an extra gain in $\tau$ by performing an integration by parts with respect to $\sigma$. More precisely, we do this in case $\xi^{\frac12}\frac{\lambda(\tau)}{\lambda(\sigma)}\geq \sigma^{-\delta}$, while in the case $\xi^{\frac12}\frac{\lambda(\tau)}{\lambda(\sigma)}<\sigma^{-\delta}$, the gain follows from the $L^2_{d\xi}$-integral. 
\\

At this stage, the proof of the proposition will be complete provided we also show the following:
\\

{\it{The estimate $ \big\|\big(\triangle \tilde{x}^{(1)}_0(\xi), \triangle \tilde{x}^{(1)}_1(\xi)\big)\big\|_{S}\lesssim \big\|(x_0,x_1)\big\|_{\tilde{S}} + \big|x_{0d}\big|$.}} This estimate follows in the low frequency regime $\xi<1$ from the preceding estimates 
for $\triangle x^{(1)}_{>\tau}$. It therefore suffices to prove the bound 
\[
 \big\|\chi_{\xi\geq 1}\big(\triangle \tilde{x}^{(1)}_0(\xi), \triangle \tilde{x}^{(1)}_1(\xi)\big)\big\|_{S}\lesssim \big\|(x_0,x_1)\big\|_{\tilde{S}} + \big|x_{0d}\big|.
 \]
 Again we distinguish between the two parts $E_1, E_2$ contributing to $f(\sigma, \cdot)$ according to Proposition~\ref{prop:nonlinbounds}. Assuming 
 \[
 f(\sigma, \cdot) = \langle \phi(R, \cdot), \lambda^{-2}(\sigma)E_1(\sigma, R)\rangle, 
\]
we get (considering the estimate for $\triangle \tilde{x}^{(1)}_1(\xi)$, the one for $\triangle \tilde{x}^{(1)}_0(\xi)$ being similar)
\begin{align*}
&\big\|\xi^{\frac12+}\int_{\tau_0}^\infty\frac{\lambda^{\frac{3}{2}}(\tau_0)}{\lambda^{\frac{3}{2}}(\sigma)}\frac{\rho^{\frac{1}{2}}(\frac{\lambda^2(\tau_0)}{\lambda^2(\sigma)}\xi)}{\rho^{\frac{1}{2}}(\xi)}\cos[\lambda(\tau_0)\xi^{\frac{1}{2}}\int_{\tau_0}^\sigma\lambda^{-1}(u)\,du]f(\sigma, \underline{x}^{(0)})(\frac{\lambda^2(\tau_0)}{\lambda^2(\sigma)}\xi)\,d\sigma\big\|_{L^2_{d\xi}(\xi\geq 1)}\\
&\lesssim \int_{\tau_0}^\infty\big(\frac{\lambda(\sigma)}{\lambda(\tau_0)}\big)^{0+}\big\|\langle\xi\rangle^{\frac12+}f(\sigma, \underline{x}^{(0)})\big\|_{L^2_{d\xi}}\,d\sigma\\
&\lesssim  \big[\big\|(x_0, x_1)\big\|_{\tilde{S}} + \big|x_{0d}\big|\big]\cdot\int_{\tau_0}^\infty\big(\frac{\lambda(\tau_0)}{\lambda(\sigma)}\big)^{-0+}\sigma^{-(2-)}\,d\sigma\\
&\lesssim \big\|(x_0, x_1)\big\|_{\tilde{S}} + \big|x_{0d}\big|. 
\end{align*}
The case when 
\[
 f(\sigma, \cdot) = \langle \phi(R, \cdot), \lambda^{-2}(\sigma)E_2(\sigma, R)\rangle, 
\]
is more complicated, since we only get $\sigma^{-1}$-decay. Here we need to exploit the better smoothness properties of $E_2(\sigma, R)$ with respect to $\sigma$, and perform integration by parts with respect to $\sigma$. Specifically, we use that 
\begin{align*}
&\big\|\frac{\lambda(\tau_0)}{\lambda(\sigma)}\cdot \frac{\partial}{\partial_{\sigma}}\big(\langle  \phi(R, \frac{\lambda^2(\tau_0)}{\lambda^2(\sigma)}\xi), \lambda^{-2}(\sigma)E_2(\sigma, R)\rangle\big)\big\|_{H^{0+}_{d\xi}}\\
&\lesssim \big(\frac{\lambda(\tau_0)}{\lambda(\sigma)}\big)^{-0-}\sigma^{-2}\cdot [ \big\|(x_0, x_1)\big\|_{\tilde{S}} + \big|x_{0d}\big|].
\end{align*}
Then we obtain the desired bound via integration by parts with respect to $\sigma$ in the regime $\frac{\lambda^2(\tau_0)}{\lambda^2(\sigma)}\xi>\sigma^{-\delta}$ for some small enough $\delta>0$ (independent of $\delta_2$), while in the regime 
\[
\frac{\lambda^2(\tau_0)}{\lambda^2(\sigma)}\xi<\sigma^{-\delta}
\]
we obtain a gain in $\sigma^{-1}$ directly from the $L^2_{d\xi}$-integral, as well as exploiting $L^\infty_{d\xi}$ control over $f(\sigma, \cdot)$. Observe that the preceding reasoning in fact gives a better bound 
\[
 \big\|\chi_{\xi\geq 1}\big(\triangle \tilde{x}^{(1)}_0(\xi), \triangle \tilde{x}^{(1)}_1(\xi)\big)\big\|_{S}\lesssim \tau_0^{-\delta}[\big\|(x_0,x_1)\big\|_{\tilde{S}} + \big|x_{0d}\big|].
 \]
This concludes the proof of Proposition~\ref{prop:firstiteratenonlin} . 
\end{proof}

In analogy to Proposition~\ref{prop:firstiteratelindisc}, we also get a straightforward bound for the discrete part of the first iterate: write
\[
f_d(\sigma, \underline{x}^{(0)}) = \langle \phi_d(R), \lambda^{-2}(\sigma)N_{\nu}(\epsilon^{(0)})\rangle
\]
\begin{prop}\label{prop:firstiteratenonlindisc} Define the function $\triangle x^{(1)}_d(\tau)$ implicitly by the equation 
\[
\triangle x^{(1)}_d(\tau) = \int_{\tau_0}^\infty H_d(\tau, \sigma)\cdot [f_d(\sigma, \underline{x}^{(0)}) - \beta_{\nu}(\sigma)\partial_{\sigma}\triangle x^{(1)}_d(\sigma)]\,d\sigma,
\]
and  impose the vanishing condition $\lim_{\tau\rightarrow\infty}\triangle x^{(1)}_d(\tau)  = 0$. Then we have 
\[
\tau^{1-}\big[\big|\triangle x^{(1)}_d(\tau)\big| + \big|\partial_{\tau}\triangle x^{(1)}_d(\tau)\big|\big]\lesssim \big\|(x_0, x_1)\big\|_{\tilde{S}} + \big|x_{0d}\big|. 
\]

\end{prop}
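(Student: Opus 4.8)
~\textbf{Plan of proof.} The statement to prove is Proposition~\ref{prop:firstiteratenonlindisc}, which controls the discrete spectral part of the first iterate produced by the nonlinear source term $f_d(\sigma, \underline{x}^{(0)}) = \langle \phi_d(R), \lambda^{-2}(\sigma)N_{\nu}(\epsilon^{(0)})\rangle$. The overall strategy is a direct transcription of the argument used for Proposition~\ref{prop:firstiteratelindisc}: first reduce the implicit relation to an explicit estimate via a contraction/bootstrap argument, then plug in the decay bounds for $N_\nu(\epsilon^{(0)})$ furnished by Proposition~\ref{prop:nonlinbounds}.

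First I would deal with the implicit definition. The equation for $\triangle x^{(1)}_d(\tau)$ contains the term $-\beta_\nu(\sigma)\partial_\sigma \triangle x^{(1)}_d(\sigma)$ under the integral against $H_d(\tau,\sigma) = -\tfrac12|\xi_d|^{-\frac12}e^{-|\xi_d|^{\frac12}|\tau-\sigma|}$. Since $\beta_\nu(\sigma) = \dot\lambda\lambda^{-1} \sim (1+\nu^{-1})\sigma^{-1}$ decays and $H_d$ is an exponentially localized, bounded kernel, the map $\triangle x^{(1)}_d \mapsto \int_{\tau_0}^\infty H_d(\tau,\sigma)[\,\cdot\,- \beta_\nu(\sigma)\partial_\sigma(\cdot)]\,d\sigma$ is a contraction on the Banach space of functions with finite norm $\sup_{\tau\geq\tau_0}\tau^{1-}(|g(\tau)| + |\partial_\tau g(\tau)|)$, once $\tau_0$ is large; this is exactly the same fixed point argument already invoked at the end of the proof of Lemma~\ref{lem:linhom} and in Proposition~\ref{prop:firstiteratelindisc}. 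Hence it suffices to bound $\big|\int_{\tau_0}^\infty H_d(\tau,\sigma) f_d(\sigma, \underline{x}^{(0)})\,d\sigma\big|$ and its $\partial_\tau$-derivative by $\tau^{-1+}[\|(x_0,x_1)\|_{\tilde S} + |x_{0d}|]$, since differentiating $H_d$ in $\tau$ does not change its qualitative size (it merely inserts a bounded factor $\mp|\xi_d|^{\frac12}\mathrm{sign}(\tau-\sigma)$).

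Next I would invoke Proposition~\ref{prop:nonlinbounds}, case (i), which gives the splitting $\lambda^{-2}(\sigma)RN_\nu(\tilde\epsilon^{(0)}) = E_1(\sigma,\cdot) + E_2(\sigma,\cdot)$ with $\|E_1(\sigma,\cdot)\|_{(H^{1+}\cap L^1)(R\lesssim\sigma)}\lesssim \alpha\sigma^{-(2-)}$ and $E_2(\sigma, R) = \sigma^{-2}\tilde g(\sigma)\langle R\rangle^{-3}g(R,a)$ with $\sigma^{-1}|\tilde g(\sigma)| + |\partial_\sigma\tilde g(\sigma)|\lesssim \alpha$, where $\alpha = \|(x_0,x_1)\|_{\tilde S} + |x_{0d}|$. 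For the $E_1$ contribution, since $\phi_d$ is a smooth exponentially decaying eigenfunction, $|f_{d,1}(\sigma)| = |\langle \phi_d, E_1(\sigma,\cdot)\rangle|\lesssim \|E_1(\sigma,\cdot)\|_{L^1}\lesssim \alpha\sigma^{-(2-)}$, and then $\int_{\tau_0}^\infty H_d(\tau,\sigma)|f_{d,1}(\sigma)|\,d\sigma \lesssim \alpha\tau^{-(2-)}$, using the exponential localization of $H_d$ around $\sigma=\tau$ — this is even better than the claimed $\tau^{-1+}$. For the $E_2$ contribution, $|f_{d,2}(\sigma)| = |\sigma^{-2}\tilde g(\sigma)|\cdot|\langle\phi_d, \langle R\rangle^{-3}g(R,a)\rangle|\lesssim \alpha\sigma^{-1}$, so a naive absolute estimate only gives $\int_{\tau_0}^\infty H_d(\tau,\sigma)\sigma^{-1}\,d\sigma\lesssim \tau^{-1}$, which is in fact already acceptable since the target is $\tau^{-1+}$; if one wants the extra small power one can integrate by parts once in $\sigma$ using $\sigma^{-1}|\tilde g| + |\partial_\sigma\tilde g|\lesssim\alpha$ together with $\partial_\sigma H_d$, as was done for $E_2$ elsewhere, but this refinement is not needed here.

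The main obstacle is the mild non-integrability of the $E_2$-induced source term $f_{d,2}(\sigma)\sim\sigma^{-1}$: against a generic bounded kernel this would fail to converge, but here the exponential decay of $H_d(\tau,\sigma)$ in $|\tau-\sigma|$ rescues the integral and even extracts the full decay $\tau^{-1}$. In other words, unlike the continuous-spectrum analysis in Proposition~\ref{prop:firstiteratelincont} where the analogous marginal term required genuinely delicate oscillatory cancellation, here the discrete case is elementary precisely because the ``propagator'' $H_d$ is exponentially smoothing rather than dispersive. Apart from carefully writing out the contraction argument, everything reduces to these two elementary bounds, so I expect the proof to be short, in the same spirit and of the same length as that of Proposition~\ref{prop:firstiteratelindisc}.
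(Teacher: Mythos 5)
Your proposal is correct and agrees with what the paper intends: the paper states Proposition~\ref{prop:firstiteratenonlindisc} without writing a proof, saying only that the bound follows ``in analogy to Proposition~\ref{prop:firstiteratelindisc}'', and your argument is precisely that analogy carried out. The fixed-point reduction to the explicit Duhamel integral (as in the end of the proof of Lemma~\ref{lem:linhom}), the appeal to Proposition~\ref{prop:nonlinbounds} case (i) and the resulting $E_1/E_2$ splitting with $\|E_1(\sigma,\cdot)\|_{L^1}\lesssim\alpha\sigma^{-(2-)}$ and $|f_{d,2}(\sigma)|\lesssim\alpha\sigma^{-1}$, together with the exponential localization of $H_d(\tau,\sigma)$ near $\sigma\sim\tau$ yielding $\tau^{-1}$ decay (more than the required $\tau^{-(1-)}$), are exactly the right ingredients.
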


\section{Iterative step; a priori control of higher iterates}

Here, we take the conclusions about the structure of the first iterate from the preceding sections as a priori given and recover them for the next higher iterate. More precisely, at first, we show this for our approximate Duhamel parameterix applied to the linear source terms $\mathcal{R}(\sigma, \cdot)$. As this step will be used infinitely often, we replace the notation $\triangle x^{(1)}$ from before by 
$\triangle x^{(input)}$, and the next iterate will be denoted by $\triangle x^{(output)}$. The role of the correction terms to the data $\triangle \tilde{\tilde{x}}^{(1)}_0, \triangle \tilde{\tilde{x}}^{(1)}_1$ will be played by functions $\triangle\tilde{\tilde{x}}^{(input)}_{0,1}(\xi)$. The following proposition only takes the continuous spectral part of the full operator $\mathcal{R}(\sigma,\underline{\triangle x}^{(input)})$ into account:

\begin{prop}\label{prop:lineariterstep}Assume that there are functions $\triangle\tilde{\tilde{x}}^{(input)}_0(\xi), \triangle\tilde{\tilde{x}}^{(input)}_1(\xi)$ with 
\begin{equation}
\big\|(\triangle\tilde{\tilde{x}}^{(input)}_0(\xi), \triangle\tilde{\tilde{x}}^{(input)}_1(\xi))\big\|_{\tilde{S}}\leq A,
\end{equation}
and such that the function $\triangle x^{(input)}(\tau, \xi)$ satisfies the bound (as before $\kappa = 2(1+\nu^{-1})\delta_0$)
\begin{align*}
&\sup_{\tau\geq \tau_0}\big[\big(\frac{\tau_0}{\tau}\big)^{\kappa}\big\|\chi_{\xi>1}\triangle x^{(input)}(\tau, \xi)\big\|_{S_1}\\& + \big(\sum_{\substack{N\gtrsim \tau_0\\ N\,\text{dyadic}}}[\sup_{\tau\sim N}\big(\frac{\tau}{\tau_0}\big)^{2\kappa}\big\|\chi_{\xi>1}\mathcal{D}_{\tau}[\triangle x^{(input)} (\tau, \xi) - S(\tau)(\triangle\tilde{\tilde{x}}^{(input)}_0, \triangle\tilde{\tilde{x}}^{(input)}_1)]\big\|_{S_2}]^2\big)^{\frac{1}{2}}\\&\leq A,
\end{align*}
and furthermore that there exist $(\triangle\tilde{x}^{(input)}_0(\xi), \triangle\tilde{x}^{(input)}_1(\xi))$ with 
\begin{align*}
\big\|(\triangle\tilde{x}^{(input)}_0, \triangle\tilde{x}^{(input)}_1)\big\|_{S}\leq A,\,&\int_0^\infty\frac{(\rho^{\frac{1}{2}}\triangle\tilde{x}^{(input)}_1)(\xi)}{\xi^{\frac{3}{4}}}\sin[\nu\tau_0\xi^{\frac{1}{2}}]\,d\xi = 0,\\
&\int_0^\infty\frac{(\rho^{\frac{1}{2}}\triangle\tilde{x}^{(input)}_0)(\xi)}{\xi^{\frac{1}{4}}}\cos[\nu\tau_0\xi^{\frac{1}{2}}]\,d\xi = 0,
\end{align*}
such that upon writing 
\begin{equation}\label{eq:trianglexinput}
\triangle x^{(input)} (\tau, \xi) = \triangle_{>\tau} x^{(input)} (\tau, \xi) + S(\tau)\big(\triangle\tilde{x}^{(input)}_0, \triangle\tilde{x}^{(input)}_1\big),
\end{equation}
we have 
\begin{align*}
&\sup_{\tau\geq \tau_0}\big[\big(\frac{\tau_0}{\tau}\big)^{\kappa}\big\|\chi_{\xi<1}\triangle_{>\tau} x^{(input)}(\tau, \xi)\big\|_{S_1}\\& + \big(\sum_{\substack{N\gtrsim \tau_0\\ N\,\text{dyadic}}}\sup_{\tau\sim N}\big((\frac{\tau}{\tau_0})^{\kappa}\big\|\chi_{\xi<1}\mathcal{D}_{\tau}\triangle_{>\tau} x^{(input)} (\tau, \xi)\big\|_{S_2}\big)^2\big)^{\frac12}\leq A
\end{align*}
Then there exist $\triangle\tilde{\tilde{x}}^{(output)}_0, \triangle\tilde{\tilde{x}}^{(output)}_1$ satisfying 
\begin{equation}\label{eq:newcorrection}
\big\|(\triangle\tilde{\tilde{x}}^{(output)}_0, \triangle\tilde{\tilde{x}}^{(output)}_1)\big\|_{\tilde{S}}\lesssim \tau_0^{-(1-)}A, 
\end{equation}
and such that if we introduce 
 \begin{align*}
 &\triangle x^{(output)}(\tau, \xi): \\
 &=\int_{\tau_0}^\tau\frac{\lambda^{\frac{3}{2}}(\tau)}{\lambda^{\frac{3}{2}}(\sigma)}\frac{\rho^{\frac{1}{2}}(\frac{\lambda^2(\tau)}{\lambda^2(\sigma)}\xi)}{\rho^{\frac{1}{2}}(\xi)}\frac{\sin[\lambda(\tau)\xi^{\frac{1}{2}}\int_{\tau}^\sigma\lambda^{-1}(u)\,du]}{\xi^{\frac{1}{2}}}\mathcal{R}_1(\sigma, \triangle x^{(input)} (\sigma, \xi) )(\frac{\lambda^2(\tau)}{\lambda^2(\sigma)}\xi)\,d\sigma\\
 & + S(\tau)\big(\triangle\tilde{\tilde{x}}^{(output)}_0, \triangle\tilde{\tilde{x}}^{(output)}_1\big),
 \end{align*}
then we have the high-frequency bound 
 \begin{equation}\label{eq:apriorihighfreq}\begin{split}
  &\sup_{\tau\geq \tau_0}(\frac{\tau_0}{\tau})^{\kappa}\big\|\chi_{\xi>1}\triangle x^{(output)}(\tau, \xi)\big\|_{S_1}\\& +  \big(\sum_{\substack{N\gtrsim \tau_0\\ N\,\text{dyadic}}}\sup_{\tau\sim N}(\frac{\tau}{\tau_0})^{2\kappa}\big\|\chi_{\xi>1}\mathcal{D}_{\tau}[\triangle x^{(output)}(\tau, \xi) - S(\tau)\big(\triangle\tilde{\tilde{x}}^{(output)}_0, \triangle\tilde{\tilde{x}}^{(output)}_1\big)]\big\|_{S_2}^2\big)^{\frac12}
 \lesssim A.
  \end{split}\end{equation}
  Moreover, there is a splitting 
  \begin{equation}\label{eq:splitting1}
  \triangle x^{(output)}(\tau, \xi) =  \triangle x^{(output)}_{>\tau}(\tau, \xi) + S(\tau)\big(\triangle \tilde{x}^{(output)}_0(\xi), \triangle \tilde{x}^{(output)}_1(\xi)\big)
  \end{equation}
  such that we have 
  \begin{equation}\label{eq:apriorilowfreq}\begin{split}
 &\sup_{\tau\geq \tau_0}(\frac{\tau}{\tau_0})^{-\kappa}\big\|\chi_{\xi<1}\triangle x^{(output)}_{>\tau}(\tau, \xi)\big\|_{S_1} + \big(\sum_{\substack{N\gtrsim \tau_0\\ N\,\text{dyadic}}}[\sup_{\tau\sim N}(\frac{\tau}{\tau_0})^{\kappa}\big\|\chi_{\xi<1}\mathcal{D}_{\tau}\triangle_{>\tau}x^{(output)}(\tau, \xi)\big\|_{S_2}]^2\big)^{\frac{1}{2}}\\&
 + \big\|\big(\triangle \tilde{x}^{(output)}_0(\xi), \triangle \tilde{x}^{(output)}_1(\xi)\big)\big\|_{S}\lesssim A,
  \end{split}\end{equation}
  and furthermore we have the orthogonality conditions 
\[
\int_0^\infty\frac{(\rho^{\frac{1}{2}}\triangle \tilde{x}^{(output)}_0)(\xi)}{\xi^{\frac{1}{4}}}\cos[\nu\tau_0\xi^{\frac{1}{2}}]\,d\xi = 0,\,\int_0^\infty\frac{(\rho^{\frac{1}{2}}\triangle \tilde{x}^{(output)}_1)(\xi)}{\xi^{\frac{3}{4}}}\sin[\nu\tau_0\xi^{\frac{1}{2}}]\,d\xi = 0.
\]  
\end{prop}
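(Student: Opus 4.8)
The strategy is to recycle, essentially verbatim, the analysis carried out for the first iterate in Propositions~\ref{prop:firstiteratelincont} and~\ref{prop:firstiteratelincontimprov}, replacing the concrete zeroth iterate $\underline{x}^{(0)}$ (for which the continuous part was given by the explicit formula \eqref{eq:linhomparam1}) by the abstract input $\triangle x^{(input)}$, which is \emph{assumed} to have precisely the same structural decomposition \eqref{eq:trianglexinput}. The key observation is that $\triangle x^{(input)}$ splits as a free wave $S(\tau)(\triangle\tilde{x}^{(input)}_0, \triangle\tilde{x}^{(input)}_1)$ — whose data satisfy the vanishing conditions \eqref{eq:vanishing} exactly as $(x_0,x_1)$ did — plus the error piece $\triangle_{>\tau}x^{(input)}$, which satisfies $S$-type bounds with only the mild $(\tau/\tau_0)^{\pm\kappa}$ losses. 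So the proof should be organized as follows. First, feed $\triangle x^{(input)}$ through $\mathcal{R}_1(\sigma,\cdot)$ and, using the schematic form \eqref{eq:R_1structure} of $\mathcal{R}_1$, split into the contribution coming from the free-wave part of $\triangle x^{(input)}$ and the contribution coming from $\triangle_{>\tau}x^{(input)}$.

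For the free-wave part, the estimates are \emph{identical} to Steps~0--5 of the proof of Proposition~\ref{prop:firstiteratelincont} and to Steps~1--2 of Proposition~\ref{prop:firstiteratelincontimprov}: one replays the case analysis in the frequency variables ($\xi<1$ vs.\ $\xi>1$; imbalanced/balanced/off-diagonal regimes $\frac{\lambda^2(\tau)}{\lambda^2(\sigma)}\xi \lessgtr \eta$), invoking the vanishing of $(\triangle\tilde{x}^{(input)}_0, \triangle\tilde{x}^{(input)}_1)$ exactly where the vanishing of $(x_0,x_1)$ was used (the off-diagonal case, e.g. the analogue of {\it{(1i.c)}} and {\it{(4(i).c)}}). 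This produces the correction $(\triangle\tilde{\tilde{x}}^{(output)}_0, \triangle\tilde{\tilde{x}}^{(output)}_1)$ with the gain \eqref{eq:newcorrection} via the analogue of Lemma~\ref{lem:choiceofcorrection} (choosing $\triangle\tilde{\tilde{x}}^{(output)}_1 = \alpha\,\mathcal{F}(\chi_{R\le C\tau_0}\phi(R,0))$ with $|\alpha|\lesssim\tau_0^{-1+}A$, and similarly for the $0$-component), and produces the high- and low-frequency bounds \eqref{eq:apriorihighfreq}, \eqref{eq:apriorilowfreq} with $A$ in place of $\big\|(x_0,x_1)\big\|_{\tilde S}+|x_{0d}|$. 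The only new feature to verify is that the $\triangle\tilde{\tilde{x}}^{(input)}$ contribution to the free wave in \eqref{eq:trianglexinput} is harmless, but this is immediate from Lemma~\ref{lem:freeSbound} and the $\tau_0^{-(1-)}A$ smallness of $\triangle\tilde{\tilde{x}}^{(input)}$; and likewise that $\mathcal{D}_\tau$ applied to the free part $S(\tau)(\triangle\tilde{\tilde{x}}^{(input)}_0,\triangle\tilde{\tilde{x}}^{(input)}_1)$ — which is what the hypothesis on $\mathcal{D}_\tau\triangle x^{(input)}$ subtracts off — is again controlled by Lemma~\ref{lem:freeSbound}.

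For the $\triangle_{>\tau}x^{(input)}$ part one cannot invoke an exact vanishing condition, but this is precisely where the $(\tau/\tau_0)^{\pm\kappa}$-weighted $S$-bounds and the square-sum bounds for $\mathcal{D}_\tau\triangle_{>\tau}x^{(input)}$ enter: in each of the frequency regimes above, instead of extracting the zero-mode and using vanishing, one simply estimates $\mathcal{K}_{cc}$ (or $\mathcal{K}_{cc}\mathcal{D}_\sigma$, $\mathcal{K}_{cc}^2$) directly as an $L^2$-bounded operator (up to the factor $(\frac{\lambda(\sigma)}{\lambda(\tau_0)})^{O(\delta_0)}$ as in \cite{KST}), pay the mild loss $(\frac{\sigma}{\tau_0})^{\kappa}$ from the input bound, and absorb everything into the $\sigma$-integral thanks to the damping weight $\beta_\nu^2(\sigma)$ (resp.\ $\beta_\nu(\sigma)$ together with a gain $(\frac{\lambda(\tau)}{\lambda(\sigma)})^{-\alpha}$ from the oscillatory $\sigma$-integral as noted in Step~1(ii) of Proposition~\ref{prop:firstiteratelincont}). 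The resulting $\sigma$-integrals are of the form $\int_\tau^\infty \frac{\lambda(\tau)}{\lambda(\sigma)}\beta_\nu^{1\text{ or }2}(\sigma)(\frac{\sigma}{\tau_0})^{2\kappa}(\frac{\lambda(\sigma)}{\lambda(\tau_0)})^{O(\delta_0)}\,d\sigma$, which converge and produce at worst a factor $(\frac{\tau}{\tau_0})^{O(\delta_0)}$ — compatible with the weights in \eqref{eq:apriorihighfreq}, \eqref{eq:apriorilowfreq} provided $\delta_0$, equivalently $\nu$, is chosen small enough; and for the square-sum bounds one uses the orthogonality/dyadic-decomposition arguments of Step~1 of Proposition~\ref{prop:firstiteratelincontimprov} and cases {\it{(2i)--(2iii)}} there, together with Cauchy--Schwarz in the time integral. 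Finally, the definition of $(\triangle\tilde{x}^{(output)}_0, \triangle\tilde{x}^{(output)}_1)$ as the $\tau\to\infty$ (resp.\ $\tau=\tau_0$) limits of the appropriate pieces, together with the analogue of Lemma~\ref{lem:choiceofcorrection} for the $\triangle\tilde{\tilde{x}}^{(output)}$-corrections, yields the orthogonality conditions stated at the end.

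\textbf{Main obstacle.} The genuinely delicate point — and the only place where the argument is not a pure transcription of the first-iterate proof — is the interaction of the $(\tau/\tau_0)^{\kappa}$ loss carried by the \emph{input} with the $(\tau/\tau_0)^{\kappa}$ loss that the \emph{output} is allowed to carry: one must check that the map does not compound these losses, i.e.\ that the $\sigma$-integral $\int_\tau^\infty \frac{\lambda(\tau)}{\lambda(\sigma)}\beta_\nu^2(\sigma)(\frac{\sigma}{\tau_0})^{2\kappa}\,d\sigma$ still only costs $(\frac{\tau}{\tau_0})^{\le\kappa}$ (rather than $(\frac{\tau}{\tau_0})^{2\kappa}$), which works because $\beta_\nu^2(\sigma)\sim\sigma^{-2}$ kills two full powers of $\sigma$ while $2\kappa = 4(1+\nu^{-1})\delta_0$ is an arbitrarily small power — so there is room to spare, uniformly in the iteration. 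The accounting of these powers across the high-frequency $S_1$-bound (which is the one carrying the genuine $(\tau/\tau_0)^{\kappa}$ growth, the $\mathcal{D}_\tau$ version being de-singularized as explained at the end of Step~4 of Proposition~\ref{prop:firstiteratelincont}) is what has to be done carefully; everything else is routine repetition of the earlier estimates.
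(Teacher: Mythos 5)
Your plan is, in outline, the paper's own: decompose $\triangle x^{(input)}$ into the free wave $S(\tau)(\triangle\tilde{x}^{(input)}_0,\triangle\tilde{x}^{(input)}_1)$ (whose data satisfy the vanishing conditions \eqref{eq:vanishing}) plus the error piece $\triangle_{>\tau}x^{(input)}$ at low output frequency, or $\widetilde{\triangle x^{(input)}} = \triangle x^{(input)} - S(\tau)(\triangle\tilde{\tilde{x}}^{(input)}_0,\triangle\tilde{\tilde{x}}^{(input)}_1)$ at high output frequency, and then re-run the frequency-regime case analysis of Propositions~\ref{prop:firstiteratelincont} and~\ref{prop:firstiteratelincontimprov}, with the role of Lemma~\ref{lem:choiceofcorrection} played by its analogue Lemma~\ref{lem:choiceofcorrectioniter} and the required $S$-bound on the output data supplied by Lemma~\ref{lem:hereneednorm}. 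One remark on where the weight lies: the power-bookkeeping you flag as the ``main obstacle'' does have to be checked but closes with room to spare — in the $\sigma$-integrals the factor $\frac{\lambda(\tau)}{\lambda(\sigma)}\sim(\tau/\sigma)^{1+\nu^{-1}}$ together with $\beta_\nu(\sigma)$ produces an extra $\tau^{-1}$ that dominates $(\tau/\tau_0)^{\kappa}$ uniformly for $\tau\geq\tau_0$. The step in the paper that is actually sharp is case \emph{(1d)} of Step~1: the output's $\mathcal{D}_\tau$-bound must land in the dyadic square-sum space while the input $\mathcal{D}_\sigma$-bound is itself only a square-sum bound, which forces a dyadic decomposition of the Duhamel $\sigma$-integral followed by a Cauchy--Schwarz in which the decaying factor $(\lambda(M)/\lambda(N))^2$ absorbs the $(M/\tau_0)^{-2\kappa}(\tau/\tau_0)^{2\kappa}$ mismatch — you mention this machinery only in passing, but it is the place where the estimate would have failed if the weights had been chosen less carefully.
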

\begin{rem}\label{rem:proplineariterstep} One can obtain the same conclusion by imposing the weaker condition
\begin{align*}
\big(\frac{\xi^{\frac12-}}{\langle\xi\rangle^{\frac12-}}\triangle \tilde{x}^{(input)}_0(\xi), \triangle \tilde{x}^{(input)}_1(\xi)\big)\big\|_{S} \lesssim A.
\end{align*}
\end{rem}
\begin{proof} 

We follow the pattern of proof of Proposition~\ref{prop:firstiteratenonlin}, Proposition~\ref{prop:firstiteratelincont}. 
\\

{\bf{Step 1}}: {\it{High frequency estimates}}. Here we prove the bound \eqref{eq:apriorihighfreq}. We use the analogue of \eqref{eq:R_1structure}, which for us becomes 
\[
\mathcal{R}_1(\tau, \triangle x^{(input)})(\xi) = \beta_{\nu}^2(\tau)\mathcal{K}_{cc}\triangle x^{(input)} + \beta_{\nu}(\tau)\mathcal{K}_{cc}\mathcal{D}_{\tau}\triangle x^{(input)} +  \beta_{\nu}^2(\tau)\mathcal{K}_{cc}^2\triangle x^{(input)}
\]
We shall in detail treat the contribution of the first two terms on the right, the last one being more of the same and similar to the first term. We further split this into the following tasks: 
\\

{\bf{(1a)}}: {\it{The contribution of $\beta_{\nu}^2(\tau)\mathcal{K}_{cc}\triangle x^{(input)}$ to the term $(\frac{\tau_0}{\tau})^{\kappa}\big\|\chi_{\xi>1}\triangle x^{(output)}(\tau, \xi)\big\|_{S_1}$.}}
We need to bound 
\[
\big\|\chi_{\xi>1}\int_{\tau_0}^\tau\frac{\lambda^{\frac{3}{2}}(\tau)}{\lambda^{\frac{3}{2}}(\sigma)}\frac{\rho^{\frac{1}{2}}(\frac{\lambda^2(\tau)}{\lambda^2(\sigma)}\xi)}{\rho^{\frac{1}{2}}(\xi)}\frac{\sin[\lambda(\tau)\xi^{\frac{1}{2}}\int_{\tau}^\sigma\lambda^{-1}(u)\,du]}{\xi^{\frac{1}{2}}}\beta_{\nu}^2(\sigma)\big(\mathcal{K}_{cc}\triangle x^{(input)}\big)(\sigma, \frac{\lambda^2(\tau)}{\lambda^2(\sigma)}\xi)\,d\sigma\big\|_{\langle\xi\rangle^{-1-}L^2_{d\xi}}
\]
We distinguish between the following cases for the integral defining $\mathcal{K}_{cc}$: 
\\

{\it{(1a.i): $\frac{\lambda^2(\tau)}{\lambda^2(\sigma)}\xi\ll \eta$. In particular, $\eta>1$.}}  Here we can use the high-frequency bound for $\triangle x^{(input)}$, as well as the fast decay of the kernel $F(\cdot, \cdot)$ in $\mathcal{K}_{cc}$, which indeed amounts to (see \cite{KST})
\[
\big([\frac{\lambda^2(\tau)}{\lambda^2(\sigma)}\xi]^2\eta\big)^{-N}.
\]
This factor more than compensates for the growing term 
\[
\frac{\lambda^{\frac{3}{2}}(\tau)}{\lambda^{\frac{3}{2}}(\sigma)}\frac{\rho^{\frac{1}{2}}(\frac{\lambda^2(\tau)}{\lambda^2(\sigma)}\xi)}{\rho^{\frac{1}{2}}(\xi)}\sim \frac{\lambda^2(\tau)}{\lambda^2(\sigma)}
\]
Then using 
\[
\big|\big(\mathcal{K}_{\frac{\lambda^2(\tau)}{\lambda^2(\sigma)}\xi\ll \eta}\triangle x^{(input)}\big)(\sigma, \frac{\lambda^2(\tau)}{\lambda^2(\sigma)}\xi)\big|\lesssim [\frac{\lambda^2(\tau)}{\lambda^2(\sigma)}\xi]^{-2}\big\|\chi_{\eta>1}\langle\eta\rangle^{1+}\triangle x^{(input)}(\sigma, \cdot)\big\|_{L^2_{d\eta}},
\]
we get 
\begin{align*}
&\big\|\chi_{\xi>1}\int_{\tau_0}^\tau\frac{\lambda^{\frac{3}{2}}(\tau)}{\lambda^{\frac{3}{2}}(\sigma)}\frac{\rho^{\frac{1}{2}}(\frac{\lambda^2(\tau)}{\lambda^2(\sigma)}\xi)}{\rho^{\frac{1}{2}}(\xi)}\frac{\sin[\lambda(\tau)\xi^{\frac{1}{2}}\int_{\tau}^\sigma\lambda^{-1}(u)\,du]}{\xi^{\frac{1}{2}}}\\&\hspace{3cm}\cdot\beta_{\nu}^2(\sigma)\big(\mathcal{K}_{\frac{\lambda^2(\tau)}{\lambda^2(\sigma)}\xi\ll \eta}\triangle x^{(input)}\big)(\sigma, \frac{\lambda^2(\tau)}{\lambda^2(\sigma)}\xi)\,d\sigma\big\|_{\langle\xi\rangle^{-1-}L^2_{d\xi}}\\
&\lesssim A\int_{\tau_0}^\tau\sigma^{-2}\big(\frac{\sigma}{\tau_0}\big)^{\kappa}\,d\sigma\lesssim A,
\end{align*}
which indeed improves the bound needed, provided $\delta_0$ is chosen sufficiently small. 
\\

{\it{(1a.ii): $\frac{\lambda^2(\tau)}{\lambda^2(\sigma)}\xi\sim\eta$. In particular, $\eta\gtrsim 1$.}} Here one needs to be more careful since the decay of the kernel $F(\cdot, \cdot)$ is weaker in the singular region, see \cite{KST}. Still, we can easily close, by using 
\[
\frac{\lambda(\tau)}{\lambda(\sigma)}\cdot\big\|(\frac{\lambda^2(\tau)}{\lambda^2(\sigma)}\xi)^{\frac12+}\mathcal{K}_{\frac{\lambda^2(\tau)}{\lambda^2(\sigma)}\xi\sim\eta}\triangle x^{(input)}(\sigma, \cdot)\big\|_{L^2_{d\xi}}\lesssim \big\|\langle\eta\rangle^{0+}\triangle x^{(input)}(\sigma, \cdot)\big\|_{L^2_{d\eta}},
\]
which is a consequence of the bound 
\[
\big|\xi^{\frac12+}F(\xi, \eta)\rho(\eta)\big|\lesssim \xi^{0+}.
\]
It follows that 
\begin{align*}
&\big\|\chi_{\xi>1}\int_{\tau_0}^\tau\frac{\lambda^{\frac{3}{2}}(\tau)}{\lambda^{\frac{3}{2}}(\sigma)}\frac{\rho^{\frac{1}{2}}(\frac{\lambda^2(\tau)}{\lambda^2(\sigma)}\xi)}{\rho^{\frac{1}{2}}(\xi)}\frac{\sin[\lambda(\tau)\xi^{\frac{1}{2}}\int_{\tau}^\sigma\lambda^{-1}(u)\,du]}{\xi^{\frac{1}{2}}}\\&\hspace{3cm}\cdot\beta_{\nu}^2(\sigma)\big(\mathcal{K}_{\frac{\lambda^2(\tau)}{\lambda^2(\sigma)}\xi\sim\eta}\triangle x^{(input)}\big)(\sigma, \frac{\lambda^2(\tau)}{\lambda^2(\sigma)}\xi)\,d\sigma\big\|_{\langle\xi\rangle^{-1-}L^2_{d\xi}}\\
&\lesssim A\int_{\tau_0}^\tau\sigma^{-2}\big(\frac{\sigma}{\tau_0}\big)^{\kappa}\,d\sigma\lesssim A,
\end{align*}

{\it{(1a.iii): $\frac{\lambda^2(\tau)}{\lambda^2(\sigma)}\xi\gg\eta$.}} Here one distinguishes between the regimes $\eta>1, \eta<1$ for defining $\mathcal{K}_{\frac{\lambda^2(\tau)}{\lambda^2(\sigma)}\xi\gg\eta}\triangle x^{(input)}(\sigma, \cdot)$. In the former case, one argues exactly as in situation {\it{(1a.i)}}. In the latter case we need to exploit the structure of $\triangle x^{(input)}$ given by \eqref{eq:trianglexinput}. The contribution of $S(\tau)\big(\triangle\tilde{x}^{(input)}_0, \triangle\tilde{x}^{(input)}_1\big)$ is handled as in case {\it{(1(i))}} in the proof of Proposition~\ref{prop:firstiteratelincont}, with $S(\tau)\big(\triangle\tilde{x}^{(input)}_0, \triangle\tilde{x}^{(input)}_1\big)$  replacing $x^{(0)}$. Thus we then reduce to bounding the expression 
\begin{align*}
&\big\|\chi_{\xi>1}\int_{\tau_0}^\tau\frac{\lambda^{\frac{3}{2}}(\tau)}{\lambda^{\frac{3}{2}}(\sigma)}\frac{\rho^{\frac{1}{2}}(\frac{\lambda^2(\tau)}{\lambda^2(\sigma)}\xi)}{\rho^{\frac{1}{2}}(\xi)}\frac{\sin[\lambda(\tau)\xi^{\frac{1}{2}}\int_{\tau}^\sigma\lambda^{-1}(u)\,du]}{\xi^{\frac{1}{2}}}\\&\hspace{3cm}\cdot\beta_{\nu}^2(\sigma)\big(\mathcal{K}_{\eta<1}\triangle_{>\sigma}x^{(input)}\big)(\sigma, \frac{\lambda^2(\tau)}{\lambda^2(\sigma)}\xi)\,d\sigma\big\|_{\langle\xi\rangle^{-1-}L^2_{d\xi}}.
\end{align*}
This is straightforward since 
\[
\big|\big(\mathcal{K}_{\eta<1}\triangle_{>\sigma}x^{(input)}\big)(\sigma, \frac{\lambda^2(\tau)}{\lambda^2(\sigma)}\xi)\big|\lesssim \big(\frac{\lambda^2(\tau)}{\lambda^2(\sigma)}\xi\big)^{-N}\big\|\eta^{-0-}\triangle_{>\sigma}x^{(input)}\big\|_{L^2_{d\eta}},
\]
which then gives 
\begin{align*}
&\big\|\chi_{\xi>1}\int_{\tau_0}^\tau\frac{\lambda^{\frac{3}{2}}(\tau)}{\lambda^{\frac{3}{2}}(\sigma)}\frac{\rho^{\frac{1}{2}}(\frac{\lambda^2(\tau)}{\lambda^2(\sigma)}\xi)}{\rho^{\frac{1}{2}}(\xi)}\frac{\sin[\lambda(\tau)\xi^{\frac{1}{2}}\int_{\tau}^\sigma\lambda^{-1}(u)\,du]}{\xi^{\frac{1}{2}}}\\&\hspace{3cm}\cdot\beta_{\nu}^2(\sigma)\big(\mathcal{K}_{\eta<1}\triangle_{>\sigma}x^{(input)}\big)(\sigma, \frac{\lambda^2(\tau)}{\lambda^2(\sigma)}\xi)\,d\sigma\big\|_{\langle\xi\rangle^{-1-}L^2_{d\xi}}\\
&\lesssim  A\int_{\tau_0}^\tau\sigma^{-2}\big(\frac{\sigma}{\tau_0}\big)^{\kappa}\,d\sigma\lesssim A.
\end{align*}

{\bf{(1b)}}: {\it{The contribution of $\beta_{\nu}^2(\tau)\mathcal{K}_{cc}\triangle x^{(input)}$ to the term 
\[
\big(\sum_{\substack{N\gtrsim \tau_0\\ N\,\text{dyadic}}}\sup_{\tau\sim N}(\frac{\tau}{\tau_0})^{2\kappa}\big\|\chi_{\xi>1}\mathcal{D}_{\tau}[\triangle x^{(output)}(\tau, \xi) - S(\tau)\big(\triangle\tilde{\tilde{x}}^{(output)}_0, \triangle\tilde{\tilde{x}}^{(output)}_1\big)]\big\|_{S_2}^2\big)^{\frac12}.
\]
}} 
Note that 
\[
\triangle x^{(output)}(\tau, \xi) - S(\tau)\big(\triangle\tilde{\tilde{x}}^{(output)}_0, \triangle\tilde{\tilde{x}}^{(output)}_1\big)
\]
is simply the pure Duhamel term 
\[
\int_{\tau_0}^\tau\frac{\lambda^{\frac{3}{2}}(\tau)}{\lambda^{\frac{3}{2}}(\sigma)}\frac{\rho^{\frac{1}{2}}(\frac{\lambda^2(\tau)}{\lambda^2(\sigma)}\xi)}{\rho^{\frac{1}{2}}(\xi)}\frac{\sin[\lambda(\tau)\xi^{\frac{1}{2}}\int_{\tau}^\sigma\lambda^{-1}(u)\,du]}{\xi^{\frac{1}{2}}}\mathcal{R}_1(\sigma, \triangle x^{(input)} (\sigma, \xi) )(\frac{\lambda^2(\tau)}{\lambda^2(\sigma)}\xi)\,d\sigma.
\]
We then briefly go through the cases {\it{(1b.i)}} - {\it{(1b.iii)}} which correspond to  {\it{(1a.i)}} - {\it{(1a.iii)}}: 
\\

{\it{(1b.i) and (1b.iii)}}. One gains $\big(\frac{\lambda(\tau)}{\lambda(\sigma)}\big)^{-N}$, and the preceding analysis gives a gain of $\sigma^{-2}$. This allows to include a weight $(\frac{\tau}{\tau_0})^{\kappa}$ (in fact much more) and square-sum over dyadic $\tau\geq \tau_0$. 
\\

{\it{(1b.ii)}}. Here the gain is smaller but still more than enough to close. One uses that 
\begin{align*}
&\frac{\lambda(\tau)}{\lambda(\sigma)}\cdot\big\|(\frac{\lambda^2(\tau)}{\lambda^2(\sigma)}\xi)^{\frac12+}\mathcal{K}_{\frac{\lambda^2(\tau)}{\lambda^2(\sigma)}\xi\sim\eta}\triangle x^{(input)}(\sigma, \cdot)\big\|_{L^2_{d\xi}}
\\&
\lesssim \frac{\lambda(\sigma)}{\lambda(\tau)}\big\|\langle\eta\rangle^{\frac12+}\triangle x^{(input)}(\sigma, \cdot)\big\|_{L^2_{d\eta}}
\end{align*}
and continues as in {\it{(1a.ii)}}. 
\\

{\bf{(1c)}}: {\it{The contribution of $\beta_{\nu}(\tau)\mathcal{K}_{cc}\mathcal{D}_{\tau}\triangle x^{(input)}$ to the term $(\frac{\tau_0}{\tau})^{\kappa}\big\|\chi_{\xi>1}\triangle x^{(output)}(\tau, \xi)\big\|_{S_1}$.}}
Here we need to bound the expression 
\[
\big\|\chi_{\xi>1}\int_{\tau_0}^\tau\frac{\lambda^{\frac{3}{2}}(\tau)}{\lambda^{\frac{3}{2}}(\sigma)}\frac{\rho^{\frac{1}{2}}(\frac{\lambda^2(\tau)}{\lambda^2(\sigma)}\xi)}{\rho^{\frac{1}{2}}(\xi)}\frac{\sin[\lambda(\tau)\xi^{\frac{1}{2}}\int_{\tau}^\sigma\lambda^{-1}(u)\,du]}{\xi^{\frac{1}{2}}}\beta_{\nu}(\sigma)\big(\mathcal{K}_{cc}\mathcal{D}_{\sigma}\triangle x^{(input)}\big)(\sigma, \frac{\lambda^2(\tau)}{\lambda^2(\sigma)}\xi)\,d\sigma\big\|_{\langle\xi\rangle^{-1-}L^2_{d\xi}},
\]
which we do in close analogy to case {\bf{(1a)}}. In fact, while we only have weight $\beta_{\nu}(\sigma)$ instead of $\beta_{\nu}^2(\sigma)$, we get better decay by assumption for $\mathcal{D}_{\sigma}\triangle x^{(input)}$. 
\\

{\bf{(1d)}}: {\it{The contribution of $\beta_{\nu}(\tau)\mathcal{K}_{cc}\mathcal{D}_{\tau}\triangle x^{(input)}$ to the term 
\[
\big(\sum_{\substack{N\gtrsim \tau_0\\ N\,\text{dyadic}}}\sup_{\tau\sim N}(\frac{\tau}{\tau_0})^{2\kappa}\big\|\chi_{\xi>1}\mathcal{D}_{\tau}[\triangle x^{(output)}(\tau, \xi) - S(\tau)\big(\triangle\tilde{\tilde{x}}^{(output)}_0, \triangle\tilde{\tilde{x}}^{(output)}_1\big)]\big\|_{S_2}^2\big)^{\frac12}.
\]
}} 
This case is a bit more delicate than the preceding {\it{(1a)}} - {\it{(1c)}}, in that the estimate is sharp here. First, using the already established Proposition~\ref{prop:firstiteratelincontimprov}, we may replace $\triangle x^{(input)}$ in the high-frequency regime $\xi>1$ by 
\[
\widetilde{\triangle x^{(input)}}: = \triangle x^{(input)} - S(\tau)\big(\triangle\tilde{\tilde{x}}^{(input)}_0, \triangle\tilde{\tilde{x}}^{(input)}_1\big)
\]
and in the low frequency regime $\xi<1$ by $\triangle_{>\tau}x^{(input)}$.

Then, following the steps in {\it{(1a)}} and using the improvement in {\it{(1b.ii)}} above as well as the Cauchy-Schwarz inequality to reduce over a square-sum over dyadic $\sigma$, we get the bound 
\begin{align*}
&\big(\sum_{\substack{N\gtrsim \tau_0\\ N\,\text{dyadic}}}\sup_{\tau\sim N}(\frac{\tau}{\tau_0})^{2\kappa}\big\|\chi_{\xi>1}\mathcal{D}_{\tau}[\triangle x^{(output)}(\tau, \xi) - S(\tau)\big(\triangle\tilde{\tilde{x}}^{(output)}_0, \triangle\tilde{\tilde{x}}^{(output)}_1\big)]\big\|_{S_2}^2\big)^{\frac12}\\
&\lesssim \big(\sum_{\substack{N\gtrsim \tau_0\\ N\,\text{dyadic}}}\sup_{\tau\sim N}(\frac{\tau}{\tau_0})^{2\kappa}\sum_{\substack{\tau_0\lesssim M\lesssim N\\ M\,\text{dyadic}}}(\frac{M}{\tau_0})^{-2\kappa}\big(\frac{\lambda(M)}{\lambda(N)}\big)^2\sup_{\sigma\sim M}(\frac{\sigma}{\tau_0})^{2\kappa}\big\|\mathcal{D}_{\sigma}\widetilde{\triangle x^{(input)}}\big\|_{S_2(\xi>1)}^2\big)^{\frac12}\\
& + \big(\sum_{\substack{N\gtrsim \tau_0\\ N\,\text{dyadic}}}\sup_{\tau\sim N}(\frac{\tau}{\tau_0})^{2\kappa}\sum_{\substack{\tau_0\lesssim M\lesssim N\\ M\,\text{dyadic}}}(\frac{M}{\tau_0})^{-2\kappa}\big(\frac{\lambda(M)}{\lambda(N)}\big)^2\sup_{\sigma\sim M}(\frac{\sigma}{\tau_0})^{2\kappa}\big\|\mathcal{D}_{\sigma}\triangle_{>\sigma}x^{(input)}\big\|_{S_2(\xi<1)}^2\big)^{\frac12}\\
&\lesssim \big(\sum_{\substack{\tau_0\lesssim M\\ M\,\text{dyadic}}}\sup_{\sigma\sim M}(\frac{\sigma}{\tau_0})^{2\kappa}\big\|\mathcal{D}_{\sigma}\widetilde{\triangle x^{(input)}}\big\|_{S_2(\xi>1)}^2\big)^{\frac12} + \big(\sum_{\substack{\tau_0\lesssim M\\ M\,\text{dyadic}}}\sup_{\sigma\sim M}(\frac{\sigma}{\tau_0})^{2\kappa}\big\|\mathcal{D}_{\sigma}\triangle_{>\sigma} x^{(input)}\big\|_{S_2(\xi<1)}^2\big)^{\frac12}
\end{align*}
The last expression is $\lesssim A$ according to the assumptions of the proposition. This concludes {\it{Step 1}}. 
\\

{\bf{Step 2}}: {\it{The choice of $\triangle\tilde{\tilde{x}}^{(output)}_0, \triangle\tilde{\tilde{x}}^{(output)}_1$.}} Introduce 
\begin{align*}
&\triangle_{>\tau}x^{(output)}(\tau, \xi): =\\& -\int_{\tau}^\infty\frac{\lambda^{\frac{3}{2}}(\tau)}{\lambda^{\frac{3}{2}}(\sigma)}\frac{\rho^{\frac{1}{2}}(\frac{\lambda^2(\tau)}{\lambda^2(\sigma)}\xi)}{\rho^{\frac{1}{2}}(\xi)}\frac{\sin[\lambda(\tau)\xi^{\frac{1}{2}}\int_{\tau}^\sigma\lambda^{-1}(u)\,du]}{\xi^{\frac{1}{2}}}\mathcal{R}_1(\sigma, \triangle x^{(input)} (\sigma, \xi) )(\frac{\lambda^2(\tau)}{\lambda^2(\sigma)}\xi)\,d\sigma
\end{align*}
and further write 
\begin{align*}
&\triangle \tilde{x}^{(output)}_0(\xi): =\\& \int_{\tau_0}^\infty\frac{\lambda^{\frac{3}{2}}(\tau_0)}{\lambda^{\frac{3}{2}}(\sigma)}\frac{\rho^{\frac{1}{2}}(\frac{\lambda^2(\tau_0)}{\lambda^2(\sigma)}\xi)}{\rho^{\frac{1}{2}}(\xi)}\frac{\sin[\lambda(\tau_0)\xi^{\frac{1}{2}}\int_{\tau_0}^\sigma\lambda^{-1}(u)\,du]}{\xi^{\frac{1}{2}}}\mathcal{R}_1(\sigma, \triangle x^{(input)} (\sigma, \xi) )(\frac{\lambda^2(\tau_0)}{\lambda^2(\sigma)}\xi)\,d\sigma\\
& + \triangle\tilde{\tilde{x}}^{(output)}_0,
\end{align*}
\begin{align*}
&\triangle \tilde{x}^{(output)}_1(\xi): = \\
&\int_{\tau_0}^\infty\frac{\lambda^{\frac{3}{2}}(\tau_0)}{\lambda^{\frac{3}{2}}(\sigma)}\frac{\rho^{\frac{1}{2}}(\frac{\lambda^2(\tau_0)}{\lambda^2(\sigma)}\xi)}{\rho^{\frac{1}{2}}(\xi)}\cos[\lambda(\tau_0)\xi^{\frac{1}{2}}\int_{\tau_0}^\sigma\lambda^{-1}(u)\,du]\mathcal{R}_1(\sigma, \triangle x^{(input)} (\sigma, \xi) )(\frac{\lambda^2(\tau_0)}{\lambda^2(\sigma)}\xi)\,d\sigma\\& + \triangle\tilde{\tilde{x}}^{(output)}_1. 
\end{align*}
Here the choice of the correction terms $\triangle\tilde{\tilde{x}}^{(output)}_0, \triangle\tilde{\tilde{x}}^{(output)}_1$ will be made according to the following 
\begin{lem}\label{lem:choiceofcorrectioniter} There exist $\triangle\tilde{\tilde{x}}^{(output)}_0, \triangle\tilde{\tilde{x}}^{(output)}_1$ satisfying \eqref{eq:newcorrection} and such that we have 
\[
\int_0^\infty \frac{(\rho^{\frac{1}{2}}\triangle \tilde{x}^{(output)}_0)(\xi)}{\xi^{\frac{1}{4}}}\cos[\nu\tau_0\xi^{\frac{1}{2}}]\,d\xi = 0, \int_0^\infty \frac{(\rho^{\frac{1}{2}}\triangle \tilde{x}^{(output)}_1)(\xi)}{\xi^{\frac{3}{4}}}\sin[\nu\tau_0\xi^{\frac{1}{2}}]\,d\xi = 0,
\]
where $\triangle \tilde{x}^{(output)}_{0,1}(\xi)$ are defined through the formula preceding the lemma. 
\end{lem}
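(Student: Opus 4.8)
The plan is to establish Lemma~\ref{lem:choiceofcorrectioniter} in exact parallel to Lemma~\ref{lem:choiceofcorrection} and Lemma~\ref{lem:choiceofcorrectionnl}: namely, show that the "free" parts $\widetilde{\triangle}\tilde{x}^{(output)}_{0,1}(\xi)$ (the Duhamel integrals at time $\tau_0$, before adding the corrections) contribute to the two oscillatory integrals in \eqref{eq:vanishing} only a quantity of size $\lesssim \tau_0^{0+}A$, and then choose the corrections $\triangle\tilde{\tilde{x}}^{(output)}_{0,1}$ to cancel this contribution while keeping $\|(\triangle\tilde{\tilde{x}}^{(output)}_0, \triangle\tilde{\tilde{x}}^{(output)}_1)\|_{\tilde{S}}\lesssim \tau_0^{-(1-)}A$. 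As in the proof of Lemma~\ref{lem:choiceofcorrection}, I would take the explicit ansatz $\triangle\tilde{\tilde{x}}^{(output)}_1 = \alpha\,\mathcal{F}(\chi_{R\leq C\tau_0}\phi(R,0))$ (and analogously for the $0$-component), using that
\[
\int_0^\infty\frac{(\rho^{\frac12}\triangle\tilde{\tilde{x}}^{(output)}_1)(\xi)}{\xi^{\frac34}}\sin[\nu\tau_0\xi^{\frac12}]\,d\xi\sim\alpha\tau_0,\qquad \big\|(0,\triangle\tilde{\tilde{x}}^{(output)}_1)\big\|_{\tilde{S}}\lesssim\alpha\tau_0^{0+},
\]
so that a value $|\alpha|\lesssim\tau_0^{-1+}A$ both produces the required cancellation and respects \eqref{eq:newcorrection}. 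The construction is a fixed-point/implicit choice because both vanishing conditions must hold simultaneously; since the two corrections are essentially decoupled (the $\sin$–$\cos$ structure makes the cross terms smaller by $\tau_0^{-1}$), this is a routine two-by-two linear solve.

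The substance is the bound $|\int_0^\infty \xi^{-\frac34}(\rho^{\frac12}\widetilde{\triangle}\tilde{x}^{(output)}_1)(\xi)\sin[\nu\tau_0\xi^{\frac12}]\,d\xi|\lesssim\tau_0^{0+}A$ and its $\cos$ analogue. I would reduce, exactly as before, to the contribution of the continuous spectral part, split $\mathcal{R}_1(\sigma,\triangle x^{(input)})$ according to the schematic decomposition $\beta_\nu^2\mathcal{K}_{cc}\triangle x^{(input)} + \beta_\nu\mathcal{K}_{cc}\mathcal{D}_\sigma\triangle x^{(input)} + \beta_\nu^2\mathcal{K}_{cc}^2\triangle x^{(input)}$, and then insert the decomposition \eqref{eq:trianglexinput}, $\triangle x^{(input)} = \triangle_{>\sigma}x^{(input)} + S(\sigma)(\triangle\tilde{x}^{(input)}_0,\triangle\tilde{x}^{(input)}_1)$. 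For the part where $\triangle x^{(input)}$ is replaced by the free evolution $S(\sigma)(\triangle\tilde{x}^{(input)}_0,\triangle\tilde{x}^{(input)}_1)$, the computation is word-for-word the one carried out in Step (1) of the proof of Lemma~\ref{lem:choiceofcorrection}, with $x_{0,1}$ replaced by $\triangle\tilde{x}^{(input)}_{0,1}$ and using that the latter satisfy the vanishing conditions \eqref{eq:vanishing} — this is where the key cancellation allowing the $\tau_0^{0+}$ (rather than a power of $\tau_0$) bound enters, via the "end of the proof of Proposition~\ref{prop:lingrowthcond}" type estimate applied in the off-diagonal case $\frac{\lambda^2(\tau_0)}{\lambda^2(\sigma)}\xi\gg\eta$. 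For the part where $\triangle x^{(input)}$ is replaced by $\triangle_{>\sigma}x^{(input)}$, one uses instead the direct bounds on $\|\chi_{\xi<1}\triangle_{>\sigma}x^{(input)}\|_{S_1}$, $\|\chi_{\xi>1}\triangle_{>\sigma}x^{(input)}\|$ and the hypotheses of the proposition (which control these by $A(\sigma/\tau_0)^\kappa$), together with the smoothing from the $F(\cdot,\cdot)$ kernel and the $\beta_\nu^2(\sigma)$ or $\beta_\nu(\sigma)$ weight, to integrate in $\sigma$. One also has to dispose of the discrete-spectral contribution $\mathcal{R}_2$ and the double-transference term $\mathcal{K}_{cc}^2$, both of which are strictly easier by the rapid decay of $K_d$ and by iterating the previous bounds.

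The main obstacle, as in the earlier lemmas, is the case $\xi<1$, off-diagonal $\frac{\lambda^2(\tau_0)}{\lambda^2(\sigma)}\xi\gg\eta$, for the source term $\beta_\nu(\sigma)\mathcal{K}_{cc}\mathcal{D}_\sigma\triangle x^{(input)}$: here only one power $\sigma^{-1}$ of time decay is available from the weight, the inner $\eta$-integral is at the singular endpoint, and the naive estimate $\lesssim\sigma\|\cdot\|$ produces a logarithmically divergent $\sigma$-integral. The resolution is the integration-by-parts-in-$\sigma$ trick already used in parts (1iii) of Lemma~\ref{lem:choiceofcorrection} and (1vi) of Proposition~\ref{prop:firstiteratelincontimprov}: combining the inner oscillatory phase $\cos[\lambda(\sigma)\eta^{\frac12}\int_{\tau_0}^\sigma\lambda^{-1}]$ with the outer phase $\sin[\lambda(\tau_0)\xi^{\frac12}\int_{\tau_0}^\sigma\lambda^{-1}]$ into a single exponential and integrating by parts yields either an extra $\sigma^{-1}$ or an extra factor $\frac{\lambda(\tau_0)}{\lambda(\sigma)}\xi^{\frac12}$, the latter removing the need to absorb a singular $\xi^{-\frac12}$ into $\sin[\nu\tau_0\xi^{\frac12}]$ and rendering the $\sigma$-integral convergent. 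Once this case is handled, all the remaining cases follow the template of the previous two lemmas, and assembling them gives $|\int \cdots| \lesssim \tau_0^{0+}A$, after which the choice of $\alpha$ completes the proof of the lemma.
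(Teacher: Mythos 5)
Your proposal matches the paper's approach: reduce to showing the free Duhamel integral satisfies $\big|\int_0^\infty \xi^{-\frac34}(\rho^{\frac12}\tilde{\triangle}\tilde{x}^{(output)}_1)(\xi)\sin[\nu\tau_0\xi^{\frac12}]\,d\xi\big|\lesssim\tau_0^{0+}A$ (and its $\cos$ analogue), then absorb this via the ansatz $\alpha\mathcal{F}(\chi_{R\leq C\tau_0}\phi(R,0))$ with $|\alpha|\lesssim\tau_0^{-1+}A$, after splitting the input into its free-wave part (handled by Step~(1) of the proof of Lemma~\ref{lem:choiceofcorrection}, applicable because the hypotheses of Proposition~\ref{prop:lineariterstep} endow $\triangle\tilde{x}^{(input)}_{0,1}$ with the vanishing conditions) and the non-free remainder $\widetilde{\triangle x^{(input)}}$/$\triangle_{>\tau}x^{(input)}$ (handled by direct bounds with the $\beta_\nu$ weights, tolerating a $\log\tau_0$ loss). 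One imprecision worth flagging: in the free-part analysis, the integration-by-parts-in-$\sigma$ trick is what handles the analogues of cases (1i.a) $\frac{\lambda^2(\tau_0)}{\lambda^2(\sigma)}\xi\ll\eta$ and (1i.b) $\sim\eta$ for the $\beta_\nu\mathcal{K}_{cc}\mathcal{D}_\sigma$ source, while the off-diagonal case $\frac{\lambda^2(\tau_0)}{\lambda^2(\sigma)}\xi\gg\eta$ is actually the \emph{easy} one there (the $\eta^{\frac12}$ factor in $\mathcal{D}_\sigma$ keeps the $\eta$-integral away from the singular endpoint); it is the $\beta_\nu^2\mathcal{K}_{cc}$ source, without $\mathcal{D}_\sigma$, whose $\gg\eta$ case needs the vanishing cancellation — you appear to have paired the two mechanisms with the wrong sub-cases, though both mechanisms are present in your account.
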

\begin{proof}(lemma) We provide it for $\triangle\tilde{\tilde{x}}^{(output)}_1$, the argument for $\triangle\tilde{\tilde{x}}^{(output)}_0$ being identical. Following closely the proof of Lemma~\ref{lem:choiceofcorrection}, it suffices to prove the estimate 
\begin{equation}\label{lem:choiceofcorrectioniterkey}
\big|\int_0^\infty \frac{(\rho^{\frac{1}{2}}\tilde{\triangle} \tilde{x}^{(output)}_1)(\xi)}{\xi^{\frac{3}{4}}}\sin[\nu\tau_0\xi^{\frac{1}{2}}]\,d\xi\big|\lesssim \tau_0^{0+}\cdot A, 
\end{equation}
where we set 
\begin{align*}
&\tilde{\triangle} \tilde{x}^{(output)}_1: =\\& \int_{\tau_0}^\infty\frac{\lambda^{\frac{3}{2}}(\tau_0)}{\lambda^{\frac{3}{2}}(\sigma)}\frac{\rho^{\frac{1}{2}}(\frac{\lambda^2(\tau_0)}{\lambda^2(\sigma)}\xi)}{\rho^{\frac{1}{2}}(\xi)}\cos[\lambda(\tau_0)\xi^{\frac{1}{2}}\int_{\tau_0}^\sigma\lambda^{-1}(u)\,du]\mathcal{R}_1(\sigma, \triangle x^{(input)} (\sigma, \xi) )(\frac{\lambda^2(\tau_0)}{\lambda^2(\sigma)}\xi)\,d\sigma
\end{align*}
Also, observe that in light of Lemma~\ref{lem:choiceofcorrection} and its proof, we may replace the input $\chi_{\xi>1}\triangle x^{(input)}$ by $\chi_{\xi>1}\widetilde{\triangle x^{(input)}}$, while we can replace $\chi_{\xi<1}\triangle x^{(input)}$ by 
$\chi_{\xi<1}\triangle_{>\tau} x^{(input)}$. 
\\
We shall do so below without further mention. 
\\

As usual, write schematically 
\[
 \mathcal{R}_1(\sigma, \triangle x^{(input)} (\tau, \xi) ) = \beta_{\nu}^2(\sigma)\mathcal{K}_{cc}\triangle x^{(input)} (\sigma, \xi)  + \beta_{\nu}(\sigma)\mathcal{K}_{cc}\mathcal{D}_\sigma\triangle x^{(input)} (\sigma, \xi),  
 \]
where we omit the contribution of the error term $ \beta_{\nu}^2(\sigma)\mathcal{K}_{cc}^2\triangle x^{(input)} (\sigma, \xi)$ whose contribution is handled like the one of the first term on the right. 
\\

{\bf{(2.i)}} {\it{Contribution of $\chi_{\xi>1}\widetilde{\triangle x^{(input)}}(\tau, \xi)$.}} Here we divide further into different regimes for the output frequency variable $\xi$: 
\\

{\it{(2.i.1): Output frequency $\xi<1$.}} Here we have a non-resonant interaction inside $\mathcal{K}_{cc}\ldots$ since we then also have $\frac{\lambda^2(\tau_0)}{\lambda^2(\sigma)}\xi<1$, and so we use the point wise bound (recall the decay of the kernel of $\mathcal{K}_{cc}$)
 \begin{align*}
 \big|\beta_{\nu}^2(\sigma)\mathcal{K}_{cc}(\chi_{\eta>1}\widetilde{\triangle x^{(input)}}) (\sigma, \frac{\lambda^2(\tau_0)}{\lambda^2(\sigma)}\xi)\big|&\lesssim \beta_{\nu}^2(\sigma)\big\|\chi_{\eta>1}\widetilde{\triangle x^{(input)}}\big\|_{L^2_{d\eta}}\\
 &\lesssim \beta_{\nu}^2(\sigma)(\frac{\sigma}{\tau_0})^{\kappa}\cdot A. 
 \end{align*}
 Further, bound 
 \[
 \big|\frac{\lambda^{\frac{3}{2}}(\tau_0)}{\lambda^{\frac{3}{2}}(\sigma)}\frac{\rho^{\frac{1}{2}}(\frac{\lambda^2(\tau_0)}{\lambda^2(\sigma)}\xi)}{\rho^{\frac{1}{2}}(\xi)}\big|\lesssim \frac{\lambda(\tau_0)}{\lambda(\sigma)}.
 \]
 Finally, we easily get for the contribution of 
 \[
 \beta_{\nu}^2(\sigma)\mathcal{K}_{cc}\big(\chi_{\xi>1}\widetilde{\triangle x^{(input)}}\big)
 \]
 to the left hand side of \eqref{lem:choiceofcorrectioniterkey} the bound
 the bound 
 \begin{align*}
 \big|\int_0^1 \frac{(\rho^{\frac{1}{2}}\tilde{\triangle}\tilde{x}^{(output)}_1)(\xi)}{\xi^{\frac{3}{4}}}\sin[\nu\tau_0\xi^{\frac{1}{2}}]\,d\xi \big|&\lesssim \tau_0^{0+}\int_{\tau_0}^\infty \beta_{\nu}^2(\sigma)\frac{\lambda(\tau_0)}{\lambda(\sigma)}(\frac{\sigma}{\tau_0})^{\kappa}\,d\sigma\cdot A\\
 &\lesssim \tau_0^{0+}\cdot A. 
 \end{align*}
 The contribution of $\beta_{\nu}(\sigma)\mathcal{K}_{cc}(\chi_{\eta>1}\mathcal{D}_\sigma\widetilde{\triangle x^{(input)}}) $ is handled similarly.\\
\\

{\it{(2.i.2): Output frequency $\xi>1$.}} Observe that if $\frac{\lambda^2(\tau_0)}{\lambda^2(\sigma)}\xi<1$, then 
\[
 \big|\frac{\lambda^{\frac{3}{2}}(\tau_0)}{\lambda^{\frac{3}{2}}(\sigma)}\frac{\rho^{\frac{1}{2}}(\frac{\lambda^2(\tau_0)}{\lambda^2(\sigma)}\xi)}{\rho^{\frac{1}{2}}(\xi)}\big|\lesssim \xi^{-\frac12}\cdot\frac{\lambda(\tau_0)}{\lambda(\sigma)}
 \]
 Combined with a factor $\rho^{\frac12}\xi^{-\frac34}\sim \xi^{-\frac12}$, the $\xi$-integral converges absolutely up to a factor $\big|\log(\frac{\lambda(\tau_0)}{\lambda(\sigma)})\big|$. Thus the corresponding contribution to the left hand side of \eqref{lem:choiceofcorrectioniterkey} is bounded by 
 \[
 \lesssim A\cdot\int_{\tau_0}^\infty \big|\log(\frac{\lambda(\tau_0)}{\lambda(\sigma)})\big|\big[\beta_{\nu}^2(\sigma)\frac{\lambda(\tau_0)}{\lambda(\sigma)}(\frac{\sigma}{\tau_0})^{\kappa} + \beta_{\nu}(\sigma)\frac{\lambda(\tau_0)}{\lambda(\sigma)}\big]\,d\sigma,
 \]
 which in turn is $\lesssim A$. On the other hand, when $\frac{\lambda^2(\tau_0)}{\lambda^2(\sigma)}\xi\geq 1$, we get 
\[
 \big|\frac{\lambda^{\frac{3}{2}}(\tau_0)}{\lambda^{\frac{3}{2}}(\sigma)}\frac{\rho^{\frac{1}{2}}(\frac{\lambda^2(\tau_0)}{\lambda^2(\sigma)}\xi)}{\rho^{\frac{1}{2}}(\xi)}\big|\lesssim\frac{\lambda^2(\tau_0)}{\lambda^2(\sigma)}, 
 \]
and unless we are in the resonant case $\eta\sim \frac{\lambda^2(\tau_0)}{\lambda^2(\sigma)}\xi$ for the convolution type operator $\mathcal{K}_{cc}$, we can easily bound the $\xi$-integral absolutely, using the kernel decay from \cite{KST}, while keeping a factor $\frac{\lambda(\tau_0)}{\lambda(\sigma)}$ which suffices to ensure convergence of the $\sigma$-integral using crude bounds. 
\\
We are thus reduced here to the contribution of the resonant case $\eta\sim \frac{\lambda^2(\tau_0)}{\lambda^2(\sigma)}\xi$, for which we use 
\begin{align*}
\mathcal{K}_{\frac{\lambda^2(\tau_0)}{\lambda^2(\sigma)}\xi\sim \eta}(\chi_{\eta>1}\widetilde{\triangle x^{(input)}}) (\sigma, \frac{\lambda^2(\tau_0)}{\lambda^2(\sigma)}\xi)\big\|_{L^2_{d\xi}}
 \lesssim \frac{\lambda(\sigma)}{\lambda(\tau_0)}\big\|\chi_{\eta>1}\widetilde{\triangle x^{(input)}}(\sigma, \cdot)\big\|_{L^2_{d\eta}}
 \end{align*}
In fact, one may include a further factor $\big(\frac{\lambda^2(\tau_0)}{\lambda^2(\sigma)}\xi\big)^{\frac12}$ into $\mathcal{K}_{\frac{\lambda^2(\tau_0)}{\lambda^2(\sigma)}\xi\sim \eta}(\ldots)$ due to the decay of its kernel, and so using the Cauchy-Schwarz inequality we may bound the corresponding contribution to the left hand side of \eqref{lem:choiceofcorrectioniterkey} by 
\[
\lesssim A\cdot\int_{\tau_0}^\infty [\beta_{\nu}^2(\sigma)(\frac{\sigma}{\tau_0})^{\kappa} + \beta_\nu(\sigma)(\frac{\sigma}{\tau_0})^{-\kappa}]\,d\sigma\lesssim A. 
\]
This concludes step {\it{(2.i)}}, i. e. the contribution of the high-frequency term 
\[
\chi_{\xi>1}\widetilde{\triangle x^{(input)}}(\tau, \xi). 
\]

{\bf{(2.ii)}} {\it{Contribution of $\chi_{\xi<1}\triangle_{>\tau} x^{(input)}(\tau, \xi)$.}} We split into the usual interaction types between $\eta$ and $\xi$ inside
\[
\beta_{\nu}^2(\sigma)\mathcal{K}_{cc}(\chi_{\eta<1}\triangle_{>\sigma} x^{(input)}),\,\beta_{\nu}(\sigma)\mathcal{K}_{cc}(\chi_{\eta<1}\mathcal{D}_\sigma\triangle_{>\sigma} x^{(input)})
\]
Throughout $\eta<1$, by assumption.  
\\

{\it{(2.ii.1): $\eta\gg \frac{\lambda^2(\tau_0)}{\lambda^2(\sigma)}\xi$.}} First, assume that $\xi<1$, i. e. consider 
\begin{align*}
\int_0^1 \frac{(\rho^{\frac{1}{2}}\tilde{\triangle} \tilde{x}^{(output)}_1)(\xi)}{\xi^{\frac{3}{4}}}\sin[\nu\tau_0\xi^{\frac{1}{2}}]\,d\xi,  
\end{align*}
where we now set 
\begin{align*}
&\tilde{\triangle} \tilde{x}^{(output)}_1(\xi): =\\
&  \int_{\tau_0}^\infty\frac{\lambda^{\frac{3}{2}}(\tau_0)}{\lambda^{\frac{3}{2}}(\sigma)}\frac{\rho^{\frac{1}{2}}(\frac{\lambda^2(\tau_0)}{\lambda^2(\sigma)}\xi)}{\rho^{\frac{1}{2}}(\xi)}\cos[\lambda(\tau_0)\xi^{\frac{1}{2}}\int_{\tau_0}^\sigma\lambda^{-1}(u)\,du]\mathcal{R}_1(\sigma, \chi_{\eta<1}\triangle_{>\sigma} x^{(input)})(\frac{\lambda^2(\tau_0)}{\lambda^2(\sigma)}\xi)\,d\sigma
\end{align*}
Then we use the point wise bound 
\begin{align*}
&\big|\mathcal{R}_1(\sigma, \chi_{\eta<1}\triangle_{>\sigma} x^{(input)})(\frac{\lambda^2(\tau_0)}{\lambda^2(\sigma)}\xi)\big|\\&\lesssim \beta_{\nu}^2(\sigma)\big\|\eta^{-0+}\triangle_{>\sigma} x^{(input)}\big\|_{L^2_{d\eta}(\eta<1)} + \beta_{\nu}(\sigma)\big\|\eta^{-0+}\mathcal{D}_{\sigma}\triangle_{>\sigma} x^{(input)}\big\|_{L^2_{d\eta}(\eta<1)},
\end{align*}
and so for $\xi<1$ we obtain 
\begin{align*}
\big|\tilde{\triangle}\tilde{x}^{(output)}_1)(\xi)\big|\lesssim \beta_{\nu}^2(\sigma)(\frac{\sigma}{\tau_0})^{\kappa}\cdot \frac{\lambda(\tau_0)}{\lambda(\sigma)}A + \beta_{\nu}(\sigma)(\frac{\sigma}{\tau_0})^{-\kappa}\cdot \frac{\lambda(\tau_0)}{\lambda(\sigma)}A
\end{align*}
It follows after integration over $\sigma\geq \tau_0$ that 
\begin{align*}
\big|\int_0^1 \frac{(\rho^{\frac{1}{2}}\tilde{\triangle}\tilde{x}^{(output)}_1)(\xi)}{\xi^{\frac{3}{4}}}\sin[\nu\tau_0\xi^{\frac{1}{2}}]\,d\xi\big|\lesssim A\log \tau_0.
\end{align*}
Next, consider the case $\xi>1$. Taking advantage of the decay properties of the kernel of $\mathcal{K}$, we have for $\xi>1$ the crude bound
\begin{align*}
\big|\triangle \tilde{x}^{(output)}_1)(\xi)\big|\lesssim \langle\frac{\lambda(\tau_0)}{\lambda(\sigma)}\xi\rangle^{-1}\big[\beta_{\nu}^2(\sigma)(\frac{\sigma}{\tau_0})^{\kappa}\cdot \frac{\lambda(\tau_0)}{\lambda(\sigma)}A + \beta_{\nu}(\sigma)(\frac{\sigma}{\tau_0})^{-\kappa}\cdot \frac{\lambda(\tau_0)}{\lambda(\sigma)}A\big]. 
\end{align*}
From here it easily follows that for this contribution we get 
\begin{align*}
\big|\int_1^\infty \frac{(\rho^{\frac{1}{2}}\tilde{\triangle} \tilde{x}^{(output)}_1)(\xi)}{\xi^{\frac{3}{4}}}\sin[\nu\tau_0\xi^{\frac{1}{2}}]\,d\xi\big|\lesssim A.
\end{align*}

{\it{(2.ii.2): $\eta\sim\frac{\lambda^2(\tau_0)}{\lambda^2(\sigma)}\xi$.}} Here we use that 
\begin{align*}
&\big\|\langle\frac{\lambda(\tau_0)}{\lambda(\sigma)}\xi^{\frac12}\rangle\frac{\langle\xi\rangle^{\frac{1}{2}}}{\xi^{\frac{1}{2}}}\mathcal{R}_1(\sigma, \chi_{\eta<1}\triangle_{>\sigma} x^{(input)})(\frac{\lambda^2(\tau_0)}{\lambda^2(\sigma)}\xi)\big\|_{L^2_{d\xi}}\\&\lesssim (\frac{\lambda(\tau_0)}{\lambda(\sigma)})^{-1}\big[\beta_{\nu}^2(\sigma)\big\|\eta^{-0+}\triangle_{>\sigma} x^{(input)}\big\|_{L^2_{d\eta}} + \beta_{\nu}(\sigma)\big\|\eta^{-0+}\mathcal{D}_{\sigma}\triangle_{>\sigma} x^{(input)}\big\|_{L^2_{d\eta}}\big],
\end{align*}
where we use the fact that we can absorb the singular weight $\xi^{-\frac{1}{2}}$ into the kernel defining $\mathcal{K}_{\eta\sim\frac{\lambda^2(\tau_0)}{\lambda^2(\sigma)}\xi}$. 
This in turn yields
\begin{align*}
\big\|\frac{\langle\xi\rangle^{\frac{1}{2}+}}{\xi^{\frac{1}{2}}}\tilde{\triangle}\tilde{x}^{(output)}_1)(\sigma, \xi)\big\|_{L^2_{d\xi}}\lesssim \beta_{\nu}^2(\sigma)\big\|\eta^{-0+}\triangle_{>\sigma} \tilde{x}^{(1)}\big\|_{L^2_{d\eta}} + \beta_{\nu}(\sigma)\big\|\eta^{-0+}\mathcal{D}_{\sigma}\triangle_{>\sigma} \tilde{x}^{(1)}\big\|_{L^2_{d\eta}}.
\end{align*}
From here one finally infers that for this contribution we also get 
\begin{align*}
\big|\int_0^\infty \frac{(\rho^{\frac{1}{2}}\tilde{\triangle}\tilde{x}^{(output)}_1)(\xi)}{\xi^{\frac{3}{4}}}\sin[\nu\tau_0\xi^{\frac{1}{2}}]\,d\xi\big|\lesssim A.
\end{align*}

{\it{(2.ii.3): $\eta\gg\frac{\lambda^2(\tau_0)}{\lambda^2(\sigma)}\xi$.}} This is similar to case {\it{(2.ii.1)}} and omitted. 
\\

The proof of Lemma~\ref{lem:choiceofcorrectioniter} is now completed as in {\it{(2)}} of the proof of Lemma~\ref{lem:choiceofcorrection}. 

\end{proof}

{\bf{Step 3}}: {\it{Proof of required $S$-bounds for the initial data $(\triangle \tilde{x}^{(output)}_0, \triangle \tilde{x}^{(output)}_1)$.}} Recall that these are defined at the beginning of {\it{Step 2}}. In light of Proposition~\ref{prop:firstiteratelincont} and its proof, it suffices to replace the high-frequency input $\chi_{\xi>1}\triangle x^{(input)}(\tau, \xi)$ by $\chi_{\xi>1}\widetilde{\triangle x^{(input)}}(\tau, \xi)$ with the latter defined in {\it{(1d)}} of {\it{Step 1}} further above, while the low-frequency input $\chi_{\xi<1}\triangle x^{(input)}(\tau, \xi)$ may be replaced by $\chi_{\xi<1}\triangle_{>\tau} x^{(input)}(\tau, \xi)$.
\\
We now encapsulate the required $S$-bound in the following 
\begin{lem}\label{lem:hereneednorm} Under the hypotheses of Proposition~\ref{prop:lineariterstep}, we have 
\begin{align*}
\big\|(\triangle \tilde{x}^{(output)}_0, \triangle \tilde{x}^{(output)}_1)\big\|_{S}\lesssim A.
\end{align*}
\end{lem}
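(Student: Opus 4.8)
The plan is to prove Lemma~\ref{lem:hereneednorm} by exactly paralleling \emph{Step 5} of the proof of Proposition~\ref{prop:firstiteratelincont}, with the zeroth iterate $x^{(0)}$ replaced by the two admissible pieces of $\triangle x^{(input)}$. As observed in the statement, by {\it{Step 4}} (the low-frequency part, already subsumed in the bound on $\triangle x^{(output)}_{>\tau}$ proved in \eqref{eq:apriorilowfreq}) it suffices to establish the high-frequency bound
\[
\big\|\chi_{\xi>1}\big(\triangle \tilde{x}^{(output)}_0(\xi), \triangle \tilde{x}^{(output)}_1(\xi)\big)\big\|_{S}\lesssim A,
\]
and, since the contribution of $\triangle\tilde{\tilde{x}}^{(output)}_{0,1}$ is controlled by \eqref{eq:newcorrection}, it suffices to bound the $\sigma$-integrals $\widetilde{\triangle}\tilde{x}^{(output)}_{0,1}$. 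The first reduction is that in the high-frequency regime $\xi>1$ one replaces the input by $\widetilde{\triangle x^{(input)}} = \triangle x^{(input)} - S(\tau)(\triangle\tilde{\tilde{x}}^{(input)}_0, \triangle\tilde{\tilde{x}}^{(input)}_1)$, and in the low-frequency regime $\xi<1$ by $\triangle_{>\tau}x^{(input)}$; the free-flow pieces are absorbed into the initial-data corrections and handled by Lemma~\ref{lem:freeSbound}. Then one uses the schematic splitting $\mathcal{R}_1(\sigma, \triangle x^{(input)}) = \beta_{\nu}^2(\sigma)\mathcal{K}_{cc}\triangle x^{(input)} + \beta_{\nu}(\sigma)\mathcal{K}_{cc}\mathcal{D}_\sigma\triangle x^{(input)} + \beta_{\nu}^2(\sigma)\mathcal{K}_{cc}^2\triangle x^{(input)}$, treating the last term like the first.

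Next I would run the four-case analysis of {\it{Step 5(i)}} --- namely the sub-cases $\frac{\lambda^2(\tau_0)}{\lambda^2(\sigma)}\xi \ll \eta$ with $\eta<1$ or $\eta>1$, the diagonal $\frac{\lambda^2(\tau_0)}{\lambda^2(\sigma)}\xi \sim \eta$, and the off-diagonal $\frac{\lambda^2(\tau_0)}{\lambda^2(\sigma)}\xi \gg \eta$ --- using in each case the decay and vanishing of the kernel $F$ from \cite{KST}, the bound $\big|\langle\xi\rangle^{1+}\frac{\rho^{1/2}(\frac{\lambda^2(\tau_0)}{\lambda^2(\sigma)}\xi)}{\rho^{1/2}(\xi)}\frac{\sin[\cdots]}{\xi^{1/2}}\big|\lesssim [\frac{\lambda^2(\tau_0)}{\lambda^2(\sigma)}]^{-1/4}\xi^{0+}$ valid for $\xi>1$, and Cauchy-Schwarz in $\eta$. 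The key structural input not available in Proposition~\ref{prop:firstiteratelincont} is that $\triangle x^{(input)}$ no longer carries the explicit oscillatory representation \eqref{eq:linhomparam1}; in the off-diagonal case one must instead invoke the \emph{vanishing conditions} $\int_0^\infty \frac{(\rho^{1/2}\triangle\tilde{x}^{(input)}_{0,1})(\xi)}{\xi^{1/4},\xi^{3/4}}\cos,\sin[\nu\tau_0\xi^{1/2}]\,d\xi = 0$ assumed in the hypotheses on $(\triangle\tilde{x}^{(input)}_0, \triangle\tilde{x}^{(input)}_1)$, together with the $S$-bound on that data, exactly as the vanishing of $x_{0,1}$ was used in case {\it{(5(i).d)}}. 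Where $\sigma$-integrability is borderline one uses integration by parts in $\sigma$ as in {\it{(5(i).d)}} (combining the outer phase $\sin[\lambda(\tau_0)\xi^{1/2}\int_{\tau_0}^\sigma\lambda^{-1}]$ with the inner one, gaining $\sigma^{\delta-1}$ under the cutoff $\frac{\lambda^2(\tau_0)}{\lambda^2(\sigma)}\xi > \sigma^{-\delta}$, and gaining from the $L^2_{d\xi}$ norm otherwise); for the $\beta_{\nu}(\sigma)\mathcal{K}_{cc}\mathcal{D}_\sigma(\cdot)$ term one additionally exploits the extra $\eta^{1/2}$ gain and the orthogonality / square-sum structure of {\it{Step 5(ii)}} to beat the logarithmic divergence of $\int \beta_{\nu}(\sigma)\,d\sigma$. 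Crucially, the assumed low-frequency bound $\sup_\tau (\tfrac{\tau_0}{\tau})^\kappa\|\chi_{\xi<1}\triangle_{>\tau}x^{(input)}\|_{S_1}$ and the square-sum bound on $\mathcal{D}_\tau\triangle_{>\tau}x^{(input)}$ provide precisely the $\eta$-side control needed in the off-diagonal and diagonal cases.

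The main obstacle, as in the original proof, is the off-diagonal low-output-frequency case $\xi<1$, $\frac{\lambda^2(\tau_0)}{\lambda^2(\sigma)}\xi\gg\eta$: here after replacing the kernel by its principal part $c\frac{F(\frac{\lambda^2(\tau_0)}{\lambda^2(\sigma)}\xi,0)\eta^{-1/4}}{\frac{\lambda^2(\tau_0)}{\lambda^2(\sigma)}\xi}$ and extending the $\eta$-integral to $[0,\infty)$, one is left with an expression of the type $\frac{\lambda(\sigma)}{\lambda(\tau_0)}\int_0^\infty \frac{\sin[\lambda(\tau_0)\tilde\eta^{1/2}\int_{\tau_0}^\sigma\lambda^{-1}]}{\tilde\eta^{3/4}}(\rho^{1/2}x_1)(\tilde\eta)\,d\tilde\eta$ (now with $x_1$ replaced by $\triangle\tilde{x}^{(input)}_1$) whose only available bound, $\lesssim \sigma\,A$, leads to a divergent $\sigma$-integral; the resolution is again the two-scale split $\frac{\lambda^2(\tau_0)}{\lambda^2(\sigma)}\xi \lessgtr \sigma^{-\delta}$, gaining $\sigma^{-\delta/2}$ from the $L^2_{d\xi}$-norm in the small regime and gaining $\sigma^{\delta-1}$ by integration by parts in $\sigma$ (tracking the outer phase $\sin[\lambda(\tau_0)\xi^{1/2}\int_{\tau_0}^\sigma\lambda^{-1}]$) in the large regime, with $0<\delta<1$ and $1-\delta\gg\delta_0$. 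One then also checks the discrete-to-continuous coupling term $\mathcal{R}_2(\sigma, \triangle x^{(input)}_d)$ is harmless since $\triangle x^{(input)}_d$ is (by the bootstrap underlying Proposition~\ref{prop:firstiteratelindisc}, applied at the input stage) polynomially decaying and $\mathcal{K}_{cd}$ has a smooth rapidly decaying kernel. Assembling the $\sigma$-integrals over all cases yields $\|(\triangle\tilde{x}^{(output)}_0,\triangle\tilde{x}^{(output)}_1)\|_S \lesssim A$, completing the lemma; I would remark that, just as in the nonlinear case, one in fact gets a slightly better bound with a gain $\tau_0^{-\delta}$ in several of the sub-cases, which is not needed here.
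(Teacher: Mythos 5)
Your high-frequency analysis follows essentially the same route as the paper's case {\it{(3i)}}: reduce the input to $\widetilde{\triangle x^{(input)}}$ (for $\eta>1$) and $\triangle_{>\sigma}x^{(input)}$ (for $\eta<1$), run the $\eta$--$\frac{\lambda^2(\tau_0)}{\lambda^2(\sigma)}\xi$ case split with the kernel decay from \cite{KST}, use the $\sigma^{-\delta}$ two-scale split with integration by parts and the assumed vanishing conditions for the free part, and close the borderline $\beta_\nu(\sigma)\mathcal{K}_{cc}\mathcal{D}_\sigma$ contribution via orthogonality over dyadic $\eta$-blocks and the square-sum hypothesis on $\mathcal{D}_\sigma\triangle_{>\sigma}x^{(input)}$. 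That part is fine.

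However, the opening reduction to ``high output frequency only'' has a genuine circularity. You defer the low-frequency $S$-bound on $(\triangle\tilde{x}^{(output)}_0,\triangle\tilde{x}^{(output)}_1)$ to the bound on $\chi_{\xi<1}\triangle_{>\tau}x^{(output)}$ ``proved in \eqref{eq:apriorilowfreq}''. But \eqref{eq:apriorilowfreq} is the {\it conclusion} of Proposition~\ref{prop:lineariterstep}, not a given; and, unlike in Proposition~\ref{prop:firstiteratelincont} where Step 4 precedes Step 5, here the paper's Step 4 bound on $\triangle_{>\tau}x^{(output)}$ is proved by appealing to the preceding lemma (``follows precisely as in the proof of the preceding lemma, provided one replaces $\tau_0$ by $\tau$''). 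So the deferral buys you nothing: you would be assuming what you are trying to prove. The paper handles this by proving Lemma~\ref{lem:hereneednorm} directly for both output regimes --- see case {\it{(3ii)}} --- and the low-frequency case does need a slightly different device than {\it{(3i)}}: in {\it{(3ii.1)}} one integrates by parts in $\sigma$ precisely to avoid the $\tau_0^{0+}$ loss coming from absorbing $\xi^{-0+}$ into $\sin[\lambda(\tau_0)\xi^{1/2}\int_{\tau_0}^\sigma\lambda^{-1}]$, whereas in {\it{(3i)}} one uses dyadic orthogonality. You mention both mechanisms, so the fix is to deploy them in the low-output-frequency case rather than deferring. A smaller inaccuracy: the free-flow part $S(\sigma)(\triangle\tilde{x}^{(input)}_0,\triangle\tilde{x}^{(input)}_1)$ of the input is dispatched by invoking Proposition~\ref{prop:firstiteratelincont} (whose hypotheses are supplied by the assumed vanishing conditions), not by Lemma~\ref{lem:freeSbound}, which only bounds the free flow itself and says nothing about $\mathcal{K}_{cc}$ applied to it.
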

\begin{proof}(lemma)  We shall consider in detail the slightly more difficult term $\triangle \tilde{x}^{(output)}_0$, on account of its more singular behaviour near $\xi = 0$. The second term $ \triangle \tilde{x}^{(output)}_1$ is handled similarly. 

{\it{(3i): High output frequencies $\xi>1$.}} Here we need to bound 
\[
\big\|\xi^{1+}\chi_{\xi>1}\triangle \tilde{x}^{(output)}_0\big\|_{L^2_{d\xi}}. 
\]
Distinguish between different interactions in $\mathcal{K}_{cc}$: 
\\

{\it{(3i.1): $\frac{\lambda^2(\tau_0)}{\lambda^2(\sigma)}\xi\ll \eta$, $\eta<1$.}} Observe that in this case 
\[
\frac{\lambda^{\frac{3}{2}}(\tau_0)}{\lambda^{\frac{3}{2}}(\sigma)}\frac{\rho^{\frac{1}{2}}(\frac{\lambda^2(\tau_0)}{\lambda^2(\sigma)}\xi)}{\rho^{\frac{1}{2}}(\xi)}\sim \frac{\lambda(\tau_0)}{\lambda(\sigma)}\xi^{-\frac{1}{2}},
\]
and so we have the bound 
\begin{align*}
\big|\xi^{1+}\chi_{\xi>1}\frac{\lambda^{\frac{3}{2}}(\tau_0)}{\lambda^{\frac{3}{2}}(\sigma)}\frac{\rho^{\frac{1}{2}}(\frac{\lambda^2(\tau_0)}{\lambda^2(\sigma)}\xi)}{\rho^{\frac{1}{2}}(\xi)}\frac{\sin[\lambda(\tau_0)\xi^{\frac{1}{2}}\int_{\tau_0}^\sigma\lambda^{-1}(u)\,du]}{\xi^{\frac{1}{2}}}\big|\lesssim (\frac{\lambda^2(\tau_0)}{\lambda^2(\sigma)}\xi)^{0+}[\frac{\lambda(\tau_0)}{\lambda(\sigma)}]^{1-}
\end{align*}
Then recalling that the basic constituents of $\mathcal{R}_1(\sigma, \triangle x^{(input)})$ are $\beta_{\nu}^2(\sigma)\mathcal{K}_{cc}\triangle x^{(input)}$ as well as $\beta_{\nu}(\sigma)\mathcal{K}_{cc}\mathcal{D}_{\sigma}\triangle x^{(input)}$, as well as the fact that for low frequencies, $\eta<1$, we may replace $\triangle x^{(input)}$ by $\triangle_{>\sigma} x^{(input)}$, we can bound the contribution of 
\[
\beta_{\nu}^2(\sigma)\mathcal{K}_{\frac{\lambda^2(\tau_0)}{\lambda^2(\sigma)}\xi\ll \eta}(\chi_{\cdot<1}\triangle_{>\tau} x^{(input)})
\]
as follows\footnote{Throughout recall the notation explained after \eqref{eq:Snorm}}
: 
\begin{align*}
&\big\|\xi^{1+}\chi_{\xi>1}\frac{\lambda^{\frac{3}{2}}(\tau_0)}{\lambda^{\frac{3}{2}}(\sigma)}\frac{\rho^{\frac{1}{2}}(\frac{\lambda^2(\tau_0)}{\lambda^2(\sigma)}\xi)}{\rho^{\frac{1}{2}}(\xi)}\frac{\sin[\lambda(\tau_0)\xi^{\frac{1}{2}}\int_{\tau_0}^\sigma\lambda^{-1}(u)\,du]}{\xi^{\frac{1}{2}}}\\&\hspace{3.5cm}\cdot\beta_{\nu}^2(\sigma)\mathcal{K}_{\frac{\lambda^2(\tau_0)}{\lambda^2(\sigma)}\xi\ll \eta}(\chi_{\eta<1}\triangle_{>\sigma} x^{(input)})(\sigma, \frac{\lambda^2(\tau_0)}{\lambda^2(\sigma)}\xi)\big\|_{L^2_{d\xi}}\\
& = \big\|\xi^{1+}\chi_{\xi>1}\frac{\lambda^{\frac{3}{2}}(\tau_0)}{\lambda^{\frac{3}{2}}(\sigma)}\frac{\rho^{\frac{1}{2}}(\frac{\lambda^2(\tau_0)}{\lambda^2(\sigma)}\xi)}{\rho^{\frac{1}{2}}(\xi)}\frac{\sin[\lambda(\tau_0)\xi^{\frac{1}{2}}\int_{\tau_0}^\sigma\lambda^{-1}(u)\,du]}{\xi^{\frac{1}{2}}}\\&\hspace{3.5cm}\cdot\beta_{\nu}^2(\sigma)\mathcal{K}_{\frac{\lambda^2(\tau_0)}{\lambda^2(\sigma)}\xi\ll \eta}(\chi_{\eta<1}\triangle_{>\sigma} x^{(input)})(\sigma, \frac{\lambda^2(\tau_0)}{\lambda^2(\sigma)}\xi)\big\|_{L^2_{d\xi}(\frac{\lambda^2(\tau_0)}{\lambda^2(\sigma)}\xi<1)}\\
&\lesssim (\frac{\lambda(\sigma)}{\lambda(\tau_0)})^{2\delta_0}\beta_{\nu}^2(\sigma)\big\|\mathcal{K}_{\frac{\lambda^2(\tau_0)}{\lambda^2(\sigma)}\xi\ll \eta}(\chi_{\eta<1}\triangle_{>\sigma} x^{(input)})(\sigma, \cdot)\big\|_{L^\infty_{d\xi}}.
\end{align*}
Recalling the definition of $\mathcal{K}_{cc}$, this in turn can be bounded by 
\begin{align*}
&(\frac{\lambda(\sigma)}{\lambda(\tau_0)})^{2\delta_0}\beta_{\nu}^2(\sigma)\big\|\mathcal{K}_{\frac{\lambda^2(\tau_0)}{\lambda^2(\sigma)}\xi\ll \eta}(\chi_{\eta<1}\triangle_{>\sigma} x^{(input)})(\sigma, \cdot)\big\|_{L^\infty_{d\xi}}\\
&\lesssim (\frac{\lambda(\sigma)}{\lambda(\tau_0)})^{2\delta_0}\beta_{\nu}^2(\sigma)\big\|\eta^{-0+}\chi_{\eta<1}\triangle_{>\sigma}x^{(input)})(\sigma, \cdot)\big\|_{L^2_{d\eta}}\\
&\lesssim (\frac{\lambda(\sigma)}{\lambda(\tau_0)})^{4\delta_0}\beta_{\nu}^2(\sigma)\cdot A.
\end{align*}
Integrating over $\sigma\geq \tau_0$ yields the desired bound for this contribution, provided $\delta_0\ll 1$. Note that we take crucial advantage here of the decay $\beta_{\nu}^2(\sigma)$, and so the argument for the second type of contribution, $\beta_{\nu}(\sigma)\mathcal{K}_{cc}(\chi_{\cdot<1}\mathcal{D}_{\sigma}\triangle_{>\sigma} x^{(input)})$, which is just borderline integrable (in light of our assumptions on $\mathcal{D}_{\sigma}\triangle_{>\sigma} x^{(input)}$), will be more delicate. 
\\

Specifically, we observe that for fixed dyadic frequency $2^j<1$, we have 
\begin{align*}
&\big\|\xi^{1+}\chi_{\xi>1}\frac{\lambda^{\frac{3}{2}}(\tau_0)}{\lambda^{\frac{3}{2}}(\sigma)}\frac{\rho^{\frac{1}{2}}(\frac{\lambda^2(\tau_0)}{\lambda^2(\sigma)}\xi)}{\rho^{\frac{1}{2}}(\xi)}\frac{\sin[\lambda(\tau_0)\xi^{\frac{1}{2}}\int_{\tau_0}^\sigma\lambda^{-1}(u)\,du]}{\xi^{\frac{1}{2}}}\\&\hspace{2.5cm}\cdot\beta_{\nu}(\sigma)\mathcal{K}_{\frac{\lambda^2(\tau_0)}{\lambda^2(\sigma)}\xi\ll \eta}(\chi_{\eta<1}\mathcal{D}_{\sigma}\triangle_{>\sigma} x^{(input)})(\sigma, \frac{\lambda^2(\tau_0)}{\lambda^2(\sigma)}\xi)\big\|_{L^2_{d\xi}(\frac{\lambda^2(\tau_0)}{\lambda^2(\sigma)}\xi\sim 2^j)}\\
&\lesssim (\frac{\lambda(\sigma)}{\lambda(\tau_0)})^{2\delta_0}2^{\frac{(1-)}{2}j}\beta_{\nu}(\sigma)\big\|\eta^{-0+}\chi_{\eta<1}\mathcal{D}_{\sigma}\triangle_{>\sigma} x^{(input)})(\sigma, \cdot)\big\|_{L^2_{d\eta}}.\\
\end{align*}
Then using orthogonality it follows that 
\begin{align*}
&\big\|\xi^{1+}\chi_{\xi>1}\int_{\tau_0}^\infty\chi_{\frac{\lambda^2(\tau_0)}{\lambda^2(\sigma)}\xi<1}\frac{\lambda^{\frac{3}{2}}(\tau_0)}{\lambda^{\frac{3}{2}}(\sigma)}\frac{\rho^{\frac{1}{2}}(\frac{\lambda^2(\tau_0)}{\lambda^2(\sigma)}\xi)}{\rho^{\frac{1}{2}}(\xi)}\frac{\sin[\lambda(\tau_0)\xi^{\frac{1}{2}}\int_{\tau_0}^\sigma\lambda^{-1}(u)\,du]}{\xi^{\frac{1}{2}}}\\&\hspace{2.5cm}\cdot\beta_{\nu}(\sigma)\mathcal{K}_{\frac{\lambda^2(\tau_0)}{\lambda^2(\sigma)}\xi\ll \eta}(\chi_{\eta<1}\mathcal{D}_{\sigma}\triangle_{>\sigma} x^{(input)})(\sigma, \frac{\lambda^2(\tau_0)}{\lambda^2(\sigma)}\xi)\,d\sigma\big\|_{L^2_{d\xi}}\\
&\lesssim \sum_{j<0}\big(\sum_{k\in Z}\big\|\xi^{1+}\chi_{\xi>1}\int_{\tau_0}^\infty\chi_{\frac{\lambda^2(\tau_0)}{\lambda^2(\sigma)}\xi\sim 2^j}\frac{\lambda^{\frac{3}{2}}(\tau_0)}{\lambda^{\frac{3}{2}}(\sigma)}\frac{\rho^{\frac{1}{2}}(\frac{\lambda^2(\tau_0)}{\lambda^2(\sigma)}\xi)}{\rho^{\frac{1}{2}}(\xi)}\frac{\sin[\lambda(\tau_0)\xi^{\frac{1}{2}}\int_{\tau_0}^\sigma\lambda^{-1}(u)\,du]}{\xi^{\frac{1}{2}}}\\&\hspace{2.5cm}\cdot\beta_{\nu}(\sigma)\mathcal{K}_{\frac{\lambda^2(\tau_0)}{\lambda^2(\sigma)}\xi\ll \eta}(\chi_{\eta<1}\mathcal{D}_{\sigma}\triangle_{>\sigma}x^{(input)})(\sigma, \frac{\lambda^2(\tau_0)}{\lambda^2(\sigma)}\xi)\,d\sigma\big\|_{L^2_{d\xi}(\xi\sim 2^k)}^2\big)^{\frac{1}{2}}\\
\end{align*}
This last expression can be bounded in turn by (using the previous estimates) 
\begin{align*}
& \sum_{j<0}\big(\sum_{k\in Z}\big\|\xi^{1+}\chi_{\xi>1}\int_{\tau_0}^\infty\chi_{\frac{\lambda^2(\tau_0)}{\lambda^2(\sigma)}\xi\sim 2^j}\frac{\lambda^{\frac{3}{2}}(\tau_0)}{\lambda^{\frac{3}{2}}(\sigma)}\frac{\rho^{\frac{1}{2}}(\frac{\lambda^2(\tau_0)}{\lambda^2(\sigma)}\xi)}{\rho^{\frac{1}{2}}(\xi)}\frac{\sin[\lambda(\tau_0)\xi^{\frac{1}{2}}\int_{\tau_0}^\sigma\lambda^{-1}(u)\,du]}{\xi^{\frac{1}{2}}}\\&\hspace{2.5cm}\cdot\beta_{\nu}(\sigma)\mathcal{K}_{\frac{\lambda^2(\tau_0)}{\lambda^2(\sigma)}\xi\ll \eta}(\chi_{\eta<1}\mathcal{D}_{\sigma}\triangle_{>\sigma}x^{(input)})(\sigma, \frac{\lambda^2(\tau_0)}{\lambda^2(\sigma)}\xi)\,d\sigma\big\|_{L^2_{d\xi}(\xi\sim 2^k)}^2\big)^{\frac{1}{2}}\\
&\lesssim  \sum_{j<0} 2^{\frac{(1-)}{2}j}\big(\sum_{k\in Z}\sup_{\lambda^2(\sigma)\sim \lambda^2(\tau_0)2^{k-j}}(\frac{\lambda(\sigma)}{\lambda(\tau_0)})^{4\delta_0}\big\|\eta^{-0+}\chi_{\eta<1}\chi_{\sigma\geq \tau_0}\mathcal{D}_{\sigma}\triangle_{>\sigma}x^{(input)}(\sigma, \cdot)\big\|_{L^2_{d\eta}}^2\big)^{\frac{1}{2}}\\
&\lesssim A. 
\end{align*}

{\it{(3i.2): $\frac{\lambda^2(\tau_0)}{\lambda^2(\sigma)}\xi\ll \eta$, $\eta>1$.}} Here one replaces the norms 
\[
\big\|\eta^{-0+}\chi_{\eta<1}\chi_{\sigma\geq \tau_0}\triangle_{>\sigma} x^{(input)}(\sigma, \cdot)\big\|_{L^2_{d\eta}}
\]
by 
\[
\big\|\chi_{\eta>1}\chi_{\sigma\geq \tau_0}\widetilde{\triangle x^{(input)}}(\sigma, \cdot)\big\|_{L^2_{d\eta}}.
\]
Also, since now we can also have $\frac{\lambda^2(\tau_0)}{\lambda^2(\sigma)}\xi>1$, but we can always absorb a large weight $\frac{\lambda^2(\tau_0)}{\lambda^2(\sigma)}\xi$ into the kernel of $\mathcal{K}_{\frac{\lambda^2(\tau_0)}{\lambda^2(\sigma)}\xi\ll \eta}$ and even gain $(\frac{\lambda^2(\tau_0)}{\lambda^2(\sigma)}\xi)^{-1}$, upon localising $\frac{\lambda^2(\tau_0)}{\lambda^2(\sigma)}\xi\sim 2^j$, we can replace the gain $2^{\frac{(1-)j}{2}}$ in the last part of {\it{(3i.1)}} by the gain 
\[
\min\{2^{\frac{(1-)}{2}j}, 2^{-j}\}.
\]
Otherwise, one proceeds just as in {\it{(3i.1)}}. 
\\

{\it{(3i.3): $\frac{\lambda^2(\tau_0)}{\lambda^2(\sigma)}\xi\sim\eta$, $\eta<1$.}} Here we follow a similar pattern, exploiting an orthogonality argument to control the contribution of $\beta_{\nu}(\sigma)\mathcal{K}_{cc}\mathcal{D}_{\sigma}\triangle_{>\sigma} x^{(input)}$. We spell this case out in detail, omitting the simpler contribution of $\beta_{\nu}^2(\sigma)\mathcal{K}_{cc}\triangle_{>\sigma}x^{(input)}$. Thus we use that for $2^j<1$ we have 
\begin{align*}
&\big\|\beta_{\nu}(\sigma)(\mathcal{K}_{\frac{\lambda^2(\tau_0)}{\lambda^2(\sigma)}\xi\sim\eta}\chi_{\eta\sim 2^j}\mathcal{D}_{\sigma}\triangle_{>\sigma}x^{(input)})(\sigma, \frac{\lambda^2(\tau_0)}{\lambda^2(\sigma)}\xi)\big\|_{L^2_{d\xi}}
\\&\lesssim  ( \frac{\lambda(\tau_0)}{\lambda(\sigma)})^{-1}\beta_{\nu}(\sigma)2^{\frac{j}{2}}\big\|\eta^{-0+}\chi_{\eta<1}\mathcal{D}_{\sigma}\triangle_{>\sigma} x^{(input)}(\sigma, \cdot)\big\|_{L^2_{d\eta}}
\end{align*}
Here the gain of $2^{\frac{j}{2}}$ is a consequence of the decay properties of the kernel defining $\mathcal{K}_{cc}$. We combine this with 
\begin{align*}
\big|\xi^{1+}\chi_{\xi>1}\frac{\lambda^{\frac{3}{2}}(\tau_0)}{\lambda^{\frac{3}{2}}(\sigma)}\frac{\rho^{\frac{1}{2}}(\frac{\lambda^2(\tau_0)}{\lambda^2(\sigma)}\xi)}{\rho^{\frac{1}{2}}(\xi)}\frac{\sin[\lambda(\tau_0)\xi^{\frac{1}{2}}\int_{\tau_0}^\sigma\lambda^{-1}(u)\,du]}{\xi^{\frac{1}{2}}}\big|\lesssim \xi^{0+}\frac{\lambda(\tau_0)}{\lambda(\sigma)}
\end{align*}
provided $\frac{\lambda^2(\tau_0)}{\lambda^2(\sigma)}\xi<1$ and use an orthogonality argument to obtain 
\begin{align*}
&\big\|\xi^{1+}\chi_{\xi>1}\int_{\tau_0}^\infty\frac{\lambda^{\frac{3}{2}}(\tau_0)}{\lambda^{\frac{3}{2}}(\sigma)}\frac{\rho^{\frac{1}{2}}(\frac{\lambda^2(\tau_0)}{\lambda^2(\sigma)}\xi)}{\rho^{\frac{1}{2}}(\xi)}\frac{\sin[\lambda(\tau_0)\xi^{\frac{1}{2}}\int_{\tau_0}^\sigma\lambda^{-1}(u)\,du]}{\xi^{\frac{1}{2}}}\\&\hspace{2.5cm}\cdot\beta_{\nu}(\sigma)\mathcal{K}_{\frac{\lambda^2(\tau_0)}{\lambda^2(\sigma)}\xi\sim\eta}(\chi_{\eta<1}\mathcal{D}_{\sigma}\triangle_{>\sigma} x^{(input)})(\sigma, \frac{\lambda^2(\tau_0)}{\lambda^2(\sigma)}\xi)\,d\sigma\big\|_{L^2_{d\xi}}\\
&\lesssim \sum_{j<0}\big(\sum_{k\in Z}\big\|\xi^{1+}\chi_{\xi>1}\int_{\tau_0}^\infty\frac{\lambda^{\frac{3}{2}}(\tau_0)}{\lambda^{\frac{3}{2}}(\sigma)}\frac{\rho^{\frac{1}{2}}(\frac{\lambda^2(\tau_0)}{\lambda^2(\sigma)}\xi)}{\rho^{\frac{1}{2}}(\xi)}\frac{\sin[\lambda(\tau_0)\xi^{\frac{1}{2}}\int_{\tau_0}^\sigma\lambda^{-1}(u)\,du]}{\xi^{\frac{1}{2}}}\\&\hspace{2.5cm}\cdot\beta_{\nu}(\sigma)\mathcal{K}_{\frac{\lambda^2(\tau_0)}{\lambda^2(\sigma)}\xi\sim\eta}(\chi_{\eta\sim 2^j}\mathcal{D}_{\sigma}\triangle_{>\sigma} x^{(input)})(\sigma, \frac{\lambda^2(\tau_0)}{\lambda^2(\sigma)}\xi)\,d\sigma\big\|_{L^2_{d\xi}(\xi\sim 2^k)}^2\big)^{\frac{1}{2}},\\
\end{align*}
and further 
\begin{align*}
&\sum_{j<0}\big(\sum_{k\in Z}\big\|\xi^{1+}\chi_{\xi>1}\int_{\tau_0}^\infty\frac{\lambda^{\frac{3}{2}}(\tau_0)}{\lambda^{\frac{3}{2}}(\sigma)}\frac{\rho^{\frac{1}{2}}(\frac{\lambda^2(\tau_0)}{\lambda^2(\sigma)}\xi)}{\rho^{\frac{1}{2}}(\xi)}\frac{\sin[\lambda(\tau_0)\xi^{\frac{1}{2}}\int_{\tau_0}^\sigma\lambda^{-1}(u)\,du]}{\xi^{\frac{1}{2}}}\\&\hspace{2.5cm}\cdot\beta_{\nu}(\sigma)\mathcal{K}_{\frac{\lambda^2(\tau_0)}{\lambda^2(\sigma)}\xi\sim\eta}(\chi_{\eta\sim 2^j}\mathcal{D}_{\sigma}\triangle_{>\sigma}x^{(input)})(\sigma, \frac{\lambda^2(\tau_0)}{\lambda^2(\sigma)}\xi)\,d\sigma\big\|_{L^2_{d\xi}(\xi\sim 2^k)}^2\big)^{\frac{1}{2}}\\
&\lesssim \sum_{j<0}2^{\frac{j}{2}}\big(\sum_{k}\sup_{\lambda^2(\sigma)\sim 2^{k-j}\lambda^2(\tau_0)}(\frac{\lambda^2(\sigma)}{\lambda^2(\tau_0)})^{2\delta_0}\big\|\eta^{-0+}\chi_{\eta<1}\mathcal{D}_{\sigma}\triangle_{>\sigma} x^{(input)})(\sigma,\cdot)\big\|_{L^2_{d\eta}}^2\big)^{\frac{1}{2}}\\
&\lesssim A. 
\end{align*}

{\it{(3i.4): $\frac{\lambda^2(\tau_0)}{\lambda^2(\sigma)}\xi\sim\eta$, $\eta>1$.}} This case is handled just like the preceding one except that now one uses decay in $\eta^{-1}$ for the kernel of $\mathcal{K}_{cc}$. 
\\

{\it{(3i.5): $\frac{\lambda^2(\tau_0)}{\lambda^2(\sigma)}\xi\gg\eta$.}} This is analogous to cases {\it{(3i.1), (3i.2)}} and omitted. 
\\

{\it{(3ii): Low output frequencies $\xi<1$.}} Here we need to bound 
\[
\big\|\xi^{-0+}\chi_{\xi<1}\triangle x^{(output)}_0\big\|_{L^2_{d\xi}}. 
\]
We follow the scheme of the high-frequency case {\it{(3i)}}, and only consider the more delicate contribution of $\beta_{\nu}(\sigma)\mathcal{K}_{cc}\mathcal{D}_{\sigma}\triangle x^{(input)}$: 
\\

{\it{(3ii.1):  $\frac{\lambda^2(\tau_0)}{\lambda^2(\sigma)}\xi\ll \eta$, $\eta<1$.   }} We perform integration by parts with respect to $\sigma$ here in order not to lose a potentially large factor $\tau_0^{0+}$. Thus write 
\begin{align*}
&\mathcal{K}_{\frac{\lambda^2(\tau_0)}{\lambda^2(\sigma)}\xi\ll \eta}(\mathcal{D}_{\sigma}\triangle_{>\sigma} x^{(input)})\\& = \int_0^\infty \chi_{\frac{\lambda^2(\tau_0)}{\lambda^2(\sigma)}\xi\ll \eta}\frac{F(\frac{\lambda^2(\tau_0)}{\lambda^2(\sigma)}\xi, \eta)\rho(\eta)}{\frac{\lambda^2(\tau_0)}{\lambda^2(\sigma)}\xi - \eta}(\mathcal{D}_{\sigma}\triangle_{>\sigma} x^{(input)})(\sigma, \eta)\,d\eta\\
& = \int_0^\infty \chi_{\xi\ll \tilde{\eta}}\frac{F(\frac{\lambda^2(\tau_0)}{\lambda^2(\sigma)}\xi, \frac{\lambda^2(\tau_0)}{\lambda^2(\sigma)}\tilde{\eta})\rho(\frac{\lambda^2(\tau_0)}{\lambda^2(\sigma)}\tilde{\eta})}{\xi - \tilde{\eta}}\partial_{\sigma}\big((\triangle_{>\sigma} x^{(input)})(\sigma,\frac{\lambda^2(\tau_0)}{\lambda^2(\sigma)}\tilde{\eta})\big)\,d\eta\\
& + \text{error},
\end{align*}
where the term $\text{error}$ gains an extra weight $\sigma^{-1}$ and is hence handled like the contribution of $\beta_{\nu}^2(\sigma)\mathcal{K}_{cc}\ldots$.\\
Inserting this expression into the formula for $\triangle x^{(output)}_0$ and performing an integration by parts with respect to $\sigma$ either gains $\sigma^{-1}$ or else a factor $\frac{\lambda(\tau_0)}{\lambda(\sigma)}\xi^{\frac12}$, but at the expense of replacing 
$\mathcal{D}_{\sigma}\triangle x^{(input)}$ by $\triangle x^{(input)}$. In either case, estimating the $\xi$-integral for the output brute force, we bound this contribution to 
\[
\big\|\xi^{-0+}\chi_{\xi<1}\triangle x^{(output)}_0\big\|_{L^2_{d\xi}}. 
\]
by the expression 
\begin{align*}
\lesssim \int_{\tau_0}^\infty (\frac{\sigma}{\tau_0})^{\kappa}[\tau_0^{0+}\sigma^{-2} + \sigma^{-1}\frac{\lambda(\tau_0)}{\lambda(\sigma)}](\frac{\sigma}{\tau_0})^{-\kappa}\big\|\eta^{-0+}\chi_{\eta<1}\triangle_{>\sigma} x^{(input)})(\sigma,\cdot)\big\|_{L^2_{d\eta}}\,d\sigma\\
&\lesssim A,
\end{align*}
provided $\delta_0\ll 1$. Observe that the factor $\tau_0^{0+}$ here comes from absorbing the singular weight $\xi^{-0+}$ into 
\[
\sin[\lambda(\tau_0)\xi^{\frac{1}{2}}\int_{\tau_0}^\sigma\lambda^{-1}(u)\,du]. 
\]

{\it{(3ii.2):  $\frac{\lambda^2(\tau_0)}{\lambda^2(\sigma)}\xi\ll \eta$, $\eta>1$.   }} This is analogous to the preceding case, one replaces $\triangle_{>\sigma}x^{(input)}$ by $\widetilde{\triangle x^{(input)}}$ (defined as in {\it{(1d)}} above). 
\\

{\it{(3ii.3):  $\frac{\lambda^2(\tau_0)}{\lambda^2(\sigma)}\xi\sim\eta$, in particular $\eta<1$}}. Here we observe that the singular weight $\xi^{-0+}$ is better than $\eta^{-0+}$, and the weight $\xi^{-\frac12}$ may be absorbed into the kernel of $\mathcal{K}_{\frac{\lambda^2(\tau_0)}{\lambda^2(\sigma)}\xi\sim\eta}$. Then the corresponding contribution to $\big\|\xi^{-0+}\chi_{\xi<1}\triangle x^{(output)}_0\big\|_{L^2_{d\xi}}$ is bounded by 
\begin{align*}
&\lesssim \int_{\tau_0}^\infty\beta_{\nu}(\sigma)\big\|\eta^{-0+}\mathcal{D}_{\sigma}\triangle_{>\sigma} x^{(input)}(\sigma, \cdot)\big\|_{L^2_{d\eta}(\eta<1)}\,d\sigma\\
&\lesssim  \int_{\tau_0}^\infty\beta_{\nu}(\sigma)(\frac{\sigma}{\tau_0})^{-\kappa}\cdot(\frac{\sigma}{\tau_0})^{\kappa}\big\|\eta^{-0+}\mathcal{D}_{\sigma}\triangle_{>\sigma} x^{(input)}(\sigma, \cdot)\big\|_{L^2_{d\eta}(\eta<1)}\,d\sigma\\
&\lesssim A\cdot \int_{\tau_0}^\infty\beta_{\nu}(\sigma)(\frac{\sigma}{\tau_0})^{-\kappa}\,d\sigma\lesssim A. 
\end{align*}

{\it{(3ii.4):  $\frac{\lambda^2(\tau_0)}{\lambda^2(\sigma)}\xi\gg\eta$, in particular $\eta<1$}}. Here we can trade the singular weight $\xi^{-0+}$ for a better singular weight $\eta^{-0+}$ and then estimate the term like in case {\it{(3ii.3)}}. 
\\

This completes case {\it{(3ii)}}, and the proof of the lemma is complete. 
\end{proof}

{\bf{Step 4}}: {\it{Proof of the remaining low frequency bound 
\begin{align*}
&\sup_{\tau\geq \tau_0}(\frac{\tau}{\tau_0})^{-\kappa}\big\|\chi_{\xi<1}\triangle x^{(output)}_{>\tau}(\tau, \xi)\big\|_{S_1}\\&\hspace{2cm} + \big(\sum_{\substack{N\gtrsim \tau_0\\ N\,\text{dyadic}}}[\sup_{\tau\sim N}(\frac{\tau}{\tau_0})^{\kappa}\big\|\chi_{\xi<1}\mathcal{D}_{\tau}\triangle_{>\tau}x^{(output)}(\tau, \xi)\big\|_{S_2}]^2\big)^{\frac{1}{2}}\lesssim A. 
\end{align*}
}}
The bound on the first expression 
\[
\sup_{\tau\geq \tau_0}(\frac{\tau}{\tau_0})^{-\kappa}\big\|\chi_{\xi<1}\triangle x^{(output)}_{>\tau}(\tau, \xi)\big\|_{S}
\]
follows precisely as in the proof of the preceding lemma, provided one replaces $\tau_0$ by $\tau$ in the Duhamel integral.  Thus consider now the more complicated second expression involving the square-sum. We shall again only consider the more difficult contribution of $\beta_{\nu}(\sigma)\mathcal{K}_{cc}(\mathcal{D}_{\sigma}\triangle x^{(input)})$. Restricting this further to 
\[
\beta_{\nu}(\sigma)\mathcal{K}_{cc}(\chi_{\cdot <1}\mathcal{D}_{\sigma}\triangle_{>\sigma} x^{(input)}),
\]
we arrive at the expression 
\begin{align*}
&\chi_{\xi<1}\int_{\tau}^\infty\frac{\lambda^{\frac{3}{2}}(\tau)}{\lambda^{\frac{3}{2}}(\sigma)}\frac{\rho^{\frac{1}{2}}(\frac{\lambda^2(\tau)}{\lambda^2(\sigma)}\xi)}{\rho^{\frac{1}{2}}(\xi)}\cos[\lambda(\tau)\xi^{\frac{1}{2}}\int_{\tau}^\sigma\lambda^{-1}(u)\,du]\\&\hspace{3cm}\cdot\beta_{\nu}(\sigma)\mathcal{K}_{cc}(\chi_{\eta<1}\mathcal{D}_{\sigma}\triangle_{>\sigma} x^{(input)})(\sigma, \frac{\lambda^2(\tau)}{\lambda^2(\sigma)}\xi)\,d\sigma
\end{align*}
Observe that for dyadic $N\gtrsim \tau_0$ and $\tau\sim N$ we have (here $M$ also denotes dyadic numbers)
\begin{align*}
&(\frac{N}{\tau_0})^{\kappa}\big\|\xi^{-0+}\chi_{\xi<1}\int_{\tau}^\infty\frac{\lambda^{\frac{3}{2}}(\tau)}{\lambda^{\frac{3}{2}}(\sigma)}\frac{\rho^{\frac{1}{2}}(\frac{\lambda^2(\tau)}{\lambda^2(\sigma)}\xi)}{\rho^{\frac{1}{2}}(\xi)}\cos[\lambda(\tau)\xi^{\frac{1}{2}}\int_{\tau}^\sigma\lambda^{-1}(u)\,du]\\&\hspace{3cm}\cdot\beta_{\nu}(\sigma)\mathcal{K}(\chi_{\eta<1}\mathcal{D}_{\sigma}\triangle_{>\sigma} x^{(input)})(\sigma, \frac{\lambda^2(\tau)}{\lambda^2(\sigma)}\xi)\,d\sigma\big\|_{L^2_{d\xi}}\\
&\lesssim (\frac{N}{\tau_0})^{\kappa}\sum_{\substack{M\gtrsim N\\M\,\text{dyadic}}}\big(\frac{\lambda(N}{\lambda(M)})\sup_{\sigma\sim M}\big\|\eta^{-0+}\chi_{\eta<1}\mathcal{D}_{\sigma}\triangle_{>\sigma} x^{(input)})(\sigma,\cdot)\big\|_{L^2_{d\eta}}
\end{align*}
and from here we easily infer 
\begin{align*}
&\big(\sum_{\substack{N\gtrsim \tau_0\\ N\,\text{dyadic}}}(\frac{N}{\tau_0})^{\kappa}\big\|\xi^{-0+}\ldots\big\|_{L^2_{d\xi}}^2\big)^{\frac{1}{2}}\\&\lesssim \big(\sum_{\substack{M\gtrsim \tau_0\\ M\,\text{dyadic}}}(\frac{M}{\tau_0})^{2\kappa}\sup_{\sigma\sim M}\big\|\eta^{-0+}\chi_{\eta<1}\mathcal{D}_{\sigma}\triangle_{>\sigma} x^{(input)})(\sigma,\cdot)\big\|_{L^2_{d\eta}}^2\big)^{\frac{1}{2}}\\
&\lesssim A, 
\end{align*}
as required. The contribution of the high frequency input 
\[
\beta_{\nu}(\sigma)\mathcal{K}_{cc}(\chi_{\cdot >1}\mathcal{D}_{\sigma}\widetilde{\triangle x^{(input)}})
\]
is handled similarly. 
\\

This concludes {\it{Step 4}} and thereby the proof of Proposition~\ref{prop:lineariterstep}. 

\end{proof}

The proof of the remark following this proposition is immediate in light of Proposition~\ref{prop:firstiteratelincont}. 
\\

The preceding proposition only dealt with the transition from the continuous spectral part $\triangle x^{(input)}(\tau, \xi)$ to the continuous spectral part $\triangle x^{(output)}$, but we also need to study the transition for the discrete spectral part. Here the natural hypothesis for the inductive step is furnished by the conclusion of Proposition~\ref{prop:firstiteratelindisc}, Proposition~\ref{prop:firstiteratenonlindisc}: 

\begin{prop}\label{prop:lineariterstepdisc}Let $\mathcal{K}_{cd}$ be defined as in \cite{KST}, \cite{KS1}. Then if $\sup_{\tau\geq \tau_0}\tau^{(1-)}|\triangle x^{(input)}_d(\tau)| + \sup_{\tau\geq \tau_0}\tau^{(1-)}|\partial_{\tau}\triangle x^{(input)}_d(\tau)|\leq A$, and we denote schematically 
\[
\big(\mathcal{R}_{cd}(\triangle x^{(input)}_d)\big)(\tau, \xi): = \beta_{\nu}^2(\tau)\mathcal{K}_{cd}(\xi)\triangle x^{(input)}_d(\tau) +  \beta_{\nu}(\tau)\mathcal{K}_{cd}(\xi)\partial_{\tau}\triangle x^{(input)}_d(\tau) 
\]
Then there are $\triangle\tilde{\tilde{x}}^{(output)}_0(\xi), \triangle\tilde{\tilde{x}}^{(output)}_1(\xi)$ satisfying 
\[
\big\|(\triangle\tilde{\tilde{x}}^{(output)}_0, \triangle\tilde{\tilde{x}}^{(ouptut)}_1)\big\|_{\langle\xi\rangle^{-\frac{1}{2}-}\xi^{0+}L^2_{d\xi}}\lesssim \tau_0^{-(2-)}A
\]
and such that if we put 
\begin{align*}
 &\triangle x^{(output)}(\tau, \xi): \\
 &=\int_{\tau_0}^\tau\frac{\lambda^{\frac{3}{2}}(\tau)}{\lambda^{\frac{3}{2}}(\sigma)}\frac{\rho^{\frac{1}{2}}(\frac{\lambda^2(\tau)}{\lambda^2(\sigma)}\xi)}{\rho^{\frac{1}{2}}(\xi)}\frac{\sin[\lambda(\tau)\xi^{\frac{1}{2}}\int_{\tau}^\sigma\lambda^{-1}(u)\,du]}{\xi^{\frac{1}{2}}}\big(\mathcal{R}_{cd}(\triangle x^{(input)}_d)\big)(\sigma, \cdot)\big)(\frac{\lambda^2(\tau)}{\lambda^2(\sigma)}\xi)\,d\sigma\\
 & + S(\tau)(\triangle\tilde{\tilde{x}}^{(output)}_0, \triangle\tilde{\tilde{x}}^{(output)}_1),\\
 \end{align*}
then the exact same conclusions obtain as in Proposition~\ref{prop:lineariterstep} but with $A$ replaced by the better $\tau_0^{-(1-)}A$. 
\end{prop}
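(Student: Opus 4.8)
The plan is to deduce Proposition~\ref{prop:lineariterstepdisc} from the machinery already in place, treating the discrete source term $\big(\mathcal{R}_{cd}(\triangle x^{(input)}_d)\big)(\tau,\xi)$ as a particularly benign special case of the continuous linear source terms handled in Proposition~\ref{prop:lineariterstep}. The point is that $\mathcal{K}_{cd}$ acts by multiplication against a fixed kernel function $K_d(\xi)$ that, by \cite{KST}, is smooth and rapidly decaying toward $\xi=+\infty$, so all the delicate $\eta$-integration and Hilbert-transform-type analysis that made Steps 1--4 of Proposition~\ref{prop:lineariterstep} sharp simply does not arise here: there is no convolution-type operator, no near-diagonal singularity, and the input $\triangle x^{(input)}_d(\tau)$ together with its $\tau$-derivative decays like $\tau^{-(1-)}$ by hypothesis, far faster than the mere boundedness we had to work with for $x^{(0)}$.

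First I would record the pointwise and $L^2_{d\xi}$ bounds on the source term: from the decay of $K_d$, for any $N$,
\[
\big\|\langle\xi\rangle^{N}\big(\mathcal{R}_{cd}(\triangle x^{(input)}_d)\big)(\sigma,\cdot)\big\|_{L^2_{d\xi}\cap L^\infty_{d\xi}}\lesssim \big[\beta_{\nu}^2(\sigma)\tau_0^{-(1-)}\sigma^{-(1-)}+\beta_{\nu}(\sigma)\tau_0^{-(1-)}\sigma^{-(1-)}\big]A\lesssim \tau_0^{-(1-)}\sigma^{-(2-)}A,
\]
where I used $\beta_{\nu}(\sigma)\sim \sigma^{-1}$ and $|\triangle x^{(input)}_d(\sigma)|+|\partial_\sigma\triangle x^{(input)}_d(\sigma)|\lesssim \sigma^{-(1-)}A$. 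Thus, up to the overall gain $\tau_0^{-(1-)}$, the source term obeys exactly the kind of bound that the term $E_1$ from Proposition~\ref{prop:nonlinbounds} satisfied — in fact a $\langle\xi\rangle$-weighted version of it, uniformly in all powers of $\xi$. Then I would choose $\triangle\tilde{\tilde{x}}^{(output)}_{0,1}$ exactly as in the proof of Lemma~\ref{lem:choiceofcorrection} / Lemma~\ref{lem:choiceofcorrectioniter}, i.e.\ as a suitable multiple of $\mathcal{F}(\chi_{R\le C\tau_0}\phi(R,0))$, so as to restore the two vanishing conditions \eqref{eq:vanishing} for the resulting data $\triangle\tilde{x}^{(output)}_{0,1}$; the same computation giving $\int_0^\infty\frac{(\rho^{1/2}\triangle\tilde{\tilde{x}}^{(1)}_1)(\xi)}{\xi^{3/4}}\sin[\nu\tau_0\xi^{1/2}]\,d\xi\sim \alpha\tau_0$ shows $|\alpha|\lesssim \tau_0^{-(2-)}A$ here, because the quantity we must cancel is $\lesssim \tau_0^{-(1-)}\int_{\tau_0}^\infty \sigma^{-(2-)}\frac{\lambda(\tau_0)}{\lambda(\sigma)}\,d\sigma\cdot A\lesssim \tau_0^{-(2-)}A$, which also gives the claimed bound $\big\|(\triangle\tilde{\tilde{x}}^{(output)}_0,\triangle\tilde{\tilde{x}}^{(output)}_1)\big\|_{\langle\xi\rangle^{-\frac12-}\xi^{0+}L^2_{d\xi}}\lesssim \tau_0^{-(2-)}A$.

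With the source bound and the correction in hand, I would then run the Duhamel parametrix \eqref{eq:para1}-type estimates verbatim as in case \textbf{(1a)} of the proof of Proposition~\ref{prop:lineariterstep} — splitting into high frequencies $\xi>1$ and low frequencies $\xi<1$, and in the low-frequency regime into the piece $\triangle x^{(output)}_{>\tau}$ and the free evolution $S(\tau)(\triangle\tilde{x}^{(output)}_0,\triangle\tilde{x}^{(output)}_1)$. In every regime the estimate closes trivially with room to spare, since the $\sigma$-integral of the form $\int_{\tau_0}^\tau (\text{growth factor})\cdot \tau_0^{-(1-)}\sigma^{-(2-)}A\,d\sigma$ already carries the decisive gain $\tau_0^{-(1-)}$ and is absolutely convergent; the square-sum-over-dyadic-time norms are handled as in Step 1 of Proposition~\ref{prop:lineariterstep} using Cauchy--Schwarz in $\sigma$. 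The $S$-bound for the data $(\triangle\tilde{x}^{(output)}_0,\triangle\tilde{x}^{(output)}_1)$ follows just as in Step 3 and Lemma~\ref{lem:hereneednorm}, again with the extra factor $\tau_0^{-(1-)}$. For the discrete-to-discrete transition one uses the kernel $\mathcal{K}_{dc}$ and the implicit relation \eqref{eq:x_d} exactly as in the paragraph preceding Proposition~\ref{prop:firstiteratelindisc}, obtaining $\tau^{1-}\big[|\triangle x^{(output)}_d(\tau)|+|\partial_\tau \triangle x^{(output)}_d(\tau)|\big]\lesssim \tau_0^{-(1-)}A$.

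The main obstacle — such as it is — is purely bookkeeping: one must verify that inserting the weighted pointwise bound $\big|\big(\mathcal{R}_{cd}(\triangle x^{(input)}_d)\big)(\sigma,\tfrac{\lambda^2(\tau)}{\lambda^2(\sigma)}\xi)\big|\lesssim \tau_0^{-(1-)}\sigma^{-(2-)}\langle\tfrac{\lambda^2(\tau)}{\lambda^2(\sigma)}\xi\rangle^{-N}A$ into each of the frequency-regime estimates of Proposition~\ref{prop:lineariterstep} indeed reproduces every conclusion there with $A$ replaced by $\tau_0^{-(1-)}A$; since the source here is strictly better-behaved (no $\eta$-integral, arbitrary polynomial decay in $\xi$, extra $\sigma$-decay) than any source treated previously, no new cancellation structure or vanishing condition on the input is needed, and there is no genuinely delicate step analogous to case \textbf{(1d)}. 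I would therefore present the proof as a short reduction: state the source bound, choose the corrections to fix \eqref{eq:vanishing}, and invoke the proofs of Proposition~\ref{prop:lineariterstep} and Proposition~\ref{prop:firstiteratelindisc} mutatis mutandis, pointing out at each split that the bounds are strictly better by $\tau_0^{-(1-)}$.
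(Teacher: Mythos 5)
The paper omits the proof of this proposition entirely, remarking only that it is ``similar in outline but in effect much simpler than the one of Proposition~\ref{prop:lineariterstep}, and hence omitted''; your sketch is precisely that reduction, and it is correct in its essentials. One small bookkeeping slip worth flagging: your displayed source bound claims an intermediate factor $\tau_0^{-(1-)}\sigma^{-(1-)}$, but the hypothesis only gives $|\triangle x^{(input)}_d(\sigma)|+|\partial_\sigma\triangle x^{(input)}_d(\sigma)|\lesssim A\sigma^{-(1-)}$, so after multiplying by $\beta_\nu(\sigma)\sim\sigma^{-1}$ the source is $\lesssim A\sigma^{-(2-)}$ \emph{with no $\tau_0$ factor}; the $\tau_0^{-(1-)}$ gain only appears after carrying out the $\sigma$-integration (together with the $\frac{\lambda(\tau_0)}{\lambda(\sigma)}$ or $\frac{\lambda(\tau)}{\lambda(\sigma)}$ weights from the Duhamel kernel), since $\int_{\tau_0}^{\infty}\sigma^{-(2-)}(\ldots)\,d\sigma\lesssim\tau_0^{-(1-)}$. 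This does not affect any of your conclusions, but the display as written overstates the pointwise bound; the remaining steps (rapid decay of $K_d$ in $\xi$ replacing the delicate $\eta$-integration of Proposition~\ref{prop:lineariterstep}, corrections chosen as in Lemma~\ref{lem:choiceofcorrection} with the cancellation quantity of size $\lesssim\tau_0^{-(1-)}A$ hence correction of size $\lesssim\tau_0^{-(2-)}A$, and absolute convergence yielding the uniform $\tau_0^{-(1-)}$ improvement) are all sound.
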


The proof of this proposition is similar in outline but in effect much simpler than the one of Proposition~\ref{prop:lineariterstep}, and hence omitted.  
\\

To finish the inductive step for the linear source terms of the schematic form $\mathcal{R}(\tau, \underline{\triangle x})$, we also have to study their projection onto the unstable spectral part, i. e. the operators $\mathcal{R}_d(\tau, \underline{\triangle x})$. 
Borrowing notation from \cite{KST}, \cite{KS1}, introduce the schematic operator 
\begin{align*}
\mathcal{R}_d(\sigma, \underline{\triangle x}) &= \beta_{\nu}^2(\sigma)\big[\mathcal{K}_{dc}(\triangle x(\sigma, \cdot)) + \mathcal{K}_{dd}\triangle x_d(\sigma)\big]\\
& + \beta_{\nu}(\sigma)\big[\mathcal{K}_{dc}(\mathcal{D}_{\sigma}\triangle x(\sigma, \cdot)) + \mathcal{K}_{dd}\partial_{\sigma}\triangle x_d(\sigma)\big]\\
\end{align*}
Then we have the following
\begin{prop}\label{prop:discreteintodiscrete} Assume that $\triangle x^{(input)}(\tau, \xi)$ is as in the statement of Proposition~\ref{prop:lineariterstep}, and that moreover we have 
\[
|\triangle x^{(input)}_d(\tau)| + |\partial_{\tau}\triangle x^{(input)}_d(\tau)|\leq \tau^{-(1-)}A,\,\forall\tau\geq \tau_0.
\]
Then writing $\underline{\triangle x}^{(input)} = \left(\begin{array}{c}\triangle x^{(input)}_d\\ \triangle x^{(input)}\end{array}\right)$ and setting 
\[
\triangle x_d^{(output)}(\tau): = \int_\tau^\infty H_d(\tau, \sigma)[\mathcal{R}_d(\sigma, \underline{\triangle x}^{(input)}) - \beta_{\nu}(\sigma)\triangle_d x^{(input)}]\,d\sigma, 
\]
we have 
\[
\sup_{\tau\geq \tau_0}\big(\tau|\triangle \tilde{x}_d^{(2)}(\tau)| + \tau|\partial_{\tau}\triangle \tilde{x}_d^{(2)}(\tau)|\big)\lesssim A. 
\]

\end{prop}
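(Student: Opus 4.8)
The plan is to estimate $\triangle x_d^{(output)}(\tau)$ by a direct combination of the pointwise kernel bound $|H_d(\tau, \sigma)| = \tfrac12 |\xi_d|^{-\frac12} e^{-|\xi_d|^{\frac12}|\tau - \sigma|}$ with pointwise control of the source term $\mathcal{R}_d(\sigma, \underline{\triangle x}^{(input)})$, together with a bootstrap to absorb the term $\beta_{\nu}(\sigma)\partial_\sigma\triangle x_d^{(output)}(\sigma)$ on the right. First I would break $\mathcal{R}_d(\sigma, \underline{\triangle x}^{(input)})$ into the four schematic pieces displayed before the statement of the proposition, namely $\beta_{\nu}^2(\sigma)\mathcal{K}_{dc}(\triangle x^{(input)}(\sigma, \cdot))$, $\beta_{\nu}^2(\sigma)\mathcal{K}_{dd}\triangle x^{(input)}_d(\sigma)$, $\beta_{\nu}(\sigma)\mathcal{K}_{dc}(\mathcal{D}_\sigma\triangle x^{(input)}(\sigma, \cdot))$ and $\beta_{\nu}(\sigma)\mathcal{K}_{dd}\partial_\sigma\triangle x^{(input)}_d(\sigma)$, and bound each one separately. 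The $\mathcal{K}_{dd}$ pieces are trivial: $\mathcal{K}_{dd} = -\tfrac12$ is a bounded scalar, so they are directly $\lesssim \sigma^{-1}\cdot\sigma^{-(1-)}A$ and $\lesssim \sigma^{-2}\cdot\sigma^{-(1-)}A$ respectively, each integrable against $H_d(\tau, \sigma)$ and yielding the bound $\lesssim \tau^{-(1-)}A$, which is in fact stronger than what the proposition claims (the claim only asks for $\tau|\triangle x_d^{(output)}| \lesssim A$, i.e. the weaker $\tau^{-1}$ decay).

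For the $\mathcal{K}_{dc}$ pieces the key input is that $\mathcal{K}_{dc}$ acts by $\mathcal{K}_{dc}(f) = -\int_0^\infty f(\xi) K_d(\xi)\rho(\xi)\,d\xi$ with $K_d$ smooth and rapidly decaying toward $\xi = \infty$ (and, by the asymptotics \eqref{eq:rhoasympto} of $\rho$, the weight $K_d(\xi)\rho(\xi)$ is integrable near $\xi = 0$ as well, since it behaves like $\xi^{-\frac12}$ times a bounded function). Hence $\big|\mathcal{K}_{dc}(\triangle x^{(input)}(\sigma,\cdot))\big| \lesssim \big\|\langle\xi\rangle^{-\frac12-}\xi^{0+}\triangle x^{(input)}(\sigma,\cdot)\big\|_{L^2_{d\xi}}$ after Cauchy--Schwarz, and this is controlled by the hypotheses of Proposition~\ref{prop:lineariterstep}: splitting $\xi<1$ and $\xi>1$, the high-frequency part is $\lesssim (\frac{\sigma}{\tau_0})^\kappa A$ via the $S_1$-bound, and the low-frequency part of $\triangle x^{(input)}$ (after replacing it by $\triangle_{>\sigma}x^{(input)}$ where needed) is $\lesssim (\frac{\sigma}{\tau_0})^\kappa A$ as well. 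The same reasoning, using the $\mathcal{D}_\sigma$-bounds instead (and the square-sum structure, which in particular gives a pointwise-in-$\sigma$ bound up to the weight $(\frac{\sigma}{\tau_0})^\kappa$), handles the $\beta_{\nu}(\sigma)\mathcal{K}_{dc}(\mathcal{D}_\sigma\triangle x^{(input)})$ piece. Thus $|\mathcal{R}_d(\sigma, \underline{\triangle x}^{(input)})| \lesssim \beta_{\nu}(\sigma)(\frac{\sigma}{\tau_0})^\kappa A \lesssim \sigma^{-1+O(\kappa)}A$, and inserting this into $\int_\tau^\infty H_d(\tau, \sigma)(\cdots)\,d\sigma$ and using $\int_\tau^\infty e^{-|\xi_d|^{\frac12}|\tau-\sigma|}\sigma^{-1+O(\kappa)}\,d\sigma \lesssim \tau^{-1+O(\kappa)}$ gives $|\triangle x_d^{(output)}(\tau)| \lesssim \tau^{-1+O(\kappa)}A$; choosing $\delta_0$ (hence $\kappa$) small enough, this is $\lesssim \tau^{-1}A \cdot \tau^{O(\kappa)}$, which is acceptable for the stated bound once one notes the statement only demands $\tau|\triangle x_d^{(output)}(\tau)| \lesssim A$ — I would in fact record the sharper $\tau^{1-}$ decay, consistent with Propositions~\ref{prop:firstiteratelindisc} and \ref{prop:firstiteratenonlindisc}.

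The derivative bound on $\partial_\tau\triangle x_d^{(output)}(\tau)$ follows by differentiating the implicit relation: $\partial_\tau H_d(\tau,\sigma) = -\tfrac12\,\mathrm{sign}(\tau-\sigma)e^{-|\xi_d|^{\frac12}|\tau-\sigma|}$ has the same size as $H_d$ up to the boundary term from differentiating the lower limit, which vanishes because the integrand is continuous there; so the same estimate propagates and $|\partial_\tau\triangle x_d^{(output)}(\tau)| \lesssim \tau^{-1+O(\kappa)}A$. Finally, to close the implicit equation one runs a standard contraction/bootstrap argument exactly as in the derivation following \eqref{eq:x_d} in the proof of Lemma~\ref{lem:linhom}: the term $\beta_{\nu}(\sigma)\partial_\sigma\triangle x_d^{(input)}$ appearing in the definition of $\triangle x_d^{(output)}$ — wait, more precisely the self-referential term $\beta_{\nu}(\sigma)\triangle_d x^{(input)}$ that appears is of lower order ($\beta_{\nu}(\sigma)\sigma^{-(1-)}A \lesssim \sigma^{-2+}A$) and is harmlessly absorbed, while the choice of $(\triangle\tilde{\tilde{x}}^{(output)}_0, \triangle\tilde{\tilde{x}}^{(output)}_1)$ required to restore the vanishing conditions \eqref{eq:vanishing} is made exactly as in Lemma~\ref{lem:choiceofcorrectioniter}, with the size bound $\lesssim \tau_0^{-(2-)}A$ following from the extra power of $\beta_{\nu}$ (i.e. $\tau_0^{-1}$) beyond the $\tau_0^{-(1-)}$ gain already present in that lemma. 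The main obstacle, such as it is, is purely bookkeeping: one must be careful that replacing $\triangle x^{(input)}$ by $\triangle_{>\sigma}x^{(input)}$ in the low-frequency regime and by $\widetilde{\triangle x^{(input)}}$ in the high-frequency regime (as licensed by Proposition~\ref{prop:firstiteratelincontimprov}) does not introduce any uncontrolled boundary contributions, but since $\mathcal{K}_{dc}$ is a fixed integral functional with a smooth rapidly-decaying kernel there is no genuine difficulty here — this is exactly why the authors remark that the proof is ``much simpler than the one of Proposition~\ref{prop:lineariterstep}''.
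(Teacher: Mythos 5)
The paper provides no proof of Proposition~\ref{prop:discreteintodiscrete}; it is stated and immediately followed by a summary sentence, so there is nothing in the paper to compare against directly. Your overall strategy — decompose $\mathcal{R}_d$ into the four schematic constituents, exploit $\mathcal{K}_{dd}=-\tfrac12$ and the smooth rapidly-decaying kernel $K_d$ for $\mathcal{K}_{dc}$ via Cauchy--Schwarz against the $S_1/S_2$-norm hypotheses, convolve against the exponentially decaying $H_d$, and close the implicit equation by a fixed-point/bootstrap argument in the spirit of Lemma~\ref{lem:linhom} — is the natural one and matches the pattern the authors clearly have in mind for the analogous Propositions~\ref{prop:firstiteratelindisc} and \ref{prop:firstiteratenonlindisc}. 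The result itself follows essentially as you describe.

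There are, however, a few points you should tighten. First, your claim that the boundary term produced by differentiating the $\sigma$-integral at the lower limit ``vanishes because the integrand is continuous there'' is false: $H_d(\tau,\tau)=-\tfrac12|\xi_d|^{-1/2}\neq 0$, so $\partial_\tau\triangle x_d^{(output)}$ picks up an exact term $-H_d(\tau,\tau)\big[\mathcal{R}_d(\tau,\cdot)-\cdots\big]$. The proof survives because this term is itself $\lesssim\tau^{-(1-)}A$ — the same size as the integral — but it does not vanish, and the reasoning should simply say that rather than appeal to a vanishing that doesn't occur. Second, your entire last paragraph about choosing corrections $(\triangle\tilde{\tilde x}^{(output)}_0,\triangle\tilde{\tilde x}^{(output)}_1)$ and restoring the vanishing conditions \eqref{eq:vanishing} is out of scope: those corrections are a device to tame the \emph{continuous}-spectrum output (Propositions~\ref{prop:lineariterstep} and \ref{prop:lineariterstepdisc}); the present proposition concerns only the discrete coefficient $\triangle x_d^{(output)}(\tau)$, for which no such correction appears or is needed. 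Third, your intermediate pointwise bound $|\mathcal{R}_d(\sigma,\cdot)|\lesssim\beta_\nu(\sigma)(\sigma/\tau_0)^\kappa A$ glosses over the fact that, when $\triangle x^{(input)}$ is (at low frequencies) the free evolution, $\mathcal{K}_{dc}$ applied to the undifferentiated piece picks up the singular $\eta^{-1/2}$ from the sine factor near $\eta=0$, costing an extra factor of $\sigma$ — this is exactly the phenomenon the paper handles in the computation preceding Proposition~\ref{prop:firstiteratelindisc}, where the bound $\lesssim(\lambda(\sigma)/\lambda(\tau_0))^{0+}\sigma\cdot\beta_\nu^2(\sigma)\|x_1\|_{S_2}$ appears. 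The factor $\sigma$ is absorbed by the $\beta_\nu^2$, giving $\sigma^{-1+0+}$, so the final conclusion is unaffected, but the intermediate estimate as you state it is not quite right and should be accounted for. Finally, the parenthetical ``wait, more precisely...'' indicates you noticed the apparent typo in the statement (the self-referential term should by analogy read $\beta_\nu(\sigma)\partial_\sigma\triangle x_d^{(output)}(\sigma)$, and the conclusion's $\triangle\tilde x_d^{(2)}$ should read $\triangle x_d^{(output)}$); that observation is correct and worth stating explicitly, but it should not remain as an unresolved aside in the finished proof.
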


The three propositions Proposition~\ref{prop:discreteintodiscrete}, Proposition~\ref{prop:lineariterstepdisc}, Proposition~\ref{prop:lineariterstep}, combined with Proposition~\ref{prop:firstiteratelincont} completely describe the contribution of the linear source term 
\[
\mathcal{R}(\tau, \underline{\triangle x}^{(j-1)})
\]
 to the next iterate $\underline{\triangle x}^{(j)}$ in the iterative step \eqref{eq:incrementj}. To complete control over the iterative step, we then also need to control the contribution of the source term
 \[
 \underline{\triangle f}^{(j-1)},
 \]
 which we recall is given by the expression 
 \begin{align*}
\underline{\triangle f}^{(j-1)}(\tau,\xi): = \left(\begin{array}{c}\langle \phi_d, \lambda^{-2}(\tau)RN_{\nu}(\epsilon^{(j-1)})\rangle - \lambda^{-2}(\tau)RN_{\nu}(\epsilon^{(j-2)})\rangle\\ \mathcal{F}\big(\lambda^{-2}(\tau)RN_{\nu}(\epsilon^{(j-1)}) \big)(\xi) - \lambda^{-2}(\tau)RN_{\nu}(\epsilon^{(j-2)}) \big)(\xi)\end{array}\right).
\end{align*}
Thus we now formulate the corresponding iterative step for this contribution. Here we carefully single out the part where we do not gain a smallness factor, and the part for which we do gain smallness. Observe that the contribution of the discrete spectral part always leads to smallness gains and is hence in some sense negligible in the iteration scheme later on: 

\begin{prop}\label{prop:nonlineariterstep} Assume that 
\[
\tilde{\epsilon}^{(k)}(\tau, R) = x^{(k)}_d(\tau)\phi_d(R) + \int_0^\infty x^{(k)}(\tau, \xi)\phi(R, \xi)\rho(\xi)\,d\xi,\,1\leq k\leq j-1,
\]
with 
\[
\underline{x}^{(k)}(\tau, \xi) = \left(\begin{array}{c}x_d^{(k)}(\tau)\\ x^{(k)}(\tau, \xi)\end{array}\right) = \underline{x}^{(0)}+ \sum_{1\leq l\leq k}\underline{\triangle x}^{(l)}(\tau, \xi),\,1\leq l\leq k, 
\]
and $\underline{x}^{(0)} = \left(\begin{array}{c}x_d(\tau)\\ x(\tau, \xi)\end{array}\right)$ as in the statement of Lemma~\ref{lem:linhom}. Assume that we have 
\[
\sup_{1\leq k\leq j-1}\big[\big|x_d^{(k)}(\tau)\big| + \big|\partial_{\tau}x_d^{(k)}(\tau)\big|\big]\leq B\tau^{-(1-)},
\]
and furthermore each of the $x^{(k)}(\tau, \xi)$, $1\leq k\leq j-1$, admits a structure and bounds like the function $\triangle x^{(input)}$ in Proposition~\ref{prop:lineariterstep} with a constant $B\ll 1$ (instead of $A$ there). Further, assume that 
\[
\big|\triangle x_d^{(j-1)}(\tau)\big| + \big|\partial_{\tau}\triangle x_d^{(j-1)}(\tau)\big|\big]\leq A_d\tau^{-(1-)}, 
\]
and that the difference $\triangle x^{(j-1)}(\tau, \xi)$ admits the same structure and estimates as $\triangle x^{(input)}$ in Proposition~\ref{prop:lineariterstep} with constant $A$. Then we can conclude the following:
\begin{itemize}
\item For the discrete contribution to the next iterate $\triangle x^{(j)}_d(\tau)$, we get 
\begin{align*}
\big|\triangle x^{(j)}_d(\tau)\big| + \big|\partial_{\tau}\triangle x^{(j)}_d(\tau)\big|&\lesssim \big|\int_{\tau}^\infty H_d(\tau, \sigma)\triangle f^{(j-1)}(\sigma)\,d\sigma\big|\\
&\lesssim \tau^{-(1-)}\big[A_d\tau_0^{-1} + A\big]
\end{align*}
\item For the continuous spectral part, there is a splitting 
\[
\triangle f^{(j-1)}(\tau,\xi) = \triangle f^{(j-1)}_{bad}(\tau,\xi) +  \triangle f^{(j-1)}_{good}(\tau,\xi), 
\]
such that we have the following conclusion: there exist $\triangle \tilde{\tilde{x}}^{(j)}_{good,\kappa}, \triangle \tilde{\tilde{x}}^{(j)}_{bad,\kappa}$, $\kappa = 0,1$, satisfying the bounds 
\[
\big\|(\triangle \tilde{\tilde{x}}^{(j)}_{good,0}, \triangle \tilde{\tilde{x}}^{(j)}_{good,1})\big\|_{S_2}\lesssim \tau_0^{-(1-)}[AB + A_d\tau_0^{-1}],\,\big\|(\triangle \tilde{\tilde{x}}^{(j)}_{bad,0},\triangle \tilde{\tilde{x}}^{(j)}_{bad,1})\big\|_{S_2}\lesssim \tau_0^{-(1-)}[A + A_d\tau_0^{-1}],
\]
and such that if we put 
\begin{align*}
\triangle x^{(j)}_{good}(\tau, \xi)&:= \int_{\tau_0}^\tau\frac{\lambda^{\frac{3}{2}}(\tau)}{\lambda^{\frac{3}{2}}(\sigma)}\frac{\rho^{\frac{1}{2}}(\frac{\lambda^2(\tau)}{\lambda^2(\sigma)}\xi)}{\rho^{\frac{1}{2}}(\xi)}\frac{\sin[\lambda(\tau)\xi^{\frac{1}{2}}\int_{\tau}^\sigma\lambda^{-1}(u)\,du]}{\xi^{\frac{1}{2}}}\triangle f^{(j-1)}_{good}(\sigma, \frac{\lambda^2(\tau)}{\lambda^2(\sigma)}\xi)\,d\sigma\\
 & + S(\tau)\big(\triangle\tilde{\tilde{x}}^{(j)}_{good,0}, \triangle\tilde{\tilde{x}}^{(j)}_{good,1}\big),
 \end{align*}
and analogously 
\begin{align*}
\triangle x^{(j)}_{bad}(\tau, \xi)&:= \int_{\tau_0}^\tau\frac{\lambda^{\frac{3}{2}}(\tau)}{\lambda^{\frac{3}{2}}(\sigma)}\frac{\rho^{\frac{1}{2}}(\frac{\lambda^2(\tau)}{\lambda^2(\sigma)}\xi)}{\rho^{\frac{1}{2}}(\xi)}\frac{\sin[\lambda(\tau)\xi^{\frac{1}{2}}\int_{\tau}^\sigma\lambda^{-1}(u)\,du]}{\xi^{\frac{1}{2}}}\triangle f^{(j-1)}_{bad}(\sigma, \frac{\lambda^2(\tau)}{\lambda^2(\sigma)}\xi)\,d\sigma\\
 & + S(\tau)\big( \triangle\tilde{\tilde{x}}^{(j)}_{bad,0}, \triangle\tilde{\tilde{x}}^{(j)}_{bad,1}\big),
 \end{align*}
then $\triangle x^{(j)}_{good}, \triangle x^{(j)}_{bad}$ admit the same structure and estimates as the function $\triangle x^{(output)}$ in Proposition~\ref{prop:lineariterstep}, with $ \triangle\tilde{\tilde{x}}^{(j)}_{good,\kappa}, \triangle\tilde{\tilde{x}}^{(j)}_{bad,\kappa}$ replacing $\triangle \tilde{\tilde{x}}^{(output)}_{\kappa}$, respectively, and with the constant $A$ in Proposition~\ref{prop:lineariterstep} replaced by 
\[
AB + A_d\tau_0^{-1},\,A + A_d\tau_0^{-(1-)}, 
\]
respectively. 
\end{itemize}
\end{prop}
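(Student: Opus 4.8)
The proof of Proposition~\ref{prop:nonlineariterstep} proceeds by decomposing the nonlinear increment $\triangle f^{(j-1)}(\tau,\xi)$ into a ``good'' part — for which one gains an extra smallness factor over the naive estimate — and a ``bad'' part, and then invoking Proposition~\ref{prop:nonlinbounds} (together with the parametrix bookkeeping already carried out in Proposition~\ref{prop:firstiteratenonlin} and Proposition~\ref{prop:lineariterstep}) to convert the pointwise/Sobolev bounds on these pieces into the asserted structural bounds on $\triangle x^{(j)}_{good},\triangle x^{(j)}_{bad}$. The first step is to write, schematically,
\begin{align*}
RN_{\nu}(\epsilon^{(j-1)}) - RN_{\nu}(\epsilon^{(j-2)}) &= 5(u_{\nu}^4 - u_0^4)(\tilde\eps^{(j-1)} - \tilde\eps^{(j-2)})\\
&\quad + \big[R N(u_{\nu}, \tilde\eps^{(j-1)}) - R N(u_{\nu}, \tilde\eps^{(j-2)})\big],
\end{align*}
and to expand the second bracket as a sum of terms, each of which is multilinear of degree $\geq 2$ in the collection $\{\tilde\eps^{(k)}\}_{k\leq j-1}$ with at least one factor being the difference $\triangle\tilde\eps^{(j-1)} = \tilde\eps^{(j-1)} - \tilde\eps^{(j-2)}$. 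Since every such purely multilinear ($\geq 2$) term carries a fixed factor $\tilde\eps^{(k)}$ with $k \leq j-1$, whose $L^\infty$/$H^{1+}$ norm is $\lesssim B$ (or $\lesssim B$ for $\triangle x_d$), these contributions are automatically accompanied by the smallness factor $B$ and are declared ``good''; the associated output correction will then satisfy the bound with constant $AB$ (plus the discrete correction $A_d\tau_0^{-1}$ coming from the exponentially decaying piece).

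\textbf{Isolating the bad term.} The only place where no smallness factor appears is exactly the analogue of the term $E_2$ in Proposition~\ref{prop:nonlinbounds}: the linearised interaction $5\lambda^{-2}(\tau)(u_{\nu}^4 - u_0^4)\triangle\tilde\eps^{(j-1)}$, and more precisely the part of it where $\triangle\tilde\eps^{(j-1)}$ is replaced by its ``linearly growing in $\tau$'' constituent, i.e. the multiple $\phi(R,0)\triangle\tilde g^{(j-1)}(\tau)$ of the resonance — cf. Remark~\ref{rem:firstiteratenonlin}. So the plan is to set
\[
\triangle f^{(j-1)}_{bad}(\tau,\xi) = \mathcal{F}\big(\tau^{-2}\triangle\tilde g^{(j-1)}(\tau)\langle R\rangle^{-3}g(R,a)\big)(\xi),
\]
with $\tau^{-1}|\triangle\tilde g^{(j-1)}(\tau)| + |\partial_\tau \triangle\tilde g^{(j-1)}(\tau)|\lesssim A$ (this is the analogue of the bound on $\tilde g$ in Proposition~\ref{prop:nonlinbounds}, now applied to the \emph{difference} of iterates, which inherits the $A$-bound on $\triangle x^{(j-1)}$ via the $\tilde S$-estimates on $\triangle x_{>\tau}^{(j-1)}$ and on the free data $\triangle\tilde x^{(j-1)}_{0,1}$), and $\triangle f^{(j-1)}_{good} = \triangle f^{(j-1)} - \triangle f^{(j-1)}_{bad}$. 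One checks, exactly as in the proof of Proposition~\ref{prop:nonlinbounds}, case (i), that $\triangle f^{(j-1)}_{good}$ splits further as $E_1 + E_2'$ where $\|E_1\|_{(H^{1+}_{dR}\cap L^1_{dR})(R\lesssim\tau)}\lesssim (AB + A_d\tau_0^{-1})\tau^{-(2-)}$ and $E_2'$ has the same structure as $E_2$ but carries an extra factor $B$; the point is that every term contributing to $\triangle f^{(j-1)}_{good}$ either is multilinear of degree $\geq 2$ (hence smallness $B$) or comes with genuine decay $\tau^{-(2-)}$ after using Proposition~\ref{prop:energybound1} and the fractional Leibniz rule as in the proof of Proposition~\ref{prop:nonlinbounds}. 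The discrete bound for $\triangle x^{(j)}_d$ then follows from \eqref{eq:x_d} and the pointwise estimate $|\triangle f^{(j-1)}_d(\sigma)|\lesssim \sigma^{-(2-)}[AB + A_d\tau_0^{-1}] + \sigma^{-(2-)}A$ by the same Gronwall/fixed-point argument used for Proposition~\ref{prop:firstiteratenonlindisc}, keeping in mind that the piece of $\triangle f^{(j-1)}$ that is only $\sigma^{-1}$-decaying is paired with the rapidly decaying kernel $H_d(\tau,\sigma)$, which gives the extra $\tau_0^{-1}$ in front of $A_d$.

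\textbf{From source bounds to structural bounds on the iterate.} With the decomposition in hand, the passage to the bounds on $\triangle x^{(j)}_{good}$ and $\triangle x^{(j)}_{bad}$ is a verbatim repetition of the argument for Proposition~\ref{prop:firstiteratenonlin}: for the $E_1$-type piece one uses the $\tau^{-(2-)}$ decay directly in the Duhamel integral (Step 1(a) and the low-frequency estimates of that proof) to obtain all the required $S$-, square-sum, and $\tilde S$-correction bounds, picking up the factor $\tau_0^{-(1-)}$; for the $E_2$-type (resp. $E_2'$-type) piece one performs the integration by parts in $\sigma$ exactly as in Step 1(b) and in Step 2 of Proposition~\ref{prop:firstiteratenonlin}, exploiting that $|\partial_\sigma \triangle\tilde g^{(j-1)}(\sigma)| \lesssim A$ (resp. $\lesssim AB$). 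The choice of the corrections $\triangle\tilde{\tilde x}^{(j)}_{good,\kappa}$, $\triangle\tilde{\tilde x}^{(j)}_{bad,\kappa}$ and the verification of the vanishing conditions \eqref{eq:vanishing} for the resulting free data $\triangle\tilde x^{(j)}_{0,1}$ is done via the analogue of Lemma~\ref{lem:choiceofcorrectionnl}, i.e. by adding a scalar multiple of $\mathcal{F}(\chi_{R\leq C\tau_0}\phi(R,0))$ tuned to kill the two oscillatory integrals; the key smallness $\tau_0^{-(1-)}$ of these corrections again comes from the bound on $\widetilde\triangle\tilde x^{(j)}_{0,1}$ proved exactly as in Lemma~\ref{lem:choiceofcorrectionnl}.

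\textbf{The main obstacle.} The delicate point is \emph{not} the good part but establishing that the bad term's $\tilde g$-coefficient really does satisfy $\tau^{-1}|\triangle\tilde g^{(j-1)}(\tau)| + |\partial_\tau\triangle\tilde g^{(j-1)}(\tau)|\lesssim A$, \emph{uniformly in $j$}, using only the hypothesised structure of $\triangle x^{(j-1)}$ (which is itself a sum of a free wave $S(\tau)(\triangle\tilde x^{(j-1)}_0,\triangle\tilde x^{(j-1)}_1)$ satisfying the vanishing conditions, plus $\triangle_{>\tau}x^{(j-1)}$ which is \emph{not} a free wave and has a $(\tau/\tau_0)^\kappa$ loss in $S_1$). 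One must re-examine cases (2) and (4) of the proof of Proposition~\ref{prop:lingrowthcond} applied to $\triangle x^{(j-1)}$: the free-wave part contributes to $\triangle\tilde g^{(j-1)}$ exactly as in Proposition~\ref{prop:nonlinbounds} because it satisfies the vanishing relations; the $\triangle_{>\tau}x^{(j-1)}$ part must be shown to contribute something already better than $\tau$-linear growth in the resonance projection, which is precisely the content of the $S_1$-bound with the $\kappa$-loss together with the fact that $\kappa \ll 1$ can be absorbed. Getting this uniformly in $j$, so that the constant multiplying $A$ does not degrade across the iteration, is the real content of the proposition and will require care in tracking exactly which norm of $\triangle x^{(j-1)}$ each piece of $\triangle\tilde g^{(j-1)}$ sees.
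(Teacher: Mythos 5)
The paper omits the proof of this proposition outright, stating only that it is ``very close to the one of Proposition~\ref{prop:firstiteratenonlin}.'' Your proposal correctly reconstructs the intended route: the good/bad splitting of $\triangle f^{(j-1)}$ into (a) terms that are multilinear of degree $\geq 2$ in the $\{\tilde\eps^{(k)}\}$ and hence carry a $B$-factor, together with the $E_1$-type part of the linear interaction, versus (b) the $E_2$-type piece of $5\lambda^{-2}(u_\nu^4-u_0^4)\triangle\tilde\eps^{(j-1)}$ coming from the resonance projection of the free-wave part of $\triangle x^{(j-1)}$ --- this is exactly what Remark~\ref{rem:nonlineariterstep} singles out --- and then running the Duhamel/correction machinery of Proposition~\ref{prop:firstiteratenonlin} and Lemma~\ref{lem:choiceofcorrectionnl} on each piece.

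Two remarks. First, the ``main obstacle'' you flag at the end is in fact already resolved by the framework you invoked earlier: the hypothesis that $\triangle x^{(j-1)}$ has the structure of $\triangle x^{(\mathrm{input})}$ in Proposition~\ref{prop:lineariterstep} gives you the decomposition $\triangle x^{(j-1)}=\triangle_{>\tau}x^{(j-1)}+S(\tau)(\triangle\tilde x^{(j-1)}_0,\triangle\tilde x^{(j-1)}_1)$, and $\triangle_{>\tau}x^{(j-1)}$ satisfies precisely the $S_1$-bound (with $(\tau/\tau_0)^\kappa$ loss) that places it in situation~(ii) of Proposition~\ref{prop:nonlinbounds} --- so it feeds entirely into the $E_1$/good side and never contributes to $\triangle\tilde g^{(j-1)}$ at all, while the free part satisfies the vanishing relations by hypothesis and is handled by situation~(i). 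There is therefore no residual worry about $j$-uniformity; the bound on $\triangle\tilde g^{(j-1)}$ follows directly, and you should state this affirmatively rather than as an open issue. Second, a small bookkeeping slip: the pointwise decay of $\triangle f^{(j-1)}_d$ coming from the bad ($E_2$-type) piece is $\sigma^{-1}A$, not $\sigma^{-(2-)}A$, since $|\tilde g(\sigma)|\lesssim\sigma A$ cancels one power of $\sigma^{-2}$; the asserted $\tau^{-(1-)}A$ conclusion still follows because the exponentially localized kernel $H_d(\tau,\sigma)$ effectively returns the pointwise value of the source at $\sigma\sim\tau$, but the intermediate claim as written is off by a power.
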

\begin{rem}\label{rem:nonlineariterstep} As we saw for the proof of Proposition~\ref{prop:firstiteratenonlin}, the reason for the lack of a smallness gain in the constant for the contribution of the bad term $\triangle f^{(j-1)}_{bad}$ is the fact that this term is in effect the Fourier transform of the linear interaction term $5\lambda^{-2}(\tau)(u_{\nu}^4 - u_0^4)\triangle\tilde{\epsilon}^{(j-1)}$, with the latter given by 
\[
\triangle\tilde{\epsilon}^{(j-1)}(\tau, R) = \triangle x^{(j-1)}_d(\tau)\phi_d(R) + \int_0^\infty \triangle x^{(j-1)}(\tau, \xi)\phi(R, \xi)\rho(\xi)\,d\xi, 
\]
and the lack of smallness gain comes precisely from the low-frequency regime $\xi<1$ and the linear in $\tau$ growth of $\triangle\tilde{\epsilon}^{(j-1)}$. Observe that this linear growth occurs only for the continuous spectral part of $\triangle\tilde{\epsilon}^{(j-1)}$, while the discrete spectral part is actually such that it decays to zero asymptotically, by assumption on $\triangle x^{(j-1)}_d(\tau)$. 

\end{rem}

The proof of Proposition~\ref{prop:nonlineariterstep} is very close to the one of Proposition~\ref{prop:firstiteratenonlin}, and hence we omit it here. 
This completes establishing the necessary a priori bounds in a first approximation, although we will have to refine these considerations a bit to obtain the desired convergence of the iterates.

\section{Preparations for the proof of convergence; refined estimates}

Observe that in the key inductive Propositions~\ref{prop:lineariterstep} - Proposition~\ref{prop:nonlineariterstep} we have not improved the constant $A$ controlling the inputs and outputs. This is entirely due to the source terms which are {\it{linear}} in the perturbation $\triangle x^{(j-1)}$ (or alternatively $\triangle \tilde{\epsilon}^{(j-1)}$), and we shall have to rely on delicate {\it{re-iteration arguments}} to obtain convergence of the series $\sum\triangle x^{(j)}$. However, before being able to set up such an argument, which eventually reduces the convergence issue to integrations over high-dimensional simplices, we have to provide certain tools which shall allow us to reduce to a very specific situation at the end, namely essentially unit frequencies throughout. For this, we shall have to re-visit the proofs of some of the earlier propositions and refine them a bit. 
\\

To begin with, we observe that for the time differentiated Duhamel parametrix, we can gain a smallness factor when restricting to very low frequencies: 

\begin{lem}\label{lem:smallnessgainoutputfreqsmall} Let $\triangle x^{(output)}, \triangle_{>\tau} x^{(output)}$ be defined as in Proposition~\ref{prop:lineariterstep}. Then for $0<\kappa_*\ll 1$, we have 
\[
\big(\sum_{\substack{N\geq \tau_0\\ N\,\text{dyadic}}}\sup_{\tau\sim N}(\frac{\tau}{\tau_0})^{2\kappa}\big\|\chi_{\xi<\kappa_*}\mathcal{D}_{\tau}\triangle_{>\tau}x^{(output)}\big\|_{S_2}^2\big)^{\frac12}\lesssim \kappa_*^{\gamma}A. 
\]
for a suitable absolute constant $\gamma>0$. Moreover, if $\triangle x^{(k)},\,1\leq k\leq j$, are as in Proposition~\ref{prop:nonlineariterstep}, then we have 
\[
\big(\sum_{\substack{N\geq \tau_0\\ N\,\text{dyadic}}}\sup_{\tau\sim N}(\frac{\tau}{\tau_0})^{2\kappa}\big\|\chi_{\xi<\kappa_*}\mathcal{D}_{\tau}\triangle_{>\tau}x^{(j)}\big\|_{S_2}^2\big)^{\frac12}\lesssim \kappa_*^{\gamma}[A + A_d\tau_0^{-(1-)}]. 
\]
In both inequalities one may replace the operator $\mathcal{D}_\tau$ by a weight $\frac{\xi^{\frac12-}}{\langle\xi\rangle^{\frac12-}}$. 
\end{lem}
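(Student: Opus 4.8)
The plan is to extract the smallness factor $\kappa_*^{\gamma}$ by tracking, in the proof of Proposition~\ref{prop:lineariterstep}, precisely where powers of the output frequency $\xi$ are available and showing that restricting to $\xi<\kappa_*$ always produces a genuine positive power of $\kappa_*$. Recall that the estimate for $\mathcal{D}_{\tau}\triangle_{>\tau}x^{(output)}$ in the low frequency regime (cases {\it{(3ii.1)}}--{\it{(3ii.4)}} and {\it{Step 4}} of that proof, and the parallel cases in Proposition~\ref{prop:firstiteratelincontimprov}) is \emph{sharp} only in the sense that the weight $\xi^{-0+}$ together with the oscillatory factor $\sin[\lambda(\tau)\xi^{\frac12}\int_\tau^\sigma\lambda^{-1}]/\xi^{\frac12}$ just barely fails to be square-integrable near $\xi=0$. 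The key observation is that applying $\mathcal{D}_{\tau}$ replaces the singular $\xi^{-\frac12}$ by $\cos[\lambda(\tau)\xi^{\frac12}\int_\tau^\sigma\lambda^{-1}]$, which is bounded; consequently the $L^2_{d\xi}$ norm over $\{\xi<\kappa_*\}$ of the relevant output is controlled by the $L^2_{d\xi}(\xi<\kappa_*)$ norm of $\xi^{-0+}$ times a bounded factor, and $\|\xi^{-0+}\|_{L^2_{d\xi}(\xi<\kappa_*)}\lesssim \kappa_*^{\frac12-0+}=\kappa_*^{\gamma_0}$ for $\gamma_0=\frac12-\delta_0>0$. More precisely, I would revisit each of the interaction cases: when $\xi\sim\eta$ or $\xi\gg\eta$ the weight $\xi^{-0+}$ (or the trade to $\eta^{-0+}$) is controlled directly and the restriction $\xi<\kappa_*$ gains $\kappa_*^{\gamma_0}$ from the outer $L^2_{d\xi}$ integral; when $\xi\ll\eta$, one has $\frac{\lambda^2(\tau)}{\lambda^2(\sigma)}\xi<\kappa_*$ forces the inner $\eta$-integral (after the change of variables to $\tilde\eta$) to also gain a comparable power through Cauchy--Schwarz and the definition of $\|\cdot\|_{S_2}$, as in case {\it{(1ii)}} of Proposition~\ref{prop:firstiteratelincontimprov}.

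The next step is to run this frequency-localized bookkeeping through the \emph{entire} chain of estimates that produced $\triangle x^{(output)}$: the contributions of $\beta_\nu^2(\sigma)\mathcal{K}_{cc}\triangle x^{(input)}$, $\beta_\nu(\sigma)\mathcal{K}_{cc}\mathcal{D}_\sigma\triangle x^{(input)}$, $\beta_\nu^2(\sigma)\mathcal{K}_{cc}^2\triangle x^{(input)}$, and the discrete contribution $\mathcal{R}_{cd}(\triangle x^{(input)}_d)$ from Proposition~\ref{prop:lineariterstepdisc}. In each case the $\sigma$-integral and the dyadic square-sum over $N\sim\tau$ were already shown to converge, absorbing at most a factor $(\frac\sigma{\tau_0})^{O(\delta_0)}$; since $\gamma_0=\frac12-\delta_0$ dominates these small losses when $\delta_0\ll1$, the overall output inherits the factor $\kappa_*^{\gamma}$ with $\gamma=\frac12-O(\delta_0)>0$, uniformly in the time variables. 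The $\tau_0^{-(1-)}$-small corrections $\triangle\tilde{\tilde{x}}^{(output)}_{0,1}$ contribute only through $S(\tau)(\triangle\tilde{\tilde{x}}^{(output)}_0,\triangle\tilde{\tilde{x}}^{(output)}_1)$, and the low-frequency part of that free evolution, differentiated and restricted to $\xi<\kappa_*$, is handled by direct inspection of \eqref{eq:linhomparam1}, again gaining $\kappa_*^{\gamma_0}$ from the outer integral after $\mathcal{D}_\tau$ kills the singular weight. For the second assertion, concerning $\triangle x^{(j)}$ as in Proposition~\ref{prop:nonlineariterstep}, one decomposes $\triangle x^{(j)}=\triangle x^{(j)}_{good}+\triangle x^{(j)}_{bad}$ and applies Proposition~\ref{prop:nonlinbounds}: the good part has source term $E_1$ with pointwise $L^\infty_{d\xi}\cap H^{1+}_{dR}$ decay $\sigma^{-(2-)}$, so the restriction $\xi<\kappa_*$ gains $\kappa_*^{\gamma_0}$ exactly as above, while the bad part $E_2(\sigma,R)=\sigma^{-2}\tilde g(\sigma)\langle R\rangle^{-3}g(R,a)$ is handled by the same integration-by-parts-in-$\sigma$ trick used in Proposition~\ref{prop:firstiteratenonlin} (case {\bf{(1b)}}), which again leaves the outer $\xi$-integral with a spare positive power after $\mathcal{D}_\tau$ removes the $\xi^{-\frac12}$.

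The final clause — that one may replace $\mathcal{D}_\tau$ by the weight $\frac{\xi^{\frac12-}}{\langle\xi\rangle^{\frac12-}}$ — follows because, for $\xi<1$, this weight is $\sim\xi^{\frac12-}$, which \emph{also} de-singularizes the factor $\sin[\lambda(\tau)\xi^{\frac12}\int_\tau^\sigma\lambda^{-1}]/\xi^{\frac12}$ inside the Duhamel integrand (turning it into $\xi^{-0+}\sin[\cdots]$, which is square-integrable near $\xi=0$ with a $\kappa_*^{\gamma_0}$ gain), so the identical bookkeeping applies verbatim with $\mathcal{D}_\tau\triangle_{>\tau}x$ replaced by $\frac{\xi^{\frac12-}}{\langle\xi\rangle^{\frac12-}}\triangle_{>\tau}x$; the earlier propositions already established the un-restricted $S_2$-bound for this weighted quantity (it is exactly the $\|\cdot\|_{S_2}$-type norm with an extra $\xi^{\frac12-}$), so only the $\kappa_*$-gain is new and it comes for free from the outer integral. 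I expect the main obstacle to be the low-low interaction $\xi\ll\eta$ in the $\mathcal{D}_\sigma\triangle x^{(input)}$ contribution (the analogue of case {\it{(1ii)}}/{\it{(1vi)}} in Proposition~\ref{prop:firstiteratelincontimprov}), where the naive bound only sees $\eta$-frequencies and the constraint $\xi<\kappa_*$ must be propagated through the change of variables $\tilde\eta=\frac{\lambda^2(\sigma)}{\lambda^2(\tau_0)}\eta$ and the double Cauchy--Schwarz over dyadic $\sigma$ and dyadic $\eta$; one has to check that localizing $\eta\sim2^k$ with $2^k\lesssim\kappa_*\frac{\lambda^2(\sigma)}{\lambda^2(\tau)}$ really does produce a net power of $\kappa_*$ after summing the geometric series in $k$, rather than merely shifting the loss. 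The remaining cases are routine refinements of arguments already carried out in the excerpt.
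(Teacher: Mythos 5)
Your plan captures correctly the easy half of the paper's argument: for the ``non-free'' part of the input (the contributions coming from $\triangle_{>\tau}x^{(input)}$, $\widetilde{\triangle x^{(input)}}$) and for the nonlinear source, the paper does indeed get the $\kappa_*^{\gamma}$ gain by a direct H\"older/$L^\infty_{d\xi}$-plus-measure-of-$\{\xi<\kappa_*\}$ argument, exactly as you describe. The final clause (replacing $\mathcal{D}_\tau$ by the weight $\xi^{\frac12-}/\langle\xi\rangle^{\frac12-}$) is also handled in the way you indicate.

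The gap is in the free-part contribution, i.e.\ when $\triangle x^{(input)}$ is replaced by $S(\tau)(\ldots)$ and one revisits cases {\it (1i)}--{\it (1vi)} of Proposition~\ref{prop:firstiteratelincontimprov}. You correctly flag the low-low interaction $\frac{\lambda^2(\tau)}{\lambda^2(\sigma)}\xi\ll\eta$ as the obstacle, but the mechanism you then propose is incorrect: you want to localise $\eta\sim 2^k$ with $2^k\lesssim\kappa_*\frac{\lambda^2(\sigma)}{\lambda^2(\tau)}$ and extract the gain from the $\eta$-integral. The condition $\frac{\lambda^2(\tau)}{\lambda^2(\sigma)}\xi\ll\eta$ is a \emph{lower} bound on $\eta$ in terms of $\xi$, so the restriction $\xi<\kappa_*$ imposes \emph{no upper} constraint on $\eta$ (e.g.\ $\xi=\kappa_*/2$ and $\eta=1/2$ are both admissible, with $\sigma\geq\tau$). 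Consequently the $\eta$-integral does not see $\kappa_*$ at all, and no geometric sum in $k$ produces the factor you need. Moreover, in this case the output need not be in $L^\infty_{d\xi}$ near $\xi=0$ in a way that lets you just peel off $\|\xi^{-0+}\chi_{\xi<\kappa_*}\|_{L^2}$: the original bound in case {\it (1i)} works precisely because the $L^2_{d\xi}$-integral of the output furnishes a gain of $(\frac{\lambda^2(\tau)}{\lambda^2(\sigma)}\xi)^{\frac12+}$ that \emph{exactly} cancels the $\rho(\eta)\sim\eta^{-\frac12}$ loss; there is no slack for a blunt H\"older step. The paper's actual fix is to \emph{split} that $L^2_{d\xi}$-gain as $(\frac{\lambda^2(\tau)}{\lambda^2(\sigma)}\xi)^{\frac14+}\cdot(\frac{\lambda^2(\tau)}{\lambda^2(\sigma)}\xi)^{\frac14}$, spend the first factor on $\kappa_*^{\frac14}$ (legitimate because $\sigma\geq\tau$ forces $\frac{\lambda^2(\tau)}{\lambda^2(\sigma)}\xi<\kappa_*$), and pay for the remaining $\eta^{-\frac14}$ and the $\sigma$-integral with the second factor plus $\eta^{\frac14}$ from Cauchy--Schwarz, accepting a weaker but still sufficient $\lambda(\tau_0)^{\frac12}/\lambda(\tau)^{\frac12}$ decay. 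The same ``trade part of the $L^2_{d\xi}$-gain for $\kappa_*^{\frac14}$'' idea handles case {\it (1ii)}. That specific re-splitting is the one missing ingredient in your proposal, and without it the argument for the free part does not close.
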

\begin{proof}(lemma)
{\it{(1) Proof of the first inequality.}} Recall that the input $\triangle x^{(input)}$ admits for both low as well as high frequencies a decomposition into a 'free part' of the form $S(\tau)(\ldots)$ plus a term with less structure but better bounds. Thus to prove the first inequality, we distinguish between the case where $\triangle x^{(input)}$ is in effect of the form $S(\tau)(\ldots)$, as well as the remaining situation. In the former situation, the proof of Proposition~\ref{prop:firstiteratelincontimprov} is relevant, and we shall
freely borrow from it. Thus replace for now $\triangle x^{(input)}$ by $x^{(0)}$ with data of norm $\lesssim A$. As in the proof of Proposition~\ref{prop:firstiteratelincontimprov} we only consider the contribution of the delicate source term $\beta_{\nu}(\tau)\mathcal{K}_{cc}\mathcal{D}_{\tau}x^{(0)}$. Then we need to eke out an extra gain $\kappa_*^{\gamma}$ from this proof provided we restrict the output frequency to $\xi<\kappa_*$. Referring to the steps of that proof, consider case {\it{(1i)}}. 
There we use the gain of $(\frac{\lambda^2(\tau)}{\lambda^2(\sigma)}\xi)^{\frac{1}{2+}}$ to cancel the singular factor $\rho(\eta)$, but we can keep track of this gain and obtain the more precise gain 
\[
(\frac{\lambda^2(\tau)}{\lambda^2(\sigma)}\xi)^{\frac{1}{2+}}\cdot\eta^{-\frac{1}{2+}}\cdot\eta^{\frac{1}{2+}}
\]
where the factor $\eta^{\frac{1}{2+}}$ comes from application of Cauchy-Schwarz to the $\eta$-integral. We can also write this as 
\[
(\frac{\lambda^2(\tau)}{\lambda^2(\sigma)}\xi)^{\frac{1}{4+}}\cdot (\frac{\lambda^2(\tau)}{\lambda^2(\sigma)}\xi)^{\frac{1}{4}}\cdot\eta^{-\frac14}\cdot\eta^{\frac14}
\]
Then using the assumption $\frac{\lambda^2(\sigma)}{\lambda^2(\tau_0)}\eta<1$ in that case, we use the factor $\eta^{\frac14}$ to gain $\frac{\lambda^{\frac12}(\tau_0)}{\lambda^{\frac12}(\tau)}$ (weaker than what we had previously in case {\it{(1i)}}, but still enough), while we also gain 
\[
(\frac{\lambda^2(\tau)}{\lambda^2(\sigma)}\xi)^{\frac{1}{4+}}\lesssim \frac{\lambda^{\frac14}(\tau)}{\lambda^{\frac14}(\sigma)}\cdot \kappa_*^{\frac14}
\]
which gives the required gain with $\gamma = \frac14$. 
\\
Case {\it{(1ii)}} is similar in that one trades a power of $2^j$, such as $2^{\frac{j}{4}}$, for a power of $\kappa_*$, such as $\kappa_*^{\frac14}$, while in case {\it{(1iii)}} one gets an extra power of $\kappa_*$ without any losses. 
In case {\it{(1iv)}} one uses the modified bound (assuming $\xi<\kappa_*$)
\[
\big|\xi^{-0+}F(\frac{\lambda^2(\tau)}{\lambda^2(\sigma)}\xi, \eta)\rho(\eta)\big|\lesssim \eta^{\frac12-}\lesssim \kappa_*^{\frac14}[\frac{\lambda(\tau_0)}{\lambda(\sigma)}]^{\frac14-} [\frac{\lambda(\tau_0)}{\lambda(\tau)}]^{2\delta_0}
\]
and again the desired conclusion is reached with $\gamma = \frac14$. The remaining cases {\it{(1v)}}, {\it{(1vi)}} are more of the same.
\\
This deals with the case when $\triangle x^{(input)}$ is replaced by the 'free part' $S(\tau)(\ldots)$. In case that $\triangle x^{(input)}$ is of the form $\triangle_{>\tau} x^{(input)}$ or else $\widetilde{\triangle x^{(input)}}$, one needs to revisit {\it{Step 4}} of the proof of Proposition~\ref{prop:lineariterstep}, where the required gain is easily seen to come from directly from the $\xi$-integral via Holder upon using an $L^\infty_{d\xi}$-bound for the expression, or else exploring the vanishing property of the kernel of $\mathcal{K}_{cc}$, depending on the situation. 
\\

{\it{(2) Proof of the second inequality.}} This follows by revisiting the proofs of Proposition~\ref{prop:firstiteratenonlin}, Proposition~\ref{prop:nonlineariterstep} in for low frequency outputs $\xi<1$. The idea is that for an expression such as 
\begin{align*}
&\big\|\xi^{-0+}\int_{\tau}^\infty\frac{\lambda^{\frac{3}{2}}(\tau)}{\lambda^{\frac{3}{2}}(\sigma)}\frac{\rho^{\frac{1}{2}}(\frac{\lambda^2(\tau)}{\lambda^2(\sigma)}\xi)}{\rho^{\frac{1}{2}}(\xi)}\cos[\lambda(\tau)\xi^{\frac{1}{2}}\int_{\tau}^\sigma\lambda^{-1}(u)\,du]f(\sigma, \underline{x}^{(0)})(\frac{\lambda^2(\tau)}{\lambda^2(\sigma)}\xi)\,d\sigma\big\|_{L^2_{d\xi}(\xi<1)}\\
\end{align*}
which occurs in the proof of Proposition~\ref{prop:firstiteratenonlin}, one gains smallness in $\kappa_*$ (upon restricting to $\xi<\kappa_*$) by estimating the expression in $L^\infty_{d\xi}$ and invoking Holder. This is possible for $\mathcal{D}_{\tau}\triangle x^{(j)}$ but of course not for $\triangle x^{(j)}$. 
We omit the simple details. 
\end{proof}

While the preceding lemma dealt with small output frequencies, we also need a lemma gaining smallness for small input frequencies, but in a more restrictive context: 

\begin{lem}\label{lem:smallnessgaininputfreqsmall} Assume that $\triangle x^{(input)}(\tau, \xi)$ is as in Proposition~\ref{prop:lineariterstep}. Then for $0<\kappa_*\ll 1$, we have 
\begin{align*}
&\big\|\int_{\tau_0}^\infty \frac{\xi^{\frac12}}{\langle\xi\rangle^{\frac12}}\frac{\lambda^{\frac{3}{2}}(\tau_0)}{\lambda^{\frac{3}{2}}(\sigma)}\frac{\rho^{\frac{1}{2}}(\frac{\lambda^2(\tau_0)}{\lambda^2(\sigma)}\xi)}{\rho^{\frac{1}{2}}(\xi)}\frac{\sin[\lambda(\tau_0)\xi^{\frac{1}{2}}\int_{\tau_0}^\sigma\lambda^{-1}(u)\,du]}{\xi^{\frac{1}{2}}}\mathcal{R}_1(\sigma, \chi_{\eta<\kappa_*}\triangle x^{(input)})\,d\sigma\big\|_{S_1}\\&\lesssim [\kappa_*^{\gamma} + \tau_0^{-\gamma}]A
\end{align*}
\begin{align*}
&\big\|\int_{\tau_0}^\infty \frac{\lambda^{\frac{3}{2}}(\tau_0)}{\lambda^{\frac{3}{2}}(\sigma)}\frac{\rho^{\frac{1}{2}}(\frac{\lambda^2(\tau_0)}{\lambda^2(\sigma)}\xi)}{\rho^{\frac{1}{2}}(\xi)}\cos[\lambda(\tau_0)\xi^{\frac{1}{2}}\int_{\tau_0}^\sigma\lambda^{-1}(u)\,du]\mathcal{R}_1(\sigma, \chi_{\eta<\kappa_*}\triangle x^{(input)})\,d\sigma\big\|_{S_2}\\&\lesssim [\kappa_*^{\gamma}+ \tau_0^{-\gamma}]A
\end{align*}
for a suitable $\gamma>0$. 
\end{lem}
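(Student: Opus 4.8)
\textbf{Proof strategy for Lemma~\ref{lem:smallnessgaininputfreqsmall}.}
The plan is to re-run the arguments from \emph{Step 3} (Lemma~\ref{lem:hereneednorm}) and \emph{Step 4} of the proof of Proposition~\ref{prop:lineariterstep}, but with the input frequency sharply localized to $\eta<\kappa_*$, and then to harvest the gain $\kappa_*^{\gamma}$ (or $\tau_0^{-\gamma}$) from the places where one previously just threw away a dyadic power of $\eta$ or of $\xi$. First I would recall the schematic decomposition
\[
\mathcal{R}_1(\sigma, \chi_{\eta<\kappa_*}\triangle x^{(input)}) = \beta_{\nu}^2(\sigma)\mathcal{K}_{cc}(\chi_{\eta<\kappa_*}\triangle x^{(input)}) + \beta_{\nu}(\sigma)\mathcal{K}_{cc}\mathcal{D}_{\sigma}(\chi_{\eta<\kappa_*}\triangle x^{(input)}) + \beta_{\nu}^2(\sigma)\mathcal{K}_{cc}^2(\chi_{\eta<\kappa_*}\triangle x^{(input)}),
\]
noting that in the regime $\eta<\kappa_*<1$ the input may be replaced by $\triangle_{>\sigma}x^{(input)}$, on which the hypotheses of Proposition~\ref{prop:lineariterstep} give the bounds $(\tfrac{\sigma}{\tau_0})^{-\kappa}\|\chi_{\eta<\kappa_*}\triangle_{>\sigma}x^{(input)}\|_{S_1}\lesssim A$ and $(\tfrac{\sigma}{\tau_0})^{\kappa}\|\chi_{\eta<\kappa_*}\mathcal{D}_{\sigma}\triangle_{>\sigma}x^{(input)}\|_{S_2}\lesssim A$. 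The three terms are handled in the same way, so I would present the two that matter: $\beta_{\nu}^2(\sigma)\mathcal{K}_{cc}(\ldots)$ (for which the weight $\beta_{\nu}^2(\sigma)\sim\sigma^{-2}$ makes the $\sigma$-integral absolutely convergent) and the borderline term $\beta_{\nu}(\sigma)\mathcal{K}_{cc}\mathcal{D}_{\sigma}(\ldots)$ (for which one must split into the interaction cases $\tfrac{\lambda^2(\tau_0)}{\lambda^2(\sigma)}\xi\lessgtr\sim\eta$ exactly as in cases \textit{(3i.1)}--\textit{(3ii.4)} of the proof of Lemma~\ref{lem:hereneednorm}, using orthogonality over dyadic $\eta$).

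The key point is that in every one of those cases the old argument produced, after Cauchy--Schwarz in $\eta$, a factor of the form $\eta^{\beta}$ with $\beta>0$ (e.g. the $\eta^{1/2-}$ appearing in case \textit{(3i.1)}, the $2^{j/2}$ in \textit{(3i.3)}, or the $\langle\tfrac{\lambda(\tau_0)}{\lambda(\sigma)}\xi\rangle^{-1}$ combined with the vanishing of $F(\cdot,0)$ in the off-diagonal case), which was previously absorbed harmlessly. Restricting $\eta<\kappa_*$ turns each such factor into $\kappa_*^{\beta}$ times what it was before, uniformly in the remaining time- and frequency-integrals; choosing $\gamma=\min\beta$ over the (finitely many) cases, typically $\gamma=\tfrac14$ as in Lemma~\ref{lem:smallnessgainoutputfreqsmall}, gives the claimed $\kappa_*^{\gamma}A$. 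In the resonant/diagonal case $\tfrac{\lambda^2(\tau_0)}{\lambda^2(\sigma)}\xi\sim\eta$ the frequency localization $\eta<\kappa_*$ also forces $\tfrac{\lambda^2(\tau_0)}{\lambda^2(\sigma)}\xi\lesssim\kappa_*$, so one gains either from the $L^2_{d\xi}$ integral (when $\tfrac{\lambda^2(\tau_0)}{\lambda^2(\sigma)}\xi<1$, the square root of the measure of the $\xi$-support is $\lesssim\kappa_*^{1/2}(\tfrac{\lambda(\sigma)}{\lambda(\tau_0)})^{1/2}$, which is controlled once $\sigma$ is also restricted) or, when $\sigma$ is already so large that $\tfrac{\lambda^2(\tau_0)}{\lambda^2(\sigma)}<\kappa_*$ even at $\xi\sim1$, one extracts the alternative gain $\tau_0^{-\gamma}$ by crude bounds in $\sigma$ together with the fast decay of $\beta_{\nu}$; this is where the $\tau_0^{-\gamma}$ alternative in the statement enters. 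For the second displayed inequality (the $S_2$-bound on the $\cos$-Duhamel term, which is the $\mathcal{D}_\tau$-differentiated parametrix), the same case analysis applies, and here the $\mathcal{D}_\tau$ has already de-singularized the $\xi^{-1/2}$, so the gain from the $\xi$-integral via an $L^\infty_{d\xi}$ estimate plus H\"older is even more direct, exactly as in part (1) of the proof of Lemma~\ref{lem:smallnessgainoutputfreqsmall}.

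The main obstacle I anticipate is the borderline nature of the $\beta_{\nu}(\sigma)\mathcal{K}_{cc}\mathcal{D}_{\sigma}(\ldots)$ contribution in the \emph{diagonal} regime $\tfrac{\lambda^2(\tau_0)}{\lambda^2(\sigma)}\xi\sim\eta$: here $\beta_\nu(\sigma)\sim\sigma^{-1}$ is only just non-integrable, the kernel $F$ decays slowly in the singular region, and the input bound on $\mathcal{D}_{\sigma}\triangle_{>\sigma}x^{(input)}$ itself carries the growing weight $(\tfrac{\sigma}{\tau_0})^{-\kappa}$. One must therefore not only localize $\eta\sim2^j<\kappa_*$ and sum, but carefully match the $\kappa_*^{\gamma}$ gain against the logarithmic-type divergence in $\sigma$, which forces the dichotomy between the $\kappa_*^{\gamma}$ and $\tau_0^{-\gamma}$ alternatives depending on whether $\sigma\lesssim\kappa_*^{-c}\tau_0$ or $\sigma\gtrsim\kappa_*^{-c}\tau_0$ for a suitable small $c$; in the first range one uses the $\kappa_*$-gain and the (then finite) $\sigma$-integral, in the second the $\sigma$-integral over $[\kappa_*^{-c}\tau_0,\infty)$ is itself $\lesssim\tau_0^{-\gamma}$ by the decay of $\beta_\nu$ and a crude bound. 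Apart from this bookkeeping the estimate is a routine re-reading of the cited proofs, so I would state it as ``following the proof of Lemma~\ref{lem:hereneednorm} and \emph{Step 4} of Proposition~\ref{prop:lineariterstep}, extracting in each interaction case the additional power of $\eta<\kappa_*$ (resp.\ of $\tau_0^{-1}$) which was previously discarded,'' and only spell out the diagonal borderline case in detail.
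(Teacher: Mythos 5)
Your overall \emph{mechanism} — convert the frequency restriction $\eta<\kappa_*$ into a gain $\kappa_*^{\gamma}$ by re-running the case analysis from Lemma~\ref{lem:hereneednorm} and Step~4 of Proposition~\ref{prop:firstiteratelincont} — is in the right spirit, but there are two genuine gaps.

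First, you never mention the extra weight $\xi^{1/2}/\langle\xi\rangle^{1/2}$ that appears in the first displayed inequality. In the paper's argument this added output weight is not cosmetic: it is precisely what turns the low–frequency case $\eta<\kappa_*$ into a usable gain. Concretely, in the interaction regimes $\xi\cdot\tfrac{\lambda^2(\tau)}{\lambda^2(\sigma)}\lesssim\eta$ the extra factor $\xi^{1/2}/\langle\xi\rangle^{1/2}$ translates into a factor of order $\eta^{1/2}\tfrac{\lambda(\sigma)}{\lambda(\tau)}\lesssim\kappa_*^{1/2}\tfrac{\lambda(\sigma)}{\lambda(\tau)}$, which after weakening to $[\kappa_*^{1/2}\tfrac{\lambda(\sigma)}{\lambda(\tau)}]^{2\gamma}$ still makes the $\sigma$-integral converge. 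Your account attributes the gain solely to stray powers of $\eta$ already present in the old estimates; that is not what the paper exploits, and for some cases (notably when the input is the free evolution) the old estimates have no spare $\eta$-power to give away without the new weight.

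Second, you state that ``the input may be replaced by $\triangle_{>\sigma}x^{(input)}$'' because $\eta<\kappa_*<1$. That is not correct: in the low-frequency regime the hypothesis of Proposition~\ref{prop:lineariterstep} decomposes $\triangle x^{(input)}=\triangle_{>\sigma}x^{(input)}+S(\sigma)(\triangle\tilde{x}_0^{(input)},\triangle\tilde{x}_1^{(input)})$, and the free piece $S(\sigma)(\cdots)$ also contributes. In the paper's proof the free piece is actually the delicate one — it is for this piece that the weight $\xi^{1/2}/\langle\xi\rangle^{1/2}$ and the reductions from Steps~4--5 of Proposition~\ref{prop:firstiteratelincont} are invoked, while the $\triangle_{>\sigma}x^{(input)}$ piece is dispatched by Lemma~\ref{lem:hereneednorm}. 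By omitting the free piece you have left out the main case. Your alternative dichotomy in $\sigma$ ($\sigma\lessgtr\kappa_*^{-c}\tau_0$) is also not the mechanism used in the paper, and without the weight observation it does not rescue the free-piece contribution in the diagonal regime.
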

\begin{proof}(lemma) Observe that in the first expression we have included a crucial additional factor $\frac{\xi^{\frac12}}{\langle\xi\rangle^{\frac12}}$ which gives a gain for low frequencies $\xi<1$. Then, depending on whether we replace $\triangle x^{(input)}(\tau, \xi)$ by its 'free part' $S(\tau)(\ldots)$ or by a less structured but smaller error part (as in the assumptions in Proposition~\ref{prop:lineariterstep}), the relevant gain is obtained by revisiting {\it{Step 4, 5}} in the proof of Proposition~\ref{prop:firstiteratelincont}, respectively the proof of Lemma~\ref{lem:hereneednorm}. Assuming first that $\triangle x^{(input)}$ is 'free' and consulting {\it{Step 4}} in the proof of Proposition~\ref{prop:firstiteratelincont}, one sees that in the situations $\xi\cdot\frac{\lambda^2(\tau)}{\lambda^2(\sigma)}\ll \eta$, $\xi\cdot\frac{\lambda^2(\tau)}{\lambda^2(\sigma)}\sim\eta$, the extra factor $\frac{\xi^{\frac12}}{\langle\xi\rangle^{\frac12}}$ translates into a gain of 
\[
\eta^{\frac{1}{2}}\frac{\lambda(\sigma)}{\lambda(\tau)}\lesssim \kappa_*^{\frac12} \frac{\lambda(\sigma)}{\lambda(\tau)}, 
\]
keeping in mind our additional restriction $\eta<\kappa_*$. We may replace the preceding also by a factor involving a weaker gain in $\kappa_*$, 
\[
[\kappa_*^{\frac12} \frac{\lambda(\sigma)}{\lambda(\tau)}]^{2\gamma}, 
\]
which leads to a good bound following {\it{(4(i).a)}}, {\it{(4(ii).b)}} in {\it{Step 4}} in the proof of Proposition~\ref{prop:firstiteratelincont}, provided 
\[
\int_{\tau}^\infty [\frac{\lambda(\sigma)}{\lambda(\tau)}]^{2\gamma}\beta_{\nu}^2(\sigma)(\frac{\lambda(\sigma)}{\lambda(\tau_0)})^{2\delta_0}\,d\sigma
\]
converges, which is the case provided $0<\gamma<\gamma_0(\nu)$. On the other hand, in case {\it{(4(ii).c)}} one performs integration by parts with respect to $\sigma$ to translate the gain in $\xi$ (for $\xi<1$) into a gain in $\eta$, whence in $\kappa_*$.  
This deals with the case when $\triangle x^{(input)}(\tau, \xi)$ is 'free' and we restrict to small output frequencies $\xi<1$. On the other hand, in the case of large output frequencies $\xi>1$, following {\it{Step 5}} in the proof of Proposition~\ref{prop:firstiteratelincont}, one directly infers either a gain $\tau_0^{-\gamma}$ or else a gain $\kappa_*^{\gamma}$, which the concludes the case when $\triangle x^{(input)}(\tau, \xi)$ is 'free'. 
\\
On the other hand, if $\triangle x^{(input)}$ is of the form $\triangle_{>\tau} x^{(input)}$ (for low frequencies) or $\widetilde{\triangle x^{(input)}}$ (for high frequencies), then the required gain follows directly from the proof of Lemma~\ref{lem:hereneednorm}. 
\end{proof}

The preceding two lemmas have dealt with improvements to estimates tied to the linear source term $\mathcal{R}_1(\sigma, \triangle x^{(input)})$. The next lemma deals with an improvement for the terms arising from the nonlinear source terms $\triangle f$. Observe that we jut break even in the conclusion of Proposition~\ref{prop:firstiteratenonlin}, as well as Proposition~\ref{prop:nonlineariterstep}, without smallness gain. This  is entirely due to the case of low frequencies of the output, and hence inclusion of an additional weight there improves the estimate correspondingly: 
\begin{lem}\label{lem:smallnessgainnl} Using the same notation and making the same assumptions as in the statement of Proposition~\ref{prop:nonlineariterstep}, put 
\[
\triangle x^{(j)}: = \triangle x^{(j)}_{good} + \triangle x^{(j)}_{bad},\,\triangle \tilde{\tilde{x}}^{(j)}_{\rho}: = \triangle \tilde{\tilde{x}}^{(j)}_{good,\rho} + \triangle \tilde{\tilde{x}}^{(j)}_{bad,\rho},\,\rho = 0, 1. 
\]
Then setting 
\[
\widetilde{\triangle x^{(j)}}(\tau, \cdot): = \triangle x^{(j)}(\tau, \cdot) - S(\tau)\big(\triangle \tilde{\tilde{x}}^{(j)}_0, \triangle \tilde{\tilde{x}}^{(j)}_1\big),\,\triangle_{>\tau} x^{(j)}: =  \triangle_{>\tau} x^{(j)}_{good} + \triangle_{>\tau} x^{(j)}_{bad}, 
\]
as well as 
\[
\triangle \tilde{x}^{(j)}_\rho(\xi): = \triangle \tilde{x}^{(j)}_{good,\rho}(\xi) + \triangle \tilde{x}^{(j)}_{bad,\rho}(\xi),\,\rho = 0,\,1, 
\]
we have the improved estimates 
 \begin{equation}\begin{split}
  \sup_{\tau\geq \tau_0}(\frac{\tau_0}{\tau})^{\kappa}\big\|\chi_{\xi>1}\triangle x^{(j)}(\tau, \xi)\big\|_{S_1}+  \big(\sum_{\substack{N\gtrsim \tau_0\\ N\,\text{dyadic}}}\sup_{\tau\sim N}(\frac{\tau}{\tau_0})^{2\kappa}\big\|\chi_{\xi>1}\mathcal{D}_{\tau}\widetilde{\triangle x^{(j)}}\big\|_{S_2}^2\big)^{\frac12}
 \lesssim [A+A_d]\cdot\tau_0^{-\gamma}.
  \end{split}\end{equation}
as well as
 \begin{equation}\begin{split}
 &\sup_{\tau\geq \tau_0}(\frac{\tau}{\tau_0})^{-\kappa}\big\|\chi_{\xi<1}\frac{\xi^{\frac12}}{\langle\xi\rangle^{\frac12}}\triangle x^{(j)}_{>\tau}(\tau, \xi)\big\|_{S_1}\\& + \big(\sum_{\substack{N\gtrsim \tau_0\\ N\,\text{dyadic}}}[\sup_{\tau\sim N}(\frac{\tau}{\tau_0})^{\kappa}\big\|\chi_{\xi<1}\mathcal{D}_{\tau}\triangle_{>\tau}x^{(j)}(\tau, \xi)\big\|_{S_2}]^2\big)^{\frac{1}{2}}\\&
 + \big\|\big(\frac{\xi^{\frac12-}}{\langle\xi\rangle^{\frac12-}}\triangle \tilde{x}^{(j)}_0(\xi), \triangle \tilde{x}^{(j)}_1(\xi)\big)\big\|_{S}\lesssim [A+A_d]\cdot\tau_0^{-\gamma},
  \end{split}\end{equation}
for suitable $\gamma>0$. 
\end{lem}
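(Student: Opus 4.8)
The plan is to revisit the proofs of Proposition~\ref{prop:firstiteratenonlin} and Proposition~\ref{prop:nonlineariterstep}, which as noted there only ``break even'' — i.e.\ produce the constant $[A+A_d]$ with no smallness gain — and to show that the loss occurs in exactly one place, namely the contribution of the bad term $\triangle f^{(j-1)}_{bad}$ (equivalently, the source term $5\lambda^{-2}(\tau)(u_\nu^4-u_0^4)\triangle\tilde\epsilon^{(j-1)}$, i.e.\ the term $E_2$ in Proposition~\ref{prop:nonlinbounds}) to the \emph{low-frequency, un-differentiated} output $\triangle x^{(j)}_{>\tau}$. The strategy is then to exploit that the assertion of the present lemma either (i) inserts the extra weight $\frac{\xi^{\frac12}}{\langle\xi\rangle^{\frac12}}$ in the low-frequency un-differentiated piece, or (ii) asks only for the differentiated quantity $\mathcal{D}_\tau\triangle_{>\tau}x^{(j)}$, or (iii) asks for the high-frequency piece — and in each of these three situations a genuine gain $\tau_0^{-\gamma}$ is available. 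Concretely: for the high-frequency bounds one reads off from {\it{Step 1}} of the proof of Proposition~\ref{prop:firstiteratenonlin} (cases {\bf{(1a)}}, {\bf{(1b)}}) that the time integral $\int_{\tau_0}^\tau(\lambda(\sigma)/\lambda(\tau))^{\frac12+2\delta_0}\sigma^{-(2-)}\,d\sigma$ already yields a factor $\tau_0^{-(1-)}$, hence after square-summing over dyadic $\tau$ a gain $\tau_0^{-\gamma}$, uniformly for both the $E_1$ and $E_2$ contributions; the estimate on $\big\|\chi_{\xi\geq1}(\triangle\tilde x^{(j)}_0,\triangle\tilde x^{(j)}_1)\big\|_S$ was in fact already shown in that proof to carry a factor $\tau_0^{-\delta}$. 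For the differentiated low-frequency bound $\mathcal{D}_\tau\triangle_{>\tau}x^{(j)}$ one invokes \eqref{eq:firstiteratelincontimprov1nl} / the corresponding conclusion of Proposition~\ref{prop:nonlineariterstep}, where the extra $\sigma^{-\delta_0}$ and the integration-by-parts in $\sigma$ (exactly as in the treatment of the $E_2$ term there) produce the gain.

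The only real work, therefore, is the low-frequency \emph{un-differentiated} piece with the inserted weight $\frac{\xi^{\frac12}}{\langle\xi\rangle^{\frac12}}$, together with the $\big\|\big(\frac{\xi^{\frac12-}}{\langle\xi\rangle^{\frac12-}}\triangle\tilde x^{(j)}_0,\triangle\tilde x^{(j)}_1\big)\big\|_S$ bound. Here the plan is to mimic the estimate ``$\sup_{\tau\geq\tau_0}(\tau/\tau_0)^{-\kappa}\|\chi_{\xi<1}\triangle x^{(1)}_{>\tau}\|_{S_1}$'' from {\it{Step 2}} of the proof of Proposition~\ref{prop:firstiteratenonlin}, but to track the extra weight. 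In that estimate the offending factor was the singular kernel $\frac{\sin[\lambda(\tau)\xi^{1/2}\int_\tau^\sigma\lambda^{-1}]}{\xi^{1/2}}$, whose $L^2_{d\xi}(\xi<1)$-norm over the relevant range costs $\tau^{2\delta_0}$ and was only barely compensated by $\big(\frac{\sigma}{\tau_0}\big)^{\kappa}\sigma^{-1-\delta_0}$ pointwise control of $f$; the point is that the additional $\frac{\xi^{\frac12}}{\langle\xi\rangle^{\frac12}}$ (for $\xi<1$ a genuine power $\xi^{\frac12}$) precisely de-singularizes that kernel, removing the $\xi^{-1/2}$, so the $\xi$-integral converges with room to spare and one gains $\tau^{-\gamma}$ rather than merely $\tau^{0}$; integrating $\int_\tau^\infty\frac{\lambda(\tau)}{\lambda(\sigma)}\sigma^{-1-\delta_0}\,d\sigma$ then gives an overall $\tau^{-\gamma}$, and one reads off the gain $\tau_0^{-\gamma}$ on the initial-data quantities by evaluating at $\tau=\tau_0$. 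The same mechanism handles $\triangle\tilde x^{(j)}_1$ without the weight, since that term already lives on $\cos[\cdots]$ with no $\xi^{-1/2}$. One must of course separate the $E_1$ and $E_2$ contributions to $f(\sigma,\cdot)$ via Proposition~\ref{prop:nonlinbounds}: for $E_1$ the decay $\sigma^{-(2-)}$ is more than enough; for $E_2$, whose $H^1$-norm only decays like $\sigma^{-1}$, one combines the de-singularization above with an integration-by-parts in $\sigma$ (using $\tau^{-1}|\tilde g|+|\partial_\tau\tilde g|\lesssim\alpha$ from Proposition~\ref{prop:nonlinbounds}, and splitting at $\xi^{1/2}\frac{\lambda(\tau)}{\lambda(\sigma)}\gtrless\sigma^{-\delta}$ exactly as in {\it{Step 2}} of the proof of Proposition~\ref{prop:firstiteratenonlin}).

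Finally, the passage from the first-iterate statement to the $j$-th iterate is routine: the structural hypotheses of Proposition~\ref{prop:nonlineariterstep} guarantee that $\triangle f^{(j-1)}_{bad}$ is, schematically, the Fourier transform of $5\lambda^{-2}(\tau)(u_\nu^4-u_0^4)\triangle\tilde\epsilon^{(j-1)}$ with $\triangle\tilde\epsilon^{(j-1)}$ obeying the same bounds (with constant $A$, and discrete part $\lesssim A_d\tau^{-(1-)}$) as $\triangle\tilde\epsilon^{(1)}$ did with constant $[\|(x_0,x_1)\|_{\tilde S}+|x_{0d}|]$, so the whole analysis above applies verbatim, yielding the constant $[A+A_d]\tau_0^{-\gamma}$; the discrete-spectral contribution, as observed in Remark~\ref{rem:nonlineariterstep}, decays in $\tau$ and produces only smallness-gaining terms, so it is absorbed into $A_d\tau_0^{-\gamma}$. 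I expect the main obstacle to be bookkeeping rather than anything conceptual: one has to be careful that inserting $\frac{\xi^{\frac12}}{\langle\xi\rangle^{\frac12}}$ does not destroy the square-summability over dyadic time scales that was arranged in \eqref{eq:firstiteratelincontimprov1nl}, and that the value of $\gamma>0$ can be chosen uniformly (in particular $\gamma<\gamma_0(\nu)$ so that the weighted $\sigma$-integrals of $\beta_\nu^2(\sigma)$, $\beta_\nu(\sigma)$ against powers of $\lambda(\sigma)/\lambda(\tau_0)$ still converge), exactly as in Lemma~\ref{lem:smallnessgaininputfreqsmall}. I omit the routine details, which parallel those of the proofs of Proposition~\ref{prop:firstiteratenonlin} and Proposition~\ref{prop:nonlineariterstep}.
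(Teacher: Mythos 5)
Your proposal is correct and follows essentially the same route as the paper, which itself gives only a one-sentence justification: it refers back to Proposition~\ref{prop:firstiteratenonlin} and notes that the extra weight $\frac{\xi^{\frac12-}}{\langle\xi\rangle^{\frac12-}}$ on the un-differentiated low-frequency quantities is what allows the $\sigma$-integration-by-parts against the bad source $E_2$. You correctly locate the unique break-even point, and you correctly observe that the high-frequency bounds and the $\mathcal{D}_\tau$-differentiated low-frequency bound already gain $\tau_0^{-\gamma}$ by \eqref{eq:firstiterhighnl}--\eqref{eq:firstiteratelincontimprov1nl}.

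One sentence in your write-up is misleading as phrased: the claim that the weight $\xi^{\frac12}/\langle\xi\rangle^{\frac12}$ ``de-singularizes the kernel \dots\ and one gains $\tau^{-\gamma}$'' overstates what the weight alone does. The weight merely removes the $\tau^{2\delta_0}$ loss coming from $\big\|\xi^{0-}\frac{\sin[\cdots]}{\xi^{1/2}}\big\|_{L^2_{d\xi}(\xi<1)}$; the actual gain in $\tau$ comes from the $\sigma$-integral, and for the $E_2$ contribution this requires the integration by parts you describe in the following clause (exchanging the borderline $\sigma^{-1}$ decay for $\sigma^{-2}$ via $|\partial_\sigma\tilde g|\lesssim\alpha$, after splitting at $\xi^{1/2}\lambda(\tau)/\lambda(\sigma)\gtrless\sigma^{-\delta}$). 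Since you do invoke precisely this IBP-and-splitting mechanism a sentence later, the argument as a whole is sound; only the intermediate attribution of the gain to the $\xi$-integral alone is imprecise. Also note that the square-sum concern you raise at the end is moot: the statement puts the extra weight only on the $\sup_\tau$-type norm for the un-differentiated quantity, while the square-sum norm of $\mathcal{D}_\tau\triangle_{>\tau}x^{(j)}$ carries no weight and already gains by \eqref{eq:firstiteratelincontimprov1nl}.
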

The proof of this lemma follows again in close analogy to the one of Proposition~\ref{prop:firstiteratenonlin}, the point being that inclusion of the extra weight $\frac{\xi^{\frac12-}}{\langle\xi\rangle^{\frac12-}}$ in the un-differentiated terms $\triangle x^{(j)}_{>\tau},\,\triangle \tilde{x}^{(j)}_0(\xi)$ allows one to perform integration by parts with respect to $\sigma$ in the Duhamel parametrix applied to the 'bad' source terms $E_2$, recalling Proposition~\ref{prop:nonlinbounds}. 
\\

In the next section, we shall complement the preceding three lemmas with further improvements of 're-iteration type', i. e. exploiting cancellations when applying the iterative step twice, as opposed to only once(like for the preceding three lemmas). 

\section{Improvements upon re-iteration}

Recall the key iterative step \eqref{eq:incrementj}. Each of the two terms on the right contributes to the next iterate $\underline{\triangle x}^{(j)}$, in a way specified by Proposition~\ref{prop:lineariterstep}, Proposition~\ref{prop:nonlineariterstep}, and we can correspondingly write the continuous spectral part of $\triangle x^{(j)}$ in the following manner(changing the notation for $\triangle\tilde{\tilde{x}}^{(j)}_{0,1}$, which now incorporate the contributions from both source terms)
\begin{equation}\label{eq:trianglexjformula}\begin{split}
&\triangle x^{(j)}(\tau, \xi)\\& =  \int_{\tau_0}^\tau\frac{\lambda^{\frac{3}{2}}(\tau)}{\lambda^{\frac{3}{2}}(\sigma)}\frac{\rho^{\frac{1}{2}}(\frac{\lambda^2(\tau)}{\lambda^2(\sigma)}\xi)}{\rho^{\frac{1}{2}}(\xi)}\frac{\sin[\lambda(\tau)\xi^{\frac{1}{2}}\int_{\tau}^\sigma\lambda^{-1}(u)\,du]}{\xi^{\frac{1}{2}}}\\&\hspace{4cm}\cdot\big[\triangle f^{(j-1)}(\sigma, \cdot) + \mathcal{R}(\sigma, \underline{\triangle x}^{(j-1)})\big](\frac{\lambda^2(\tau)}{\lambda^2(\sigma)}\xi)\,d\sigma\\
 & + S(\tau)\big(\triangle\tilde{\tilde{x}}^{(j)}_{0}, \triangle\tilde{\tilde{x}}^{(j)}_{1}\big),
\end{split}\end{equation}
The goal of this section is to show that upon {\it{re-iteration}} of \eqref{eq:trianglexjformula}, the contribution of the nonlinear source term $\triangle f^{(j-1)}$ becomes essentially negligible, and we can reduce to the contribution of the principal linear source term $\beta_{\nu}(\sigma)\mathcal{K}_{cc}\mathcal{D}_{\sigma}\triangle x^{(j-1)}$. 
\\
To make things precise, we commence by introducing a quantity $\triangle A_j\in \R_+$ which measures the size of the correction $\triangle x^{(j)}$ as well as all the related functions, in accordance with Proposition~\ref{prop:lineariterstep}, Proposition~\ref{prop:nonlineariterstep}. Thus we make 
\begin{defn}\label{defn:sizeofcorrection} We define the quantity $\triangle A_j$, which controls the size of the correction at stage $j$, by 
\begin{align*}
&\triangle A_j:=\\& \sup_{\tau\geq \tau_0}(\frac{\tau_0}{\tau})^{\kappa}\big\|\chi_{\xi>1}\triangle x^{(j)}(\tau, \xi)\big\|_{S_1} + \big(\sum_{\substack{N\gtrsim \tau_0\\ N\,\text{dyadic}}}[\sup_{\tau\sim N}(\frac{\tau}{\tau_0})^{\kappa}\big\|\chi_{\xi>1}\mathcal{D}_{\tau}\widetilde{\triangle x^{(j)}}(\tau, \xi)\big\|_{S_2}]^2\big)^{\frac{1}{2}}\\
& + \sup_{\tau\geq \tau_0}(\frac{\tau_0}{\tau})^{\kappa}\big\|\chi_{\xi<1}\triangle_{>\tau}\triangle x^{(j)}(\tau, \xi)\big\|_{S_1} + \big(\sum_{\substack{N\gtrsim \tau_0\\ N\,\text{dyadic}}}[\sup_{\tau\sim N}(\frac{\tau}{\tau_0})^{\kappa}\big\|\chi_{\xi<1}\mathcal{D}_{\tau}\triangle_{>\tau}\triangle x^{(j)}(\tau, \xi)\big\|_{S_2}]^2\big)^{\frac{1}{2}}\\
& + \big\|(\triangle\tilde{x}_0^{(j)}, \triangle\tilde{x}_1^{(j)})\big\|_{\tilde{S}} + \big\|(\triangle\tilde{\tilde{x}}^{(j)}_0, \triangle\tilde{\tilde{x}}^{(j)}_1)\big\|_{\tilde{S}} + \sup_{\tau\geq\tau_0}\tau^{(1-)}|\triangle x_d^{(j)}(\tau)| + \sup_{\tau\geq\tau_0}\tau^{(1-)}|\partial_{\tau}\triangle x_d^{(j)}(\tau)|
\end{align*}
\end{defn}

Now pick a very small frequency cutoff $0<\kappa_*\ll 1$, to be specified later. We already point out that $\kappa_*$ will be chosen independently of $\tau_0$, and the latter will then be picked at the very end large enough to kill any constants depending on $\kappa_*$. 
Recalling that 
\begin{align*}
&\widetilde{\triangle x^{(j)}}(\tau, \xi) = \triangle x^{(j)}(\tau, \xi) - S(\tau)\big(\triangle\tilde{\tilde{x}}^{(j)}_{0}, \triangle\tilde{\tilde{x}}^{(j)}_{1}\big)\\
& = \int_{\tau_0}^\tau\frac{\lambda^{\frac{3}{2}}(\tau)}{\lambda^{\frac{3}{2}}(\sigma)}\frac{\rho^{\frac{1}{2}}(\frac{\lambda^2(\tau)}{\lambda^2(\sigma)}\xi)}{\rho^{\frac{1}{2}}(\xi)}\frac{\sin[\lambda(\tau)\xi^{\frac{1}{2}}\int_{\tau}^\sigma\lambda^{-1}(u)\,du]}{\xi^{\frac{1}{2}}}\\&\hspace{4cm}\cdot\big[\triangle f^{(j-1)}(\sigma, \cdot) + \mathcal{R}(\sigma, \underline{\triangle x}^{(j-1)})\big](\frac{\lambda^2(\tau)}{\lambda^2(\sigma)}\xi)\,d\sigma,\\
\end{align*}
we can now state the following important {\it{re-iteration lemma}}: 
\begin{lem}\label{lem:Phireduction} Introduce the operator 
\begin{align*}
&\Phi(f)(\tau, \xi): = \\&\int_{\tau_0}^\tau \frac{\lambda^{\frac{3}{2}}(\tau)}{\lambda^{\frac{3}{2}}(\sigma)}\frac{\rho^{\frac{1}{2}}(\frac{\lambda^2(\tau)}{\lambda^2(\sigma)}\xi)}{\rho^{\frac{1}{2}})\xi)}\cos[\lambda(\tau)\xi^{\frac{1}{2}}\int_{\tau}^\sigma\lambda^{-1}(u)\,du]\beta_{\nu}(\sigma)(\mathcal{K}_{cc}f)(\frac{\lambda^2(\tau)}{\lambda^2(\sigma)}\xi)\,d\sigma.
\end{align*}
Borrowing the notation from Proposition~\ref{prop:nonlineariterstep}, assume that 
\[
\sum_{k=1}^{j-1}\triangle A_k\lesssim1.
\]
Then 
\begin{equation}\label{eq:keyPhiinductive}\begin{split}
&\big(\sum_{\substack{N\gtrsim \tau_0\\ N\,\text{dyadic}}}[\sup_{\tau\sim N}(\frac{\tau}{\tau_0})^{\kappa}\big\|\chi_{\xi>\kappa_*}\mathcal{D}_{\tau}\widetilde{\triangle x^{(j)}}\big\|_{S_2}]^2\big)^{\frac{1}{2}}\\&\lesssim \big(\sum_{\substack{N\gtrsim \tau_0\\ N\,\text{dyadic}}}[\sup_{\tau\sim N}(\frac{\tau}{\tau_0})^{\kappa}\big\|\chi_{\xi>\kappa_*}\Phi\chi_{\xi>\kappa_*}\Phi(\mathcal{D}_{\tau}\triangle x^{(j-2)})\big\|_{S_2}]^2\big)^{\frac{1}{2}} + C_{\kappa_*}\tau_0^{-\gamma}[\triangle A_{j-1} + \triangle A_{j-2}]
\end{split}\end{equation}
for suitable $\gamma>0$ (absolute constant). 
\end{lem}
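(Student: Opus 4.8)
The goal is to show that iterating the Duhamel step twice kills all source terms except the doubly-applied principal transference operator $\Phi$ acting on $\mathcal{D}_\tau\triangle x^{(j-2)}$. The starting point is the formula for $\widetilde{\triangle x^{(j)}}$ recorded just before the statement, in which the source term is $\triangle f^{(j-1)}(\sigma,\cdot) + \mathcal{R}(\sigma,\underline{\triangle x}^{(j-1)})$. First I would dispose of the nonlinear piece $\triangle f^{(j-1)}$: by Proposition~\ref{prop:nonlineariterstep} its contribution to $\triangle x^{(j)}$ (hence to $\widetilde{\triangle x^{(j)}}$) has the structure of $\triangle x^{(output)}$ with constant bounded by $A\cdot B + A_d\tau_0^{-1}$ where the relevant inputs are $\triangle A_{j-1}$ and $\triangle A_{j-2}$; combined with the smallness gain of Lemma~\ref{lem:smallnessgainnl} this contributes $\lesssim \tau_0^{-\gamma}[\triangle A_{j-1} + \triangle A_{j-2}]$, which is absorbed into the error term on the right-hand side of \eqref{eq:keyPhiinductive}. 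The discrete spectral part $\triangle x^{(j-1)}_d$ of $\underline{\triangle x}^{(j-1)}$ feeding into $\mathcal{R}$ is likewise negligible: by Proposition~\ref{prop:lineariterstepdisc} / Proposition~\ref{prop:discreteintodiscrete} and the exponential-decay bound $\tau^{1-}|\triangle x^{(j-1)}_d|\lesssim \triangle A_{j-1}$, its contribution carries an extra $\tau_0^{-(1-)}$, again absorbable.

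\textbf{Reduction of the linear term.} This leaves the continuous-spectral linear source $\mathcal{R}_1(\sigma, \triangle x^{(j-1)})$, which by the schematic decomposition \eqref{eq:R_1structure} splits as $\beta_\nu^2(\sigma)\mathcal{K}_{cc}\triangle x^{(j-1)} + \beta_\nu(\sigma)\mathcal{K}_{cc}\mathcal{D}_\sigma\triangle x^{(j-1)} + \beta_\nu^2(\sigma)\mathcal{K}_{cc}^2\triangle x^{(j-1)}$. The two terms carrying the weight $\beta_\nu^2(\sigma)$ (rather than $\beta_\nu$) gain an honest power of $\sigma^{-1}$ over the borderline integrand, as seen repeatedly in the proofs of Proposition~\ref{prop:lineariterstep} (cases (1a), (1b)) and Proposition~\ref{prop:firstiteratelincontimprov}; this extra $\sigma$-decay, after Cauchy--Schwarz over dyadic time scales, yields a gain of $\tau_0^{-\gamma}$ times $\triangle A_{j-1}$, so these are pushed into the error. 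Hence only $\beta_\nu(\sigma)\mathcal{K}_{cc}\mathcal{D}_\sigma\triangle x^{(j-1)}$ survives; applying the $\mathcal{D}_\tau$-differentiated Duhamel parametrix to it (with the cosine kernel, as in the formula for $\mathcal{D}_\tau\triangle_{>\tau}x^{(output)}$) produces precisely $\Phi(\mathcal{D}_\tau\triangle x^{(j-1)})$ up to error terms coming from $[\mathcal{D}_\tau,\beta_\nu]$ and from the discrepancy $\mathcal{D}_\sigma - (\partial_\sigma - 2\beta_\nu\xi\partial_\xi)$, which by the integration-by-parts argument used in case (1iii) of the previous proofs again gain $\sigma^{-1}$. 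Next I would restrict to input frequencies $\eta>\kappa_*$ inside this $\mathcal{K}_{cc}$: the complementary piece $\chi_{\eta<\kappa_*}$ gains $[\kappa_*^\gamma + \tau_0^{-\gamma}]\triangle A_{j-1}$ by Lemma~\ref{lem:smallnessgaininputfreqsmall}, and the output-frequency restriction $\chi_{\xi<\kappa_*}$ on the outermost layer gains $\kappa_*^\gamma\triangle A_{j-1}$ by Lemma~\ref{lem:smallnessgainoutputfreqsmall}; but one must be careful that the constant $C_{\kappa_*}$ in the statement is allowed to depend on $\kappa_*$ (while $\tau_0$ is chosen last), so these $\kappa_*$-gains are not actually needed at this stage — they are used in a later section — and here it suffices to simply insert the cutoffs $\chi_{\xi>\kappa_*}$ and $\chi_{\eta>\kappa_*}$ and keep the remainder as part of the $C_{\kappa_*}\tau_0^{-\gamma}$ error once the inner Duhamel term is re-expanded.

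\textbf{The re-iteration.} Now repeat the same analysis on $\mathcal{D}_\tau\triangle x^{(j-1)}$ itself: using \eqref{eq:trianglexjformula} for $\triangle x^{(j-1)}$ in terms of $\triangle f^{(j-2)}$ and $\mathcal{R}(\sigma,\underline{\triangle x}^{(j-2)})$, the same three reductions (nonlinear term negligible by Proposition~\ref{prop:nonlineariterstep} + Lemma~\ref{lem:smallnessgainnl}; discrete part negligible; the $\beta_\nu^2$ pieces of $\mathcal{R}_1$ gain $\tau_0^{-\gamma}$; the $\mathcal{K}_{cc}^2$ piece likewise) leave only $\beta_\nu(\sigma')\mathcal{K}_{cc}\mathcal{D}_{\sigma'}\triangle x^{(j-2)}$, whose Duhamel image under $\mathcal{D}_\tau$ is $\Phi(\mathcal{D}_\tau\triangle x^{(j-2)})$ plus $\tau_0^{-\gamma}$-errors; inserting the cutoff $\chi_{\xi>\kappa_*}$ between the two $\Phi$'s costs another $C_{\kappa_*}\tau_0^{-\gamma}$ error by the low-input-frequency lemma. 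Collecting everything and summing the finitely many error contributions — all of the form $C_{\kappa_*}\tau_0^{-\gamma}[\triangle A_{j-1} + \triangle A_{j-2}]$ — against the main term $\chi_{\xi>\kappa_*}\Phi\chi_{\xi>\kappa_*}\Phi(\mathcal{D}_\tau\triangle x^{(j-2)})$ in the square-sum-over-dyadic-scales norm gives exactly \eqref{eq:keyPhiinductive}. The main obstacle is bookkeeping the dyadic-time square-sum norm through two nested Duhamel integrals: at each layer one must split the $\sigma$-integral into dyadic blocks, apply Cauchy--Schwarz so as to convert the $\beta_\nu$ (or $\beta_\nu^2$) weight and the ratio $\lambda(\sigma)/\lambda(\tau)$ into a summable geometric factor, and verify that the two suprema-over-dyadic-scales compose correctly via a double orthogonality argument exactly as in Step~4 of the proof of Proposition~\ref{prop:lineariterstep}; the $\beta_\nu$-only weight on the surviving term is borderline, so the composition of the two layers must be done in the right order (outer sum absorbing the inner) to avoid a logarithmic loss, and it is here that the hypothesis $\sum_{k=1}^{j-1}\triangle A_k\lesssim 1$ is used to control the lower-order iterates uniformly.
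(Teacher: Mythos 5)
Your proposal tracks the paper's structure for the easier reductions: disposing of $\triangle f^{(j-1)}$ via Proposition~\ref{prop:nonlineariterstep} and Lemma~\ref{lem:smallnessgainnl}, the discrete part via Proposition~\ref{prop:lineariterstepdisc}, and the $\beta_\nu^2$-weighted pieces of $\mathcal{R}_1$, all of which correctly land in the $C_{\kappa_*}\tau_0^{-\gamma}$ error. The gap occurs at the crucial step of inserting the \emph{intermediate} frequency cutoff $\chi_{\eta>\kappa_*}$ inside $\mathcal{K}_{cc}$ on $\mathcal{D}_\sigma\triangle x^{(j-1)}$. You appeal to Lemma~\ref{lem:smallnessgaininputfreqsmall}, but that lemma only yields a gain of order $\kappa_*^{\gamma}+\tau_0^{-\gamma}$, and the $\kappa_*^\gamma$ part does \emph{not} decay as $\tau_0\to\infty$ and hence cannot be absorbed into the $C_{\kappa_*}\tau_0^{-\gamma}$ error term the lemma requires. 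You gesture at this by saying ``once the inner Duhamel term is re-expanded'' the remainder lands in $C_{\kappa_*}\tau_0^{-\gamma}$, but that sentence is doing a lot of unacknowledged work: it is precisely where the real argument lives, and you do not supply the mechanism.

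The paper's proof of step (1) supplies it explicitly: it re-expands $\mathcal{D}_\sigma\triangle x^{(j-1)}_{\mathcal{R}}$ via its own Duhamel formula, performs the change of variable $\eta=\frac{\lambda^2(\tau)}{\lambda^2(\sigma)}\tilde\eta$, and then crucially observes that the restrictions $\xi>\kappa_*$, $\eta<\kappa_*$, and $\sigma\leq\tau$ together force the frequency ordering $\xi>\tilde\eta$ on the support. This lets one \emph{combine} the outer oscillatory phase $\cos[\lambda(\tau)\xi^{1/2}\int_\tau^\sigma\lambda^{-1}]$ with the inner one $\sin[\lambda(\tau)\tilde\eta^{1/2}\int_\sigma^{\sigma_1}\lambda^{-1}]$ into non-degenerate linear combinations, so that integrating by parts in $\sigma$ produces the genuine $\tau_0^{-\gamma}$ gain (with a separate treatment of the boundary region $\sigma\in[\tau-\tau^{1/2},\tau]$). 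Without this non-stationary-phase mechanism there is no route to a $\tau_0$-decaying bound for the low-intermediate-frequency piece, and without a $\tau_0$-decaying bound the iterative gain structure of Proposition~\ref{prop:convergence} collapses, since one then only gains $\kappa_*^\gamma$ per step regardless of how large $\tau_0$ is chosen. You should fill this in explicitly before the argument is complete.
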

\begin{proof} We use the relation \eqref{eq:trianglexjformula} at both $j$ and $j-1$, which then gives $\triangle x^{(j)}$ in terms of $\underline{\triangle x}^{(j-2)}$ as well as $\underline{\triangle x}^{(k)},\,k\leq j-2$. Then recall
\[
\underline{\triangle x}^{(j-2)} = \left(\begin{array}{c}\triangle x^{(j-2)}_d\\ \triangle x^{(j-2)}\end{array}\right),\,\underline{\triangle x}^{(j-1)} = \left(\begin{array}{c}\triangle x^{(j-1)}_d\\ \triangle x^{(j-1)}\end{array}\right),
\]
and invoke \eqref{eq:trianglexjformula} at $j$. Then Proposition~\ref{prop:lineariterstepdisc} allows us to write\footnote{The notation is somewhat loose here, the error being in the sense of the norm used on the left in \eqref{eq:keyPhiinductive}} 
\begin{align*}
&\mathcal{D}_{\tau}\widetilde{\triangle x^{(j)}}(\tau, \xi)\\& = \int_{\tau_0}^\tau\frac{\lambda^{\frac{3}{2}}(\tau)}{\lambda^{\frac{3}{2}}(\sigma)}\frac{\rho^{\frac{1}{2}}(\frac{\lambda^2(\tau)}{\lambda^2(\sigma)}\xi)}{\rho^{\frac{1}{2}}(\xi)}\cos[\lambda(\tau)\xi^{\frac{1}{2}}\int_{\tau}^\sigma\lambda^{-1}(u)\,du]\\&\hspace{4cm}\cdot\big[\triangle f^{(j-1)}(\sigma, \cdot) + \mathcal{R}_1(\sigma, \triangle x^{(j-1)})\big](\frac{\lambda^2(\tau)}{\lambda^2(\sigma)}\xi)\,d\sigma\\& + \tau_0^{-1}O_{\kappa_*}(\triangle A_{j-1}). 
\end{align*}
Furthermore, recalling \eqref{eq:R_1structure}, as well as the proof of Proposition~\ref{prop:firstiteratelincont}, Proposition~\ref{prop:lineariterstep}, one easily infers 
\begin{equation}\label{eq:intermediatereduction1}\begin{split}
&\chi_{\xi>\kappa_*}\mathcal{D}_{\tau}\widetilde{\triangle x^{(j)}}(\tau, \xi)\\& = \chi_{\xi>\kappa_*}\int_{\tau_0}^\tau\frac{\lambda^{\frac{3}{2}}(\tau)}{\lambda^{\frac{3}{2}}(\sigma)}\frac{\rho^{\frac{1}{2}}(\frac{\lambda^2(\tau)}{\lambda^2(\sigma)}\xi)}{\rho^{\frac{1}{2}}(\xi)}\cos[\lambda(\tau)\xi^{\frac{1}{2}}\int_{\tau}^\sigma\lambda^{-1}(u)\,du]\\&\hspace{4cm}\cdot\big[\triangle f^{(j-1)}(\sigma, \cdot) + \beta_{\nu}(\sigma)\mathcal{K}_{cc}(\mathcal{D}_{\sigma}\triangle x^{(j-1)})\big](\frac{\lambda^2(\tau)}{\lambda^2(\sigma)}\xi)\,d\sigma\\& + \tau_0^{-1}O_{\kappa_*}(\triangle A_{j-1}). 
\end{split}\end{equation}
In fact, Remark~\ref{rem:prop71highfreqimprov} shows that the contribution of $\beta_{\nu}^2(\sigma)\mathcal{K}_{cc}(\triangle x^{(j-1)})$ gains a power of $\tau_0^{-1}$ upon restricting the output to frequencies $\gtrsim 1$. 
\\ 

Then use \eqref{eq:trianglexjformula} at $j-1$, resulting via the Duhamel type parametrix in a contribution from $\triangle f^{(j-2)}(\sigma, \cdot)$ as well as one from $\mathcal{R}_1(\sigma, \triangle x^{(j-2)})$. To begin with, consider the contribution when we replace $\triangle x^{(j-1)}$ on the right hand side in \eqref{eq:intermediatereduction1} by 
\begin{align*}
&\int_{\tau_0}^\tau\frac{\lambda^{\frac{3}{2}}(\tau)}{\lambda^{\frac{3}{2}}(\sigma)}\frac{\rho^{\frac{1}{2}}(\frac{\lambda^2(\tau)}{\lambda^2(\sigma)}\xi)}{\rho^{\frac{1}{2}}(\xi)}\frac{\sin[\lambda(\tau)\xi^{\frac{1}{2}}\int_{\tau}^\sigma\lambda^{-1}(u)\,du]}{\xi^{\frac{1}{2}}}\triangle f^{(j-2)}(\sigma, \frac{\lambda^2(\tau)}{\lambda^2(\sigma)}\xi)\,d\sigma\\& + S(\tau)\big(\triangle\tilde{\tilde{x}}^{(j-1)}_0, \triangle\tilde{\tilde{x}}^{(j-1)}_1\big),
 \end{align*}
 Here we are somewhat loose with the notation and assume that the correction term $\triangle\tilde{\tilde{x}}^{(j-1)}_1$ is the one coming from Proposition~\ref{prop:nonlineariterstep} at $j-1$ instead of $j$. Then using Lemma~\ref{lem:smallnessgainnl} as well as Remark~\ref{rem:proplineariterstep}, and inspecting the proof of Proposition~\ref{prop:nonlineariterstep}, Proposition~\ref{prop:firstiteratenonlin}, we see that the corresponding contribution to the left hand side of \eqref{eq:keyPhiinductive} is of size $\tau_0^{-\gamma}O_{\kappa_*}(\triangle A_{j-2})$. 
 \\
 Similarly, replacing  
$\triangle x^{(j-1)}$ in $\triangle f^{(j-1)}(\sigma, \cdot)$ on the right hand side in \eqref{eq:intermediatereduction1} by 
\begin{align*}
&\int_{\tau_0}^\tau\frac{\lambda^{\frac{3}{2}}(\tau)}{\lambda^{\frac{3}{2}}(\sigma)}\frac{\rho^{\frac{1}{2}}(\frac{\lambda^2(\tau)}{\lambda^2(\sigma)}\xi)}{\rho^{\frac{1}{2}}(\xi)}\frac{\sin[\lambda(\tau)\xi^{\frac{1}{2}}\int_{\tau}^\sigma\lambda^{-1}(u)\,du]}{\xi^{\frac{1}{2}}}\mathcal{R}_1(\sigma, \triangle x^{(j-2)}))(\frac{\lambda^2(\tau)}{\lambda^2(\sigma)}\xi)\,d\sigma\\& + S(\tau)\big(\triangle\tilde{\tilde{x}}^{(j-1)}_0, \triangle\tilde{\tilde{x}}^{(j-1)}_1\big),
 \end{align*}
where now the corrections $\triangle\tilde{\tilde{x}}^{(j-1)}_{0,1}$ are in accordance with Proposition~\ref{prop:lineariterstep}, and invoking Lemma~\ref{lem:smallnessgainnl}, we arrive at the contribution of size $\tau_0^{-\gamma}O_{\kappa_*}(\triangle A_{j-2})$. 
\\
At this stage we have shown that 
\begin{equation}\label{eq:intermediatereduction2}\begin{split}
&\chi_{\xi>\kappa_*}\mathcal{D}_{\tau}\widetilde{\triangle x^{(j)}}(\tau, \xi)\\& = \chi_{\xi>\kappa_*}\int_{\tau_0}^\tau\frac{\lambda^{\frac{3}{2}}(\tau)}{\lambda^{\frac{3}{2}}(\sigma)}\frac{\rho^{\frac{1}{2}}(\frac{\lambda^2(\tau)}{\lambda^2(\sigma)}\xi)}{\rho^{\frac{1}{2}}(\xi)}\cos[\lambda(\tau)\xi^{\frac{1}{2}}\int_{\tau}^\sigma\lambda^{-1}(u)\,du]\\&\hspace{4cm}\cdot\beta_{\nu}(\sigma)\mathcal{K}_{cc}(\mathcal{D}_{\sigma}\triangle x^{(j-1)}_{{\mathcal{R}}})(\frac{\lambda^2(\tau)}{\lambda^2(\sigma)}\xi)\,d\sigma\\& + \tau_0^{-\gamma}O_{\kappa_*}(\triangle A_{j-1} + \triangle A_{j-2}), 
\end{split}\end{equation}
where 
\begin{align*}
\triangle x^{(j-1)}_{{\mathcal{R}}}(\tau, \xi): &= \int_{\tau_0}^\tau\frac{\lambda^{\frac{3}{2}}(\tau)}{\lambda^{\frac{3}{2}}(\sigma)}\frac{\rho^{\frac{1}{2}}(\frac{\lambda^2(\tau)}{\lambda^2(\sigma)}\xi)}{\rho^{\frac{1}{2}}(\xi)}\frac{\sin[\lambda(\tau)\xi^{\frac{1}{2}}\int_{\tau}^\sigma\lambda^{-1}(u)\,du]}{\xi^{\frac12}}\\&\hspace{5cm}\cdot \mathcal{R}_1(\sigma, \triangle x^{(j-2)})(\frac{\lambda^2(\tau)}{\lambda^2(\sigma)}\xi)\,d\sigma\\& + S(\tau)\big(\triangle\tilde{\tilde{x}}^{(j-1)}_0, \triangle\tilde{\tilde{x}}^{(j-1)}_1\big)
\end{align*}
and here, by abuse of notation, the correction terms $\triangle\tilde{\tilde{x}}^{(j-1)}_{0,1}$ are in accordance with Proposition~\ref{prop:lineariterstep}. 
\\

At this stage we have almost achieved the conclusion of the lemma, except that we still need to reduce $\mathcal{D}_{\sigma}\triangle x^{(j-1)}_{{\mathcal{R}}}$ to 
\[
\chi_{\xi>\kappa_*}[\mathcal{D}_{\sigma}\triangle x^{(j-1)}_{{\mathcal{R}}} - \mathcal{D}_{\sigma}S(\sigma)\big(\triangle\tilde{\tilde{x}}^{(j-1)}_0, \triangle\tilde{\tilde{x}}^{(j-1)}_1\big)],
\]
and further reduce $\mathcal{R}_1$ in the definition of the latter to the principal part 
\[
\beta_{\nu}(\sigma)\mathcal{K}_{cc}(\mathcal{D}_{\sigma}\triangle x^{(j-2)}).
\]
The crucial part here is the inclusion of the cutoff $\chi_{\xi>\kappa_*}$ in front of $\mathcal{D}_{\sigma}\triangle x^{(j-1)}_{{\mathcal{R}}}$, and this will be achieved by combining the various oscillatory phases in the integral. 
\\

{\it{(1): Reduction of $\mathcal{D}_{\sigma}\triangle x^{(j-1)}_{{\mathcal{R}}}$ to $\chi_{\cdot>\kappa_*}\mathcal{D}_{\sigma}\triangle x^{(j-1)}_{{\mathcal{R}}}$.}} Consider the function 
\begin{align*}
&\tilde{\Phi}(\tau, \xi): = \chi_{\xi>\kappa_*}\int_{\tau_0}^\tau\frac{\lambda^{\frac{3}{2}}(\tau)}{\lambda^{\frac{3}{2}}(\sigma)}\frac{\rho^{\frac{1}{2}}(\frac{\lambda^2(\tau)}{\lambda^2(\sigma)}\xi)}{\rho^{\frac{1}{2}}(\xi)}\cos[\lambda(\tau)\xi^{\frac{1}{2}}\int_{\tau}^\sigma\lambda^{-1}(u)\,du]\\&\hspace{4cm}\cdot\beta_{\nu}(\sigma)\mathcal{K}_{cc}(\chi_{\cdot<\kappa_*}\mathcal{D}_{\sigma}\triangle x^{(j-1)}_{{\mathcal{R}}})(\frac{\lambda^2(\tau)}{\lambda^2(\sigma)}\xi)\,d\sigma
\end{align*}
and recall that 
\begin{equation}\label{eq:Kccintermediate}\begin{split}
&\chi_{\xi>\kappa_*}\mathcal{K}_{cc}(\chi_{\cdot<\kappa_*}\mathcal{D}_{\sigma}\triangle x^{(j-1)}_{{\mathcal{R}}})(\frac{\lambda^2(\tau)}{\lambda^2(\sigma)}\xi)\\& =  \chi_{\xi>\kappa_*}\int_0^\infty\frac{F(\frac{\lambda^2(\tau)}{\lambda^2(\sigma)}\xi, \eta)\rho(\eta)}{\frac{\lambda^2(\tau)}{\lambda^2(\sigma)}\xi - \eta}\chi_{\eta<\kappa_*}\mathcal{D}_{\sigma}\triangle x^{(j-1)}_{{\mathcal{R}}}\,d\eta.
\end{split}\end{equation}
Then the idea is that since $\tau\geq\sigma$, we also have $\frac{\lambda^2(\tau)}{\lambda^2(\sigma)}\xi>\eta$, and hence {\it{combining the oscillatory phases}} (one of which is inherent in the definition of $\triangle x^{(j-1)}_{{\mathcal{R}}}$) we can gain by performing an integration by parts with respect to $\sigma$. Specifically, write 
\[
\chi_{\eta<\kappa_*}\mathcal{D}_{\sigma}\triangle x^{(j-1)}_{{\mathcal{R}}} = \chi_{\eta<\kappa_*}\mathcal{D}_{\sigma}\big[\triangle_{>\sigma} x^{(j-1)}_{{\mathcal{R}}} + S(\sigma)\big(\tilde{\triangle} x^{(j-1)}_{{\mathcal{R}},0},\,\tilde{\triangle} x^{(j-1)}_{{\mathcal{R}},1}\big)\big],
\]
where we recall that 
\begin{align*}
&\triangle_{>\sigma} x^{(j-1)}_{{\mathcal{R}}}(\sigma, \eta)\\& = \int_{\sigma}^\infty\frac{\lambda^{\frac{3}{2}}(\sigma)}{\lambda^{\frac{3}{2}}(\sigma_1)}\frac{\rho^{\frac{1}{2}}(\frac{\lambda^2(\sigma)}{\lambda^2(\sigma_1)}\eta)}{\rho^{\frac{1}{2}}(\eta)}\frac{\sin[\lambda(\sigma)\eta^{\frac{1}{2}}\int_{\sigma}^{\sigma_1}\lambda^{-1}(u)\,du]}{\eta^{\frac12}}\\&\hspace{5cm}\cdot \mathcal{R}_1(\sigma_1, \triangle x^{(j-2)})(\frac{\lambda^2(\sigma)}{\lambda^2(\sigma_1)}\eta)\,d\sigma_1
\end{align*}
Inserting this in \eqref{eq:Kccintermediate} for $\triangle x^{(j-1)}_{{\mathcal{R}}}$ and performing a change of variable $\eta = \frac{\lambda^2(\tau)}{\lambda^2(\sigma)}\tilde{\eta}$, and inserting this in turn into the expression for $\tilde{\Phi}(\tau, \xi)$ above, leads to the $\sigma$-oscillatory factor 
\[
\cos[\lambda(\tau)\xi^{\frac{1}{2}}\int_{\tau}^\sigma\lambda^{-1}(u)\,du]\cdot \sin[\lambda(\tau)\tilde{\eta}^{\frac12}\int_{\sigma}^{\sigma_1}\lambda^{-1}(u)\,du]
\]
and since $\xi>\eta$ we have a fortiori $\xi>\tilde{\eta}$ on account of $\sigma\leq \tau$. Then by performing integration by parts with respect to $\sigma$ (this works if $\sigma<\tau - \tau^{\frac12}$, say, to force a lower bound on $\xi-\tilde{\eta}$, while if $\sigma\in [\tau - \tau^{\frac12}, \tau]$, one gains directly in $\tau$) and repeating the arguments in Proposition~\ref{prop:lineariterstep}, one infers that this contribution to $\tilde{\Phi}$ is of size $\tau_0^{-\gamma}O_{\kappa_*}(\triangle A_{j-2})$. Replacing $\triangle x^{(j-1)}_{{\mathcal{R}}}$ by $S(\sigma)\big(\tilde{\triangle} x^{(j-1)}_{{\mathcal{R}},0},\,\tilde{\triangle} x^{(j-1)}_{{\mathcal{R}},1}\big)$ is handled similarly, using the algebra in {\it{Step 1}} of the proof of Proposition~\ref{prop:firstiteratelincont}. 
\\

{\it{(2): Reduction of $\chi_{\cdot>\kappa_*}\mathcal{D}_{\sigma}\triangle x^{(j-1)}_{{\mathcal{R}}}$ to $\chi_{\xi>\kappa_*}\Phi(\mathcal{D}_{\tau}\triangle x^{(j-2)})$.}} Here one first disposes of the contribution of 
\[
\mathcal{D}_{\sigma}S(\sigma)\big(\triangle\tilde{\tilde{x}}^{(j-1)}_0, \triangle\tilde{\tilde{x}}^{(j-1)}_1\big),
\]
using that $\big\|\big(\triangle\tilde{\tilde{x}}^{(j-1)}_0, \triangle\tilde{\tilde{x}}^{(j-1)}_1\big)\big\|_{S}\lesssim \tau_0^{-(1-)}\triangle A_{j-2}$ as well as the fact that all frequencies are now essentially large(which means we no longer need the free term $S(\sigma)\big(\triangle\tilde{\tilde{x}}^{(j-1)}_0, \triangle\tilde{\tilde{x}}^{(j-1)}_1\big)$ to cancel out part of the Duhamel term), and then one easily disposes of the term $\beta_{\nu}^2(\sigma)\mathcal{K}_{cc}(\triangle x^{(j-2)})$ inside $\mathcal{R}_1(\sigma, \triangle x^{(j-2)})(\frac{\lambda^2(\tau)}{\lambda^2(\sigma)}\xi)$, exploiting again the fact that all frequencies are now $\gtrsim 1$, see {\it{(1a)}} in the proof of Proposition~\ref{prop:lineariterstep}, where a gain $\tau_0^{-1}$ is implicit. 
\\

This completes the proof of Lemma~\ref{lem:Phireduction}. 
\end{proof}

\section{Convergence of the iterative scheme}

In this section, we finally show that the sequence of iterates defined by \eqref{eq:increment1}, \eqref{eq:incrementj} {\it{and with the modifications of the initial data due to the $\triangle\tilde{\tilde{x}}^{(j)}_{0,1}$}} converges, which will then imply the proof of Theorem~\ref{thm:MainTechnical}. The crux of the matter will be played on the one hand by Lemma~\ref{lem:Phireduction}, which in some sense achieves a {\it{diagonalization}} of the high-frequency part of the once repeated iteration step, together with a crucial proposition of somewhat combinatorial type below which shows that {\it{re-iterating the operator $\chi_{\xi>\kappa}\Phi$ many times gains smallness}}. This last fact has close analogues in \cite{KST1} as well as \cite{DoMHKS}. 
Recalling the quantities $\triangle A_j$ from Definition~\ref{defn:sizeofcorrection}, the main result of this section is the following 
\begin{prop}\label{prop:convergence}  Given $\kappa_*>0$ and $\epsilon>0$ sufficiently small, we have for any $k\geq 1$ sufficiently large and $j\geq 2k+2$ the bound
\begin{align*}
\triangle A_j\leq \kappa_*^{\gamma} [\triangle A_{j-1}  + \triangle A_{j-2}] + \kappa_*^{-C\delta_0}\epsilon^{\gamma_1 k}e^{\epsilon^{-2}}\cdot\triangle A_{j-2k} + C_{\kappa_*,k}\tau_0^{-\gamma}[\sum_{l=j-2k-1}^j\triangle A_{l}]
\end{align*}
provided $\sum_{l=1}^{j}\triangle A_l <\delta_1$, where $\delta_1 = \delta_1(\tau_0)>0$ is sufficiently small. Here $\gamma>0$ is an absolute constant. 
\end{prop}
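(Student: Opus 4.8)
The plan is to feed the already-established a-priori bounds (Propositions~\ref{prop:lineariterstep}--\ref{prop:nonlineariterstep}) into the refined machinery of Sections~11--12 and iterate the reduction \eqref{eq:keyPhiinductive} $k$ times. First I would unpack $\triangle A_j$ via Definition~\ref{defn:sizeofcorrection} and observe that all the pieces except the high-frequency square-sum $\big(\sum_N[\sup_{\tau\sim N}(\tfrac{\tau}{\tau_0})^{\kappa}\|\chi_{\xi>\kappa_*}\mathcal D_\tau\widetilde{\triangle x^{(j)}}\|_{S_2}]^2\big)^{1/2}$ are controlled by the three smallness lemmas of Section~10 (Lemma~\ref{lem:smallnessgainoutputfreqsmall}--\ref{lem:smallnessgainnl}): the frequencies $\xi<\kappa_*$ of the differentiated Duhamel term, the low-frequency inputs, and the nonlinear source contributions all produce a factor $\kappa_*^\gamma$ (or $\tau_0^{-\gamma}$), and by Proposition~\ref{prop:firstiteratelindisc}-type arguments the discrete part always gains $\tau_0^{-1}$. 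So the whole of $\triangle A_j$ up to these harmless terms is bounded by the high-frequency square-sum, which is what Lemma~\ref{lem:Phireduction} addresses. Combining, one gets
\[
\triangle A_j \lesssim \kappa_*^\gamma[\triangle A_{j-1}+\triangle A_{j-2}] + C_{\kappa_*}\tau_0^{-\gamma}[\triangle A_{j-1}+\triangle A_{j-2}] + \big(\textstyle\sum_N[\sup_{\tau\sim N}(\tfrac{\tau}{\tau_0})^\kappa\|\chi_{\xi>\kappa_*}\Phi\chi_{\xi>\kappa_*}\Phi(\mathcal D_\tau\triangle x^{(j-2)})\|_{S_2}]^2\big)^{1/2}.
\]

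Next I would iterate. The term $\mathcal D_\tau\triangle x^{(j-2)}$ is itself (by \eqref{eq:trianglexjformula} at $j-2$, re-expressed through Lemma~\ref{lem:Phireduction}) of the form $\Phi\Phi(\mathcal D_\tau\triangle x^{(j-4)})$ plus $\kappa_*^\gamma$- and $\tau_0^{-\gamma}$-errors built out of $\triangle A_{j-3},\triangle A_{j-4}$; applying this $k$ times produces a leading term $\chi_{\xi>\kappa_*}\Phi(\chi_{\xi>\kappa_*}\Phi)^{2k}(\mathcal D_\tau\triangle x^{(j-2k)})$ together with an accumulated collection of error terms, each carrying either a $\kappa_*^\gamma$ from one of the smallness lemmas or a $C_{\kappa_*,k}\tau_0^{-\gamma}$ and multiplying one of the $\triangle A_l$ for $j-2k-1\le l\le j$. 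The number of such error terms is bounded by a constant depending on $k$ only, so they are absorbed into $C_{\kappa_*,k}\tau_0^{-\gamma}\sum_{l=j-2k-1}^j\triangle A_l$ (after choosing $\tau_0$ large in terms of $k,\kappa_*$), while the chain of $\kappa_*^\gamma$ factors attached to the genuinely small branches is summable. The $(2k+1)$-fold composed propagator $\chi_{\xi>\kappa_*}\Phi(\chi_{\xi>\kappa_*}\Phi)^{2k}$ acting on $\mathcal D_\tau\triangle x^{(j-2k)}$ is then estimated by Proposition~\ref{prop:mainestimate}: this is the combinatorial input, reducing the many-times re-iterated wave-type propagator to an integral over a high-dimensional simplex, which yields the factor $\kappa_*^{-C\delta_0}\epsilon^{\gamma_1 k}e^{\epsilon^{-2}}$ times $\triangle A_{j-2k}$ (the $\|\cdot\|_{S_2}$-size of $\mathcal D_\tau\triangle x^{(j-2k)}$ on $\xi>\kappa_*$ being $\lesssim\triangle A_{j-2k}$ by Definition~\ref{defn:sizeofcorrection}). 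The weights $\kappa_*^{-C\delta_0}$ absorb the logarithmic and $\xi^{-0+}$ losses incurred by the $2k$ intermediate $\Phi$'s, and the factorial gain in $\epsilon^{\gamma_1 k}e^{\epsilon^{-2}}$ is exactly what the simplex-volume estimate provides.

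The main obstacle I expect is bookkeeping the iteration cleanly: at each of the $k$ stages of re-iteration one must carefully track which branches receive a $\kappa_*^\gamma$ gain (these are the branches where a low-frequency cutoff $\chi_{\xi<\kappa_*}$ or a nonlinear source $\triangle f$ appears, handled by Lemmas~\ref{lem:smallnessgainoutputfreqsmall}--\ref{lem:smallnessgainnl}), which receive a $\tau_0^{-\gamma}$ gain (the discrete-part contributions and the $\beta_\nu^2\mathcal K_{cc}$-type terms via Remark~\ref{rem:prop71highfreqimprov}), and which contribute to the genuine principal chain $(\chi_{\xi>\kappa_*}\Phi)^{2k+1}$; and one must verify that the constants multiplying the surviving $\triangle A_l$'s in the non-principal branches only depend on $\kappa_*$ and $k$, so that taking $\tau_0=\tau_0(\kappa_*,k)$ large enough makes them $\le C_{\kappa_*,k}\tau_0^{-\gamma}$ as claimed. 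A secondary subtlety is making sure that the cutoffs $\chi_{\xi>\kappa_*}$ can be inserted between consecutive $\Phi$'s at the cost only of $\kappa_*^\gamma$-errors — this is exactly step (1) of the proof of Lemma~\ref{lem:Phireduction} (combining oscillatory phases and integrating by parts in the intermediate time variable, since $\tau\ge\sigma$ forces $\tfrac{\lambda^2(\tau)}{\lambda^2(\sigma)}\xi>\eta$), and it must be applied at every one of the $2k$ junctions, which is why the errors pick up the $C_{\kappa_*,k}$ but no growth in $k$ beyond a fixed-power-of-$k$ count of terms. Once these are in place, choosing first $\kappa_*$ small (to make $\kappa_*^\gamma$ and, after the combinatorial gain beats $\kappa_*^{-C\delta_0}$, the $k$-dependent term small for $k$ large), then $k$ large, then $\tau_0$ large, and finally $\delta_1$ small, delivers the stated estimate and — summing the recursion — the convergence of $\sum_j\triangle A_j$, hence of $\sum_j\underline{\triangle x}^{(j)}$, completing Theorem~\ref{thm:MainTechnical}.
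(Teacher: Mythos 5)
Your plan correctly identifies the two key technical inputs -- Lemma~\ref{lem:Phireduction} to turn the $\mathcal{D}_\tau$-differentiated high-frequency square-sum into a composition $(\chi_{\xi>\kappa_*}\Phi)^{2k}$ acting on $\mathcal{D}_\tau\triangle x^{(j-2k)}$, and Proposition~\ref{prop:mainestimate} to bound that composition by $\epsilon^{\gamma_1 k}e^{\epsilon^{-2}}$ times the input -- and this is indeed the heart of the paper's argument. The paper organizes the proof into Lemmas~\ref{lem:Dtauxconvergence}--\ref{lem:trianglexdimprov}, with Lemma~\ref{lem:Dtauxconvergence} doing exactly your main reduction and reiteration.

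However, the claim that ``all the pieces [of $\triangle A_j$] except the high-frequency square-sum\ldots are controlled by the three smallness lemmas of Section~10'' has a genuine gap. Two groups of pieces do not follow directly from those lemmas. First, the intermediate-to-high frequency part of the data corrections, $\big\|(\chi_{\xi>\kappa_*}\triangle\tilde{x}^{(j)}_0,\chi_{\xi>\kappa_*}\triangle\tilde{x}^{(j)}_1)\big\|_{S}$, cannot be handled by Lemma~\ref{lem:smallnessgainoutputfreqsmall} (which only covers $\xi<\kappa_*$); the paper needs Lemma~\ref{lem:triangletildeximprov}, whose proof feeds the output of Lemma~\ref{lem:Dtauxconvergence} back in together with Lemma~\ref{lem:smallnessgaininputfreqsmall} and a case analysis by frequency interaction. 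Second, the \emph{undifferentiated} quantities $\sup_\tau(\tfrac{\tau_0}{\tau})^{\kappa}\|\chi_{\xi<1}\triangle_{>\tau}x^{(j)}\|_{S_1}$ and $\sup_\tau(\tfrac{\tau_0}{\tau})^{\kappa}\|\chi_{\xi>1}\widetilde{\triangle x^{(j)}}\|_{S_1}$ are not small from the source term alone: Remark~\ref{rem:firstiteratenonlin} explicitly records that the bad nonlinear term $E_2$ produces no smallness gain for the un-differentiated output in the low-frequency regime, and Lemma~\ref{lem:smallnessgainnl} only gains when the extra weight $\xi^{\frac12}/\langle\xi\rangle^{\frac12}$ is attached. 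The paper therefore proves a separate recursion, Lemma~\ref{lem:triangleximprov}, which expresses $\triangle_{>\tau}x^{(j)}$ via the Duhamel formula, feeds in the already-established bound on $\mathcal{D}_\tau\triangle_{>\tau}x^{(j-1)}$, and extracts the gain from the explicit kernel structure of $\mathcal{K}_{cc}$ and the free-part split of $\triangle x^{(j-1)}$ rather than from the source-term smallness. You acknowledge ``bookkeeping'' as the main obstacle, but the missing point is more than bookkeeping: one needs to \emph{prove} that the undifferentiated and data pieces of $\triangle A_j$ inherit the same recursive bound via the parametrix, and this is what Lemmas~\ref{lem:triangletildeximprov}--\ref{lem:trianglexdimprov} supply. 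With those lemmas in hand, your iteration scheme and choice of parameters $\kappa_*$, $k$, $\tau_0$, $\delta_1$ is exactly the paper's, and Corollary~\ref{cor:convergence} follows as you describe.
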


As a consequence of the proposition, we get convergence in the following sense: 
\begin{cor}\label{cor:convergence} We have 
\begin{align*}
\sum_{j=1}^\infty\triangle A_j\lesssim \big\|(x_0, x_1)\big\|_{\tilde{S}} + \big|x_{0d}\big|,
\end{align*}
provided that $\big\|(x_0, x_1)\big\|_{\tilde{S}} + \big|x_{0d}\big|$ is sufficiently small. 
\end{cor}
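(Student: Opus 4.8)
The plan is to extract Corollary~\ref{cor:convergence} from the recursive inequality of Proposition~\ref{prop:convergence} by a bootstrap in the iteration index $j$, once the auxiliary parameters $\kappa_*,\epsilon,k$ and then $\tau_0$ have been chosen in the correct order. Throughout write $\delta:=\big\|(x_0,x_1)\big\|_{\tilde S}+\big|x_{0d}\big|$ and $S_J:=\sum_{j=1}^J\triangle A_j$, with $\triangle A_j$ as in Definition~\ref{defn:sizeofcorrection}.

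First I would record the base of the induction. Combining Proposition~\ref{prop:firstiteratelincont}, Proposition~\ref{prop:firstiteratelincontimprov} and Proposition~\ref{prop:firstiteratelindisc} (for the contribution of the linear source term $\mathcal{R}(\sigma,\underline{x}^{(0)})$ to the first increment) with Proposition~\ref{prop:firstiteratenonlin} and Proposition~\ref{prop:firstiteratenonlindisc} (for the nonlinear source term) yields $\triangle A_1\lesssim\delta$. Iterating the a priori Propositions~\ref{prop:lineariterstep}, \ref{prop:lineariterstepdisc}, \ref{prop:discreteintodiscrete} and \ref{prop:nonlineariterstep} — each of which bounds the output quantities entering $\triangle A_j$ by a fixed multiple of a linear combination of $\triangle A_{j-1}$ and $\triangle A_{j-2}$, with \emph{no} smallness gain — one then gets $\triangle A_l\le C_k\,\delta$ for all $1\le l\le 2k+1$, hence $S_{2k+1}\le C_k\,\delta$, where $C_k$ is finite and may depend on $k,\kappa_*,\tau_0$ but not on $\delta$.

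Next comes the choice of parameters, carried out strictly in this order. Pick $\kappa_*=\kappa_*(\nu)>0$ so small that $2\kappa_*^{\gamma}<\tfrac18$; with $\kappa_*$ frozen the quantity $\kappa_*^{-C\delta_0}$ is a fixed constant, so pick $\epsilon>0$ small and then $k$ large enough that $\kappa_*^{-C\delta_0}\epsilon^{\gamma_1 k}e^{\epsilon^{-2}}<\tfrac18$ — possible because $\epsilon^{\gamma_1 k}\to 0$ as $k\to\infty$ with $\epsilon$ held fixed; finally pick $\tau_0$ so large that $\nu_0:=C_{\kappa_*,k}\tau_0^{-\gamma}$ satisfies $(2k+2)\nu_0<\tfrac18$. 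With these choices Proposition~\ref{prop:convergence} gives, for every $j\ge 2k+2$ and as long as $S_j<\delta_1$,
\[
\triangle A_j\le \kappa_*^{\gamma}\big[\triangle A_{j-1}+\triangle A_{j-2}\big]+\tfrac18\,\triangle A_{j-2k}+\nu_0\!\!\sum_{l=j-2k-1}^{j}\!\!\triangle A_l .
\]
Absorbing the $l=j$ term on the left (legitimate since $\nu_0<1$), summing over $2k+2\le j\le J$, and using $\sum_j\triangle A_{j-m}\le S_J$ for each fixed shift $m$ together with the fact that each $\triangle A_l$, $l\le J-1$, occurs at most $2k+1$ times in the double sum, one obtains $(1-\nu_0)(S_J-S_{2k+1})\le\big(2\kappa_*^{\gamma}+\tfrac18+(2k+1)\nu_0\big)S_J$, whence, by the smallness built into the parameters, $S_J\le 2\,S_{2k+1}$ for all $J$.

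The bootstrap is then immediate: choose $\delta$ so small that $2C_k\,\delta<\delta_1=\delta_1(\tau_0)$. Assuming inductively $S_{J-1}<\delta_1$ (true for $J=2k+2$ by the base estimate once $\delta$ is small), the displayed inequality is available for all $j\le J$, hence $S_J\le 2S_{2k+1}\le 2C_k\,\delta<\delta_1$, which simultaneously closes the induction at level $J$ and bounds $S_J$ uniformly in $J$. Letting $J\to\infty$ gives $\sum_{j=1}^\infty\triangle A_j\le 2C_k\,\delta\lesssim\big\|(x_0,x_1)\big\|_{\tilde S}+\big|x_{0d}\big|$, as claimed. The only genuinely delicate point — and the place where one must be careful — is the bookkeeping of this hierarchy of choices ($\kappa_*\rightsquigarrow\epsilon\rightsquigarrow k\rightsquigarrow\tau_0\rightsquigarrow\delta$): the $e^{\epsilon^{-2}}$ factor of Proposition~\ref{prop:convergence} is harmless precisely because $\epsilon$ is fixed before $k\to\infty$, while the $k$- and $\kappa_*$-dependent constant $C_{\kappa_*,k}$ is neutralised only at the final stage by taking $\tau_0$ large. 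No analytic input beyond Proposition~\ref{prop:convergence} and the a priori propositions is required.
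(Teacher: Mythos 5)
Your argument is correct and follows essentially the same route as the paper's own proof: fix the parameters in the hierarchical order $\kappa_*\rightsquigarrow\epsilon\rightsquigarrow k\rightsquigarrow\tau_0$ so that each coefficient on the right-hand side of the recursion in Proposition~\ref{prop:convergence} is small, sum the recursion over $j$ and absorb, then close using the data smallness to maintain $\sum_l\triangle A_l<\delta_1$. You are somewhat more explicit than the paper about the base estimate $\triangle A_l\lesssim_k\delta$ for $l\le 2k+1$ and about the bootstrap that verifies the running hypothesis of Proposition~\ref{prop:convergence}, but these are refinements of presentation rather than of substance.
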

\begin{proof} To begin with, we note that the preceding proposition implies that given $\delta>0$, there are $j_*(\delta)$ and $\tau_0$ large enough such that we have for any $j\geq j_*(\delta)+2$
\[
\triangle A_j\leq\delta[\triangle A_{j-1}  + \triangle A_{j-2} + \triangle A_{j-j_*(\delta)}] + \frac{\delta}{ j_*(\delta)}\sum_{l=j-j_*(\delta) -1}^j\triangle A_{l}
\]
In fact, first one picks $\kappa_*$ small enough such that $\kappa_*^{\gamma}<\delta$, and then letting $\epsilon<1$ small enough such that the conclusion of the preceding proposition holds we pick $k$ large enough such that 
\[
 \kappa_*^{-C\delta_0}\epsilon^{\gamma_1 k}e^{\epsilon^{-2}}<\delta
 \]
Then set $j_*(\delta) = 2k$ and finally pick $\tau_0$ large enough such that $C_{\kappa_*,k}\tau_0^{-\gamma}<\frac{\delta}{j_*(\delta)}$. 
But then summing over $j\geq j_*(\delta)$ we get 
\[
\sum_{l=1}^\infty \triangle A_l\leq \sum_{l=1}^{j_*(\delta)} \triangle A_l + 4\delta\sum_{l=1}^\infty \triangle A_l,
\]
which then gives 
\[
\sum_{l=1}^\infty \triangle A_l\leq \frac{1}{1-4\delta} \sum_{l=1}^{j_*(\delta)} \triangle A_l\lesssim_{\delta}\big\|(x_0, x_1)\big\|_{\tilde{S}} + \big|x_{0d}\big|, 
\]
and the latter can be made $\ll \delta_1(\tau_0)$ by picking the data small enough, hence establishing the convergence and necessary smallness condition behind the proposition. 
\end{proof}

We can in fact easily sharpen the preceding corollary, in that we get exponential decay of the $\triangle A_j$ with respect to $j$, and we can also strengthen the bounds concerning the corrections $\triangle\tilde{\tilde{x}}^{(j)}_{\kappa}$, $\kappa = 0,1$, in light of the iterative step. We record this in the following 
\begin{cor}\label{for:corrections decay} For any $\delta>0$, and $\tau_0 = \tau_0(\delta)$ large enough, we have 
\[
\triangle A_j\lesssim_{\delta}\delta^j(\big\|(x_0, x_1\big\|_{\tilde{S}} + \big|x_{0d}\big|).
\]
Similarly, we have 
\[
\big\|(\triangle\tilde{\tilde{x}}^{(j)}_0, \triangle\tilde{\tilde{x}}^{(j)}_1)\big\|_{\tilde{S}}\lesssim_{\delta}\tau_0^{-(1-)}\delta^j(\big\|(x_0, x_1\big\|_{\tilde{S}} + \big|x_{0d}\big|).
\]

\end{cor}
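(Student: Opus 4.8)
The plan is to bootstrap the two estimates in Corollary~\ref{for:corrections decay} out of Corollary~\ref{cor:convergence} and Proposition~\ref{prop:convergence}, using the same $\delta$-smallness mechanism that drove the proof of Corollary~\ref{cor:convergence}. First I would fix $\delta>0$ arbitrary, and as in the proof of Corollary~\ref{cor:convergence} choose $\kappa_*$ small enough that $\kappa_*^\gamma<\tfrac{\delta}{10}$, then $\epsilon<1$ small enough that the conclusion of Proposition~\ref{prop:convergence} holds, then $k$ large enough that $\kappa_*^{-C\delta_0}\epsilon^{\gamma_1 k}e^{\epsilon^{-2}}<\tfrac{\delta}{10}$, set $j_*=2k$, and finally $\tau_0=\tau_0(\delta)$ large enough that $C_{\kappa_*,k}\tau_0^{-\gamma}<\tfrac{\delta}{10 j_*}$. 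With these choices Proposition~\ref{prop:convergence} gives, for all $j\ge j_*+2$,
\[
\triangle A_j\leq \frac{\delta}{10}\big[\triangle A_{j-1}+\triangle A_{j-2}+\triangle A_{j-j_*}\big]+\frac{\delta}{10 j_*}\sum_{l=j-j_*-1}^{j}\triangle A_l,
\]
which in particular implies a recursive bound of the schematic form $\triangle A_j\le \delta\max_{j_*+2\le l<j}\triangle A_l$ for $j$ large — more precisely $\triangle A_j\le \delta\,\sup_{l\ge j-j_*-1,\,l<j}\triangle A_l$ after absorbing the finitely many terms.

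The key step is then a standard finite-depth recursion argument: since each $\triangle A_j$ for $j\ge j_*+2$ is bounded by $\delta$ times the maximum of the previous $j_*+1$ terms, iterating this $m$ times shows that for $j\ge (j_*+2)+m(j_*+1)$ we have $\triangle A_j\le \delta^{\lfloor j/(j_*+2)\rfloor}\max_{1\le l\le j_*+1}\triangle A_l$, up to harmless constants depending on $j_*$ (hence on $\delta$). Combined with Corollary~\ref{cor:convergence}, which controls $\sum_{l=1}^{j_*+1}\triangle A_l$ by $\big\|(x_0,x_1)\big\|_{\tilde S}+|x_{0d}|$, this yields the first claimed bound $\triangle A_j\lesssim_\delta \delta^j\big(\big\|(x_0,x_1)\big\|_{\tilde S}+|x_{0d}|\big)$ after relabelling $\delta$ (replacing $\delta$ by a suitable power of itself so the exponent $\lfloor j/(j_*+2)\rfloor$ can be traded for $j$). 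I would be slightly careful here that $j_*$ depends on $\delta$, so the implicit constant genuinely depends on $\delta$; this is why the statement is phrased with $\lesssim_\delta$.

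For the second bound on $\big\|(\triangle\tilde{\tilde x}^{(j)}_0,\triangle\tilde{\tilde x}^{(j)}_1)\big\|_{\tilde S}$, the point is purely that these corrections are \emph{outputs} of the iterative step. By Proposition~\ref{prop:lineariterstep} (see \eqref{eq:newcorrection}) and Proposition~\ref{prop:nonlineariterstep}, the corrections produced at stage $j$ obey $\big\|(\triangle\tilde{\tilde x}^{(j)}_0,\triangle\tilde{\tilde x}^{(j)}_1)\big\|_{\tilde S}\lesssim \tau_0^{-(1-)}\big[\triangle A_{j-1}+\triangle A_{j-2}\big]$ — each is smaller by a factor $\tau_0^{-(1-)}$ than the quantity $\triangle A$ controlling the input at the previous stage(s). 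Feeding in the exponential decay $\triangle A_{j-1}+\triangle A_{j-2}\lesssim_\delta \delta^{j}\big(\big\|(x_0,x_1)\big\|_{\tilde S}+|x_{0d}|\big)$ just obtained (after absorbing the two shifts in $j$ into the relabelled $\delta$) gives the stated bound.

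I do not expect a genuine obstacle here: both assertions are soft consequences of Proposition~\ref{prop:convergence} plus Corollary~\ref{cor:convergence} and the input--output structure of Propositions~\ref{prop:lineariterstep}--\ref{prop:nonlineariterstep}. The only point requiring care — and the closest thing to a ``hard part'' — is the order of quantifiers: one must fix $\delta$ first, which fixes $\kappa_*,\epsilon,k,j_*$ and only then $\tau_0$, so that the finite-depth recursion has constant depth $j_*+1$ independent of $j$; the geometric decay rate one extracts is then a fixed power of the original $\delta$, which is why a relabelling is needed to present the conclusion cleanly in the form $\triangle A_j\lesssim_\delta \delta^j(\cdots)$. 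All remaining manipulations are the routine telescoping/geometric-series estimates already used in the proof of Corollary~\ref{cor:convergence}.
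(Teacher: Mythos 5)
There is a genuine gap in your derivation of the first estimate. You pass from Proposition~\ref{prop:convergence} to the ``lumped'' recursion $\triangle A_j \le c\,\delta\,\max_{j-j_*-1\le l\le j-1}\triangle A_l$ (with $j_*=2k$), exactly as in the proof of Corollary~\ref{cor:convergence}, and then iterate this max-recursion. That iteration only yields $\triangle A_j\lesssim (c\delta)^{\lfloor j/(j_*+1)\rfloor}$, i.e.\ exponential decay at the rate $(c\delta)^{1/(j_*+1)}$. But $j_*=2k$ must be chosen so large that $\kappa_*^{-C\delta_0}\epsilon^{\gamma_1 k}e^{\epsilon^{-2}}<\delta/10$, which forces $j_*(\delta)\gtrsim |\log\delta|$; hence $(c\delta)^{1/(j_*+1)}\approx \exp(\log\delta/|\log\delta|)\approx e^{-1}$ stays \emph{bounded away from} $0$ as $\delta\to 0$. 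The ``relabelling $\delta\mapsto\delta^m$'' cannot repair this, for the same reason: $j_*(\delta^m)\sim m|\log\delta|$, so $(\delta^m)^{1/j_*(\delta^m)}$ is again $\approx e^{-1}$, independently of $m$. Since the claim is that for \emph{every} $\delta>0$ (in particular $\delta\ll 1$) one has $\triangle A_j\lesssim_\delta \delta^j(\cdots)$, your argument establishes only exponential decay at a fixed universal rate, which is strictly weaker than what is asserted. Note $\lesssim_\delta$ does not help: the constant is $j$-independent, so it cannot compensate a decay rate worse than $\delta$.

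The fix is to use Proposition~\ref{prop:convergence} \emph{without} lumping, keeping the three coefficients $\kappa_*^\gamma$, $\kappa_*^{-C\delta_0}\epsilon^{\gamma_1 k}e^{\epsilon^{-2}}$ and $C_{\kappa_*,k}\tau_0^{-\gamma}$ distinct, and run the induction $\triangle A_j\le B\,\delta^j$ directly. Plugging the inductive hypothesis into the recursion, the closure condition reads
\[
\kappa_*^\gamma(\delta^{-1}+\delta^{-2})
+\kappa_*^{-C\delta_0}e^{\epsilon^{-2}}\big(\epsilon^{\gamma_1}\delta^{-2}\big)^{k}
+C_{\kappa_*,k}\tau_0^{-\gamma}\sum_{l=1}^{2k+1}\delta^{-l}\;<\;1,
\]
and this can be arranged in the intended order of quantifiers: first $\kappa_*$ so small that $\kappa_*^\gamma<\tfrac14\delta^2$ (handling the short-range terms), then $\epsilon$ so small that $\epsilon^{\gamma_1}<\tfrac12\delta^2$ (so the middle term is $\lesssim 2^{-k}$ for $k$ large), then $k$ large, then $\tau_0$ large; finally $B\sim_\delta D\,\delta^{-(2k+1)}$ with $D:=\|(x_0,x_1)\|_{\tilde S}+|x_{0d}|$ is fixed by the base case $j\le 2k+1$ using Corollary~\ref{cor:convergence}. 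The essential point your lumping loses is that the short-range coefficient $\kappa_*^\gamma$ can be driven below $\delta^2$ \emph{without} increasing the recursion depth $2k$, whereas after lumping everything to a single $\delta$-weighted window of width $j_*+1$ the achievable rate is tied to $\delta^{1/j_*}$. Your treatment of the second estimate for $(\triangle\tilde{\tilde x}^{(j)}_0,\triangle\tilde{\tilde x}^{(j)}_1)$ via the input--output gain $\tau_0^{-(1-)}$ from Propositions~\ref{prop:lineariterstep}, \ref{prop:nonlineariterstep} is fine once the first bound is available.
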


It remains to prove Proposition~\ref{prop:convergence}. As already mentioned, a key role will be played by the fact that {\it{re-iterating}} the operator $\chi_{\cdot>\kappa_*}\Phi$ many times leads to an improved estimate. The following is a precise version of what we need. Its proof will be relegated to the end of this section. 
\begin{prop}\label{prop:mainestimate} For any $\epsilon>0$ sufficiently small, and a suitable $\gamma_1>0$, there is for any $k\geq 1$ a splitting 
\[
(\chi_{\xi>\kappa_*}\Phi)^k(f) = \Phi_1^{(k)}(f) + \Phi_2^{(k)}(f),
\]
such that we have the bounds 
\[
\big(\sum_{\substack{N\gtrsim \tau_0\\ N\,\text{dyadic}}}[\sup_{\tau\sim N}(\frac{\tau}{\tau_0})^{\kappa}\big\|\Phi_1^{(k)}(f)\big\|_{S_3}]^2\big)^{\frac{1}{2}}\leq \epsilon^{\gamma_1 k}e^{\epsilon^{-2}}\big(\sum_{\substack{N\gtrsim \tau_0\\ N\,\text{dyadic}}}[\sup_{\tau\sim N}(\frac{\tau}{\tau_0})^{\kappa}\big\|f(\tau, \cdot)\|_{S_2}]^2\big)^{\frac{1}{2}}
\]
\[
\big(\sum_{\substack{N\gtrsim \tau_0\\ N\,\text{dyadic}}}[\sup_{\tau\sim N}(\frac{\tau}{\tau_0})^{\kappa}\big\|\Phi_2^{(k)}(f)\big\|_{S_2}]^2\big)^{\frac{1}{2}}\lesssim_{k,\kappa_*} \tau_0^{-\gamma_1}\big(\sum_{\substack{N\gtrsim \tau_0\\ N\,\text{dyadic}}}[\sup_{\tau\sim N}(\frac{\tau}{\tau_0})^{\kappa}\big\|f(\tau, \cdot)\|_{S_2}]^2\big)^{\frac{1}{2}}
\]
Here we have introduced the norm 
\[
\big\|f\big\|_{S_3}: = \|\min\{\xi^{0+}, 1\}\langle\xi\rangle^{\frac{1}{2}+}f(\xi)\|_{L^2_{d\xi}}.
\]
\end{prop}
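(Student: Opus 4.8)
The plan is to unfold the $k$-fold composition $(\chi_{\xi>\kappa_*}\Phi)^k$ into a single oscillatory integral over the ordered time simplex
\[
\Delta_k=\{\tau_0\le\sigma_k\le\sigma_{k-1}\le\cdots\le\sigma_1\le\tau\},\qquad \sigma_0:=\tau,
\]
whose integrand is a product of $k$ amplitude factors $\frac{\lambda^{3/2}(\sigma_{i-1})}{\lambda^{3/2}(\sigma_i)}\frac{\rho^{1/2}((\lambda^2(\sigma_{i-1})/\lambda^2(\sigma_i))\xi_{i-1})}{\rho^{1/2}(\xi_{i-1})}\beta_{\nu}(\sigma_i)$, of $k$ cosine phases $\cos[\lambda(\sigma_{i-1})\xi_{i-1}^{1/2}\int_{\sigma_{i-1}}^{\sigma_i}\lambda^{-1}]$, and of $k$ copies of the transference kernel $K_{cc}$ relating consecutive (rescaled) frequency variables $\xi_i$. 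The first step is to record the elementary algebra that composes the chain of rescalings $\xi\mapsto(\lambda^2(\sigma_{i-1})/\lambda^2(\sigma_i))\xi$ into single dilations, so that the product of $\rho^{1/2}$-ratios telescopes. On the region $\xi>\kappa_*$, the constraint $\sigma_i\le\sigma_{i-1}$ forces every intermediate frequency to be $\gtrsim\kappa_*$ as well, so the only effect of the spectral-measure asymptotics \eqref{eq:rhoasympto} is an overall factor $\kappa_*^{-C\delta_0}$, which is exactly the power of $\kappa_*$ appearing in front of this term in Proposition~\ref{prop:convergence}.

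\textbf{Reduction of the frequency integrations.} The second step is to dispose of the $k$ frequency integrals. Away from $\xi=0$ the operator $\mathcal{K}_{cc}$ is an $L^2$-bounded Calderón--Zygmund-type operator — its kernel being $F(\xi,\eta)\rho(\eta)/(\xi-\eta)$ with $F$ enjoying the decay and vanishing properties of \cite{KST} — and the same remains true after the dilations, with operator norm $\lesssim(\lambda(\sigma_{i-1})/\lambda(\sigma_i))^{C\delta_0}$; moreover, restricted to a fixed dyadic scaling ratio $\lambda(\sigma_{i-1})/\lambda(\sigma_i)\sim 2^{\ell_i}$, the amplitude factor carries a genuine decay $2^{-\gamma\ell_i}$ in the non-resonant frequency regimes. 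Composing these bounds and absorbing the small power losses into the amplitudes reduces the problem, after taking $L^2$ in the output frequency, to estimating the scalar simplex integral $\int_{\Delta_k}\prod_{i=1}^k\beta_\nu(\sigma_i)\,a_i(\sigma_{i-1},\sigma_i)\,d\sigma$ with benign amplitudes $a_i=O(1)$ (modulo the $\kappa_*$- and $\lambda$-losses already accounted for), together with the leftover cosine phases. The output can only be controlled in the weaker norm $S_3$ rather than $S_2$: since $\mathcal{K}_{cc}$ is non-local it spreads the output to all frequencies, and the low-frequency tail of $(\chi_{\xi>\kappa_*}\Phi)^k f$ is merely bounded, hence measured by the vanishing weight $\min\{\xi^{0+},1\}$ rather than by $\xi^{-\delta_0}$; this is the source of the $S_3$ in the statement.

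\textbf{The scalar simplex estimate and the combinatorial gain.} The third and central step extracts smallness from the scalar simplex integral. At each stage $i$ I would split the $\sigma_i$-integration into a near-diagonal range, $\lambda(\sigma_{i-1})/\lambda(\sigma_i)\le 2^{\ell_0}$ for a threshold $\ell_0\sim\epsilon^{-1}$ (equivalently a short interval of logarithmic length $\lesssim\epsilon^{-2}$ in $\sigma_i$), and its complement. On the far piece one combines the cosine of that stage with the neighbouring phases and performs an integration by parts in $\sigma_i$, which produces a power $\tau_0^{-\gamma}$ (the boundary region $\sigma_i\in[\tau-\tau^{1/2},\tau]$ being treated directly as in the proof of Proposition~\ref{prop:lineariterstep}); alternatively the scaling decay $2^{-\gamma\ell_i}$ with $\ell_i\ge\ell_0$ already supplies a gain $\epsilon^{\gamma_1}$. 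Expanding over the $2^k$ near/far configurations, the configuration in which every stage is ``far'' yields the factor $\epsilon^{\gamma_1 k}$, and any configuration with $m\ge 1$ integration-by-parts (``error'') stages carries a gain $\tau_0^{-\gamma m}$ with a $k$-dependent constant; collecting the latter configurations into $\Phi_2^{(k)}$ gives the second claimed bound, while the former collect into $\Phi_1^{(k)}$. What remains — and this is the main obstacle — is the bookkeeping for the purely near-diagonal contributions: the ordered simplex integral of $\prod\chi_{\mathrm{near}}\,\frac{d\sigma_i}{\sigma_i}$ over a block of $m$ consecutive near stages has size at most $\frac{1}{m!}(C\epsilon^{-2})^m$, and summing over all block decompositions yields the universal constant $e^{\epsilon^{-2}}$ multiplying the $\epsilon^{\gamma_1 k}$. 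This is precisely the high-dimensional-simplex mechanism of \cite{KST1}, \cite{DoMHKS}, and I would adapt their argument, keeping careful track of the interplay between the logarithmic weight $\beta_\nu(\sigma)\sim\sigma^{-1}$, the dyadic scaling decay, and the $(\tau/\tau_0)^\kappa$ weight in the square-sum norm, to produce exactly the factor $e^{\epsilon^{-2}}$ in the statement. Assembling the near part into $\Phi_1^{(k)}$ (bounded in $S_3$ with the gain $\epsilon^{\gamma_1 k}e^{\epsilon^{-2}}$) and the error part into $\Phi_2^{(k)}$ (bounded in $S_2$ with the gain $\tau_0^{-\gamma_1}$) then completes the proof.
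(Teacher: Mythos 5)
Your proposal has the right flavor — reduce to a simplex integral in the time variables, decompose into near/far pieces, and use a factorial gain from the simplex volume — but it has a genuine gap precisely at the point you flag as the ``main obstacle,'' and the gap traces back to a structural ingredient you have not used: the \emph{diagonal/off-diagonal decomposition of the transference kernel $\mathcal{K}_{cc}$ itself}, with tightness parameter $1/n$ where $n = k$ is the number of iterations.

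Concretely, the paper does not split the $\sigma_i$-integrations directly into near and far ranges. Instead it first writes $\mathcal{K} = \mathcal{K}^d + \mathcal{K}^{nd}$ with $\mathcal{K}^d$ supported on $|\xi/\eta - 1| < 1/k$, shows via integration by parts that any appearance of $\mathcal{K}^{nd}$ yields a $\tau_0^{-\gamma}$ gain (this is your ``far'' case and is absorbed into $\Phi_2^{(k)}$), and then, for the purely diagonal string, further splits $\mathcal{K}^d = \mathcal{K}_1^{d,\epsilon} + \mathcal{K}_2^{d,\epsilon} + \mathcal{K}_3^{d,\epsilon}$ by a low/intermediate/high frequency cutoff at $\epsilon, \epsilon^{-1}$. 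The key point is that, on the intermediate piece $\mathcal{K}_2^{d,\epsilon}$, the diagonal constraint forces the rescaled frequency $\lambda^2(\sigma_i)\eta_i$ to drift by at most $(1 \pm 1/k)$ per step, so over $j \le k$ steps the total multiplicative drift is $(1 - 1/k)^j \gtrsim 1$, uniformly in $j$. Combined with the constraint $\eta_i \in [\epsilon, \epsilon^{-1}]$ this yields the \emph{global} box constraint $\sigma_j > \epsilon\tau$ for all $j$, and this is what produces $\beta_\nu(\sigma_j) \lesssim \epsilon^{-1}\tau^{-1}$ and the ordered-simplex volume $\tfrac{\epsilon^{-j}}{j!}$.

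Your near/far split in the time variables replaces this global box constraint by a chain of pairwise constraints $\lambda(\sigma_{i-1})/\lambda(\sigma_i) \le 2^{\ell_0}$ with $\ell_0$ a fixed $\epsilon$-dependent threshold independent of $k$. But a chain of pairwise near constraints does \emph{not} give a factorial gain: changing variables to $t_i = \log\sigma_{i-1} - \log\sigma_i$, the near conditions $t_i \in [0, L]$ are independent and the volume is $L^m$, not $L^m/m!$. The $1/m!$ appears only if all $\sigma_i$ live in a \emph{single} fixed box, which is exactly what the paper extracts from the $1/k$-tight kernel diagonal plus the intermediate-frequency restriction. Without it, the ``all near'' contribution has size $(C\epsilon^{-2})^k$ rather than $\tfrac{(C\epsilon^{-2})^k}{k!}$, and the argument does not close. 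You also replace the transference operator by a generic $L^2$-bounded kernel, which discards the very structure ($F(\xi,\eta)$ localized near the diagonal with scale $1/k$) that makes the global time constraint available. So while the high-level shape of the argument (simplex + integration by parts for off-diagonal pieces) matches the paper and the references you cite, the central estimate you assert for the near-diagonal block is incorrect as stated, and repairing it requires importing the paper's kernel decomposition.
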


Back to proving  Proposition~\ref{prop:convergence}, we shall do so by obtaining a smallness gain for the various constituents of $\triangle A_j$, according to Definition~\ref{defn:sizeofcorrection}. We shall first consider the time differentiated expression $\mathcal{D}_{\tau}\triangle x^{(j)}$, and show a smallness gain for it after many iterations: 

\begin{lem}\label{lem:Dtauxconvergence} We have 
\begin{align*}
&\big(\sum_{\substack{N\gtrsim \tau_0\\ N\,\text{dyadic}}}[\sup_{\tau\sim N}(\frac{\tau}{\tau_0})^{\kappa}\big[\big\|\chi_{\xi>1}\mathcal{D}_{\tau}\widetilde{\triangle x^{(j)}}\|_{S_2} + \|\chi_{\xi<1}\mathcal{D}_{\tau}\triangle_{>\tau} x^{(j)}\|_{S_2}]^2\big)^{\frac{1}{2}}\\
&\leq \kappa_*^{\gamma} [\triangle A_{j-1}  + \triangle A_{j-2}] + \kappa_*^{-C\delta_0}\epsilon^{\gamma_1 k}e^{\epsilon^{-2}}\cdot\triangle A_{j-2k} + C_{\kappa_*,k}\tau_0^{-\gamma}[\sum_{l=j-2k-1}^j\triangle A_{l}]
\end{align*}
provided $\sum_{l=1}^{j}\triangle A_l <\delta_1$, where $\delta_1 = \delta_1(\tau_0)>0$ is sufficiently small.
\end{lem}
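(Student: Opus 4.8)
The plan is to decompose the time-differentiated correction $\mathcal{D}_\tau\triangle x^{(j)}$ into a high-frequency piece $\chi_{\xi>\kappa_*}$ and a very-low-frequency piece $\chi_{\xi<\kappa_*}$, and to treat these by completely different mechanisms. For the very-low-frequency part, I would invoke Lemma~\ref{lem:smallnessgainoutputfreqsmall} (and its nonlinear counterpart via Proposition~\ref{prop:nonlineariterstep}, Lemma~\ref{lem:smallnessgainnl}): restricting the output frequency to $\xi<\kappa_*$ gains a genuine power $\kappa_*^{\gamma}$ relative to the a priori bound, so this contribution is $\lesssim \kappa_*^{\gamma}[\triangle A_{j-1}+\triangle A_{j-2}]$ after feeding in the a priori control of $\triangle x^{(j-1)}, \triangle x^{(j-2)}$ from Definition~\ref{defn:sizeofcorrection}. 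The same remark applies to $\chi_{\xi<\kappa_*}\mathcal{D}_\tau\triangle_{>\tau}x^{(j)}$ in the low-frequency half of the norm. So the whole $\xi<\kappa_*$ regime is handled by one citation.

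For the high-frequency part $\chi_{\xi>\kappa_*}\mathcal{D}_\tau\widetilde{\triangle x^{(j)}}$, I would first apply Lemma~\ref{lem:Phireduction} to reduce, up to an error of size $C_{\kappa_*}\tau_0^{-\gamma}[\triangle A_{j-1}+\triangle A_{j-2}]$, to the once-diagonalized expression $\chi_{\xi>\kappa_*}\Phi\,\chi_{\xi>\kappa_*}\Phi(\mathcal{D}_\tau\triangle x^{(j-2)})$. Then I would iterate Lemma~\ref{lem:Phireduction} itself $k$ times (each application peeling off two indices and throwing out a $C_{\kappa_*}\tau_0^{-\gamma}$-type error involving $\triangle A_{j-1},\dots,\triangle A_{j-2k}$, plus the low-frequency $\kappa_*^\gamma$ errors from the intermediate steps), arriving at an expression of the schematic form $(\chi_{\xi>\kappa_*}\Phi)^{2k}(\mathcal{D}_\tau\triangle x^{(j-2k)})$ modulo these controlled errors; here one must be careful that $j\geq 2k+2$ so that all the intermediate iterates are defined and the a priori bounds $\sum_l \triangle A_l<\delta_1$ apply throughout. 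Finally I would feed this into Proposition~\ref{prop:mainestimate} with $f=\mathcal{D}_\tau\triangle x^{(j-2k)}$: the splitting $(\chi_{\xi>\kappa_*}\Phi)^{2k}=\Phi_1^{(2k)}+\Phi_2^{(2k)}$ gives a main term bounded by $\epsilon^{\gamma_1\cdot 2k}e^{\epsilon^{-2}}$ times the $S_3$-norm of $f$ — which, after paying a factor $\kappa_*^{-C\delta_0}$ to convert the $S_3$-norm back to the $S_2$-norm on $\xi>\kappa_*$ (the $\min\{\xi^{0+},1\}$ weight in $S_3$ is $\gtrsim \kappa_*^{\delta_0}$ there) — is $\lesssim \kappa_*^{-C\delta_0}\epsilon^{\gamma_1 k}e^{\epsilon^{-2}}\triangle A_{j-2k}$, and a remainder term $\lesssim_{k,\kappa_*}\tau_0^{-\gamma_1}\triangle A_{j-2k}$ absorbable into the $C_{\kappa_*,k}\tau_0^{-\gamma}$ error. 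Collecting the three contributions yields exactly the claimed inequality.

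The bookkeeping I would watch most carefully is the interplay of the three error scales $\kappa_*^\gamma$, $\kappa_*^{-C\delta_0}\epsilon^{\gamma_1 k}e^{\epsilon^{-2}}$, and $C_{\kappa_*,k}\tau_0^{-\gamma}$: the first must come only from output-frequency localization at the \emph{end} (i.e. from the frequencies of $\triangle x^{(j)}$ itself being $<\kappa_*$, or from the single $\kappa_*^\gamma$ losses in Lemma~\ref{lem:Phireduction}), never from intermediate low-frequency cuts which would compound to $\kappa_*^{\gamma k}$ and be too strong to be honest about — actually that is fine since smaller is better, but I must make sure I do not secretly \emph{need} more than $\kappa_*^\gamma$ total from that source. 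The $\kappa_*^{-C\delta_0}$ loss in the third term is the one genuinely dangerous place: it arises from the $S_3$-to-$S_2$ conversion and from the constants $C_{\kappa_*}$ in Lemma~\ref{lem:Phireduction}, and one has to verify it is a \emph{fixed} negative power of $\kappa_*$ independent of $k$, so that once $\kappa_*$ is frozen the term is controlled by choosing $k$ large (to beat $\kappa_*^{-C\delta_0}$ with $\epsilon^{\gamma_1 k}$) and then $\tau_0$ large.

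The main obstacle I anticipate is not any single estimate but the combinatorics of tracking which intermediate iterates $\triangle x^{(l)}$, $j-2k-1\le l\le j$, appear in the accumulated error after $k$ nested applications of Lemma~\ref{lem:Phireduction} together with Lemma~\ref{lem:smallnessgainnl} and Proposition~\ref{prop:nonlineariterstep}, and ensuring every such error is genuinely of one of the three advertised types — in particular that the nonlinear source terms $\triangle f^{(l)}$ at each stage either get absorbed into a $\tau_0^{-\gamma}$ error (the "good" part, via Lemma~\ref{lem:smallnessgainnl}) or, for the "bad" part $E_2$, survive only with the extra $\frac{\xi^{1/2-}}{\langle\xi\rangle^{1/2-}}$ weight that permits the integration-by-parts-in-$\sigma$ gain. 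I would organize this as an induction on $k$ with the inductive hypothesis being precisely the reduction of $\chi_{\xi>\kappa_*}\mathcal{D}_\tau\widetilde{\triangle x^{(j)}}$ to $(\chi_{\xi>\kappa_*}\Phi)^{2m}(\mathcal{D}_\tau\triangle x^{(j-2m)})$ plus errors of the three types with $m$ in place of $k$, the inductive step being a verbatim application of Lemma~\ref{lem:Phireduction} to $\triangle x^{(j-2m)}$.
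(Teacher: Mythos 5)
Your overall architecture is the right one and matches the paper's: very-low frequencies $\xi<\kappa_*$ via Lemma~\ref{lem:smallnessgainoutputfreqsmall} and Lemma~\ref{lem:smallnessgainnl}, high frequencies via iterated Lemma~\ref{lem:Phireduction} fed into Proposition~\ref{prop:mainestimate}, with the $S_3\to S_2$ conversion on $\xi>\kappa_*$ costing $\kappa_*^{-C\delta_0}$. That part of the bookkeeping is sound.

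But there is a genuine gap in the intermediate frequency regime $\kappa_*<\xi<1$. The quantity to be controlled there is $\|\chi_{\xi<1}\mathcal{D}_\tau\triangle_{>\tau}x^{(j)}\|_{S_2}$, whereas what the $\Phi$-reduction machinery actually produces is a bound on $\|\chi_{\xi>\kappa_*}\mathcal{D}_\tau\widetilde{\triangle x^{(j)}}\|_{S_2}$. These are two \emph{different} objects: $\widetilde{\triangle x^{(j)}}=\triangle x^{(j)}-S(\tau)(\triangle\tilde{\tilde{x}}^{(j)}_0,\triangle\tilde{\tilde{x}}^{(j)}_1)$ and $\triangle_{>\tau}x^{(j)}=\triangle x^{(j)}-S(\tau)(\triangle\tilde{x}^{(j)}_0,\triangle\tilde{x}^{(j)}_1)$, so their difference is $\mathcal{D}_\tau S(\tau)(\triangle\tilde{\tilde{x}}^{(j)}_0,\triangle\tilde{\tilde{x}}^{(j)}_1)-\mathcal{D}_\tau S(\tau)(\triangle\tilde{x}^{(j)}_0,\triangle\tilde{x}^{(j)}_1)$. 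Your proposal never carries out this passage, and it is not harmless: free evolutions $\mathcal{D}_\tau S(\tau)(\cdot)$ with $S$-data are generically \emph{not} square-summable over dyadic time scales, so placing them in the norm on the left requires the output-frequency restriction to a finite dyadic range, which is precisely where the $|\log\kappa_*|$ factor on the $\triangle\tilde{\tilde{x}}^{(j)}$-term comes from. More seriously, the $\triangle\tilde{x}^{(j)}$-piece is a priori only bounded by $\triangle A_j$ itself (with no smallness gain), so you cannot discard it; you need the separate improved bound \eqref{eq:triangletildeximprov}, i.e.\ Lemma~\ref{lem:triangletildeximprov}, which the paper proves afterward. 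Without appealing to that forthcoming lemma, the argument does not close — and indeed the paper explicitly defers this last term to Lemma~\ref{lem:triangletildeximprov}, stating that establishing \eqref{eq:triangletildeximprov} "will be accomplished in the next lemma." So the missing idea is precisely the decomposition $\mathcal{D}_\tau\triangle_{>\tau}x^{(j)}=\mathcal{D}_\tau\widetilde{\triangle x^{(j)}}+\mathcal{D}_\tau S(\tau)(\triangle\tilde{\tilde{x}}^{(j)}_0,\triangle\tilde{\tilde{x}}^{(j)}_1)-\mathcal{D}_\tau S(\tau)(\triangle\tilde{x}^{(j)}_0,\triangle\tilde{x}^{(j)}_1)$ together with the separate treatment of the two free-evolution terms; you should flag that the proof as written is contingent on the improved bound for $(\triangle\tilde{x}_0^{(j)},\triangle\tilde{x}_1^{(j)})$ on $\xi>\kappa_*$.
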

\begin{proof}(Lemma~\ref{lem:Dtauxconvergence})
\\

{\bf{Step 1}}: Using Lemma~\ref{lem:smallnessgainoutputfreqsmall} and choosing $\kappa_* = \kappa_*(\delta)$ small enough, we get 
\[
\big(\sum_{\substack{N\gtrsim \tau_0\\ N\,\text{dyadic}}}[\sup_{\tau\sim N}(\frac{\tau}{\tau_0})^{\kappa} \|\chi_{\xi<\kappa_*}\mathcal{D}_{\tau}\triangle_{>\tau} x^{(j)}\|_{S_2}]^2\big)^{\frac{1}{2}}\ll \kappa_*^\gamma \triangle A_{j-1}. 
\]
{\bf{Step 2}}: From Lemma~\ref{lem:Phireduction} and its proof, we infer for any $k\geq 1$
\begin{align*}
&\big(\sum_{\substack{N\gtrsim \tau_0\\ N\,\text{dyadic}}}[\sup_{\tau\sim N}(\frac{\tau}{\tau_0})^{\kappa}\big\|\chi_{\xi>\kappa_*}\mathcal{D}_{\tau}\widetilde{\triangle x^{(j)}}\big\|_{S_2}]^2\big)^{\frac{1}{2}}\\&\lesssim \big(\sum_{\substack{N\gtrsim \tau_0\\ N\,\text{dyadic}}}[\sup_{\tau\sim N}(\frac{\tau}{\tau_0})^{\kappa}\big\|(\chi_{\xi>\kappa_*}\Phi)^{2k}(\mathcal{D}_{\tau}\triangle x^{(j-2k)})\big\|_{S_2}]^2\big)^{\frac{1}{2}} + C_{\kappa_*,k}\tau_0^{-\gamma}[\sum_{l=j-2k}^j\triangle A_{l}]
\end{align*}
Invoking Proposition~\ref{prop:mainestimate}, we can estimate this in turn by 
\begin{align*}
&\big(\sum_{\substack{N\gtrsim \tau_0\\ N\,\text{dyadic}}}[\sup_{\tau\sim N}(\frac{\tau}{\tau_0})^{\kappa}\big\|(\chi_{\xi>\kappa_*}\Phi)^{2k}(\mathcal{D}_{\tau}\triangle x^{(j-2k)})\big\|_{S_2}]^2\big)^{\frac{1}{2}} + C_{\kappa_*,k}\tau_0^{-\gamma}[\sum_{l=j-2k}^j\triangle A_{l}]\\
&\lesssim \kappa_*^{-C\delta_0}\epsilon^{\gamma_1 k}e^{\epsilon^{-2}}\cdot\triangle A_{j-2k} + C_{\kappa_*,k}\tau_0^{-\gamma}[\sum_{l=j-2k-1}^j\triangle A_{l}]
\end{align*}
provided $0<\epsilon\leq \epsilon_*$. Then we have almost completed the proof of the lemma, exempt that we don't control $\chi_{\kappa_*<\xi<1}\mathcal{D}_{\tau}\triangle_{>\tau} x^{(j)}$ yet. For this we have to pass from $\chi_{\kappa_*<\xi<1}\mathcal{D}_{\tau}\widetilde{\triangle x^{(j)}}$ to 
$\chi_{\kappa_*<\xi<1}\mathcal{D}_{\tau}\triangle_{>\tau} x^{(j)}$. 
\\

{\bf{Step 3}}: Write
\[
\mathcal{D}_{\tau}\triangle_{>\tau}x^{(j)} = \mathcal{D}_{\tau}\widetilde{\triangle x^{(j)}} + \mathcal{D}_{\tau}S(\tau)(\triangle\tilde{\tilde{x}}^{(j)}_0, \triangle\tilde{\tilde{x}}^{(j)}_1) - \mathcal{D}_{\tau}S(\tau)(\triangle\tilde{x}^{(j)}_0, \triangle\tilde{x}^{(j)}_1)
\]
Then it follows that
\begin{align*}
&\big(\sum_{\substack{N\gtrsim \tau_0\\ N\,\text{dyadic}}}[\sup_{\tau\sim N}(\frac{\tau}{\tau_0})^{\kappa}\big[\big\|\chi_{\kappa_*<\xi<1}\mathcal{D}_{\tau}\triangle_{>\tau} x^{(j)}\|_{S_2}]^2\big)^{\frac{1}{2}}\\&\leq \big(\sum_{\substack{N\gtrsim \tau_0\\ N\,\text{dyadic}}}[\sup_{\tau\sim N}(\frac{\tau}{\tau_0})^{\kappa}\big[\big\|\chi_{\kappa_*<\xi<1} \mathcal{D}_{\tau}\widetilde{\triangle x^{(j)}}\|_{S_2}]^2\big)^{\frac{1}{2}}\\
& +  \big(\sum_{\substack{N\gtrsim \tau_0\\ N\,\text{dyadic}}}[\sup_{\tau\sim N}(\frac{\tau}{\tau_0})^{\kappa}\big[\big\|\chi_{\kappa_*<\xi<1}\mathcal{D}_{\tau}S(\tau)(\triangle\tilde{\tilde{x}}^{(j)}_0, \triangle\tilde{\tilde{x}}^{(j)}_1) \|_{S_2}]^2\big)^{\frac{1}{2}}\\
& + \big(\sum_{\substack{N\gtrsim \tau_0\\ N\,\text{dyadic}}}[\sup_{\tau\sim N}(\frac{\tau}{\tau_0})^{\kappa}\big[\big\|\chi_{\kappa_*<\xi<1}\mathcal{D}_{\tau}S(\tau)(\triangle\tilde{x}^{(j)}_0, \triangle\tilde{x}^{(j)}_1)\|_{S_2}]^2\big)^{\frac{1}{2}}\\
\end{align*}
Here the first term on the right is bounded using {\it{Step 2}}, and the second term on the right is bounded by $\big|\log\kappa_*\big|\tau_0^{-(1-)}\triangle A_{j-1}$ due to Proposition~\ref{prop:lineariterstep}, Proposition~\ref{prop:nonlineariterstep}. Note that free evolutions $\mathcal{D}_{\tau}S(\tau)(\ldots)$ with data in $S$ cannot be placed into the square-sum type space in general, but restricting output frequencies to a finite dyadic range enables such an estimate, which explains the factor $\big|\log\kappa_*\big|$.  
Thus to complete the proof of Lemma~\ref{lem:Dtauxconvergence}, it remains to bound the last term on the right, which will follow from 
\begin{equation}\label{eq:triangletildeximprov}\begin{split}
&\big\|\big(\triangle\tilde{x}^{(j)}_0, \triangle\tilde{x}^{(j)}_1\big)\big\|_{S(\xi>\kappa_*)}\\&\lesssim \kappa_*^{\gamma}[\triangle A_{j-1} + \triangle A_{j-2}] + \kappa_*^{-C\delta_0}\epsilon^{\gamma_1 k}e^{\epsilon^{-2}}\cdot\triangle A_{j-2k} + C_{\kappa_*,k}\tau_0^{-\gamma}[\sum_{l=j-2k-1}^j\triangle A_{l}]
\end{split}\end{equation}
This will be accomplished in the next lemma
\end{proof}

\begin{lem}\label{lem:triangletildeximprov} The bound \eqref{eq:triangletildeximprov} holds. 
\end{lem}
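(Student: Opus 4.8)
\smallskip
The plan is to mimic the proof of Lemma~\ref{lem:Dtauxconvergence}, transporting the argument from the time-$\tau$ square-sum estimate to the fixed-time Cauchy data $(\triangle\tilde{x}^{(j)}_0(\xi),\triangle\tilde{x}^{(j)}_1(\xi))$ at $\tau=\tau_0$. Recall from the paragraph preceding Lemma~\ref{lem:choiceofcorrectioniter} and from \eqref{eq:trianglexjformula} that, up to the correction terms $\triangle\tilde{\tilde{x}}^{(j)}_{0,1}$,
\[
\triangle\tilde{x}^{(j)}_1(\xi)=\int_{\tau_0}^\infty\frac{\lambda^{\frac{3}{2}}(\tau_0)}{\lambda^{\frac{3}{2}}(\sigma)}\frac{\rho^{\frac{1}{2}}(\frac{\lambda^2(\tau_0)}{\lambda^2(\sigma)}\xi)}{\rho^{\frac{1}{2}}(\xi)}\cos[\lambda(\tau_0)\xi^{\frac12}\textstyle\int_{\tau_0}^\sigma\lambda^{-1}]\,\big[\triangle f^{(j-1)}(\sigma,\cdot)+\mathcal{R}(\sigma,\underline{\triangle x}^{(j-1)})\big]\big(\tfrac{\lambda^2(\tau_0)}{\lambda^2(\sigma)}\xi\big)\,d\sigma,
\]
with the analogous formula for $\triangle\tilde{x}^{(j)}_0$ carrying $\sin[\cdots]/\xi^{\frac12}$ in place of $\cos[\cdots]$. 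First I would dispose of the correction terms: by Proposition~\ref{prop:lineariterstep}, Proposition~\ref{prop:nonlineariterstep} one has $\|(\triangle\tilde{\tilde{x}}^{(j)}_0,\triangle\tilde{\tilde{x}}^{(j)}_1)\|_{\tilde S}\lesssim\tau_0^{-(1-)}[\triangle A_{j-1}+\triangle A_{j-2}]$, and on the range $\xi>\kappa_*$ the $S$- and $\tilde S$-norms differ only by a factor $\kappa_*^{-C\delta_0}$, so these fit into $C_{\kappa_*}\tau_0^{-\gamma}[\triangle A_{j-1}+\triangle A_{j-2}]$.

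Next I would split the source into four groups: (a) the nonlinear term $\triangle f^{(j-1)}$; (b) the parts of $\mathcal{R}(\sigma,\underline{\triangle x}^{(j-1)})$ driven by the discrete component $\triangle x^{(j-1)}_d$, i.e.\ $\mathcal{R}_2$ and the $\mathcal{K}_{cd}$-terms; (c) the sub-principal continuous linear pieces $\beta_\nu^2(\sigma)\mathcal{K}_{cc}\triangle x^{(j-1)}$, $\beta_\nu^2(\sigma)\mathcal{K}_{cc}^2\triangle x^{(j-1)}$ and the $[\mathcal{A},\mathcal{K}]$-term; and (d) the principal piece $\beta_\nu(\sigma)\mathcal{K}_{cc}(\mathcal{D}_\sigma\triangle x^{(j-1)})$, which I would further split by the \emph{input} frequency $\eta$ into $\eta<\kappa_*$ and $\eta>\kappa_*$. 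For (a), Lemma~\ref{lem:smallnessgainnl} already bounds $\big\|\big(\tfrac{\xi^{\frac12-}}{\langle\xi\rangle^{\frac12-}}\triangle\tilde{x}^{(j)}_0,\triangle\tilde{x}^{(j)}_1\big)\big\|_{S}\lesssim\tau_0^{-\gamma}[\triangle A_{j-1}+\triangle A_{j-2}]$, and dividing by the weight $\tfrac{\xi^{\frac12-}}{\langle\xi\rangle^{\frac12-}}\gtrsim\kappa_*^{\frac12-}$ on $\xi>\kappa_*$ puts this into $C_{\kappa_*}\tau_0^{-\gamma}[\triangle A_{j-1}+\triangle A_{j-2}]$; for (b), Proposition~\ref{prop:lineariterstepdisc} gives a gain $\tau_0^{-(1-)}$; for (c), the integration-by-parts improvement of Remark~\ref{rem:prop71highfreqimprov}, available because the output frequency is now bounded below by $\kappa_*$, produces an extra $\tau_0^{-(1-)}$; for (d) with $\eta<\kappa_*$, Lemma~\ref{lem:smallnessgaininputfreqsmall} is exactly tailored to the $\tau_0$-to-$\infty$ Duhamel formula above and yields $[\kappa_*^{\gamma}+\tau_0^{-\gamma}]\triangle A_{j-1}$. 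Hence everything except (d) with $\eta>\kappa_*$ is already of the asserted form.

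Finally, for the principal term $\beta_\nu(\sigma)\mathcal{K}_{cc}(\chi_{\eta>\kappa_*}\mathcal{D}_\sigma\triangle x^{(j-1)})$ I would re-iterate as in the proof of Lemma~\ref{lem:Phireduction}: insert \eqref{eq:trianglexjformula} at level $j-1$, discard via Lemma~\ref{lem:smallnessgainnl} and Remark~\ref{rem:proplineariterstep} the resulting $\triangle f^{(j-2)}$-contribution (size $\tau_0^{-\gamma}O_{\kappa_*}(\triangle A_{j-2})$) and the free piece $S(\sigma)(\triangle\tilde{\tilde{x}}^{(j-1)}_{0,1})$, discard the sub-principal $\mathcal{R}$-pieces of $\triangle x^{(j-1)}_{\mathcal R}$ ($\tau_0^{-1}$ gains), and discard the $\eta<\kappa_*$ part of the new input via Lemma~\ref{lem:smallnessgaininputfreqsmall} (size $\kappa_*^{\gamma}\triangle A_{j-2}$), leaving the doubly nested object $\chi_{\xi>\kappa_*}\Phi\,\chi_{\xi>\kappa_*}\Phi(\mathcal{D}_\tau\triangle x^{(j-2)})$; carrying this out $k$ times reduces the whole contribution, modulo an error $C_{\kappa_*,k}\tau_0^{-\gamma}\sum_{l=j-2k-1}^{j}\triangle A_l+\kappa_*^{\gamma}[\triangle A_{j-1}+\triangle A_{j-2}]$, to $(\chi_{\xi>\kappa_*}\Phi)^{2k}(\mathcal{D}_\tau\triangle x^{(j-2k)})$ evaluated at $\tau=\tau_0$. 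The one genuinely new point, and the main obstacle, is that Proposition~\ref{prop:mainestimate} is phrased in the square-sum-over-dyadic-times norm whereas this object sits at the single time $\tau_0$ and must be measured in the ordinary $S$-norm; I would bridge this exactly as in Step~5(ii) of the proof of Proposition~\ref{prop:firstiteratelincont} (see also Step~5 of Proposition~\ref{prop:firstiteratelincontimprov}), where the rescaling $\tilde\xi=\tfrac{\lambda^2(\tau_0)}{\lambda^2(\sigma)}\xi$ built into each Duhamel kernel ties a dyadic window $\sigma\sim M$ of the integration variable to a dyadic output-frequency window, so that $\|\cdot\|_{L^2_{d\xi}}$ at $\tau_0$ decomposes orthogonally into a square-sum over dyadic time scales, to which Proposition~\ref{prop:mainestimate} applies and yields $\kappa_*^{-C\delta_0}\epsilon^{\gamma_1 k}e^{\epsilon^{-2}}\triangle A_{j-2k}$; the subtlety is to check that the orthogonality underpinning this conversion is preserved after passing through the convolution operators $\mathcal{K}_{cc}$ at each of the $k$ re-iteration levels, and to track the $O_{\kappa_*}$-errors accumulated there. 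Collecting the four groups together with this re-iteration bound gives \eqref{eq:triangletildeximprov}.
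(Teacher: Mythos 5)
Your decomposition into source types (the nonlinear piece, the discrete $\mathcal{R}_{cd}$-pieces, the sub-principal $\beta_\nu^2\mathcal{K}_{cc}$-type terms, and the principal $\beta_\nu\mathcal{K}_{cc}\mathcal{D}_\sigma$-term), and your identification of Lemma~\ref{lem:smallnessgainnl}, Proposition~\ref{prop:lineariterstepdisc}, and Lemma~\ref{lem:smallnessgaininputfreqsmall} as the correct tools for the first three groups and for the low-input-frequency part of the fourth, all match the paper's organization. The trouble is in the final group, the principal term with input frequency $\eta>\kappa_*$, which is where the real content of the lemma lies.

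The paper does not attempt a direct $k$-fold re-iteration inside the $\tau_0$-data formula. Rather, for $\triangle\tilde{x}^{(j)}_1$ it inserts the already-available square-sum bound on $\chi_{\eta>\kappa_*}\mathcal{D}_\sigma\triangle x^{(j-1)}$ from Steps~1--2 of Lemma~\ref{lem:Dtauxconvergence} as a black-box constant (those steps having been proved without any reference to $\triangle\tilde{x}^{(j)}_{0,1}$), and controls the outer $\int_{\tau_0}^\infty d\sigma$ integral via the Cauchy-Schwarz/orthogonality argument of Lemma~\ref{lem:hereneednorm}. This cleanly decouples the two issues and never runs into the mismatch of integration domains. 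Your proposal instead wants to reduce the $\tau_0$-datum to $(\chi_{\xi>\kappa_*}\Phi)^{2k}(\mathcal{D}_\tau\triangle x^{(j-2k)})$ ``evaluated at $\tau=\tau_0$''; but the operator $\Phi$ carries the Duhamel integral $\int_{\tau_0}^{\tau}\!d\sigma$, so the $k$-fold composition is identically zero at $\tau=\tau_0$, while the quantity you are trying to control is an integral $\int_{\tau_0}^{\infty}\!d\sigma$ with the \emph{outer} limit at infinity. The orthogonality bridge you invoke (rescaling $\tilde\xi=\tfrac{\lambda^2(\tau_0)}{\lambda^2(\sigma)}\xi$ tying dyadic $\sigma$ to dyadic output frequency) addresses the conversion from a time integral to a frequency square-sum, but it does not repair the domain mismatch, and once one makes this precise one is in effect re-deriving the paper's Lemma~\ref{lem:Dtauxconvergence}--plus--Lemma~\ref{lem:hereneednorm} combination anyway.

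There is a second, equally important gap: you treat $\triangle\tilde{x}^{(j)}_0$ and $\triangle\tilde{x}^{(j)}_1$ symmetrically (``with the analogous formula carrying $\sin[\cdots]/\xi^{1/2}$ in place of $\cos[\cdots]$''), but the extra singular factor $\xi^{-1/2}$ in $\triangle\tilde{x}^{(j)}_0$ prevents the argument from closing in the same way. The paper first establishes the improved bound for $\triangle\tilde{x}^{(j)}_1$, then for $\triangle\tilde{x}^{(j)}_0$ decomposes $\chi_{\eta<\kappa_*}\triangle x^{(j-1)}$ into $\chi_{\eta<\kappa_*}\triangle_{>\sigma}x^{(j-1)} + \chi_{\eta<\kappa_*}S(\sigma)(\triangle\tilde{x}^{(j-1)}_0,0) + \chi_{\eta<\kappa_*}S(\sigma)(0,\triangle\tilde{x}^{(j-1)}_1)$, and feeds the already-established improved bounds on the free data (both $\triangle\tilde{x}^{(j-1)}_1$ from the previous step and, inductively, the low-frequency control on $\triangle\tilde{x}^{(j-1)}_0$) back into the argument. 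The integration-by-parts in $\sigma$ when $\xi^{1/2}\lambda(\tau_0)/\lambda(\sigma)\gtrless\sigma^{-\delta}$, available because the output frequency is bounded below by $\kappa_*$, then absorbs the $\xi^{-1/2}$. Your proposal never surfaces this asymmetric, bootstrapped structure, and without it the argument for $\triangle\tilde{x}^{(j)}_0$ does not close.
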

\begin{proof}(Lemma~\ref{lem:triangletildeximprov}) We commence by deducing the bound for $\big\|\triangle\tilde{x}^{(j)}_1\big\|_{S_2}$. Recall from the proofs of Proposition~\ref{prop:lineariterstep}, Proposition~\ref{prop:nonlineariterstep} that we have 
\begin{align*}
\triangle\tilde{x}^{(j)}_1 &= \int_{\tau_0}^\infty\frac{\lambda^{\frac32}(\tau_0)}{\lambda^{\frac32}(\sigma)}\frac{\rho^{\frac12}(\frac{\lambda^2(\tau_0)}{\lambda^2(\sigma)}\xi)}{\rho^{\frac12}(\xi)}\cos[\lambda(\tau_0)\xi^{\frac12}\int_{\tau_0}^{\sigma}\lambda^{-1}(u)\,du]\\&\hspace{3cm}\cdot\big[\mathcal{R}(\sigma, \underline{\triangle x}^{(j-1)}) + \triangle f^{(j-1)}(\sigma, \cdot)\big](\frac{\lambda^2(\tau_0)}{\lambda^2(\sigma)}\xi)\,d\sigma\\
& + \triangle\tilde{\tilde{x}}^{(j)}_1. 
\end{align*}
Moreover, we immediately have $\big\| \triangle\tilde{\tilde{x}}^{(j)}_1\big\|_{S_2}\lesssim \tau_0^{-(1-)}\triangle A_{j-1}$. As for the integral expression on the right, relying on Proposition~\ref{prop:lineariterstepdisc}, Proposition~\ref{prop:nonlineariterstep}, we see that the contribution of the discrete spectral part of $\underline{\triangle x}^{(j-1)}$ to the left hand side is of size $\lesssim \tau_0^{-(1-)}\triangle A_{j-1}$, and in particular we may and shall replace $\mathcal{R}(\sigma, \underline{\triangle x}^{(j-1)})$ by $\mathcal{R}_1(\sigma, \triangle x^{(j-1)})$. Then further decompose 
\[
\mathcal{R}_1(\sigma, \triangle x^{(j-1)}) = \mathcal{R}_1(\sigma, \chi_{\eta>\kappa_*}\triangle x^{(j-1)}) + \mathcal{R}_1(\sigma, \chi_{\eta<\kappa_*}\triangle x^{(j-1)})
\]
For the first term on the right, we use the bound established in the preceding lemma(without any information on $(\triangle\tilde{x}^{(j)}_0, \triangle\tilde{x}^{(j)}_1)$) and using the argument in Proposition~\ref{prop:lineariterstep} we deduce the desired bound for this contribution to $\triangle\tilde{x}^{(j)}_1$. On the other hand, for the contribution of $\mathcal{R}_1(\sigma, \chi_{\eta<\kappa_*}\triangle x^{(j-1)})$ we see that Lemma~\ref{lem:smallnessgaininputfreqsmall} furnishes the required bound. Furthermore, for the contribution of $\triangle f^{(j-1)}(\sigma, \cdot)$ to $\triangle\tilde{x}^{(j)}_1$, we obtain the improved estimate using Lemma~\ref{lem:smallnessgainnl}. This completes the bound for $\big\|\triangle\tilde{x}^{(j)}_1\big\|_{S_2}$.
\\

Next, we proceed by analogy for the term 
\begin{align*}
\triangle\tilde{x}^{(j)}_0 &= \int_{\tau_0}^\infty\frac{\lambda^{\frac32}(\tau_0)}{\lambda^{\frac32}(\sigma)}\frac{\rho^{\frac12}(\frac{\lambda^2(\tau_0)}{\lambda^2(\sigma)}\xi)}{\rho^{\frac12}(\xi)}\frac{\sin[\lambda(\tau_0)\xi^{\frac12}\int_{\tau_0}^{\sigma}\lambda^{-1}(u)\,du]}{\xi^{\frac12}}\\&\hspace{3cm}\cdot\big[\mathcal{R}(\sigma, \underline{\triangle x}^{(j-1)}) + \triangle f^{(j-1)}(\sigma, \cdot)\big](\frac{\lambda^2(\tau_0)}{\lambda^2(\sigma)}\xi)\,d\sigma\\
& + \triangle\tilde{\tilde{x}}^{(j)}_0
\end{align*}
Again we can dispose immediately of the contribution of the discrete spectral part of $\underline{\triangle x}^{(j-1)}$, as well as of the term $\triangle\tilde{\tilde{x}}^{(j)}_0$. Moreover, using the preceding lemma we can replace $\mathcal{R}(\sigma, \underline{\triangle x}^{(j-1)})$ by 
\[
\mathcal{R}_1(\sigma, \chi_{\eta<\kappa_*}\triangle x^{(j-1)}).
\]
Consider then the integral expression 
\begin{align*}
&\int_{\tau_0}^\infty\frac{\lambda^{\frac32}(\tau_0)}{\lambda^{\frac32}(\sigma)}\frac{\rho^{\frac12}(\frac{\lambda^2(\tau_0)}{\lambda^2(\sigma)}\xi)}{\rho^{\frac12}(\xi)}\frac{\sin[\lambda(\tau_0)\xi^{\frac12}\int_{\tau_0}^{\sigma}\lambda^{-1}(u)\,du]}{\xi^{\frac12}}\mathcal{R}_1(\sigma,  \chi_{\eta<\kappa_*}\triangle x^{(j-1)})(\frac{\lambda^2(\tau_0)}{\lambda^2(\sigma)}\xi)\,d\sigma\\
\end{align*}
Here since $\chi_{\eta<\kappa_*}\triangle x^{(j-1)}$ is in the low frequency regime (since $\kappa_*\ll 1$), we can write 
\begin{align*}
\chi_{\eta<\kappa_*}\triangle x^{(j-1)}& = \chi_{\eta<\kappa_*}\triangle_{>\sigma} x^{(j-1)}  +  \chi_{\eta<\kappa_*}S(\sigma)\big(\triangle\tilde{x}^{(j-1)}_0, \triangle\tilde{x}^{(j-1)}_1\big)\\
&= \chi_{\eta<\kappa_*}\triangle_{>\sigma} x^{(j-1)}  +  \chi_{\eta<\kappa_*}S(\sigma)\big(\triangle\tilde{x}^{(j-1)}_0, 0\big)\\
& +  \chi_{\eta<\kappa_*}S(\sigma)\big(0, \triangle\tilde{x}^{(j-1)}_1\big). 
\end{align*}
The contribution of the last term on the right to the preceding integral expression is of the desired type due to our improved bound for $\triangle\tilde{x}^{(j-1)}_1$ from the first part of the proof. 
\\
Next, expanding out 
\begin{align*}
&\mathcal{R}_1(\sigma,  \chi_{\eta<\kappa_*}\triangle_{>\sigma} x^{(j-1)})\\& = \beta_{\nu}^2(\sigma)\mathcal{K}_{cc}( \chi_{\eta<\kappa_*}\triangle_{>\sigma} x^{(j-1)}) + \beta_{\nu}(\sigma)\mathcal{K}_{cc}( \chi_{\eta<\kappa_*}\mathcal{D}_{\sigma}\triangle_{>\sigma} x^{(j-1)}),
\end{align*}
we get the desired bound for the contribution of the last term on the right due to {\it{Step 1}} in the proof of Lemma~\ref{lem:Dtauxconvergence}. On the other hand, arguing as in the proof of Lemma~\ref{lem:hereneednorm}, the contribution of 
\[
\beta_{\nu}^2(\sigma)\mathcal{K}_{cc}( \chi_{\eta<\kappa_*}\triangle_{>\sigma} x^{(j-1)})
\]
to the preceding integral expression is bounded by $\lesssim \tau_0^{-1}\triangle A_{j-1}$. Finally, consider
\begin{align*}
&\int_{\tau_0}^\infty\frac{\lambda^{\frac32}(\tau_0)}{\lambda^{\frac32}(\sigma)}\frac{\rho^{\frac12}(\frac{\lambda^2(\tau_0)}{\lambda^2(\sigma)}\xi)}{\rho^{\frac12}(\xi)}\frac{\sin[\lambda(\tau_0)\xi^{\frac12}\int_{\tau_0}^{\sigma}\lambda^{-1}(u)\,du]}{\xi^{\frac12}}\\&\hspace{4cm}\cdot\mathcal{R}_1(\sigma,  \chi_{\eta<\kappa_*}S(\sigma)\big(\triangle\tilde{x}^{(j-1)}_0, 0\big))(\frac{\lambda^2(\tau_0)}{\lambda^2(\sigma)}\xi)\,d\sigma\\
\end{align*}
By distinguishing between $\xi^{\frac12}\frac{\lambda(\tau_0)}{\lambda(\sigma)}\gtrless \sigma^{-\delta}$ and either exploiting a smallness gain coming from the $L^2_{d\xi}$-norm of the output, or else performing an integration by parts, while always keeping in mind that $\xi>\kappa_*$ for the output, the preceding term leads to a contribution bounded by $\lesssim \tau_0^{-\gamma}\triangle A_{j-1}$.

This completes discussion of the contribution of $\mathcal{R}_1(\sigma, \chi_{\eta<\kappa_*}\triangle x^{(j-1)})$ to $\triangle\tilde{x}^{(j)}_0 $. 
\\

We have now reduced the proof of the improved bound for $\triangle\tilde{x}^{(j)}_0 $ to controlling the expression 
\begin{align*}
\int_{\tau_0}^\infty\frac{\lambda^{\frac32}(\tau_0)}{\lambda^{\frac32}(\sigma)}\frac{\rho^{\frac12}(\frac{\lambda^2(\tau_0)}{\lambda^2(\sigma)}\xi)}{\rho^{\frac12}(\xi)}\frac{\sin[\lambda(\tau_0)\xi^{\frac12}\int_{\tau_0}^{\sigma}\lambda^{-1}(u)\,du]}{\xi^{\frac12}}\triangle f^{(j-1)}(\sigma, \frac{\lambda^2(\tau_0)}{\lambda^2(\sigma)}\xi)\,d\sigma.\\
\end{align*}
Here the desired gain comes from Lemma~\ref{lem:smallnessgainnl} in light of the fact that we restrict to $\xi>\kappa_*$, and we are done.
\end{proof}

In light of the preceding two lemmas, conclusion of the proof of Proposition~\ref{prop:convergence} will require improving bounds for 
\[
\big\|(\chi_{\xi<\kappa_*}\triangle \tilde{x}_0^{(j)}, \chi_{\xi<\kappa_*}\triangle \tilde{x}_1^{(j)})\big\|_{\tilde{S}}, 
\]
the undifferentiated quantities 
\[
\chi_{\xi>1}\widetilde{\triangle x^{(j)}}, \chi_{\xi<1}\triangle_{>\tau}x^{(j)}, 
\]
as well as the discrete part $x_d(\tau), \partial_\tau x_d(\tau)$. 
The improved bound for 
\[
\big\|(\chi_{\xi<\kappa_*}\triangle \tilde{x}_0^{(j)}, \chi_{\xi<\kappa_*}\triangle \tilde{x}_1^{(j)})\big\|_{\tilde{S}}, 
\]
follows directly from Lemma~\ref{lem:smallnessgainoutputfreqsmall} , Lemma~\ref{lem:smallnessgainnl}. For the remaining quantities, 
we accomplish this in the following two lemmas: 
\begin{lem}\label{lem:triangleximprov}
We have the bound 
\begin{align*}
&\sup_{\tau\geq \tau_0}(\frac{\tau}{\tau_0})^{-\kappa}\big\|\chi_{\xi>1}\widetilde{\triangle x^{(j)}}(\tau, \cdot)\big\|_{S_1} + \sup_{\tau\geq \tau_0}(\frac{\tau}{\tau_0})^{-\kappa}\big\|\chi_{\xi<1}\triangle_{>\tau}x^{(j)}(\tau, \cdot)\big\|_{S_1}\\&\lesssim
\kappa_*^{\gamma}[\triangle A_{j-1} + \triangle A_{j-2}] + \kappa_*^{-C\delta_0}\epsilon^{\gamma_1 k}e^{\epsilon^{-2}}\cdot\triangle A_{j-2k} + C_{\kappa_*,k}\tau_0^{-\gamma}[\sum_{l=j-2k-1}^j\triangle A_{l}]
\end{align*}
\end{lem}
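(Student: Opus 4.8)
## Proof plan for Lemma~\ref{lem:triangleximprov}

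The plan is to recover the bound for the \emph{un-differentiated} correction terms $\chi_{\xi>1}\widetilde{\triangle x^{(j)}}$ and $\chi_{\xi<1}\triangle_{>\tau}x^{(j)}$ by the same re-iteration strategy that was used for the time-differentiated quantities in Lemma~\ref{lem:Dtauxconvergence} and Lemma~\ref{lem:triangletildeximprov}, but now being careful about the fact that there is no $\mathcal{D}_\tau$ available to de-singularize the low-frequency weight $\xi^{-\frac12}$ in the Duhamel kernel, and no smallness gain for the bad nonlinear source term $E_2$ in the un-differentiated low-frequency regime (recall Remark~\ref{rem:firstiteratenonlin}). First I would split $\triangle x^{(j)}$ according to \eqref{eq:trianglexjformula} into the contribution of the linear source term $\mathcal{R}(\sigma,\underline{\triangle x}^{(j-1)})$ and the nonlinear source term $\triangle f^{(j-1)}(\sigma,\cdot)$, and within the linear part reduce via Proposition~\ref{prop:lineariterstepdisc} to the continuous-spectral part $\mathcal{R}_1(\sigma,\triangle x^{(j-1)})$, the discrete contribution being bounded by $\tau_0^{-(1-)}\triangle A_{j-1}$ as usual.

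Next, for the linear contribution I would further split $\mathcal{R}_1(\sigma,\triangle x^{(j-1)}) = \mathcal{R}_1(\sigma,\chi_{\eta>\kappa_*}\triangle x^{(j-1)}) + \mathcal{R}_1(\sigma,\chi_{\eta<\kappa_*}\triangle x^{(j-1)})$. For the small-input-frequency piece, Lemma~\ref{lem:smallnessgaininputfreqsmall} directly furnishes a gain of $[\kappa_*^{\gamma} + \tau_0^{-\gamma}]\triangle A_{j-1}$ for both the low and high output-frequency norms appearing on the left — this is precisely the point of the additional weight $\frac{\xi^{\frac12}}{\langle\xi\rangle^{\frac12}}$ built into the statement of that lemma, which handles the $\chi_{\xi<1}$ output norm, while the $\chi_{\xi>1}$ output norm is immediate from the $S$-bound there. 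For the large-input-frequency piece $\mathcal{R}_1(\sigma,\chi_{\eta>\kappa_*}\triangle x^{(j-1)})$, I would \emph{re-iterate}: expand $\chi_{\eta>\kappa_*}\triangle x^{(j-1)}$ itself via \eqref{eq:trianglexjformula} at stage $j-1$, use Lemma~\ref{lem:Phireduction} (and the algebra in its proof) together with the already-established Lemma~\ref{lem:Dtauxconvergence} to reduce the twice-iterated expression to $2k$-fold applications of $\chi_{\xi>\kappa_*}\Phi$, and then invoke Proposition~\ref{prop:mainestimate}. The point is that for the \emph{un-differentiated} output, after one application of the Duhamel operator one lands on a $\mathcal{D}_\sigma$-type object internally (because $\mathcal{K}_{cc}$ sees $\triangle x^{(j-1)}$ and the leading piece is $\beta_\nu(\sigma)\mathcal{K}_{cc}\mathcal{D}_\sigma\triangle x^{(j-1)}$), so the combinatorial gain of Proposition~\ref{prop:mainestimate} applies exactly as in the differentiated case, producing the $\kappa_*^{-C\delta_0}\epsilon^{\gamma_1 k}e^{\epsilon^{-2}}\triangle A_{j-2k}$ term plus the $C_{\kappa_*,k}\tau_0^{-\gamma}$ tail. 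Finally, for the nonlinear source term $\triangle f^{(j-1)}$, I would use Lemma~\ref{lem:smallnessgainnl}, which supplies the factor $\tau_0^{-\gamma}$ precisely because the extra weight $\frac{\xi^{\frac12-}}{\langle\xi\rangle^{\frac12-}}$ in the un-differentiated terms $\triangle x^{(j)}_{>\tau}$, $\triangle\tilde x^{(j)}_0$ allows an integration by parts in $\sigma$ against the bad $E_2$-contribution.

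The passage from $\chi_{\xi>1}\triangle x^{(j)}$ to $\chi_{\xi>1}\widetilde{\triangle x^{(j)}}$ and from $\chi_{\xi<1}\triangle x^{(j)}$ to $\chi_{\xi<1}\triangle_{>\tau}x^{(j)}$ costs only the free-evolution pieces $S(\tau)(\triangle\tilde{\tilde x}^{(j)}_0,\triangle\tilde{\tilde x}^{(j)}_1)$ and $S(\tau)(\triangle\tilde x^{(j)}_0,\triangle\tilde x^{(j)}_1)$; the former is $\lesssim\tau_0^{-(1-)}\triangle A_{j-1}$ by \eqref{eq:newcorrection}, and the latter in the range $\xi>\kappa_*$ is controlled by Lemma~\ref{lem:triangletildeximprov} (i.e. \eqref{eq:triangletildeximprov}), while for $\xi<\kappa_*$ one uses the gain from Lemma~\ref{lem:smallnessgainoutputfreqsmall} together with Lemma~\ref{lem:smallnessgainnl}. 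Assembling all these contributions yields the claimed inequality.

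The main obstacle I anticipate is the small-output-frequency part $\chi_{\xi<1}\triangle_{>\tau}x^{(j)}$ of the \emph{un-differentiated} term, since here the Duhamel kernel carries the singular factor $\sin[\lambda(\tau)\xi^{\frac12}\int_\tau^\sigma\lambda^{-1}]/\xi^{\frac12}$, which (unlike after applying $\mathcal{D}_\tau$) just barely fails to be in $L^2_{d\xi}$ near $\xi=0$ and forces the loss $(\tau/\tau_0)^{\kappa}$ built into the left-hand side — so one must track this loss carefully through the re-iteration and verify that it does not compound, which is exactly why $\kappa = 2(1+\nu^{-1})\delta_0$ must be kept small and why the square-sum over dyadic $\tau$-intervals is organized with the weight $(\lambda(\tau)/\lambda(\tau_0))^{4\delta_0}$ rather than a larger one. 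The bookkeeping for the bad $E_2$-term in this regime, where one has no intrinsic smallness and must extract it entirely from the $\sigma$-integration-by-parts in the range $\frac{\lambda^2(\tau_0)}{\lambda^2(\sigma)}\xi>\sigma^{-\delta}$ while handling the complementary range by the $L^2_{d\xi}$-gain, is the delicate point; but this is precisely the mechanism already set up in Lemma~\ref{lem:smallnessgainnl}, so it can be invoked rather than redone.
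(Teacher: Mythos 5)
Your proposal is correct in substance and uses the right toolbox, but it is noticeably more laborious than the paper's actual argument. The paper's proof is a short bootstrap that leans on the two immediately preceding lemmas: after writing $\triangle_{>\tau}x^{(j)}$ (resp.\ $\widetilde{\triangle x^{(j)}}$) via the Duhamel parametrix with source $\mathcal{R}(\sigma,\underline{\triangle x}^{(j-1)})+\triangle f^{(j-1)}$ and reducing to $\mathcal{R}_1$, it splits $\mathcal{R}_1$ into the $\beta_\nu^2\mathcal{K}_{cc}\triangle x^{(j-1)}$ and $\beta_\nu\mathcal{K}_{cc}\mathcal{D}_\sigma\triangle x^{(j-1)}$ pieces. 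For the first piece it observes (by inspecting Proposition~\ref{prop:lineariterstep}) that one gains $\tau_0^{-1}$ outright except on the low-frequency free part $\chi_{\eta<1}S(\sigma)(\triangle\tilde{x}^{(j-1)}_0,\triangle\tilde{x}^{(j-1)}_1)$, which is then controlled by Lemma~\ref{lem:triangletildeximprov} at stage $j-1$ (large $\xi$) together with the remark on $\chi_{\xi<\kappa_*}$ (small $\xi$). For the second piece it plugs the already-established bound on $\mathcal{D}_\sigma\triangle x^{(j-1)}$ from Lemma~\ref{lem:Dtauxconvergence} (again at stage $j-1$) straight through the Duhamel estimate. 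The nonlinear source term $\triangle f^{(j-1)}$ is handled by splitting $\triangle x^{(j-1)}$ into $\triangle_{>\sigma}$ and free parts and invoking Proposition~\ref{prop:nonlinbounds} and Proposition~\ref{prop:nonlineariterstep}.

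You instead split $\mathcal{R}_1$ by input frequency $\eta\lessgtr\kappa_*$, use Lemma~\ref{lem:smallnessgaininputfreqsmall} for the low piece, and re-run the whole Lemma~\ref{lem:Phireduction}\,/\,Proposition~\ref{prop:mainestimate} re-iteration for the high piece. This is not wrong — after one Duhamel application the leading internal object is $\mathcal{D}_\sigma\triangle x^{(j-1)}$, so the combinatorial gain does propagate to the undifferentiated output — but it effectively re-derives the content of Lemma~\ref{lem:Dtauxconvergence} (and much of Lemma~\ref{lem:triangletildeximprov}) instead of quoting them. Two small cautions about your version: Lemma~\ref{lem:smallnessgaininputfreqsmall} as stated only controls the time integrals that define $\triangle\tilde{x}^{(output)}_{0,1}$, not the $\triangle_{>\tau}$ Duhamel tail directly, so you would need the obvious analogue for $\int_\tau^\infty$; and one must be careful that the index shift introduced by re-iterating at stage $j-1$ lands the $\epsilon^{\gamma_1 k}$-term at $\triangle A_{j-1-2k}$, whereas the statement has $\triangle A_{j-2k}$ — this is a harmless relabelling of $k$, and the paper's version of the proof inherits the same flexibility, but it should be noted. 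Both routes reach the stated bound; the paper's is shorter because it treats Lemma~\ref{lem:Dtauxconvergence} and Lemma~\ref{lem:triangletildeximprov} as black boxes rather than rebuilding them.
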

\begin{proof} This is now straightforward given our preceding work: write 
\begin{align*}
&\triangle_{>\tau}\triangle x^{(j)}(\tau, \xi)\\& = \int_{\tau}^\infty\frac{\lambda^{\frac32}(\tau)}{\lambda^{\frac32}(\sigma)}\frac{\rho^{\frac12}(\frac{\lambda^2(\tau)}{\lambda^2(\sigma)}\xi)}{\rho^{\frac12}(\xi)}\frac{\sin[\lambda(\tau)\xi^{\frac12}\int_{\tau}^{\sigma}\lambda^{-1}(u)\,du]}{\xi^{\frac12}}\\&\hspace{3cm}\cdot\big[\mathcal{R}(\sigma, \underline{\triangle x}^{(j-1)}) + \triangle f^{(j-1)}(\sigma, \cdot)\big](\frac{\lambda^2(\tau)}{\lambda^2(\sigma)}\xi)\,d\sigma.
\end{align*}
Then as in the proof of the preceding lemma, one may immediately reduce 
\[
\mathcal{R}(\sigma, \underline{\triangle x}^{(j-1)})
\]
to the contribution of the continuous spectral part 
\[
\mathcal{R}_1(\sigma, \triangle x^{(j-1)}) = \beta_{\nu}^2(\sigma)\mathcal{K}_{cc}(\triangle x^{(j-1)}) + \beta_{\nu}(\sigma)\mathcal{K}_{cc}(\mathcal{D}_{\sigma}\triangle x^{(j-1)})
\]
Then inspection of the proof of Proposition~\ref{prop:lineariterstep} reveals that for the contribution of the first term on the right we gain a power of $\tau_0^{-1}$ except for the contribution of the low-frequency free part of $\triangle x^{(j-1)}$ of the specific form 
\[
\chi_{\eta<1}S(\sigma)\big(\triangle\tilde{x}^{(j-1)}_0, \triangle\tilde{x}^{(j-1)}_1\big).
\]
However, here the preceding lemma (with $j$ replaced by $j-1$) together with the above the remark on 
\[
\big\|(\chi_{\xi<\kappa_*}\triangle \tilde{x}_0^{(j)}, \chi_{\xi<\kappa_*}\triangle \tilde{x}_1^{(j)})\big\|_{\tilde{S}}
\]
in conjunction with the proof of Proposition~\ref{prop:firstiteratelincont} implies the desired improved bound. As for the contribution of 
\[
 \beta_{\nu}(\sigma)\mathcal{K}_{cc}(\mathcal{D}_{\sigma}\triangle x^{(j-1)}),
 \]
 the improved bound follows from the last lemma but one in conjunction with the proof of Proposition~\ref{prop:lineariterstep}. 
 \\
 The contribution of the source term $\triangle f^{(j-1)}(\sigma, \cdot)$ to $\triangle_{>\tau}\triangle x^{(j)}(\tau, \xi)$ is handled analogously: one splits the input 
 \[
 \triangle x^{(j-1)}
 \]
 in the low frequency regime $\xi<1$ into $\triangle_{>\tau} x^{(j-1)}$  as well as the 'free part' $S(\tau)(\triangle\tilde{x}_0^{(j-1)}, \triangle\tilde{x}_1^{(j-1)})$. The latter leads to an improvement, as we have seen (use Proposition~\ref{prop:nonlineariterstep}). Also, we infer that the contribution of $\triangle_{>\tau} x^{(j-1)}$ leads to a gain in $\tau_0^{-1}$, in light of Proposition~\ref{prop:nonlinbounds}. 
\\
Obtaining the improved bound for $\chi_{\xi>1}\widetilde{\triangle x^{(j)}}$ is similar. 
\end{proof}

To complete the proof of Proposition~\ref{prop:convergence}, we finally have 
\begin{lem}\label{lem:trianglexdimprov}
We have the bound 
\begin{align*}
&\sup_{\tau\geq\tau_0}\tau^{(1-)}[\big|\triangle x^{(j)}_d\big| + \big|\partial_{\tau}\triangle x^{(j)}_d\big|]\\
&\lesssim \kappa_*^{\gamma}[\triangle A_{j-1} + \triangle A_{j-2}] + \kappa_*^{-C\delta_0}\epsilon^{\gamma_1 k}e^{\epsilon^{-2}}\cdot\triangle A_{j-2k} + C_{\kappa_*,k}\tau_0^{-\gamma}[\sum_{l=j-2k-1}^j\triangle A_{l}].
\end{align*}
\end{lem}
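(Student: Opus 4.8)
The plan is to mirror the proofs of Proposition~\ref{prop:discreteintodiscrete}, Proposition~\ref{prop:lineariterstepdisc} and the first bullet of Proposition~\ref{prop:nonlineariterstep}, but now feeding in the improved continuous bounds of Lemmas~\ref{lem:Dtauxconvergence}, \ref{lem:triangletildeximprov}, \ref{lem:triangleximprov} (applied at index $j-1$). The guiding observation is that the discrete channel is governed by the exponentially localizing kernel $H_d(\tau,\sigma) = -\tfrac12|\xi_d|^{-1/2}e^{-|\xi_d|^{1/2}|\tau-\sigma|}$, and that we only demand the decay $\tau^{1-\delta_0}$ rather than $\tau^1$; consequently every contribution will in fact already carry a gain $\tau_0^{-\gamma}$, so the discrete channel lands entirely inside the last term $C_{\kappa_*,k}\tau_0^{-\gamma}[\sum_{l=j-2k-1}^j\triangle A_l]$ of Proposition~\ref{prop:convergence}, the $\kappa_*^{\gamma}$ and $\epsilon^{\gamma_1 k}$ terms being needed only for the continuous components. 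As in \eqref{eq:x_d} and Proposition~\ref{prop:firstiteratenonlindisc}, $\triangle x_d^{(j)}(\tau)$ solves $\triangle x_d^{(j)}(\tau) = \int_\tau^\infty H_d(\tau,\sigma)\big[\mathcal{R}_d(\sigma,\underline{\triangle x}^{(j-1)}) + \triangle f_d^{(j-1)}(\sigma) - \beta_\nu(\sigma)\partial_\sigma\triangle x_d^{(j)}(\sigma)\big]\,d\sigma$ with vanishing imposed at $\tau=+\infty$, and $\partial_\tau\triangle x_d^{(j)}$ satisfies the analogous relation with $H_d$ replaced by $\partial_\tau H_d$ (of the same size, plus a boundary term $\propto g(\tau)$ treated identically). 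First I would run the fixed-point argument of Lemma~\ref{lem:linhom}: since $H_d$ is exponentially localizing, the map $g \mapsto \int_\tau^\infty H_d(\tau,\sigma)\beta_\nu(\sigma)g(\sigma)\,d\sigma$ has operator norm $\lesssim \tau_0^{-1}$ on $\{g : \sup_{\tau\ge\tau_0}\tau^{1-\delta_0}|g(\tau)| < \infty\}$, so one solves for $\triangle x_d^{(j)}$ and reduces to estimating $\int_\tau^\infty H_d(\tau,\sigma)[\mathcal{R}_d(\sigma,\underline{\triangle x}^{(j-1)}) + \triangle f_d^{(j-1)}(\sigma)]\,d\sigma$.

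For the linear source, recall $\mathcal{R}_d(\sigma,\underline{\triangle x}^{(j-1)}) = \beta_\nu^2(\sigma)[\mathcal{K}_{dc}(\triangle x^{(j-1)}) + \mathcal{K}_{dd}\triangle x_d^{(j-1)}] + \beta_\nu(\sigma)[\mathcal{K}_{dc}(\mathcal{D}_\sigma\triangle x^{(j-1)}) + \mathcal{K}_{dd}\partial_\sigma\triangle x_d^{(j-1)}]$, with $\mathcal{K}_{dd} = -\tfrac12$ and $\mathcal{K}_{dc}(f) = -\int_0^\infty f(\xi)K_d(\xi)\rho(\xi)\,d\xi$, $K_d$ smooth and rapidly decaying. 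The $\mathcal{K}_{dd}$-terms are controlled at once from $|\triangle x_d^{(j-1)}| + |\partial_\sigma\triangle x_d^{(j-1)}| \lesssim \sigma^{-(1-)}\triangle A_{j-1}$ and the extra weights $\beta_\nu^{1,2}(\sigma)$, which after $H_d$-localization give $\tau^{-(1-)}\tau_0^{-1}\triangle A_{j-1}$. For the $\mathcal{K}_{dc}$-terms one applies Cauchy--Schwarz in $\xi$ against $K_d\rho$ (note $\rho\sim\xi^{-1/2}$ near $0$, so $\xi^{0+}K_d\rho\in L^2$); the genuine difficulty, and the step I expect to be the main obstacle, is the free-evolution piece $S(\sigma)\big(\triangle\tilde{x}_0^{(j-1)}, \triangle\tilde{x}_1^{(j-1)}\big)$ of $\triangle x^{(j-1)}$, whose $\tilde S$-data carry a power of $\tau_0$. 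This is resolved exactly as in the earlier proofs: insert the explicit parametrix \eqref{eq:linhomparam1} and change variables $\tilde\xi = \frac{\lambda^2(\sigma)}{\lambda^2(\tau_0)}\xi$; the rapid decay of $K_d$ forces the prefactor to collapse to $\frac{\lambda^{5/2}(\tau_0)}{\lambda^{5/2}(\sigma)}$ (resp.\ $\frac{\lambda^{3/2}(\tau_0)}{\lambda^{3/2}(\sigma)}$ for the $\triangle\tilde{x}_1$/sine component), a factor that beats the $\tau_0$-power hidden in the $\tilde S$-norm and, via $\frac{\lambda(\tau_0)}{\lambda(\sigma)}\le\frac{\lambda(\tau_0)}{\lambda(\tau)}\le 1$, supplies the needed decay in $\tau$; together with $\beta_\nu^{1,2}(\sigma)$ and $H_d$ this yields a bound $\tau^{-(1-)}\tau_0^{-\gamma}\triangle A_{j-1}$. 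The remaining low-frequency piece $\triangle_{>\sigma}\triangle x^{(j-1)}$ of $\triangle x^{(j-1)}$ (and its $\mathcal{D}_\sigma$) is controlled directly by $\triangle A_{j-1}$ with the weights $(\sigma/\tau_0)^{\pm\kappa}$ dictated by Definition~\ref{defn:sizeofcorrection}, and since $\tau^{1-\delta_0}\int_\tau^\infty e^{-c|\tau-\sigma|}\sigma^{-2}(\sigma/\tau_0)^{\kappa}\,d\sigma \lesssim \tau_0^{-(1+\delta_0-\kappa)}$ and $\tau^{1-\delta_0}\int_\tau^\infty e^{-c|\tau-\sigma|}\sigma^{-1}(\sigma/\tau_0)^{-\kappa}\,d\sigma \lesssim \tau_0^{-\delta_0}$ — positive powers of $\tau_0^{-1}$ provided $\delta_0$ is small enough that $\kappa<1+\delta_0$ — the $\tau_0^{-\gamma}$ gain is kept.

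For the nonlinear source $\triangle f_d^{(j-1)}(\sigma) = \langle\phi_d, \lambda^{-2}(\sigma)[RN_\nu(\tilde\epsilon^{(j-1)}) - RN_\nu(\tilde\epsilon^{(j-2)})]\rangle$ I would invoke the difference version of the estimates in Proposition~\ref{prop:nonlinbounds}: the genuinely multilinear part (quadratic and higher in $\triangle\tilde\epsilon$) is $\lesssim \sigma^{-(2-)}\triangle A_{j-1}\big(\sum_{l\le j-1}\triangle A_l\big)$, while the linear-in-$\triangle\tilde\epsilon^{(j-1)}$ piece $5\lambda^{-2}(\sigma)(u_\nu^4-u_0^4)\triangle\tilde\epsilon^{(j-1)}$ is paired against $\phi_d$ with the decaying spatial weight $\langle R\rangle^{-3}$ coming from $u_\nu^4-u_0^4$, so even the linearly growing resonance part of $\triangle\tilde\epsilon^{(j-1)}$ contributes only $\lesssim \sigma^{-1}(\sigma/\tau_0)^{\kappa}\triangle A_{j-1}$ to $|\triangle f_d^{(j-1)}(\sigma)|$ — which is precisely why, unlike in the continuous channel, there is no "bad" $E_2$-type obstruction here, the $\phi_d$-pairing being pointwise in $\sigma$. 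Integrating against $H_d$ and multiplying by $\tau^{1-\delta_0}$ gives $\lesssim \tau_0^{-\delta_0}\triangle A_{j-1}$, as above. Assembling the four contributions (and the $\partial_\tau$ versions, obtained by replacing $H_d$ by $\partial_\tau H_d$), with the trivial re-indexing of $k$, produces the asserted bound, with the discrete channel contributing only to the term $C_{\kappa_*,k}\tau_0^{-\gamma}[\sum_{l=j-2k-1}^j\triangle A_l]$.
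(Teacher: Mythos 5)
Your overall structure is sound: you correctly reduce, via the fixed‑point argument built on the exponentially localizing kernel $H_d$, to estimating $\int_\tau^\infty H_d(\tau,\sigma)\big[\mathcal{R}_d(\sigma,\underline{\triangle x}^{(j-1)}) + \triangle f_d^{(j-1)}(\sigma)\big]\,d\sigma$, you handle the $\mathcal{K}_{dd}$‑pieces and the nonlinear source $\triangle f_d^{(j-1)}$ correctly, and you correctly observe that because the discrete channel produces a $\tau^{-1}$ decay (cf.\ Proposition~\ref{prop:discreteintodiscrete}) while the lemma demands only $\tau^{-(1-\delta_0)}$, the whole contribution can be absorbed into the $C_{\kappa_*,k}\tau_0^{-\gamma}$ term.

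However, there is a genuine gap in your treatment of the free‑evolution piece $S(\sigma)\big(\triangle\tilde{x}_0^{(j-1)},\triangle\tilde{x}_1^{(j-1)}\big)$ fed through $\mathcal{K}_{dc}$. You claim that after the change of variables $\tilde\xi=\tfrac{\lambda^2(\sigma)}{\lambda^2(\tau_0)}\xi$ the rapid decay of $K_d$ ``forces the prefactor to collapse to $\tfrac{\lambda^{5/2}(\tau_0)}{\lambda^{5/2}(\sigma)}$''. This is false. After the substitution the argument of $K_d$ is $\tfrac{\lambda^2(\tau_0)}{\lambda^2(\sigma)}\tilde\xi\ll 1$, i.e.\ $K_d$ is evaluated \emph{near the origin} where it is $\sim K_d(0)$, and its rapid decay is inoperative. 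Collecting all the explicit powers — the $\tfrac{\lambda^{5/2}(\sigma)}{\lambda^{5/2}(\tau_0)}$ from \eqref{eq:linhomparam1}, the Jacobian $\tfrac{\lambda^2(\tau_0)}{\lambda^2(\sigma)}$, and the ratio $\rho(\tfrac{\lambda^2(\tau_0)}{\lambda^2(\sigma)}\tilde\xi)\rho^{-1/2}(\tfrac{\lambda^2(\tau_0)}{\lambda^2(\sigma)}\tilde\xi)\sim\tfrac{\lambda^{1/2}(\sigma)}{\lambda^{1/2}(\tau_0)}\tilde\xi^{-1/4}$ — the net $\lambda$‑factor is $\tfrac{\lambda(\sigma)}{\lambda(\tau_0)}$, which \emph{grows} in $\sigma$ like $(\sigma/\tau_0)^{1+\nu^{-1}}$. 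If one now bounds the remaining $\tilde\xi$‑integral $\int_0^\infty\tilde\xi^{-1/4}\rho^{1/2}(\tilde\xi)\cos[\lambda(\tau_0)\tilde\xi^{1/2}\int_{\tau_0}^\sigma\lambda^{-1}]\triangle\tilde{x}_0^{(j-1)}(\tilde\xi)\,d\tilde\xi$ crudely by Cauchy--Schwarz against the $\tilde S_1$‑norm, one picks up the low‑frequency weight from $\min\{\tau_0\xi^{1/2},1\}^{-1}$ and the outcome, after multiplication by $\beta_\nu^2(\sigma)$ and integration against $H_d$, \emph{diverges} as $\tau\to\infty$ because $\tau^{1-\delta_0}\cdot\tau^{-2}\cdot(\tau/\tau_0)^{1+\nu^{-1}}\to\infty$. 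So the argument as written cannot close.

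What actually saves the estimate — and what your proof never invokes — is the pair of vanishing conditions \eqref{eq:vanishing} (and the orthogonality conditions on $\triangle\tilde{x}_{0,1}^{(l)}$ maintained at each step, see Propositions~\ref{prop:lineariterstep}, \ref{prop:nonlineariterstep}). Exactly as at the end of the proof of Proposition~\ref{prop:lingrowthcond}, one writes $\lambda(\tau_0)\int_{\tau_0}^\sigma\lambda^{-1} = \nu\tau_0 - \nu\tau_0^{1+\nu^{-1}}\sigma^{-\nu^{-1}}$ and expands the $\cos$; the leading term $\cos[\nu\tau_0\tilde\xi^{1/2}]$ is annihilated by the orthogonality of $\triangle\tilde{x}_0^{(j-1)}$, and what survives carries a factor $\lesssim\sigma\tfrac{\lambda(\tau_0)}{\lambda(\sigma)}$ which exactly cancels the dangerous growth $\tfrac{\lambda(\sigma)}{\lambda(\tau_0)}$ and leaves a single $\sigma$ loss. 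That $\sigma$ is then killed by $\beta_\nu^2(\sigma)$ and the exponential localization of $H_d$, yielding $\tau^{-1}$ decay and hence the $\tau_0^{-\delta_0}$ gain you want. The same mechanism handles the $\triangle\tilde{x}_1^{(j-1)}$/sine component. So the conclusion of your proof is correct, but the claimed mechanism for the hardest term is wrong and must be replaced by the vanishing‑condition argument from Proposition~\ref{prop:lingrowthcond}.
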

In fact, one easily checks that the contribution of $\triangle x^{(j-1)}_d$ to the norm on the left is bounded by $\lesssim \tau_0^{-1}\triangle A_{j-1}$, and the contribution of the continuous part of $\triangle x^{(j-1)}$ is controlled in term of the preceding lemmas. 
\\

Since we also have $\big\|\triangle \tilde{\tilde{x}}^{(j)}_1\big\|_{S_2}\lesssim \tau_0^{-(1-)}\triangle A_{j-1}$ on account of Proposition~\ref{prop:lineariterstep}, Proposition~\ref{prop:nonlineariterstep}, we have now completed the proof of Proposition~\ref{prop:convergence} contingent upon Proposition~\ref{prop:mainestimate}. The proof of this latter will fill the remainder of this section.

\begin{proof}(Proof of Proposition~\ref{prop:mainestimate}) To simplify notation, we shall set here $\mathcal{K}_{cc} =:\mathcal{K}$. Also, we shall use the notation
\begin{align*}
\big(\mathcal{D}_{\tau}Uh\big)(\tau, \xi) = \int_{\tau_0}^{\tau}\frac{\lambda^{\frac32}(\tau)}{\lambda^{\frac32}(\sigma)}\frac{\rho^{\frac12}(\frac{\lambda^2(\tau)}{\lambda^2(\sigma)}\xi)}{\rho^{\frac12}(\xi)}\cos[\lambda(\tau)\xi^{\frac12}\int_{\tau}^{\sigma}\lambda^{-1}(u)\,du]h(\sigma, \frac{\lambda^2(\tau)}{\lambda^2(\sigma)}\xi)\,d\sigma. 
\end{align*}

We use a decomposition of the kernel $\mathcal{K}$ into a diagonal and an off-diagonal term. Specifically, for some $n\geq 1$ later to be chosen large, we split
\begin{align*}
\mathcal{K}(\xi, \eta) &= \chi_{|\frac{\xi}{\eta} - 1|<\frac{1}{n}}\mathcal{K}(\xi, \eta)  + \chi_{|\frac{\xi}{\eta} - 1|\geq\frac{1}{n}}\mathcal{K}(\xi, \eta)\\
& = \mathcal{K}^d(\xi, \eta) + \mathcal{K}^{nd}(\xi, \eta).
\end{align*}
Then we first show that for the {\it{two fold composition}} of $\Phi$, the contribution of $\mathcal{K}^{nd}$ is small: 
\begin{lem}\label{lem:off diagonal} There is $\gamma>0$ such that 
\begin{align*}
&\big(\sum_{\substack{N\gtrsim \tau_0\\ N\,\text{dyadic}}}[\sup_{\tau\sim N}(\frac{\tau}{\tau_0})^{\kappa}\big\|\chi_{\xi>\kappa_*}\mathcal{D}_{\tau}U\big(\beta_{\nu}(\tau)\mathcal{K}^{nd}\chi_{\xi>\kappa_*}\mathcal{D}_{\tau}U(f)\big)\big\|_{S_2}]^2\big)^{\frac{1}{2}}\\&\lesssim \tau_0^{-\gamma}\big(\sum_{\substack{N\gtrsim \tau_0\\ N\,\text{dyadic}}}[\sup_{\tau\sim N}(\frac{\tau}{\tau_0})^{\kappa}\|f(\tau, \cdot)\big\|_{S_2}]^2\big)^{\frac{1}{2}}.
\end{align*}
\end{lem}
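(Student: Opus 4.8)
The plan is to exploit the fact that on the off-diagonal region $|\xi/\eta - 1|\ge 1/n$ the kernel $\mathcal{K}^{nd}(\xi,\eta) = \chi_{|\xi/\eta-1|\ge 1/n}\frac{F(\xi,\eta)\rho(\eta)}{\xi-\eta}$ is non-singular, and more importantly that the Duhamel propagator $\mathcal{D}_\tau U$ carries an oscillatory phase. When one composes $\mathcal{D}_\tau U \circ \beta_\nu \mathcal{K}^{nd}\circ \mathcal{D}_\tau U$, the two inner $\sigma$-integrals each come with a factor $\cos[\lambda(\cdot)\xi^{1/2}\int\lambda^{-1}]$, and after rescaling the frequency variables into a common variable these become exponentials with phases $\lambda(\tau)\xi^{1/2}\int_\tau^\sigma\lambda^{-1} \pm \lambda(\tau)\tilde\eta^{1/2}\int_{\sigma}^{\sigma_1}\lambda^{-1}$. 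On the off-diagonal region $\xi$ and $\tilde\eta$ (the inner frequency) are separated by a factor bounded away from $1$, so $\big|\xi^{1/2}-\tilde\eta^{1/2}\big|\gtrsim n^{-1}\max\{\xi^{1/2},\tilde\eta^{1/2}\}$, and the combined phase is genuinely non-stationary in the intermediate time variable $\sigma$. First I would set up the double Duhamel integral explicitly, rescale both inner frequencies to a single variable as in the proof of Proposition~\ref{prop:lineariterstep}, localize $\xi\sim 2^k$, $\tilde\eta\sim 2^l$ with $|k-l|\gtrsim \log n$ (plus the comparable-but-off-by-$1/n$ piece, handled by the mild kernel bound and a small loss in $n$), and then integrate by parts once in the intermediate time variable $\sigma$.

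The integration by parts produces a gain of $\sigma^{-1}$ (or, when the time derivative hits $\cos[\lambda(\tau)\xi^{1/2}\int_\tau^\sigma\lambda^{-1}]$, a factor $\frac{\lambda(\tau)}{\lambda(\sigma)}\xi^{1/2}$ which, combined with the $\beta_\nu$ weight, is integrable), at the cost of dividing by $\big|\xi^{1/2}\frac{\lambda(\tau)}{\lambda(\sigma)}\pm\tilde\eta^{1/2}\frac{\lambda(\tau)}{\lambda(\sigma)}\big|^{-1}$, which on the off-diagonal region costs only a factor $n$ (not a power of $\lambda(\tau)/\lambda(\sigma)$, which would be fatal). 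After this one is left with an expression of exactly the type already estimated in Proposition~\ref{prop:lineariterstep} and Proposition~\ref{prop:firstiteratelincont}: one uses the $S_2$-boundedness (up to the usual $\lambda(\tau)/\lambda(\sigma)$-type factors) of the rescaled $\mathcal{K}^{nd}$ operator as a Hilbert-transform-type operator on $L^2_{d\xi}$, one reduces the time integrals via Cauchy--Schwarz to square-sums over dyadic intervals, and the two powers of $\beta_\nu$ (one from each application of $\Phi$, i.e. from the two $\beta_\nu(\sigma)$ factors) supply the decay needed to close. The point is that the extra $\sigma^{-1}$ gained from the integration by parts, compared with the sharp estimate for the single $\Phi$, is what produces the claimed $\tau_0^{-\gamma}$, since $\int_{\tau_0}^\infty \sigma^{-1}(\text{borderline integrand})\,d\sigma \lesssim \tau_0^{-\gamma}$ while a fixed power $n$ from the denominator is absorbed into $C_{\kappa_*}$ (recall $n$ and $\kappa_*$ are fixed before $\tau_0$).

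Some care is needed at the boundary terms of the integration by parts: these are evaluated at $\sigma = \tau_0$, $\sigma = \tau$, and (since both Duhamel integrals have $\sigma$-dependent upper limits $\tau$ resp. $\sigma$) at the diagonal $\sigma=\tau$ there is a coincidence-of-limits term. The boundary term at $\sigma=\tau_0$ gains $\lambda(\tau_0)/\lambda(\tau)$ directly and is harmless; the one involving $\sigma$ near $\tau$ is treated, as in case {\it(4(ii).c)} of the proof of Proposition~\ref{prop:lineariterstep} and case {\it(1vi)} of Proposition~\ref{prop:firstiteratelincontimprov}, by splitting off the slab $\sigma\in[\tau-\tau^{1/2},\tau]$ (where one gains directly in $\tau$ from the short time interval) from $\sigma < \tau - \tau^{1/2}$ (where the lower bound on $\xi-\tilde\eta$ needed for the integration by parts holds). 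One must also split the low/high ranges $\xi<1$ vs. $\xi>1$ and $\tilde\eta<1$ vs. $\tilde\eta>1$, using the $\rho$-asymptotics \eqref{eq:rhoasympto} in each regime exactly as in the earlier proofs, and in the low-frequency regime for the output use the weight $\xi^{-0+}$ absorbed into the oscillatory factor as before, which again costs at most $\tau_0^{0+}$ — absorbed because the integration-by-parts gain is a genuine power $\tau_0^{-\gamma}$.

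The main obstacle I anticipate is verifying that the denominator produced by the integration by parts is bounded below uniformly on the off-diagonal region by a constant depending only on $n$ (and not deteriorating with $\lambda(\tau)/\lambda(\sigma)$), which hinges on the precise form of the combined phase $\lambda(\tau)\int_\tau^\sigma\lambda^{-1}\cdot\xi^{1/2}\pm\lambda(\tau)\int_\sigma^{\sigma_1}\lambda^{-1}\cdot\tilde\eta^{1/2}$ and the monotonicity of $\lambda$; in the $+$ sign case this is immediate, but the $-$ sign case requires checking that the two $\lambda$-integrals cannot conspire to cancel the frequency separation, which is where the condition $\sigma\le\tau$ (so that $\int_\sigma^{\sigma_1}\lambda^{-1}$ and $\int_\tau^\sigma\lambda^{-1}$ have controlled relative size) is used, together with $|k-l|\gtrsim\log n$. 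Once this lower bound is in hand the rest is a routine repackaging of the estimates already developed.
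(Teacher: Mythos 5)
Your overall strategy — set up the double Duhamel integral, rescale so that both oscillatory phases appear with respect to the same intermediate time $\sigma$, observe that the off-diagonal condition on $\mathcal{K}^{nd}$ forces a frequency separation, and perform an integration by parts in $\sigma$ to gain an extra $\sigma^{-1}$ — is precisely the paper's argument, and the $\tau_0^{-\gamma}$ gain does indeed come from that extra $\sigma^{-1}$ with the constant $n$ from the denominator absorbed into $C_{\kappa_*}$. So you have the right proof.

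Two remarks on precision, and one on a misplaced worry. First, the off-diagonal condition constrains the \emph{rescaled} arguments of $\mathcal{K}^{nd}$, namely $\frac{\lambda^2(\tau)}{\lambda^2(\sigma)}\xi$ and $\frac{\lambda^2(s)}{\lambda^2(\sigma)}\tilde\eta$, not $\xi$ and $\tilde\eta$ directly; so a fixed dyadic localisation $\xi\sim 2^k$, $\tilde\eta\sim 2^l$ with $|k-l|\gtrsim\log n$ does not by itself capture the support of $\mathcal{K}^{nd}$, since the ratio of the rescaled frequencies also depends on $\tau, s, \sigma$. In practice the paper simply works with the inequality $|\frac{\lambda^2(\tau)\xi}{\lambda^2(s)\tilde\eta}-1|>\frac1n$ and derives the pointwise bound
$\big|\frac{\lambda(\tau)}{\lambda(\sigma)}\xi^{1/2}\mp\frac{\lambda(s)}{\lambda(\sigma)}\tilde\eta^{1/2}\big|^{-1}\lesssim n\min\{\cdot,\cdot\}$
directly, without dyadic localisation; the resulting weight $\min\{\xi^{-1/2},\eta^{-1/2}\}$ is then absorbed into $K^{nd}$ using the kernel's vanishing/decay properties, a step you should make explicit since otherwise the extra singular weight would spoil the $L^2$-boundedness of the transference operator. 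Second, your concern about the ``$-$'' sign case is not the real issue: the integration by parts only requires a lower bound on the $\sigma$-\emph{derivative} of the phase, and that derivative is exactly $\frac{\lambda(\tau)}{\lambda(\sigma)}\xi^{1/2}\mp\frac{\lambda(s)}{\lambda(\sigma)}\tilde\eta^{1/2}$ — the $\lambda$-integrals are killed by differentiation, so there is no room for them to ``conspire.'' The lower bound on this quantity is a direct algebraic consequence of the off-diagonal condition via $|a^{1/2}-b^{1/2}|=\frac{|a-b|}{a^{1/2}+b^{1/2}}$, and no additional monotonicity argument is needed.
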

\begin{proof}(lemma) Here we combine the vanishing properties of the kernel $\mathcal{K}$ with an integration by parts. We need to show 
\begin{align*}
&\big(\sum_{\substack{N\gtrsim \tau_0\\ N\,\text{dyadic}}}[\sup_{\tau\sim N}(\frac{\tau}{\tau_0})^{\kappa}\big\|\int_{\tau_0}^\tau\frac{\lambda^{\frac{3}{2}}(\tau)}{\lambda^{\frac{3}{2}}(\sigma)}\frac{\rho^{\frac{1}{2}}(\frac{\lambda^2(\tau)}{\lambda^2(\sigma)}\xi)}{\rho^{\frac{1}{2}}(\xi)}\\&\hspace{3cm}\cdot\chi_{\xi>\kappa_*}\cos\big[\lambda(\tau)\xi^{\frac{1}{2}}\int_{\tau}^{\sigma}\lambda^{-1}(u)\,du\big]h(\sigma, \frac{\lambda^2(\tau)}{\lambda^2(\sigma)}\xi)\,d\sigma\big\|_{S_2}]^2\big)^{\frac{1}{2}}\\
&\lesssim \tau_0^{-\gamma}\big(\sum_{\substack{N\gtrsim \tau_0\\ N\,\text{dyadic}}}[\sup_{\tau\sim N}(\frac{\tau}{\tau_0})^{\kappa}\|f(\tau, \cdot)\big\|_{S_2}]^2\big)^{\frac{1}{2}},
\end{align*}
where we have 
\[
h(\sigma, \cdot) = \beta_{\nu}(\sigma)\mathcal{K}^{nd}(\chi_{\cdot>\kappa_*}\mathcal{D}_{\sigma}Uf). 
\]
Thus we can write more explicitly 
\begin{align*}
&h(\sigma, \frac{\lambda^2(\tau)}{\lambda^2(\sigma)}\xi) =  \beta_{\nu}(\sigma)\int_0^\infty \mathcal{K}^{nd}(\frac{\lambda^2(\tau)}{\lambda^2(\sigma)}\xi, \eta)\chi_{\eta>\kappa_*}\int_{\tau_0}^\sigma\frac{\lambda^{\frac{3}{2}}(\sigma)}{\lambda^{\frac{3}{2}}(s)}\frac{\rho^{\frac{1}{2}}(\frac{\lambda^2(\sigma)}{\lambda^2(s)}\eta)}{\rho^{\frac{1}{2}}(\eta)}\\
&\hspace{4cm}\cos[\lambda(\sigma)\eta^{\frac{1}{2}}\int_{\sigma}^s\lambda^{-1}(u)\,du] f(s, \frac{\lambda^2(\sigma)}{\lambda^2(s)}\eta)\,dsd\eta
\end{align*}
Using the change of variables $\tilde{\eta} = \frac{\lambda^2(\sigma)}{\lambda^2(s)}\eta$, this becomes
\begin{align*}
&\beta_{\nu}(\sigma)\int_0^\infty\int_{\tau_0}^\sigma \mathcal{K}^{nd}(\frac{\lambda^2(\tau)}{\lambda^2(\sigma)}\xi, \frac{\lambda^2(s)}{\lambda^2(\sigma)}\tilde{\eta})\chi_{\frac{\lambda^2(s)}{\lambda^2(\sigma)}\tilde{\eta}>\kappa_*}\frac{\lambda^{\frac{1}{2}}(s)}{\lambda^{\frac{1}{2}}(\sigma)}\frac{\rho^{\frac{1}{2}}(\tilde{\eta})}{\rho^{\frac{1}{2}}(\frac{\lambda^2(s)}{\lambda^2(\sigma)}\tilde{\eta})}\\
&\hspace{4cm}\cos[\lambda(s)\tilde{\eta}^{\frac{1}{2}}\int_{\sigma}^s\lambda^{-1}(u)\,du]f(s, \tilde{\eta})\,ds d\tilde{\eta}
\end{align*}
By definition, we have 
\[
\frac{1}{n}<\big|\frac{\frac{\lambda^2(\tau)}{\lambda^2(\sigma)}\xi}{\frac{\lambda^2(s)}{\lambda^2(\sigma)}\tilde{\eta}} - 1\big|
\]
on the support of the integrand, which also implies that 
\[
\frac{1}{n}\lesssim \big|\frac{\frac{\lambda^2(s)}{\lambda^2(\sigma)}\tilde{\eta}}{\frac{\lambda^2(\tau)}{\lambda^2(\sigma)}\xi} - 1\big|
\]
We can then bound 
\[
\frac{1}{\big|\frac{\lambda(\tau)}{\lambda(\sigma)}\xi^{\frac{1}{2}} \mp \frac{\lambda(s)}{\lambda(\sigma)}\tilde{\eta}^{\frac{1}{2}}\big|}\lesssim n\min\{\frac{\frac{\lambda(\tau)}{\lambda(\sigma)}\xi^{\frac{1}{2}} +\frac{\lambda(s)}{\lambda(\sigma)}\tilde{\eta}^{\frac{1}{2}}}{\frac{\lambda^2(s)}{\lambda^2(\sigma)}\tilde{\eta}},\,\frac{\frac{\lambda(\tau)}{\lambda(\sigma)}\xi^{\frac{1}{2}} +\frac{\lambda(s)}{\lambda(\sigma)}\tilde{\eta}^{\frac{1}{2}}}{\frac{\lambda^2(\tau)}{\lambda^2(\sigma)}\xi}\}
\]
Combining the two phases inherent in the expression 
\begin{align*}
\int_{\tau_0}^\tau\frac{\lambda^{\frac{3}{2}}(\tau)}{\lambda^{\frac{3}{2}}(\sigma)}\frac{\rho^{\frac{1}{2}}(\frac{\lambda^2(\tau)}{\lambda^2(\sigma)}\xi)}{\rho^{\frac{1}{2}}(\xi)}\cos\big[\lambda(\tau)\xi^{\frac{1}{2}}\int_{\tau}^{\sigma}\lambda^{-1}(u)\,du\big]h(\sigma, \frac{\lambda^2(\tau)}{\lambda^2(\sigma)}\xi)\,d\sigma
\end{align*}
as revealed in the preceding results in a combined phase of the form 
\begin{align*}
&\cos\big[\lambda(\tau)\xi^{\frac{1}{2}}\int_{\tau}^\sigma\lambda^{-1}(u)\,du\mp \lambda(s)\tilde{\eta}^{\frac{1}{2}}\int_{\sigma}^s\lambda^{-1}(u)\,du\big]\\
& = \frac{1}{\frac{\lambda(\tau)}{\lambda(\sigma)}\xi^{\frac{1}{2}} \mp \frac{\lambda(s)}{\lambda(\sigma)}\tilde{\eta}^{\frac{1}{2}}}\frac{\partial}{\partial\sigma}\big(\sin\big[\lambda(\tau)\xi^{\frac{1}{2}}\int_{\tau}^\sigma\lambda^{-1}(u)\,du\mp \lambda(s)\tilde{\eta}^{\frac{1}{2}}\int_{\sigma}^s\lambda^{-1}(u)\,du\big]\big)\\
\end{align*}
Performing an integration by parts with respect to $\sigma$ in the first integral then produces an extra gain of $\sigma^{-1}$, while we replace the kernel $K^{nd}(\xi, \eta)$ with one of the schematic form 
\[
\min\{\frac{1}{\xi^{\frac{1}{2}}}, \frac{1}{\eta^{\frac{1}{2}}}\}K^{nd}(\xi, \eta). 
\]
Note that the vanishing properties of the kernel $K^{nd}$ imply that we can absorb the weight $\min\{\frac{1}{\xi^{\frac{1}{2}}}, \frac{1}{\eta^{\frac{1}{2}}}\}$ into the kernel. The desired bound of the lemma then follows as for the a priori bounds established earlier. 
\end{proof}

If we now absorb into $\Phi^k_2$ all those expressions where at least one kernel $\mathcal{K}^{nd}$ appears, then we get the desired bound for $\Phi_2^k$ as a consequence of the preceding lemma. It then remains to establish the bound for $\Phi_1^k$ where we now set 
\[
\Phi_1(f): = \chi_{\xi>\kappa_*}\int_{\tau_0}^\tau \frac{\lambda^{\frac{3}{2}}(\tau)}{\lambda^{\frac{3}{2}}(\sigma)}\frac{\rho^{\frac{1}{2}}(\frac{\lambda^2(\tau)}{\lambda^2(\sigma)}\xi)}{\rho^{\frac{1}{2}}(\xi)}\cos[\lambda(\tau)\xi^{\frac{1}{2}}\int_{\tau}^\sigma\lambda^{-1}(u)\,du]\beta_{\nu}(\sigma)(\mathcal{K}^df)(\frac{\lambda^2(\tau)}{\lambda^2(\sigma)}\xi)\,d\sigma.
\]
Below we shall omit the cutoff $\chi_{\xi>\kappa_*}$ as it becomes irrelevant. As we still haven't fixed $n$ in the definition of $\Phi_{1,2}$, we shall henceforth put $n = k$. 
\\

To get the desired bound for $\Phi_1^k$, we shall further decompose the kernel (which now lives near the diagonal $\xi = \eta$) into a small frequency and a large frequency case, as well as a remaining intermediate frequency case. This is where the parameter $\epsilon$ comes in: 
\[
\mathcal{K}^d = \mathcal{K}_1^{d,\epsilon} +  \mathcal{K}_2^{d,\epsilon} +  \mathcal{K}_3^{d,\epsilon}. 
\]
Here the operators on the right are given in terms of their respective kernels as follows: 
\[
K_1^{d,\epsilon} = \chi_{\xi<\epsilon}K^d,\,K_3^{d,\epsilon} = \chi_{\xi>\epsilon^{-1}}K^d,\,K_2^{d,\epsilon} = K^d - K_1^{d,\epsilon}  - K_3^{d,\epsilon}. 
\]
Then the 'outer' kernels $K_{1,3}^{d,\epsilon}$ have an important smallness property evidenced by 
\[
 \big(\sum_{\substack{N\gtrsim \tau_0\\ N\,\text{dyadic}}}[\sup_{\tau\sim N}(\frac{\tau}{\tau_0})^{\kappa}\big\|\mathcal{D}_{\tau}U\beta_{\nu}(\sigma)\mathcal{K}_{1,3}^{d, \epsilon}f\big\|_{S_3}]^2\big)^{\frac{1}{2}}\lesssim \epsilon^{\gamma_1} \big(\sum_{\substack{N\gtrsim \tau_0\\ N\,\text{dyadic}}}[\sup_{\tau\sim N}(\frac{\tau}{\tau_0})^{\kappa}\|f(\tau, \cdot)\|_{S_3}]^2\big)^{\frac{1}{2}}
\]
for suitable $\gamma_1>0$. Indeed, this follows from Lemma~\ref{lem:smallnessgaininputfreqsmall} for the small frequency cutoff and is proved similarly for the large frequency cutoff, exploiting the decay properties of the kernel $\mathcal{K}$ at large frequencies as in \cite{KST}.
\\
Moreover, in light of the definition of $\mathcal{D}_{\tau}U$, we have the following key vanishing relations:
\begin{equation}\label{eq:van1}
\mathcal{K}_3^{d,\epsilon}\mathcal{D}_{\tau}U\mathcal{K}_2^{d, (1+\frac{1}{n})\epsilon} = 0
\end{equation}
\begin{equation}\label{eq:van2}
\mathcal{K}_2^{d,(1+\frac{1}{n})\epsilon}\mathcal{D}_{\tau}U\mathcal{K}_1^{d, \epsilon} = 0
\end{equation}
This implies that operators of type $\mathcal{K}_{2,3}^{d,\epsilon}$ can only be followed by a more restrictive class of operators, and in particular, we can 'lock in' a certain amount of gain in the presence of 'off diagonal' operators. However, iterating a large number of diagonal operators will result in smallness thanks to the fact that one essentially integrates over a simplex in high dimension. 
Specifically, we expand 
\begin{align*}
&\big(\beta_{\nu}(\tau)\mathcal{K}^{d}\mathcal{D}_{\tau}U\big)^n\\
& = \big(\beta_{\nu}(\tau)\mathcal{K}_2^{d, \epsilon}\mathcal{D}_{\tau}U\big)^n\\
& + \sum_{k=1}^{n-1} \big(\beta_{\nu}(\tau)\mathcal{K}_2^{d, \epsilon}\mathcal{D}_{\tau}U\big)^k \big(\beta_{\nu}(\tau)\mathcal{K}_1^{d, \epsilon}\mathcal{D}_{\tau}U\big)\big(\beta_{\nu}(\tau)\mathcal{K}^{d}\mathcal{D}_{\tau}U\big)^{n-k-1}\\
& + \sum_{k=1}^{n-1} \big(\beta_{\nu}(\tau)\mathcal{K}_2^{d, \epsilon}\mathcal{D}_{\tau}U\big)^k \big(\beta_{\nu}(\tau)\mathcal{K}_3^{d, \epsilon}\mathcal{D}_{\tau}U\big)\big(\beta_{\nu}(\tau)\mathcal{K}^{d}\mathcal{D}_{\tau}U\big)^{n-k-1}\\
& + \big(\beta_{\nu}(\tau)\mathcal{K}_1^{d, \epsilon}\mathcal{D}_{\tau}U\big)\big(\beta_{\nu}(\tau)\mathcal{K}^{d}\mathcal{D}_{\tau}U\big)^{n-1}\\
&+\big(\beta_{\nu}(\tau)\mathcal{K}_3^{d, \epsilon}\mathcal{D}_{\tau}U\big)\big(\beta_{\nu}(\tau)\mathcal{K}^{d}\mathcal{D}_{\tau}U\big)^{n-1}\\
&=: A+B+C+D+E.
\end{align*}
For the term $C$, observe that we have 
\begin{align*}
&\big(\beta_{\nu}(\tau)\mathcal{K}_3^{d, \epsilon}\mathcal{D}_{\tau}U\big)\big(\beta_{\nu}(\tau)\mathcal{K}^{d}\mathcal{D}_{\tau}U\big)^{n-k-1}\\& = \big(\beta_{\nu}(\tau)\mathcal{K}_3^{d, \epsilon}\mathcal{D}_{\tau}U\big)\big(\beta_{\nu}(\tau)\mathcal{K}_3^{d, 4\epsilon}\mathcal{D}_{\tau}U\big)^{n-k-1},
\end{align*}
and so all terms here are trapped in a high-frequency regime. Thus we get 
\begin{equation}\label{eq:Cequation}\begin{split}
& C = \sum_{k=1}^{n-1} \big(\beta_{\nu}(\tau)\mathcal{K}_2^{d, \epsilon}\mathcal{D}_{\tau}U\big)^k \big(\beta_{\nu}(\tau)\mathcal{K}_3^{d, \epsilon}\mathcal{D}_{\tau}U\big)\big(\beta_{\nu}(\tau)\mathcal{K}^{d}\mathcal{D}_{\tau}U\big)^{n-k-1}\\
& = \sum_{k=1}^{n-1} \big(\beta_{\nu}(\tau)\mathcal{K}_2^{d, \epsilon}\mathcal{D}_{\tau}U\big)^k\big(\beta_{\nu}(\tau)\mathcal{K}_3^{d, \epsilon}\mathcal{D}_{\tau}U\big)\big(\beta_{\nu}(\tau)\mathcal{K}_3^{d, 4\epsilon}\mathcal{D}_{\tau}U\big)^{n-k-1}
\end{split}\end{equation}
Moreover, for the term $B$, we have 
\begin{equation}\label{eq:Bequation}\begin{split}
&B = \sum_{k=1}^{n-1} \big(\beta_{\nu}(\tau)\mathcal{K}_2^{d, \epsilon}\mathcal{D}_{\tau}U\big)^k \big(\beta_{\nu}(\tau)\mathcal{K}_1^{d, \epsilon}\mathcal{D}_{\tau}U\big)\big(\beta_{\nu}(\tau)\mathcal{K}^{d}\mathcal{D}_{\tau}U\big)^{n-k-1}\\
& = \sum_{k=1}^{n-1} \big(\beta_{\nu}(\tau)\mathcal{K}_2^{d, \epsilon}\mathcal{D}_{\tau}U\big)^k \big(\beta_{\nu}(\tau)\mathcal{K}_1^{d, \epsilon}\mathcal{D}_{\tau}U\big)\big(\beta_{\nu}(\tau)\mathcal{K}_2^{d, \frac{\epsilon}{4}}\mathcal{D}_{\tau}U\big)^{n-k-1}\\
&+\sum_{\substack{1\leq k\leq n-1\\ j\leq n-k-2}} \big(\beta_{\nu}(\tau)\mathcal{K}_2^{d, \epsilon}\mathcal{D}_{\tau}U\big)^k \big(\beta_{\nu}(\tau)\mathcal{K}_1^{d, \epsilon}\mathcal{D}_{\tau}U\big)\big(\beta_{\nu}(\tau)\mathcal{K}_2^{d, \frac{\epsilon}{4}}\mathcal{D}_{\tau}U\big)^{j}\\
&\hspace{4cm}\cdot\big(\beta_{\nu}(\tau)\mathcal{K}_3^{d, \frac{\epsilon}{4}}\mathcal{D}_{\tau}U\big)\big(\beta_{\nu}(\tau)\mathcal{K}_3^{d, \epsilon}\mathcal{D}_{\tau}U\big)^{n-j-k-2}
\end{split}\end{equation}
Here we use that the operators $ \big(\beta_{\nu}(\tau)\mathcal{K}_2^{d, \epsilon}\mathcal{D}_{\tau}U\big)$ on the left force large frequencies at the end of the expression, and if only one operator $\big(\beta_{\nu}(\tau)\mathcal{K}_3^{d, \frac{\epsilon}{4}}$ occurs it will force very large frequencies after it. 
\\
For term $E$ we proceed just as for term $C$. Thus write 
\begin{align*}
\big(\beta_{\nu}(\tau)\mathcal{K}_3^{d, \epsilon}\mathcal{D}_{\tau}U\big)\big(\beta_{\nu}(\tau)\mathcal{K}^{d}\mathcal{D}_{\tau}U\big)^{n-1} = \big(\beta_{\nu}(\tau)\mathcal{K}_3^{d, \epsilon}\mathcal{D}_{\tau}U\big)\big(\beta_{\nu}(\tau)\mathcal{K}_3^{d, 4\epsilon}\mathcal{D}_{\tau}U\big)^{n-1}
\end{align*}
The conclusion is that for terms $A, B, C$ and $E$ we can write them in terms of a few consecutive strings of operators of type $\big(\beta_{\nu}(\tau)\mathcal{K}_2^{d, \epsilon}\mathcal{D}_{\tau}U\big)$, $\big(\beta_{\nu}(\tau)\mathcal{K}_3^{d, \epsilon}\mathcal{D}_{\tau}U\big)$, and for the latter we already have observed a smallness gain. 
Finally, for the remaining term $D$, we also write it in terms of a small number of consecutive strings, by writing 
\begin{align*}
&D =  \big(\beta_{\nu}(\tau)\mathcal{K}_1^{d, \epsilon}\mathcal{D}_{\tau}U\big)\big(\beta_{\nu}(\tau)\mathcal{K}^{d}\mathcal{D}_{\tau}U\big)^{n-1}\\
&=\sum_{j=1}^n\big(\beta_{\nu}(\tau)\mathcal{K}_1^{d, \epsilon}\mathcal{D}_{\tau}U\big)^j[A^{(n-j)}+B^{(n-j)} + C^{(n-j)} + E^{(n-j)}]
\end{align*}
where the superscript indicates that these terms are defined just as in $A$, $B$, $C$ and $E$ but with $n$ replaced by $n-j$. 
\\

At this point, we have essentially reduced the problem of bounding $\big(\beta_{\nu}(\tau)\mathcal{K}^{d}\mathcal{D}_{\tau}U\big)^n$ to the problem of bounding $\big(\beta_{\nu}(\tau)\mathcal{K}_2^{d, \epsilon}\mathcal{D}_{\tau}U\big)^n$, and so this is what we now turn to: 
\begin{lem}\label{lem:simplex} Using the preceding notation and assuming $j\leq n$ (the latter as in the definition of $\mathcal{K}^{d, nd}$), we have the bound 
\begin{align*}
&\big(\sum_{\substack{N\gtrsim \tau_0\\ N\,\text{dyadic}}}[\sup_{\tau\sim N}(\frac{\tau}{\tau_0})^{\kappa}\big\|\mathcal{D}_{\tau}U\big(\beta_{\nu}(\tau)\mathcal{K}_2^{d, \epsilon}\mathcal{D}_{\tau}U\big)^j f\big\|_{S_3}]^2\big)^{\frac{1}{2}}\\&\lesssim \frac{\epsilon^{-j}}{j!} \big(\sum_{\substack{N\gtrsim \tau_0\\ N\,\text{dyadic}}}[\sup_{\tau\sim N}(\frac{\tau}{\tau_0})^{\kappa}\big\|f(\tau, \cdot)\big\|_{S_3}]^2\big)^{\frac{1}{2}}.
\end{align*}

\end{lem}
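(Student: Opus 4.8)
\textbf{Plan of proof for Lemma~\ref{lem:simplex}.}

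The plan is to set up the iteration of the operator $\beta_{\nu}(\tau)\mathcal{K}_2^{d,\epsilon}\mathcal{D}_{\tau}U$ explicitly as a $j$-fold iterated integral, and to exploit the fact that (i) the diagonal, frequency-localized kernel $\mathcal{K}_2^{d,\epsilon}$ is essentially a bounded operator on $L^2_{d\xi}$ restricted to the dyadic band $\epsilon<\xi<\epsilon^{-1}$, with an operator norm $\lesssim 1$ uniformly, and (ii) the propagator $\mathcal{D}_{\tau}U$ carries a time-integration $\int_{\tau_0}^{\sigma}\,d\sigma_1$ against a \emph{uniformly bounded} oscillatory/transport kernel once we are working on the fixed compact frequency band. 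Concretely, after unwinding all the changes of variables of the type $\tilde\eta=\tfrac{\lambda^2(\sigma_m)}{\lambda^2(\sigma_{m+1})}\eta$ (as in the proof of Lemma~\ref{lem:off diagonal}), the composition $\big(\beta_{\nu}(\tau)\mathcal{K}_2^{d,\epsilon}\mathcal{D}_{\tau}U\big)^j$ applied to $f$ becomes an integral over the time-simplex $\tau_0\le \sigma_j\le\sigma_{j-1}\le\cdots\le\sigma_1\le\tau$, with integrand bounded pointwise (in the $S_3$-frequency norm, using the band localization to control all the $\rho$-quotients and the $\langle\xi\rangle^{\frac12+}$ weights by absolute constants) by a product $\prod_{m=1}^j \beta_{\nu}(\sigma_m)$ times a factor controlling the last slot by $\|f(\sigma_j,\cdot)\|_{S_3}$.

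The key step is then the simplex estimate: since $\beta_{\nu}(\sigma)=\dot\lambda/\lambda\sim \sigma^{-1}$ and $\int_{\tau_0}^{\infty}\beta_{\nu}(\sigma)\,d\sigma$ diverges only logarithmically, we cannot integrate each $\sigma_m$ separately; instead we observe that on the band $\epsilon<\xi<\epsilon^{-1}$ the relevant scaling ratios $\tfrac{\lambda^2(\sigma_m)}{\lambda^2(\sigma_{m+1})}$ are confined to $[\,\epsilon^2,\epsilon^{-2}\,]$, which forces $\sigma_{m}\lesssim \epsilon^{-C}\sigma_{m+1}$ and $\sigma_{m+1}\lesssim\epsilon^{-C}\sigma_m$ at each step where a $\mathcal{K}_2^{d,\epsilon}$ acts. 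Hence the change of variables $u_m=\log\sigma_m$ turns the $j$ nested time integrals into an integral over a \emph{bounded} region of the ordered simplex $\{0\le u_j\le\cdots\le u_1\}$ of diameter $O(j\log(1/\epsilon))$ — more precisely one gains a factor $\tfrac{1}{j!}\big(\log\epsilon^{-1}\big)^{j}$ from the simplex volume together with the uniform bound for $\beta_{\nu}(\sigma_m)\,d\sigma_m = du_m$. Combining this with an application of the Cauchy--Schwarz inequality in each of the intermediate $\sigma_m$-integrals to pass from pointwise-in-$\tau$ bounds to the square-sum-over-dyadic-scales norm (exactly as in Step 4 of the proof of Proposition~\ref{prop:lineariterstep} and in the cases of Proposition~\ref{prop:firstiteratelincontimprov}), yields the claimed factor $\epsilon^{-j}/j!$ — the power $\epsilon^{-j}$ absorbing the $(\log\epsilon^{-1})^j$ and any fixed powers of $\epsilon^{-C}$ incurred at each slot, with room to spare since $(\log\epsilon^{-1})^j\ll \epsilon^{-j}$.

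I expect the main obstacle to be the bookkeeping that upgrades the \emph{pointwise} simplex bound to the \emph{square-sum} norm $\big(\sum_{N}\sup_{\tau\sim N}(\tfrac{\tau}{\tau_0})^{2\kappa}\|\cdot\|_{S_3}^2\big)^{1/2}$: one must insert the weights $(\tfrac{\sigma_m}{\tau_0})^{\pm\kappa}$ telescopically along the chain $\tau\ge\sigma_1\ge\cdots\ge\sigma_j\ge\tau_0$ so that the growth weight at the output scale $\tau$ is recovered while the decay from each $\beta_{\nu}(\sigma_m)=\sigma_m^{-1}$ dominates the logarithmic weight $(\sigma_m/\tau_0)^\kappa$ — this is possible because $\kappa=2(1+\nu^{-1})\delta_0$ is as small as we like. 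A secondary point is verifying that each interior application of $\mathcal{K}_2^{d,\epsilon}$ genuinely acts as a bounded operator between the band-localized $S_3$-spaces uniformly in the scaling ratio; this follows from the structure $\mathcal{K}_{cc}(\xi,\eta)=\tfrac{\rho(\eta)F(\xi,\eta)}{\xi-\eta}$, the $C^2$-kernel bounds from \cite{KST}, and the fact that on $[\epsilon,\epsilon^{-1}]$ the operator is a standard Calder\'on--Zygmund (Hilbert-transform type) operator with constants depending only on $\epsilon$ through a harmless power, which can be absorbed into $\epsilon^{-j}$. Once Lemma~\ref{lem:simplex} is in hand, the reduction carried out above (expanding $(\beta_{\nu}\mathcal{K}^d\mathcal{D}_\tau U)^n$ into the strings $A,B,C,D,E$, using the vanishing relations \eqref{eq:van1}--\eqref{eq:van2} and the smallness of the outer kernels $\mathcal{K}_{1,3}^{d,\epsilon}$) together with $\tfrac{\epsilon^{-n}}{n!}\lesssim \epsilon^{\gamma_1 n}e^{\epsilon^{-2}}$ — valid by Stirling for $n$ in the relevant range — gives the stated bound for $\Phi_1^{(k)}$, completing the proof of Proposition~\ref{prop:mainestimate}.
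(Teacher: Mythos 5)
Your overall strategy is the same as the paper's (write the composition as an explicit $j$-fold iterated integral, use the frequency localization to confine the time variables, and extract $1/j!$ from the volume of an ordered time simplex), but the crucial step — the one that actually produces the factorial — has a genuine gap as you state it. You only derive \emph{chain-wise} comparability, $\sigma_{m}\lesssim \epsilon^{-C}\sigma_{m+1}$, from the band restriction at each slot. That confines the log-time variables to a region of diameter $O(j\log(1/\epsilon))$, and the ordered simplex inside an interval of length $L=Cj\log(1/\epsilon)$ has volume $L^j/j!\approx (Ce\log(1/\epsilon))^j$ by Stirling: the factorial is eaten by the $j$-dependent diameter, so your claimed gain $\tfrac{1}{j!}(\log\epsilon^{-1})^j$ does not follow from what you proved. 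This is fatal downstream, because in Proposition~\ref{prop:mainestimate} the bound must beat $\epsilon^{-(1+\gamma_1)j}$ for $j$ large at fixed $\epsilon$, which only the factorial can do; a bound of the form $K(\epsilon)^j$ is useless there.

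The missing ingredient is precisely the hypothesis $j\leq n$, which your proposal never uses. The paper telescopes the near-diagonal conditions $\big|\tfrac{\lambda^2(\sigma_k)\eta_k}{\lambda^2(\sigma_{k+1})\eta_{k+1}}-1\big|<\tfrac1n$ across all $k\leq j-1$ to get $\lambda^2(\sigma_k)\eta_k\geq (1-\tfrac1n)^k\lambda^2(\tau)\xi$, and since $j\leq n$ the factor $(1-\tfrac1n)^k$ is bounded below \emph{uniformly in $j$} (this is exactly why $n$ is later chosen equal to the number of iterations in the definition of $\mathcal{K}^{d},\mathcal{K}^{nd}$). Combining this quasi-conservation of $\lambda^2\eta$ with $\eta_k\lesssim\epsilon^{-1}$ and $\xi\gtrsim\epsilon$ pins \emph{every} $\sigma_k$ into the single interval $[\epsilon\tau,\tau]$, whose length is independent of $j$; then $\beta_\nu(\sigma_k)\lesssim\epsilon^{-1}\tau^{-1}$ on it, and the ordered simplex in $[\epsilon\tau,\tau]^j$ gives $\tau^j/j!$, hence $\epsilon^{-j}/j!$. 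Incidentally, once all times are forced to be comparable to $\tau$, the square-sum bookkeeping you flag as the "main obstacle" becomes trivial (no telescoping of $(\sigma_m/\tau_0)^{\pm\kappa}$ weights is needed — the weight transfers to $\sigma_j\sim\tau$ at the cost of a harmless $\epsilon^{-O(\kappa)}$), so the real difficulty sits exactly where your argument is currently incomplete.
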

\begin{proof} Write 
\begin{align*}
&\big(\beta_{\nu}(\tau)\mathcal{K}_2^{d, \epsilon}\mathcal{D}_{\tau}U\big)^j(\beta_{\nu}(\tau)\mathcal{K}_2^{d, \epsilon}f)\\& = \beta_{\nu}(\tau)\mathcal{K}_2^{d, \epsilon}\int_{\tau_0}^{\tau}\int_0^\infty d\sigma_1d\eta_1\frac{\lambda^{\frac{3}{2}}(\tau)}{\lambda^{\frac{3}{2}}(\sigma_1)}\frac{\rho^{\frac{1}{2}}(\frac{\lambda^2(\tau)}{\lambda^2(\sigma_1)}\xi)}{\rho^{\frac{1}{2}}(\xi)}\beta_{\nu}(\sigma_1)\cos(\lambda(\tau)\xi^{\frac{1}{2}}\int_{\tau}^{\sigma_1}\lambda^{-1}(u)\,du)\\
&\cdot K_2^{d, \epsilon}(\frac{\lambda^{2}(\tau)}{\lambda^{2}(\sigma_1)}\xi, \eta_1)\int_{\tau_0}^{\sigma_1}\int_0^\infty d\sigma_2d\eta_2\frac{\lambda^{\frac{3}{2}}(\sigma_1)}{\lambda^{\frac{3}{2}}(\sigma_2)}\frac{\rho^{\frac{1}{2}}(\frac{\lambda^2(\sigma_1)}{\lambda^2(\sigma_2)}\eta_1)}{\rho^{\frac{1}{2}}(\eta_1)}\beta_{\nu}(\sigma_2)\cos(\lambda(\sigma_1)\eta_1^{\frac{1}{2}}\int_{\sigma_1}^{\sigma_2}\lambda^{-1}(u)\,du)\\
&\ldots\\
&\cdot K_2^{d, \epsilon}(\frac{\lambda^{2}(\sigma_{j-2})}{\lambda^{2}(\sigma_{j-1})}\eta_{j-2}, \eta_{j-1})\int_{\tau_0}^{\sigma_{j-1}}d\sigma_j\frac{\lambda^{\frac{3}{2}}(\sigma_{j-1})}{\lambda^{\frac{3}{2}}(\sigma_j)}\frac{\rho^{\frac{1}{2}}(\frac{\lambda^2(\sigma_{j-1})}{\lambda^2(\sigma_j)}\eta_{j-1})}{\rho^{\frac{1}{2}}(\eta_{j-1})}\beta_{\nu}(\sigma_j)\\&\hspace{3cm}\cdot\cos(\lambda(\sigma_{j-1})\eta_{j-1}^{\frac{1}{2}}\int_{\sigma_{j-1}}^{\sigma_j}\lambda^{-1}(u)\,du)((\beta_{\nu}(\cdot)\mathcal{K}_2^{d, \epsilon}f)(\sigma_j, \frac{\lambda^2(\sigma_{j-1})}{\lambda^2(\sigma_j)}\eta_{j-1})\\
\end{align*}
Then we carefully recall that by choice of $\mathcal{K}^d$ we have 
\[
\big|\frac{\lambda^2(\sigma_k)\eta_k}{\lambda^2(\sigma_{k+1})\eta_{k+1}} - 1\big|<\frac{1}{n}, \,1\leq k\leq j-1<n. 
\]
and so 
\[
\lambda^2(\sigma_k)\eta_k\geq (1-\frac{1}{n})^k\lambda^2(\tau)\xi,\,1\leq k\leq j-1. 
\]
Since we further have the restrictions $\xi\gtrsim\epsilon, \eta_k<\epsilon^{-1}$ on the support of the full expression, we get 
\[
\sigma_k>\tau\cdot\epsilon,\,1\leq k\leq j. 
\]
for $\nu$ and the  $\epsilon$ small enough, uniformly in $n$.  In particular, we get 
\[
\beta_{\nu}(\sigma_k)\lesssim \epsilon^{-1}\tau^{-1}, 1\leq k\leq j.
\]
Finally, we infer that for fixed $\tau\geq \tau_0$ 
\begin{align*}
&\big\|\big(\beta_{\nu}(\tau)\mathcal{K}_2^{d, \epsilon}\mathcal{D}_{\tau}U\big)^j(\beta_{\nu}(\tau)\mathcal{K}_2^{d, \epsilon}f)\big\|_{S_3}\\
&\lesssim \beta_{\nu}(\tau)\int_{\epsilon\tau}^{\tau}\beta_{\nu}(\sigma_1)\int_{\epsilon\tau}^{\sigma_1}\beta_{\nu}(\sigma_2)\ldots\int_{\epsilon\tau}^{\sigma_j-1}\beta_{\nu}(\sigma_j)\big\|f(\sigma_j, \cdot)\big\|_{S_3}\,d\sigma_j\ldots d\sigma_1\\
&\lesssim \beta_{\nu}(\tau)\frac{\epsilon^{-j}}{j!}\sup_{\sigma\sim \tau}\big\|f(\sigma, \cdot)\big\|_{S_3}
\end{align*}
We finally get the desired conclusion of the lemma 
\begin{align*}
&\big(\sum_{\substack{N\gtrsim \tau_0\\ N\,\text{dyadic}}}[\sup_{\tau\sim N}(\frac{\tau}{\tau_0})^{\kappa}\big\|\mathcal{D}_{\tau}U\big(\beta_{\nu}(\tau)\mathcal{K}_2^{d, \epsilon}\mathcal{D}_{\tau}U\big)^j(\beta_{\nu}(\tau)\mathcal{K}_2^{d, \epsilon}f)\big\|_{S_3}\\&\lesssim  \frac{\epsilon^{-j}}{j!}\big(\sum_{\substack{N\gtrsim \tau_0\\ N\,\text{dyadic}}}[\sup_{\tau\sim N}(\frac{\tau}{\tau_0})^{\kappa}\|f(\tau, \cdot)\big\|_{S_3}]^2\big)^{\frac{1}{2}}
\end{align*}
\end{proof}

We can now conclude the bound for $\Phi_1^k$ which will then complete the proof of Proposition~\ref{prop:mainestimate}, by bounding the terms $A$ - $E$ from before: 
\\

{\it{Bound for $A$.}} From preceding lemma, we have 
\begin{align*}
 \big(\sum_{\substack{N\gtrsim \tau_0\\ N\,\text{dyadic}}}[\sup_{\tau\sim N}(\frac{\tau}{\tau_0})^{\kappa}\big\|Af\big\|_{S_3}]^2\big)^{\frac{1}{2}}&\lesssim \frac{\epsilon^{-n}}{n!} \big(\sum_{\substack{N\gtrsim \tau_0\\ N\,\text{dyadic}}}[\sup_{\tau\sim N}(\frac{\tau}{\tau_0})^{\kappa}\big\|f(\tau, \cdot)\big\|_{S_3}]^2\big)^{\frac{1}{2}}\\
&\ll \epsilon^{\gamma_1 n}e^{\epsilon^{-2}}\big(\sum_{\substack{N\gtrsim \tau_0\\ N\,\text{dyadic}}}[\sup_{\tau\sim N}(\frac{\tau}{\tau_0})^{\kappa}\big\|f(\tau, \cdot)\big\|_{S_3}]^2\big)^{\frac{1}{2}}\\ 
\end{align*}
provided $n$ is sufficiently large in relation to a fixed chosen $\epsilon$. 
\\

{\it{Bound for $B$.}} In light of identity \eqref{eq:Bequation}, we find 
\begin{align*}
&\big(\sum_{\substack{N\gtrsim \tau_0\\ N\,\text{dyadic}}}[\sup_{\tau\sim N}(\frac{\tau}{\tau_0})^{\kappa}\big\|Bf\big\|_{S_3}]^2\big)^{\frac{1}{2}}\\&\lesssim \big[\sum_{k=1}^{n-1}\frac{\epsilon^{-k}}{k!}\epsilon^{\gamma_1}\frac{\epsilon^{-(n-k-1)}}{(n-k-1)!}\\
&\hspace{1cm} + \sum_{k=1}^{n-1}\sum_{0\leq j\leq n-k-2}\frac{\epsilon^{-k}}{k!}\frac{\epsilon^{-j}}{j!}\epsilon^{\gamma_1(n-k-j)}\big]\big(\sum_{\substack{N\gtrsim \tau_0\\ N\,\text{dyadic}}}[\sup_{\tau\sim N}(\frac{\tau}{\tau_0})^{\kappa}\big\|f(\tau, \cdot)\big\|_{S_3}]^2\big)^{\frac{1}{2}}\\
&\ll \epsilon^{\gamma_1 n}e^{\epsilon^{-2}}\big(\sum_{\substack{N\gtrsim \tau_0\\ N\,\text{dyadic}}}[\sup_{\tau\sim N}(\frac{\tau}{\tau_0})^{\kappa}\big\|f(\tau, \cdot)\big\|_{S_3}]^2\big)^{\frac{1}{2}}\\
\end{align*}
provided $\epsilon$ is sufficiently small and $n$ large enough. 
\\

{\it{Bound for $C, D, E$.}}The term $C$ is similar in light of relation \eqref{eq:Cequation}, as is term $E$. Finally, for the term $D$, we can bound it by 
\begin{align*}
&\big(\sum_{\substack{N\gtrsim \tau_0\\ N\,\text{dyadic}}}[\sup_{\tau\sim N}(\frac{\tau}{\tau_0})^{\kappa}\big\|Df\big\|_{S_3}]^2\big)^{\frac{1}{2}}\\&\lesssim[\sum_{j=1}^n \epsilon^{\gamma_1j}\epsilon^{\gamma_1(n-j)}\epsilon^{\epsilon^{-2}}]\big(\sum_{\substack{N\gtrsim \tau_0\\ N\,\text{dyadic}}}[\sup_{\tau\sim N}(\frac{\tau}{\tau_0})^{\kappa}\big\|f(\tau, \cdot)\big\|_{S_3}]^2\big)^{\frac{1}{2}}\\&\leq \epsilon^{\gamma_2n}e^{\epsilon^{-2}}\big(\sum_{\substack{N\gtrsim \tau_0\\ N\,\text{dyadic}}}[\sup_{\tau\sim N}(\frac{\tau}{\tau_0})^{\kappa}\big\|f(\tau, \cdot)\big\|_{S_3}]^2\big)^{\frac{1}{2}}
\end{align*}
for any $\gamma_2<\gamma_1$, provided $\epsilon<1$ and $n$ is sufficiently large. 
\end{proof}

\section{Proof of Theorem~\ref{thm:MainTechnical}}

By Corollary~\ref{cor:convergence}, the function $\underline{x}(\tau, \xi): = \underline{x}^{(0)}(\tau, \xi) + \sum_{j=1}^\infty \underline{\triangle x}^{(j)}(\tau, \xi)$ converges in the sense of Corollary~\ref{cor:convergence}. In light of the definition of $\triangle A_j$, it is then easily seen that 
\[
\tilde{\epsilon}(\tau, R): = \big[\sum_{j=1}^\infty\triangle_d x^{(j)}(\tau)\big]\phi_d(R) + \int_0^\infty\phi(R, \xi)\big[\underline{x}^{(0)}(\tau, \xi) + \sum_{j=1}^\infty\triangle x^{(j)}(\tau, \xi)\big]\rho(\xi)\,d\xi
\]
is in fact a function in $H^{\frac32+}_{dR}$,  which by construction of $\underline{x}$ constitutes a solution of \eqref{eq:Rtauwave1}. In view of the regularity properties of $u_{approx}^{k,\nu}$ (\cite{KST}, \cite{KS1}), we then obtain that 
\[
u_{approx}^{k,\nu} + \epsilon
\]
is a solution of \eqref{eq:Main} of regularity $H^{1+\frac{\nu}{2}-}_{\R^3}$. Furthermore, writing (see Theorem~\ref{thm:Main})
\[
u_{approx}^{k,\nu} + \epsilon = W_{\lambda(t)}(x) + v_1(t,x)
\]
and recalling \eqref{eq:u2k-1bound}, the proof of the last assertion of Theorem~\ref{thm:Main} concerning $E_{loc}$ reduces to verifying that 
\[
\lim_{t\rightarrow 0}\int_{|x|\leq t}|\nabla_{t,x}\epsilon(t,x)|^2\,dx = 0
\]
However, using that $\epsilon = \frac{\tilde{\epsilon}}{R}$ and the structure estimates of $\tilde{\epsilon}$ coming from Proposition~\ref{prop:lingrowthcond} as well as Proposition~\ref{prop:energybound1} as well as straightforward bounds for the contributions to $\tilde{\epsilon}$ coming from the 'non-free' Fourier components $\triangle_{>\tau}x$, $\widetilde{\triangle x}$, one infers that 
\[
\int_{|x|\leq t}|\nabla_{t,x}\epsilon(t,x)|^2\,dx\lesssim \frac{\tau^2}{\tau^{1+\nu^{-1}}}
\]
which converges to zero for $\nu<1$, and, we in fact had (see the proof of Proposition~\ref{prop:nonlinbounds}) the restriction $\nu\leq \frac13$.  
\\

Back to the level of the Fourier transform, the data of $\underline{x}$ at time $\tau = \tau_0$ no longer coincide with 
\[
\big(\left(\begin{array}{c}x_{0d}\\ x_0\end{array}\right),\,\left(\begin{array}{c}x_{1d}\\ x_1\end{array}\right)\big), 
\]
but can be written as 
\[
\big(\left(\begin{array}{c}x_{0d}\\ x_0\end{array}\right),\,\left(\begin{array}{c}x_{1d}\\ x_1\end{array}\right)\big) + \sum_{j=1}^\infty\big(\left(\begin{array}{c}\triangle x^{(j)}_d(\tau_0)\\ \triangle \tilde{\tilde{x}}^{(j)}_0\end{array}\right),\,\left(\begin{array}{c}\partial_{\tau}\triangle x^{(j)}_d(\tau_0)\\ \triangle \tilde{\tilde{x}}^{(j)}_1\end{array}\right)\big)
\]
Then the arguments from before imply that 
\begin{align*}
\big\|\big( \sum_{j=1}^\infty\triangle \tilde{\tilde{x}}^{(j)}_0, \sum_{j=1}^\infty\triangle \tilde{\tilde{x}}^{(j)}_1\big)\big\|_{\tilde{S}}\lesssim \tau_0^{-(1-)}[\big\|\big(x_0, x_1\big)\big\|_{\tilde{S}} + \big|x_{0d}\big|].
\end{align*}
and further 
\[
\sum_j\big|\triangle x^{(j)}_d(\tau_0)\big|\lesssim \tau_0^{-(1-)}\big|x_{0d}\big| + \big\|\big(x_0, x_1\big)\big\|_{\tilde{S}}
\]
Moreover, passing to differences of solutions gives the Lipschitz-continuity of the data map with small Lipschitz norm. This completes the proof. 

\section{Appendix}

\subsection{The contribution of the first term in \eqref{eq:linhomparam1} in the proof of Proposition~\ref{prop:firstiteratelincont}}

Here we follow the same steps as in the proof alluded to, but only retain the contribution of the source term $x_0$: 
\\

{\bf{Step 1}}: {\it{Proof of the high frequency estimate \eqref{eq:firstiterhigh} for the contribution from $\mathcal{R}_1(\tau, x^{(0)})$.}}
\\

{\bf{Step 1(i)}}: {\it{Contribution of $\beta_{\nu}^2(\tau)\mathcal{K}_{cc}x^{(0)}$}}. Substituting the first term in \eqref{eq:Kx0formula} for $\mathcal{K}_{cc}x^{(0)}$, we arrive at the expression 
\begin{align*}
 \int_{\tau_0}^\tau\frac{\lambda^{\frac{3}{2}}(\tau)}{\lambda^{\frac{3}{2}}(\sigma)}\frac{\rho^{\frac{1}{2}}(\frac{\lambda^2(\tau)}{\lambda^2(\sigma)}\xi)}{\rho^{\frac{1}{2}}(\xi)}\frac{\sin[\lambda(\tau)\xi^{\frac{1}{2}}\int_{\tau}^\sigma\lambda^{-1}(u)\,du]}{\xi^{\frac{1}{2}}}\beta_{\nu}^2(\sigma)A(\xi, \tau, \tau_0, \sigma)\,d\sigma,
 \end{align*}
where this time 
 \begin{align*}
 &A(\xi, \tau, \tau_0, \sigma):\\
 & =  \int_0^\infty\frac{\lambda^{\frac52}(\sigma)}{\lambda^{\frac52}(\tau_0)}\frac{F(\frac{\lambda^2(\tau)}{\lambda^2(\sigma)}\xi, \frac{\lambda^2(\tau)}{\lambda^2(\sigma)}\tilde{\eta})\rho^{\frac{1}{2}}(\frac{\lambda^2(\tau)}{\lambda^2(\sigma)}\tilde{\eta})}{\xi- \tilde{\eta}}\rho^{\frac{1}{2}}(\frac{\lambda^2(\tau)}{\lambda^2(\tau_0)}\tilde{\eta})\cos[\lambda(\tau)\tilde{\eta}^{\frac{1}{2}}\int_{\tau_0}^{\sigma}\lambda^{-1}(u)\,du]\\
 &\hspace{9.5cm}\cdot x_0(\frac{\lambda^2(\tau)}{\lambda^2(\tau_0)}\tilde{\eta})\,d\tilde{\eta}. 
 \end{align*}
To deal with the case $\tilde{\eta}\geq 1$, use that if we put 
\begin{align*}
B(\tau, \sigma, \xi, \tilde{\eta}): = \frac{\lambda^{\frac{3}{2}}(\tau)}{\lambda^{\frac{3}{2}}(\sigma)}\xi^{\frac14+}\rho^{\frac{1}{2}}(\frac{\lambda^2(\tau)}{\lambda^2(\sigma)}\xi)F(\frac{\lambda^2(\tau)}{\lambda^2(\sigma)}\xi, \frac{\lambda^2(\tau)}{\lambda^2(\sigma)}\tilde{\eta})\rho^{\frac{1}{2}}(\frac{\lambda^2(\tau)}{\lambda^2(\sigma)}\tilde{\eta})
\end{align*}
and further restrict to $\xi>1$ (as we do throughout {\it{Step 1}}), we get 
\begin{align*}
\big|B(\tau, \sigma, \xi, \tilde{\eta})\big|\lesssim \max\{\xi, \tilde{\eta}\}^{-\frac14+}(\frac{\lambda(\tau)}{\lambda(\sigma)})^{\frac12}
\end{align*}
Furthermore, we have 
\begin{align*}
&\big\|\frac{\lambda^{\frac52}(\sigma)}{\lambda^{\frac52}(\tau_0)}\rho^{\frac{1}{2}}(\frac{\lambda^2(\tau)}{\lambda^2(\tau_0)}\tilde{\eta})\tilde{\eta}^{-\frac14+}x_0(\frac{\lambda^2(\tau)}{\lambda^2(\tau_0)}\tilde{\eta})\big\|_{L^2_{d\tilde{\eta}}(\tilde{\eta}>1)}
\\&\lesssim \frac{\lambda^{\frac52}(\sigma)\lambda^{\frac12}(\tau)}{\lambda^3(\tau_0)}\cdot\frac{\lambda^2(\tau_0)}{\lambda^2(\tau)}\cdot\frac{\lambda(\tau_0)}{\lambda(\tau)}\big\|x_0\big\|_{\langle\tilde{\eta}\rangle^{-(1+)}L^2_{d\tilde{\eta}}},
\end{align*}
and the operator with kernel 
\[
\frac{\tilde{\eta}^{\frac14-}B(\tau, \sigma, \xi, \tilde{\eta})}{\xi - \tilde{\eta}}
\]
is $L^2$-bounded up to a factor $(\frac{\lambda(\tau)}{\lambda(\sigma)})^{\frac12}$. In all, we then conclude that 
\begin{align*}
&\big\| \int_{\tau_0}^\tau\frac{\lambda^{\frac{3}{2}}(\tau)}{\lambda^{\frac{3}{2}}(\sigma)}\frac{\rho^{\frac{1}{2}}(\frac{\lambda^2(\tau)}{\lambda^2(\sigma)}\xi)}{\rho^{\frac{1}{2}}(\xi)}\frac{\sin[\lambda(\tau)\xi^{\frac{1}{2}}\int_{\tau}^\sigma\lambda^{-1}(u)\,du]}{\xi^{\frac{1}{2}}}\beta_{\nu}^2(\sigma)A_{\tilde{\eta}\geq 1}(\xi, \tau, \tau_0, \sigma)\,d\sigma\big\|_{\langle\xi\rangle^{-1-}L^2_{d\xi}(\xi>1)}\\
&\lesssim \big\|x_0\big\|_{\tilde{S}_1} \int_{\tau_0}^\tau \frac{\lambda^2(\sigma)}{\lambda^2(\tau)}\beta_{\nu}^2(\sigma)\,d\sigma\lesssim \tau_0^{-1} \big\|x_0\big\|_{\tilde{S}_1}. 
\end{align*}
Next, turn to the case $\tilde{\eta}<1$. Then replicating the reductions in {\it{Step 1(i)}} of the proof of Proposition~\ref{prop:firstiteratelincont}, one first replaces $\rho^{\frac12}(\frac{\lambda^2(\tau)}{\lambda^2(\sigma)}\tilde{\eta})$ by 
\[
c\chi_{\frac{\lambda^2(\tau)}{\lambda^2(\sigma)}\tilde{\eta}<1}[\frac{\lambda^2(\tau)}{\lambda^2(\sigma)}\tilde{\eta}]^{-\frac14}. 
\]
To achieve this reduction, recalling the algebra in {\it{Step 1(i)}} earlier,  one has to bound the expression
\begin{align*}
 \int_{\tau_0}^\tau\frac{\lambda^{\frac{3}{2}}(\tau)}{\lambda^{\frac{3}{2}}(\sigma)}\frac{\rho^{\frac{1}{2}}(\frac{\lambda^2(\tau)}{\lambda^2(\sigma)}\xi)}{\rho^{\frac{1}{2}}(\xi)}\frac{\sin[\lambda(\tau)\xi^{\frac{1}{2}}\int_{\tau}^\sigma\lambda^{-1}(u)\,du]}{\xi^{\frac{1}{2}}}\beta_{\nu}^2(\sigma)\tilde{A}_{\tilde{\eta}<1}(\xi, \tau, \tau_0, \sigma)\,d\sigma,
\end{align*}
where we set 
\begin{align*}
&\tilde{A}_{\tilde{\eta}<1}(\xi, \tau, \tau_0, \sigma):\\
 & =  \int_0^1\frac{\lambda^{\frac52}(\sigma)}{\lambda^{\frac52}(\tau_0)}\frac{F(\frac{\lambda^2(\tau)}{\lambda^2(\sigma)}\xi, \frac{\lambda^2(\tau)}{\lambda^2(\sigma)}\tilde{\eta})\chi_{\frac{\lambda^2(\tau)}{\lambda^2(\sigma)}\tilde{\eta}>1}\rho^{\frac{1}{2}}(\frac{\lambda^2(\tau)}{\lambda^2(\sigma)}\tilde{\eta})}{\xi- \tilde{\eta}}\rho^{\frac{1}{2}}(\frac{\lambda^2(\tau)}{\lambda^2(\tau_0)}\tilde{\eta})\\
 &\hspace{4cm}\cdot\cos[\lambda(\tau)\tilde{\eta}^{\frac{1}{2}}\int_{\tau_0}^{\sigma}\lambda^{-1}(u)\,du]\cdot x_0(\frac{\lambda^2(\tau)}{\lambda^2(\tau_0)}\tilde{\eta})\,d\tilde{\eta}\\
 & +  \int_0^1\frac{\lambda^{\frac52}(\sigma)}{\lambda^{\frac52}(\tau_0)}\frac{F(\frac{\lambda^2(\tau)}{\lambda^2(\sigma)}\xi, \frac{\lambda^2(\tau)}{\lambda^2(\sigma)}\tilde{\eta})\chi_{\frac{\lambda^2(\tau)}{\lambda^2(\sigma)}\tilde{\eta}<1}[\frac{\lambda^2(\tau)}{\lambda^2(\sigma)}\tilde{\eta})]^{\frac14}}{\xi- \tilde{\eta}}\rho^{\frac{1}{2}}(\frac{\lambda^2(\tau)}{\lambda^2(\tau_0)}\tilde{\eta})\\
 &\hspace{4cm}\cdot\cos[\lambda(\tau)\tilde{\eta}^{\frac{1}{2}}\int_{\tau_0}^{\sigma}\lambda^{-1}(u)\,du]\cdot x_0(\frac{\lambda^2(\tau)}{\lambda^2(\tau_0)}\tilde{\eta})\,d\tilde{\eta}\\
\end{align*}
For the first expression on the right use that ($\xi\gg1, \tilde{\eta}<1$, $\tau_0\leq \sigma\leq \tau$)
\begin{align*}
\big|\frac{F(\frac{\lambda^2(\tau)}{\lambda^2(\sigma)}\xi, \frac{\lambda^2(\tau)}{\lambda^2(\sigma)}\tilde{\eta})\chi_{\frac{\lambda^2(\tau)}{\lambda^2(\sigma)}\tilde{\eta}>1}\rho^{\frac{1}{2}}(\frac{\lambda^2(\tau)}{\lambda^2(\sigma)}\tilde{\eta})\rho^{\frac{1}{2}}(\frac{\lambda^2(\tau)}{\lambda^2(\tau_0)}\tilde{\eta})}{\xi- \tilde{\eta}}\big|
\lesssim (\frac{\lambda(\tau)}{\lambda(\sigma)})^{-N}\langle\xi\rangle^{-N}\frac{\lambda^{\frac12}(\tau)}{\lambda^{\frac12}(\tau_0)},
\end{align*}
and so 
\begin{align*}
&\big| \int_0^1\frac{\lambda^{\frac52}(\sigma)}{\lambda^{\frac52}(\tau_0)}\frac{F(\frac{\lambda^2(\tau)}{\lambda^2(\sigma)}\xi, \frac{\lambda^2(\tau)}{\lambda^2(\sigma)}\tilde{\eta})\chi_{\frac{\lambda^2(\tau)}{\lambda^2(\sigma)}\tilde{\eta}>1}\rho^{\frac{1}{2}}(\frac{\lambda^2(\tau)}{\lambda^2(\sigma)}\tilde{\eta})}{\xi- \tilde{\eta}}\rho^{\frac{1}{2}}(\frac{\lambda^2(\tau)}{\lambda^2(\tau_0)}\tilde{\eta})\\
 &\hspace{4cm}\cdot\cos[\lambda(\tau)\tilde{\eta}^{\frac{1}{2}}\int_{\tau_0}^{\sigma}\lambda^{-1}(u)\,du]\cdot x_0(\frac{\lambda^2(\tau)}{\lambda^2(\tau_0)}\tilde{\eta})\,d\tilde{\eta}\\
 &\lesssim (\frac{\lambda(\tau)}{\lambda(\sigma)})^{-N}\langle\xi\rangle^{-N}\cdot \frac{\lambda^{\frac52}(\sigma)\lambda^{\frac12}(\tau)}{\lambda^{3}(\tau_0)}\cdot\frac{\lambda(\tau_0)}{\lambda(\tau)}\int_0^1[\frac{\lambda^2(\tau)}{\lambda^2(\tau_0)}\tilde{\eta}]^{\frac12}
 \big| x_0(\frac{\lambda^2(\tau)}{\lambda^2(\tau_0)}\tilde{\eta})\big|\,d\tilde{\eta},\\
\end{align*}
and we further get 
\begin{align*}
&\int_0^1[\frac{\lambda^2(\tau)}{\lambda^2(\tau_0)}\tilde{\eta}]^{\frac12}
 \big| x_0(\frac{\lambda^2(\tau)}{\lambda^2(\tau_0)}\tilde{\eta})\big|\,d\tilde{\eta}
 \lesssim \frac{\lambda^2(\tau_0)}{\lambda^2(\tau)}\big\|x_0\big\|_{\xi^{-\frac12}L^1_{d\xi}}\lesssim  \frac{\lambda^2(\tau_0)}{\lambda^2(\tau)}\big\|x_0\big\|_{\tilde{S}_1},
\end{align*}
via a simple application of the Cauchy-Schwarz inequality. We conclude that 
\begin{equation}\label{eq:appendix1}\begin{split}
&\big| \int_0^1\frac{\lambda^{\frac52}(\sigma)}{\lambda^{\frac52}(\tau_0)}\frac{F(\frac{\lambda^2(\tau)}{\lambda^2(\sigma)}\xi, \frac{\lambda^2(\tau)}{\lambda^2(\sigma)}\tilde{\eta})\chi_{\frac{\lambda^2(\tau)}{\lambda^2(\sigma)}\tilde{\eta}>1}\rho^{\frac{1}{2}}(\frac{\lambda^2(\tau)}{\lambda^2(\sigma)}\tilde{\eta})}{\xi- \tilde{\eta}}\rho^{\frac{1}{2}}(\frac{\lambda^2(\tau)}{\lambda^2(\tau_0)}\tilde{\eta})\\
 &\hspace{4cm}\cdot\cos[\lambda(\tau)\tilde{\eta}^{\frac{1}{2}}\int_{\tau_0}^{\sigma}\lambda^{-1}(u)\,du]\cdot x_0(\frac{\lambda^2(\tau)}{\lambda^2(\tau_0)}\tilde{\eta})\,d\tilde{\eta}\\
 &\lesssim (\frac{\lambda(\tau)}{\lambda(\sigma)})^{-N}\langle\xi\rangle^{-N}\cdot\big\|x_0\big\|_{\tilde{S}_1}.
\end{split}\end{equation}
Similarly, for the term restricted to the low frequency regime $\frac{\lambda^2(\tau)}{\lambda^2(\sigma)}\tilde{\eta}<1$, use that 
\begin{align*}
&\big|\frac{F(\frac{\lambda^2(\tau)}{\lambda^2(\sigma)}\xi, \frac{\lambda^2(\tau)}{\lambda^2(\sigma)}\tilde{\eta})\chi_{\frac{\lambda^2(\tau)}{\lambda^2(\sigma)}\tilde{\eta}<1}[\frac{\lambda^2(\tau)}{\lambda^2(\sigma)}\tilde{\eta})]^{\frac14}}{\xi- \tilde{\eta}}\rho^{\frac{1}{2}}(\frac{\lambda^2(\tau)}{\lambda^2(\tau_0)}\tilde{\eta})\big|\\
&\lesssim (\frac{\lambda(\tau)}{\lambda(\sigma)})^{-N}\langle\xi\rangle^{-N}\big[\chi_{\frac{\lambda^2(\tau)}{\lambda^2(\tau_0)}\tilde{\eta}>1}\frac{\lambda^{\frac12}(\tau)}{\lambda^{\frac12}(\tau_0)}\tilde{\eta}^{\frac12} + \chi_{\frac{\lambda^2(\tau)}{\lambda^2(\tau_0)}\tilde{\eta}<1}\frac{\lambda^{\frac12}(\tau_0)}{\lambda^{\frac12}(\tau)}\big]
\end{align*}
Then using that 
\begin{align*}
\int_0^1\chi_{\frac{\lambda^2(\tau)}{\lambda^2(\tau_0)}\tilde{\eta}<1}\big|x_0(\frac{\lambda^2(\tau)}{\lambda^2(\tau_0)}\tilde{\eta})\big|\,d\tilde{\eta}\lesssim [\frac{\lambda(\tau_0)}{\lambda(\tau)}]^{-2}\big\|x_0\big\|_{\tilde{S}_1}, 
\end{align*}
we also infer 
\begin{equation}\label{eq:appendix2}\begin{split}
&\big|\int_0^1\frac{\lambda^{\frac52}(\sigma)}{\lambda^{\frac52}(\tau_0)}\frac{F(\frac{\lambda^2(\tau)}{\lambda^2(\sigma)}\xi, \frac{\lambda^2(\tau)}{\lambda^2(\sigma)}\tilde{\eta})\chi_{\frac{\lambda^2(\tau)}{\lambda^2(\sigma)}\tilde{\eta}<1}[\frac{\lambda^2(\tau)}{\lambda^2(\sigma)}\tilde{\eta})]^{\frac14}}{\xi- \tilde{\eta}}\rho^{\frac{1}{2}}(\frac{\lambda^2(\tau)}{\lambda^2(\tau_0)}\tilde{\eta})\\
 &\hspace{4cm}\cdot\cos[\lambda(\tau)\tilde{\eta}^{\frac{1}{2}}\int_{\tau_0}^{\sigma}\lambda^{-1}(u)\,du]\cdot x_0(\frac{\lambda^2(\tau)}{\lambda^2(\tau_0)}\tilde{\eta})\,d\tilde{\eta}\big|\\
&\lesssim (\frac{\lambda(\tau)}{\lambda(\sigma)})^{-N}\langle\xi\rangle^{-N}\cdot\big\|x_0\big\|_{\tilde{S}_1}.
\end{split}\end{equation}

However, assuming the bounds \eqref{eq:appendix1}, \eqref{eq:appendix2} for $\tilde{A}_{\tilde{\eta}<1}(\xi, \tau, \tau_0, \sigma)$, one easily infers the bound 
\begin{align*}
&\big\|\int_{\tau_0}^\tau\frac{\lambda^{\frac{3}{2}}(\tau)}{\lambda^{\frac{3}{2}}(\sigma)}\frac{\rho^{\frac{1}{2}}(\frac{\lambda^2(\tau)}{\lambda^2(\sigma)}\xi)}{\rho^{\frac{1}{2}}(\xi)}\frac{\sin[\lambda(\tau)\xi^{\frac{1}{2}}\int_{\tau}^\sigma\lambda^{-1}(u)\,du]}{\xi^{\frac{1}{2}}}\\&\hspace{4cm}\cdot\beta_{\nu}^2(\sigma)\tilde{A}_{\tilde{\eta}<1}(\xi, \tau, \tau_0, \sigma)\,d\sigma\big\|_{\langle\xi\rangle^{-1-}L^2_{d\xi}(\xi>1)}\\
&\lesssim \tau_0^{-1}\big\|x_0\big\|_{\tilde{S}_1}.
\end{align*}
The preceding considerations then allow replacing $\rho^{\frac12}(\frac{\lambda^2(\tau)}{\lambda^2(\sigma)}\tilde{\eta})$ by 
\[
c\chi_{\frac{\lambda^2(\tau)}{\lambda^2(\sigma)}\tilde{\eta}<1}[\frac{\lambda^2(\tau)}{\lambda^2(\sigma)}\tilde{\eta}]^{-\frac14}, 
\]
Similarly, one reduces $\frac{F(\frac{\lambda^2(\tau)}{\lambda^2(\sigma)}\xi, \frac{\lambda^2(\tau)}{\lambda^2(\sigma)}\tilde{\eta})}{\xi - \tilde{\eta}}$ by $\frac{F(\frac{\lambda^2(\tau)}{\lambda^2(\sigma)}\xi, 0)}{\xi}$ up to a negligible error(bounded as before), and so we finally conclude that we may replace $A_{\tilde{\eta}<1}(\xi, \tau, \tau_0, \sigma)$ by a term of the form 
\[
\frac{F(\frac{\lambda^2(\tau)}{\lambda^2(\sigma)}\xi, 0)}{\xi}\int_0^\infty\frac{\lambda^{\frac52}(\sigma)}{\lambda^{\frac52}(\tau_0)}\chi_{\frac{\lambda^2(\tau)}{\lambda^2(\sigma)}\tilde{\eta}<1} \frac{\cos[\lambda(\tau)\tilde{\eta}^{\frac{1}{2}}\int_{\tau_0}^{\sigma}\lambda^{-1}(u)\,du]}{(\frac{\lambda^2(\tau)}{\lambda^2(\sigma)}\tilde{\eta})^{\frac{1}{4}}}\cdot \rho^{\frac{1}{2}}(\frac{\lambda^2(\tau)}{\lambda^2(\tau_0)}\tilde{\eta})x_0(\frac{\lambda^2(\tau)}{\lambda^2(\tau_0)}\tilde{\eta})\,d\tilde{\eta}.
\]
Throughout keep in mind our assumptions $\xi\gg 1$, $\tau\geq \sigma$. Here we can easily get rid of the localiser $\chi_{\frac{\lambda^2(\tau)}{\lambda^2(\sigma)}\tilde{\eta}<1}$ up to a contribution which is bounded as in the preceding, and so we have finally reduced $A_{\tilde{\eta}<1}(\xi, \tau, \tau_0, \sigma)$ to the expression 
\begin{align*}
&\frac{F(\frac{\lambda^2(\tau)}{\lambda^2(\sigma)}\xi, 0)}{\xi}\int_0^\infty\frac{\lambda^{\frac52}(\sigma)}{\lambda^{\frac52}(\tau_0)} \frac{\cos[\lambda(\tau)\tilde{\eta}^{\frac{1}{2}}\int_{\tau_0}^{\sigma}\lambda^{-1}(u)\,du]}{(\frac{\lambda^2(\tau)}{\lambda^2(\sigma)}\tilde{\eta})^{\frac{1}{4}}}\cdot \rho^{\frac{1}{2}}(\frac{\lambda^2(\tau)}{\lambda^2(\tau_0)}\tilde{\eta})x_0(\frac{\lambda^2(\tau)}{\lambda^2(\tau_0)}\tilde{\eta})\,d\tilde{\eta}\\
& = \frac{F(\frac{\lambda^2(\tau)}{\lambda^2(\sigma)}\xi, 0)}{\frac{\lambda^{2}(\tau)}{\lambda^{2}(\sigma)}\xi}\int_0^\infty\frac{\lambda(\sigma)}{\lambda(\tau_0)} \frac{\cos[\lambda(\tau_0)\tilde{\eta}^{\frac{1}{2}}\int_{\tau_0}^{\sigma}\lambda^{-1}(u)\,du]}{\tilde{\eta}^{\frac{1}{4}}}\cdot \rho^{\frac{1}{2}}(\tilde{\eta})x_0(\tilde{\eta})\,d\tilde{\eta}\\
\end{align*}
However, this term being just like the one displayed after \eqref{eq:simplebutimportant}, substituting it for $A_{\tilde{\eta}<1}(\xi, \tau, \tau_0, \sigma)$ leads to the bound 
\begin{align*}
&\big\|\int_{\tau_0}^\tau\frac{\lambda^{\frac{3}{2}}(\tau)}{\lambda^{\frac{3}{2}}(\sigma)}\frac{\rho^{\frac{1}{2}}(\frac{\lambda^2(\tau)}{\lambda^2(\sigma)}\xi)}{\rho^{\frac{1}{2}}(\xi)}\frac{\sin[\lambda(\tau)\xi^{\frac{1}{2}}\int_{\tau}^\sigma\lambda^{-1}(u)\,du]}{\xi^{\frac{1}{2}}}\beta_{\nu}^2(\sigma)A_{\tilde{\eta}<1}(\xi, \tau, \tau_0, \sigma)\,d\sigma\big\|_{\langle\xi\rangle^{-1-}L^2_{d\xi}(\xi>1)}\\
&\lesssim \big\|x_0\big\|_{\tilde{S}_1}\int_{\tau_0}^\tau (\frac{\lambda(\sigma)}{\lambda(\tau)})^{N}\sigma \beta_{\nu}^2(\sigma)\,d\sigma\lesssim \big\|x_0\big\|_{\tilde{S}_1}.
\end{align*}
This concludes {\it{Step 1(i)}} of the proof of Proposition~\ref{prop:firstiteratelincont} for the contribution of the first term in \eqref{eq:Kx0formula}.

\begin{rem}\label{rem:appendixprop71highfreqimprov} We note that in the last step we could have performed an integration by parts with respect to $\sigma$, and gained an extra power $\tau_0^{-(1-)}$. 

\end{rem}

{\bf{Step 1(ii)}}: {\it{Contribution of $\beta_{\nu}(\tau)\mathcal{K}_{cc}\mathcal{D}_{\tau}x^{(0)}$}}. Here we arrive at the expression 
\begin{align*}
 \int_{\tau_0}^\tau\frac{\lambda^{\frac{3}{2}}(\tau)}{\lambda^{\frac{3}{2}}(\sigma)}\frac{\rho^{\frac{1}{2}}(\frac{\lambda^2(\tau)}{\lambda^2(\sigma)}\xi)}{\rho^{\frac{1}{2}}(\xi)}\frac{\sin[\lambda(\tau)\xi^{\frac{1}{2}}\int_{\tau}^\sigma\lambda^{-1}(u)\,du]}{\xi^{\frac{1}{2}}}\beta_{\nu}(\sigma)A(\xi, \tau, \tau_0, \sigma)\,d\sigma,
 \end{align*}
where this time 
 \begin{align*}
 &A(\xi, \tau, \tau_0, \sigma):\\
 & =  \int_0^\infty\frac{\lambda^{\frac52}(\sigma)}{\lambda^{\frac52}(\tau_0)}\frac{F(\frac{\lambda^2(\tau)}{\lambda^2(\sigma)}\xi, \frac{\lambda^2(\tau)}{\lambda^2(\sigma)}\tilde{\eta})\rho^{\frac{1}{2}}(\frac{\lambda^2(\tau)}{\lambda^2(\sigma)}\tilde{\eta})}{\xi- \tilde{\eta}}\rho^{\frac{1}{2}}(\frac{\lambda^2(\tau)}{\lambda^2(\tau_0)}\tilde{\eta})\tilde{\eta}^{\frac12}\sin[\lambda(\tau)\tilde{\eta}^{\frac{1}{2}}\int_{\tau_0}^{\sigma}\lambda^{-1}(u)\,du]\\
 &\hspace{9.5cm}\cdot x_0(\frac{\lambda^2(\tau)}{\lambda^2(\tau_0)}\tilde{\eta})\,d\tilde{\eta}. 
 \end{align*}
Restricting to $\tilde{\eta}\geq 1$, $\xi>1$, and setting 
\begin{align*}
B(\tau, \sigma, \xi, \tilde{\eta}): = \tilde{\eta}^{\frac12}\frac{\lambda^{\frac{3}{2}}(\tau)}{\lambda^{\frac{3}{2}}(\sigma)}\xi^{\frac14+}\rho^{\frac{1}{2}}(\frac{\lambda^2(\tau)}{\lambda^2(\sigma)}\xi)F(\frac{\lambda^2(\tau)}{\lambda^2(\sigma)}\xi, \frac{\lambda^2(\tau)}{\lambda^2(\sigma)}\tilde{\eta})\rho^{\frac{1}{2}}(\frac{\lambda^2(\tau)}{\lambda^2(\sigma)}\tilde{\eta}),
\end{align*}
we have the bound 
\[
\big|B(\tau, \sigma, \xi, \tilde{\eta})\big|\lesssim \tilde{\eta}^{\frac14+}(\frac{\lambda(\tau)}{\lambda(\sigma)})^{\frac12}.
\]
Furthermore, the operator with kernel 
\[
\frac{\tilde{\eta}^{-\frac14-}B(\tau, \sigma, \xi, \tilde{\eta})}{\xi - \tilde{\eta}}
\]
has $L^2$-bound $\lesssim (\frac{\lambda(\tau)}{\lambda(\sigma)})^{\frac12}$, and we have 
\begin{align*}
&\big\|\frac{\lambda^{\frac52}(\sigma)}{\lambda^{\frac52}(\tau_0)}\rho^{\frac{1}{2}}(\frac{\lambda^2(\tau)}{\lambda^2(\tau_0)}\tilde{\eta})\tilde{\eta}^{\frac14+}x_0(\frac{\lambda^2(\tau)}{\lambda^2(\tau_0)}\tilde{\eta})\big\|_{L^2_{d\tilde{\eta}}(\tilde{\eta}>1)}
\\&\lesssim \frac{\lambda^{\frac52}(\sigma)\lambda^{\frac12}(\tau)}{\lambda^3(\tau_0)}\cdot\frac{\lambda^2(\tau_0)}{\lambda^2(\tau)}\cdot\frac{\lambda(\tau_0)}{\lambda(\tau)}\big\|x_0\big\|_{\langle\tilde{\eta}\rangle^{-(1+)}L^2_{d\tilde{\eta}}},
\end{align*}
Then we infer that 
\begin{align*}
&\big\| \int_{\tau_0}^\tau\frac{\lambda^{\frac{3}{2}}(\tau)}{\lambda^{\frac{3}{2}}(\sigma)}\frac{\rho^{\frac{1}{2}}(\frac{\lambda^2(\tau)}{\lambda^2(\sigma)}\xi)}{\rho^{\frac{1}{2}}(\xi)}\frac{\sin[\lambda(\tau)\xi^{\frac{1}{2}}\int_{\tau}^\sigma\lambda^{-1}(u)\,du]}{\xi^{\frac{1}{2}}}\beta_{\nu}(\sigma)A_{\tilde{\eta}\geq 1}(\xi, \tau, \tau_0, \sigma)\,d\sigma\big\|_{L^2_{d\xi}(\xi>1)}\\
&\lesssim \big\|x_0\big\|_{\tilde{S}_1}\cdot\int_{\tau_0}^{\tau}(\frac{\lambda(\tau)}{\lambda(\sigma)})^{\frac12}\cdot  \frac{\lambda^{\frac52}(\sigma)\lambda^{\frac12}(\tau)}{\lambda^3(\tau_0)}\cdot\frac{\lambda^2(\tau_0)}{\lambda^2(\tau)}\cdot\frac{\lambda(\tau_0)}{\lambda(\tau)}\cdot\beta_{\nu}(\sigma)\,d\sigma\\
&\lesssim  \big\|x_0\big\|_{\tilde{S}_1}.
\end{align*}
The remaining situation $\tilde{\eta}<1$ in this case is handled in analogy to the preceding case {\it{Step 1(i)}}; note that on account of the extra factor $\tilde{\eta}^{\frac12}$ we are never in danger of losing a factor $\sigma$. 
\\

{\bf{Step 3}}: Here we deal with the low frequency case $\xi<1$, and in particular, we explain the choice of $\triangle \tilde{\tilde{x}}^{(1)}_{0,1}$ for the contribution of $x^{(0)}$ coming from $x_0$. Here the precise definition of the norm $\big\|\cdot\big\|_{\tilde{S}_1}$ will play a role. The control of the part of $\triangle x_{>\tau}^{(1)}$ as well as that part of  the components $\triangle \tilde{x}_0^{(1)}, \triangle \tilde{x}_1^{(1)}$  contributed by $x_0$ being similar and routine, we shall omit them. To begin with, we need to complete the proof of Lemma~\ref{lem:choiceofcorrection}, which we recall happened in two stages {\it{(1)}} and {\it{(2)}}. Here we show how to choose $\triangle \tilde{\tilde{x}}_0^{(1)}$, by first establishing 
\\

{\it{Completion of proof of Lemma~\ref{lem:choiceofcorrection}, {\bf{(1)}}}}: {\it{proof of the bound}}
 \begin{align*}
\big|\int_0^\infty \frac{(\rho^{\frac{1}{2}}\tilde{\triangle}\tilde{x}^{(1)}_0)(\xi)}{\xi^{\frac{1}{4}}}\cos[\nu\tau_0\xi^{\frac{1}{2}}]\,d\xi\big|\lesssim \tau_0^{0+}\big\|(x_0,x_1)\big\|_{\tilde{S}} + \tau_0^{-(1-)}|x_{0d}|.  
\end{align*}
This is again straightforward for the contribution of the exponentially decaying term $x^{(0)}_d$, and so we reduce to the contribution of $x^{(0)}$. We follow the same sub-cases as the part of the proof of Lemma~\ref{lem:choiceofcorrection} given earlier, but this time only considering the contribution from $x_0$. Recall the formula for $\tilde{\triangle}\tilde{x}^{(1)}_0$ which is given immediately before the statement of Lemma~\ref{lem:choiceofcorrection}.
\\

{\bf{(1i)}}: {\it{Contribution of  $\beta_{\nu}^2(\sigma)\mathcal{K}_{cc}x^{(0)}(\sigma, \xi)$, low frequency $\xi<1$}}.
Throughout we only consider the contribution of the first term in \eqref{eq:Kx0formula} to $x^{(0)}$. Substituting this formula into the expression for $\widetilde{\triangle}\tilde{x}^{(1)}_0$ and then into the first expression in \eqref{eq:vanishing2}, we obtain the following kind of integral expression 
\begin{equation}\label{eq:app:ugly1}\begin{split}
&\int_{\tau_0}^\infty \frac{\lambda^{\frac{3}{2}}(\tau_0)}{\lambda^{\frac{3}{2}}(\sigma)}\beta_{\nu}^2(\sigma)\int_0^\infty\chi_{\xi<1}\frac{\rho^{\frac{1}{2}}(\frac{\lambda^2(\tau_0)}{\lambda^2(\sigma)}\xi)}{\rho^{\frac{1}{2}}(\xi)}\frac{\rho^{\frac{1}{2}}(\xi)\cos[\nu\tau_0\xi^{\frac{1}{2}}]\sin[\lambda(\tau_0)\xi^{\frac{1}{2}}\int_{\tau_0}^\sigma\lambda^{-1}(u)\,du]}{\xi^{\frac{3}{4}}}\\
&\hspace{9cm}\cdot B(\tau_0, \sigma, \xi)\,d\xi d\sigma,
\end{split}\end{equation}
where we have set 
\begin{align*}
&B(\tau_0, \sigma, \xi):\\& = \int_0^\infty \frac{F(\frac{\lambda^2(\tau_0)}{\lambda^2(\sigma)}\xi, \eta)\rho(\eta)}{\frac{\lambda^2(\tau_0)}{\lambda^2(\sigma)}\xi - \eta}\frac{\lambda^{\frac{5}{2}}(\sigma)}{\lambda^{\frac{5}{2}}(\tau_0)}\frac{\rho^{\frac{1}{2}}(\frac{\lambda^2(\sigma)}{\lambda^2(\tau_0)}\eta)}{\rho^{\frac{1}{2}}(\eta)}\cos[\lambda(\sigma)\eta^{\frac{1}{2}}\int_{\tau_0}^\sigma\lambda^{-1}(u)\,du]x_0(\frac{\lambda^2(\sigma)}{\lambda^2(\tau_0)}\eta)\,d\eta
\end{align*}
Here we quickly mimic the cases {\it{(1i.a)}} - {\it{(1i.c)}} from the earlier proof of Lemma~\ref{lem:choiceofcorrection}. 
\\

 {\it{(1i.a): Imbalanced frequencies $\frac{\lambda^2(\tau_0)}{\lambda^2(\sigma)}\xi\ll \eta$.}} Following the same steps as in the earlier proof, we reduce at fixed time $\sigma\geq \tau_0$ to bounding 
\begin{align*}
&\tau_0\beta_{\nu}^2(\sigma)\int_0^\infty \sup_{\frac{\lambda^2(\tau_0)}{\lambda^2(\sigma)}\xi\ll \eta}\big|\eta^{\frac12}\frac{F(\frac{\lambda^2(\tau_0)}{\lambda^2(\sigma)}\xi, \eta)\rho(\eta)}{\frac{\lambda^2(\tau_0)}{\lambda^2(\sigma)}\xi - \eta}\big|\frac{\lambda^{\frac{5}{2}}(\sigma)}{\lambda^{\frac{5}{2}}(\tau_0)}\big|\frac{\rho^{\frac{1}{2}}(\frac{\lambda^2(\sigma)}{\lambda^2(\tau_0)}\eta)}{\rho^{\frac{1}{2}}(\eta)}\\&\hspace{5cm}\cdot\cos[\lambda(\sigma)\eta^{\frac{1}{2}}\int_{\tau_0}^\sigma\lambda^{-1}(u)\,du]x_0(\frac{\lambda^2(\sigma)}{\lambda^2(\tau_0)}\eta)\big|\,d\eta\\
&\lesssim \tau_0\beta_{\nu}^2(\sigma)\int_0^\infty\frac{\lambda^2(\sigma)}{\lambda^2(\tau_0)} \big(\chi_{\frac{\lambda^2(\sigma)}{\lambda^2(\tau_0)}\eta<1}\big|x_0(\frac{\lambda^2(\sigma)}{\lambda^2(\tau_0)}\eta)\big|\\&\hspace{4cm} + \chi_{\frac{\lambda^2(\sigma)}{\lambda^2(\tau_0)}\eta\geq 1}[\frac{\lambda^2(\sigma)}{\lambda^2(\tau_0)}\eta]^{\frac{1}{2}}\big|x_0(\frac{\lambda^2(\sigma)}{\lambda^2(\tau_0)}\eta)\big|\big)\,d\eta\\
&\lesssim \tau_0\beta_{\nu}^2(\sigma)\big\|x_0\big\|_{\tilde{S}_1}. 
\end{align*}
This in turn can be integrated over $\sigma\geq \tau_0$ to give the desired bound. 
\\

 {\it{(1i.b): Balanced frequencies $\frac{\lambda^2(\tau_0)}{\lambda^2(\sigma)}\xi\sim\eta$.}} Arguing as in the earlier proof, we simply need to exploit here that 
 \begin{align*}
\big\|\frac{\lambda^2(\sigma)}{\lambda^2(\tau_0)}\eta^{\frac12}x_0(\frac{\lambda^2(\sigma)}{\lambda^2(\tau_0)}\eta)\big\|_{L^2_{d\eta}}\lesssim [\frac{\lambda(\sigma)}{\lambda(\tau_0)}]^{0+}\big\|x_0\big\|_{\tilde{S}_1}. 
 \end{align*}
 The desired bound easily follows from the earlier argument. 
\\

 {\it{(1i.c): Imbalanced frequencies $\frac{\lambda^2(\tau_0)}{\lambda^2(\sigma)}\xi\gg\eta$.}} Here instead of \eqref{eq:ugly1}, \eqref{eq:ugly2}, one arrives at the quantities
 \begin{align*}
 \int_0^{\infty}\frac{\lambda^{\frac{5}{2}}(\sigma)}{\lambda^{\frac{5}{2}}(\tau_0)}\rho^{\frac{1}{2}}(\frac{\lambda^2(\sigma)}{\lambda^2(\tau_0)}\eta)\frac{\cos[\lambda(\sigma)\eta^{\frac{1}{2}}\int_{\tau_0}^\sigma\lambda^{-1}(u)\,du]}{\eta^{\frac{1}{4}}}x_0(\frac{\lambda^2(\sigma)}{\lambda^2(\tau_0)}\eta)\,d\eta
 \end{align*}
 \begin{align*}
\int_{c\frac{\lambda^2(\tau_0)}{\lambda^2(\sigma)}\xi}^\infty\frac{\lambda^{\frac{5}{2}}(\sigma)}{\lambda^{\frac{5}{2}}(\tau_0)}\rho^{\frac{1}{2}}(\frac{\lambda^2(\sigma)}{\lambda^2(\tau_0)}\eta)\frac{\cos[\lambda(\sigma)\eta^{\frac{1}{2}}\int_{\tau_0}^\sigma\lambda^{-1}(u)\,du]}{\eta^{\frac{1}{4}}}x_0(\frac{\lambda^2(\sigma)}{\lambda^2(\tau_0)}\eta)\,d\eta, 
\end{align*}
Then one controls the contribution of the first expression by means of the first vanishing condition of \eqref{eq:vanishing}, just as in {\it{(1i.c)}} in the earlier proof of Lemma~\ref{lem:choiceofcorrection}, while the second contribution is bounded by 
\begin{align*}
&\big|\int_{c\frac{\lambda^2(\tau_0)}{\lambda^2(\sigma)}\xi}^\infty\frac{\lambda^{\frac{5}{2}}(\sigma)}{\lambda^{\frac{5}{2}}(\tau_0)}\rho^{\frac{1}{2}}(\frac{\lambda^2(\sigma)}{\lambda^2(\tau_0)}\eta)\frac{\cos[\lambda(\sigma)\eta^{\frac{1}{2}}\int_{\tau_0}^\sigma\lambda^{-1}(u)\,du]}{\eta^{\frac{1}{4}}}x_0(\frac{\lambda^2(\sigma)}{\lambda^2(\tau_0)}\eta)\,d\eta\big|\\
&\lesssim \int_{c\frac{\lambda^2(\tau_0)}{\lambda^2(\sigma)}\xi}^{\frac{\lambda^2(\tau_0)}{\lambda^2(\sigma)}}\frac{\lambda^{2}(\sigma)}{\lambda^{2}(\tau_0)}\eta^{-\frac12}\big|x_0(\frac{\lambda^2(\sigma)}{\lambda^2(\tau_0)}\eta)\big|\,d\eta
+\int_{\frac{\lambda^2(\tau_0)}{\lambda^2(\sigma)}}^\infty \frac{\lambda^3(\sigma)}{\lambda^3(\tau_0)}\big|x_0(\frac{\lambda^2(\sigma)}{\lambda^2(\tau_0)}\eta)\big|\,d\eta.
\end{align*}
Then we get 
\begin{align*}
\int_{c\frac{\lambda^2(\tau_0)}{\lambda^2(\sigma)}\xi}^{\frac{\lambda^2(\tau_0)}{\lambda^2(\sigma)}}\frac{\lambda^{2}(\sigma)}{\lambda^{2}(\tau_0)}\eta^{-\frac12}\big|x_0(\frac{\lambda^2(\sigma)}{\lambda^2(\tau_0)}\eta)\big|\,d\eta&\lesssim \xi^{-\frac12+}[\frac{\lambda(\sigma)}{\lambda(\tau_0)}]^{1+}\int_\xi^1\tilde{\eta}^{0-}\big|x_0(\tilde{\eta})\big|\,d\tilde{\eta}\\
&\lesssim \xi^{-\frac12+}\log \xi[\frac{\lambda(\sigma)}{\lambda(\tau_0)}]^{1+}\big\|x_0\big\|_{\tilde{S}_1},
\end{align*}
and further 
\begin{align*}
\big|\int_{\frac{\lambda^2(\tau_0)}{\lambda^2(\sigma)}}^\infty \frac{\lambda^3(\sigma)}{\lambda^3(\tau_0)}\big|x_0(\frac{\lambda^2(\sigma)}{\lambda^2(\tau_0)}\eta)\big|\,d\eta\big|&\lesssim \frac{\lambda(\sigma)}{\lambda(\tau_0)}\int_1^\infty\big|x_0(\tilde{\eta})\big|\,d\tilde{\eta}\\&
\lesssim \frac{\lambda(\sigma)}{\lambda(\tau_0)}\big\|x_0\big\|_{\tilde{S}_1}. 
\end{align*}
If we combine the preceding two bounds with the estimate (for $\xi<1$, $\sigma\geq \tau_0$)
\begin{align*}
&\big|\frac{\lambda^{\frac{3}{2}}(\tau_0)}{\lambda^{\frac{3}{2}}(\sigma)}\beta_{\nu}^2(\sigma)\frac{\rho^{\frac{1}{2}}(\frac{\lambda^2(\tau_0)}{\lambda^2(\sigma)}\xi)}{\rho^{\frac{1}{2}}(\xi)}\frac{\rho^{\frac{1}{2}}(\xi)\cos[\nu\tau_0\xi^{\frac{1}{2}}]\sin[\lambda(\tau_0)\xi^{\frac{1}{2}}\int_{\tau_0}^\sigma\lambda^{-1}(u)\,du]}{\xi^{\frac{3}{4}}}\big|\\
&\lesssim \nu\tau_0\beta_{\nu}^2(\sigma)\cdot \frac{\lambda(\tau_0)}{\lambda(\sigma)}\xi^{-\frac12}, 
\end{align*}
we see that inserting the second expression at the beginning of {\it{(1i.c)}} for $B(\tau_0,\sigma,\xi)$ in \eqref{eq:app:ugly1}, the resulting expression can be bounded by 
\[
\lesssim \big\|x_0\big\|_{\tilde{S}_1}\int_{\tau_0}^\infty  \nu\tau_0\beta_{\nu}^2(\sigma)[ \frac{\lambda(\sigma)}{\lambda(\tau_0)}]^{0+}\,d\sigma\lesssim  \big\|x_0\big\|_{\tilde{S}_1}, 
\]
as desired. 
\\
As the remaining cases {\it{(1ii), (1iii)}}, in analogy to the earlier proof of Lemma~\ref{lem:choiceofcorrection}, are routine variations on the preceding kinds of estimates, we omit them here. This concludes {\it{(1)}} of the completion of proof of Lemma~\ref{lem:choiceofcorrection}. 
\\

{\it{Completion of proof of Lemma~\ref{lem:choiceofcorrection}, {\bf{(2)}}}}: {\it{Choice of $\triangle\tilde{\tilde{x}}_0^{(1)}$.}} Again we shall set 
\[
\triangle\tilde{\tilde{x}}_0^{(1)} = \beta \mathcal{F}(\chi_{R\leq C\tau_0}\phi(R, 0))
\]
for suitable $\beta\in \R$. We easily get 
\[
\int_0^\infty\frac{(\rho^{\frac{1}{2}}\triangle\tilde{\tilde{x}}_0^{(1)})(\xi)}{\xi^{\frac{1}{4}}}\cos[\nu\tau_0\xi^{\frac{1}{2}}]\,d\xi\sim \beta,
\]
while we also have 
\begin{align*}
\big\|(\triangle\tilde{\tilde{x}}_0^{(1)},0)\big\|_{\tilde{S}}\lesssim \beta\tau_0^{-1}. 
\end{align*}

\subsection{The contribution of the source term $\beta_{\nu}^2(\tau)\mathcal{K}_{cc}^2x^{(0)}$ in the proof of Proposition~\ref{prop:firstiteratelincont}}

Here we briefly consider the contribution of the last term in \eqref{eq:R_1structure} in the proof of Proposition~\ref{prop:firstiteratelincont}. More specifically, we shall reduce $x^{(0)}$ to the contribution of the first term in \eqref{eq:linhomparam1}, the second term there being treated similarly, and we shall consider the bound for the low frequency term $\chi_{\xi<1}\triangle_{>\tau} x^{(1)}$ with respect to the norm $\big\|\cdot\big\|_{S_1}$. Following {\it{Step 4}} in the proof of Proposition~\ref{prop:firstiteratelincont}, consider 
the expression 
\begin{equation}\label{eq:app:finalmess}
-\int_{\tau}^\infty\frac{\lambda^{\frac{3}{2}}(\tau)}{\lambda^{\frac{3}{2}}(\sigma)}\frac{\rho^{\frac{1}{2}}(\frac{\lambda^2(\tau)}{\lambda^2(\sigma)}\xi)}{\rho^{\frac{1}{2}}(\xi)}\frac{\sin[\lambda(\tau)\xi^{\frac{1}{2}}\int_{\tau}^\sigma\lambda^{-1}(u)\,du]}{\xi^{\frac{1}{2}}}\beta_{\nu}^2(\sigma)\big(\mathcal{K}_{cc}^2x^{(0)}\big)(\sigma, \frac{\lambda^2(\tau)}{\lambda^2(\sigma)}\xi)\,d\sigma,
\end{equation}
where throughout we shall restrict to $\xi<1$. Write explicitly 
\begin{align*}
&\mathcal{K}_{cc}^2(\frac{\lambda^2(\tau)}{\lambda^2(\sigma)}\xi)x^{(0)}\\& = \int_0^\infty\frac{F(\frac{\lambda^2(\tau)}{\lambda^2(\sigma)}\xi, \frac{\lambda^2(\tau)}{\lambda^2(\sigma)}\zeta)\rho( \frac{\lambda^2(\tau)}{\lambda^2(\sigma)}\zeta)}{\xi - \zeta}
\int_0^\infty \frac{F(\frac{\lambda^2(\tau)}{\lambda^2(\sigma)}\zeta, \frac{\lambda^2(\tau)}{\lambda^2(\sigma)}\tilde{\eta})\rho( \frac{\lambda^2(\tau)}{\lambda^2(\sigma)}\tilde{\eta})}{\zeta - \tilde{\eta}}x^{(0)}(\frac{\lambda^2(\tau)}{\lambda^2(\sigma)}\tilde{\eta})\,d\tilde{\eta}d\zeta, 
\end{align*}
where we now have (under our current simplified version of $x^{(0)}$) 
\[
x^{(0)}(\frac{\lambda^2(\tau)}{\lambda^2(\sigma)}\tilde{\eta}) = \frac{\lambda^{\frac52}(\sigma)}{\lambda^{\frac52}(\tau_0)}\frac{\rho^{\frac12}(\frac{\lambda^{2}(\tau)}{\lambda^2(\tau_0)}\tilde{\eta})}{\rho^{\frac12}(\frac{\lambda^2(\tau)}{\lambda^2(\sigma)}\tilde{\eta})}\cos[\lambda(\tau)\tilde{\eta}^{\frac12}\int_{\tau_0}^{\sigma}\lambda^{-1}(u)\,du]x_0(\frac{\lambda^{2}(\tau)}{\lambda^2(\tau_0)}\tilde{\eta}). 
\]
It remains to estimate $\big\|\eqref{eq:app:finalmess}\big\|_{S_1} = \big\|\xi^{-(0+)}\eqref{eq:app:finalmess}\big\|_{L^2_{d\xi}}$. We prove this again by distinguishing between a number of cases: 
\\

{\it{{\bf{(1)}}: Imbalanced frequencies $\xi\ll \tilde{\eta}$.}} Introduce the auxiliary function (keeping the temporal variables fixed for now)
\begin{align*}
f(\zeta): = &\int_0^\infty \frac{F(\frac{\lambda^2(\tau)}{\lambda^2(\sigma)}\zeta, \frac{\lambda^2(\tau)}{\lambda^2(\sigma)}\tilde{\eta})\rho^{\frac12}( \frac{\lambda^2(\tau)}{\lambda^2(\sigma)}\tilde{\eta})}{\zeta - \tilde{\eta}}\frac{\lambda^{\frac52}(\sigma)}{\lambda^{\frac52}(\tau_0)}\rho^{\frac12}(\frac{\lambda^2(\tau)}{\lambda^2(\tau_0)}\tilde{\eta})\\&\hspace{4cm}\cdot\cos[\lambda(\tau)\tilde{\eta}\int_{\tau_0}^{\sigma}\lambda^{-1}u\,du]x_0(\frac{\lambda^2(\tau)}{\lambda^2(\tau_0)}\tilde{\eta})\,d\tilde{\eta}
\end{align*}
Then using subscripts to indicate further restrictions of the integration range, we get 
\begin{align*}
\big|f_{\zeta\ll\tilde{\eta}}(\zeta)\big|&\lesssim \frac{\lambda^{\frac52}(\sigma)}{\lambda^{\frac52}(\tau_0)}\int_{C\max\{\zeta,\xi\}}^\infty\chi_{\frac{\lambda^2(\tau)}{\lambda^2(\sigma)}\tilde{\eta}<1}\frac{(\frac{\lambda^2(\tau)}{\lambda^2(\sigma)}\tilde{\eta})^{\frac34}}{\tilde{\eta}}
\rho^{\frac12}(\frac{\lambda^2(\tau)}{\lambda^2(\tau_0)}\tilde{\eta})\big|x_0(\frac{\lambda^2(\tau)}{\lambda^2(\tau_0)}\tilde{\eta})\big|\,d\tilde{\eta}\\
& + \frac{\lambda^{\frac52}(\sigma)}{\lambda^{\frac52}(\tau_0)}\int_{C\max\{\zeta,\xi\}}^\infty\chi_{\frac{\lambda^2(\tau)}{\lambda^2(\sigma)}\tilde{\eta}\geq 1}\tilde{\eta}^{-1}(\frac{\lambda^2(\tau)}{\lambda^2(\tau_0)}\tilde{\eta})^{\frac14}\big|x_0(\frac{\lambda^2(\tau)}{\lambda^2(\tau_0)}\tilde{\eta})\big|\,d\tilde{\eta}.\\
\end{align*}
The first integral expression on the right is then bounded by 
\begin{align*}
&\frac{\lambda^{\frac52}(\sigma)}{\lambda^{\frac52}(\tau_0)}\int_{C\max\{\zeta,\xi\}}^\infty\chi_{\frac{\lambda^2(\tau)}{\lambda^2(\sigma)}\tilde{\eta}<1}\frac{(\frac{\lambda^2(\tau)}{\lambda^2(\sigma)}\tilde{\eta})^{\frac34}}{\tilde{\eta}}
\rho^{\frac12}(\frac{\lambda^2(\tau)}{\lambda^2(\tau_0)}\tilde{\eta})\big|x_0(\frac{\lambda^2(\tau)}{\lambda^2(\tau_0)}\tilde{\eta})\big|\,d\tilde{\eta}\\
&\lesssim \frac{\lambda(\sigma)\lambda(\tau)}{\lambda^2(\tau_0)}\int_{C\max\{\zeta,\xi\}}^\infty\chi_{\frac{\lambda^2(\tau)}{\lambda^2(\tau_0)}\tilde{\eta}<1}\tilde{\eta}^{-\frac12}\big|x_0(\frac{\lambda^2(\tau)}{\lambda^2(\tau_0)}\tilde{\eta})\big|\,d\tilde{\eta}\\
& + \frac{\lambda(\sigma)\lambda^{\frac32}(\tau)}{\lambda^{\frac52}(\tau_0)}\int_{C\max\{\zeta,\xi\}}^\infty\chi_{\frac{\lambda^2(\tau)}{\lambda^2(\tau_0)}\tilde{\eta}\geq 1}\tilde{\eta}^{-\frac14}[\frac{\lambda^2(\tau)}{\lambda^2(\tau_0)}\tilde{\eta}]^{\frac14}\big|x_0(\frac{\lambda^2(\tau)}{\lambda^2(\tau_0)}\tilde{\eta})\big|\,d\tilde{\eta}\\
&\lesssim \xi^{-(\frac12-)}\frac{\lambda(\sigma)\lambda(\tau)}{\lambda^2(\tau_0)}[\frac{\lambda(\tau_0)}{\lambda(\tau)}]^{2-}\big\|x_0\big\|_{\tilde{S}_1}
\end{align*}
On the other hand, the second integral in the bound for $\big|f_{\zeta\ll\tilde{\eta}}(\zeta)\big|$ is dominated by 
\begin{align*}
&\big|\frac{\lambda^{\frac52}(\sigma)}{\lambda^{\frac52}(\tau_0)}\int_{C\max\{\zeta,\xi\}}^\infty\chi_{\frac{\lambda^2(\tau)}{\lambda^2(\sigma)}\tilde{\eta}\geq 1}\tilde{\eta}^{-1}(\frac{\lambda^2(\tau)}{\lambda^2(\tau_0)}\tilde{\eta})^{\frac14}\big|x_0(\frac{\lambda^2(\tau)}{\lambda^2(\tau_0)}\tilde{\eta})\big|\,d\tilde{\eta}\big|\\
&\lesssim \frac{\lambda^{\frac52}(\sigma)}{\lambda^{\frac52}(\tau_0)}\cdot\frac{\lambda^{\frac52}(\tau)}{\lambda^{\frac52}(\sigma)}\cdot\frac{\lambda^{\frac52}(\tau_0)}{\lambda^{\frac52}(\tau)}\big\|x_0\big\|_{\tilde{S}_1} = \big\|x_0\big\|_{\tilde{S}_1},
\end{align*}
where we have exploited that we have $\tilde{\eta}\geq \frac{\lambda^2(\sigma)}{\lambda^2(\tau)}$ in the integration range, as well as the fact that the definition of $\big\|\cdot\big\|_{\tilde{S}_1}$ allows us to include an extra weight $(\frac{\lambda^2(\tau)}{\lambda^2(\tau_0)}\tilde{\eta})^{\frac34}$.
The preceding bounds then immediately imply 
\begin{align*}
&\big\|\xi^{-(0+)}\int_0^\infty\chi_{\xi\sim \zeta}\frac{F(\frac{\lambda^2(\tau)}{\lambda^2(\sigma)}\xi, \frac{\lambda^2(\tau)}{\lambda^2(\sigma)}\zeta)\rho( \frac{\lambda^2(\tau)}{\lambda^2(\sigma)}\zeta)}{\xi - \zeta}f_{\zeta\ll\tilde{\eta}}\,d\zeta\big\|_{L^2_{d\xi}(\xi<1)}\\&\lesssim \big\|\zeta^{\frac12-}f_{\zeta\ll\tilde{\eta}}\big\|_{L^2_{d\zeta}(\zeta<1)}\lesssim \frac{\lambda(\sigma)}{\lambda(\tau_0)}[\frac{\lambda(\tau_0)}{\lambda(\tau)}]^{1-} \big\|x_0\big\|_{\tilde{S}_1},
\end{align*}
where we have exploited the vanishing property of $F(\cdot, \cdot)$ to absorb $\xi^{-\frac12}$. It is then easily seen that the contribution of the term $f_{\xi\sim\zeta\ll\tilde{\eta}}$ to \eqref{eq:app:finalmess} leads to the desired bound, using that for $\xi<1$ we have 
\begin{align*}
\big|\frac{\lambda^{\frac{3}{2}}(\tau)}{\lambda^{\frac{3}{2}}(\sigma)}\frac{\rho^{\frac{1}{2}}(\frac{\lambda^2(\tau)}{\lambda^2(\sigma)}\xi)}{\rho^{\frac{1}{2}}(\xi)}\frac{\sin[\lambda(\tau)\xi^{\frac{1}{2}}\int_{\tau}^\sigma\lambda^{-1}(u)\,du]}{\xi^{\frac{1}{2}}}\beta_{\nu}^2(\sigma)\big|\lesssim \frac{\lambda(\tau)}{\lambda(\sigma)}\cdot\tau\cdot \beta_{\nu}^2(\sigma). 
\end{align*}

Similarly, we obtain 
\begin{align*}
&\big|\int_0^\infty\chi_{\xi\ll\zeta}\frac{F(\frac{\lambda^2(\tau)}{\lambda^2(\sigma)}\xi, \frac{\lambda^2(\tau)}{\lambda^2(\sigma)}\zeta)\rho( \frac{\lambda^2(\tau)}{\lambda^2(\sigma)}\zeta)}{\xi - \zeta}f_{\zeta\ll\tilde{\eta}}\,d\zeta\big|\\&
\lesssim \log (\frac{\tilde{\eta}}{\xi})\xi^{-(\frac12-)}\frac{\lambda(\sigma)}{\lambda(\tau_0)}[\frac{\lambda(\tau_0)}{\lambda(\tau)}]^{1-} \big\|x_0\big\|_{\tilde{S}_1}
\end{align*}
Calculating the norm $\big\|\cdot\big\|_{\xi^{0+}L^2_{d\xi}}$ here leads to a logarithmic divergence, which however is easily avoided by means of a simple orthogonality type argument, exploiting gains in $\frac{\xi}{\tilde{\eta}}$ which we neglected before. 
Finally, the expression arising upon substituting the cutoff $\chi_{\zeta\ll \xi}$ is bounded similarly, exploiting the crude bound
\[
\chi_{\zeta\ll \xi}\big|\frac{F(\frac{\lambda^2(\tau)}{\lambda^2(\sigma)}\xi, \frac{\lambda^2(\tau)}{\lambda^2(\sigma)}\zeta)\rho( \frac{\lambda^2(\tau)}{\lambda^2(\sigma)}\zeta)}{\xi-\zeta}\big|\lesssim \zeta^{-\frac12},\,\xi<1.
\]
This concludes the contribution of $|f_{\zeta\ll\tilde{\eta}}|$. 
\\
Next, we consider what happens when we replace $f(\zeta)$ by $f_{\zeta\sim \tilde{\eta}}$. Proceeding as for $f_{\zeta\ll\tilde{\eta}}$, one infers that 
\begin{align*}
\big\|f_{\zeta\sim \tilde{\eta}}\big\|_{L^2_{d\zeta}}\lesssim \frac{\lambda(\sigma)\lambda(\tau)}{\lambda^2(\tau_0)}[\frac{\lambda(\tau_0)}{\lambda(\tau)}]^{2-}\big\|x_0\big\|_{\tilde{S}_1}, 
\end{align*}
Since $\zeta\sim \tilde{\eta}$ implies $\zeta\gg\xi$ under the hypothesis of case {\it{(1)}}, and further (provided $\xi\ll\zeta$)
\[
\big|F(\frac{\lambda^2(\tau)}{\lambda^2(\sigma)}\xi, \frac{\lambda^2(\tau)}{\lambda^2(\sigma)}\zeta)\rho( \frac{\lambda^2(\tau)}{\lambda^2(\sigma)}\zeta)\big|\lesssim 1, 
\]
we have 
\begin{align*}
\big|\int_0^\infty\chi_{\xi\ll\zeta}\frac{F(\frac{\lambda^2(\tau)}{\lambda^2(\sigma)}\xi, \frac{\lambda^2(\tau)}{\lambda^2(\sigma)}\zeta)\rho( \frac{\lambda^2(\tau)}{\lambda^2(\sigma)}\zeta)}{\xi - \zeta}f_{\zeta\sim\tilde{\eta}}\,d\zeta\big|\lesssim 
\frac{\lambda(\sigma)\lambda(\tau)}{\lambda^2(\tau_0)}[\frac{\lambda(\tau_0)}{\lambda(\tau)}]^{2-}\big\|x_0\big\|_{\tilde{S}_1}. 
\end{align*}
Then setting 
\[
\frac{\lambda^{\frac{3}{2}}(\tau)}{\lambda^{\frac{3}{2}}(\sigma)}\frac{\rho^{\frac{1}{2}}(\frac{\lambda^2(\tau)}{\lambda^2(\sigma)}\xi)}{\rho^{\frac{1}{2}}(\xi)}\frac{\sin[\lambda(\tau)\xi^{\frac{1}{2}}\int_{\tau}^\sigma\lambda^{-1}(u)\,du]}{\xi^{\frac{1}{2}}}\beta_{\nu}^2(\sigma)
= \Phi(\tau,\sigma,\xi),
\]
we get 
\begin{align*}
&\big\|\xi^{-(0+)}\int_{\tau}^\infty \Phi(\tau,\sigma,\xi)\int_0^\infty\chi_{\xi\ll\zeta}\frac{F(\frac{\lambda^2(\tau)}{\lambda^2(\sigma)}\xi, \frac{\lambda^2(\tau)}{\lambda^2(\sigma)}\zeta)\rho( \frac{\lambda^2(\tau)}{\lambda^2(\sigma)}\zeta)}{\xi - \zeta}f_{\zeta\sim\tilde{\eta}}\,d\zeta\big\|_{L^2_{d\xi}(\xi<1)}\\
&\lesssim \big\|x_0\big\|_{\tilde{S}_1}\int_{\tau}^\infty \frac{\lambda(\tau)}{\lambda(\sigma)}\tau\beta_{\nu}^2(\sigma)\cdot\frac{\lambda(\sigma)\lambda(\tau)}{\lambda^2(\tau_0)}[\frac{\lambda(\tau_0)}{\lambda(\tau)}]^{2-}\,d\sigma\lesssim (\frac{\tau}{\tau_0})^{\kappa}\big\|x_0\big\|_{\tilde{S}_1}, 
\end{align*}
which takes care of the contribution of $f_{\zeta\sim\tilde{\eta}}$. It remains to deal with the contribution of $f_{\zeta\gg\tilde{\eta}}$, which is treated similarly to that of $f_{\zeta\ll\tilde{\eta}}$, trading a factor $\xi^{\frac12-}$ for a $\tilde{\eta}^{\frac12-}$. 
This concludes case {\it{(1)}}, i. e. the imbalanced case $\xi\ll\tilde{\eta}$. 
\\

{\it{{\bf{(2)}}: Imbalanced frequencies $\xi\gg\tilde{\eta}$.}} We borrow the notation from the preceding case {\it{(1)}}. First consider the contribution of $f_{\zeta\ll\tilde{\eta}}$, which is bounded by 
\begin{align*}
&\zeta^{\frac12}\big|f_{\zeta\ll\tilde{\eta}}\big|\\&\lesssim \frac{\lambda^{\frac52}(\sigma)}{\lambda^{\frac52}(\tau_0)}\int_{C\zeta}^{c\xi}\tilde{\eta}^{\frac12}\frac{\big|F(\frac{\lambda^2(\tau)}{\lambda^2(\sigma)}\zeta, \frac{\lambda^2(\tau)}{\lambda^2(\sigma)}\tilde{\eta})\big|}{\tilde{\eta}}\rho^{\frac12}(\frac{\lambda^2(\tau)}{\lambda^2(\sigma)}\tilde{\eta})\rho^{\frac12}(\frac{\lambda^2(\tau)}{\lambda^2(\tau_0)}\tilde{\eta})\big|x_0(\frac{\lambda^2(\tau)}{\lambda^2(\tau_0)}\tilde{\eta})\big|\,d\tilde{\eta}\\
&\lesssim \frac{\lambda^{\frac52}(\sigma)}{\lambda^{\frac52}(\tau_0)}\int_{0}^{\min\{c\xi, \frac{\lambda^2(\tau_0)}{\lambda^2(\tau)}\}}\frac{\lambda^2(\tau)}{\lambda^2(\sigma)}(\frac{\lambda(\sigma)}{\lambda(\tau)})^{\frac12}(\frac{\lambda(\tau_0)}{\lambda(\tau)})^{\frac12}\big|x_0(\frac{\lambda^2(\tau)}{\lambda^2(\tau_0)}\tilde{\eta})\big|\,d\tilde{\eta}\\
& + \frac{\lambda^{\frac52}(\sigma)}{\lambda^{\frac52}(\tau_0)}\int_{\min\{c\xi, \frac{\lambda^2(\tau_0)}{\lambda^2(\tau)}\}}^{c\xi}\frac{\lambda^2(\tau)}{\lambda^2(\sigma)}(\frac{\lambda(\sigma)}{\lambda(\tau)})^{\frac12}(\frac{\lambda(\tau)}{\lambda(\tau_0)})^{\frac12}\tilde{\eta}^{\frac12}\big|x_0(\frac{\lambda^2(\tau)}{\lambda^2(\tau_0)}\tilde{\eta})\big|\,d\tilde{\eta}\\
&\lesssim \frac{\lambda(\sigma)}{\lambda(\tau)}(\frac{\tau}{\tau_0})^{\kappa}\big\|x_0\big\|_{\tilde{S}_1}. 
\end{align*}
The preceding estimate, combined with a simple orthogonality argument, then yields (keeping in mind that we restrict to $\xi\gg \tilde{\eta}$)
\begin{align*}
&\big|\int_0^\infty\chi_{\xi\gg\zeta}\frac{F(\frac{\lambda^2(\tau)}{\lambda^2(\sigma)}\xi, \frac{\lambda^2(\tau)}{\lambda^2(\sigma)}\zeta)\rho( \frac{\lambda^2(\tau)}{\lambda^2(\sigma)}\zeta)}{\xi - \zeta}f_{\zeta\ll\tilde{\eta}}\,d\zeta\big|\\&
\lesssim \frac{\lambda(\sigma)}{\lambda(\tau)}(\frac{\tau}{\tau_0})^{\kappa}\big\|x_0\big\|_{\tilde{S}_1}.
\end{align*}
It follows that the corresponding contribution to \eqref{eq:app:finalmess} is bounded by $\lesssim (\frac{\tau}{\tau_0})^{\kappa}\big\|x_0\big\|_{\tilde{S}_1}$. 
\\
Next, consider the contribution of $f_{\zeta\sim \tilde{\eta}}$, whence now $\zeta\ll\xi$. Here we have 
\begin{align*}
&\big\|\zeta^{-(0+)}f_{\zeta\sim\tilde{\eta}}\big\|_{L^2_{d\zeta}}\\& = \big\|\zeta^{-(0+)}\int_0^\infty\chi_{\zeta\sim\tilde{\eta}}\frac{F(\frac{\lambda^2(\tau)}{\lambda^2(\sigma)}\zeta, \frac{\lambda^2(\tau)}{\lambda^2(\sigma)}\tilde{\eta})}{\zeta - \tilde{\eta}}\rho( \frac{\lambda^2(\tau)}{\lambda^2(\sigma)}\tilde{\eta})x^{(0)}( \frac{\lambda^2(\tau)}{\lambda^2(\sigma)}\tilde{\eta})\,d\tilde{\eta}\big\|_{L^2_{d\zeta}}\\
&\lesssim \frac{\lambda^{\frac52}(\sigma)}{\lambda^{\frac52}(\tau_0)}\big\|\tilde{\eta}^{-(0+)}(\frac{\lambda^2(\tau)}{\lambda^2(\sigma)}\tilde{\eta})^{\frac34}\chi_{\frac{\lambda^2(\tau)}{\lambda^2(\tau_0)}\tilde{\eta}<1}(\frac{\lambda(\tau_0)}{\lambda(\tau)})^{\frac12}\tilde{\eta}^{-\frac14}x_0(\frac{\lambda^2(\tau)}{\lambda^2(\tau_0)}\tilde{\eta})\,d\tilde{\eta}\big\|_{L^2_{d\zeta}}\\
& + \frac{\lambda^{\frac52}(\sigma)}{\lambda^{\frac52}(\tau_0)}\big\|\tilde{\eta}^{-(0+)}(\frac{\lambda^2(\tau)}{\lambda^2(\sigma)}\tilde{\eta})^{\frac34}\chi_{\frac{\lambda^2(\tau)}{\lambda^2(\tau_0)}\tilde{\eta}\geq 1}(\frac{\lambda(\tau)}{\lambda(\tau_0)})^{\frac12}\tilde{\eta}^{\frac14}x_0(\frac{\lambda^2(\tau)}{\lambda^2(\tau_0)}\tilde{\eta})\,d\tilde{\eta}\big\|_{L^2_{d\zeta}}\\
\end{align*}
Both of the last expressions are directly seen to be bounded by $\lesssim (\frac{\tau}{\tau_0})^{\kappa}\frac{\lambda(\sigma)}{\lambda(\tau)}\big\|x_0\big\|_{\tilde{S}_1}$. Then we can infer that 
\begin{align*}
&\big|\int_0^\infty\chi_{\xi\gg\zeta}\frac{F(\frac{\lambda^2(\tau)}{\lambda^2(\sigma)}\xi, \frac{\lambda^2(\tau)}{\lambda^2(\sigma)}\zeta)\rho( \frac{\lambda^2(\tau)}{\lambda^2(\sigma)}\zeta)}{\xi - \zeta}f_{\zeta\sim\tilde{\eta}}\,d\zeta\big|\\&
\lesssim \big\|\zeta^{-(0+)}f_{\zeta\sim\tilde{\eta}}\big\|_{L^2_{d\zeta}}\lesssim \frac{\lambda(\sigma)}{\lambda(\tau)}(\frac{\tau}{\tau_0})^{\kappa}\big\|x_0\big\|_{\tilde{S}_1}, 
\end{align*}
which again suffices to bound the corresponding contribution to \eqref{eq:app:finalmess}. 
\\
Finally, we consider the contribution of $f_{\zeta\gg\tilde{\eta}}$. Here we can replace $f_{\zeta\gg\tilde{\eta}}$ by 
\begin{align*}
&\frac{\lambda^2(\tau)}{\lambda^2(\sigma)}\int_0^{c\xi}\rho^{\frac12}(\frac{\lambda^2(\tau)}{\lambda^2(\sigma)}\tilde{\eta})\frac{\lambda^{\frac52}(\sigma)}{\lambda^{\frac52}(\tau_0)}\rho^{\frac12}(\frac{\lambda^2(\tau)}{\lambda^2(\tau_0)}\tilde{\eta})\cos[\lambda(\tau)\tilde{\eta}^{\frac12}\int_{\tau_0}^{\sigma}\lambda^{-1}(u)\,du]x_0(\frac{\lambda^{2}(\tau)}{\lambda^2(\tau_0)}\tilde{\eta})\,d\tilde{\eta}\\
& = -\frac{\lambda^2(\tau)}{\lambda^2(\sigma)}\int_{c\xi}^\infty\rho^{\frac12}(\frac{\lambda^2(\tau)}{\lambda^2(\sigma)}\tilde{\eta})\frac{\lambda^{\frac52}(\sigma)}{\lambda^{\frac52}(\tau_0)}\rho^{\frac12}(\frac{\lambda^2(\tau)}{\lambda^2(\tau_0)}\tilde{\eta})\cos[\lambda(\tau)\tilde{\eta}^{\frac12}\int_{\tau_0}^{\sigma}\lambda^{-1}(u)\,du]x_0(\frac{\lambda^{2}(\tau)}{\lambda^2(\tau_0)}\tilde{\eta})\,d\tilde{\eta}\\
& + \frac{\lambda^2(\tau)}{\lambda^2(\sigma)}\int_0^{\infty}\rho^{\frac12}(\frac{\lambda^2(\tau)}{\lambda^2(\sigma)}\tilde{\eta})\frac{\lambda^{\frac52}(\sigma)}{\lambda^{\frac52}(\tau_0)}\rho^{\frac12}(\frac{\lambda^2(\tau)}{\lambda^2(\tau_0)}\tilde{\eta})\cos[\lambda(\tau)\tilde{\eta}^{\frac12}\int_{\tau_0}^{\sigma}\lambda^{-1}(u)\,du]x_0(\frac{\lambda^{2}(\tau)}{\lambda^2(\tau_0)}\tilde{\eta})\,d\tilde{\eta}\\
\end{align*}
Here in the first integral expression on the right, trading a power $\xi^{\frac12-}$, which will come from the $L^2_{d\xi}$-norm of the output, for a power $\tilde{\eta}^{\frac12-}$, one easily finds (recall $\xi<1$)
\begin{align*}
&\big|\frac{\lambda^2(\tau)}{\lambda^2(\sigma)}\int_{c\xi}^\infty\tilde{\eta}^{\frac12-}\rho^{\frac12}(\frac{\lambda^2(\tau)}{\lambda^2(\sigma)}\tilde{\eta})\frac{\lambda^{\frac52}(\sigma)}{\lambda^{\frac52}(\tau_0)}\rho^{\frac12}(\frac{\lambda^2(\tau)}{\lambda^2(\tau_0)}\tilde{\eta})\cos[\lambda(\tau)\tilde{\eta}^{\frac12}\int_{\tau_0}^{\sigma}\lambda^{-1}(u)\,du]x_0(\frac{\lambda^{2}(\tau)}{\lambda^2(\tau_0)}\tilde{\eta})\,d\tilde{\eta}\big|\\
&\lesssim \frac{\lambda(\sigma)}{\lambda(\tau)}(\frac{\tau}{\tau_0})^{\kappa}\big\|x_0\big\|_{\tilde{S}_1}, 
\end{align*}
and from here one easily infers the desired bound for the corresponding contribution to \eqref{eq:app:finalmess}. One may thus replace the expression $f_{\zeta\gg\tilde{\eta}}$ by 
\[
\frac{\lambda^2(\tau)}{\lambda^2(\sigma)}\int_0^{\infty}\rho^{\frac12}(\frac{\lambda^2(\tau)}{\lambda^2(\sigma)}\tilde{\eta})\frac{\lambda^{\frac52}(\sigma)}{\lambda^{\frac52}(\tau_0)}\rho^{\frac12}(\frac{\lambda^2(\tau)}{\lambda^2(\tau_0)}\tilde{\eta})\cos[\lambda(\tau)\tilde{\eta}^{\frac12}\int_{\tau_0}^{\sigma}\lambda^{-1}(u)\,du]x_0(\frac{\lambda^{2}(\tau)}{\lambda^2(\tau_0)}\tilde{\eta})\,d\tilde{\eta}.
\]
This expression we then reduce to the 
\[
\frac{\lambda^2(\tau)}{\lambda^2(\sigma)}\int_0^{\infty}\chi_{\frac{\lambda^2(\tau)}{\lambda^2(\sigma)}\tilde{\eta}<1}\rho^{\frac12}(\frac{\lambda^2(\tau)}{\lambda^2(\sigma)}\tilde{\eta})\frac{\lambda^{\frac52}(\sigma)}{\lambda^{\frac52}(\tau_0)}\rho^{\frac12}(\frac{\lambda^2(\tau)}{\lambda^2(\tau_0)}\tilde{\eta})\cos[\lambda(\tau)\tilde{\eta}^{\frac12}\int_{\tau_0}^{\sigma}\lambda^{-1}(u)\,du]x_0(\frac{\lambda^{2}(\tau)}{\lambda^2(\tau_0)}\tilde{\eta})\,d\tilde{\eta}
\]
up to a term which in terms of $L^\infty_{d\zeta}$ is bounded by $\lesssim (\frac{\tau}{\tau_0})^{\kappa}\big\|x_0\big\|_{\tilde{S}_1}$, which then furnishes an acceptable contribution to \eqref{eq:app:finalmess} as is easily verified. 
In turn, one reduces the preceding expression to 
\[
\frac{\lambda^2(\tau)}{\lambda^2(\sigma)}\int_0^{\infty}\chi_{\frac{\lambda^2(\tau)}{\lambda^2(\sigma)}\tilde{\eta}<1}[\frac{\lambda^2(\tau)}{\lambda^2(\sigma)}\tilde{\eta}]^{-\frac14}\frac{\lambda^{\frac52}(\sigma)}{\lambda^{\frac52}(\tau_0)}\rho^{\frac12}(\frac{\lambda^2(\tau)}{\lambda^2(\tau_0)}\tilde{\eta})\cos[\lambda(\tau)\tilde{\eta}^{\frac12}\int_{\tau_0}^{\sigma}\lambda^{-1}(u)\,du]x_0(\frac{\lambda^{2}(\tau)}{\lambda^2(\tau_0)}\tilde{\eta})\,d\tilde{\eta}
\]
up to a negligible error, and finally, one reduces to 
\begin{align*}
&\frac{\lambda^2(\tau)}{\lambda^2(\sigma)}\int_0^{\infty}[\frac{\lambda^2(\tau)}{\lambda^2(\sigma)}\tilde{\eta}]^{-\frac14}\frac{\lambda^{\frac52}(\sigma)}{\lambda^{\frac52}(\tau_0)}\rho^{\frac12}(\frac{\lambda^2(\tau)}{\lambda^2(\tau_0)}\tilde{\eta})\cos[\lambda(\tau)\tilde{\eta}^{\frac12}\int_{\tau_0}^{\sigma}\lambda^{-1}(u)\,du]x_0(\frac{\lambda^{2}(\tau)}{\lambda^2(\tau_0)}\tilde{\eta})\,d\tilde{\eta}\\
\end{align*}
But then arguing as in {\it{(2)}} of the proof of Proposition~\ref{prop:lingrowthcond}, the preceding equals 
\begin{equation}\label{eq:app:againthisone}
\frac{\lambda(\sigma)}{\lambda(\tau_0)}\int_0^\infty \eta^{-\frac14}\cos[\lambda(\tau_0)\eta^{\frac12}\int_{\tau_0}^\sigma\lambda^{-1}(u)\,du]\rho^{\frac12}(\eta)x_0(\eta)\,d\eta
\end{equation}
which is bounded by $\sigma\big\|x_0\big\|_{\tilde{S}_1}$ thanks to the vanishing condition satisfied by $x_0$. Inserting the preceding expression instead of $f_{\zeta\gg\tilde{\eta}}$ into the formula for $\mathcal{K}_{cc}^2x^{(0)}$ then results almost in a good bound for the corresponding contribution for \eqref{eq:app:finalmess}, except that the singular weight $\xi^{-(0+)}$ used in the norm for the output just barely causes a divergence which we have to avoid by using an orthogonality type argument. In fact, note that we can simply bound (with the preceding expression representing $f_{\zeta\gg\tilde{\eta}}$)
\begin{align*}
\big|\int_{C\xi}^\infty\frac{F(\frac{\lambda^2(\tau)}{\lambda^2(\sigma)}\xi, \frac{\lambda^2(\tau)}{\lambda^2(\sigma)}\zeta)\rho( \frac{\lambda^2(\tau)}{\lambda^2(\sigma)}\zeta)}{\xi - \zeta}f_{\zeta\gg\tilde{\eta}}\,d\zeta\big|
&\lesssim\big\|f_{\zeta\gg\tilde{\eta}}\big\|_{L^\infty_{d\zeta}}\\
&\lesssim\sigma \big\|x_0\big\|_{\tilde{S}_1}, 
\end{align*}
and we 'almost have' (recall the notation introduced earlier above)
\begin{align*}
\big\|\chi_{\xi<1}\xi^{-(0+)}\Phi(\tau, \sigma, \xi)\big\|_{L^2_{d\xi}}\lesssim \frac{\lambda(\tau)}{\lambda(\sigma)}\beta_{\nu}^2(\sigma), 
\end{align*}
whence we just miss the estimate 
\[
\big\|x_0\big\|_{\tilde{S}_1}\int_{\tau}^\infty\frac{\lambda(\tau)}{\lambda(\sigma)}\beta_{\nu}^2(\sigma)\cdot\sigma\,d\sigma\sim \big\|x_0\big\|_{\tilde{S}_1}.
\]
for this contribution to \eqref{eq:app:finalmess}. 
\\
To deal with the divergence, we note that  with $\eta$ the integration variable in \eqref{eq:app:againthisone}, if $\lambda^2(\tau)\xi\gtrsim \lambda^2(\tau_0)\eta$, we can absorb a factor $\xi^{-(0+)}$ into the integral \eqref{eq:app:againthisone} while gaining a factor 
\[
[\frac{\lambda^2(\tau_0)\eta}{\lambda^2(\tau)\xi}]^{0+}[\frac{\tau}{\tau_0}]^{\kappa}
\]
and then a simple orthogonality argument, using localisations to dyadic values of $\xi, \eta$, furnishes the desired sharp bound (with a loss $(\frac{\tau}{\tau_0})^\kappa$, which is admissible in light of the shape estimate \eqref{eq:firstiterlow} we are striving to prove).
On the other hand, if $\lambda^2(\tau)\xi\ll\lambda^2(\tau_0)\eta$, we are performing an integration by parts in the $\sigma$-integral \eqref{eq:app:finalmess}, which in  the worst case trades a factor $\frac{\lambda(\tau)}{\lambda(\sigma)}\xi^{\frac12}$ for a factor 
$\frac{\lambda(\tau_0)}{\lambda(\sigma)}\eta^{\frac12}$ (coming from \eqref{eq:app:againthisone}). Then performing the $L^2_{d\xi}$-integral on \eqref{eq:app:finalmess} with the weight $\xi^{-(0+)}$, one obtains (after dyadic localisation, say) a factor 
\[
\frac{\lambda(\tau)\xi^{\frac12-}}{\lambda(\tau_0)\eta^{\frac12}} = [\frac{\lambda(\tau)\xi^{\frac12}}{\lambda(\tau_0)\eta^{\frac12}}]^{1-}(\frac{\tau}{\tau_0})^{\kappa}\eta^{-(0+)},
\]
and the weight $\eta^{-(0+)}$ can again be absorbed into \eqref{eq:app:againthisone} in light of the definition of $\big\|\cdot\big\|_{\tilde{S}_1}$. Then another orthogonality argument furnishes the desired sharp bound \eqref{eq:firstiterlow} for this contribution to \eqref{eq:app:finalmess} as well. 
This finally concludes the contribution of $f_{\zeta\gg\tilde{\eta}}$ to \eqref{eq:app:finalmess} in case {\it{(2)}}, and thereby case {\it{(2)}}.
\\

{\it{{\bf{(3)}}: Balanced frequencies $\xi\sim \tilde{\eta}$}}. This is again similar to case {\it{(1)}} and we omit it (one trades a factor $\xi^{\frac12}$ for a factor $\tilde{\eta}^{\frac12}$ to five boundedness of the $\tilde{\eta}$-integral.

\centerline{\scshape Joachim Krieger }
\medskip
{\footnotesize
 \centerline{B\^{a}timent des Math\'ematiques, EPFL}
\centerline{Station 8, 
CH-1015 Lausanne, 
  Switzerland}
  \centerline{\email{joachim.krieger@epfl.ch}}
} 

\end{document}